\newtheorem{thm}{Theorem}[section]
\newtheorem{cor}[thm]{Corollary}
\newtheorem{lemma}[thm]{Lemma}
\newtheorem{prop}[thm]{Proposition}
\newtheorem{defn}[thm]{Definition}
\newtheorem{definition}[thm]{Definition}
\newtheorem{remark}[thm]{Remark}
\newtheorem{ex}[thm]{Example}
\newtheorem{nota}[thm]{Notation}
\newtheorem{conj}[thm]{Conjecture}
\newtheorem{prob}[thm]{Problem}
\newtheorem{ques}[thm]{Open Question}
\numberwithin{equation}{section}
\newcommand{\bs}{\backslash}
\newcommand{\stek}{\operatorname{Stek}}  
\newcommand{\spec}{\operatorname{Spec}} 
\newcommand{\restr}[1]{\lower0.4ex\hbox{$|$}\lower0.7ex
	\hbox{$\scriptstyle{#1}$}}
\DeclareMathOperator\arcosh{arccosh}
\newcommand{\bdry}{\Sigma}
\newcommand{\Cx}{\mathbf C} 
\newcommand{\N}{\mathbf N} 
\newcommand{\R}{\mathbf R} 
\newcommand{\Z}{\mathbf Z}
\newcommand{\D}{\mathbb D}
\newcommand{\om}{\Omega}
\newcommand{\DtN}{\mathcal{D}}
\newcommand{\Sp}{\mathbf S} 
\newcommand{\eps}{\varepsilon}
\newcommand{\lap}{\Delta} 
\newcommand{\pp}{(p)}
\newcommand{\Acal}{\mathcal{A}}
\newcommand{\rs}{\operatorname{rs}} 
\newcommand{\rssig}{\sigma^{\rs}}
\newcommand{\sig}{\Sigma}
\newcommand{\hsig}{\widehat{\sigma}^{\,\rs}}
\newcommand{\lapsp}{\lap_\sig^{\pp}}
\newcommand{\hla}{\widehat{\lambda}}
\newcommand{\rsDtN}{\DtN^{\rs}}
\newcommand{\DtNp}{\rsDtN_p}
\newcommand{\Hcal}{\mathcal{H}}
\newcommand{\kDtN}{\DtN^K}
\newcommand{\teta}{\widetilde{\eta}}
\newcommand{\hsigk}{\widehat{\sigma}^{K}}
\newcommand{\ksig}{\sigma^K}
\newcommand{\orb}{\mathcal{O}}
\newcommand{\pam}{\partial\om}
\newcommand{\B}{\mathbb{B}}
\newcommand{\sph}{\mathbb{S}}
\newcommand{\gap}{\operatorname{Gap}} 
\newcommand{\diam}{\operatorname{Diam}}
\newcommand{\inj}{\operatorname{inj}}
\newcommand{\mob}{\text{M\"ob}} 
\newcommand{\bla}{\overline{\lambda}}
\newcommand{\bsig}{\overline{\sigma}}
\newcommand{\mom}{\mathcal{M}_\om}
\begin{document}
\title[Steklov Eigenvalue Problem]{Some recent developments on the Steklov Eigenvalue Problem}

\abstract{
    The Steklov eigenvalue problem, first introduced over 125 years ago, has seen a surge of interest in the past few decades.  This article is a tour of some of the recent developments linking the Steklov eigenvalues and eigenfunctions of compact Riemannian manifolds to the geometry of the manifolds.    Topics include isoperimetric-type upper and lower bounds on Steklov eigenvalues (first in the case of surfaces and then in higher dimensions), stability and instability of eigenvalues under deformations of the Riemannian metric, optimisation of eigenvalues and connections to free boundary minimal surfaces in balls, inverse problems and isospectrality, discretisation, and the geometry of eigenfunctions.   We begin with background material and motivating examples for readers that are new to the subject.   Throughout the tour, we frequently compare and contrast the behavior of the Steklov spectrum with that of the Laplace spectrum.   We include many open problems in this rapidly expanding area.
}

\author[1]{\fnm{Bruno}\sur{Colbois}}\email{bruno.colbois@unine.ch}

\author[2]{\fnm{Alexandre}\sur{Girouard}}\email{alexandre.girouard@mat.ulaval.ca}

\author[3]{\fnm{Carolyn}\sur{Gordon}}\email{csgordon@dartmouth.edu}

\author[4]{\fnm{David}\sur{Sher}}\email{dsher@depaul.edu}

\affil[1]{\orgdiv{Institut de Math\'ematiques},\orgname{Universit\'e de Neuch\^atel}, \street{Rue
  Emile-Argand 11}, \postcode{CH-2000}\city{Neuch\^atel}, \country{Switzerland}}
  
    \affil[2]{\orgdiv{D\'epartement de math\'ematiques et de statistique}, \street{Pavillon Alexandre-Vachon}, \orgname{Universit\'e Laval}, \city{Qu\'ebec QC}, \postcode{G1V 0A6}, \country{Canada}}
    
    \affil[3]{\orgdiv{Department of Mathematics}, \orgname{Dartmouth College}, \city{Hanover}, \state{NH}, \postcode{03755}, \country{USA}}

 \affil[4]{\orgdiv{Department of Mathematical Sciences}, \orgname{DePaul University}, \street{2320 N Kenmore Ave.}, \city{Chicago}, \state{IL}, \postcode{60614}, \country{USA}}

\keywords{Steklov spectrum, eigenvalue bounds, inverse spectral problem, free boundary minimal surfaces}

\maketitle

\setcounter{tocdepth}{1}
\tableofcontents

\subsection*{Acknowledgements}
The authors would like to thank the following people who have read preliminary versions of this paper and helped the authors improve the presentation in various ways: Iosif Polterovich, Alessandro Savo, Asma Hassanezhad, Katie Gittins, Chakradhar, Jean Lagac\'e, Mikhail Karpukhin, Antoine M\'etras, Nilima Nigam, and Romain Petrides. We thank David Webb for providing several drawings.  Bruno Colbois and Alexandre Girouard would also like to thank the participants in the Qu\'ebec-Neuch\^atel-Montr\'eal doctoral seminar, who have patiently attended lectures on many parts of this paper during the year 2021-2022, and Léonard Tschanz for his pictures.

\subsection*{Funding}
The first author acknowledges support of the Swiss National Science Foundation project ‘Geometric Spectral Theory’, grant number 200021-19689.

\section{Introduction}

In this paper, we give a survey of some recent developments concerning the Steklov eigenvalues and eigenfunctions of compact manifolds with boundary. The last few years have seen intense interest in these topics, with significant progress in many directions since the publication of the survey paper of Girouard and Polterovich~\cite{GiPo2017}. We focus on the time period since the earlier survey but try to include enough background to make the current survey somewhat self-contained. Even so, we are only able to touch on some of the topics, as a quick search on MathSciNet for titles containing \emph{Steklov eigenvalue} over this time period reveals over 100 papers. The selection of topics which are covered here is naturally influenced by the tastes and knowledge of the authors.

Let $\Omega$ be a smooth compact Riemannian manifold of dimension $d+1\geq 2$ with boundary $\bdry=\partial\Omega$. The Dirichlet-to-Neumann operator $\DtN:C^\infty(\bdry)\to C^\infty(\bdry)$ is defined by
$\DtN f=\partial_\nu{\hat{f}}$, where $\nu$ is the outward normal  along the boundary $\bdry$ and where the function $\hat{f}\in C^\infty(\Omega)$ is the unique harmonic extension of $f$ to the interior of $\Omega$. 
The eigenvalues of $\DtN$ are known as \emph{Steklov eigenvalues} of $\Omega$. They form an unbounded sequence
$0=\sigma_0\leq\sigma_1\leq\sigma_2\leq\cdots\to\infty$,
where as usual each eigenvalue is repeated according to its multiplicity. There exists a corresponding sequence of eigenfunctions $f_k\in C^\infty(\bdry)$ that forms an orthonormal basis of $L^2(\bdry)$. Their harmonic extensions $u_k=\hat{f_k}$ are solutions of the Steklov spectral problem given by
\[\begin{cases}
  \Delta u_k=0&\text{in }\Omega,\\
  \partial_{\nu}u_k=\sigma_k u_k&\text{on }\bdry.
\end{cases}\]
The functions $u_k$ are referred to as the \emph{Steklov eigenfunctions} of $\Omega$, while the sequence of eigenvalues $(\sigma_k)_{k\geq 0}$ is the \emph{Steklov spectrum} of the manifold $\Omega$.
We are interested in the rich interplay between the Steklov spectral data and various geometric features of the manifold $\Omega$. 
For basic motivation we refer to the introduction of the aforementioned paper~\cite{GiPo2017}, while for historical background and physical motivations readers are invited to look at the paper~\cite{KuKu2014} by Kuznetsov, Kulczycki, Kwaśnicki, Nazarov, Poborchi, Polterovich and Siudeja.

There are only a few manifolds for which the Steklov eigenvalues can be computed explicitly. 
In general, one must instead use methods other than direct computation to study Steklov eigenvalues. One critical tool is variational characterisations. The \emph{Rayleigh--Steklov quotient} of a function $u$ in the Sobolev space $H^1(\Omega)$ is given by 
\begin{equation*}
R(u)= \frac{\int_{\Omega}\vert \nabla u\vert^2 dV_{\Omega}}{\int_{\Sigma}u^2dV_{\Sigma}}.
\end{equation*}
Denote by $\mathcal E(k)$ the set of all $k$-dimensional subspaces of $H^1(\Omega)$, and let $\mathcal H(k)\subset \mathcal E(k)$ consist of those $k$-dimensional subspaces of $H^1(\Omega)$ that are orthogonal to constant functions on $\Sigma$.     The following equation gives two convenient formulations of the variational characterisation of the Steklov eigenvalues $\sigma_k(\om)$ for all $k\in \Z^+$:
\begin{equation*}
\sigma_k(\Omega)=  \min_{E\in \mathcal E(k+1)}\max_{0\not=u \in E}R(u)=\min_{V\in \mathcal H(k)}\max_{0\not=u \in V}R(u).
\end{equation*}
In particular,
\[\sigma_1(\Omega)=\min\{R(u)\,:\,u\in H^1(\Omega)\quad \mbox{and}\quad \int_{\Sigma}u\,dV_\Sigma=0\}.\]
Similar  variational characterisations for mixed Steklov--Neumann and Steklov--Dirichlet eigenvalues are also presented in Section~\ref{section:Prelim}. 

Observe that the numerator of the Rayleigh--Steklov quotient is the Dirichlet energy, which is a conformal invariant in dimension two, while the denominator depends only on the metric on the boundary $\bdry$.  Thus, in the case of surfaces, the Steklov spectrum is invariant under conformal changes in the metric away from the boundary.

\subsection{Overview of the survey}

In Section~\ref{section:Prelim}, after first introducing notational conventions that will be used throughout the paper, we provide basic background on the Steklov eigenvalue problem and examples.  Two examples presented in detail are metric balls in Euclidean space and the particularly useful example of cylinders $\Omega=[0,L]\times M$  over a closed Riemannian manifold $M$. 
Even these simple examples are sufficient to motivate many of the questions that will be studied in this survey. In particular, both examples illustrate the interplay between the Steklov eigenvalues of $\Omega$ and the Laplace eigenvalues of the tangential Laplacian on $\partial\Omega$. The behaviour of the Steklov spectrum of cylinders as $L$ tends to 0 or to $+\infty$ is also a meaningful preview of results to come. 

Each of the remaining sections is devoted to a particular topic or set of topics. We discuss each briefly here.

One of the oldest and most active lines of investigation regarding Steklov eigenvalues is the search for isoperimetric-type geometric inequalities. For simply-connected planar domains $\Omega\subset\R^2$ this goes back to Weinstock~\cite{We1954}. 

In the last few years the variational characterisations of Steklov eigenvalues were used together with a combination of tools, in particular from complex analysis, to obtain upper bounds for the perimeter-normalised Steklov eigenvalues $\sigma_k(\om)L(\bdry)$ of compact Riemannian surfaces $\om$ with boundary in terms of the genus and number of boundary components of $\om$. This is discussed in Section~\ref{section:surfaces}. One of the major developments that took place in the last few years is the full solution of the isoperimetric problem for Steklov eigenvalues of planar domains, without constraint on the number of boundary components. This was obtained by Girouard, Karpukhin and Lagacé in~\cite{GiKaLa2021} through the use of homogenisation theory by perforation, which provides examples saturating previous bounds by Kokarev~\cite{Ko2014}. This technique also reveals an interesting connection between area-normalised eigenvalues of the Laplace operator on a closed surface and perimeter-normalised Steklov eigenvalues for domains in that surface.

For manifolds $\Omega$ of arbitrary dimension $d+1$, sharp upper bounds and existence results are out of reach at the moment. One of the first difficulties that is encountered is that the Dirichlet energy is no longer conformally invariant in dimension larger than two. This is often circumvented by using H\"older's inequality to replace the Dirichlet energy  $\int_{\Omega}|\nabla u|^2\,dV$ by an expression containing $\int_{\Omega}|\nabla u|^{d+1}\,dV$, which is conformally invariant. However by doing so one loses precision. Section \ref{section:boundshigher} presents many upper bounds for the eigenvalues $\sigma_k$ in terms of various geometric features of the manifold.

An important recent innovation was the realisation by Karpukhin and M\'etras that apart from the perimeter and volume normalisation, there is a another normalisation that appears to be particularly well suited to the study of upper bounds for Steklov eigenvalues.  This normalisation will be introduced in Section~\ref {section:boundshigher} and will play an important role both there and later in the survey.

Thus far we have discussed bounds for the normalised Steklov eigenvalues.   Do there exist Riemannian metrics on a given underlying surface that realise these bounds?  This is the subject of Section~\ref{sec:existence}.  Fraser and Schoen~\cite{FrSc2016} first discovered and studied a deep connection between such extremal metrics on surfaces and free boundary minimal surfaces in Euclidean balls. \textbf{}This connection has not only influenced advances on the Steklov eigenvalue
problem, including many of the results in Section \ref{section:surfaces}, but also yields important applications to
the study of minimal surfaces. Fraser and Schoen went on to introduce innovative techniques to address the existence of Riemannian metrics maximising the first non-zero normalised Steklov eigenvalue.   Their ideas have led to striking advances in addressing existence of extremal metrics for normalised eigenvalues both for the Steklov problem and for the Laplace eigenvalue problem on closed Riemannian manifolds.

An interesting way to get bounds on Steklov eigenvalues is to discretise the manifold and compare its spectrum with the spectrum of a Steklov problem on a graph.  While obtaining a meaningful discretisation requires significant geometric constraints on the manifold, a number of interesting results have been obtained.  
Discretisation motivates the study of the spectrum of the Steklov problem on a finite graph in and of itself (see Section \ref{Section: discretisation}).

For $\om$ a compact $(d+1)$-dimensional Riemannian manifold with boundary $\bdry$, Section~\ref{stek.forms} addresses two ways of defining an analogue of the Dirichlet-to-Neumann operator on the space $\Acal^p(\bdry)$ of smooth $p$-forms on $\bdry$ for each $p=1,\dots, d$.  Both operators have discrete spectra, allowing one to define notions of the Steklov spectrum for $p$-forms and to study their properties.  

In Section~\ref{sec:inv probs pos} we consider the inverse spectral problem for the Steklov problem, with an emphasis on ``positive" results, that is, finding geometric information that can be recovered from the Steklov spectrum. Along the way, we explain recent developments in the theory of Steklov spectral asymptotics.

Next in Section~\ref{sec: isospectrality}, we address "negative" inverse spectral results. We discuss general techniques for constructing pairs or continuous families of compact Riemannian manifolds with boundary that have the same Steklov spectrum.  By comparing their geometry, we identify geometric invariants that are \emph{not} spectrally determined.   

Section~\ref{sec:eigenfunctions} is devoted to the geometry of Steklov eigenfunctions. We discuss the interior decay of Steklov eigenfunctions. We also present the best known bounds on the volumes of the nodal sets of both Steklov eigenfunctions and their restrictions to the boundary (the Dirichlet-to-Neumann eigenfunctions). Finally, we explain some recent results on nodal counts and the density of nodal sets.

In Appendix~\ref{Section:Radon}, we present some material on variational eigenvalues of Radon measures. One could start with this section, or simply refer to it when needed. Indeed, the setting presented here allows the unification of many well known eigenvalue problems. In particular, homogenisation procedures that relate isoperimetric problems for Steklov, Laplace and various other eigenvalue problems are natural in this setting.

There are numerous open problems scattered throughout the survey.  For the convenience of the reader, all of these problems are gathered together in Appendix~\ref{app:open ques} along with references to their locations in the survey.  As we hope this survey conveys, new techniques and results are being introduced into the study of the Steklov spectrum at a very rapid pace.  In particular, it is of course possible that some of the open problems may be resolved quickly.

Among the topics not covered in this survey is the significant progress in the development of numerical methods for computing Steklov eigenvalues and eigenfunctions. Interested readers are invited to begin with the paper~\cite{BrGa2020} by Bruno and Galkowski where these computations are performed with a view towards nodal geometry. This is based on methods that are developed by Akhmetgaliyev, Kao, and Osting in \cite{AkElOs2017}. Another relevant paper is~\cite{OuKaOs2021} by Oudet, Kao, and Osting, where numerical isoperimetric shape optimisation is used to compute free boundary minimal surfaces. For a survey of recent results and interesting discussions we also recommend \cite{MoZh2022} by Monk and Zhang as well as \cite{YoXiLi2019} by Liu, Xie, and Liu.

\section{Motivating examples and preliminary material}\label{section:Prelim}

The background and basic examples in section will serve as motivation for many of the questions that we will study. Readers who are well acquainted with the Steklov eigenvalue problem may want to skip ahead to Section~\ref{section:surfaces} after going over our notational conventions below and then come back to this section when needed.

\subsection*{Notational conventions}\label{nota.convention}

\begin{enumerate}
\item $(M,g)$ and $(N,g)$ will usually denote complete Riemannian manifolds (whether or not compact).  We will often suppress the name of the metric $g$.  
 
\item Throughout the paper, compact manifolds with boundary will systematically be denoted by $\Omega$ and the boundary of $\om$ will be denoted by $\Sigma$.   We also use $\Omega$ for bounded domains $\Omega\subset M$ with nonempty boundary.

\item The dimension of $\om$ will usually be denoted by $d+1$, so that $d$ is the dimension of its boundary $\Sigma$.

\item The volume of $(\om,g)$  will usually be written $|\om|_g$ or simply $|\om|$ if the metric $g$ is understood. We will continue to denote by $g$ the Riemannian metric induced by $g$ on $\Sigma=\pam$.  In particular, $|\Sigma|_g$ will denote the $d$-dimensional volume of $\Sigma$.  Sometimes we will also use notation such as $A$ and $L$ for the area of a surface and the length of a curve. 
\item We use the positive definite Laplacian defined by $\Delta_g=-\text{div}_g\circ\nabla_gf$.
\item The Riemannian volume form on $\om$ will usually be denoted $dV_{(\om,g)}$ (with the subscripts $\om$ and/or $g$ suppressed if they are clear from the context) but we will also sometimes replace $dV$ with $dA$ in the case of surfaces and sometimes use $ds$ for arclength measure. 
\item The standard Sobolev space consisting of functions in $L^2(\om)$ with weak gradient also in $L^2(\om)$ will be denoted $H^1(\Omega)$.  We use $H^1_0(\om)$ to denote the closure in $H^1(\om)$ of $C_0^\infty(\om)$, where $C_0^\infty(\om)$ denotes the space of smooth functions with compact support in the interior of $\om$.
\item Eigenvalues will be indexed starting with the index zero.    The Steklov spectrum of $(\om,g)$ will thus be written as:
\[\stek(\om,g): 0=\sigma_0(\om,g)\leq \sigma_1(\om,g)\leq \sigma_2(\om,g)\leq \dots\]
and the Laplace spectrum of a closed Riemannian manifold $(M,g)$ will be expressed as 
\[0=\lambda_0(M,g)\leq \lambda_1(M,g)\leq \lambda_2(M,g)\leq \dots\]
\end{enumerate}
We will write simply $\sigma_k$ rather than $\sigma_k(\om,g)$ if $(\om,g)$ is fixed.

\noindent{\bf CAUTION!} We caution that there are two frequently used indexing conventions in the literature:  indexing the lowest eigenvalue by zero as we are doing here or by one.   The convention we are using is convenient for the Steklov spectrum of connected manifolds.  In that case $\sigma_0=0$ and $\sigma_1$ is the lowest non-zero eigenvalue.    For consistency, we are using this indexing convention even, for example, in the setting of the Dirichlet spectrum of a compact manifold with boundary, in which case the lowest eigenvalue $\lambda_0^D$ is non-zero.

While the Steklov spectrum $\stek(\om,g)$ is the spectrum of the Dirichlet-to-Neumann operator $\DtN$ on $C^\infty(\Sigma)$, it is common to refer to the harmonic extensions to $\om$ of the eigenfunctions of $\DtN$ as \emph{Steklov eigenfunctions}.  We will follow that practice here.

\subsection{Examples}\label{subsec: examples}

There are very few Riemannian manifolds for which the Steklov eigenvalues can be computed explicitly.  Here we give two simple but illustrative examples.

\begin{ex}\label{ex: ball}
Consider the unit ball $\B^{d+1}$ in $\R^{d+1}$.   Let $P_k$ denote the space of homogeneous harmonic polynomials of degree $k$ on $\R^{d+1}$.  For $p\in P_k$ expressed in spherical coordinates as $p(r,\theta)=r^kh(\theta)$, observe that $\partial_\nu p =\frac{\partial p}{\partial r} =kp$ on the boundary sphere $\sph^d$.  Thus $P_k$ consists of Steklov eigenfunctions with eigenvalue $k$.    Since the spherical harmonics -- i.e., the restrictions to $\sph^d$ of all the homogeneous harmonic polynomials -- span $L^2(\sph^d)$, we conclude that the Steklov spectrum of $\B^{d+1}$ consists precisely of the non-negative integers, and the Steklov eigenspace associated with $k$ is given by $P_k$ (more precisely, by the restrictions to $\B^{d+1}$ of the elements of $P_k$).   

Compare this with the spectrum of the Laplacian $\Delta$ on $\sph^d$.   The latter consists of all $k(k+d-1)$ for $k=0,1,2,\dots$ with corresponding eigenspaces the $k$th degree spherical harmonics.    In particular, the Dirichlet-to-Neumann operator and the Lapacian have the same eigenspaces.   
In fact, the relationship between the Laplacian and the Dirichlet-to-Neumann operator is completely explicit in this case:
\[\Delta_{\sph^d}=\DtN_{\B^{d+1}}^2+(d-1)\DtN_{\B^{d+1}}.\]
 In particular, in the case of the disk $\B^2$, we have $\DtN_{\B^{2}}=\sqrt{\Delta_{\sph^1}}$.    
\end{ex}

We identify a few features of the example above, some unique to balls, others completely general.

\begin{itemize}
\item For $k$ large, the $\sigma_k$-eigenfunctions decay towards zero rapidly on compact subsets of the interior.   This property is completely general and will be discussed in Section~\ref{sec:eigenfunctions}.
\item Comparison with the Laplace eigenvalues of the sphere $\sph^d$ yields \[\sigma_k(\B^{d+1})=\sqrt{\lambda_k(\sph^d)}+ O(1).\]  As we will discuss in the next subsection, this relationship between the Steklov eigenvalues of a manifold and the Laplace eigenvalues of its boundary also holds more generally.
\item Special to this example:  Girouard, Karpukhin, Levitin and Polterovich \cite{GKLP2021} showed that 
Euclidean balls are the \emph{only} compact Riemannian manifolds for which the Dirichlet-to-Neumann operator commutes with the boundary Laplacian, and disks are the only surfaces for which $\DtN=\sqrt{\Delta_\Sigma}$.    \	 
\end{itemize}

\begin{ex}\label{example: cylinder}
Cylinders over compact manifolds are among the simplest and at the same time most useful examples.  Let $\om$ be a cylinder $\om=C_L= [0,L]\times M$ where $M$ is a connected, closed $d$-dimensional Riemannian manifold and $L\in\R^+$. Denote the eigenvalues of the Laplace--Beltrami operator $\Delta_M$ by
$0=\lambda_0(M)<\lambda_1(M)\le \lambda_2\le\cdots\nearrow\infty$, and let 
$(\varphi_k)_{k=0}^{\infty}\subset C^\infty(M)$ be a corresponding orthonormal basis of eigenfunctions.  Since $\Sigma:=\pam$ consists of two copies of $M$, we have 
\begin{equation}\label{eq: sig vs N}\lambda_{2k}(\Sigma)=\lambda_{2k+1}(\Sigma)=\lambda_k(M)\end{equation}
for all $k=0,1,2,\dots$.   
The Steklov eigenvalues of the cylinder are given by $0$, $\frac{2}{L}$ and for each $k\geq 1$,
\[
\sqrt{\lambda_k(M)}\tanh(\sqrt{\lambda_k(M)} \frac{L}{2})\qquad\text{and}\qquad \sqrt{\lambda_k(M)}\coth (\sqrt{\lambda_k(M)} \frac{L}{2}).
\]
Using the variables $t\in [0,L]$ and $x\in M$, the corresponding eigenfunctions are
\[
1;\ t;\ \cosh\left(\sqrt{\lambda_k(M)}(t-L/2)\right)\varphi_k(x);\ \sinh\left(\sqrt{\lambda_k(M)}(t-L/2)\right)\varphi_k(x).
\]

(i) We first consider the asymptotics of the Steklov eigenvalues as $k\to\infty$. Because $\tanh(t)=1+O(t^{-\infty})$ and $\coth(t)=1+O(t^{-\infty})$ it follows from the asymptotic growth rate of $\lambda_k$ given by the Weyl law that
\[\sigma_k(C_L)=\sqrt{\lambda_k(\Sigma)}+O(k^{-\infty}).\]

    (ii)  Next we let the length $L$ of the cylinder vary and consider the limiting behaviour as $L\to 0$:   The eigenvalue $2/L\to+\infty$, while
\[\sqrt{\lambda_k(M)}\tanh(\sqrt{\lambda_k(M)} \frac{L}{2})\to 0\qquad\text{and}\qquad\sqrt{\lambda_k(M)}\coth (\sqrt{\lambda_k(M)} \frac{L}{2})\to+\infty.\] 

In particular, the number of very small eigenvalues increases and for each index $k\in\N$,
\[\sigma_k(C_L)\xrightarrow{\,L\to 0\,}0.\]

(iii) Finally let $L\to\infty$.   Then $2/L\to 0$ and
\[
\sqrt{\lambda_k(M)}\tanh(\sqrt{\lambda_k(M)} \frac{L}{2})\to\sqrt{\lambda_k(M)}\qquad\text{and}\qquad \sqrt{\lambda_k(M)}\coth (\sqrt{\lambda_k(M)} \frac{L}{2})\to\sqrt{\lambda_k(M)}.\]
Taking into account Equation~\eqref{eq: sig vs N}, it follows that 
\[\sigma_k(C_L)\xrightarrow{\,L\to \infty\,}\sqrt{\lambda_k(\Sigma)}.\]
\end{ex}

\begin{itemize}
\item This example illustrates that we can find metrics on the underlying manifold for which the $k$th Steklov eigenvalue is arbitrarily small while keeping the volume of the boundary fixed.   As we will discuss in Subsection~\ref{subsec: prelim bounds}, one can construct metrics with similar behaviour on every compact manifold.

\item The fact that $\sigma_k(C_L)=\sqrt{\lambda_k(\Sigma)}+O(k^{-\infty})$ is a feature of this example that is \emph{not} common to all Riemannian manifolds.  In fact, it is enough to look at the ball $\Omega=B(0,1)\subset\R^n$, with $n\geq 3$ to see that this is not true.  However, we will see in the next subsection that a weaker relationship does hold in general.  
\end{itemize}

This simple example has many applications.   A sampling:

\begin{itemize}
\item It will be used in Section~\ref{section:boundshigher} to show that the exponent on $k$ in some bounds for $\sigma_k$ is optimal (see Theorem~\ref{globalestimate}). 
\item It can be used to deduce results on $\lambda_k$ from results on $\sigma_k$ and vica versa. See  Examples 5.1 and 5.2 in~\cite{CoGi2021}.   
\item The earliest known examples of non-isometric Steklov isospectral manifolds arose from the observation (see the earlier survey \cite{GiPo2017}) that the Steklov spectrum of the cylinder $[0,L]\times M$ depends only on $L$ and the Laplace spectrum of $M$; thus any pair of Laplace isospectral closed Riemannian manifolds yields a pair of cylindrical Steklov isospectral manifolds.

\end{itemize}

\begin{remark} The example of a dumbbell is classical for the Laplacian.  The Steklov spectrum of a dumbbell is addressed by Bucur, Henrot and Michetti \cite{BuHeMi2021} in all dimensions. See also the Ph.D. thesis of Michetti \cite{Mi20222}. 

\end{remark}

\subsection{Asymptotic behaviour of eigenvalues}\label{subsec: prelim asympt}

For compact Riemannian manifolds $(\om,g)$ with smooth boundary, the Dirichlet-to-Neumann operator $\DtN=\DtN_{(\om,g)}:C^\infty(\pam)\to C^\infty(\pam)$ is an elliptic pseudodifferential operator of order one.   As shown in \cite{LeUh1989},
the symbol of $\DtN$ is completely determined by the Riemannian metric in an arbitrarily small neighborhood of the boundary. Since the asymptotics of the spectrum of a pseudodifferential operator depend only on the symbol, this yields the following theorem:

\begin{thm}\cite{HiLu2001}, \cite[Theorem 2.5]{GPPS2014}\label{thm: hislop lutzer} Suppose $(\om,g)$ and $(\om',g')$ are compact Riemannian manifolds with boundary.   If some neighborhood of $\pam$ is isometric to a neighborhood of $\pam'$, then 
\[\sigma_k(\om,g)-\sigma_k(\om',g') = O(k^{-\infty}).\]
\end{thm}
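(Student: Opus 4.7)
The plan is to reduce the statement to a comparison of full pseudodifferential symbols and then to a spectral perturbation argument. First, I would use the given isometry between neighborhoods of $\bdry$ and $\partial\Omega'$ to identify the two boundaries as Riemannian manifolds. Under this identification, both Dirichlet-to-Neumann operators $\DtN$ and $\DtN'$ become elliptic pseudodifferential operators of order one on the \emph{same} closed Riemannian manifold $\bdry$.

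Next, I would invoke the result of Lee and Uhlmann cited in the excerpt: the full classical symbol of $\DtN$ is determined by the Riemannian metric in any one-sided neighborhood of $\bdry$. Because the isometry identifies such neighborhoods, the full symbols of $\DtN$ and $\DtN'$ agree term by term. Equivalently, the difference
\[K := \DtN - \DtN'\]
belongs to $\Psi^{-\infty}(\bdry)$; that is, $K$ is a smoothing operator. In particular $K$ extends to a continuous map $H^{-s}(\bdry)\to H^{s}(\bdry)$ for every $s\in\R$, and its singular values satisfy $s_k(K)=O(k^{-\infty})$.

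The final step is a spectral stability result: two self-adjoint elliptic pseudodifferential operators of positive order on a closed manifold whose difference is smoothing have eigenvalues (indexed in increasing order with multiplicity) satisfying $\sigma_k-\sigma_k'=O(k^{-\infty})$. My approach here would exploit the a priori Sobolev regularity of eigenfunctions of $\DtN$: if $\DtN f=\sigma_k f$ with $\|f\|_{L^2}=1$, then $\|f\|_{H^{-s}}\lesssim \sigma_k^{-s}$ for each $s\ge 0$, so $\|Kf\|_{L^2}\le \|K\|_{H^{-s}\to L^2}\,\sigma_k^{-s}$. Combined with the min-max characterisation applied to the trial space spanned by the first $k+1$ eigenfunctions of $\DtN$, and with Weyl-type asymptotics $\sigma_k(\DtN)\asymp k^{1/d}$ to control how many eigenvalues fall in any window, this forces $|\sigma_k(\DtN)-\sigma_k(\DtN')|$ to decay faster than any negative power of $k$.

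The main obstacle is this last step: passing from the purely \emph{symbolic} agreement of $\DtN$ and $\DtN'$ modulo smoothing to a pointwise comparison of the individual eigenvalues to infinite order in $k$. The naive min-max estimate $|\sigma_k-\sigma_k'|\le \|K\|_{\mathrm{op}}$ only gives a uniform bound, so one must combine a finite-rank-plus-small-remainder decomposition $K=K_N+R_N$ (with $\mathrm{rank}\,K_N\le N$ and $\|R_N\|=O(N^{-\infty})$) with Weyl interlacing and the Weyl asymptotics for $\DtN$ in order to carry the decay of $s_k(K)$ through to the difference of the eigenvalues themselves. An alternative implementation of this step, which bypasses the delicate interlacing bookkeeping, is to compare the heat traces $\operatorname{Tr}(e^{-t\DtN})$ and $\operatorname{Tr}(e^{-t\DtN'})$: these differ by $O(t^\infty)$ as $t\to 0^+$ because they are determined by the (agreeing) symbols, and a Tauberian argument then translates this into the desired $O(k^{-\infty})$ bound on the eigenvalues.
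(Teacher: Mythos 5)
Your first two steps — identifying the boundaries through the isometry so that both Dirichlet-to-Neumann operators become pseudodifferential operators on the same closed manifold $\bdry$, and invoking Lee--Uhlmann to conclude that $K:=\DtN-\DtN'$ is smoothing — match the sketch the survey gives before the theorem statement. The survey stops there and cites \cite{HiLu2001} and \cite[Theorem 2.5]{GPPS2014}: the entire content of the theorem is exactly what you call ``the main obstacle,'' and neither of your two proposed resolutions actually closes it.

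The finite-rank/Weyl-interlacing route gives, after writing $K=K_N+R_N$ with $\operatorname{rank}K_N\le N$ and $\|R_N\|\le s_{N+1}(K)$,
\[
|\sigma_k(\DtN)-\sigma_k(\DtN')|\le\max\bigl(\sigma_{k+N}(\DtN')-\sigma_k(\DtN'),\ \sigma_k(\DtN')-\sigma_{k-N}(\DtN')\bigr)+s_{N+1}(K).
\]
The last term can be made $O(k^{-\infty})$, but the first one is not $o(1)$ in general: the one-term Weyl law $\sigma_k\asymp k^{1/d}$ comes with a remainder of order $\lambda^{d-1}$, which is comparable to the possible cluster sizes. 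Indeed, by Example~\ref{ex: ball}, for $\B^{d+1}$ with $d\ge 2$ the DtN spectrum consists of the integers $m$ with multiplicities $\dim P_m\asymp m^{d-1}$, so for $k$ at the edge of such a cluster $\sigma_{k+N}-\sigma_k\ge 1$ even for $N$ far smaller than $k$. The simpler min-max variant you sketch (trial space $E_k=\mathrm{span}(f_0,\dots,f_k)$) fares no better: the low-index eigenfunctions remain in $E_k$ for all $k$, $\|Kf_j\|$ does not decay for $j$ bounded, and $\|K|_{E_k}\|$ stays bounded away from zero, so you only get $\sigma_k(\DtN')\le\sigma_k(\DtN)+O(1)$. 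The heat-trace alternative also fails, for two independent reasons: for a genuinely pseudodifferential (non-differential) operator the constant term of the short-time heat trace expansion is a \emph{non-local} spectral invariant not determined by the symbol, so $\operatorname{Tr}(e^{-t\DtN})-\operatorname{Tr}(e^{-t\DtN'})$ tends to a generically nonzero constant as $t\to 0^+$ rather than vanishing to all orders; and even if the heat traces did agree to infinite order, a Tauberian argument would control the counting function rather than individual eigenvalues — passing from one to the other is exactly the difficulty you set out to resolve. The actual pointwise $O(k^{-\infty})$ bound needs the specific constructions of \cite{HiLu2001} (Euclidean domains) or, for surfaces, \cite[Theorem 2.5]{GPPS2014}.
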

(As discussed in Section~\ref{sec:inv probs pos}, a much stronger statement holds in the case of surfaces.)

The principal symbol of $\DtN_{(\om,g)}$ depends only on the boundary $(\Sigma, g)$ and in fact coincides with the principal symbol of $\sqrt{\Delta_\Sigma}$, where $\Delta_\Sigma$ is the Laplace-Beltrami operator of the boundary $\Sigma$ of $\om$ with the induced Riemannian metric. However, the subprincipal symbols of these two operators are different in general.  

The Weyl law for the Steklov eigenvalues (see Section~\ref{sec:inv probs pos}) yields 
\begin{equation}\label{eq: prelim Weyl}\sigma_k(\om,g)=\frac{2\pi}{|\B^d|}\left(\frac{k}{|\Sigma|_g}\right)^{1/d} \,+\,O(1). \end{equation}
As noted in \cite{GKLP2021}, a comparison with the Weyl law for the Laplacian yields
\begin{equation}\label{eq: sigma = lambda +O(1)}\sigma_k(\om,g)=\sqrt{\lambda_k(\Sigma,g)} + O(1)\end{equation}
where the $\lambda_j$'s are the eigenvalues of $\Delta_\Sigma$, the Laplacian on $\Sigma=\pam$.    

Further relationships between Steklov eigenvalues and Laplace eigenvalues of the boundary will be manifest in various parts of this paper (see, for example,Theorem~\ref{comparison}).

\subsection{Variational characterisation of eigenvalues}
In most cases, it is impossible to compute the Steklov spectrum of a manifold explicitly. Instead, one resorts to variational characterisations of eigenvalues in order to obtain lower and upper bounds. 

Let $(\Omega,g)$ be a compact Riemannian manifold with boundary $\Sigma=\partial \Omega$.  In contrast to the previous subsection, we do not require the boundary to be smooth.   For example, we allow Lipschitz boundary.   The Rayleigh--Steklov quotient of a function $u \in H^1(\Omega)$ is given by 
\begin{equation}\label{eq:Rayleigh S}
R(u)= \frac{\int_{\Omega}\vert \nabla u\vert^2 dV_{(\Omega,g)}}{\int_{\Sigma}u^2dV_{(\Sigma,g))}}.
\end{equation}

Denote by $\mathcal E(k)$ the set of all $k$-dimensional subspaces of $H^1(\Omega)$.  Let $\mathcal H(k)\subset \mathcal E(k)$ consist of those $k$-dimensional subspaces of $H^1$ that are orthogonal to the constant functions on $\Sigma$.     The following equation gives two convenient formulations of the variational characterisation
of the Steklov eigenvalues $\sigma_k(\om)$ for all $k\in\N$:
\begin{equation}\label{eq:Stek Rayleigh min max}
\sigma_k(\Omega)=  \min_{E\in \mathcal E(k+1)}\max_{0\not=u \in E}R(u)=\min_{V\in \mathcal H(k)}\max_{0\not=u \in V}R(u).
\end{equation}
Letting $u_0, u_1, u_2,\dots$ be Steklov eigenfunctions for $\sigma_0,\sigma_1, \sigma_2,\dots$, then the minimum in the first formulation is obtained by $E$=span$(u_0,\dots, u_k)$ and in the second formulation by $V$=span$(u_1,\dots, u_k)$.

\subsection{Conformal invariance in dimension two}\label{subsec: conf invar}

The study of the Steklov spectrum of surfaces often employs different techniques than in higher dimensions due to the following conformal invariance property:

\begin{prop}\label{prop: conf invar}
Let $\om$ be a compact surface with boundary $\Sigma$.   Suppose $g$ and $g'$ are Riemannian metrics on $\om$ satisfying both of the following conditions:
\begin{enumerate}
\item $g'$ is conformally equivalent to $g$, i.e., $g'=\tau g$ for some positive function $\tau\in C^\infty(\om)$;
\item $\tau\equiv 1$ on $\Sigma$.
\end{enumerate}
Then the Dirichlet-to-Neumann operators of $(\om,g)$ and $(\om,g')$ coincide, and 
\[\stek(\om,g)=\stek(\om,g').\]
\end{prop}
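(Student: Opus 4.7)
The plan is to prove directly that the operators $\DtN_{(\om,g)}$ and $\DtN_{(\om,g')}$ coincide as maps $C^\infty(\Sigma)\to C^\infty(\Sigma)$, from which the equality of spectra is immediate. Two ingredients need to be checked: that the harmonic extension of a given boundary datum is the same for both metrics, and that the normal derivative read off at $\Sigma$ is the same.

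First I would show that harmonic extensions agree. In dimension two, the combination $|\nabla_g u|_g^2\,dV_g$ is pointwise conformally invariant: if $g'=\tau g$ on a surface then $dV_{g'}=\tau\,dV_g$ and $|\nabla_{g'}u|_{g'}^2=\tau^{-1}|\nabla_g u|_g^2$, so the Dirichlet energy
\[
D(u)=\int_\om|\nabla u|^2\,dV
\]
defines the same functional on $H^1(\om)$ for $g$ and $g'$. (Equivalently, $\Delta_{g'}u=\tau^{-1}\Delta_g u$, so $g$-harmonicity and $g'$-harmonicity are the same condition.) Since $\hat f$ is characterised as the unique minimiser of $D$ among $H^1$-extensions of $f$, the harmonic extension of $f$ is the same function $\hat f\in C^\infty(\om)$ for both metrics.

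Next I would verify that the normal derivative $\partial_\nu\hat f$ along $\Sigma$ does not depend on which of $g,g'$ is used. The hypothesis $\tau\equiv 1$ on $\Sigma$ means that $g$ and $g'$ agree as bilinear forms on $T\om|_\Sigma$. In particular, the induced boundary metrics coincide (so $dV_\Sigma$ is unchanged) and the outward unit normal vector $\nu$ at each boundary point is the same vector in $T\om|_\Sigma$ for both metrics. Hence $\partial_{\nu_g}\hat f=\partial_{\nu_{g'}}\hat f$ pointwise on $\Sigma$.

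Combining the two steps, $\DtN_{(\om,g)}f=\DtN_{(\om,g')}f$ for every $f\in C^\infty(\Sigma)$, and the Steklov spectra coincide. There is no real obstacle: the proof is a one-line application of two-dimensional conformal invariance. It is worth noting, however, that one could alternatively argue only at the level of eigenvalues by invoking the variational characterisation~\eqref{eq:Stek Rayleigh min max}: the numerator of the Rayleigh--Steklov quotient~\eqref{eq:Rayleigh S} is conformally invariant in dimension two, and the denominator only sees $g|_\Sigma=g'|_\Sigma$, so $R$ is the same functional on $H^1(\om)$ and thus $\sigma_k(\om,g)=\sigma_k(\om,g')$. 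This softer route yields only equality of spectra, not of the underlying operators, which is why I would prefer the pointwise argument above. The same argument breaks in dimension $d+1>2$ because $|\nabla u|^2\,dV$ then scales as $\tau^{(d-1)/2}|\nabla_g u|_g^2\,dV_g$, so the minimiser of the Dirichlet energy genuinely depends on the metric in the interior.
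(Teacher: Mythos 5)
Your proof is correct and follows essentially the same two-step route as the paper: harmonicity is conformally invariant in dimension two (via $\Delta_{g'}=\tau^{-1}\Delta_g$), and the metrics agreeing along $\Sigma$ forces the unit normals, hence the normal derivatives, to agree. The paper also records the alternative variational argument you mention as a remark immediately after the proof.
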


\begin{proof}
Since we are in dimension two, the Laplace-Beltrami operators associated with $g$ and $g'$ satisfy $\Delta_{g'}=\frac{1}{\tau}\Delta_g$; thus the condition that a function be harmonic depends only on the conformal class of the metric.  Moreover, since the metrics agree on $\Sigma$, the unit normals to the boundary agree and thus the Dirichlet-to-Neumann operators are identical.
\end{proof}

We will use the following convenient language introduced by Fraser and Schoen:
 
 \begin{defn}\label{sigmaisom} We say two compact Riemannian surfaces $(\om_1,g_1)$ and $(\om_2,g_2)$ are $\sigma$-\emph{isometric} if there exists a diffeomorphism $\Phi:\om_1\to \om_2$ such that $\Phi^*g_2=\tau g_1$ where $\tau\in C^\infty(\om_1)$ satisfies $\tau_{|\partial\om_1}\equiv 1$.   
 
 \end{defn}
 
 \begin{cor}\label{cor:sigmaisom} Suppose $(\om_1,g_1)$ and $(\om_2,g_2)$ are $\sigma$-isometric Riemannian surfaces.    Then they have the same Steklov spectrum.   Moreover, for $\Phi$ as in Definition~\ref{sigmaisom}, the restriction $\Phi |_{\pam_1}:\pam_1\to \pam_2$ intertwines the Dirichlet-to-Neumann maps of $(\om_1,g_1)$ and $(\om_2,g_2)$.
 
 \end{cor}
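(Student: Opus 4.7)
The plan is to factor the $\sigma$-isometry $\Phi$ into two simpler operations and apply known facts to each. First, since $\Phi:\om_1\to\om_2$ is a diffeomorphism, pulling back the metric yields an honest Riemannian isometry $\Phi:(\om_1,\Phi^* g_2)\to(\om_2,g_2)$. Riemannian isometries send harmonic functions to harmonic functions and preserve outward unit normals, so they trivially preserve the Steklov eigenvalue problem. Concretely, if $u$ satisfies $\Delta_{g_2}u=0$ in $\om_2$ with $\partial_\nu u=\sigma u$ on $\partial\om_2$, then $u\circ\Phi$ is a Steklov eigenfunction on $(\om_1,\Phi^* g_2)$ with the same eigenvalue, and the restriction $\Phi|_{\partial\om_1}:\partial\om_1\to\partial\om_2$ intertwines the two Dirichlet-to-Neumann operators.

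Second, by hypothesis $\Phi^* g_2=\tau g_1$ with $\tau\in C^\infty(\om_1)$ and $\tau|_{\partial\om_1}\equiv 1$. The two metrics $g_1$ and $\Phi^* g_2$ on $\om_1$ therefore satisfy exactly the hypotheses of Proposition~\ref{prop: conf invar}: they are conformally equivalent and they agree on the boundary. That proposition gives $\DtN_{(\om_1,g_1)}=\DtN_{(\om_1,\Phi^* g_2)}$ as operators on $C^\infty(\partial\om_1)$, and in particular
\[
\stek(\om_1,g_1)=\stek(\om_1,\Phi^* g_2).
\]

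Combining the two steps yields $\stek(\om_1,g_1)=\stek(\om_2,g_2)$, and the composition of the identity map (from step two) with $\Phi|_{\partial\om_1}$ (from step one) is simply $\Phi|_{\partial\om_1}$, which therefore intertwines $\DtN_{(\om_1,g_1)}$ with $\DtN_{(\om_2,g_2)}$. There is no real obstacle here; the only point requiring a moment of care is checking that the boundary measures used in the Rayleigh quotient agree, but this is immediate because $\tau\equiv 1$ on $\partial\om_1$ forces the induced boundary metrics of $g_1$ and $\Phi^* g_2$ to coincide, so the $L^2$ structure on $\partial\om_1$ is unambiguous throughout the argument.
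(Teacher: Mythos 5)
Your proof is correct and follows the approach the paper intends: decompose the $\sigma$-isometry into an honest Riemannian isometry (which trivially intertwines the Dirichlet-to-Neumann operators) followed by a conformal change trivial on the boundary, to which Proposition~\ref{prop: conf invar} applies directly. The paper states this as a corollary without spelling out the factorisation, and separately sketches a second proof via the variational characterisation; you have simply made the first, implicit argument explicit. Your closing remark about boundary measures in the Rayleigh quotient is superfluous to the route you actually take (which works at the level of the operators themselves, not the quadratic forms), but it is not incorrect.
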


One can give another proof that $\stek(\om_1,g_1)=\stek(\om_2,g_2)$ when $(\om_1,g_1)$ and $(\om_2,g_2)$ are $\sigma$-isometric by appealing to the variational characterisation~\eqref{eq:Stek Rayleigh min max} of the eigenvalues.  The numerator of the Rayleigh quotient $R(u)$ in Equation~\eqref{eq:Rayleigh S} is the Dirichlet energy.
In the case of surfaces, the Dirichlet energy is independent of the conformal class.    Since the denominator of the Rayleigh quotient depends only on the metric restricted to the boundary, the Steklov spectra agree.

A particularly simple and interesting class of $\sigma$-isometric metrics is obtained for surfaces of revolution.
\begin{prop}\cite{Br2019}
Let $\Omega\subset\R^3$ be a surface of revolution with connected boundary $\partial\Omega=\partial\D\times\{0\}$. Then
$\sigma_k(\Omega)=\sigma_k(\D)$ where $\D$ is a disk of the same boundary length.  Moreover, the Dirichlet-to-Neumann maps of $\om$ and $\D$ coincide (when one identifies the boundary circles of $\om$ and $\D$).
\end{prop}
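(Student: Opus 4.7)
The plan is to exhibit an explicit $\sigma$-isometry (in the sense of Definition~\ref{sigmaisom}) between $\om$ and a Euclidean disk $\D_R$ of radius $R$, where $2\pi R$ is the length of $\pam$, and then invoke Corollary~\ref{cor:sigmaisom}.

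Using the rotational symmetry I would parametrize a meridian of $\om$ by $r\in[0,1]$ from the apex to $\pam$ and the angular coordinate $\theta$, so that
\[g_\om=A(r)^2\,dr^2+f(r)^2\,d\theta^2,\]
with $A,f\in C^\infty((0,1])$ positive, $f(0)=0$, $f(1)=R$, and the usual regularity conditions at $r=0$ coming from smoothness of $\om$ at the apex. Searching for a radial map $\Phi(r,\theta)=(u(r),\theta)$ onto $\D_R$ (equipped with its Euclidean metric $du^2+u^2\,d\theta^2$), the requirement $\Phi^*g_{\D_R}=\tau g_\om$ reduces to the two equations
\[\tau=u^2/f^2\qquad\text{and}\qquad u'/u=A/f.\]
Backward integration with the initial condition $u(1)=R=f(1)$ then yields
\[u(r)=R\exp\Bigl(-\int_r^1 A(\rho)/f(\rho)\,d\rho\Bigr),\]
and the condition $\tau(1)=1$ holds automatically from $u(1)=f(1)$.

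The delicate step is to verify that $\Phi$ extends to a smooth diffeomorphism across the apex $r=0$. Taking $r$ to be arc length along a meridian (so $A\equiv 1$), smoothness of the surface of revolution forces $f(r)=r+O(r^3)$ with $f$ odd, hence $1/f(\rho)=1/\rho+O(\rho)$ near the apex. Consequently $\int_r^1 d\rho/f(\rho)$ diverges logarithmically as $r\to 0^+$, which drives $u(r)\to 0$, and matching of Taylor expansions shows that $u$ extends to a smooth function on $[0,1]$ with $u(0)=0$ and $u'(0)>0$. The conformal factor $\tau=u^2/f^2$ then extends smoothly and positively up to the apex as well.

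With the smoothness of $\Phi$ established, $\Phi$ is a radial diffeomorphism $\om\to\D_R$ with $\Phi^*g_{\D_R}=\tau g_\om$ and $\tau_{|\pam}\equiv 1$; that is, $\Phi$ is a $\sigma$-isometry. An application of Corollary~\ref{cor:sigmaisom} then simultaneously gives the equality $\sigma_k(\om)=\sigma_k(\D)$ for every $k$ and the coincidence of the two Dirichlet-to-Neumann operators under the boundary identification induced by $\Phi|_{\pam}$ (which is an isometry of circles of length $2\pi R$). An alternative, less explicit route would appeal to uniformization to produce a conformal diffeomorphism from $\om$ to some Euclidean disk, and then use the rotational symmetry to adjust it so that its boundary restriction is an isometry when the target disk is chosen to have the correct boundary length.
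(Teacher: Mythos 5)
Your proposal is correct and follows essentially the same route as the paper: construct a $\sigma$-isometry from $\om$ to a disk with matching boundary length and invoke Corollary~\ref{cor:sigmaisom}. The paper treats the existence of the required conformal parametrisation as classical (citing Liouville), whereas you derive it explicitly, including the mildly delicate verification that $u(r)\sim r$ near the apex — using that $f$ must extend to an odd function with $f'(0)=1$ so that $\int_r^1 A/f\,d\rho$ diverges like $-\log r$, making $u$ smooth and odd across $r=0$. That additional detail is a genuine filling-in rather than a different argument, so the proof matches the paper's approach.
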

The proof of this proposition is surprisingly direct, since conformal parametrisations for surfaces of revolutions are known since Liouville and it follows that any surface of revolution is $\sigma$-isometric to the unit disk.
This leads to a gigantic family of surfaces in $\R^3$ that have exactly the same DtN map. 
\begin{ques}\label{ques:conformal}
Describe the class of all smooth compact surfaces $\Omega\subset\R^3$ with boundary $\partial\Omega=\partial\D$ that admit a conformal parametrisation $\Phi:\D\to\Omega$ such that $|\Phi'|\equiv 1$ on $\partial\D$.
\end{ques}

\begin{remark}\label{rem:con sing}
A Riemannian metric $g$ on a surface $\om$ is said to have an isolated conical singularity at an interior point $p\in\om$ if in a sufficiently small geodesic disk centered at $p$ with complex coordinate $z$, the metric is expressed in the form
\[g=|z|^{2(\alpha-1)}\varphi(z) |dz|^2\]
for some real number $\alpha>0$ (with $\alpha\neq 1$) and some positive smooth function $\varphi$.  The cone angle at $p$ is $2\pi\alpha$.  In case $\varphi\equiv 1$, then the metric in this geodesic disk is isometric to the standard flat cone with cone angle $2\pi\alpha$.    
A Riemannian metric on $\om$ that is smooth except for isolated interior conical singularities is conformally equivalent to a smooth metric although the conformal factor has value $0$ or $\infty$ (depending on the cone angle) at the conical singularities. Moreover, under our assumption that all the singularities are at interior points, one can choose the conformal factor to be identically one on the boundary.  We extend the notion of  $\sigma$-isometry to allow conformal equivalences of this type; i.e., we allow the conformal factor $\tau$ in Definition~\ref{sigmaisom} to take on the values $0$ and $\infty$ at finitely many interior points.  

For compact Riemannian manifolds with isolated interior conical singularities, the Steklov spectrum is well-defined via the variational characterisation of eigenvalues~\ref{eq:Stek Rayleigh min max}.  Moreover, Corollary~\ref{cor:sigmaisom} continues to hold with the extended definition of $\sigma$-isometry.

The ability to conformally remove conical singularities without affecting the Steklov spectrum plays a role in the construction of smooth metrics that maximise normalised Steklov eigenvalues on surfaces; see Section~\ref{sec:existence}.
\end{remark}

\subsection{Steklov eigenvalue bounds}\label{subsec: prelim bounds}

Since rescaling a Riemannian metric has the effect of rescaling all the Steklov eigenvalues, one must choose a scale-invariant normalisation in order to address eigenvalue bounds.   The most commonly used normalisation is via the boundary volume:  $\sigma_k(\om,g)|\Sigma|_g^{\frac{1}{d}}$. A second normalisation is via the volume of the manifold $\om$ itself:  $\sigma_k(\om,g)|\om|_g^{\frac{1}{d+1}}$. Motivated by the isoperimetric ratio, 
Karpukhin and M\'etras recently introduced a normalisation involving both the boundary volume and the volume of $\om$; see Section ~\ref{section:boundshigher}.

Note that in the case of surfaces $\om$, $\sigma$-isometries (see Definition~\ref{sigmaisom}) allow one to adjust the area arbitrarily without affecting the Steklov eigenvalues.  Thus the boundary length normalisation is the most natural one in this case, although the area normalisation has occasionally been used in dimension two when restricting, say, to the case of plane domains. Due to the conformal invariance of the Steklov spectrum in dimension two, the techniques used for addressing eigenvalue bounds for surfaces typically differ substantially from higher dimensions.

We first address here the non-existence of lower eigenvalue bounds, other than the trivial bound of zero, for manifolds of arbitrary dimension.  \begin{prop}\label{prop: no lower bound}  Let $\om$ be a compact $d+1$-dimensional manifold with boundary, and let $k\in \N$.  Then     
\begin{enumerate}
\item[1.] \cite[Subsection 2.2]{GiPo2010};  \cite[Section 4]{GiPo2017} For every $\epsilon >0$, there exists a Riemannian metric $g_\epsilon$ on $\om$ such that $\sigma_k(\om,g_\epsilon)<\epsilon$.   The metrics can be chosen so that $|\om|_{g_\epsilon}=|\om|$ and $|\pam|_{g_\epsilon}=|\pam|$, independently of $\eps$.
\item[2.] \cite[Proposition 2.1]{CoElGi2019}. Moreover, if  $d+1\geq 3$, then given any Riemannian metric $g$ on $\om$, the metrics $g_\epsilon$ as above can be chosen so that they are conformally equivalent to $g$ and coincide with $g$ on the boundary $\Sigma$.
\end{enumerate}
\end{prop}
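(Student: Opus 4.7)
The strategy for both parts is to use the variational characterisation~\eqref{eq:Stek Rayleigh min max}, building $k$ explicit test functions orthogonal to constants on $\Sigma$ whose Rayleigh--Steklov quotients can be made arbitrarily small. The motivation comes directly from Example~\ref{example: cylinder}: on a short cylinder $C_L=[0,L]\times M$ the first $k+1$ Steklov eigenvalues tend to $0$ as $L\to 0$, with eigenfunctions essentially of product form $\cosh(\sqrt{\lambda_j(M)}(t-L/2))\varphi_j(x)$. The plan is to graft this behaviour into a thin cylindrical collar of $(\Omega,g_\epsilon)$.

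\textbf{Part 1.} Fix a small parameter $\delta>0$ (to be sent to $0$) and choose a collar neighborhood of $\Sigma$ in $\Omega$ diffeomorphic to $[0,\delta]\times\Sigma$. Define $g_\epsilon$ to agree with the product metric $dt^2+g_{|\Sigma}$ on this collar, smoothly interpolate to $g$ outside, and apply a small compactly supported conformal correction away from the boundary so that $|\Omega|_{g_\epsilon}=|\Omega|_g$; the condition $|\Sigma|_{g_\epsilon}=|\Sigma|_g$ is automatic because the collar restricts to $g_{|\Sigma}$ at $t=0$. Now let $\varphi_1,\dots,\varphi_k$ be the first $k$ nonconstant Laplace eigenfunctions of $(\Sigma,g_{|\Sigma})$, and set
\[ u_i(t,x)=\phi(t)\varphi_i(x) \text{ on the collar}, \qquad u_i\equiv 0 \text{ outside}, \]
where $\phi$ is a smooth cutoff with $\phi(0)=1$ and $\phi\equiv 0$ near $t=\delta$. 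A direct integration in the product metric gives
\[ R_{g_\epsilon}(u_i)=\frac{\int_0^\delta \phi'(t)^2\,dt+\lambda_i\int_0^\delta \phi(t)^2\,dt}{1}, \]
after normalising $\int_\Sigma\varphi_i^2=1$. Choosing $\phi$ close to the optimal profile $\cosh(\sqrt{\lambda_i}(\delta-t))/\cosh(\sqrt{\lambda_i}\delta)$ (then suitably cut off), this ratio is bounded by $\sqrt{\lambda_i}\tanh(\sqrt{\lambda_i}\delta)+o(1)$, which tends to $0$ as $\delta\to 0$. Each $u_i$ is automatically orthogonal to constants on $\Sigma$ because $\int_\Sigma\varphi_i=0$, so the second form of~\eqref{eq:Stek Rayleigh min max} applied to the span of $u_1,\dots,u_k$ yields $\sigma_k(\Omega,g_\epsilon)<\epsilon$ once $\delta$ is small enough.

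\textbf{Part 2.} When $d+1\geq 3$, the same effect must be produced within the conformal class of $g$, with $g_\epsilon|_{\Sigma}=g|_{\Sigma}$. Writing $g_\epsilon=\tau_\epsilon g$ with $\tau_\epsilon\equiv 1$ on $\Sigma$, the transformation rules give
\[ R_{g_\epsilon}(u)=\frac{\int_\Omega \tau_\epsilon^{(d-1)/2}|\nabla u|_g^2\,dV_g}{\int_\Sigma u^2\,dV_{\Sigma,g}}, \]
the boundary denominator being unchanged since $\tau_\epsilon=1$ on $\Sigma$. The decisive point is that for $d\geq 2$ the exponent $(d-1)/2>0$ is strictly positive, so making $\tau_\epsilon$ very small in an interior region dramatically shrinks the Dirichlet numerator for functions with gradient concentrated there. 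The plan is to pick $\tau_\epsilon$ equal to $1$ on $\Sigma$, decreasing rapidly to a value of order $\epsilon^{2/(d-1)}$ on a shell $[\delta/2,\delta]\times\Sigma$ of the collar, and slightly adjusted in the deep interior so that $|\Omega|_{g_\epsilon}=|\Omega|_g$. Taking the same test functions $u_i=\phi(t)\varphi_i(x)$ as in Part 1, arranged so that $\phi'$ is supported where $\tau_\epsilon$ is small, reduces the energy estimate of Part 1 by the factor $\tau_\epsilon^{(d-1)/2}$ and delivers the required bound.

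The main obstacle is the bookkeeping: one must simultaneously preserve both volumes, keep the boundary metric unchanged, and guarantee the Rayleigh bound uniformly for all $k$ test functions. In Part 1 this means balancing the cylindrical collar deformation against a compensating conformal rescaling in the interior; in Part 2 it means choosing the single conformal factor $\tau_\epsilon$ so that its smallness in the relevant collar region produces the gain $\tau_\epsilon^{(d-1)/2}$, while its interior values restore the volume $|\Omega|_g$. The restriction $d+1\geq 3$ in Part 2 is not cosmetic: in dimension two the exponent $(d-1)/2$ vanishes, the Dirichlet energy is conformally invariant (cf.~Proposition~\ref{prop: conf invar}), and this argument collapses.
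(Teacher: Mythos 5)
Your Part 1 does not work, and the gap is fundamental rather than a matter of bookkeeping. With $u_i=\phi(t)\varphi_i(x)$ supported in the collar $[0,\delta]\times\Sigma$ and $\phi(0)=1$, $\phi\equiv 0$ near $t=\delta$, Cauchy--Schwarz gives
\[
\int_0^\delta \phi'(t)^2\,dt \;\geq\; \frac{(\phi(0)-\phi(\delta))^2}{\delta}\;=\;\frac{1}{\delta},
\]
so the Rayleigh quotient $R(u_i)$ \emph{blows up} as $\delta\to 0$, rather than going to zero. The $\cosh$ profile you invoke does not vanish at $t=\delta$ (it equals $1/\cosh(\sqrt{\lambda_i}\delta)\to 1$), so it is incompatible with the requirement that $u_i\equiv 0$ off the collar; any ``suitable cut off'' to make it vanish reinstates the $1/\delta$ cost. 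What you have built is, in effect, the mixed Steklov--Dirichlet problem on the collar, and Example~\ref{example:mixedcylinders}(ii) records precisely that $\sigma_k^D(C_L)\to+\infty$ as $L\to 0$. The Steklov--Neumann estimate $\sqrt{\lambda_i}\tanh(\sqrt{\lambda_i}\delta)\to 0$ that you quote only applies if the test function is allowed to be nonzero on the inner face of the collar, and then the unavoidable nontrivial continuation into the interior of $\Omega$ contributes Dirichlet energy bounded away from zero. Shrinking a collar of $\Sigma$ simply cannot produce small Steklov eigenvalues; by Dirichlet--Neumann bracketing (Proposition~\ref{prop: dir neum bracket}), the spectrum is largely controlled by a fixed boundary neighbourhood.

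The paper's construction for Part 1 is geometrically quite different. Rather than a thin collar of $\Sigma$, one arranges the metric so that $\Omega$ contains a thin \emph{solid cylinder} $C_{L,\eta}\cong[0,L]\times\B^d_\eta$ of \emph{fixed length} $L$ but shrinking cross-section $\B^d_\eta$, with the lateral surface $[0,L]\times\sph^{d-1}_\eta$ lying inside $\Sigma$. The test functions $u_j(t,x)=\sin(j\pi t/L)$ depend only on the axial coordinate, vanish on the two caps (so extend by zero to $\Omega\setminus C_{L,\eta}$), and have Dirichlet energy $\propto\eta^d$ while the boundary $L^2$-norm over the lateral surface is $\propto\eta^{d-1}$, so $R(u_j)\propto\eta\to 0$. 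The smallness comes from the cross-sectional radius $\eta$, not from shrinking any length in the normal direction; correspondingly, the ratio (lateral boundary area)/(cylinder volume) $\sim 1/\eta$ blows up, which is the intuitive mechanism making the Steklov spectrum collapse. Your Part 2 has the right mechanism --- the conformal factor $\tau$ enters the Dirichlet energy with the positive exponent $(d-1)/2$ when $d+1\geq 3$, which is why the restriction is sharp --- but as written it ``reduces the energy estimate of Part 1,'' an estimate that is incorrect, and so the argument is not established. The paper's Part 2 keeps the same test-function philosophy (support in a small region $U$ near a single boundary point, $\beta$ very small in the bulk of $U$, $\beta\equiv 1$ on $\Sigma$ and off $U$) precisely because the factor $\beta^{d-1}$ (in the convention $g'=\beta^2 g$) suppresses the otherwise-divergent Dirichlet energy; you would need to make an analogous argument carefully from scratch rather than referencing a Part 1 estimate that does not hold.
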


\begin{proof}
(1)  Fix $L>0$ and let $\eta>0$.  Choose a metric $h_\eta$ such that $(\om, h)$ contains a cylinder $C_{L,\eta}$ isometric to $[0,L]\times \B^{d}_\eta$ with lateral boundary $[0,L]\times \sph^{d-1}_\eta$ contained in $\Sigma$. (Here $\B^{d}_\eta$ is a Euclidean ball of radius $\eta$.) See Figure~\ref{fig:cylinder}. 
\begin{figure}
  \centering
  \includegraphics[width=3cm]{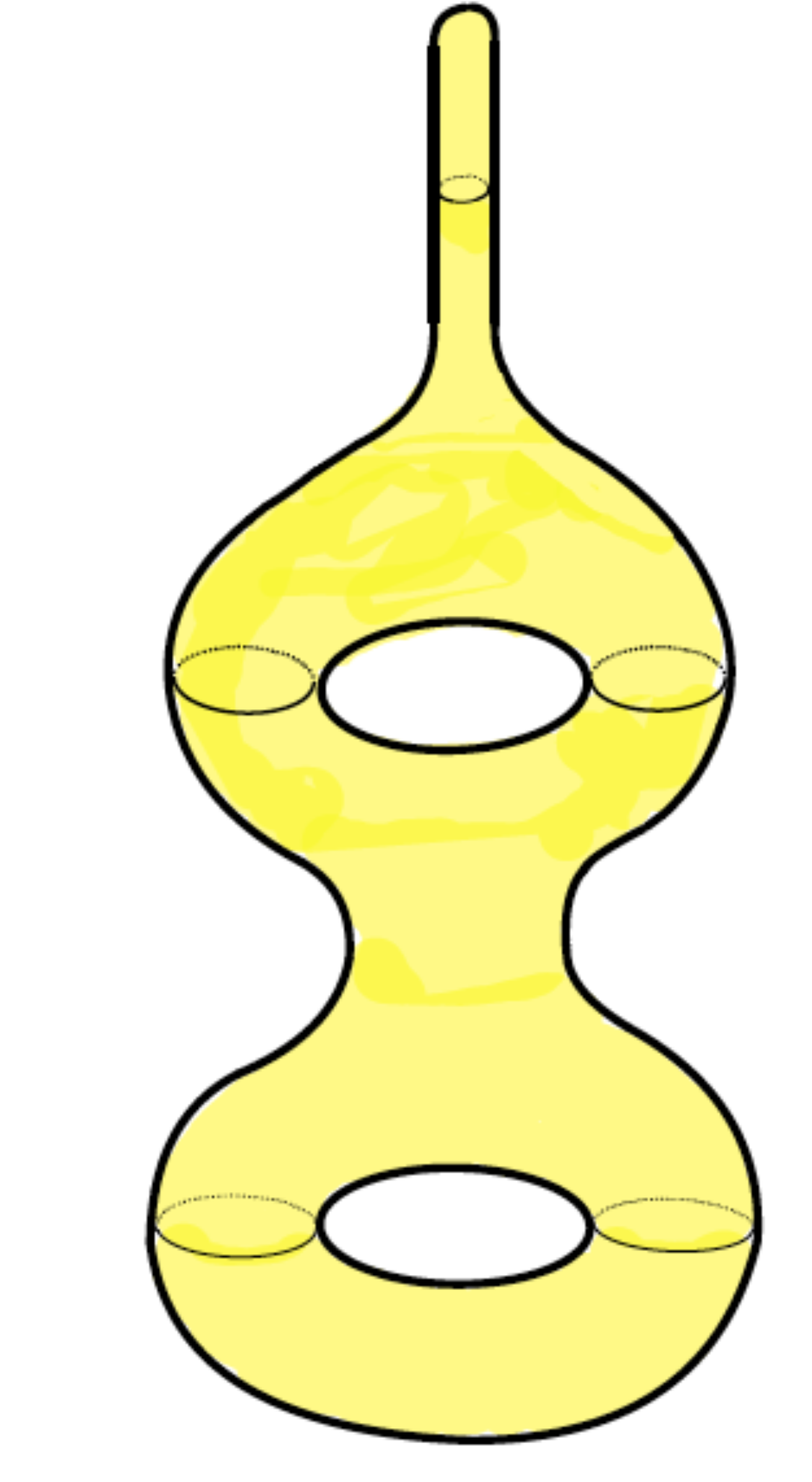}
  \caption{}
  \label{fig:cylinder}
  \end{figure}

Let $E\subset\mathcal{E}(k)$ be the subspace spanned by the constant function 1 together with $u_1, \dots, u_k$, where $u_j\equiv 0$ off $C_{L,\eta}$ and $u_j(t,x) = \sin(\frac{j\pi t}{L})$ for $(t,x)\in C_{L,\eta}$.   Then an easy computation shows that there exists a constant $A$ depending only on $L$, $k$ and $d$, not on $\eta$, such that the Rayleigh--Steklov quotient satisfies $R(u)\leq A\eta$ for all $u\in E$. Thus by Equation~\eqref{eq:Stek Rayleigh min max}, $\sigma_k(\om, h_\eta)\leq A\eta$, and the first statement in (1) follows by choosing $g_\epsilon=h_\eta$ with $\eta$ sufficiently small. To guarantee that the measure of $\Omega$ and of its boundary are independent of $\eps$, one then needs to perform some scaling and local perturbations.

(2) Fix $g$ and observe that if $\beta$ is a non-negative smooth function on $\om$ then the Riemannian metric $g'=\beta g$ has 
Dirichlet energy 
\[\int_{\Omega}\vert \nabla u\vert^2_g \,\beta^{d-1}\,dV_{(\Omega,g)}.\]
The desired result is achieved by fixing a point $p\in \Sigma$ and a small neighborhood $U$ of $p$ in $\om$ and choosing $\beta$ so that: (i) $\beta\equiv 1$ both on $\Sigma$ and on the complement of $U$ in $\om$; (ii) $0<\beta<1$ in  $\operatorname{int}(U)$; and (iii) $\beta$ is very close to zero in  $\operatorname{int}(U)$ away from a small neighborhood of $\partial{U}$.   One then applies Equation~\eqref{eq:Stek Rayleigh min max} choosing $E\subset\mathcal{E}(k)$ to consist of functions supported in $U$. See \cite{CoElGi2019} for details.
\end{proof}

As discussed at the beginning of this subsection, the commonly used eigenvalue normalisations in the literature involve only the volume and the volume of the boundary of the Riemannian manifold.   With respect to any such normalisation, Proposition~\ref{prop: no lower bound} implies the following:

\begin{cor}\label{cor: no lower bound}
Let $\om$ be a compact manifold with boundary.   Then for each $k\in \N$ there exist Riemannian metrics on $\om$ for which the $k$th normalised Steklov eigenvalue is arbitrarily small. 
\end{cor}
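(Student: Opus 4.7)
The plan is to deduce this corollary directly from part (1) of Proposition~\ref{prop: no lower bound}, with essentially no additional work. That proposition already produces, for any fixed $k$ and any prescribed $\epsilon>0$, a Riemannian metric $g_\epsilon$ on $\om$ satisfying $\sigma_k(\om,g_\epsilon)<\epsilon$, and moreover it does so while holding both $|\om|_{g_\epsilon}$ and $|\pam|_{g_\epsilon}$ equal to their values for some fixed reference metric, independently of $\epsilon$. This is the key point: the construction in Proposition~\ref{prop: no lower bound}(1) decouples the smallness of $\sigma_k$ from the two natural volumetric scales of the manifold.

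The remaining observation is purely formal. Each of the normalisations mentioned in the text just above the corollary — the boundary-volume normalisation $\sigma_k(\om,g)|\Sigma|_g^{1/d}$, the total-volume normalisation $\sigma_k(\om,g)|\om|_g^{1/(d+1)}$, and the Karpukhin--M\'etras mixed normalisation involving both $|\om|_g$ and $|\Sigma|_g$ — has the form $\sigma_k(\om,g)\cdot F(|\om|_g,|\Sigma|_g)$ for some continuous positive function $F$. Applying this with $g=g_\epsilon$ and using that $F(|\om|_{g_\epsilon},|\Sigma|_{g_\epsilon})=F(|\om|,|\pam|)$ is a constant $C>0$ independent of $\epsilon$, the normalised $k$th Steklov eigenvalue of $(\om,g_\epsilon)$ is bounded above by $C\epsilon$, and hence can be made arbitrarily small by choosing $\epsilon$ small.

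There is no real obstacle to overcome here; the substantive work lies entirely in Proposition~\ref{prop: no lower bound}(1), whose proof fixes a thin cylindrical collar in $\om$ and exhibits a $(k+1)$-dimensional test space in $\mathcal{E}(k+1)$ on which the Rayleigh--Steklov quotient is small, together with a scaling step to restore the volumes. The only point worth emphasising in the write-up of the corollary is the hypothesis that the normalisation depends on $g$ only through $\sigma_k(\om,g)$, $|\om|_g$, and $|\Sigma|_g$; one could in principle concoct a normalisation involving other invariants (for instance, a curvature integral) that blows up along $g_\epsilon$, but the normalisations actually in use in the literature are of the product form above, so the statement of the corollary is justified as written.
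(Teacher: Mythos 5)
Your proposal is correct and is essentially the argument the paper has in mind: the sentence preceding the corollary observes that the standard normalisations depend on $g$ only through $|\om|_g$ and $|\Sigma|_g$, so Proposition~\ref{prop: no lower bound}(1), which drives $\sigma_k$ to zero while holding both volumes fixed, immediately gives the result. Your extra remark about the multiplicative form $\sigma_k\cdot F(|\om|_g,|\Sigma|_g)$ and the caveat about hypothetical normalisations involving other invariants are correct clarifications but not a different route.
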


Section~\ref{section:surfaces} will focus on upper bounds for eigenvalues on surfaces, normalised by boundary length.    Such bounds always exist and depend only on the topology of the surface.   However, for manifolds $\om$ of higher dimension, Colbois, El Soufi and Girouard \cite{CoElGi2019} showed that  within every conformal class of metrics, the normalised eigenvalue $\sigma_1(\om,g)|\Sigma|_g$ can be made arbitrarily large.  Very interesting questions arise concerning eigenvalue bounds when one imposes geometric constraints in these higher dimensional settings, which is the subject of Section~\ref{section:boundshigher}.

\subsection{Mixed eigenvalue problems}\label{subsec: mixed} 

While our focus in this survey will be on the Steklov problem, it will sometimes be useful to consider mixed Steklov-Neumann or mixed Steklov-Dirichlet problems. In particular, as we will discuss in the next subsection, one can sometimes obtain bounds on Steklov eigenvalues of a Riemannian manifold $\om$ by comparing the Steklov eigenvalues of $\om$ with the eigenvalues
of mixed
problems on well-chosen domains in $\om$.

Given a decomposition $\pam=\partial_S(\om)\,\sqcup\,\partial_N(\om)$, one defines the mixed Steklov-Neumann problem
\begin{gather*}
  \Delta f=0\mbox{ in } {\om},\\
  \partial_\nu f=\sigma f\mbox{ on } \partial_S(\om),\qquad
  \partial_\nu f=0 \mbox{ on }\ \partial_N(\om).
\end{gather*} 
The eigenvalues of this mixed problem form a discrete sequence
\[0=\sigma_0^N( \om) \le \sigma_1^N( \om)\leq\sigma_2^N(\om)\leq\cdots\nearrow\infty,\]
and for each $k\geq 1$ the $k$-th eigenvalue is given by
\begin{equation}\label{eq:Rayleigh S-N}
 \sigma_k^{N}(\om)=\min_{E\in\mathcal {E}(k+1)}\max_{0\neq u\in E}\frac{\int_{\om}|\nabla u|^2\,dV_\om}{\int_{\Sigma}|u|^2\,dV_{\partial_S(\om)}}.\end{equation}

Similarly, the mixed Steklov-Dirichlet problem is given by
\begin{gather*}
  \Delta f=0\mbox{ in } {\om},\\
  \partial_\nu f=\sigma f\mbox{ on } \partial_S(\om),\qquad
   f=0 \mbox{ on }\ \partial_D(\om)
\end{gather*} 
relative to a decomposition $\pam=\partial_S(\om)\,\sqcup\,\partial_D(\om)$, and the eigenvalues form a discrete sequence
\[0<\sigma_0^D( \om) \le \sigma_1^D( \om)\leq\sigma_2^D(\om)\leq\cdots\nearrow\infty,\]
Their variational characterisation is given by 
\begin{equation}\label{eq:Rayleigh S-D}
 \sigma_k^{D}(\om)=\min_{E\in\mathcal {E}_0(k+1)}\max_{0\neq u\in E}\frac{\int_{\om}|\nabla u|^2\,dV_\om}{\int_{\Sigma}|u|^2\,dV_{\partial_S(\om)}},
\end{equation}
where $\mathcal {E}_0(k+1)$ consists of all $(k+1)$-dimensional subspaces of $\{f\in H^1(\om)\,:\,f=0\,\text{ on }\partial_D(\om)\}$.

\begin{ex}[Mixed Steklov--Neumann and Steklov--Dirichlet eigenvalues of cylinders]\label{example:mixedcylinders}~

(i) Let $C_L$ be the cylinder in Example~\ref{example: cylinder}.  Consider the mixed problem on $C_L$ with Steklov condition at $t=0$ and Neumann condition at $t=L$:
\begin{gather*}
  \Delta f=0\mbox{ in } {C_L},\\
  \partial_\nu f=\sigma f\mbox{ on } \{0\}\times M,\qquad
  \partial_\nu f=0 \mbox{ on } \{L\}\times M.
\end{gather*}
The eigenvalues are 
\[\sigma_k^N(C_L)=\sqrt{\lambda_k(M)}\tanh(\sqrt{\lambda_k(M)}L),\quad k\geq 0,\] 
with corresponding eigenfunctions
\[\varphi_k(x)\cosh(\sqrt{\lambda_k(M)}(L-t)).\]
In particular $\sigma_0^N(L)=0$, with constant eigenfunction.
Notice that for each index $k$,
\[\lim_{L\to 0}\sigma_k^N(C_L)=0\qquad\text{and}\qquad\lim_{L\to+\infty}\sigma_k^N(C_L)=\sqrt{\lambda_k(M)}.\]	

(ii) Next consider the Steklov--Dirichlet problem on $C_L$ with Steklov condition at $t=0$ and Dirichlet condition at $t=L$. The eigenvalues are
$\sigma_0^D(L)=1/L$ with corresponding eigenfunctions
  $1-t/L$ and for each $k\geq 1$,
  \[\sigma_k^D(L)=\sqrt{\lambda_k(M)}\coth(\sqrt{\lambda_k(M)}L),\]
  with corresponding eigenfunctions
  \[\varphi_k(x)\sinh(\sqrt{\lambda_k(M)}(L-t)).\]

For each index $k$, we have
\[\lim_{L\to 0}\sigma_k^D(C_L)=+\infty\qquad\text{and}\qquad\lim_{L\to+\infty}\sigma_k^D(C_L)=\sqrt{\lambda_k(M)}.\]
Observe that
\[\sigma_k^D=\sqrt{\lambda_k(M)}+O(k^{-\infty})\qquad\text{and}\qquad\sigma_k^N=\sqrt{\lambda_k(M)}+O(k^{-\infty}).\]
\end{ex}

\begin{ex} \label{annuli} The mixed Steklov-Dirichlet and Steklov-Neumann eigenvalues on annular domains.

In $\R^{d+1}$ ($d\ge 2$), let $B_1$ and $B_L$ be the balls centered at the origin of radius $1$ and $L$, respectively, with $L>1$. Consider the annulus $\om_L:=B_L \setminus B_1$ with Steklov condition on $\partial B_1$ and Dirichlet (resp. Neumann) condition on $\partial B_L$.
Because of the symmetries of the problem, the eigenvalues for both mixed problems have multiplicity.
We denote the distinct eigenvalues respectively by
\[
\sigma_{(0)}^D(\Omega_L)< \sigma_{(1)}^D(\Omega_L)<\sigma_{(2)}^D(\Omega_L)<...
\]
and
\[
\sigma_{(0)}^N(\Omega_L)< \sigma_{(1)}^N(\Omega_L)<\sigma_{(2)}^N(\Omega_L)<...
\]
where $\sigma_{(k)}^D(\Omega_L)$ and $\sigma_{(k)}^N(\Omega_L)$ have the multiplicity of the $k$th eigenvalue of the Laplacian on the sphere $\mathbb S^d$.
  
Then, we have (\cite[Proposition 4 and 5]{CoVe2021})
  \[
  \sigma_{(k)}^D(\Omega_L)= \frac{k}{L^{2k+d-1}-1} +\frac{(k+d-1)L^{2k+d-1}}{L^{2k+d-1}-1}
  \]
  and
  \[
  \sigma_{(k)}^N(\Omega_L)=k \frac{(d+k-1)(L^{2k+d-1}-1)}{kL^{2k+d-1}+(k+d-1)}
  \]
  
  In particular,
for $k\ge 0$,
\begin{equation} \label{Dirichlet}
	\lim_{L\to \infty} \sigma^D_{(k)}(\Omega_L)=k+d-1,
\end{equation}
and for $k>0$,
\begin{equation} \label{Neumann}
	\lim_{L\to \infty} \sigma^N_{(k)}(\Omega_L)=k+d-1.
\end{equation}

  It may appear surprising that for every $k>0$, \[\lim_{L\to\infty}\,\left(\sigma_{(k)}^N(\Omega_L)-\sigma_{(k)}^D(\Omega_L)\right)=0.\] Intuitively, the reason is that each Steklov-Neumann eigenfunction of $\Omega_L$ is obtained by separation of variables as a product of a radial function with the corresponding eigenfunction of the sphere.  Observe that the denominator in the Rayleigh quotient is an integral over the inner boundary sphere only.   When the annulus is very large,  the radial function must eventually decay towards zero as the radius grows.

This example is used in the proof of Theorem \ref{upperrev}.
 
\end{ex}

The calculation of the (non-mixed) Steklov spectrum of an annulus $\Omega_L$ is much more complicated.  The asymptotics are given in \cite[Proposition 3.1]{FrSc2019} by Fraser and Schoen, and $\sigma_1$ is given in \cite[Theorem 4.1]{Ft2022} by Ftouhi. In dimension 2, it was already done by Dittmar in \cite{Di2004} and presented in \cite{GiPo2017}.

\subsection{Dirichlet-Neumann bracketing}\label{subsec:Dir Neum bracketing} 

Let $\om$ be a compact Riemannian manifold, let $\Sigma=\pam$, and let $A\subset \Omega$ be an open neighborhood of $\Sigma$ in $\om$.    
We denote by $\partial_IA$ the intersection of the boundary of $A$ with
the interior of $\Omega$ and we suppose that $\partial_IA$ is smooth.  We have \[\partial A=\Sigma\sqcup \partial_IA.\]  Consider the mixed Steklov-Neumann and mixed Steklov-Dirichlet eigenvalue problems on $A$, where we impose the Steklov condition on $\Sigma$ and Neumann or Dirichlet conditions on $\partial_IA$.  Comparing the variational formulae~\eqref{eq:Rayleigh S-N} and \eqref{eq:Rayleigh S-D} (with $A$ playing the role of $\om$) with the variational formula~\eqref{eq:Rayleigh S}, we obtain the following bracketing for  
each $k$:

\begin{gather}\label{ineq:compmixed}
\sigma_k^N(A) \le \sigma_k(\Omega) \le \sigma_{k}^D(A).
\end{gather}

\begin{remark} Suppose that the boundary $\Sigma$ of $\om$ has $b$ connected components.   Any sufficiently small neighborhood of $\Sigma$ will then have $b$ connected components and thus will satisfy $\sigma_k^N(A)=0$ for $k\leq b-1$.   This suggests that the global geometry of $\Omega$ can have a greater impact on the first $b-1$ non-zero Steklov eigenvalues of $\om$ and leads to interesting questions (see open question \ref{question lower}).  Nonetheless, the following consequence of Inequality~\eqref{ineq:compmixed} illustrates again that the effect on the Steklov eigenvalues of the geometry far away from the boundary is limited.   (Compare with Theorem~\ref{thm: hislop lutzer}.) 
\end{remark}

\begin{prop}\label{prop: dir neum bracket}

Let $(\om,g)$ and $(\om',g')$ be compact Riemannian manifolds with boundary.  Suppose that some neighborhood of $\pam$ in $\om$ is isometric to a neighborhood of $\pam'$ in $\om'$.  Identify these neighborhoods and call them $A$.   Then with boundary conditions chosen as above, we have 
\[|\sigma_k(\om)-\sigma_k(\om')|\leq \sigma_k^D(A)-\sigma_k^N(A).\]
\end{prop}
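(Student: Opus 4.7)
The plan is to apply the Dirichlet--Neumann bracketing inequality~\eqref{ineq:compmixed} twice, once to $(\om,g)$ and once to $(\om',g')$, exploiting the fact that the hypothesis lets us use literally the \emph{same} collar neighborhood $A$ in both applications. Concretely, choose $A$ to be a neighborhood of $\pam$ in $\om$ small enough to be isometric (via the given isometry) to a neighborhood of $\pam'$ in $\om'$, and with smooth interior boundary $\partial_I A$. By hypothesis we may regard $A$ as sitting simultaneously inside $\om$ and inside $\om'$, with the Steklov condition imposed on $\pam=\pam'$ and either Neumann or Dirichlet conditions imposed on $\partial_I A$; the mixed eigenvalues $\sigma_k^N(A)$ and $\sigma_k^D(A)$ depend only on the Riemannian structure of $A$ itself, not on which ambient manifold it is viewed inside.

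The key step is then to invoke~\eqref{ineq:compmixed}. Applied to $\om$ it gives
\[\sigma_k^N(A)\le\sigma_k(\om)\le\sigma_k^D(A),\]
and applied to $\om'$ it gives
\[\sigma_k^N(A)\le\sigma_k(\om')\le\sigma_k^D(A).\]
Hence both $\sigma_k(\om)$ and $\sigma_k(\om')$ lie in the closed interval $[\sigma_k^N(A),\sigma_k^D(A)]$, and therefore
\[|\sigma_k(\om)-\sigma_k(\om')|\le\sigma_k^D(A)-\sigma_k^N(A),\]
which is the desired inequality.

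There is no serious obstacle here: the content of the proposition is really a packaging of the bracketing~\eqref{ineq:compmixed} together with the observation that the mixed Steklov--Neumann and Steklov--Dirichlet problems on $A$ are intrinsic to $A$ and so are blind to the ambient manifold. The only point that requires a moment of care is ensuring that $A$ can indeed be chosen simultaneously as a collar in $\om$ and in $\om'$ with smooth interior boundary $\partial_I A$; this follows by taking $A$ to be a sufficiently thin tubular neighborhood of the boundary inside the region on which the given boundary isometry extends, so that $\partial_I A$ is a smooth level set of the distance to $\pam$ (which, for small enough distances, is smooth and identical on both sides under the isometry).
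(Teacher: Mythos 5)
Your proof is correct and is exactly the argument the paper intends: the proposition is stated as a direct consequence of the bracketing inequality~\eqref{ineq:compmixed}, and applying it to both $\om$ and $\om'$ with the common collar $A$ places both $\sigma_k(\om)$ and $\sigma_k(\om')$ in the interval $[\sigma_k^N(A),\sigma_k^D(A)]$. The remark about choosing $A$ thin enough with smooth $\partial_I A$ is a sensible clarification, but the core argument matches the paper's.
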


\begin{ex}[Manifolds with cylindrical boundary neighborhood]\label{cylindricalend}
Given $L\geq 0$, let $\Omega_L$ be a compact manifold with connected boundary $\Sigma$ such that a neighborhood of the boundary is isometric to $\Sigma\times [0,L]$. It follows from Example~\ref{example:mixedcylinders} and the bracketing inequality~\eqref{ineq:compmixed} that
\begin{gather}\label{ineq:DNbracketCylinder}
    \sqrt{\lambda_k(\Sigma)}\tanh(\sqrt{\lambda_k(\Sigma)}L)\le \sigma_k(\Omega) \le\sqrt{\lambda_k(\Sigma)}\coth(\sqrt{\lambda_k(\Sigma)}L).
    \end{gather}

    This inequality is replete with interesting consequences that will lead to interesting questions.   
    For instance, the definition of $\tanh$ and $\coth$ gives $\tanh(x)<1<\coth(x)$ and one checks that as $x\to\infty$,
    \[\tanh(x),\coth(x)=1+O(x^{-\infty}).\]
    Thus for each $N\in\N$,
    $\lim_{x\to\infty}(1-\tanh(x))x^N=0= \lim_{x\to\infty}(1-\coth(x))x^N$.

    Now the Weyl Law for the eigenvalues of the Laplace operator implies that $\lambda_k\sim c(n)k^{2/n}$ as $k\to\infty$.  Since  Inequality~(\ref{ineq:DNbracketCylinder}) says
    \[\sqrt{\lambda_k(\Sigma)}\left(\tanh(\sqrt{\lambda_k(\Sigma)}L)-1\right)\le \sigma_k(\Omega)-\sqrt{\lambda_k(\Sigma)} \le\sqrt{\lambda_k(\Sigma)}\left(\coth(\sqrt{\lambda_k(\Sigma)}L)-1\right).\]
    it follows that
    $|\sigma_k-\sqrt{\lambda_k(\Sigma)}|=O(k^{-\infty})$.
    In other words, for manifolds $\Omega_L$ with cylindrical boundary components, the Steklov eigenvalues are intimately linked to the Laplace eigenvalue of the boundary:
    \[\sigma_k=\sqrt{\lambda_k(\Sigma)}+O(k^{-\infty})\]
    a much stronger link that in the general case of Equation~\eqref{eq: sigma = lambda +O(1)}.

    Another interesting consequence of inequality~\eqref{ineq:DNbracketCylinder} is that manifolds $\Omega_L$ containing a cylindrical neighborhood of the boundary of length $L$ have precisely controlled  Steklov eigenvalues. For each fixed $k$, as $L$ goes to infinity, we have
    \[\sigma_k=\sqrt{\lambda_k(\Sigma)}+O(L^{-\infty}).\] 
    In particular, when $L$ is very large, not only are the Steklov spectrum and the spectrum of $\sqrt{\Delta_{\Sigma}}$ asymptotically close but in fact \emph{all} their eigenvalues are very close.
            \end{ex}

  \subsection{Variational eigenvalues for Radon measures}
\label{subsection:introvareigenradon}

 The reader may notice the similarity between the Rayleigh--Steklov quotient $R(u)$ in Equation~\eqref{eq:Rayleigh S} and the standard Rayleigh quotient for the Neumann eigenvalues on $\om$, i.e., the eigenvalues of the Laplace--Beltrami operator with Neumann boundary conditions.   The latter is given by 
 \[\frac{\int_{\Omega}\vert \nabla u\vert^2 dV_{(\Omega,g)}}{\int_{\om}u^2\,dV_{(\Omega,g)}}.\]
 Comparing the two Rayleigh quotients, the only difference is in the denominators; in one case the integral is with respect to the volume measure on the boundary $\Sigma$, in the other it is with respect to the volume measure on $\om$.
 
This observation led Kokarev~\cite{Ko2014} to introduce variational eigenvalues associated to any nonzero Radon measure $\mu$ on $\Omega$.
Let $u\in C^\infty(\Omega)$ with $\int_{\Omega}u^2\,d\mu\neq 0$. The \emph{Rayleigh--Radon quotient} of $u$ is defined to be
\[R_{\mu}(u):=\frac{\int_\Omega|\nabla u|^2\,dV_{(\Omega,g)}}{\int_{\Omega}u^2\,d\mu}.\]
It is then natural to define the \emph{variational eigenvalues} by
\begin{equation}\label{def:VarEigenRadon}
\lambda_k(\Omega,g,\mu) := 
\inf_{F_{k+1}}\sup_{f\in
F_{k+1}\setminus\{0\}} R_\mu(f),
\end{equation}
where the infimum is taken over all $(k+1)$-dimensional subspaces
$F_{k+1}\subset C^\infty(\Omega)$ such that the image of $F_{k+1}$ in $L^2(\Omega,\mu)$ is also $(k+1)$-dimensional.
We will sometimes write $\lambda_k(\mu)$ for $\lambda_k(\Omega,g,\mu)$ when no confusion is created.

This setting encompasses many well-known eigenvalue problems.

\begin{ex}\label{ex: vareigen weighted lap}
Let $\beta:\Omega\longrightarrow\R$ be a continuous positive function. For $\mu=\beta\,dV_\om$ the variational eigenvalues
$\lambda_k(\om,g,\mu)$ are the eigenvalues of the following non-homogeneous weighted Laplace problem with Neumann boundary conditions:
\[\begin{cases}
  \Delta u=\lambda \beta u&\text{in }\Omega,\\
  \partial_{\nu}u=0&\text{on }\partial\Omega.
\end{cases}\]
(If the boundary $\partial\Omega$ is empty, i.e., if $\om$ is a closed manifold, these are simply the eigenvalues of the weighted Laplace operator $\beta^{-1}\Delta$, without any boundary condition.)
In the special case that $\Omega$ is a surface, the Laplace operator induced by the conformally equivalent metric $\beta g$ satisfies $\Delta_{\beta g}=\beta^{-1}\Delta$, so the variational eigenvalues $\lambda_k(\om,g,\mu)$ are the Neumann eigenvalues of $\Delta_{\beta g}$ with Neumann boundary conditions.  In arbitrary dimension, if $\beta$ is a constant function, the eigenvalues $\lambda_k(\Omega,g,\mu)=\beta^{-1}\lambda_k(\Omega,g)$ where the $\lambda_k(\om,g)$ are the Neumann eigenvalues of $\Delta_\Omega$. 
 
\end{ex}
\begin{ex}\label{ex:vareigenSteklov}
Let $(\Omega,g)$ be a compact Riemannian manifold with non-empty boundary $\Sigma$ and let $\iota:\Sigma\to\Omega$ be the inclusion.  Let $\mu=\iota_\star dV_{\Sigma,g}$ be the push-forward of the boundary measure. That is, for each open set $A\subset\Omega$,
\[\mu(A)=|A\cap\Sigma|_{\Sigma,g}.\]
Then the variational eigenvalues of the measure $\mu=\iota_\star dV_\Sigma$ are the Steklov eigenvalues of $\Omega$, i.e., $\lambda_k(\mu)=\sigma_k(\Omega)$.
\end{ex}
\begin{ex}\label{ex:varweightedSteklov}
Let $(\Omega,g)$ be a compact Riemannian manifold with non-empty boundary, let $\iota:\Sigma\to\Omega$ be the inclusion, and let $0\leq \rho\in L^{\infty}(\Sigma, dV_\Sigma)$.  
Then the variational eigenvalues of the measure $\mu=\iota_\star (\rho\,dV_\Sigma)$ are the eigenvalues $\lambda_k(\mu)=\sigma_k(\om,g,\rho)$ for the so-called \emph{weighted Steklov problem} with density $\rho$ given by:  
\[\begin{cases}
\Delta u= 0\,\,&\mbox{in}\,\,\om,\\
\partial_\nu u =\sigma_k\,\rho u \,\,\,&\mbox{on}\,\,\partial\om.
\end{cases}\]
Note that if $\rho$ is a strictly positive density, then the weighted Steklov eigenvalues are the eigenvalues of $\frac{1}{\rho}\DtN$ where $\DtN$ is the Dirichlet-to-Neumann operator of $(\om,g)$. 
\end{ex}
\begin{ex}
 Let $(\Omega,g)$ be a compact Riemannian manifold with non-empty boundary $\Sigma$ and let $\beta:\Omega\longrightarrow\R$ be a continuous positive function. Consider the measure $\mu=\beta\,dV_\Omega+\iota_\star dV_\Sigma$. Then the  variational eigenvalues $\lambda_k(\mu)$ are the eigenvalues of the \emph{dynamical spectral problem}:
 \[\begin{cases}
  \Delta u=\lambda \beta u&\text{in }\Omega,\\
  \partial_{\nu}u=\lambda u&\text{on }\partial\Omega.
\end{cases}\]
Similar problems were    studied in~\cite{BeFr2005} and used in~\cite{GiHeLa2021} for the study of isoperimetric type inequalities for Steklov eigenvalues, as we will see in Section~\ref{section:surfaces}. Its eigenvalues form an unbounded sequence
$0=\sigma_0\leq\sigma_{1,\beta}\leq\sigma_{2,\beta}\leq\cdots\to+\infty$.
\end{ex}

\begin{ex}\label{example:transmission}
Let $(M,g)$ be a closed $(d+1)$-dimensional manifold and let $\Omega\subset M$ be a domain with smooth boundary $\Sigma$. Let $\iota:\Sigma\to M$ be the inclusion in the ambient manifold $M$ and $\mu=\iota_{\star}dV_{\Sigma}$ be the boundary measure of $\Omega$. Then the variational eigenvalues of $\mu$ are the eigenvalues of the following transmission eigenvalue problem:
\[\begin{cases}
  \Delta u=0&\text{in }M\setminus\Sigma,\\
  (\partial_{\nu^+}+\partial_{\nu^-})u=\tau u&\text{on }\Sigma.
\end{cases}\]
They form an unbounded sequence
$0=\tau_0\leq\tau_1\leq\tau_2\leq\cdots\to+\infty$. It follows directly from the variational definition that $\sigma_k(\Omega)\leq\tau_k$ for each index $k$.
\end{ex}

At this point, variational eigenvalues of Radon measures are merely a convenient tool to keep track of various eigenvalue problems. However, by restricting to classes of Radon measures that have good functional properties, Girouard, Karpukhin and Lagac\'e~\cite{GiKaLa2021} were able to formulate conditions such that the variational eigenvalues $\lambda_k(\mu)$ form a non-negative discrete unbounded sequence and depend continuously on $\mu$. See Appendix~\ref{Section:Radon} for some details. This will be useful when considering the saturation of isoperimetric-type inequalities in subsection~\ref{subsec:homolarge}.

\subsection{Upper bounds, regularity and uniform approximation of domains}
\label{sec:upperboundsunifapprox}

Over the years, many people have proved various upper bounds for Steklov eigenvalues in terms of geometric and topological features of a manifold. These results are usually stated in the class of smooth compact manifolds with boundary. In particular, the boundary of these manifolds are themselves smooth. However, it is interesting to know which of these results can be extended to manifolds with boundary that are not smooth. A particularly interesting case is that of bounded domains with Lipschitz boundary in a complete Riemannian manifold $M$.   Any bounded Lipschitz domain $\om$ in $\R^n$ can be nicely approximated by a sequence of domains $\om_j$ with smooth boundary; see e.g., Verchota \cite{Ve1984} for details. Mitrea and Taylor \cite{MiTa1999} observed that the analogous statement holds for bounded Lipschitz domains in complete Riemannian manifolds.

Several recent results address stability of Steklov eigenvalues and of weighted Steklov eigenvalues  under suitable domain perturbations. (See Example~\ref{ex:varweightedSteklov} for the notion of weighted Steklov eignevalues.)  Bucur and Nahon gave a sufficient condition for stability in the case of plane domains.    Shortly after, Bucur, Giacomini and Trebeschi \cite[Theorem 4.1]{BuGiTr2020} obtained a stability result for Steklov eigenvalues of bounded domains in $\R^n$.   Karpukhin and Lagac\'e, using \cite[Lemma 3.1]{GiLa2021}, extended these results to domains in arbitrary complete Riemannian manifolds. 

In the case of connected surfaces $\om$ with smooth boundary, there exist upper bounds, depending only on the topology of the surface and on the perimeter-normalised Steklov eigenvalues.  These will be discussed in the next section.  The stability results allow these bounds to be extended to domains with Lipschitz boundary and to weighted Steklov eigenvalues:

\begin{cor}\label{rem: Lipschitz bound}\cite{KaLa2022} (See also \cite{ADGHRS2023}.) 
Let $\om=\om_{\gamma,b}$ be the orientable surface of genus $\gamma$ with $b$ boundary components and let 
\[\sigma^*_k(\gamma,b)=\sup_g\,\sigma_k(\om,g)|\partial\om|_g\]
where the supremum is over all smooth Riemannian metrics on $\om$.   Let $D$ be a bounded domain with Lipschitz boundary in a complete Riemannian surface $(M,g)$.  If $D$ is orientable of genus $\gamma$ with $b$ boundary components, then $\sigma_k(D,g)|\partial D|_g\leq \sigma^*_k(\gamma,b)$.   Moreover, the same eigenvalue bounds hold for all weighted Steklov eigenvalue problems on $D$; i.e., $\sigma_k(\om,g,\rho)\|\rho\|_{L^\infty}\leq \sigma^*_k(\gamma,b)$ for all non-negative densities $\rho\in L^1(\partial D)$ that are not identically zero.
\end{cor}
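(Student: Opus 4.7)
The plan is to reduce the stated inequality to the smooth unweighted case, where it holds by the very definition of $\sigma^*_k(\gamma,b)$, through two approximation steps: Lipschitz-to-smooth at the level of the domain, and truncation/regularisation at the level of the weight.

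For the unweighted bound I would proceed as follows. By the Riemannian extension due to Mitrea-Taylor \cite{MiTa1999} of Verchota's approximation theorem \cite{Ve1984}, the bounded Lipschitz domain $D\subset M$ admits a sequence of smooth domains $D_j\subset M$ of the same genus $\gamma$ and with $b$ boundary components, satisfying $|\partial D_j|_g\to|\partial D|_g$. Each smooth $D_j$ automatically obeys $\sigma_k(D_j,g)|\partial D_j|_g\leq\sigma^*_k(\gamma,b)$ by the very definition of the supremum. To pass to the limit I would invoke the Steklov spectral stability theorem of Bucur-Giacomini-Trebeschi \cite[Theorem 4.1]{BuGiTr2020}, together with its Riemannian extension by Karpukhin-Lagac\'e \cite{KaLa2022} (which relies on \cite[Lemma 3.1]{GiLa2021}), to conclude that $\sigma_k(D_j,g)\to\sigma_k(D,g)$. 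Combining the two yields the unweighted conclusion $\sigma_k(D,g)|\partial D|_g\leq\sigma^*_k(\gamma,b)$.

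For the weighted bound I would view the weighted Steklov eigenvalues as variational eigenvalues of the Radon measure $\mu=\iota_\star(\rho\,dV_\Sigma)$ (Example~\ref{ex:varweightedSteklov}), and use the continuity of variational eigenvalues on suitable classes of Radon measures (Appendix~\ref{Section:Radon}). Approximating $\rho$ by smooth, strictly positive bounded densities and combining with the two-dimensional conformal invariance of the Steklov spectrum (Proposition~\ref{prop: conf invar}), one realises each smooth weighted problem on $D$ as an unweighted Steklov problem on a $\sigma$-isometric smooth surface of the same genus and number of boundary components, so that the unweighted conclusion from the previous step applies; tracking the scaling constants produces the factor $\|\rho\|_{L^\infty}$, and passage to the limit in the Radon-measure topology then extends the bound to general $\rho\in L^1$. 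The main technical obstacle is the spectral stability under Lipschitz convergence of domains: because the Steklov Rayleigh quotient couples the interior Dirichlet integral with a boundary trace, controlling the two simultaneously is delicate and constitutes the substantive content of \cite{BuGiTr2020,KaLa2022}. A secondary subtlety is that the Radon-measure framework of Appendix~\ref{Section:Radon} must accommodate both the geometric convergence of the ambient domains and the convergence of the densities in a single limiting argument, which is why the approximations are performed jointly rather than sequentially.
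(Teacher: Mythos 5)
Your unweighted argument is exactly the one outlined in the survey and attributed to \cite{KaLa2022}: approximate $D$ by smooth domains (Verchota, Mitrea--Taylor), invoke the Steklov spectral stability of \cite{BuGiTr2020} as extended to Riemannian manifolds by Karpukhin--Lagac\'e, and conclude from the definition of $\sigma_k^*(\gamma,b)$. That part is fine.

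The weighted part contains a genuine gap. You invoke Proposition~\ref{prop: conf invar}, but that result requires the conformal factor to equal $1$ on the boundary and therefore cannot turn a weighted problem into an unweighted one; what you need is the \emph{weighted} $\sigma$-isometry of Remark~\ref{rem:resc}. More importantly, if you actually carry out the scaling you claim to ``track'', you do not land on $\|\rho\|_{L^\infty}$. For a smooth surface $(\om,g)$ and a smooth positive density $\rho$ with smooth positive extension $\tilde\rho$, the pair $(g,\rho)$ is weighted-$\sigma$-isometric to $(g',1)$ with $g'=\tilde\rho^2 g$, so $\sigma_k(\om,g,\rho)=\sigma_k(\om,g')$ and $|\partial\om|_{g'}=\int_{\partial\om}\rho\,ds_g=\|\rho\|_{L^1(\partial D)}$; the unweighted bound on $(\om,g')$ therefore gives $\sigma_k(\om,g,\rho)\,\|\rho\|_{L^1}\le\sigma_k^*(\gamma,b)$. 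That is the correct weighted inequality, and it is the surface case of the Karpukhin--M\'etras normalisation of Definition~\ref{def: conf ext pair}. The quantity $\sigma_k(\om,g,\rho)\,\|\rho\|_{L^\infty}$ appearing in the survey's statement (which you reproduce) is not scale-invariant: it has units of inverse length while $\sigma_k^*(\gamma,b)$ is dimensionless, and for $\rho\equiv c$ constant it equals $\sigma_k(\om,g)$, which is unbounded over metrics. So the $L^\infty$ in the statement is a typo for $L^1$, and the derivation you sketch cannot produce it. Replace $\|\rho\|_{L^\infty}$ by $\|\rho\|_{L^1}$ and cite Remark~\ref{rem:resc} rather than Proposition~\ref{prop: conf invar}; then your joint-approximation limiting argument goes through as you describe.
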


In the higher-dimensional setting, there do not exist upper bounds on the normalised Steklov eigenvalues that depend only on the topology.  However, as will be discussed later in this survey, bounds are known for domains in complete Riemannian manifolds in $M$ subject to geometric constraints on $M$.   As a consequence of the stability results, using any of the normalisations of eigenvalues discussed in Subsection~\ref{subsec: prelim bounds}, one has:

\begin{prop}\cite[Theorem 3.5]{KaLa2022} Let $(M,g)$ be a complete Riemannian manifold.  Then any normalised Steklov eigenvalue bound that is valid for all bounded domains in $(M,g)$ with smooth boundary is also valid for all domains in $(M,g)$ with Lipschitz boundary.
\end{prop}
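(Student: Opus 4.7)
The strategy is to combine the smooth bound with a density result (Mitrea--Taylor) and a continuity/stability statement for Steklov eigenvalues under domain perturbations, then take limits.

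More precisely, let $D\subset M$ be a domain with Lipschitz boundary and fix an index $k$. By the extension of Verchota's result to complete Riemannian manifolds due to Mitrea and Taylor~\cite{MiTa1999}, there exists a sequence $(D_j)_{j\in\N}$ of bounded domains in $M$ with smooth boundary that approximate $D$ in a sufficiently strong sense. The first step is to identify an approximation for which both (i) the Steklov stability results of Bucur--Giacomini--Trebeschi and of Karpukhin--Lagac\'e apply, yielding
\[\sigma_k(D_j,g)\xrightarrow{j\to\infty}\sigma_k(D,g),\]
and (ii) the geometric quantities appearing in the chosen normalisation converge, namely
\[|D_j|_g\to |D|_g\quad\text{and}\quad |\partial D_j|_g\to|\partial D|_g.\]
Both ingredients are available in this context; the first is essentially the content of~\cite[Lemma 3.1]{GiLa2021} combined with~\cite[Theorem 4.1]{BuGiTr2020} transplanted to the Riemannian setting as explained in Subsection~\ref{sec:upperboundsunifapprox}, while the second is a standard feature of Lipschitz approximations.

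Once this is in place, the proof concludes by a direct limiting argument. Write the given normalised bound in the abstract form $\sigma_k(\om,g)\,N(\om,g)\leq C$ where $N$ is a fixed monomial in $|\om|_g$ and $|\partial\om|_g$ (such as $|\partial\om|_g^{1/d}$, $|\om|_g^{1/(d+1)}$, or the mixed normalisation of Karpukhin--M\'etras from Section~\ref{section:boundshigher}). By hypothesis, the inequality
\[\sigma_k(D_j,g)\,N(D_j,g)\leq C\]
holds for every $j$, since each $D_j$ has smooth boundary. Passing to the limit $j\to\infty$ using the two convergence statements above (and the continuity of $N$ in its volume arguments), we obtain
\[\sigma_k(D,g)\,N(D,g)\leq C,\]
which is the desired bound.

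The main obstacle is not the limiting argument itself, but verifying that the approximation $D_j\to D$ produced by Mitrea--Taylor is fine enough that the Steklov stability result applies. Indeed, Steklov eigenvalues are notoriously sensitive to the geometry of the boundary, and generic domain convergence (say, in Hausdorff distance) is insufficient; one needs an approximation compatible with the framework of~\cite{BuGiTr2020,GiLa2021}, typically phrased in terms of convergence of boundary traces in $H^{1/2}$ or of suitable capacitary quantities. Once that verification is carried out, as in~\cite[Theorem 3.5]{KaLa2022}, the rest of the argument reduces to the elementary limiting step sketched above.
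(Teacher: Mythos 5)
Your proof is correct and follows essentially the same route as the paper: approximate the Lipschitz domain by smooth domains à la Verchota/Mitrea--Taylor, invoke the Steklov stability results of Bucur--Giacomini--Trebeschi as extended to complete Riemannian manifolds by Karpukhin--Lagac\'e, and pass to the limit in the normalised eigenvalue bound using the accompanying convergence of $|D_j|_g$ and $|\partial D_j|_g$. You also correctly flag that the nontrivial technical content is verifying the approximation is compatible with the stability framework, which is precisely what \cite[Theorem 3.5]{KaLa2022} supplies.
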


Under the hypotheses of the proposition, Karpukhin and Lagac\'e also prove \cite[Theorem 1.5]{KaLa2022} that the bounds extend to weighted normalised Steklov eigenvalues, provided that one uses the normalisation introduced in \cite{KaMe2021}.  (See Definition~\ref{def: conf ext pair} later in this survey for the definition of this normalisation for weighted Steklov eigenvalues.)

\begin{remark}  The stability results cited above for Steklov eigenvalues $\sigma_k$ are not uniform in $k$.  Thus while they allow eigenvalue bounds to be generalized from manifolds with smooth boundary to manifolds with Lipschitz boundary, they do not have applications to the challenging question of Steklov asymptotics for manifolds with non-smooth boundary.  See Section~\ref{sec:inv probs pos} for recent results in the setting of Euclidean domains.

\end{remark}

\section{Upper bounds for Steklov eigenvalues on surfaces and homogenisation}
\label{section:surfaces}

The initial impetus for studying Steklov eigenvalues from a geometric point of view came from Weinstock's 1954 paper~\cite{We1954}. He proved that among all simply-connected planar domains $\Omega\subset\R^2$ of prescribed perimeter $L=|\partial\Omega|$, the first nonzero eigenvalue $\sigma_1$ is maximal if and only if $\Omega$ is a disk. 
\begin{thm}[Weinstock, 1954]\label{thm:Weinstock}
Let $\Omega\subset\R^2$ a bounded simply-connected domain with smooth boundary. Then
\begin{equation}\label{ineq:weinstock}
\sigma_1L\leq 2\pi,
\end{equation}
with equality if and only if $\Omega$ is a disk. 
\end{thm}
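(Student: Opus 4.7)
The plan is to transplant the problem to the unit disk via a conformal map and then test the resulting weighted variational characterisation against the Euclidean coordinate functions, suitably re-centered by Möbius transformations. This is the classical Hersch--Weinstock strategy.

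First, I would invoke the Riemann mapping theorem, extended smoothly to the boundary since $\pam$ is smooth, to obtain a conformal diffeomorphism $\Phi:\D\to\om$. By the conformal invariance of the Dirichlet energy in dimension two (Proposition~\ref{prop: conf invar}), pulling back the Rayleigh--Steklov quotient~\eqref{eq:Rayleigh S} via $\Phi$ identifies $\sigma_1(\om)$ with the first non-zero eigenvalue of the weighted Steklov problem on $\D$ with boundary density $\rho := |\Phi'|$ on $\partial\D$ in the sense of Example~\ref{ex:varweightedSteklov}; note that $\int_{\partial\D}\rho\, ds = L$.

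Next comes Hersch's center-of-mass trick. The Euclidean coordinates $x_1,x_2$ are harmonic on $\D$ and satisfy $|\nabla x_1|^2+|\nabla x_2|^2\equiv 1$ on $\D$ and $x_1^2+x_2^2\equiv 1$ on $\partial\D$. To use them as admissible test functions in~\eqref{eq:Stek Rayleigh min max}, they must be orthogonal to constants against the measure $\rho\, ds$. A standard topological degree argument on the group $\mob(\D)$ of conformal automorphisms of the disk produces $T\in\mob(\D)$ such that, after replacing $\Phi$ by $\Phi\circ T$, one has $\int_{\partial\D} x_i\, \rho\, ds = 0$ for $i=1,2$. Plugging $x_1$ and $x_2$ into~\eqref{eq:Stek Rayleigh min max} and summing then gives
\[
\sigma_1(\om)\, L \;=\; \sigma_1(\om)\int_{\partial\D}(x_1^2+x_2^2)\rho\, ds \;\leq\; \sum_{i=1}^{2}\int_{\D}|\nabla x_i|^2\, dA \;=\; 2\,|\D| \;=\; 2\pi,
\]
which is~\eqref{ineq:weinstock}.

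Finally, I would handle the equality case: saturation of the chain above forces both $x_1$ and $x_2$ to be weighted Steklov eigenfunctions on $\D$ with eigenvalue $\sigma_1$. Since $\partial_\nu x_i = x_i$ on $\partial\D$, the boundary condition $\partial_\nu x_i = \sigma_1\rho\, x_i$ forces $\rho \equiv 1/\sigma_1$ on $\partial\D$, so $|\Phi'|$ is constant on the boundary. A Schwarz reflection argument applied to the holomorphic function $\Phi'$ then shows it is constant on $\overline{\D}$, so $\Phi$ is affine and $\om$ is a round disk. The most delicate step is Hersch's renormalisation: the degree/fixed-point argument must deal with possible concentration of the probability measure $\rho\, ds/L$ on $\partial\D$, and is the one place where anything beyond a direct computation is needed.
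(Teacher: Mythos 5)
Your argument is essentially the paper's own: conformal transplantation to the disk via the Riemann mapping theorem, Hersch's center-of-mass renormalisation to make the coordinate functions admissible, and conformal invariance of the Dirichlet energy. (You orient the conformal map $\D\to\om$ and speak of a weighted Steklov problem on $\D$, whereas the paper uses $\Phi:\om\to\D$ and pulls the coordinates back, but the computation is identical.) You also supply the equality case, which the paper's sketch omits; your reduction to $|\Phi'|$ constant on $\partial\D$ is correct, though one can finish more directly by noting that $\log|\Phi'|$ is harmonic and boundary-constant, hence constant, making $\Phi'$ a holomorphic function of constant modulus and therefore constant.
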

The proof of Weinstock's inequality~\ref{ineq:weinstock} is prototypical and it will be useful to have it fresh in our mind.  The first step is to use the Riemann mapping theorem to obtain a conformal diffeomorphism $\Phi:\Omega\to\D$. The regularity of the boundary $\partial\Omega$ implies that $\Phi$ extends to a diffeomorphism $\overline{\Phi}:\overline{\Omega}\to\overline{\D}$. Now the first nonzero Steklov eigenvalue of the unit disk $\D$ is 1, and it has multiplicity two. That is, $\sigma_1(\D)=\sigma_2(\D)=1<\sigma_3(\D)$. The corresponding eigenspace is the span of the coordinate functions $\pi_1,\pi_2:\D\to\R$ defined by $\pi_i(x)=x_i$. Precomposing these functions with $\Phi$ leads to functions $u_i:=\pi_i\circ\Phi:\Omega\to\R$ that Weinstock wants to use in the variational characterisation of $\sigma_1(\Omega)$. In order to do so, one must ensure that these functions are admissible: $\int_{\partial\Omega}u_i\,ds=0$ for both $i=1,2$. This is not true for an arbitrary conformal diffeomorphism $\Phi$, but the group of conformal automorphisms of $\D$ is rich enough to ensure the existence of a $\Phi$ for which this holds. This is in fact the easiest occurrence of the now classical \emph{center of mass method} which was put forward by Hersch in~\cite{He1970} following earlier work by Szeg\H{o}. Using this well-chosen conformal diffeomorphism $\Phi$, we see that
\begin{equation*}
    \sigma_1(\Omega)\int_{\partial\Omega}u_i^2\,ds\leq\int_{\Omega}|\nabla u_i|^2\,dA.
\end{equation*}
Summing over $i=1,2$ and using the conformal invariance of the Dirichlet energy (which holds because $\Omega$ is 2-dimensional), this leads to the sought inequality:
\begin{equation*}
    \sigma_1(\Omega)\int_{\partial\Omega}1\,ds\leq\int_{\D}|\nabla\pi_1|^2+|\nabla\pi_2|^2\,dA=2\pi.
\end{equation*}

\begin{remark}
The Weinstock inequality also holds for simply-connected domains with Lipschitz boundary. See the discussion in Section~\ref{sec:upperboundsunifapprox}. Moreover, it also holds for compact simply-connected Riemannian surfaces with boundary.
\end{remark}

\subsection{Upper bounds for multiply-connected domains and surfaces}

Several results for higher-ranked eigenvalues $\sigma_k$ of multiply-connected domains and compact surfaces with boundary were also obtained by various authors, who replaced the conformal equivalence obtained from the Riemann mapping theorem by proper holomorphic covers $\Phi:\Omega\to\D$ that are known as Ahlfors maps. The bounds that are obtained in this way are not sharp in general. They depend on the degree of the cover $\Phi$, which in turn depends on the genus $\gamma$ of the surface and on the number $b$ of connected components of its boundary $\partial\Omega$. See in particular the results of Girouard and Polterovich~\cite{GiPo2012}. More recently, this was improved by Karpukhin, who obtained the following in~\cite[Theorem 1.4]{Ka2017}.
\begin{thm}\label{thm:karp hersch}
Let $\Omega$ be a compact oriented Riemannian surface of genus $\gamma$ with $b$ boundary components. Then for each $p,q\in\N$ the following holds:
\begin{gather*}
    \sigma_p\sigma_qL^2\leq
    \pi^2
    \begin{cases}
    (p+q+2\gamma+2b-3)^2&\text{ if }p+q\equiv 1(\text{ mod }2)\\
    (p+q+2\gamma+2b-2)^2&\text{ if }p+q\equiv 0(\text{ mod }2)\\
    \end{cases}
\end{gather*}
In particular, setting $p=q=k$ leads to
\begin{gather}\label{ineq:karpukhinuppersurface}
    \sigma_kL\leq 2\pi(\gamma+b+k-1).
\end{gather}
\end{thm}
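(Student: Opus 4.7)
The approach is a Hersch--Yang--Yau style argument, generalising the Weinstock proof reviewed above by replacing the conformal diffeomorphism with a branched holomorphic cover. By the Ahlfors--Gabard theorem, the compact oriented surface $\Omega$ of genus $\gamma$ with $b$ boundary components carries a proper holomorphic map $\Phi : \Omega \to \D$ of degree $N \leq \gamma + b$, whose restriction to $\partial\Omega$ is a branched covering of $\partial\D$ of the same degree.

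For test functions I would pull back the harmonic polynomials $z^m$ on the disk. Writing $\tilde u_m$ and $\tilde v_m$ for the pullbacks of $r^m\cos(m\theta)$ and $r^m\sin(m\theta)$, these are harmonic on $\Omega$ (composition of a holomorphic and a harmonic function in dimension two). Conformal invariance of the Dirichlet energy gives $\int_\Omega|\nabla \tilde u_m|^2 = \int_\Omega |\nabla \tilde v_m|^2 = \pi m N \leq \pi m(\gamma+b)$, while the identity $|\Phi(x)| = 1$ on $\partial\Omega$ forces $\tilde u_m^2 + \tilde v_m^2 \equiv 1$ on $\partial\Omega$. Furthermore, distinct Fourier modes are orthogonal on $\partial\D$, so Green's identity makes the Dirichlet Gram matrix of the family $\{\tilde u_m, \tilde v_m\}_{m\geq 1}$ diagonal with entries $\pi m N$ (each appearing twice). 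For admissibility, Hersch's center-of-mass lemma applied to the M\"obius group of $\D$ provides a choice of $\Phi$ for which $\int_{\partial\Omega}\tilde u_1\,ds = \int_{\partial\Omega}\tilde v_1\,ds = 0$; for higher modes I would simply subtract boundary averages, which preserves Dirichlet energy and perturbs the boundary norm in a controlled way.

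Setting $s = \lfloor (p+q)/2\rfloor$, the $2s$ resulting admissible test functions span a $2s$-dimensional subspace $V \subset H^1(\Omega)$, and the min-max principle yields $\sigma_j(\Omega) \leq \mu_j$ for $j = 1,\ldots,2s$, where $\mu_1 \leq \cdots \leq \mu_{2s}$ are the generalised eigenvalues of the Dirichlet--boundary pencil $Au = \mu B u$ on $V$. Here $\operatorname{tr}(A) = \pi N\,s(s+1)$ and $\operatorname{tr}(B) = sL$, the latter coming from $\sum_{m=1}^s (\tilde u_m^2 + \tilde v_m^2) = s$ on $\partial\Omega$. A careful accounting of the paired diagonal structure of $A$ allows one to extract the product rather than merely the sum of generalised eigenvalues: $\mu_p\mu_q L^2 \leq \pi^2(2s + 2\gamma + 2b - 2)^2$. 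Substituting $2s = p+q$ when $p+q$ is even and $2s = p+q-1$ when $p+q$ is odd produces the two cases of the theorem, the odd case reflecting that in its last ``pair'' only one additional test function is needed.

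The principal obstacle is the control of $B$. The pushforward measure $\Phi_{*}(ds|_{\partial\Omega})$ on $\partial\D$ is generally far from uniform, so cross-terms of the form $\int \cos(m\theta)\cos(n\theta)\,d\mu$ couple distinct Fourier modes and $B$ is not a multiple of the identity. Separating the factors $\sigma_p$ and $\sigma_q$ in the product bound---rather than merely controlling the largest eigenvalue $\sigma_{2s}$---while keeping the parity-dependent constant sharp is the delicate step, and is precisely where Karpukhin's refinement improves on the multiplicative-index bounds of Girouard--Polterovich.
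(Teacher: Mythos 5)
Your proposal follows the Ahlfors-map route of Girouard--Polterovich, and this route cannot deliver the \emph{additive} dependence on $\gamma+b$ that the theorem asserts. Pulling back $r^m\cos m\theta$ and $r^m\sin m\theta$ through a degree-$N$ branched cover $\Phi:\Omega\to\D$ fixes each Dirichlet energy at $\pi m N$; that factor of $N$ then sits in every Rayleigh quotient you can form, so the most these test functions can yield is a bound of the shape $\sigma_p\sigma_q L^2\le \pi^2 N^2(p+q)^2$ with $N\le\gamma+b$ --- the multiplicative bound of~\cite{GiPo2012}, which the paper explicitly says Karpukhin improved upon. No ``careful accounting of the paired diagonal structure of $A$'' can turn a multiplicative factor of $N^2$ common to every diagonal entry into the additive shift $p+q+2\gamma+2b-2$; the inequality $\mu_p\mu_q L^2\le\pi^2(2s+2\gamma+2b-2)^2$ that you write down does not follow from the traces you compute, and it is false for these test functions once $N\ge 2$ and $p,q$ are large, since the $2s$-th generalised eigenvalue of the pencil is already of order $Ns/L$. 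You correctly flag this as ``the delicate step ... precisely where Karpukhin's refinement improves on the multiplicative-index bounds,'' but the refinement is not a sharper estimate inside the Ahlfors framework --- it requires abandoning the branched cover entirely.

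The actual proof goes through the Steklov problem on differential forms. Yang and Yu~\cite{YaYu2017} proved the topological Hersch--Payne--Schiffer inequality stated as Theorem~\ref{yyhps}: for a compact oriented $(d+1)$-manifold, $\sigma_j(\Omega)\,\hsig_{k,d-1}(\Omega)\le\lambda_{j+k+\beta_d-2}(\Sigma)$, where $\hsig_{k,d-1}$ is the $k$th nonzero Raulot--Savo eigenvalue on $(d-1)$-forms. The mechanism replaces the harmonic conjugate of $\widehat u$ by a $(d-1)$-form $\widehat\alpha$ satisfying $*d\widehat u=d\widehat\alpha$; the Betti number enters \emph{additively} because it is exactly the obstruction to the exactness of $*d\widehat u$. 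In dimension two, $\hsig_{k,0}=\sigma_k$ and $\beta_1=2\gamma+b-1$, which gives $\sigma_j\sigma_k\le\lambda_{j+k+2\gamma+b-3}(\Sigma)$. Karpukhin then feeds in the explicit Laplace eigenvalues of $\Sigma$, a disjoint union of $b$ circles of total length $L$; an elementary counting estimate bounds $\lambda_n(\Sigma)L^2$ by $\pi^2$ times a quantity linear in $n$ and $b$, and the parity of $n$ is what produces the two cases in the statement. If you want to reconstruct this proof, start from the form-level Yang--Yu inequality, not from a branched cover of the disk.
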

The proof of this result is based on results of Yang and Yu~\cite{YaYu2017} that allow comparison of Steklov problems on differential $p$-forms with eigenvalues of the Laplace--Beltrami operator on the boundary. (We will discuss this result further in Section~\ref{stek.forms}). In the case of surfaces, the boundary is a union of circles and the eigenvalues of the tangential Laplacian are known explicitly.
\begin{remark}
It is common knowledge that using trial functions in variational characterisations of eigenvalues rarely leads to sharp upper bounds.   However,
\begin{enumerate}
    \item For $\gamma=0$, $b=1$, and $k=1$,  one recovers Weinstock's result, in which case the bound (\ref{ineq:karpukhinuppersurface}) is sharp.
    \item For $\gamma=0$, $b=1$, and arbitrary $k$,  one recovers from~ (\ref{ineq:karpukhinuppersurface}) a result of Hersch, Payne and Schiffer~\cite{HePaSc1975}, which is known to be sharp thanks to a construction of Girouard and Polterovich~\cite{GiPo2010}.
    \item For $\gamma=0$, $b=2$ and $k=1$, the best upper bound is known thanks to the work of Fraser and Schoen~\cite{FrSc2016}.  It is attained by the so-called \emph{critical catenoid}, for which $\sigma_1L\approx 4\pi/1.2$. We will discuss the critical catenoid further in  Example~\ref{ex.cat}.
        \end{enumerate} 
    \end{remark}    
    \begin{remark}\label{rem:has cgr}~
    \begin{enumerate}
    \item In early upper bounds on $\sigma_k$, the index $k$ appeared as a multiplicative rather than an additive factor. The first upper bound to feature additivity is due to Hassannezhad~\cite{Ha2011}, who proved the existence of $A,B>0$ such that
    $\sigma_kL\leq A\gamma + Bk$. Note in particular that the number of connected coponents of the boundary does not appear in this inequality.
    \item The dependence on the genus is essential, since Colbois, Girouard and Raveendran~\cite{CoGiRa2018} have constructed surfaces with arbitrarily large normalised first eigenvalue $\sigma_1L$ and with connected boundary.
    \end{enumerate}
\end{remark}

\subsection{Upper bound for surfaces of genus 0}\label{section:uppergenus0}

For $\eps\in(0,1)$, let $A_\eps:=B(0,1)\setminus B(0,\eps)$ be a planar annulus. It was observed in~\cite{GiPo2017} that for $\eps>0$ small enough, $\sigma_1(A_\eps)L(\partial A_\eps)>2\pi$. This shows that the simple-connectedness assumption in Weinstock's result is genuinely necessary, which raises the question of how large the perimeter-normalised eigenvalue $\sigma_1(\Omega)L(\partial\Omega)$ can be among all bounded planar domains $\Omega\subset\R^2$ with smooth boundary, without assuming simple-connectivity. 
The Riemann mapping theorem is not available in this case. We have seen above that one could use the Ahlfors map instead, in which case the resulting upper bounds depend on the number of connected components of its boundary. In~\cite{Ko2014} Kokarev proposed instead to use the stereographic parametrisation 
$\Phi:\R^2\to \Sp^2$. The group of automorphisms of $\Sp^2$ is rich enough to ensure the existence of a conformal diffeomorphism $F:\Sp^2\to\Sp^2$ for which the functions $u_i:=\pi_i\circ F\circ\Phi:\Omega\to\R$ satisfy $\int_{\partial\Omega}u_i\,ds=0$ for $i\in\{1,2,3\}$ as well as $u_1^2+u_2^2+u_3^2=1$ identically on $\partial\Omega$. This is the Hersch renormalisation trick again, as in the proof above of the Weinstock inequality (Theorem~\ref{thm:Weinstock}). Because the stereographic projection is a conformal map, it follows as above that
\[\sigma_1L\leq \int_{F(\Phi(\Omega))}\sum_{i=1}^3|\nabla\pi_i|^2\,dA.\]
An easy computation shows that $\sum_{i=1}^3|\nabla\pi_i|^2=2$ pointwise, so that
\[\sigma_1L\leq 2\text{Area}(F(\Phi(\Omega))    )<2\text{Area}(\Sp^2)=8\pi.\]
This led Kokarev \cite[Theorem A1]{Ko2014} to the following result.
\begin{thm}\label{thm:KokarevGenus0}
Let $\Omega$ be a compact surface with boundary of genus $\gamma=0$. Then
\begin{gather}\label{ineq:Kokarev}
\sigma_1L< 8\pi.
\end{gather}
\end{thm}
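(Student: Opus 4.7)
The plan is to follow the Hersch-Szeg\H{o} center-of-mass strategy, as already foreshadowed in the paragraph preceding the statement, but to cast it as cleanly as possible for an abstract genus-$0$ surface (not just a planar domain) and to extract the strict inequality carefully.

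First I would use uniformization to realize $\Omega$ conformally as a domain in $\mathbb{S}^2$. Since $\Omega$ has genus $0$, by the Koebe-type uniformization of planar Riemann surfaces there exists a conformal embedding $\Phi:\Omega\to\mathbb{S}^2$ whose image $\Phi(\Omega)$ is a proper open subset of $\mathbb{S}^2$ with boundary consisting of rectifiable curves. Let $\pi_1,\pi_2,\pi_3$ denote the coordinate functions on $\mathbb{S}^2\subset\R^3$. The pointwise identity $\sum_{i=1}^3|\nabla_{\mathbb{S}^2}\pi_i|^2\equiv 2$ on $\mathbb{S}^2$ (a standard computation) will be the source of the constant $8\pi$.

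Next, I would invoke Hersch's lemma: the group of M\"obius transformations of $\mathbb{S}^2$ acts transitively enough that for the push-forward measure $\mu=(\Phi|_{\partial\Omega})_*\,ds$ on $\mathbb{S}^2$ (of total mass $L=|\partial\Omega|$) one can find a conformal automorphism $F:\mathbb{S}^2\to\mathbb{S}^2$ such that the renormalized measure $F_*\mu$ has its center of mass at the origin of $\R^3$, i.e.
\begin{equation*}
\int_{\partial\Omega}\pi_i\circ F\circ\Phi\,ds = 0 \qquad \text{for } i=1,2,3.
\end{equation*}
This is the standard topological degree argument (a continuous map from a closed ball to itself with appropriate boundary behaviour must have a zero). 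With the $u_i:=\pi_i\circ F\circ\Phi$ so normalized, each $u_i$ is admissible in the variational characterisation \eqref{eq:Stek Rayleigh min max}, yielding
\begin{equation*}
\sigma_1(\Omega)\int_{\partial\Omega}u_i^2\,ds \;\le\; \int_{\Omega}|\nabla u_i|^2\,dA.
\end{equation*}

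Now I would sum over $i=1,2,3$ and exploit the two-dimensional conformal invariance of the Dirichlet energy (Proposition~\ref{prop: conf invar} and the discussion around $\sigma$-isometries). Since $F\circ\Phi:\Omega\to\mathbb{S}^2$ is conformal,
\begin{equation*}
\sum_{i=1}^3\int_\Omega|\nabla u_i|^2\,dA \;=\; \int_{F(\Phi(\Omega))}\sum_{i=1}^3|\nabla_{\mathbb{S}^2}\pi_i|^2\,dA_{\mathbb{S}^2} \;=\; 2\,\bigl|F(\Phi(\Omega))\bigr|.
\end{equation*}
On the boundary side, since the $\pi_i$ satisfy $\sum_i\pi_i^2\equiv 1$ on $\mathbb{S}^2$ and $F\circ\Phi$ maps $\partial\Omega$ into $\mathbb{S}^2$, we get $\sum_i u_i^2\equiv 1$ on $\partial\Omega$, so summing and combining gives $\sigma_1(\Omega)L\le 2|F(\Phi(\Omega))|$.

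Finally, for strictness: $\Phi(\Omega)$ is a proper subdomain of $\mathbb{S}^2$ (its boundary is nonempty by assumption) and $F$ is a diffeomorphism of $\mathbb{S}^2$, so $|F(\Phi(\Omega))| < |\mathbb{S}^2| = 4\pi$. Combining, $\sigma_1(\Omega)L < 8\pi$, which is the claim. The main technical point that I would expect to take some care is the uniformization step at the start (and the light regularity needed on $\partial\Omega$ so that $\Phi$ extends continuously and $\sum u_i^2=1$ on $\partial\Omega$ makes sense in the boundary integrals); everything else is the standard Hersch-type computation, and strict inequality comes for free from $\Phi(\Omega)\ne\mathbb{S}^2$.
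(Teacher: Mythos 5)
Your proof is essentially the same argument that the paper sketches just before the statement (and that Kokarev uses in \cite{Ko2014}): conformally map into $\mathbb{S}^2$, apply Hersch's center-of-mass renormalization with the three coordinate functions, use conformal invariance of the Dirichlet energy together with $\sum_i |\nabla_{\mathbb{S}^2}\pi_i|^2 \equiv 2$, and get strictness from $|F(\Phi(\Omega))| < 4\pi$. The only difference is that you explicitly invoke uniformization to cover an abstract genus-$0$ surface where the paper's expository sketch specialises to planar domains and a stereographic projection, but this is precisely what is needed to obtain the theorem as stated, so the approach is the same.
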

It is then natural to investigate the sharpness of inequality~\eqref{ineq:Kokarev}, which amounts to the construction of surfaces of genus 0 with large enough perimeter-normalised $\sigma_1L$.  

\subsection{Interlude: using homogenisation theory to obtain large Steklov eigenvalues}\label{subsec:homolarge}

In~\cite{GiHeLa2021}, Girouard, Henrot and Lagac\'e studied homogenisation of the Steklov problem by periodic perforation. The  results are expressed in terms of the Neumann and dynamical eigenvalue problems, which we now recall. 
Let $\Omega\subset\R^{d+1}$ be a bounded domain with smooth boundary $\partial\Omega$. Recall that the Neumann spectral problem is
\[\begin{cases}
  \Delta f=\lambda f&\text{in }\Omega,\\
  \partial_{\nu}f=0&\text{on }\partial\Omega.
\end{cases}\]
Its spectrum consists of an unbounded sequence of real numbers
\[0=\lambda_0<\lambda_1\leq\lambda_2\leq\cdots\to+\infty.\]
The \emph{dynamical spectral problem with parameter $\beta\in [0,\infty)$} is
\begin{gather}\label{eq:dynamiceigen}
\begin{cases}
  \Delta f=\beta \sigma f&\text{in }\Omega,\\
  \partial_{\nu}f=\sigma f&\text{on }\partial\Omega.
\end{cases}
\end{gather}
Its spectrum consists of an unbounded sequence of real numbers
\[0=\sigma_0<\sigma_{1,\beta}\leq\sigma_{2,\beta}\leq\cdots\to+\infty.\]
Readers are invited to look at the work of von Below and Fran\c{c}ois~\cite{BeFr2005} for details.
For $\beta=0$, the dynamical eigenvalues of $\Omega$ coincide with Steklov eigenvalues: $\sigma_{k,0}(\Omega)=\sigma_k(\Omega)$. As $\beta\to+\infty$, the relative importance of the spectral parameter in the boundary condition seems to disappear. This is captured in the following result~\cite{GiHeLa2021}.
\begin{thm}\label{thm:dynamiclargebeta}
For each $k\in\N$, the eigenvalue $\sigma_{k,\beta}$ depends continuously on $\beta$ and satisfies
\[\lim_{\beta\to+\infty}\beta \sigma_{k,\beta}=\lambda_k.\]
\end{thm}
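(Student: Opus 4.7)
The plan is to rewrite the theorem as a statement about the variational eigenvalues of a one-parameter family of Radon measures, then deduce the limit from the min-max principle combined with a compactness argument in $H^1(\om)$.

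First, I would derive the Rayleigh--quotient form of the dynamical problem. Multiplying $\Delta f=\beta\sigma f$ by $f$, applying Green's formula, and using $\partial_\nu f=\sigma f$ on $\Sigma$, one obtains
\[\sigma_{k,\beta}=\min_{E\in\mathcal{E}(k+1)}\max_{0\neq f\in E}\frac{\int_{\om}|\nabla f|^2\,dV_\om}{\beta\int_{\om}f^2\,dV_\om+\int_\Sigma f^2\,dV_\Sigma}.\]
Multiplying numerator and denominator by $\beta^{-1}$ shows that $\beta\sigma_{k,\beta}$ is the $k$-th variational eigenvalue of the Radon measure $\nu_\beta:=dV_\om+\beta^{-1}\iota_\star dV_\Sigma$, while $\lambda_k$ is the $k$-th variational eigenvalue of $\nu_\infty:=dV_\om$; see Subsection~\ref{subsection:introvareigenradon}. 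With this in hand, the upper bound $\beta\sigma_{k,\beta}\leq\lambda_k$ is immediate: plugging $E=\operatorname{span}(\varphi_0,\dots,\varphi_k)$ for $L^2(\om)$-orthonormal Neumann eigenfunctions into the min-max, the additional nonnegative boundary term $\beta^{-1}\int_\Sigma f^2$ only enlarges the denominator, and on $E$ the pure Neumann Rayleigh quotient is already bounded by $\lambda_k$. This bound is uniform in $\beta>0$.

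The main work is the matching lower bound $\liminf_{\beta\to\infty}\beta\sigma_{k,\beta}\geq\lambda_k$. Fix $\beta_n\to\infty$ and, for each $n$, let $f_n^0,\dots,f_n^k$ be dynamical eigenfunctions, mutually orthogonal and normalised with respect to the bilinear form $c_{\beta_n}(u,v)=\int_\om uv+\beta_n^{-1}\int_\Sigma uv$, i.e.\ $c_{\beta_n}(f_n^i,f_n^j)=\delta_{ij}$. The upper bound forces $\int_\om|\nabla f_n^i|^2\leq\lambda_k$, so the family is bounded in $H^1(\om)$. By Rellich--Kondrachov and the continuity of the trace $H^1(\om)\to L^2(\Sigma)$, I extract a subsequence along which $f_n^i\to f^i$ strongly in $L^2(\om)$ and in $L^2(\Sigma)$, and weakly in $H^1(\om)$. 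Since $\int_\Sigma(f_n^i)^2$ is bounded, $\beta_n^{-1}\int_\Sigma(f_n^i)^2\to 0$, so passing to the limit in the orthonormalisation gives $\int_\om f^i f^j=\delta_{ij}$; hence the limits span a genuinely $(k+1)$-dimensional subspace $F\subset H^1(\om)$. For any $f=\sum_i a_if^i\in F$ and $f_n=\sum_ia_if_n^i\in E_n$,
\[\int_\om|\nabla f_n|^2\leq \beta_n\sigma_{k,\beta_n}\Bigl(\int_\om f_n^2+\beta_n^{-1}\int_\Sigma f_n^2\Bigr).\]
Passing to $\liminf$ and using weak lower-semicontinuity of the Dirichlet energy together with the strong convergences on the right gives
\[\int_\om|\nabla f|^2\leq\bigl(\liminf_{n\to\infty}\beta_n\sigma_{k,\beta_n}\bigr)\int_\om f^2\qquad\text{for every }f\in F.\]
Maximising over $f\in F$ and invoking the Neumann min-max formula, since $\dim F=k+1$, yields $\lambda_k\leq\liminf_n\beta_n\sigma_{k,\beta_n}$, and combining with the upper bound completes the proof of the limit.

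Continuity of $\beta\mapsto\sigma_{k,\beta}$ on $(0,\infty)$ follows from essentially the same compactness argument applied to any sequence $\beta_n\to\beta_\infty\in(0,\infty)$; the denominator $b_\beta(u,v)=\beta\int_\om uv+\int_\Sigma uv$ depends continuously on $\beta$ and remains uniformly coercive on compact subintervals, so both the min-max upper and the compactness lower bound pass to the limit. Alternatively, this is a special case of continuity of variational eigenvalues under weak convergence of Radon measures from Appendix~\ref{Section:Radon}. The delicate point throughout is the lower bound argument: one must check that the $\beta_n$-dependent orthonormalisation survives in the limit so that $F$ is truly $(k+1)$-dimensional, which is precisely what the trace bound $\beta_n^{-1}\int_\Sigma(f_n^i)^2\to 0$ secures.
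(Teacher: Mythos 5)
The survey does not actually supply a proof of this theorem: it is stated with a citation to the original paper of Girouard, Henrot and Lagac\'e~\cite{GiHeLa2021}, so there is no proof in the source text to compare against. That said, your argument is a correct and natural proof of the statement, following exactly the variational/Radon-measure approach that the survey develops in Subsection~\ref{subsection:introvareigenradon} and Appendix~\ref{Section:Radon}.

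A few small points of precision, none of which affect the correctness of the plan. In the lower-bound step, after writing the pointwise inequality
\[\int_\om|\nabla f_n|^2\leq \beta_n\sigma_{k,\beta_n}\Bigl(\int_\om f_n^2+\beta_n^{-1}\int_\Sigma f_n^2\Bigr),\]
the cleanest route is to pass first to a subsequence along which $\beta_n\sigma_{k,\beta_n}$ converges to the $\liminf$; then the right-hand side genuinely converges to $L\int_\om f^2$, and weak lower semicontinuity on the left yields $\int_\om|\nabla f|^2\leq L\int_\om f^2$. Phrasing it as ``passing to $\liminf$'' on both sides glosses over the fact that $\liminf(a_n b_n)$ need not split as a product; the subsequence fix is standard and makes this airtight. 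Also, you correctly identify the key technical ingredient that keeps $F$ genuinely $(k+1)$-dimensional, namely that $\beta_n^{-1}\int_\Sigma(f_n^i)^2\to 0$ forces the $c_{\beta_n}$-orthonormality to pass to $L^2(\om)$-orthonormality in the limit; this does rely on compactness of the trace $H^1(\om)\to L^2(\Sigma)$, which holds since $\partial\om$ is assumed smooth. Finally, your continuity argument is phrased on $(0,\infty)$; the dynamical problem is defined for $\beta\in[0,\infty)$ with $\sigma_{k,0}=\sigma_k(\om)$, and at $\beta=0$ the form $b_0(u,u)=\int_\Sigma u^2$ is no longer coercive on $L^2(\om)$, so a slight variant is needed (bound $\|u\|_{H^1}$ using the Dirichlet energy bound together with $\int_\Sigma u^2$ and a boundary Poincar\'e inequality). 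This is routine, and as you note, the whole statement --- including continuity on $[0,\infty)$ and the $\beta\to\infty$ limit --- can alternatively be read off from Theorem~\ref{thm:ContinuityRadon}: $\nu_\beta=dV_\om+\beta^{-1}\iota_\star dV_\Sigma$ converges to $dV_\om$ in $W^{1,1}(\om,dV_g)^\star$ (since $\iota_\star dV_\Sigma$ is a bounded functional on $W^{1,1}(\om)$ by the trace theorem), and similarly $\beta\,dV_\om+\iota_\star dV_\Sigma$ converges to $\iota_\star dV_\Sigma$ as $\beta\to0$.
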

For the purpose of studying Steklov eigenvalues, the importance of the dynamical eigenvalue problem~\eqref{eq:dynamiceigen} is that it appears as the limit problem for periodic homogenisation by perforation. Given $\eps>0$, let $\Omega^\eps\subset\Omega$ be the domain obtained by removing balls $B(p,r_\eps)\subset\Omega$ of radius $r_\eps>0$ centered at a point $p$ of the periodic lattice $\eps\Z^{d+1}$. See Figure~\ref{figure:Omegaeps}.
\begin{figure}
  \centering
  \includegraphics[width=9cm]{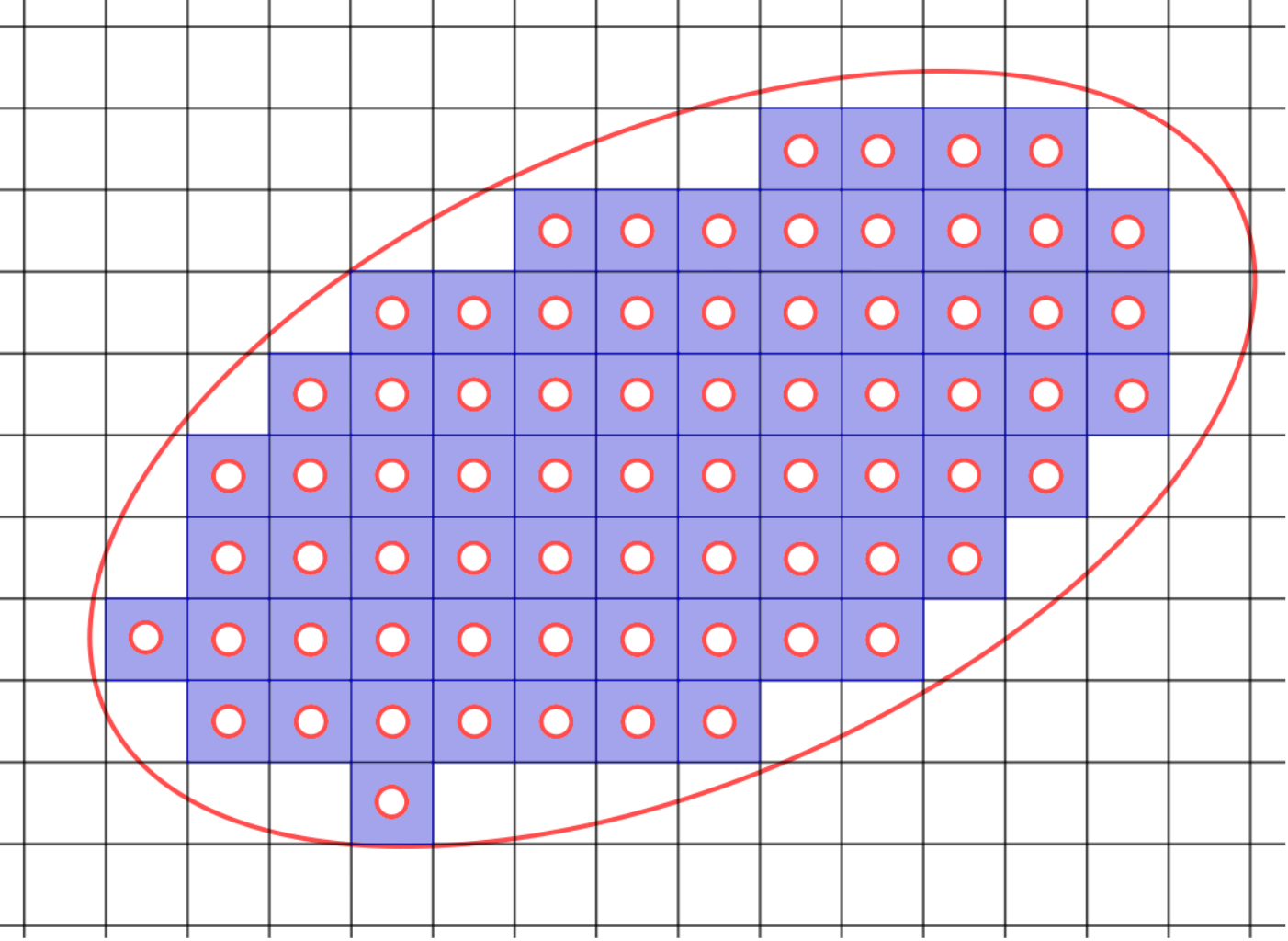}
  \caption{The perforated domain $\Omega^\eps\subset\R^2$ in the planar case.}
  \label{figure:Omegaeps}
\end{figure}
The behaviour of the Steklov eigenvalues $\sigma_k(\Omega^\eps)$ as $\eps\to 0$ then depends on the choice of radius $r_\eps\in (0,\eps)$. The following result should be compared with the classical \emph{crushed ice problem}. See for instance the work of Rauch and Taylor~\cite{RaTa1975}, and of Cioranescu and Murat~\cite{CiMu1982}.
\begin{thm}[\cite{GiHeLa2021}]\label{thm:PeriodicHomoeConstantRadius}
Let $\Omega\subset\R^{d+1}$ be a bounded domain with smooth boundary $\partial\Omega$. Let $T^\eps\subset\Omega$ be the union of all balls $B(p,r_\eps)$ with $p\in\eps\Z^d$ that are included in $\Omega$, and consider the perforated domain $\Omega^\eps:=\Omega\setminus\overline{T^\eps}$.
The asymptotic behaviour of $\sigma_k(\Omega^\eps)$ as $\eps\to 0$ depends on the parameter\footnote{It is implicitly understood that $r_\eps$ is chosen so that this limit exists.}
\begin{gather}\label{eq:betaregimehomo}
    \beta:=\frac{1}{|\Sp^d|}\lim_{\eps\to 0}r_\eps^{d}/\eps^{d+1}\in[0,+\infty].
\end{gather}
\begin{description}
\item[Small-holes regime] If $\beta=0$, then 
\[|\partial T^\eps|\to 0\quad\text{ and }\quad \sigma_k(\Omega^\eps)\to\sigma_k(\Omega).\]
\item[Large-holes regime] If $\beta=+\infty$, then 
\[|\partial T^\eps|\to \infty\quad\text{ and }\quad\sigma_k(\Omega^\eps)\to 0.\]
\item[Critical regime] If $\beta\in (0,\infty)$, then
\[|\partial T^\eps|\to \beta |\Omega|\quad\text{ and }\quad
    \sigma_k(\Omega^\eps)\to\sigma_{k,\beta}.\]
\end{description}
\end{thm}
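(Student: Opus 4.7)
The plan is to translate the Steklov problem on the perforated domain $\Omega^\eps$ into the language of variational eigenvalues of Radon measures (following Example~\ref{ex:vareigenSteklov}), and then analyse the weak convergence of these measures in each of the three regimes. Concretely, if $\iota_\eps:\partial\Omega^\eps\hookrightarrow\overline\Omega$ denotes the inclusion, then the Steklov spectrum of $\Omega^\eps$ coincides (up to harmonic extension of trial functions into the holes) with the variational spectrum of the Radon measure
\[\mu_\eps \;=\; \iota_*\,dV_{\partial\Omega} \;+\; (\iota_\eps)_*\,dV_{\partial T^\eps}\]
considered on the fixed ambient domain $\Omega$. In this form the continuity of variational eigenvalues under weak convergence of measures (Appendix~\ref{Section:Radon}) reduces the theorem to identifying $\lim_{\eps\to 0}\mu_\eps$ in each regime and matching the limit with a known spectral problem on $\Omega$.

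First I would do the volume bookkeeping: the number of admissible lattice points in $\Omega$ is $(1+o(1))|\Omega|/\eps^{d+1}$, and each hole contributes a boundary sphere of area $|\Sp^d|r_\eps^d$, so
\[|\partial T^\eps|\;=\;(1+o(1))\,|\Omega|\cdot|\Sp^d|\,r_\eps^d/\eps^{d+1}.\]
This gives the three limiting behaviours for $|\partial T^\eps|$ stated in the theorem, directly from the definition~\eqref{eq:betaregimehomo} of $\beta$. Next I would establish the weak-* convergence of the interior contribution $(\iota_\eps)_*\,dV_{\partial T^\eps}$: by a standard periodic homogenisation argument (test against continuous $\varphi\in C^0(\overline\Omega)$ and exploit that $\varphi$ is nearly constant on each cell of the lattice), this measure converges weakly to the multiple $\beta\,dV_\Omega$ of the ambient volume measure when $\beta$ is finite, to the zero measure when $\beta=0$, and becomes of infinite total mass when $\beta=+\infty$.

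Consequently, in the small-hole regime $\mu_\eps\rightharpoonup \iota_*dV_{\partial\Omega}$ and variational continuity yields $\sigma_k(\Omega^\eps)\to\sigma_k(\Omega)$ (cf.\ Example~\ref{ex:vareigenSteklov}). In the critical regime $\mu_\eps\rightharpoonup \iota_*dV_{\partial\Omega}+\beta\,dV_\Omega$, and by the dynamical eigenvalue example (Example~\ref{example:transmission} and the discussion preceding Theorem~\ref{thm:dynamiclargebeta}) the limit eigenvalues are precisely $\sigma_{k,\beta}$. For the large-hole regime, rather than appealing to continuity of a non-finite measure, I would argue directly: fix a large integer $N$, decompose $\Omega$ into $N$ disjoint open pieces $\Omega^{(1)},\dots,\Omega^{(N)}$ of positive volume, and construct mutually disjointly supported test functions $u_j\in H^1(\Omega)$ equal to $1$ on $\Omega^{(j)}$ and vanishing on $\partial\Omega$. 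Their Dirichlet energies are bounded independently of $\eps$, while $\int u_j^2\,d\mu_\eps\ge c|\partial T^\eps\cap\Omega^{(j)}|\to\infty$, so by the variational characterisation~\eqref{eq:Stek Rayleigh min max} we get $\sigma_k(\Omega^\eps)\to 0$ for every $k<N$, hence for every $k$.

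The hard part will be justifying the continuity step in the critical regime: weak convergence of measures is not, in general, enough to ensure that variational eigenvalues converge, because one needs the trace operator $H^1(\Omega)\to L^2(\mu_\eps)$ to be uniformly bounded and eventually compact along the sequence. Concretely one must establish a uniform trace inequality
\[\int_{\partial T^\eps} u^2 \;\le\; C\,\|u\|_{H^1(\Omega)}^{\,2}\qquad(u\in H^1(\Omega)),\]
with $C$ independent of $\eps$, together with an asymptotic improvement that controls the deviation $\int u^2\,d\mu_\eps-\int u^2\,d\mu$ by the modulus of continuity of $u$. Both facts are proven by a scaling-and-rescaling cell-by-cell argument: on a single cell of size $\eps$ containing one hole $B(p,r_\eps)$, the classical trace inequality on an annulus gives the correct quantitative bound after rescaling by $\eps$, and the critical normalisation $r_\eps^d/\eps^{d+1}\to\beta|\Sp^d|$ is precisely what makes the bounds summable to a finite limit as one adds the contributions of the $O(\eps^{-(d+1)})$ cells. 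Once this uniform control is in place, the Radon-measure framework of Subsection~\ref{subsection:introvareigenradon} applies, and the three regimes follow as outlined.
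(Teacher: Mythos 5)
Your strategy is sound but differs from the approach the survey attributes to \cite{GiHeLa2021}, to which Theorem~\ref{thm:PeriodicHomoeConstantRadius} is credited. The original proof proceeds by classical periodic-homogenisation energy methods; your argument runs through the variational-eigenvalues-of-Radon-measures framework of Appendix~\ref{Section:Radon}, which is the route the survey identifies with the later and more general Theorem~\ref{thm:HomoBdryGKL} from \cite{GiKaLa2021}. Both are valid, and the Radon-measure route is arguably cleaner once one has Theorem~\ref{thm:ContinuityRadon} in hand. Your volume bookkeeping (taking the lattice to be $\eps\Z^{d+1}$, as it must be) is correct, and your direct test-function argument for the large-hole regime is correct and elementary.

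Two points deserve tightening. First, the precise identification is $\sigma_k(\Omega^\eps)=\lambda_k(\Omega^\eps,\mu_\eps)$, a variational eigenvalue over the \emph{variable} domain $\Omega^\eps$, not over the fixed ambient $\Omega$; the restriction map only gives the one-sided inequality $\lambda_k(\Omega,\mu_\eps)\geq\lambda_k(\Omega^\eps,\mu_\eps)$. Theorem~\ref{thm:ContinuityRadon} is formulated exactly for this situation: it asserts $\lambda_k(\Omega_n,\mu_n)\to\lambda_k(\Omega,\mu)$ when $|\Omega\setminus\Omega_n|\to 0$, conditions (M1)--(M2) hold, and the measures converge in $W^{1,1}(\Omega,dV_g)^\star$. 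Your ``harmonic extension into the holes'' device can be made to work, but it then requires a uniform bound on the added Dirichlet energy; the cleaner option is to verify condition (M2) directly, i.e., produce uniformly bounded extension operators $J_\eps:H^1(\Omega^\eps,\mu_\eps)\to H^1(\Omega,\mu)$, which is precisely what your cell-by-cell rescaling argument is the right tool to establish. Second, the example identifying the critical-regime limit with the dynamical problem~\eqref{eq:dynamiceigen} is the one in Subsection~\ref{subsection:introvareigenradon} with $\mu=\beta\,dV_\Omega+\iota_\star dV_\Sigma$, not Example~\ref{example:transmission}, which concerns the transmission problem for a hypersurface inside a closed ambient manifold and is a different construction.
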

\begin{remark}
The convention that we use for the dynamical eigenvalue problem is slightly different from that of~\cite{GiHeLa2021}, where the constant $A_d:=|\Sp^d|$ was built into the problem itself, while here we have simply introduced it in the definition of the constant~\eqref{eq:betaregimehomo} controlling the homogenisation regime. This is clearer for our purpose, and in particular will be compatible with the situation where $\beta$ is a density function rather than a constant.
\end{remark}
\begin{remark}
The large-holes regime could be deduced from known upper bounds for $\sigma_k$. For instance, it follows from the earlier work of Colbois, Girouard and El Soufi~\cite{CoElGi2011} that any domain $\Omega\subset\R^{d+1}$ with boundary of large measure has small Steklov eigenvalues. This follows for instance from inequality~\eqref{ineq:ceg} in the next section.
\end{remark}
The critical regime is particularly interesting for planar domains.
Indeed, in this case $d=1$ and it follows that
\begin{gather}\label{eq:limithomoplanar}
\sigma_k(\Omega^\eps)L(\partial\Omega^\eps)\xrightarrow{\eps\to0}
\sigma_{k,\beta}(\Omega)(L(\partial\Omega)+\beta |\Omega|)\xrightarrow{\beta\to+\infty}\lambda_k(\Omega)|\Omega|,
\end{gather}
where the second limit is a direct consequence of Theorem~\ref{thm:dynamiclargebeta}. 
This suggests a link between maximisation of perimeter-normalised Steklov eigenvalues and maximisation of area-normalised Neumann eigenvalues. In particular, the Szeg\H{o}--Weinberger inequality states that for each bounded planar domain $\Omega\subset\R^2$, 
\[\lambda_1(\Omega)|\Omega|\leq\lambda_1(\D)\pi\cong 3.39\pi,\]
hence the largest possible value for the RHS of~\eqref{eq:limithomoplanar} is $\lambda_1(\D)\pi$.
A simple diagonal argument then leads to the existence of a family $\Omega^\eps\subset\R^2$ with $\sigma_1(\Omega^\eps)L(\partial\Omega^\eps)\xrightarrow{\eps\to 0}\lambda_1(\D)\pi$. In combination with
the above Theorem~\ref{thm:KokarevGenus0} of Kokarev, this shows that
\[\lambda_1(\D)\pi\leq \sup\{\sigma_1(\Omega)L(\partial\Omega)\,:\,\Omega\subset\R^2\}\leq 8\pi.\]

In this perforation procedure, the balls $B(p,r_\eps)$ all have the same radius $r_\eps$. Could one obtain larger Steklov eigenvalues by relaxing this constraint? Indeed, given a constant $\alpha>d$ and a positive continuous function $\beta:\Omega\to\R$, Girouard, Karpukhin and Lagac\'e~\cite{GiKaLa2021} introduced the family of functions $r_{\eps,\alpha}:\Omega\to\R$ defined by
\[r_{\eps,\alpha}(p)=\left(\frac{\eps^\alpha}{|\Sp^d|}\beta(p)\right)^{1/d}.\]
This function now determines the radii of the various balls to be removed from $\Omega$, with the situation where $\beta$ is constant and $\alpha=d+1$ corresponding to the critical regime in Theorem~\ref{thm:PeriodicHomoeConstantRadius}.
Given $\eps>0$, let $\Omega^\eps\subset\Omega$ be the domain obtained by removing balls $B(p,r_{\eps,\alpha}(p))\subset\Omega$ of radius $r_{\eps,\alpha}(p)>0$ centered at point $p$ of the periodic lattice $\eps\Z^{d+1}$. The behaviour of the Steklov eigenvalues $\sigma_k(\Omega^\eps)$ as $\eps\to 0$ depends on the choice of density function $\beta$ and on the parameter $\alpha$.
It is convenient to discuss the results in terms of variational eigenvalues of Radon measures. See Section~\ref{subsection:introvareigenradon} for a quick overview and Appendix~\ref{Section:Radon} for more details. The inclusion $\iota:\partial\Omega^\eps\to\overline{\Omega^\eps}$ allows the definition of the push-forward measure \[\mu_\alpha^\eps=\iota_*dV_{\partial\Omega^\eps}\] on $\overline{\Omega^\eps}$, which we call the boundary measure of the perforated domain $\Omega^\eps$.  The corresponding renormalised probability measures are
\[\overline{\mu_\alpha^\eps}=\frac{\mu_\alpha^\eps}{|\partial\Omega^\eps|}.\]
It follows from the definition~(\ref{def:VarEigenRadon}) of variational eigenvalues and from~Example~\ref{ex:vareigenSteklov} that
\[\lambda_k(\Omega^\eps,\overline{\mu_\alpha^\eps})=\sigma_k(\Omega^\eps)|\partial\Omega^\eps|. \]
The behaviour of these measures as $\eps\to 0$ depends on the parameter $\alpha>d$. This is expressed in terms of weak-$\star$ convergence to limit measures. Girouard, Karpukhin and Lagac\'e~\cite[Theorem 6.2]{GiKaLa2021} proved the convergence of the corresponding variational eigenvalues.
\begin{thm}\label{thm:HomoBdryGKL}
Let $\Omega\subset\R^{d+1}$ be a bounded domain with smooth boundary $\Sigma$ and let $\beta:\Omega\to\R$ be a positive continuous function. For each $\eps>0$ small enough, let $T^\eps\subset\Omega$ be the union of all balls $B(p,r_{\eps,\alpha}(p))$ with $p\in\eps\Z^{d+1}$ that are included in $\Omega$, and consider the perforated domain $\Omega^\eps:=\Omega\setminus\overline{T^\eps}$.
\begin{description}
\item[Small-holes regime] If $\alpha>d+1$, then the measures $\overline{\mu_\alpha^\eps}$ concentrate on the boundary $\Sigma$:
\[\overline{\mu_\alpha^\eps}\xrightarrow{\eps\to0}\overline{dV_{\Sigma}}.\]
Moreover,
\[\sigma_k(\Omega^\eps)|\partial\Omega^\eps|=\lambda_k(\Omega^\eps,\overline{\mu_\alpha^\eps})\xrightarrow{\eps\to0}\lambda_k(\Omega,\overline{dV_{\Sigma}})=\sigma_k(\Omega)|\Sigma|.\]
\item[Large-holes regime] If $\alpha\in (d,d+1)$, then the boundary becomes negligible in the limit and $\beta dV_\Omega$ dominates:
\[\overline{\mu_\alpha^\eps}\xrightarrow{\eps\to0}\overline{\beta dV_{\Omega}}.\]
Moreover,
\[\sigma_k(\Omega^\eps)|\partial\Omega^\eps|
=
\lambda_k(\Omega^\eps,\overline{\mu_\alpha^\eps})\xrightarrow{\eps\to0}\lambda_k(\Omega,\overline{\beta dV_{\Omega}})
=
\lambda_k(\Omega,\beta)\int_{\Omega}\beta\,dV_\Omega.\]
\item[Critical regime] If $\alpha=d+1$, then both the boundary and interior measures persist in the limit:
\[\overline{\mu_\alpha^\eps}\xrightarrow{\eps\to0}\overline{dV_{\Sigma}+\beta dV_{\Omega}}.\]
Moreover,
\[\sigma_k(\Omega^\eps)|\partial\Omega^\eps|=\lambda_k(\Omega^\eps,\overline{\mu_\alpha^\eps})\xrightarrow{\eps\to0}\lambda_k(\Omega,\overline{dV_{\Sigma}+\beta dV_{\Omega}})=\sigma_{k,\beta}(\Omega)(|\Sigma|+\int_{\Omega}\beta\,dV_\Omega).\]
\end{description}
\end{thm}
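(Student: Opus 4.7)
The plan is to deduce all three regimes from a single scheme: first establish weak-$\star$ convergence of the renormalised boundary measures $\overline{\mu_\alpha^\eps}$ on $\overline{\Omega}$, and then invoke the continuity of variational eigenvalues on a suitable class of Radon measures, as developed in Appendix~\ref{Section:Radon} and already exploited in the proof of Theorem~\ref{thm:PeriodicHomoeConstantRadius}. The three regimes differ only in which part of the measure dominates in the limit, so the same machinery applies uniformly once the measure convergence is controlled.

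For the mass asymptotics, for each $p\in\eps\Z^{d+1}$ whose ball $B(p,r_{\eps,\alpha}(p))$ lies in $\Omega$ the removed sphere contributes area $|\Sp^d|\,r_{\eps,\alpha}(p)^d=\eps^\alpha\beta(p)$, and the number of such lattice points is $|\Omega|/\eps^{d+1}+o(\eps^{-d-1})$. Recognising $\sum_p\eps^\alpha\beta(p)$ as $\eps^{\alpha-d-1}$ times a Riemann sum of $\beta$ gives
\[|\partial T^\eps|\;=\;\eps^{\alpha-d-1}\Bigl(\int_\Omega\beta\,dV_\Omega+o(1)\Bigr),\]
which, compared with $|\Sigma|=O(1)$, immediately separates the three cases $\alpha>d+1$, $\alpha=d+1$, $\alpha\in(d,d+1)$. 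The same Riemann-sum argument applied to $f\beta$ for an arbitrary test function $f\in C(\overline{\Omega})$ yields
\[\int_{\partial T^\eps}f\,dV\;=\;\eps^{\alpha-d-1}\Bigl(\int_\Omega f\beta\,dV_\Omega+o(1)\Bigr),\]
while $\int_\Sigma f\,dV$ is unchanged. Adding the two contributions and dividing by $|\partial\Omega^\eps|$, the three candidate weak-$\star$ limits $\overline{dV_\Sigma}$, $\overline{dV_\Sigma+\beta\,dV_\Omega}$, and $\overline{\beta\,dV_\Omega}$ appear in precisely the stated ranges of $\alpha$.

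The delicate step — and, I expect, the main obstacle — is to promote this weak-$\star$ convergence of measures into convergence of variational eigenvalues $\lambda_k(\Omega^\eps,\overline{\mu_\alpha^\eps})\to\lambda_k(\Omega,\mu_\infty)$. The strategy is to verify that the family $\{\overline{\mu_\alpha^\eps}\}$ lies in the class of admissible Radon measures for which Appendix~\ref{Section:Radon} provides continuity of variational spectra with respect to weak-$\star$ convergence. Concretely, this reduces to constructing extension operators $H^1(\Omega^\eps)\to H^1(\Omega)$ with operator norms bounded uniformly in $\eps$, together with uniform Sobolev trace inequalities on the perforated domains. The hypothesis $\alpha>d$ is decisive here because it forces $r_{\eps,\alpha}(p)=o(\eps)$, so the holes do not percolate and capacity estimates of Cioranescu--Murat type can be adapted, with only minor modifications, to accommodate the spatially varying radii prescribed by $\beta$. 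The small-holes regime $\alpha>d+1$ is essentially a perturbation of the ambient domain $\Omega$ and should follow from a direct application of Proposition~\ref{prop: dir neum bracket}-type bracketing, whereas the critical and large-holes regimes require the full capacity argument.

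Once the spectral stability is established, the three limiting variational eigenvalues are identified via Examples~\ref{ex:vareigenSteklov}, \ref{ex: vareigen weighted lap}, and the dynamical eigenvalue example: the measure $\overline{dV_\Sigma}$ gives the Steklov spectrum of $\Omega$, $\overline{\beta\,dV_\Omega}$ gives the weighted Neumann spectrum $\lambda_k(\Omega,\beta)$, and $\overline{dV_\Sigma+\beta\,dV_\Omega}$ gives the dynamical spectrum $\sigma_{k,\beta}(\Omega)$. Multiplying each limit by the total mass recovered in Step~1 yields the three formulas in the statement of the theorem.
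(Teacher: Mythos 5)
Your proposal matches precisely the strategy the paper indicates for this theorem: compute the weak-$\star$ limit of the renormalised measures $\overline{\mu_\alpha^\eps}$, then appeal to the continuity theorem for variational eigenvalues of admissible Radon measures (Theorem~\ref{thm:ContinuityRadon}) to convert this into convergence of the normalised Steklov spectrum. The mass asymptotics and Riemann-sum computations are correct --- the removed sphere at $p$ contributes $\eps^\alpha\beta(p)$ to $|\partial T^\eps|$, giving $|\partial T^\eps|=\eps^{\alpha-d-1}\bigl(\int_\Omega\beta\,dV+o(1)\bigr)$ --- the three weak-$\star$ limits are correctly identified from the threshold at $\alpha=d+1$, and the concluding identification of the limiting variational spectra with the Steklov, weighted Neumann, and dynamical spectra via the examples in Subsection~\ref{subsection:introvareigenradon} is exactly as in the paper.

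Two caveats. First, the aside that the small-holes regime should follow from a Proposition~\ref{prop: dir neum bracket}-type bracketing is not sound: $\partial\Omega^\eps=\Sigma\sqcup\partial T^\eps$ contains the interior spheres, which have no counterpart in $\partial\Omega=\Sigma$, and the lattice places holes at distance $O(\eps)$ from $\Sigma$, so no fixed neighbourhood of the boundary is hole-free uniformly in $\eps$; that proposition requires precisely such a fixed common neighbourhood, and the continuity machinery is needed in all three regimes. Second, Theorem~\ref{thm:ContinuityRadon} requires, in addition to weak-$\star$ convergence (condition M1) and a uniformly bounded family of extension operators (condition M2), that $\overline{\mu_\alpha^\eps}\to\mu_\infty$ in the dual of $W^{1,1}(\Omega,dV_g)$; your sketch verifies convergence against continuous test functions, but the dual-$W^{1,1}$ convergence is strictly stronger and must be established separately, using the even spacing and $o(\eps)$ size of the holes quantitatively.
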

The proof of Theorem~\ref{thm:HomoBdryGKL} is based on continuity properties of variational eigenvalues associated to the Radon measures $\overline{\mu_\alpha^\eps}$. See Theorem~\ref{thm:ContinuityRadon}.
For a simply-connected domain $\Omega\subset\R^2$, particularly interesting density functions $\beta\in C^\infty(\Omega)$ can be obtained by considering conformal maps $\Phi_\delta$ from the disk $\D$ to punctured spheres $C_\delta=\Sp^2\setminus B(p,\delta)$. In particular, if $\Omega=\D$ these maps can be constructed explicitly as a composition of the stereographic parametrisation with homotheties of the plane. The pullback of the round metric $g_{\Sp^2}$ is of the form $\Phi_\delta^{\star}g_{\Sp^2}=\beta_{\delta}g_{\text{eucl}}$, for some positive $\beta_\delta\in C^\infty(\overline{\D})$. It follows from conformal invariance of the Dirichlet energy that the Neumann eigenvalues of $C_\delta$ can be expressed as weighted Neumann eigenvalues of $\Omega$:
\[\lambda_k(C_\delta,g_{\Sp^2})=\lambda_k(\Omega,\beta_{\delta}dV_\Omega).\]
Now, it is well known that the Neumann eigenvalues of a punctured closed Riemannian manifold converge to the eigenvalues of the manifold as the radius of the puncture goes to zero (see for instance the work of Ann\'e~\cite{An1986}). In particular, $\lambda_k(C_\delta)\xrightarrow{\delta\to 0}\lambda_k(\Sp^2)$.
Combining this observation with the large-hole regime of~Theorem~\ref{thm:HomoBdryGKL} leads to a family of perforated domains $\Omega^{\eps}\subset\Omega$ such that
\[\sigma_k(\Omega^\eps)L(\partial\Omega^\eps)\xrightarrow{\eps\to0}
\lambda_k(\Omega,\beta_\delta)\int_{\Omega}\beta_\delta\,dV_\Omega=\lambda_k(C_\delta)|C_\delta|\xrightarrow{\delta\to 0}\lambda_k(\Sp^2)\times 4\pi.\]
Since $\lambda_1(\Sp^2)=2$, this shows that Kokarev's inequality~\eqref{ineq:Kokarev} is sharp and provides a complete solution for the isoperimetric problem for $\sigma_1$ of planar domains. 
\begin{thm}[\cite{Ko2014,GiKaLa2021}]
\label{thm:IsopStekOnePlanar}
Let $\Omega\subset\R^2$ be a bounded domain with sufficiently regular boundary. Then,
$\sigma_1(\Omega)L(\partial\Omega)<8\pi$.
Moreover there exists a family $\Omega^\eps\subset\R^2$ such that
\[\sigma_1(\Omega^\eps)L(\partial\Omega^\eps)\xrightarrow{\eps\to 0}8\pi.\]
\end{thm}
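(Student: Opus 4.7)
The plan is to combine the two independent pieces already laid out in the excerpt. The strict upper bound $\sigma_1(\Omega)L(\partial\Omega)<8\pi$ is just a restatement of Kokarev's Theorem~\ref{thm:KokarevGenus0}: any bounded planar domain has genus $0$, so that inequality applies with no further work. The content of the theorem thus lies entirely in the sharpness assertion, and this is where the homogenisation machinery enters.

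For the saturation, the first step is to realise $8\pi$ as a limit of Neumann quantities on punctured spheres. For each small $\delta>0$, let $\Phi_\delta:\D\to C_\delta=\Sp^2\setminus B(p,\delta)$ be the conformal diffeomorphism obtained by composing stereographic projection with a well-chosen similarity of $\R^2$, and write $\Phi_\delta^\star g_{\Sp^2}=\beta_\delta g_{\text{eucl}}$ with $0<\beta_\delta\in C^\infty(\overline{\D})$. Conformal invariance of the Dirichlet energy in dimension two gives
\[
\lambda_1(\D,\beta_\delta\,dV_\D)=\lambda_1(C_\delta),\qquad \int_\D \beta_\delta\,dV_\D=|C_\delta|,
\]
and hence $\lambda_1(\D,\beta_\delta)\int_\D\beta_\delta\,dV_\D=\lambda_1(C_\delta)|C_\delta|$. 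As $\delta\to 0$ the result of Anné on Neumann eigenvalues of punctured manifolds gives $\lambda_1(C_\delta)\to\lambda_1(\Sp^2)=2$, while obviously $|C_\delta|\to 4\pi$, so the product tends to $8\pi$.

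The second step is to use the large-holes regime of Theorem~\ref{thm:HomoBdryGKL} to realise each of these quantities as a limit of perimeter-normalised Steklov eigenvalues of planar domains. Fix $\alpha\in(1,2)$. For each $\delta>0$, apply Theorem~\ref{thm:HomoBdryGKL} on $\Omega=\D$ with the positive continuous density $\beta_\delta$; this yields a family of perforated domains $\D^{\eps,\delta}\subset\D$ with
\[
\sigma_1(\D^{\eps,\delta})\,L(\partial\D^{\eps,\delta})\xrightarrow{\,\eps\to 0\,}\lambda_1(\D,\beta_\delta)\int_\D\beta_\delta\,dV_\D=\lambda_1(C_\delta)|C_\delta|.
\]
A diagonal extraction then finishes the argument: choose $\delta_n\to 0$ and, for each $n$, pick $\eps_n>0$ small enough that the above limit holds to within $1/n$. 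Setting $\Omega^{\eps_n}:=\D^{\eps_n,\delta_n}$ produces a sequence of planar domains with $\sigma_1(\Omega^{\eps_n})L(\partial\Omega^{\eps_n})\to 8\pi$.

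The main technical point to verify is the compatibility of the two limits $\eps\to 0$ and $\delta\to 0$. The densities $\beta_\delta$ degenerate as $\delta\to 0$ (a zero develops where $\Phi_\delta$ sends the disk boundary towards the puncture region), so one has to check that Theorem~\ref{thm:HomoBdryGKL} applies with $\beta_\delta$ for every fixed small $\delta$; positivity and continuity of $\beta_\delta$ on the closed disk for each fixed $\delta>0$ is exactly what the theorem requires, and the diagonal argument does not demand any uniformity in $\delta$. Everything else amounts to invoking results already in the paper, so no serious obstacle remains.
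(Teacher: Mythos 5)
Your proposal is correct and follows the same route the paper itself takes: the strict inequality is indeed just Kokarev's Theorem~\ref{thm:KokarevGenus0}, and the sharpness is obtained exactly as in the discussion preceding the theorem, via the densities $\beta_\delta$ arising from conformal maps to punctured spheres, the large-holes regime of Theorem~\ref{thm:HomoBdryGKL}, Ann\'e's convergence result $\lambda_1(C_\delta)\to\lambda_1(\Sp^2)=2$, and a diagonal extraction. You also correctly flag the one point worth spelling out — that $\beta_\delta$ is positive and continuous on $\overline\D$ for each fixed $\delta>0$, and that no uniformity in $\delta$ is needed for the diagonal argument — so nothing is missing.
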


Instead of using the round metric $g_{\Sp^2}$ on the sphere, one can use an arbitrary metric $g$ and proceed exactly as above to obtain planar domains $\Omega^\eps$ such that $\sigma_k(\Omega^\eps)L(\partial\Omega^\eps)\xrightarrow{\eps\to 0}\lambda_k(\Sp^2,g)\text{Area}(\Sp^2,g)$. The best upper bound for these area-normalised eigenvalues were obtained by Karpukhin, Nadirashvili, Penskoi and Polterovich in~\cite{KNPP2020}:
\[\lambda_k(\Sp^2,g)\text{Area}(\Sp^2,g)\leq 8\pi k.\]
This shows that $\sup\{\sigma_k(\Omega)L(\partial\Omega)\,:\Omega\subset\R^2\}\geq 8\pi k$. However, $8\pi k$ is also an upper bound, as we will see shortly (see Theorem~\ref{thm: GiKaLa 8pi}).

\subsection{Best upper bounds for Steklov eigenvalues and conformal eigenvalues}

Thus far we have been discussing homogenisation in the Euclidean setting where we have a periodic procedure for perforating a domain. In order to address domains in compact Riemannian manifolds, Girouard and Lagac\'e~\cite{GiLa2021} extended the homogenisation procedure of~\cite{GiHeLa2021} to the non-periodic setting by using Vorono\u{\i} tessellations associated to maximal $\eps$-separated subsets in a closed Riemannian manifold $(M,g)$.
\begin{thm}[\cite{GiLa2021}]\label{thm:homoclosed}
Let $(M,g)$ be a closed Riemannian manifold and let $\beta:M\to\R_{>0}$ a continuous function. There exists a family $\Omega^\eps\subset M$ such that
$|\partial\Omega^\eps|\xrightarrow{\eps\to 0}\int_{M}\beta\,dV_g$, $|\Omega^\eps|\xrightarrow{\eps\to 0}|M|_g$ and for each $k\in\N$,
\begin{gather}\label{eq:limithomomanifold}
    \sigma_k(\Omega^\eps)\xrightarrow{\eps\to 0}\lambda_k(\beta^{-1}\Delta_g).
\end{gather}
\end{thm}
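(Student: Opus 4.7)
The plan is to adapt the periodic perforation argument of Theorem~\ref{thm:HomoBdryGKL} to the curved setting by replacing the Euclidean lattice $\eps\Z^{d+1}$ with a maximal $\eps$-separated net in $(M,g)$ and the cubic tiling with its Vorono\u{\i} tessellation. Because $\partial M=\emptyset$, the $dV_\Sigma$ contribution from the critical regime is absent, so the limit measure on the right-hand side of Theorem~\ref{thm:HomoBdryGKL} collapses to $\beta\,dV_g$ alone; Example~\ref{ex: vareigen weighted lap} then identifies the resulting variational eigenvalues with those of the weighted Laplacian $\beta^{-1}\Delta_g$, matching~\eqref{eq:limithomomanifold}.

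For the construction, fix $\eps>0$ and choose a maximal $\eps$-separated set $\{x_i^\eps\}_{i=1}^{N_\eps}\subset M$, which is automatically $\eps$-dense. Let $V_i^\eps$ be the Vorono\u{\i} cell of $x_i^\eps$; by compactness of $M$ and bounded geometry one has $B(x_i^\eps,c\eps)\subset V_i^\eps\subset B(x_i^\eps,\eps)$ for some uniform $c>0$, while $|V_i^\eps|/\eps^{d+1}$ is bounded above and below by constants tending to the Euclidean model value as $\eps\to 0$. Setting
\[
r_\eps(x_i^\eps)=\left(\frac{\eps^{d+1}\beta(x_i^\eps)}{|\Sp^d|}\right)^{1/d},
\]
I would remove the balls $B(x_i^\eps,r_\eps(x_i^\eps))$ from $M$ to obtain $\Omega^\eps$. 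Each hole has volume of order $\eps^{(d+1)^2/d}=o(\eps^{d+1})$, while its boundary has area $|\Sp^d|\,r_\eps(x_i^\eps)^d(1+o(1))=\eps^{d+1}\beta(x_i^\eps)(1+o(1))$, uniformly in $i$. Summing over the $\sim |M|_g/\eps^{d+1}$ cells yields $|\Omega^\eps|\to|M|_g$; interpreting the boundary-area sum as a Riemann sum over the Vorono\u{\i} partition with cell weights $|V_i^\eps|$ gives $|\partial\Omega^\eps|\to\int_M\beta\,dV_g$.

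Next I would establish the weak-$\star$ convergence $\mu^\eps:=\iota_{\star}dV_{\partial\Omega^\eps}\rightharpoonup\beta\,dV_g$ on $M$ by testing against continuous functions and invoking uniform continuity of $\beta$ and the test function on each Vorono\u{\i} cell. Since $\sigma_k(\Omega^\eps)=\lambda_k(\Omega^\eps,\mu^\eps)$ by Example~\ref{ex:vareigenSteklov} and $\lambda_k(M,\beta\,dV_g)=\lambda_k(\beta^{-1}\Delta_g)$ by Example~\ref{ex: vareigen weighted lap}, the conclusion~\eqref{eq:limithomomanifold} reduces to the abstract continuity of variational eigenvalues with respect to weak-$\star$ convergence of Radon measures outlined in Appendix~\ref{Section:Radon}. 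The main obstacle will be verifying the functional-analytic hypotheses of this continuity statement: one needs uniform Poincar\'e-type inequalities on the perforated domains $\Omega^\eps$ together with a non-concentration estimate preventing $L^2(\mu^\eps)$-mass from collapsing, both of which must survive as $\eps\to 0$ despite the proliferation of small spherical boundary components. I would obtain these locally on each Vorono\u{\i} cell by rescaling to unit size and comparing with a Euclidean model via bounded geometry, then assemble them globally using a partition of unity subordinate to the tessellation; this local-to-global passage is the technical core of the argument and is precisely where the non-periodic setting diverges substantively from the proof of Theorem~\ref{thm:HomoBdryGKL}.
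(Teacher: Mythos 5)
Your overall strategy matches the approach the survey attributes to \cite{GiLa2021}: replace the Euclidean lattice by a maximal $\eps$-separated net in $(M,g)$, use the associated Vorono\u{\i} tessellation, and pass to the limit via continuity of variational eigenvalues of Radon measures, using Examples~\ref{ex:vareigenSteklov} and~\ref{ex: vareigen weighted lap} to identify the two sides of~\eqref{eq:limithomomanifold}. You also correctly flag that verifying the hypotheses of Theorem~\ref{thm:ContinuityRadon} --- uniform extension operators and Poincar\'e estimates on the perforated manifolds --- is the technical core.

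There is, however, a concrete gap in your choice of radius. Setting $r_\eps(x_i^\eps)=\bigl(\eps^{d+1}\beta(x_i^\eps)/|\Sp^d|\bigr)^{1/d}$ imports the Euclidean periodic formula verbatim, so the $i$th hole contributes boundary area $\eps^{d+1}\beta(x_i^\eps)(1+o(1))$ and
\[
|\partial\Omega^\eps|\approx \sum_i\eps^{d+1}\beta(x_i^\eps).
\]
This is \emph{not} a Riemann sum for $\int_M\beta\,dV_g$, because the weight $\eps^{d+1}$ does not match the Vorono\u{\i} cell volume $|V_i^\eps|$. For a maximal $\eps$-separated set on a curved manifold --- indeed already on $\R^{d+1}$ once the lattice structure is abandoned --- the ratios $|V_i^\eps|/\eps^{d+1}$ are uniformly bounded above and below by dimensional constants but do \emph{not} tend to a single common value; in particular $N_\eps\,\eps^{d+1}$ need not converge to $|M|_g$. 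The same mismatch invalidates the claimed weak-$\star$ limit $\mu^\eps\to\beta\,dV_g$: testing against $f\in C^0(M)$ produces $\sum_i f(x_i^\eps)\eps^{d+1}\beta(x_i^\eps)$ rather than a Riemann sum $\sum_i f(x_i^\eps)|V_i^\eps|\beta(x_i^\eps)$.

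The repair is to let the radius see the actual tessellation: set $r_\eps(x_i^\eps)=\bigl(|V_i^\eps|\,\beta(x_i^\eps)/|\Sp^d|\bigr)^{1/d}$. Then each hole boundary has area $|V_i^\eps|\beta(x_i^\eps)(1+o(1))$ with the error uniform in $i$ (as $r_\eps(x_i^\eps)=O(\eps^{1+1/d})$), the boundary-area sum $\sum_i|V_i^\eps|\beta(x_i^\eps)$ converges to $\int_M\beta\,dV_g$ because $\sum_i|V_i^\eps|=|M|_g$ and the cell diameters shrink, and the weak-$\star$ convergence of $\mu^\eps$ follows by the same reasoning. Since $|V_i^\eps|\leq C\eps^{d+1}$ uniformly, one still has $r_\eps(x_i^\eps)=o(\eps)$, so the balls remain disjoint, contained in their cells, and of total removed volume $o(1)$. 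With this correction the remainder of your outline is sound; the genuinely hard part, as you note, is the local-to-global assembly of the uniform estimates required by Theorem~\ref{thm:ContinuityRadon}, which is the substance of \cite{GiLa2021}.
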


For surfaces, this suggests a link between maximisation of perimeter-normalised Steklov eigenvalues for domains $\Omega\subset M$ and maximisation of area-normalised eigenvalues of the Laplace operator. This is best expressed by introducing the \emph{conformal eigenvalues} of a closed Riemannian manifold $(M,g)$ of dimension $d+1$. They are defined by
\begin{gather}\label{eq:defConformalEigenvalues}
\lambda_k^*(M,[g]):=\sup_{h\in[g]}\lambda_k(M,h)\text{Vol}(M,h)^{2/{d+1}}.
\end{gather}
They were introduced and studied in~\cite{CoEl2003}, following work of Korevaar~\cite{Ko1993} who showed that they are finite.

In their paper~\cite{KaSt2020}, Karpukhin and Stern discovered a link between the Steklov eigenvalues of domains in a closed surface and the eigenvalues of the Laplacian on that surface. In particular they proved that for any domain $\Omega\subset M$, the following strict inequalities hold:
\begin{gather}\label{eq:karp stern}
    \sigma_1(\Omega,g)L(\partial\Omega)<\lambda_1^*(M,[g])\qquad \text{ and }\qquad\sigma_2(\Omega,g)L(\partial\Omega)<\lambda_2^*(M,[g]).
\end{gather}
It follows from Theorem~\ref{thm:homoclosed} that these inequalities are sharp and raises the question of whether similar inequalities hold for arbitrary index $k$.
Girouard, Karpukhin and Lagac\'e answered this question in the affirmative as follows:
\begin{thm}[\cite{GiKaLa2021}]\label{thm: GKL sharp}
Let $M$ be a closed surface. Then for each $k\in\N$ and for each domain $\Omega\subset M$,
\begin{gather}\label{ineq:StekConfEigen}
    \sigma_k(\Omega)L(\partial\Omega)\leq\lambda_k^*(M,[g]).
\end{gather}
Moreover, for each $k\in\N$, there exists a family of domains $\Omega^\eps\subset M$ such that \[\sigma_k(\Omega^\eps)L(\partial\Omega^\eps)\xrightarrow{\eps\to0}\lambda_k^*(M,[g]).\]
\end{thm}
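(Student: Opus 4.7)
The plan is to prove both the inequality and the saturation within the unified framework of variational eigenvalues of Radon measures on the ambient surface $M$. After suitable normalisation, both $\sigma_k(\Omega)L(\partial\Omega)$ (modulo comparison with transmission eigenvalues) and the conformal quantity $\lambda_k(M,h)\text{Vol}(M,h)$ appear as $\lambda_k(M,g,\overline{\mu})$ for different probability measures $\mu$ on $M$, and the bound~\eqref{ineq:StekConfEigen} then reduces to approximating the singular boundary measure by smooth densities in the conformal class.

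\textbf{Inequality.} Let $\iota\colon\partial\Omega\hookrightarrow M$ denote the inclusion into the closed surface and set $\mu:=\iota_{*}dV_{\partial\Omega}$. By Example~\ref{example:transmission}, the variational eigenvalue $\lambda_k(M,g,\mu)$ equals the $k$-th transmission eigenvalue $\tau_k$ of the decomposition of $M$ along $\partial\Omega$, and $\sigma_k(\Omega)\leq\tau_k$ is recorded there. After the elementary rescaling $\lambda_k(\overline{\mu})=L(\partial\Omega)\,\lambda_k(\mu)$ applied to the probability measure $\overline{\mu}=\mu/L(\partial\Omega)$, this yields
\[
\sigma_k(\Omega)L(\partial\Omega)\;\leq\;\tau_k L(\partial\Omega)\;=\;\lambda_k(M,g,\overline{\mu}).
\]
Now let $\beta_\eps\in C^{\infty}_{>0}(M)$ be a smooth positive density concentrating on $\partial\Omega$ as $\eps\to 0$ (for instance, a mollified multiple of the characteristic function of an $\eps$-tubular neighbourhood of $\partial\Omega$), chosen so that the probability measures $\overline{\mu_\eps}:=\overline{\beta_\eps\,dV_g}$ converge to $\overline{\mu}$ within the admissible class of Radon measures set up in Appendix~\ref{Section:Radon}. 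The continuity result for variational eigenvalues in that setting gives $\lambda_k(M,g,\overline{\mu_\eps})\to\lambda_k(M,g,\overline{\mu})$. Since $\dim M=2$, the conformal invariance of the Dirichlet energy together with Example~\ref{ex: vareigen weighted lap} identifies $\lambda_k(M,g,\beta_\eps dV_g)$ with the Laplace eigenvalue $\lambda_k(M,\beta_\eps g)$ of the conformally changed metric, so that
\[
\lambda_k(M,g,\overline{\mu_\eps})=\lambda_k(M,\beta_\eps g)\,\text{Vol}(M,\beta_\eps g)\;\leq\;\lambda_k^{*}(M,[g])
\]
by the very definition~\eqref{eq:defConformalEigenvalues} of the conformal eigenvalue. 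Letting $\eps\to 0$ closes the argument.

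\textbf{Saturation.} For every $\delta>0$, fix $\beta_\delta\in C^{\infty}_{>0}(M)$ with
\[
\lambda_k(M,\beta_\delta g)\,\text{Vol}(M,\beta_\delta g)\;\geq\;\lambda_k^{*}(M,[g])-\delta;
\]
the existence of such a smooth positive approximate extremiser follows from the definition of $\lambda_k^{*}$, with Remark~\ref{rem:con sing} providing, if needed, the smoothing of any isolated conical singularities of a maximising metric. Applying Theorem~\ref{thm:homoclosed} to $\beta_\delta$ produces a family $\Omega^{\delta,\eps}\subset M$ with $|\partial\Omega^{\delta,\eps}|\to\int_M\beta_\delta\,dV_g=\text{Vol}(M,\beta_\delta g)$ and $\sigma_k(\Omega^{\delta,\eps})\to\lambda_k(\beta_\delta^{-1}\Delta_g)=\lambda_k(M,\beta_\delta g)$ as $\eps\to 0$, hence
\[
\sigma_k(\Omega^{\delta,\eps})\,L(\partial\Omega^{\delta,\eps})\;\xrightarrow{\eps\to0}\;\lambda_k(M,\beta_\delta g)\,\text{Vol}(M,\beta_\delta g)\;\geq\;\lambda_k^{*}(M,[g])-\delta.
\]
A standard diagonal extraction along $\delta,\eps\to 0$ then delivers a single family $\Omega^{\eps}$ with $\sigma_k(\Omega^{\eps})L(\partial\Omega^{\eps})\to\lambda_k^{*}(M,[g])$, combined with the inequality already proved.

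\textbf{Main obstacle.} The delicate point is the continuity of $\mu\mapsto\lambda_k(M,g,\mu)$ along the singular-to-smooth approximation $\overline{\mu_\eps}\to\overline{\mu}$: the limit measure is concentrated on a set of zero $dV_g$-measure, so continuity is far from automatic and depends on the specific class of admissible Radon measures and the precise mode of weak convergence recalled in Appendix~\ref{Section:Radon}. A secondary difficulty for the saturation half is that $\lambda_k^{*}(M,[g])$ need not be attained by a smooth positive conformal factor, so one must verify that the approximation by $\beta_\delta\in C^{\infty}_{>0}(M)$ is compatible both with the definition of $\lambda_k^{*}$ and with the hypothesis of Theorem~\ref{thm:homoclosed}, which strictly requires a positive continuous density.
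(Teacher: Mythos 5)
Your proposal reproduces the paper's own argument essentially verbatim: the inequality is obtained exactly as in \cite{GiKaLa2021} by passing from $\sigma_k(\Omega)$ to the transmission eigenvalues $\tau_k$ via the boundary measure $\mu=\iota_*dV_{\partial\Omega}$, concentrating a family of conformal metrics $\beta_\eps g$ on $\partial\Omega$, and invoking the continuity of variational eigenvalues (Theorem~\ref{thm:ContinuityRadon}); the saturation half is, as you say, a direct consequence of Theorem~\ref{thm:homoclosed} plus a diagonal argument over approximate conformal maximisers. The only cosmetic difference from the paper's presentation is your use of renormalised probability measures $\overline{\mu}$ instead of tracking the total mass $\int_M dV_{g_\eps}\to L(\partial\Omega)$ separately; you also correctly flag that the nontrivial input is the continuity of $\mu\mapsto\lambda_k(\mu)$ across the singular limit, which is where the technical weight of \cite{GiKaLa2021} lies.
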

The family $\Omega^\eps$ is obtained as a direct consequence of Theorem~\ref{thm:homoclosed} while the upper bound is proved using continuity properties of the variational eigenvalues associated to Radon measures. Indeed, given a domain $\Omega\subset M$ with boundary $\Sigma$, let $\mu=\iota_{\star}dA$ be the boundary measure of $\Omega$ in $M$. Recall from Example~\ref{example:transmission} that the variational eigenvalues of $\mu$ are the transmission eigenvalues $\tau_k(\Omega)$ of $\Omega\subset M$.
One can construct a family $g_\eps=\beta_{\eps}g$ of conformal Riemannian metrics that concentrates on $\partial\Omega$ in the weak-$\star$ sense:
$dV_{g_\eps}\xrightarrow{\eps\to 0}\mu$. It was proved in~\cite{GiKaLa2021} that the corresponding variational eigenvalues also converge:
\[\lambda_k(M,g_\eps)=\lambda_k(M,g,dV_{g_\eps})\xrightarrow{\eps\to 0}\lambda_k(M,g,\mu)=\tau_k(\Omega)\geq\sigma_k(\Omega).\]
See Theorem~\ref{thm:ContinuityRadon}. Inequality~\eqref{ineq:StekConfEigen} now follows from the definition~\eqref{eq:defConformalEigenvalues} of the conformal eigenvalue $\lambda_k^*(M,[g])$.

In view of the strict inequalities~\eqref{eq:karp stern}, we ask the following:
\begin{ques}\label{ques:karpstern}
Can inequality~\eqref{ineq:StekConfEigen} be improved to a strict inequality for $k\geq 3$?
\end{ques}

The conformal eigenvalues of many surfaces are known explicitly. For instance, it was proved in~\cite{KNPP2020} that $\lambda_k^*(\Sp^2,[g_{\Sp^2}])=8\pi k$. Because planar domains are conformally equivalent to spherical domains, this implies a complete solution of the isoperimetric problems for $\sigma_k$.
\begin{thm}[\cite{GiKaLa2021}]\label{thm: GiKaLa 8pi}
Let $\Omega\subset\R^2$ be a bounded domain with sufficiently regular boundary. Then,
$\sigma_k(\Omega)L(\partial\Omega)\leq8\pi k$. 
Moreover there exists a family $\Omega^\eps\subset\R^2$ such that
\[\sigma_k(\Omega^\eps)L(\partial\Omega^\eps)\xrightarrow{\eps\to 0}8\pi k.\]
\end{thm}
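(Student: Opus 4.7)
The plan is to deduce both parts of the statement from Theorem \ref{thm: GKL sharp} applied to $M = \Sp^2$, combined with the Karpukhin-Nadirashvili-Penskoi-Polterovich identity $\lambda_k^*(\Sp^2, [g_{\Sp^2}]) = 8\pi k$. The common thread is that the conformal class $[g_{\Sp^2}]$ on the sphere is rich enough to contain representatives that, via stereographic projection, realise any given bounded planar domain isometrically.

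For the upper bound, let $\Omega \subset \R^2$ be a bounded domain with sufficiently regular boundary, and let $\Phi \colon \R^2 \to \Sp^2 \setminus \{N\}$ denote stereographic projection, so that $\Phi^{\ast} g_{\Sp^2} = \rho^2\, g_{\mathrm{eucl}}$ for a smooth positive function $\rho$. Write $\Omega' = \Phi^{-1}(\Omega)$. Since $\Omega$ is bounded, $\overline{\Omega'}$ is compactly contained in $\Sp^2 \setminus \{N\}$, so $\rho$ is bounded above and below on $\overline{\Omega'}$; this lets me choose a smooth positive function $\beta$ on all of $\Sp^2$ that coincides with $\rho^{-2}\circ \Phi^{-1}$ on $\Omega'$. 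Setting $\tilde g := \beta\, g_{\Sp^2} \in [g_{\Sp^2}]$, the map $\Phi$ becomes an isometry from $(\Omega, g_{\mathrm{eucl}})$ onto $(\Omega', \tilde g|_{\Omega'})$, so both $\sigma_k$ and the boundary length are preserved. Theorem \ref{thm: GKL sharp} with $M = \Sp^2$ and $g = \tilde g$ then yields
\[
\sigma_k(\Omega)\,L(\partial\Omega) \;=\; \sigma_k(\Omega', \tilde g)\,L_{\tilde g}(\partial\Omega') \;\leq\; \lambda_k^*(\Sp^2, [\tilde g]) \;=\; \lambda_k^*(\Sp^2, [g_{\Sp^2}]) \;=\; 8\pi k.
\]

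For the sharpness, I would follow the construction sketched in the paragraph preceding the statement. By the definition of $\lambda_k^*$ together with \cite{KNPP2020}, there is a sequence of metrics $g_n \in [g_{\Sp^2}]$ with $\lambda_k(\Sp^2, g_n)\,\text{Area}(\Sp^2, g_n) \to 8\pi k$. Each such $g_n$ can be transferred to a weighted problem on the disk: composing stereographic projection with a conformal diffeomorphism from $\D$ onto a punctured sphere $\Sp^2 \setminus B(p,\delta)$ produces a weight $\beta_{n,\delta}$ on $\D$ whose weighted Laplace eigenvalues approximate those of $(\Sp^2, g_n)$ as $\delta \to 0$, by the standard convergence for small punctures. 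The critical regime of Theorem \ref{thm:HomoBdryGKL}, applied with density $\beta_{n,\delta}$, then produces perforated planar domains $\Omega^{\eps} \subset \D$ whose perimeter-normalised Steklov eigenvalues converge to $\lambda_k(\Sp^2, g_n)\,\text{Area}(\Sp^2, g_n)$ as $\eps \to 0$. A diagonal extraction over the three parameters $n$, $\delta$, and $\eps$ then yields a family $\Omega^{\eps}$ with $\sigma_k(\Omega^{\eps})\,L(\partial\Omega^{\eps}) \to 8\pi k$.

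I expect the upper bound to be essentially immediate once the extension of $\rho^{-2}$ to a smooth positive function on $\Sp^2$ is in hand, since this is precisely what reduces the Euclidean problem to the setting already covered by Theorem \ref{thm: GKL sharp}. The main step that requires some care is the diagonal argument for sharpness, which must track three nested layers of convergence — the maximising sequence for $\lambda_k^*(\Sp^2, [g_{\Sp^2}])$, the small-puncture limit, and the homogenisation limit — but each of these convergences is already established in the results cited above, so this step is bookkeeping rather than a new difficulty.
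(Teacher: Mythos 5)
Your proof is correct and follows essentially the same strategy the paper has in mind: for the upper bound, realise $(\Omega,g_{\mathrm{eucl}})$ isometrically as a domain in $(\Sp^2,\tilde g)$ with $\tilde g$ conformal to the round metric by extending the stereographic conformal factor (this is exactly what the paper's one-line justification "planar domains are conformally equivalent to spherical domains" is compressing), then apply Theorem~\ref{thm: GKL sharp} together with the KNPP identity $\lambda_k^*(\Sp^2,[g_{\Sp^2}])=8\pi k$; for sharpness, the punctured-sphere conformal trick plus homogenisation on the disk plus a diagonal argument is precisely the route the paper sketches in the two paragraphs preceding the statement. The only detail you leave implicit that one might flag is that "sufficiently regular" (e.g.\ Lipschitz) boundary requires the smooth-to-Lipschitz approximation results of Subsection~\ref{sec:upperboundsunifapprox}, but the paper is equally silent on this.
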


\begin{remark}\label{rem:b to infty}
For $M$ a closed surface, define
\begin{equation}\label{eq:lambdak*M}
 \lambda_k^*(M)=\sup_g\,\lambda_k(M,g)\operatorname{Area}(M,g)   
\end{equation}
where the supremum is over all Riemannian metrics on $M$.  Similarly, for $\om$ a compact surface with boundary, define 
\begin{equation}\label{eq:sigmak*M}
 \sigma_k^*(\om)=\sup_g\,\sigma_k(\om,g)L(\partial\om,g)  
\end{equation}
where again the supremum is over all Riemannian metrics on $\om$.  

Now let $M_b$ be the compact surface with boundary  obtained by removing $b$ disjoint disks from $M$. Then as a consequence of Theorem~\ref{thm: GKL sharp} and its proof by homogenisation, we have $\sigma_k^*(M_b)\leq \lambda_k^*(M)$ for all $b$ and 
\[\lim_{b\to\infty}\,\sigma_k^*(M_b)=\lambda_k^*(M).\]
In particular, letting $M$ be the 2-sphere, we have in the notation of Remark~\ref{rem: Lipschitz bound} that 
\[\lim_{b\to\infty}\sigma^*_1(0,b)=8\pi.\]
\end{remark}

\subsection{Stability and quantitative isoperimetry}

Whenever a sharp inequality is known, it becomes interesting to investigate the case of almost equality.
For instance, the case of equality in Weinstock's theorem (Theorem~\ref{thm:Weinstock}) states that for simply-connected domains, $\sigma_1(\Omega)L(\partial\Omega)=2\pi$ if and only if $\Omega$ is a disk. This raises the question of whether a domain $\Omega$ having $\sigma_1(\Omega)L(\partial\Omega)$ near $2\pi$ implies that $\Omega$ is near a disk, and if so, in which sense.

In~\cite{BuNa2021}, Bucur and Nahon gave a negative answer to that question. Let us state a particularly striking case of their result here.
\begin{thm}\label{thm:BucurNahon}
Let $\Omega\subset\R^2$ be a simply-connected planar domain. Then there exists a family $\Omega^\eps$ of simply-connected planar domains such that
$\Omega^\eps\xrightarrow{\text{Hausdorff}}\Omega$ while for each $k\in\N$,
\[\sigma_k(\Omega^\eps)L(\partial\Omega^\eps)\xrightarrow{\eps\to0} 2\pi\sigma_k(\D).\]
\end{thm}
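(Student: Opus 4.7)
The strategy is to modify $\Omega$ by a family of tiny boundary perturbations that, viewed through a Riemann map onto the unit disk $\D$, redistribute the boundary measure toward the uniform one. Let $\Phi:\overline{\D}\to\overline{\Omega}$ be a conformal diffeomorphism. By the conformal invariance of the Dirichlet energy in dimension two, the substitution $v=u\circ\Phi$ transforms the Rayleigh--Steklov quotient on $\Omega$ into
\[\frac{\int_{\D}|\nabla v|^2\,dA}{\int_{\partial\D}v^2\,|\Phi'|\,ds},\]
so that $\sigma_k(\Omega)=\sigma_k(\D,|\Phi'|)$ in the notation of Example~\ref{ex:varweightedSteklov}, and $\sigma_k(\Omega)L(\partial\Omega)=\lambda_k(\overline{\D},\overline{\mu_\Phi})$, where $\mu_\Phi=\iota_{\star}(|\Phi'|\,ds)$ is the push-forward to $\overline{\D}$ and $\overline{\mu_\Phi}$ denotes its normalisation as a probability measure. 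The desired limit $2\pi\sigma_k(\D)$ is precisely $\lambda_k(\overline{\D},\overline{\iota_{\star}ds})$, obtained from the uniform measure on $\partial\D$.

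For each $\eps>0$, I would construct $\Omega^\eps$ by attaching many small outward hairs (or performing small inward indentations) along $\partial\Omega$, each of height $O(\eps)$, arranged so that $\Omega^\eps$ remains simply-connected. The density and length of these hairs are calibrated pointwise along $\partial\Omega$ so as to compensate for the variation of $|\Phi'|$: more cumulative length is added where $|\Phi'|$ is small on $\partial\D$ (equivalently, where $|(\Phi^{-1})'|$ is large on $\partial\Omega$), and less where $|\Phi'|$ is large. Since the amplitudes are $O(\eps)$, one automatically obtains the Hausdorff convergence $\Omega^\eps\to\Omega$. Moreover, using continuity properties of the Riemann map under such small perturbations, the Riemann maps $\Phi_\eps:\overline{\D}\to\overline{\Omega^\eps}$ converge to $\Phi$ (e.g.\ in the Carathéodory sense) and, by design of the hairs, the pullback boundary measures $\overline{\mu_{\Phi_\eps}}$ on $\partial\D$ converge weak-$\star$ to the normalised arclength measure $\overline{\iota_{\star}ds}$.

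The eigenvalue convergence then follows from the continuity of variational eigenvalues of Radon measures under weak-$\star$ convergence (Theorem~\ref{thm:ContinuityRadon}, in the same spirit as the applications in Section~\ref{section:surfaces}): setting $\mu_\eps=\iota_{\star}(|\Phi_\eps'|\,ds)$, we obtain
\[\sigma_k(\Omega^\eps)L(\partial\Omega^\eps)=\lambda_k(\overline{\D},\overline{\mu_\eps})\xrightarrow{\eps\to0}\lambda_k(\overline{\D},\overline{\iota_{\star}ds})=2\pi\sigma_k(\D).\]

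The main obstacle is the joint calibration of the amplitude and the density of the hairs. Making them short enough for Hausdorff convergence is in tension with making them numerous and long enough in total to achieve the target weak-$\star$ limit of the pulled-back boundary measures; a delicate homogenisation-type estimate is needed, in the spirit of those underlying Theorems~\ref{thm:PeriodicHomoeConstantRadius} and~\ref{thm:HomoBdryGKL}. A further difficulty is to control the boundary behaviour of the Riemann maps $\Phi_\eps$ near points where $|\Phi'|$ may degenerate (for instance at corners of $\partial\Omega$), which restricts how uniformly one can steer $\overline{\mu_{\Phi_\eps}}$ toward $\overline{\iota_{\star}ds}$; this is presumably the technical heart of the Bucur--Nahon construction. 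Finally, the simple-connectedness requirement rules out the natural device of cutting small holes, forcing one to work with outward protrusions or shallow slits that never close up into a loop.
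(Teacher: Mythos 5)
Your plan correctly identifies the strategy the paper sketches in Remark~\ref{rem:BucurNahon}: conformal reformulation of $\sigma_k(\D)$ as a weighted Steklov problem, boundary homogenisation calibrated to $|(\Phi^{-1})'|$, and the continuity theorem for variational eigenvalues of Radon measures. Where you deviate --- and where a genuine gap appears --- is in working on $\partial\D$ with the pulled-back measures $\overline{\mu_{\Phi_\eps}}=\overline{|\Phi_\eps'|\,ds}$ attached to the Riemann maps $\Phi_\eps:\D\to\Omega^\eps$ of the \emph{perturbed} domains. You assert that ``by design of the hairs'' these converge weak-$\star$ to normalised arclength, but the calibration was performed on $\partial\Omega$ while $|\Phi_\eps'|$ is the boundary trace of the derivative of a Riemann map you do not control: near the preimage of each thin protrusion it oscillates wildly, and Carath\'eodory convergence gives $\Phi_\eps\to\Phi$ only on compacta in the interior, not on $\partial\D$. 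So the asserted weak-$\star$ convergence on $\partial\D$ is not a consequence of the construction; it is an unproved step, and it is precisely the delicate boundary estimate on Riemann maps that you flag near the end as a ``difficulty.''

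The paper's route (as summarised in Remark~\ref{rem:BucurNahon}) never introduces $\Phi_\eps$. It fixes $\Psi:\Omega\to\D$ once and for all, sets $\mu=|\Psi'|\,ds$ on $\partial\Omega$ (so that $\lambda_k(\Omega,\mu)=\sigma_k(\D)$ and $\mu(\partial\Omega)=2\pi$), and designs oscillating boundaries $\partial\Omega^\eps$ whose normalised arclength measures converge weak-$\star$, as Radon measures in the plane, to $\overline{\mu}$. This is an explicit geometric statement about curves, requiring no conformal analysis of the perturbed domains; one then applies Theorem~\ref{thm:ContinuityRadon} to get $\sigma_k(\Omega^\eps)L(\partial\Omega^\eps)=\lambda_k(\Omega^\eps,\overline{\mu_\eps})\to\lambda_k(\Omega,\overline{\mu})=2\pi\sigma_k(\D)$. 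Two smaller remarks: the hypotheses of Theorem~\ref{thm:ContinuityRadon} require $\Omega_n\subset\Omega$, so inward oscillations (or working inside a slightly enlarged reference domain) are cleaner than outward hairs; and the concern about $|\Phi'|$ degenerating at corners does not arise here, since $\partial\Omega$ is taken smooth so that the Riemann map extends to a boundary diffeomorphism and $\mu$ is a genuine admissible measure with bounded positive density.
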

In particular, $\sigma_1(\Omega^\eps)L(\partial\Omega^\eps)\to 2\pi$, while the domains $\Omega^{\epsilon}$ approach a domain $\Omega$ which could be very different from a disk.
\begin{remark}\label{rem:BucurNahon}
The proof of Theorem~\ref{thm:BucurNahon} is based on a boundary-homogenisation method. Let $\Phi:\Omega\to\D$ be a conformal diffeomorphism. Then because $\Omega$ is smooth, $\Phi$ extends to a diffeomorphism up to the boundary, and one can define the pullback measure $\mu:=\Phi^\star(ds)=|\Phi'(s)|\,ds$ on $\partial\Omega$. Because the Dirichlet energy is conformally invariant, the Steklov problem on $\D$ is isospectral to a weighted Steklov problem on $\Omega$, which we express using the variational eigenvalue associated to this Radon measure:
$\lambda_k(\Omega,\mu)=\sigma_k(\D)$. See Section~\ref{subsection:introvareigenradon} for a quick overview of variational eigenvalues of Radon measures and Example~\ref{ex:varweightedSteklov} for the notion of weighted Steklov problem. The proof is then based on perturbations $\Omega^\eps$ of $\Omega$ by small oscillations of its boundary that lead to smooth approximations of the measure $\mu$ by the boundary measures of $\Omega^\eps$, so that $\sigma_k(\Omega^\eps)\to\lambda_k(\Omega,\mu)$.
\end{remark}
The above result shows that the first perimeter-normalised Steklov eigenvalue is not stable under Hausdorff perturbations of the domain. However, the story is completely different if we change the notion of proximity between $\Omega^\eps$ and $\Omega$ that we use, as we will see shortly.
 Let $\Omega$ be a bounded simply-connected planar domain such that $L(\partial\Omega)=2\pi$. Let $\Phi:\D\to\Omega$ be a conformal diffeomorphism and let $\mu:=\Phi^{\star}ds=\Theta\,ds$,
where $\Theta(z)=|\Phi'(z)|\in C^\infty(\partial\D)$. Because the group of conformal automorphisms of $\D$ is rich enough, it is possible to choose $\Phi$ so that the measure $\mu$ has its center of mass at the origin: $\int_{\partial\D}\pi_i\,d\mu=0$ for each $i\in\{1,2\}$. 
This follows from a topological argument that was introduced by Hersch~\cite{He1970} following work of Szeg\H{o}.
Observe that
\[\lambda_1(\D,\mu)=\sigma_1(\Omega).\]
The following is a reformulation of \cite[Proposition 3.1]{BuNa2021}.
\begin{thm}
There are constants $\delta_0\in (0,1)$ and $C>0$ with the following property.
Let $\Theta\in L^\infty(\partial\D)$ be a positive density such that $\int_{\partial\D}\pi_i\,\Theta ds=0$ for $i\in\{1,2\}$ and such that $\int_{\partial\D}\Theta\,ds=2\pi$.
If $\lambda_1(\D,\Theta\,ds)>\delta_0$, then
\begin{gather}\label{ineq:wstab}
\lambda_1(\D,\Theta\,ds)\leq \frac{1}{1+C\|\Theta-1\|_{H^{-1/2}}^2}.
\end{gather}
\end{thm}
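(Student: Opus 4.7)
By the hypotheses, $f:=\Theta-1\in L^\infty(\partial\D)$ is $L^2$-orthogonal to $1$, $\cos\theta$, and $\sin\theta$; writing its Fourier expansion $f=\sum_{k\ge2}(c_k\cos k\theta+s_k\sin k\theta)$, one has $\|f\|_{H^{-1/2}}^2$ comparable to $\sum_{k\ge2}(c_k^2+s_k^2)/k$. Recall that $\DtN_\D=\sqrt{\Delta_{\partial\D}}$ and that the unperturbed first nonzero eigenvalue $\lambda_1(\D,ds)=1$ is double with eigenspace $\mathrm{span}\{\cos\theta,\sin\theta\}$. The plan is to exhibit a pair of admissible trial functions in the variational characterisation of $\lambda_1$ whose Rayleigh quotients are small enough that the larger of the two beats $1/(1+C\|f\|_{H^{-1/2}}^2)$, the point being that the trial functions must be flexible enough to capture the full Fourier expansion of $f$ and not merely its low modes.

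For $\phi\in\{0,\pi/2\}$ and a harmonic-extension datum $w_\phi(\theta)=\sum_{k\ge 2}(a_k^\phi\cos k\theta+b_k^\phi\sin k\theta)$ to be chosen, consider
\[
v_\phi(\theta)=\cos(\theta-\phi)+w_\phi(\theta)+c_\phi,\qquad c_\phi=-\tfrac{1}{2\pi}\langle w_\phi,f\rangle_{L^2},
\]
where the constant $c_\phi$ is uniquely determined by the admissibility condition $\int v_\phi\,\Theta\,ds=0$. A direct Fourier computation (product-to-sum identities and Parseval) of the Rayleigh--Steklov quotient yields, to leading order in $w_\phi$ and $f$,
\[
\frac{1}{R(v_\phi)}=1+\tfrac12\bigl(c_2\cos 2\phi+s_2\sin 2\phi\bigr)+\kappa B_\phi(w_\phi,f)-\kappa\bigl(\|w_\phi\|_{H^{1/2}}^2-\|w_\phi\|_{L^2}^2\bigr)+\text{remainder},
\]
for an explicit $\kappa>0$, where $B_\phi(w,f):=\int_{\partial\D}\cos(\theta-\phi)\,w\,f\,ds$ is a bilinear pairing whose only effect on the Fourier side is to couple the mode-$k$ coefficients of $w$ with the mode-$(k\pm 1)$ coefficients of $f$.

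Averaging the two expansions over $\phi\in\{0,\pi/2\}$ produces the crucial cancellation $c_2\cos 0+s_2\sin 0 + c_2\cos\pi+s_2\sin\pi=0$, so that the first-order mode-$2$ contribution disappears. The duality $H^{1/2}\!\leftrightarrow\!H^{-1/2}$ furnishes, for each $\phi$ and each prescribed $\|w_\phi\|_{H^{1/2}}=W$, a choice of $w_\phi$ with $|B_\phi(w_\phi,f)|\gtrsim W\,\|f\cos(\theta-\phi)\|_{H^{-1/2}}$, while an elementary Fourier identity shows $\|f\cos\theta\|_{H^{-1/2}}^2+\|f\sin\theta\|_{H^{-1/2}}^2\gtrsim\|f\|_{H^{-1/2}}^2$. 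Since the spectral gap $k\ge 2$ forces $\|w_\phi\|_{L^2}^2\le\tfrac12\|w_\phi\|_{H^{1/2}}^2$, optimising over $W$ against the quadratic penalty $\kappa(\|w_\phi\|_{H^{1/2}}^2-\|w_\phi\|_{L^2}^2)\ge\tfrac{\kappa}{2}W^2$ turns the linear lower bound on $B_\phi$ into a quadratic gain, and summing over $\phi\in\{0,\pi/2\}$ yields
\[
\max_{\phi\in\{0,\pi/2\}}\frac{1}{R(v_\phi)}\,-\,1\ge C\|f\|_{H^{-1/2}}^2.
\]
Since $\lambda_1\le R(v_\phi)$ for each admissible $v_\phi$, this rearranges to $\lambda_1\le 1/(1+C\|f\|_{H^{-1/2}}^2)$, as claimed. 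The main obstacle is controlling the remainder in the Rayleigh expansion---essentially terms of the form $\int w_\phi^2 f\,ds$ together with the $c_\phi$-mediated corrections---in terms of $\|f\|_{H^{-1/2}}$ alone; this is exactly where the standing hypothesis $\lambda_1>\delta_0$ enters, as it provides the uniform a~priori bound on $\|f\|_{H^{-1/2}}$ needed to absorb these higher-order contributions.
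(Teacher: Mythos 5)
The paper cites \cite[Proposition 3.1]{BuNa2021} for this theorem and gives no proof of its own, so there is no in-paper argument to compare against; I will assess your proposal on its own merits. The structural plan is sound: perturbing the Weinstock trial functions $\cos(\theta-\phi)$ by mode-$\geq 2$ tails $w_\phi$ and a normalising constant $c_\phi$, averaging over $\phi\in\{0,\pi/2\}$ to dispose of the linear mode-$2$ contribution, and optimising the resulting quadratic form is the right shape of argument, and the Fourier estimate $\|(f\cos\theta)_{\geq 2}\|_{H^{-1/2}}^2+\|(f\sin\theta)_{\geq 2}\|_{H^{-1/2}}^2\gtrsim\|f\|_{H^{-1/2}}^2$ does hold once one uses $\hat f_1=0$ and keeps track of the fact that $w_\phi$ pairs only against modes $\geq 2$.

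The genuine gap is your final paragraph. You identify the remainder terms $\int_{\partial\D}w_\phi^2 f\,ds$ and the $c_\phi$-corrections and assert that the hypothesis $\lambda_1>\delta_0$ supplies ``the uniform a priori bound on $\|f\|_{H^{-1/2}}$ needed to absorb these higher-order contributions.'' As written this is circular: the conclusion~\eqref{ineq:wstab} is \emph{equivalent} to the a priori bound $\|f\|_{H^{-1/2}}^2\le C^{-1}(\lambda_1^{-1}-1)$, so you cannot take that bound as input. Moreover, even granting a bound on $\|f\|_{H^{-1/2}}$ you would still be stuck, since $\int w_\phi^2 f\,ds$ is not controlled by $\|w_\phi\|_{H^{1/2}}$ and $\|f\|_{H^{-1/2}}$ alone ($H^{1/2}(\partial\D)$ is not a multiplication algebra). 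What actually rescues the argument, and what the proposal misses, is that these ``remainder'' terms are not error terms at all. If you expand $\int_{\partial\D}v_\phi^2\,\Theta\,ds$ exactly (no Taylor expansion) and substitute the admissibility constraint $c_\phi=-\tfrac{1}{2\pi}\int w_\phi f\,ds$, the three pieces $\|w_\phi\|_{L^2}^2+\int w_\phi^2 f\,ds-\tfrac{1}{2\pi}\bigl(\int w_\phi f\,ds\bigr)^2$ assemble into $\int w_\phi^2\,\Theta\,ds - \bigl(\int\Theta\,ds\bigr)^{-1}\bigl(\int w_\phi\,\Theta\,ds\bigr)^2$, which is a weighted variance and hence nonnegative by Cauchy--Schwarz. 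Dropping this nonnegative quantity yields the \emph{exact} lower bound
\[
\frac{1}{R(v_\phi)}\;\ge\;\frac{\pi+\tfrac{\pi}{2}\bigl(c_2\cos 2\phi+s_2\sin 2\phi\bigr)+2B_\phi(w_\phi,f)}{\pi+\int_\D|\nabla\widehat{w_\phi}|^2\,dA}\,,
\]
valid for every mode-$\geq 2$ choice of $w_\phi$, with nothing left to absorb. Plugging in the optimiser $w_\phi$ (Fourier coefficients $\propto$ those of $(f\cos(\theta-\phi))_{\geq 2}$ divided by $k$) and using $|c_2|\le 2$, one sees that the right-hand side tends to $1$ as $G_\phi:=\|(f\cos(\theta-\phi))_{\geq 2}\|_{H^{-1/2}}^2\to\infty$; so the hypothesis $\lambda_1>\delta_0>\tfrac12$, which gives $\lambda_1^{-1}-1<\delta_0^{-1}-1<1$, forces $G_\phi$ to be bounded. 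That uniform bound on $G_\phi$, not a presupposed bound on $\|f\|_{H^{-1/2}}$, is what controls the denominators and lets the averaged estimate close. Your intuition that $\lambda_1>\delta_0$ is the hypothesis that saves the day is correct, but the mechanism has to run through the exact inequality above --- recognising the variance structure of what you called the remainder --- rather than through the $H^{-1/2}$ bound on $\Theta-1$, which is precisely the theorem's output.
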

This beautiful result shows that in this particular norm, stability is restored for the Weinstock inequality after transplantation to the disk by an appropriate conformal map. Indeed, if $\Omega^\eps$ is a family of bounded simply-connected planar domains such that $L(\partial\Omega^\eps)=2\pi$ and $\sigma_1(\Omega^\eps)\xrightarrow{\eps\to 0} 1$, then it follows from~\eqref{ineq:wstab} that in the dual Sobolev space $H^{-1/2}(\partial\Omega)$, the corresponding densities $\Theta_\eps$ will satisfy
\[\lim_{\eps\to 0}\Theta_\eps=1.\]
This should be compared with the recent paper~\cite{KNPS2021} by Karpukhin, Nahon, Polterovich and Stern, where the stability of isoperimetric inequalities for eigenvalues of the Laplace operator on surfaces is studied.

The situation for arbitrary bounded planar domains $\Omega\subset\R^2$ is quite different. Indeed, Theorem~\ref{thm:IsopStekOnePlanar} states that $\sigma_1L<8\pi$ is a sharp upper bound, but since the inequality is strict, there does not exist a planar domain realising this bound. However we can still obtain interesting information regarding planar domains $\Omega\subset\R^2$ such that $\sigma_1(\Omega)L(\partial\Omega)$ is close to $8\pi$.
In this case again, there is a flexibility in the geometry of the maximising sequence. Indeed, it was proved in~\cite{GiKaLa2021} that one can start with any simply-connected domain $\Omega\subset\R^2$ with smooth boundary, and construct a family of domains $\Omega^\eps\subset\Omega$, obtained by perforation, such that \[\sigma_1(\Omega^\eps)L(\partial\Omega^\eps)\xrightarrow{\eps\to0}8\pi.\]
However, one may still obtain geometric information on maximising sequences in this situation.
The following result is a corollary of~\cite[Theorem 2.1]{GiKaLa2021}.
\begin{thm}\label{thm:8pi strict}
Let $\Omega$ be a compact surface of genus $0$ with  $\sigma_1(\Omega)L(\partial\Omega)\geq 6\pi$ and such that the number of connected components of its boundary is $b$. Then,
\begin{gather*}
\sigma_1(\Omega)L(\partial\Omega)\leq 8\pi-6\pi\exp(-2b).
\end{gather*}
Moreover, if $\Omega\subset\R^2$ is a bounded planar domain the following also holds:
\begin{gather*}
\sigma_1(\Omega)L(\partial\Omega)\leq 8\pi-6\pi\exp(-\frac{L(\partial\Omega)}{\diam(\Omega)})
\end{gather*}
\end{thm}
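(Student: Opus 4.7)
The plan is to revisit the proof of Kokarev's inequality (Theorem~\ref{thm:KokarevGenus0}) and extract a quantitative version by lower-bounding the defect $8\pi - \sigma_1(\Omega)L(\partial\Omega)$ in terms of the complementary area that appears in that proof. Since $\Omega$ has genus $0$, there is a conformal diffeomorphism $\Phi:\Omega\to U$ where $U\subset\Sp^2$ is a domain bounded by $b$ disjoint smooth Jordan curves, and the complement $\Sp^2\setminus\overline{U}$ is a disjoint union of topological disks $D_1,\dots,D_b$. For the planar case, one takes $\Phi$ to be the stereographic parametrisation so that $\Phi(\Omega)$ has the same number of complementary disks as $\Omega$ has boundary components. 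Using Hersch's renormalisation trick on the boundary measure, we select $F\in\mob(\Sp^2)$ such that the three coordinate functions $u_i:=\pi_i\circ F\circ\Phi$ satisfy $\int_{\partial\Omega}u_i\,ds=0$. Plugging them into the variational characterisation and using conformal invariance of the Dirichlet energy together with $\sum_{i=1}^3|\nabla\pi_i|^2\equiv 2$ on $\Sp^2$ yields
\[
\sigma_1(\Omega)L(\partial\Omega)\;\leq\;2\operatorname{Area}(F(U))\;=\;8\pi\,-\,2\sum_{i=1}^b\operatorname{Area}(F(D_i)).
\]

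The main task is therefore to obtain a quantitative lower bound
\[
\sum_{i=1}^b\operatorname{Area}(F(D_i))\;\geq\;3\pi e^{-2b}
\]
in the surface case, and the corresponding bound $3\pi\exp(-L(\partial\Omega)/\diam(\Omega))$ in the planar case. The hypothesis $\sigma_1L\geq 6\pi$ is used here: it guarantees $\sum_i\operatorname{Area}(F(D_i))\leq\pi$, so the complementary disks are already quite small, which is the regime where the geometry becomes rigid enough to analyse. The natural attack is to combine a capacity or extremal-length inequality with the Hersch centroid constraint. Mobius transformations can expand any single disk $F(D_i)$ arbitrarily, so a naive pigeonhole on the areas alone will fail; one needs a Mobius-\emph{invariant} quantity, such as a sum of conformal capacities, together with the fact that Hersch normalisation rules out concentrating the boundary measure into a small spherical cap. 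The exponential factor $e^{-2b}$ is reminiscent of the $b$-fold iterated application of a Grunsky- or Koebe-type distortion estimate for multiply connected planar domains, suggesting a proof by induction on $b$ in which each step loses a factor controlled by $e^{-2}$.

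For the planar diameter statement, the same variational setup applies, and the task reduces to translating the geometric quantity $L(\partial\Omega)/\diam(\Omega)$ into an effective number of complementary components. A thin elongated planar domain of perimeter $L$ and diameter $D$ with $L\gg D$ must contain, by a covering/slicing argument, roughly $L/D$ separated features in its boundary, each of which contributes a nontrivial complementary disk after stereographic projection; feeding this effective $b\sim L/D$ into the same capacity-based lower bound yields the claimed $e^{-L/D}$ decay. I expect the main obstacle throughout to be the Mobius-invariant lower bound on the complementary area: once that inequality is in place, both parts of the theorem follow by direct substitution into the Kokarev bound.
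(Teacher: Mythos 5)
Your reduction of the statement to Kokarev's proof is the right starting point, and the paper does in fact describe the argument in \cite{GiKaLa2021} as "a careful quantitative adaptation of Kokarev's result.'' The variational set-up in your first display (conformal map to $\Sp^2$, Hersch renormalisation, $\sigma_1 L\le 2\operatorname{Area}(F(U))=8\pi-2\sum_i\operatorname{Area}(F(D_i))$, and the observation that $\sigma_1 L\ge 6\pi$ forces $\sum_i\operatorname{Area}(F(D_i))\le\pi$) is correct and matches the intended approach. The problem is that everything after that is a gesture, not a proof. You have correctly isolated the bound you would need, namely $\sum_{i=1}^{b}\operatorname{Area}(F(D_i))\ge 3\pi e^{-2b}$, but the two ideas you propose to establish it (a M\"obius-invariant capacity-sum estimate, and a $b$-fold iteration of a Grunsky/Koebe distortion inequality losing a factor $e^{-2}$ per step) are never made precise, and it is not at all clear that either can be made to work. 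Note in particular that the Hersch condition is a constraint on the pushed-forward \emph{boundary measure} $(F\circ\Phi)_*\,ds$, which carries an arbitrary positive density coming from the conformal factor, not a constraint on the round geometry of the curves $\partial F(D_i)$; so "Hersch normalisation rules out concentrating the boundary measure into a small spherical cap'' controls the measure, not the areas of the holes. Without a mechanism that converts the measure-centering constraint into a lower bound on $\sum_i\operatorname{Area}(F(D_i))$, the first inequality of the theorem is not proved.

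The second inequality does not follow from the first by "feeding in $b\sim L/\diam$,'' and the slicing heuristic you give for it is wrong. A bounded planar domain of perimeter $L$ and diameter $D$ can have a single boundary component no matter how large $L/D$ is (take a very wiggly Jordan curve inside a fixed disk); after stereographic projection, the complement in $\Sp^2$ is still a \emph{single} topological disk, not "roughly $L/D$ nontrivial complementary disks.'' So there is no inequality of the form $b\gtrsim L/\diam$, and the diameter bound genuinely requires a separate quantitative input in the Kokarev argument — one that converts the ratio $L/\diam(\Omega)$ directly into a lower bound for the complementary spherical area, rather than routing through an estimate on $b$. As written, neither half of the theorem is actually established.
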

The proof is based on a careful quantitative adaptation of Kokarev's result (Theorem~\ref{thm:KokarevGenus0}). The second inequality proves in particular that any maximising sequence $\Omega^\eps\subset\R^2$ for $\sigma_1L$ has a boundary with an unbounded number of connected components in the limit. Moreover, if the maximising sequence is normalised by requiring that $L(\partial\Omega^\eps)=1$, then its diameter must tend to 0 in the limit.

Let us conclude this section by mentioning a recent preprint of Karpukhin and Stern~\cite{KaSt2021} where a quantitative improvement of Theorem~\ref{thm: GKL sharp} is presented for some surfaces: the sphere $\Sp^2$, the projective plane $\R\mathbb{P}^2$, the torus $\mathbb{T}$ and the Klein bottle $\mathbb{K}$.

\section{Geometric bounds in higher dimensions}
\label{section:boundshigher}

Because of scaling properties, meaningful bounds for eigenvalues require some type of constraint on the size of the underlying manifold. In the previous sections we have seen that prescribing the area of a surface or the length of its boundary is often sufficient to obtain interesting upper bounds. For manifolds of dimension at least 3, the situation is more complicated. For instance it was proved by Colbois, El Soufi and Girouard ~\cite{CoElGi2019} that any compact connected manifold $(\Omega,g_0)$ admits conformal perturbations $g=\delta g_0$ with $\delta\equiv 1$ on the boundary and with $\sigma_1(\Omega,g)$ arbitrarily large. In other words, prescribing the geometry of the boundary is not enough to bound $\sigma_1$, even while staying in a fixed conformal class. This shows that, in order to get upper bounds for the Steklov spectrum in dimension higher than $2$, we need additional geometric information. 

Much less is known regarding lower bounds, and in Subsection \ref{Lower}, we will summarise what is known.

Another indication of the flexibility of the Steklov spectrum is given in
the paper~\cite{Ja2014} by Jammes, where it is shown that any finite part of the Steklov spectrum can be prescribed within a given conformal class $(\Omega,[g_0])$ provided that $\Omega$ has dimension at least three. In view of the work of Lohkamp~\cite{Lo1996} it is therefore natural to ask, in the case of manifolds of dimension at least 3, whether it is possible in a fixed conformal class to simultaneously prescribe a finite part of the Steklov spectrum, the volume of $(\Omega,g)$, and the volume of $\partial \Omega$. We will explain in Remark \ref{rem:prescription} below that the answer is no.

 The outline of this section is as follows.

\begin{itemize}

\item Subsection~\ref{Lower}: Lower bounds for eigenvalues. 
\item Subsection~\ref{Upperexamples}: Upper bounds for eigenvalues: basic results and examples.
\item Subsection~\ref{UpperDomains}: Upper bounds: the case of domains in a Riemannian manifold.
 \item Subsection~\ref{UpperRiemannian}: Metric upper bounds for Riemannian manifolds.
  
\item Subsection~\ref{UpperRev}: Upper and lower bounds: the case of manifolds of revolution.
\item Subsection~\ref{UpperSub}: Upper and lower bounds: the case of submanifolds of Euclidean space.
\end{itemize}

\subsection{Lower bounds for eigenvalues} \label{Lower}

Regarding lower bounds for eigenvalues, and in particular for the first nonzero eigenvalue $\sigma_1$, it is useful to recall briefly a couple of facts about the spectrum of the Laplacian. If $(M,g)$ is a closed connected Riemannian manifold of dimension $d$ (resp. if $(\Omega,g)$ is a compact connected Riemannian manifold of dimension $d$ with boundary), there are two main ways to find a lower bound for the first nonzero eigenvalue $\lambda_1(M,g)$ (resp. the first nonzero eigenvalue $\mu_1(\Omega,g)$  for the Laplacian with the Neumann boundary condition):

\smallskip
- One way is to compare $\lambda_1(M,g)$, respectively $\mu_1(\Omega,g)$, with an isoperimetric constant by applying the celebrated Cheeger inequality \cite{Ch1969} 
\begin{equation}
	\lambda_1(M,g) \ge \frac{h_c^2}{4},
\end{equation}
respectively 
\begin{equation}
	\mu_1(\om,g) \ge \frac{h_c^2}{4}.
\end{equation}
Here, $h_c$ is the classical Cheeger constant associated to a compact Riemannian manifold $M$
defined by
\begin{equation} \label{ctcheeger}
h_c(M)=\inf_{\vert A\vert \le \frac{\vert M\vert}{2}}\frac{\vert \partial_IA \vert}{\vert A\vert}
\end{equation}
where the infimum is over subsets $A$ of $M$ with smooth boundary such that $\vert A\vert \le \frac{\vert M\vert}{2}$ (the definition of $h_c$ is word for word the same for $(\Omega,g)$).
Here, for a compact Riemannian manifold $M$, and a domain $A\subset M$, we write $\partial_IA$ for the interior
boundary of $A$, that is the intersection of the boundary of $A$ with
the interior of $M$.

This gives a relation between the spectrum of the Laplacian and the geometry of $(M,g)$ through the Cheeger constant $h_c$. In general, this quantity is difficult to estimate, let alone compute. However the Cheeger constant is a very good geometric measure of the spectral gap $\lambda_1(M,g)$. Indeed Buser~\cite{Bu1982} proved that for a closed Riemannian manifold, with only the additional hypothesis of a lower bound on the Ricci curvature of $(M,g)$, one also gets an upper bound on $\lambda_1(M,g)$ in terms of $h_c$. The Buser inequality states that if $Ric (M,g)\ge -(d-1)a^2$ (where $a\ge 0$), then
\begin{equation} \label{Thm:Buser}
    \lambda_1(M,g)\le 2a(d-1)h_c+10h_c^2.
\end{equation}

Note that a similar upper bound does not exist for $\mu_1(\Omega,g)$; see \cite[Example 1.4]{Bu1982}.
\smallskip

- A second way is to give a lower bound for the lowest non-zero eigenvalue directly in terms of geometric invariants of $(M,g)$.  The foundational work of Li and Yau establishes lower bounds in terms of the Ricci curvature and the diameter both for the eigenvalue $\lambda_1(M,g)$ of any  connected closed Riemannian manifold~\cite[Theorem 7]{LiYa1979} and for the Neumann eigenvalue $\mu_1(\Omega,g)$ in the case of a compact manifold $\Omega$ with boundary~\cite[Theorem 9]{LiYa1979}.  In the latter case, they impose the additional hypothesis that the boundary is convex in the sense that the principal curvatures of $\partial \Omega$ are non-negative.  This was
generalised by Chen ~\cite[Theorem 1.1]{Ch1990}, where the convexity condition is replaced by the hypothesis of an interior $\delta$-rolling 
condition (which means that every point on the boundary is on the boundary
of a ball of radius $\frac{\delta}{2}$ whose interior lies entirely inside $\Omega$ and whose closure meets $\partial \Omega$
only at the given point). The important point is that, in addition to conditions on the geometry inside $\Omega$, we need some control of the geometry of the boundary.

\subsubsection{Lower bounds of the Steklov spectrum via geometric constants.}\label{subsec.lowerboundgeom}
Proposition \ref{prop: no lower bound} shows that one can easily construct Riemannian metrics with small eigenvalues under local deformation. In order to find a lower bound for the Steklov spectrum of a compact manifold $\Omega$ with boundary, it is thus natural to impose a geometric condition on the boundary $\Sigma=\partial \Omega$ comparable to the convexity assumption of \cite[Theorem 9]{LiYa1979}. 
This is precisely the celebrated conjecture proposed by Escobar in \cite{Es1999}.

\begin{conj}[Escobar]\label{conj:escobar}
Let $\Omega$ be a smooth compact connected Riemannian manifold of dimension
$\ge 3$ with boundary $\Sigma=\partial \Omega$. Suppose that the Ricci curvature of $\Omega$ is non-negative
and that the second fundamental form $\rho$ of $\Sigma$ is bounded below by $c>0$.
Then $\sigma_1(\Omega) \ge c$, with equality if and only if $\Omega$ is the Euclidean ball of radius $\frac{1}{c}$.
\end{conj}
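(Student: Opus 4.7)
The plan is to apply a Reilly-type integral identity to a first Steklov eigenfunction and then close with an Obata-type rigidity argument. Let $u$ be a Steklov eigenfunction for $\sigma_1:=\sigma_1(\Omega)$, so that $\Delta u=0$ in $\Omega$, $\partial_\nu u=\sigma_1 u$ on $\Sigma$, and $\int_\Sigma u\,dV_\Sigma=0$. Applying Reilly's formula to $u$, harmonicity kills the $(\Delta u)^2$ term, the Steklov boundary condition replaces every $\partial_\nu u$ by $\sigma_1 u$, and integration by parts on the closed manifold $\Sigma$ converts $\int_\Sigma u\,\Delta_\Sigma u\,dV_\Sigma$ into $\int_\Sigma|\nabla_\Sigma u|^2\,dV_\Sigma$. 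After rearranging, one obtains the identity
\begin{equation*}
2\sigma_1\int_\Sigma|\nabla_\Sigma u|^2\,dV_\Sigma \;=\; \int_\Omega|\nabla^2 u|^2\,dV_\Omega+\int_\Omega\mathrm{Ric}(\nabla u,\nabla u)\,dV_\Omega+\int_\Sigma\rho(\nabla_\Sigma u,\nabla_\Sigma u)\,dV_\Sigma+\sigma_1^2\int_\Sigma H\,u^2\,dV_\Sigma,
\end{equation*}
where $H=\mathrm{tr}_\Sigma\rho$ is the mean curvature of $\Sigma$.

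The hypotheses $\mathrm{Ric}\ge 0$ and $\rho\ge c\,g|_\Sigma$ (whence $H\ge dc$), together with $|\nabla^2 u|^2\ge 0$, bound each term on the right from below and yield
\begin{equation*}
(2\sigma_1-c)\int_\Sigma|\nabla_\Sigma u|^2\,dV_\Sigma \;\ge\; dc\,\sigma_1^2\int_\Sigma u^2\,dV_\Sigma.
\end{equation*}
The mean-zero condition on $u$, combined with unique continuation for harmonic functions, guarantees that $u$ is non-constant on $\Sigma$, so both integrals above are strictly positive. This already yields the weaker bound $\sigma_1\ge c/2$, which is where a direct Reilly computation stops.

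\textbf{The hard part} will be closing this factor-of-two gap. A natural supplementary input is a Lichnerowicz-type estimate on $\Sigma$: combining the Poincar\'e inequality $\int_\Sigma|\nabla_\Sigma u|^2\,dV_\Sigma\ge\lambda_1(\Sigma)\int_\Sigma u^2\,dV_\Sigma$ (valid because $u$ has mean zero on $\Sigma$) with the displayed inequality turns it into a quadratic inequality for $\sigma_1$ whose double root equals $c$ precisely when $\lambda_1(\Sigma)=dc^2$, so that $\sigma_1\ge c$ would follow from the Lichnerowicz-type bound $\lambda_1(\Sigma)\ge dc^2$. To obtain such a bound on $\lambda_1(\Sigma)$ one would compute $\mathrm{Ric}_\Sigma$ via the Gauss equation: the ambient Ricci contributes non-negatively, and $H\rho-\rho^2$ contributes at least $dc\,\rho-\|\rho\|_{\mathrm{op}}^2\,g|_\Sigma$, but a sectional-curvature term through $\nu$ enters with an unfavourable sign and is not controlled by the hypotheses. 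Closing this gap is the real obstacle, and would likely require either a refined Kato-type inequality to exploit $\int_\Omega|\nabla^2 u|^2\,dV_\Omega$ more efficiently, or a genuinely different approach (convex-domain comparison, Brascamp--Lieb-type log-concavity, or coupling and optimal-transport methods).

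For the rigidity case $\sigma_1=c$, I would trace equality back through every step to force $\nabla^2 u\equiv 0$ on $\Omega$, $\mathrm{Ric}(\nabla u,\nabla u)\equiv 0$, $\rho(\nabla_\Sigma u,\nabla_\Sigma u)=c|\nabla_\Sigma u|^2$, and $H\equiv dc$ on $\mathrm{supp}(u^2)$. A harmonic function with vanishing Hessian is affine, so $\nabla u$ is a nowhere-vanishing parallel vector field on $\Omega$. Running the same argument simultaneously on a basis of the $\sigma_1$-eigenspace (whose dimension is forced by saturation of the inequalities to be at least $d+1$) produces $d+1$ pointwise-independent parallel gradients, so $\Omega$ must be flat. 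Combined with uniform umbilicity $\rho\equiv c\,g|_\Sigma$, the boundary $\Sigma$ is a round sphere of radius $1/c$, and a standard Obata-type classification identifies $\Omega$ with the Euclidean ball of radius $1/c$.
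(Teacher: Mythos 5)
The statement you are attempting to prove is labelled as a \emph{conjecture} in the paper, and the paper explicitly records that it is open in every dimension $\geq 3$: Escobar proved it in dimension $2$ and proved the weaker bound $\sigma_1(\Omega) > c/2$ in higher dimensions, and the only complete resolution to date (Xia and Xiong) replaces the hypothesis $\mathrm{Ric}\geq 0$ by the strictly stronger hypothesis that the \emph{sectional} curvature is non-negative. There is therefore no proof in the paper to compare your proposal against, and no proposal could be ``correct'' here without resolving an open problem.

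That said, your write-up is a faithful reconstruction of Escobar's original argument up to the point where it stalls: the Reilly identity you derive is right, the resulting inequality $(2\sigma_1-c)\int_\Sigma |\nabla_\Sigma u|^2 \,dV_\Sigma \geq dc\,\sigma_1^2\int_\Sigma u^2\,dV_\Sigma$ is exactly what forces $\sigma_1 > c/2$, and you correctly diagnose that the factor-of-two gap is the obstruction. Your supplementary idea — feed a Lichnerowicz-type lower bound $\lambda_1(\Sigma)\geq dc^2$ back in through the Gauss equation — is the natural next move, and you also correctly observe why it fails: the Gauss equation introduces ambient sectional curvatures in the normal direction, which $\mathrm{Ric}\geq 0$ does not control. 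This is precisely the point at which Xia and Xiong need non-negative \emph{sectional} curvature. So as a research diagnosis your proposal is sound; as a proof it stops where the field currently stops.

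One concrete flaw in the final rigidity paragraph, beyond the fact that the inequality $\sigma_1\geq c$ itself is not yet established: you assert that saturation of the inequalities forces the $\sigma_1$-eigenspace to have dimension at least $d+1$, but nothing in the chain of inequalities you wrote controls the multiplicity of $\sigma_1$. The conclusion $\nabla^2 u\equiv 0$ and $\mathrm{Ric}(\nabla u,\nabla u)\equiv 0$ applies to a single eigenfunction $u$; producing $d+1$ pointwise-independent parallel gradients requires an additional argument (for the ball one knows the coordinate functions work, but in the rigidity direction one cannot assume this). For a model of how such a rigidity argument is made precise under the sectional-curvature hypothesis, see Xia and Xiong's paper cited in the survey.
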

Note that Example \ref{example: cylinder}, with $L\to 0$, shows that convexity of the boundary does not imply any lower bound. One really needs the hypothesis $c>0$.

In ~\cite[Theorem 1]{Es1997},  Escobar proved the conjecture in dimension 2, and moreover proved that in higher dimensions, $\sigma_1(\Omega) > \frac{c}{2}$. However, the conjecture itself remains open when the dimension is at least 3

Important progress was made recently by Xia and Xiong in \cite[Theorem 1]{XiXi2019}. The authors show that the conclusion of Escobar's conjecture is true if we impose non-negative sectional curvature $K_g$ instead of Ricci curvature. Specifically, assume that
\begin{equation}
	K_g\ge 0;\ \rho\ge cg_{\Sigma}>0,
\end{equation}
where $\rho$ denotes the second fundamental form of the boundary $\Sigma$ and $g_{\Sigma}$ the restriction of $g$ to $\Sigma$.
Then $\sigma_1(\Omega) \ge c$ with equality if and only if $\Omega$ is isometric to the Euclidean ball of radius $\frac{1}{c}$.

These types of results lead naturally to the question of a possible generalisation when the curvature is not necessarily non-negative, while keeping some restriction on the geometry of the boundary $\Sigma =\partial \Omega$ and on the geometry of $\Omega$ near the boundary. It turns out that we can get partial results by first addressing another natural question: \emph{Is it possible to relate the Steklov spectrum of $\Omega$ to the Laplace spectrum of its boundary $\Sigma= \partial \Omega$?} This question was considered by  Wang and Xia in \cite{WaXi2009} and Karpukhin in \cite{Ka2017}. Important progress is given in the paper by Provenzano and Stubbe ~\cite{PrSt2019}, who consider the problem only for domains in $\R^{d+1}$. The ideas may be generalised to the Riemannian context: this was done by Xiong \cite{Xi2018} and by Colbois, Girouard and  Hassannezhad in \cite{CoGiHa2020} as we describe next.

Let $d\in\mathbb N$ and let $\alpha,\beta,\kappa_-,\kappa_+\in\mathbb R$ and $\delta_0>0$ be such that $\alpha\leq\beta$ and $\kappa_-\leq\kappa_+$. Consider the class $\mathcal{C}=\mathcal{C}(d,\alpha,\beta,\kappa_-,\kappa_+,\delta_0)$ of smooth compact Riemannian manifolds $\Omega$ of dimension $d+1$ with nonempty boundary $\Sigma=\partial \Omega$ satisfying the following hypotheses:
\begin{itemize}
	\item[(H1)] The rolling radius of $\Omega$ satisfies $\delta(\Omega)\geq \delta_0$.
	\item[(H2)] The sectional curvature $K$ satisfies
	$\alpha\leq K\leq \beta$ on the tubular neighbourhood
	\[\Omega_{\delta}=\{x\in \Omega\,:\,d(x,\Sigma)<\delta\}.\]
	\item[(H3)] The principal curvatures of the boundary $\Sigma$ satisfy
	$\kappa_-\leq\kappa_i\leq\kappa_+$. 
\end{itemize}

Let $b$ be the number of connected components of $\Sigma$. The spectrum of the Laplacian on $\Sigma$ is denoted by
	$0=\lambda_0(\Sigma)=\lambda_1(\Sigma)=...=\lambda_{b-1}(\Sigma)<\lambda_{b}(\Sigma)\le ...$
 
\begin{thm}\label{comparison} \cite[Theorem 3]{CoGiHa2020}
There exist explicit constants $D= D(d,\alpha,\beta,\kappa_-,\kappa_+,\delta_0)$ $ B= B(d,\alpha,\kappa_-,\delta_0)$ such that each manifold $\Omega$ in the class $\mathcal{C}$ satisfies the following inequalities for each~$k\in\mathbb N$,
\begin{gather}
\lambda_k\leq \sigma_k^2+D\sigma_k,\label{ineq:main1}\\
\sigma_k\leq B+\sqrt{B^2+\lambda_k}.\label{ineq:thm:mainstek}
\end{gather}
In particular, for each $k\in\mathbb N$,
$|\sigma_k-\sqrt{\lambda_k}|<\max\{D,2B\}$.
\end{thm}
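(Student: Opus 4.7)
My plan is to establish the two inequalities~\eqref{ineq:main1} and~\eqref{ineq:thm:mainstek} separately via the variational characterisation~\eqref{eq:Stek Rayleigh min max}: the inequality~\eqref{ineq:thm:mainstek} by lifting Laplace eigenfunctions of $\bdry$ to test functions for $\sigma_k$, and~\eqref{ineq:main1} by using boundary traces of Steklov eigenfunctions as test functions for $\lambda_k$. The geometric bridge in both directions will be Fermi coordinates $(t,x)\in[0,\delta_0)\times\bdry$ on a tubular neighbourhood $\om_{\delta_0}$, which is well defined by hypothesis~(H1), and in which the metric takes the form $g=dt^{2}+g_t$ with $g_0=g_{\bdry}$. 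Hypotheses (H2) and (H3), fed into the Riccati equation for the shape operator of the level sets $\{t=\text{const}\}$, give effective two-sided comparisons $C_-(t)g_0\le g_t\le C_+(t)g_0$ together with bounds on the volume Jacobian and on the mean curvature $H(t)$, all with constants depending only on $d,\alpha,\beta,\kappa_-,\kappa_+,\delta_0$. Once~\eqref{ineq:main1} and~\eqref{ineq:thm:mainstek} are proved, the final bound $|\sigma_k-\sqrt{\lambda_k}|<\max\{D,2B\}$ will follow by elementary algebra (using $\sqrt{B^{2}+\lambda_k}\le B+\sqrt{\lambda_k}$ in one direction, and solving~\eqref{ineq:main1} as a quadratic in $\sigma_k$ in the other).

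\textbf{Proof of~\eqref{ineq:thm:mainstek}.} Let $\varphi_0,\dots,\varphi_k$ be an $L^{2}(\bdry)$-orthonormal family of Laplace eigenfunctions for $\lambda_0,\dots,\lambda_k$ and let $f:[0,\delta_0]\to\R$ be a smooth profile with $f(0)=1$ and $f(\delta_0)=0$. Define $u_j\in H^{1}(\om)$ by $u_j(t,x)=f(t)\varphi_j(x)$ on $\om_{\delta_0}$ and $u_j\equiv 0$ elsewhere. For $u=\sum c_j u_j$, I have $\int_{\bdry}u^{2}\,dV_\bdry=\|c\|^{2}$, and expanding $|\nabla u|^{2}=f'(t)^{2}\varphi^{2}+f(t)^{2}|\nabla_{g_t}\varphi|_{g_t}^{2}$ in Fermi coordinates and using the Riccati comparison gives
\[
\int_{\om}|\nabla u|^{2}\,dV\;\le\;\|c\|^{2}\Bigl(A_1\!\!\int_0^{\delta_0}\!\!f'(t)^{2}\,dt\,+\,A_2\,\lambda_k\!\!\int_0^{\delta_0}\!\!f(t)^{2}\,dt\Bigr)
\]
for constants $A_1,A_2$ depending only on the class parameters. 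Applying~\eqref{eq:Stek Rayleigh min max} to the $(k+1)$-dimensional space spanned by the $u_j$ and then optimising $f$ over a convenient family — a hyperbolic-cosine profile motivated by the exact cylinder computation in Example~\ref{example: cylinder} — will yield~\eqref{ineq:thm:mainstek}.

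\textbf{Proof of~\eqref{ineq:main1}.} Let $u_0,\dots,u_k$ be the first $k+1$ Steklov eigenfunctions with boundary traces $f_0,\dots,f_k$, and test the min-max for $\lambda_k$ on $\bdry$ on the space $\operatorname{span}\{f_0,\dots,f_k\}$. For $f=\sum c_j f_j$ with $\|c\|=1$ and harmonic extension $u=\sum c_j u_j$, one has $\int_\bdry f\,\partial_\nu u\,dV_\bdry=\sum c_j^{2}\sigma_j\le\sigma_k$ and $\int_\bdry(\partial_\nu u)^{2}\,dV_\bdry=\sum c_j^{2}\sigma_j^{2}\le\sigma_k^{2}$. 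Writing $\lap u=0$ in Fermi coordinates at $t=0$ produces a boundary identity relating $\lap_\bdry f$, $\partial_t^{2}u|_\bdry$, $H$ and $\partial_\nu u|_\bdry$ (with signs dictated by the paper's convention $\lap=-\operatorname{div}\nabla$), so that
\[
\int_\bdry|\nabla_\bdry f|^{2}\,dV_\bdry\;=\;\pm\!\int_\bdry f\,\partial_t^{2}u\,dV_\bdry\,\pm\!\int_\bdry Hf\,\partial_\nu u\,dV_\bdry.
\]
The curvature-weighted term is bounded by $\|H\|_\infty\sigma_k$, contributing to $D\sigma_k$. The second normal derivative term is controlled by a Provenzano--Stubbe-type multiplier argument: multiplying $\lap u=0$ by a well-chosen function of $t$ and integrating by parts over $\om_{\delta_0}$, I will absorb $\int_\bdry f\,\partial_t^{2}u$ against $\int_\bdry(\partial_\nu u)^{2}\le\sigma_k^{2}$ plus lower-order curvature-weighted terms, which produces the $\sigma_k^{2}$ contribution in~\eqref{ineq:main1}.

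\textbf{Main obstacle.} The most demanding step is the multiplier estimate for the second normal derivative term $\int_\bdry f\,\partial_t^{2}u$. It must use only the harmonicity of $u$ together with the controlled geometry on $\om_{\delta_0}$, and — crucially — it must produce the leading coefficient $1$ in front of $\sigma_k^{2}$ rather than $(1+\eps)\sigma_k^{2}$, so that the resulting inequality is asymptotically compatible with the expected Weyl-type behaviour $\sigma_k\sim\sqrt{\lambda_k}$ (cf.\ the cylindrical boundary neighbourhood case in Example~\ref{cylindricalend}). Ensuring every constant depends only on $d,\alpha,\beta,\kappa_-,\kappa_+,\delta_0$ requires a quantitative Riccati comparison for the shape operator of the level sets of the distance function on $\om_{\delta_0}$; this comparison, together with its consequences for $g_t$, $H(t)$ and the volume Jacobian, is the workhorse of the entire proof.
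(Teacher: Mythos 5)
Your overall architecture matches that of the paper: inequality~\eqref{ineq:thm:mainstek} is obtained by lifting an $L^{2}(\Sigma)$-orthonormal family of Laplace eigenfunctions to $\om$ as products $f(t)\varphi_j(x)$ supported in the cylindrical collar and feeding them into the Steklov min--max, while~\eqref{ineq:main1} is obtained by testing the Laplace min--max on $\Sigma$ with boundary traces of Steklov eigenfunctions and invoking a Provenzano--Stubbe-style identity. The quantitative Riccati comparison giving two-sided control of $g_t$, the volume Jacobian and the mean curvature of the parallel hypersurfaces is indeed the geometric engine of the argument. Your closing algebra and the observation that the leading coefficient of $\sigma_k^{2}$ must be exactly~$1$ are also correct and are the right things to worry about.

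The one place where your sketch drifts away from a workable argument is the multiplier step for~\eqref{ineq:main1}. You first pass through the boundary relation $\lap_\Sigma f=\partial_t^{2}u|_{\Sigma}-H\,\partial_\nu u$, and then propose to control $\int_\Sigma f\,\partial_t^{2}u$ by ``multiplying $\lap u=0$ by a well-chosen function of $t$ and integrating by parts.'' A scalar multiplier $\psi(t)$ applied to $\lap u=0$ produces only terms linear in derivatives of $u$ after integration by parts; it cannot generate the quadratic boundary quantity $\int_\Sigma(\partial_\nu u)^{2}$ that you need to absorb $\sigma_k^{2}$, nor the bulk term $\int_\Omega|\nabla u|^{2}=\int_\Sigma f\,\partial_\nu u$ that contributes to $D\sigma_k$. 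The identity actually used in \cite{PrSt2019} and \cite{CoGiHa2020} is of Rellich--Pohozaev type: one pairs $\lap u=0$ against $\langle F,\nabla u\rangle$, where $F$ is a vector field on the collar restricting to the unit normal on $\Sigma$, and integrates the resulting divergence. The boundary flux then produces $(\partial_\nu u)^{2}-\tfrac12|\nabla u|^{2}=\tfrac12\bigl((\partial_\nu u)^{2}-|\nabla_\Sigma f|^{2}\bigr)$ directly, without ever isolating $\partial_t^{2}u|_\Sigma$, and the interior term is quadratic in $\nabla u$ and bounded via the Riccati/curvature control by $D\int_\Omega|\nabla u|^{2}$. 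You should therefore drop the detour through the pointwise boundary equation for $\lap_\Sigma f$ and instead state and prove the Pohozaev-type identity; once that is in place the rest of your plan goes through with the leading coefficient $1$ coming out for free from the structure of the boundary flux.
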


 The hypotheses $H_1,H_2,H_3$ of Theorem \ref{comparison} seem quite strong. In \cite[Examples 37, 38, 39]{CoGiHa2020}, it is explained why they are necessary.

Under the hypotheses of Theorem \ref{comparison} one can take
\[D=\frac{d+1}{\tilde \delta}+\sqrt{|\alpha|+\kappa_-^2}
\qquad\mbox{ and }\qquad
B=\frac{1}{2\tilde \delta}+\frac{d+1}{2}\sqrt{|\alpha|+\kappa_-^2},\]
where $\tilde \delta\leq \delta_0$ is a positive constant depending on $\delta_0$, $\beta$, and $\kappa_+$.

Inequality (\ref{ineq:thm:mainstek}) gives an upper bound for $\sigma_k(\Omega)$ in terms of $\lambda_k(\Sigma)$ and of the geometry of $\Omega$. We will see in the sequel many other ways to obtain upper bounds for the eigenvalues in terms of the geometry of $\Omega$.

Inequality (\ref{ineq:main1}) implies  that for all $k$,
\begin{equation} \label{In: geometry}
	\sigma_k(\Omega)\ge \frac{2\lambda_k(\Sigma)}{\sqrt{4\lambda_k(\Sigma)+D^2}+D}.
\end{equation}

This inequality is interesting only for $k$ larger than or equal to the number of connected components of the boundary $\Sigma=\partial \Omega$. In particular, if the boundary is connected, this gives a lower bound on $\sigma_1(\Omega)$ in terms of $\lambda_1(\Sigma)$ and of the geometry of $\Omega$. Since, as shown in \cite{LiYa1979}, $\lambda_1(\Sigma)$ is bounded explicitly in terms of the geometry of $\Sigma$, one thus obtains a lower bound for $\sigma_1(\om)$ depending only on the geometry of $\om$ and of its boundary $\Sigma$. To our knowledge, this is the only general lower bound in terms of the classical geometric invariants of $\Omega$ and $\Sigma$ like the curvature. However it involves the constant $D$ from Theorem~\ref{comparison} that is not fully explicit. Note also that Example \ref{example: cylinder} shows that, in general, $\sigma_1$ depends on the inner geometry of the manifold $\Omega$, and not only on the geometry near the boundary. This leads to the question

\begin{ques}\label{question lower}
Under the hypotheses (H1)-(H3) of Theorem \ref{comparison} and  the assumption of connected boundary $\Sigma$, is it possible to find an \emph{explicit} lower bound for $\sigma_1(\Omega)$ in terms of geometric invariants of $\Omega$ and of it boundary $\Sigma$?

When the boundary $\Sigma$ is not assumed to be connected, is it also possible to find an explicit lower bound of this type?
\end{ques}
Note that Inequality (\ref{In: geometry}) will be used below in (\ref{partiallower}), in the case in which the boundary is connected.

\subsubsection{Lower bounds for the Steklov spectrum via isoperimetric constants.}

\noindent
\textbf{A Cheeger inequality}. Jammes \cite[Theorem 1]{Ja2015} proves a Cheeger-type inequality. 
Besides the classical Cheeger constant denoted by $h_c(\Omega)$, Jammes introduces another isoperimetric constant, denoted by $h_j(\Omega)$ and defined by
\begin{equation} \label{ctjammes}
h_j(\Omega)=\inf_{\vert A\vert \le \frac{\vert \Omega\vert}{2}}\frac{\vert \partial_IA \vert}{\vert A \cap \partial \Omega\vert}.
\end{equation}
(Recall that $\partial_IA$ is defined following Equation~\eqref{ctcheeger}.)
Then, Jammes shows that
\begin{equation}\label{ineq:stek cheeger}
	\sigma_1(\Omega) \ge \frac{1}{4}h_c(\Omega)h_j(\Omega).
\end{equation}
This estimate is optimal in the sense that the two isoperimetric constants $h_c(\Omega)$ and $h_j(\Omega)$ both have to appear. Intuitively, the classical Cheeger constant $h_c(\Omega)$ allows one to measure how large a hypersurface needs to be in order to disconnect $\om$ into two substantial pieces (consider the celebrated example of the Cheeger dumbbell, see for example \cite[Sections 2, 3]{Co2017}). As shown in Inequality (\ref{Thm:Buser}), having small $h_c$ has a strong influence on the spectrum of the Laplacian. The new constant $h_j(\Omega)$ will be small if there are two parts of the boundary $\Sigma$ of $\om$ that are close together in $\om$ but far apart with respect to intrinsic distance in $\Sigma$. As in the proof of Part 1 of Proposition \ref{prop: no lower bound} this tends to create small eigenvalues for the Steklov spectrum.

In \cite{Ja2015}, Jammes also discusses the optimality of Inequality~\eqref{ineq:stek cheeger}, and we present two examples that are related to Example \ref{example: cylinder}.

First in \cite[Example 4]{Ja2015}, to see the necessity of the presence of $h_j$, Jammes considers the family of cylinders $\Omega_n= M\times[0,\frac{1}{n}]$ where $M$ is a closed Riemannian manifold. Example \ref{example: cylinder} shows that $\sigma_1(\Omega_n)\sim \frac{1}{n}$ as $n\to \infty$. It is also easy to see that $h_j(\Omega_n) \to 0$ as $n\to \infty$. However we will see that $h_c(\Omega_n)$ is uniformly bounded from below, which shows the necessity of the presence of $h_j$. To bound $h_c(\Omega_n)$ from below, if $A \subset \Omega_n$ is a domain, we have to bound the ratio $\frac{\vert \partial_IA\vert}{\vert A \vert}$ from below. To this aim, the author glues $2n$ copies of $\Omega_n$ along their boundaries in order to get a closed manifold $M \times S^1$, and associates to $A$ a domain $A'\subset M \times S^1$ obtained by reflecting $A$ along the boundary. We have $\vert A'\vert=2n \vert A\vert$ and $\vert \partial A'\vert =2n \vert \partial_IA\vert$. This implies
$\frac{\vert \partial_IA\vert }{\vert A \vert}= \frac{\vert \partial A'\vert}{\vert A'\vert}$, but $A'$ is a domain in a fixed manifold
$M \times S^1$, so 
 $\frac{\vert \partial A'\vert}{\vert A'\vert}\ge h_c(M \times S^1)$, which gives a lower bound on $h_c(\Omega_n)$.

 Next \cite[Example 5]{Ja2015} shows the necessity of the presence of $h_c$. The author considers the product $M \times [0,L]$ as in Example \ref{example: cylinder}, but with $L\to \infty$. For $L$ large enough, we have $\sigma_1(\Omega)=\frac{2}{L}\to 0$ and $h_c\to 0$ as $L\to \infty$. However one can see by projection on the boundary that for $A\subset \Omega$, the ratio
 $\frac{\vert \partial_IA \vert}{\vert A \cap \partial \Omega\vert}$ is bounded from below by $1$, so that $h_j \ge 1$.

However, although very interesting, Inequality~\eqref{ineq:stek cheeger} (\cite[Theorem 1]{Ja2015}) is not optimal in the sense that there does not exist a Buser-type inequality (i.e., an analogue of Inequality~\eqref{Thm:Buser} for this constant $h_c(\Omega)h_j(\Omega)$. As the next examples show, it is easy to deform a Riemannian manifold $(\Omega,g)$ to make the Cheeger constant arbitrarily small without affecting the first eigenvalue $\sigma_1$ too much. In Example \ref{cylindricalend}, the Cheeger-Jammes constant $h_j$ stays bounded and in Example \ref{Ex: hyperbolic}, it becomes also small.
\begin{ex}
Example \ref{cylindricalend} above shows that the Steklov eigenvalues of a Riemannian manifold $(\Omega,g)$ with connected boundary admitting a neighourhood that is isometric to $\Sigma \times [0,L]$ are bounded in terms of $L$ and the eigenvalues of the Laplacian on $\Sigma$. 

Let us consider a one parameter family of metrics $h_{\epsilon}^2g$ conformal to $g$ with $h_{\epsilon}=1$ on $\Sigma \times [0,L/2]$ and $h_{\epsilon}=\frac{1}{\epsilon}$ outside $\Sigma \times [0,L]$. Because the metric is fixed on $\Sigma \times [0,L/2]$, the Steklov spectrum is not affected too much by the conformal factor.
It is even possible to keep the curvature uniformly bounded by a careful smoothing.

On the other hand, the Cheeger constant $h_c$ of $(\Omega,h_{\epsilon}^2g)$ becomes small as $\epsilon \to 0$: to see this, it suffices to consider a domain $A$ outside $\Sigma \times [0,L]$. The behaviour of $\frac{\vert \partial_IA \vert_{h_{\epsilon}^2g}}{\vert A\vert_{h_{\epsilon}^2g}}$ is proportional to $\epsilon$ and $h_c(\Omega,h_{\epsilon}^2g) \to 0$ as $\epsilon \to 0$.

The Cheeger-Jammes constant $h_j(\Omega,h_{\epsilon}^2g)$ is bounded from above: it suffices to consider domains $A \subset \Sigma \times [0,L/2]$ where the metric is constant.
\end{ex}
\begin{ex} \label{Ex: hyperbolic}
For convenience in this example, we will refer to hyperbolic surfaces (curvature $-1$) of genus one with one boundary component as hyperbolic tori. We construct a family $\Omega_{\epsilon},\ 0<\epsilon<\frac{1}{2}$, of hyperbolic tori with the following properties:  The boundary $\Sigma_{\epsilon}$ of $\Omega_{\epsilon}$ has length $\ge 1$; the first nonzero eigenvalue $\sigma_1(\Omega_{\epsilon})$ of $\Omega_{\epsilon}$ is uniformly bounded from below as $\epsilon \to 0$; however, the  Cheeger constants satisfy $h_c(\Omega_{\epsilon})\to 0$ and $h_j(\Omega_{\epsilon})\to 0$ as $\epsilon \to 0$. This shows that, even when the curvature is bounded, small $h_c(\Omega)$ and $h_j(\Omega)$ do not imply small $\sigma_1$. That is, there is no Cheeger-Buser type upper bound for $\sigma_1$.
\begin{figure}[h]
  \centering
  \includegraphics[width=10cm]{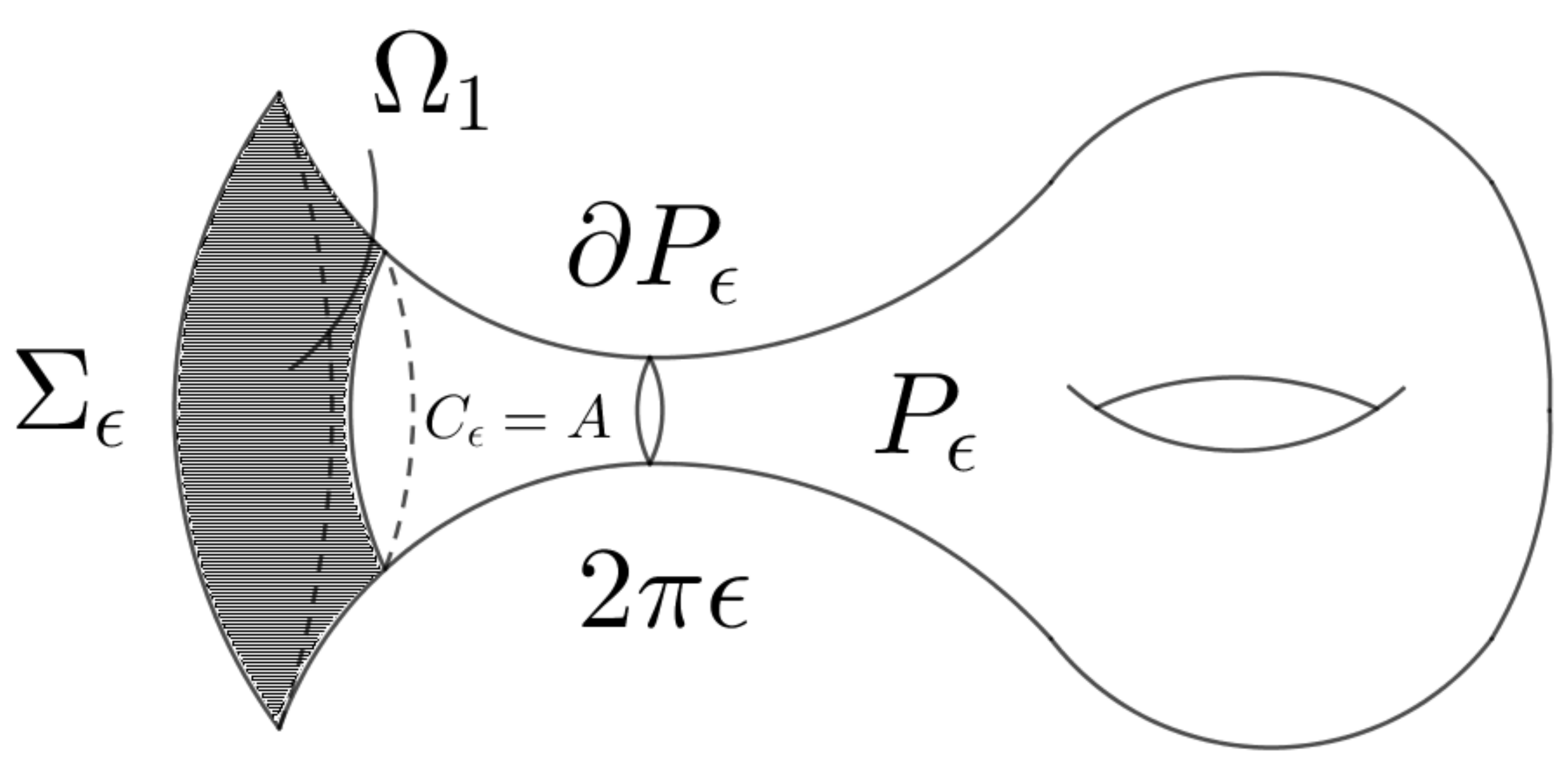}
  \caption{There is no Buser-type inequality for $\sigma_1$.}
  \label{fig:nobuser}
  \end{figure}
The surface $\Omega_{\epsilon}$ is built by gluing together a hyperbolic cylinder $C_{\epsilon}$ and a hyperbolic torus $P_{\epsilon}$ along a common geodesic boundary component of length $2\pi \epsilon$, as explained, for example, in \cite[Chapter 3]{Bu1992} (see Figure~\ref{fig:nobuser}).

The construction is given as follows:  For $\epsilon>0$, let $P_{\epsilon}$ be a hyperbolic torus with geodesic boundary of length $2\pi\epsilon$.  Let  $L_\epsilon=1+\arcosh \frac{1}{\epsilon}$. Endow the cylinder $C_{\epsilon}:=[0,L_{\epsilon}]\times S^1$ with the hyperbolic metric given in Fermi coordinates by $dr^2+\epsilon^2 \cosh^2 r$, $0\le r \le L_\epsilon, 0\le \theta \le 2\pi$. The boundary component corresponding to $r=0$ has length $2\pi \epsilon$.  Glue this boundary component to the boundary of $P_{\epsilon}$ to obtain $\Omega_{\epsilon}$. The other boundary component $\Sigma_{\epsilon}$ of $C_{\epsilon}$, corresponding to $r=1+\arcosh \frac{1}{\epsilon}$, is of length $\vert \Sigma_{\epsilon} \vert >1$, and becomes the boundary of $\Omega_{\epsilon}$, which is connected.

One can show that the neighborhood $\Sigma_1$ of $\Sigma_{\epsilon}$ given by $\{r:\arcosh \frac{1}{\epsilon}\le r \le 1+ \arcosh \frac{1}{\epsilon}\}$ is uniformly quasi-isometric to $[0,1] \times S^1$ as $\eps\to0$. Because $\Omega_{\epsilon}$ has one boundary component, $\sigma_1(\Omega_{\epsilon})$  is uniformly bounded from below and above using Dirichlet-to-Neumann bracketing (Proposition \ref{prop: dir neum bracket} and Example \ref{cylindricalend}). 

However, consider the domain $A=C_{\epsilon} \subset \Omega_{\epsilon}$. One can see that $|\partial_IA|=2\pi \epsilon$, and $\vert A\cap \partial \Omega_{\epsilon}\vert=\vert \Sigma_{\epsilon}\vert \ge 1, \vert A\vert \ge 1$, so that both $h_c$ and $h_j$ tend to $0$ as $\epsilon \to 0$.

\end{ex}

In Remark 2 of \cite{Ja2015}, the author observes that one can slightly change the definition of the constant $h_j$ of (\ref{ctjammes}) and consider
\begin{equation} \label{ctescobar}
h_e(\Omega)=\inf_{\vert A \cap \partial \Omega\vert \le \frac{\vert \partial \Omega\vert}{2}}\frac{\vert \partial_IA\vert}{\vert A \cap \partial \Omega\vert}.
\end{equation}
This corresponds to the Cheeger--Escobar constant defined in \cite{Es1999} and Jammes observes that Inequality \ref{ineq:stek cheeger} is true with the same proof.

In case $\Sigma=\partial \Omega$ is connected, Theorem   \ref{comparison} allows one to get another Cheeger-type inequality. If $h_c(\Sigma)$ denotes the classical Cheeger constant of $\Sigma$, we have $\lambda_1(\Sigma) \ge \frac{h^2_c(\Sigma)}{4}$, and estimate (\ref{ineq:main1}) leads to 
 \begin{equation}\label{partiallower}
	 \sigma_1(\Omega)\ge \frac{h^2_c(\Sigma)}{2\sqrt{h^2_c(\Sigma)+D}}.
 \end{equation}
However, the geometry of $\Omega$ appears strongly through the term $D$ of Theorem \ref{comparison}. Regarding upper bounds, it was mentioned after Theorem \ref{comparison}
that Inequality \eqref{ineq:thm:mainstek} gives an upper bound for $\sigma_k(\Omega)$ in terms of $\lambda_k(\Sigma)$ and of the geometry of $\Omega$. Combining this with Inequality 
 (\ref{Thm:Buser}), this shows that $\sigma_1(\Omega)$ is bounded from above by a term involving the Cheeger constant and the Ricci curvature of $\Sigma$, along with the geometry of $\Omega$ through the term $B$ that appears in~\eqref{ineq:thm:mainstek}. But this upper bound depends on a lot of geometric invariants, leading to the following question:

\begin{ques}\label{question cheeger}
Can one define a different Cheeger-type isoperimetric constant $h'$ for which $\sigma_1$ satisfies a Buser-type inequality as in (\ref{Thm:Buser})? I.e., is there is an upper bound for $\sigma_1$ in terms of this new isoperimetric constant $h'$ and of a lower bound on the Ricci curvature of the manifold $\Omega$.
\end{ques}

\noindent
\textbf{Higher order Cheeger inequalities}. In \cite{HaMi2020}, Hassannezhad and Miclo proved a lower bound for the $k$th Steklov eigenvalue in terms of what they call a $k$th Cheeger-Steklov constant in three different situations. In the context of Riemannian manifolds, this inequality extends the Cheeger inequality by Jammes and we will describe it below. In section \ref{cheegerdiscrete}, we will briefly describe another aspect of this work regarding the Steklov problem on graphs. The third aspect, concerning measurable state spaces, is beyond the scope of this paper and will not be described.

Intuitively, the idea is the following: for the Cheeger-Steklov inequality on a manifold $\Omega$, we consider a family of domains $A$ and $\Omega \setminus A$, and define the isoperimetric bounds $h_c$ and $h_j$ by minimising some isoperimetric ratio on the family of domains $A$. The authors use the same strategy, but in order to estimate the eigenvalue $\sigma_k$, they consider a decomposition of $\Omega$ into $k+1$ disjoint domains, and they also define two isoperimetric constants by minimisation.

More precisely, let $\Omega$ be a compact Riemannian manifold with boundary $\partial \Omega=\Sigma$ and consider the family $\mathcal A$ of non-empty open domains of $\Omega$ with piecewise smooth boundary.
The authors introduce for each $k \ge 1$ the set $\mathcal A_k$ of all $k+1$-tuples $(A_1,...,A_{k+1})$ of mutually disjoint elements of $\mathcal A$. The k-th order Cheeger constant will be defined  by minimisation on $\mathcal A_k$.

First, for each domain  
$B\in \mathcal A$, the authors introduce the isoperimetric constants $\eta(B)$ and $\eta'(B)$ defined by
\[
\eta(B)= \frac{\vert \partial_I B\vert}{\vert B\vert};\ \ \eta'(B)=
\frac{\vert \partial_I B\vert}{\vert B \cap \partial \Omega\vert}.
\]

Consider now one of the k+1-tuples $(A_1,...,A_{k+1})\in\mathcal{A}_k$. For $A\in\{A_1,...,A_{k+1}\}$ the authors define
\[
\rho(A)= \inf \{\eta(B):B\in \mathcal A;\ B\subset A;\ B\cap \partial_I A=\emptyset\}
\]
and 
\[
\rho'(A)= \inf \{\eta'(B):B\in \mathcal A;\ B\subset A;\ B\cap \partial_I A=\emptyset\}.
\]

Note that the condition $B\cap \partial_IA=\emptyset$
means that $\rho(A)$ is exactly the usual Cheeger constant of $A$ associated to the Dirichlet Laplacian for functions equal to $0$ on $\partial_I A$ (see \cite{Bu1980}, (1.5) p. 30, \cite{Ch1984}, Theorem 3, p. 95).

  The Cheeger-Steklov constant of order $k$ is defined as
\[
i_k(\Omega)=\min_{(A_1,...,A_{k+1})\in \mathcal A_k}\max_{l\in \{1,...,k+1\}}\rho(A_l)\rho'(A_l).
\]
Hassannezhad and Miclo prove a Cheeger-Steklov inequality of order $k$  \cite[Theorem C]{HaMi2020}, which states that there exists a positive universal constant $C$ such that for each $k\ge 1$ one has 
\begin{equation} \label{highercheegermanifold}
    \sigma_k(\Omega)\ge \frac{C}{k^6}i_k(\Omega).
\end{equation}

The proof of the inequality is quite complicated. The authors do not work directly on the Steklov problem, but they use the fact that a similar inequality exists for the Laplacian with Neumann boundary condition (see \cite[Theorem 26]{HaMi2020} referring to a previous result by Miclo). The authors use the fact that one can approximate the Steklov eigenvalues by the eigenvalues of the Laplacian with density with Neumann boundary condition, where the density accumulates near the boundary \cite[Theorem 24]{HaMi2020}. The authors also give examples in the spirit of the already mentioned examples by Jammes to see the necessity of the presence of the two constants $\rho$ and $\rho'$ in the definition of $i_k(\Omega)$.

A natural question is whether the term $\frac{1}{k^6}$ in the inequality (\ref{highercheegermanifold}) is optimal. The authors also show the following estimate \cite[Proposition 1]{HaMi2020}:
\begin{equation} \label{better}
\sigma_{2k+1}(\Omega)\ge \frac{C'}{\log^2(k+2)} i_{k+1}(\Omega).
\end{equation}
This means that the dependence on $k$ is only of order $\frac{1}{\log^2(k+2)}$ if the eigenvalue $\sigma_{2k+1}$ is estimated in terms of the Cheeger constant of order $k+1$. In \cite[ Remark 1]{HaMi2020}, the authors ask the following question. 

\begin{ques}\label{ques:logbase2}
Is the coefficient $\frac{C'}{\log^2(k+2)}$ of $i_{k+1}(\Omega)$ in Estimate (\ref{better}) sharp?
\end{ques}

In comparable situations (combinatorial Laplacian by Lee, Oveis Gharan and Trevisan~\cite{LGOT2014} and Markov operator by Miclo~\cite[pp. 336-337]{Mi2015}) one can show it is sharp.

More generally, this leads to the following question.

\begin{ques}\label{ques:higherordercheeger}
Is it possible to have a higher order Cheeger inequality for which the coefficient of $i_k(\om)$ does not depend on $k$?
\end{ques}

\noindent
\textbf{Other lower bounds}. For other lower bounds in some specific situations, see also \cite[Theorem 2.1]{Ve2018} by Verma and \cite[Theorem 1.3]{HaSi2020} by Hassannezhad and Siffert.

\subsection{Upper bounds for eigenvalues: basic results and examples} \label{Upperexamples}

This subject was introduced in Section 4 of the survey \cite{GiPo2017},  and we invite the reader to look there for the state of the art until 2014. 

The question of finding upper bounds for eigenvalues in terms of  geometric invariants is closely related to the question of constructing large eigenvalues under geometric restrictions and we will often present both stories in parallel. These questions depend also on the choice of normalisation. For a Riemannian manifold $\Omega$ of dimension $d+1$ with boundary $\Sigma =\partial \Omega$, the most common normalisation is with respect to the volume of the boundary: we consider the normalised eigenvalues $\sigma_k(\Omega)\vert \partial \Omega \vert^{1/d}$. Another normalisation, used in particular when $\Omega$ is a domain in a complete Riemannian manifold $M$, is with respect to the volume of $\Omega$; we consider the normalised eigenvalues $\sigma_k(\Omega)\vert \Omega\vert^{1/(d+1)}$. 

Recently, in \cite{KaMe2021}, Karpukhin and M\'etras proposed a normalisation involving both the volumes of $\Omega$ and of $\Sigma$, with the normalised eigenvalues given by $\sigma_k(\Omega) \vert \Sigma\vert\vert\Omega\vert^{\frac{1-d}{d+1}}$. See Example~\ref{example:KarpukhinMetrasNorm} for discussion of why this specific normalisation is natural. One of the interests in this last normalisation is that it was shown in the paper~\cite{CoElGi2011} by Colbois, El Soufi and Girouard and in~\cite{Ha2011} by Hassannezhad that these normalised eigenvalues are bounded from above within the conformal class of any Riemannian metric (see inequality (\ref{secondformulation}) in Theorem \ref{asma2011}), which allows one to investigate maximal Riemannian metrics in this context. Specifically, let $I_g(\Omega)$ denote the isoperimetric ratio: if $(\Omega,g)$ is a compact Riemannian manifold of dimension $d+1$, with boundary, 
	\[
	I_g(\Omega)=\frac{\vert \partial \Omega \vert_g}{\vert \Omega\vert_g^{\frac{d}{d+1}}}.
	\]
Then we have the following \cite[Theorem 4.1]{Ha2011}:
\begin{thm}\label{asma2011}
    Let $(M,g_0)$ be a complete Riemannian manifold of dimension $d+1$ with $Ric_{g_0}(M) \ge -a^2d$ for a constant $a\ge 0$. Let $\Omega \subset M$ be a relatively compact domain with $C^1$ boundary and $g$ be any metric conformal to $g_0$. Then we have
    \begin{equation}
	   \sigma_k(\Omega,g) \vert \partial \Omega\vert_g^{\frac{1}{d}}\le \frac{A_d \vert \Omega\vert_{g_0}^{\frac{2}{d+1}} a^2+B_dk^{\frac{2}{d+1}}}{I_g(\Omega)^{\frac{d-1}{d}}}
    \end{equation}
    where $A_d$ and $B_d$ are constants depending only on $d$. An equivalent formulation is
    \begin{equation} \label{secondformulation}
	   \sigma_k(\Omega,g)\vert \partial \Omega \vert_g\vert \Omega\vert_g^{\frac{1-d}{d+1}} \le  A_d \vert \Omega\vert_{g_0}^{\frac{2}{d+1}}a^2+B_dk^{\frac{2}{d+1}}.
    \end{equation}
\end{thm}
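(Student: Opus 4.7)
The plan is to use a Grigor'yan--Netrusov--Yau type metric measure decomposition to produce $k+1$ disjoint capacitors adapted to the boundary measure $dV_{\Sigma,g}$, construct localised test functions on them, and apply Hölder's inequality to isolate a conformally invariant $(d+1)$-energy from a $|\Omega|_g$ factor. The curvature hypothesis on $g_0$ will enter only through a volume doubling property of the ambient metric $g_0$, which controls the capacity of the annular shells carrying the gradient of the test functions.

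More concretely, I would first equip $\Omega$ with the distance $d_{g_0}$ and the boundary measure $\mu=\iota_{\star}dV_{\Sigma,g}$ (see Example~\ref{ex:vareigenSteklov}). The Bishop--Gromov inequality, together with $\mathrm{Ric}_{g_0}\ge -a^2 d$, gives a uniform doubling estimate for $g_0$-balls in $M$ once the radius is small relative to $1/a$. A decomposition lemma of Grigor'yan--Netrusov--Yau then produces $k+1$ pairwise disjoint annuli $2A_i=B(x_i,2r_i)\setminus B(x_i,r_i)$ in $M$ such that the inner balls $B(x_i,r_i)$ each carry boundary $g$-measure at least $c|\Sigma|_g/k$ and the enlarged balls $B(x_i,2r_i)$ have $g_0$-volume at most $C(a^2|\Omega|_{g_0}+|\Omega|_{g_0}/k)$; the precise shape of the constants $A_d,B_d$ comes from this step.

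Next, for each $i$ let $u_i$ be the Lipschitz function that equals $1$ on $B(x_i,r_i)\cap \Omega$, vanishes outside $B(x_i,2r_i)\cap\Omega$, and depends linearly on $d_{g_0}(\cdot,x_i)$ in between, so that $|\nabla u_i|_{g_0}\le 1/r_i$. These $u_i$ have disjoint supports, hence span a $(k+1)$-dimensional subspace usable in the variational characterisation \eqref{eq:Stek Rayleigh min max}. By Hölder with exponents $\tfrac{d+1}{2}$ and $\tfrac{d+1}{d-1}$, writing $g=\beta^2 g_0$ and using $dV_g=\beta^{d+1}dV_{g_0}$ and $|\nabla u|^2_g\,dV_g=|\nabla u|^2_{g_0}\beta^{d-1}dV_{g_0}$, we obtain
\[
\int_\Omega |\nabla u_i|^2_g\,dV_g
\;\le\;
\Bigl(\int_{\Omega}|\nabla u_i|_{g_0}^{d+1}\,dV_{g_0}\Bigr)^{2/(d+1)}|\Omega|_g^{(d-1)/(d+1)}.
\]
The $(d+1)$-energy is conformally invariant and, by the bound $|\nabla u_i|_{g_0}\le 1/r_i$ together with the volume control on $B(x_i,2r_i)$, is bounded by $C\bigl(a^2|\Omega|_{g_0}+|\Omega|_{g_0}/k\bigr)/r_i^{d+1}\cdot r_i^{d+1}\le C(a^2|\Omega|_{g_0}^{2/(d+1)}+k^{-1}|\Omega|_{g_0}^{2/(d+1)})^{(d+1)/2}$ after a careful reorganisation; the key is that the $r_i^{d+1}$ factors cancel, leaving a bound in which only $|\Omega|_{g_0}$, $a$ and $k$ appear.

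Combining the gradient estimate with the boundary lower bound $\int_\Sigma u_i^2\,dV_{\Sigma,g}\ge c|\Sigma|_g/k$, taking the maximum of $R(u_i)$ over $i$, and rewriting $|\Omega|_g^{(d-1)/(d+1)}/|\Sigma|_g=I_g(\Omega)^{-1}|\Omega|_g^{-1/d-(d-1)/(d(d+1))}$, one recovers exactly the asserted inequality
\[
\sigma_k(\Omega,g)|\partial\Omega|_g^{1/d}
\;\le\;\frac{A_d\,a^2|\Omega|_{g_0}^{2/(d+1)}+B_d\,k^{2/(d+1)}}{I_g(\Omega)^{(d-1)/d}}.
\]
I expect the main obstacle to be the measure-theoretic decomposition step: producing $k+1$ annuli whose inner pieces simultaneously capture a fixed fraction of the boundary $g$-measure and whose outer balls have controlled $g_0$-volume requires working with the hybrid metric-measure structure $(M,d_{g_0},\iota_\star dV_{\Sigma,g})$, and getting the right dependence on $a$ rather than on a bare doubling constant is delicate. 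The Hölder step and the final assembly are then essentially bookkeeping.
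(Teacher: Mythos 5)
The survey does not prove Theorem~\ref{asma2011}; it quotes it as \cite[Theorem~4.1]{Ha2011} and, in Remark~\ref{rem: domain}, explicitly attributes the key technical tools to the decomposition results of \cite{GrNeYa2004} and \cite{CoMa2008}. There is therefore no in-paper proof to compare against, but the approach you outline (a Grigor'yan--Netrusov--Yau-type decomposition of the hybrid metric-measure structure $(M,d_{g_0},\iota_\star dV_{\Sigma,g})$, disjoint localised test functions, and H\"older's inequality to isolate the conformally invariant $(d+1)$-energy) is indeed the framework Hassannezhad uses. The H\"older step you display is correct: writing $g=\beta^2 g_0$ one has $\int_\Omega|\nabla u|_g^2\,dV_g=\int_\Omega|\nabla u|_{g_0}^2\beta^{d-1}\,dV_{g_0}$, and H\"older with exponents $\tfrac{d+1}{2}$, $\tfrac{d+1}{d-1}$ gives exactly the bound you state.

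However, the middle of the argument has a genuine gap. First, the displayed chain
\[
C\bigl(a^2|\Omega|_{g_0}+|\Omega|_{g_0}/k\bigr)\le C\bigl(a^2|\Omega|_{g_0}^{2/(d+1)}+k^{-1}|\Omega|_{g_0}^{2/(d+1)}\bigr)^{(d+1)/2}
\]
is false in general: the right-hand side equals $C|\Omega|_{g_0}(a^2+1/k)^{(d+1)/2}$, so the inequality requires $a^2+1/k\ge 1$, which is not assumed. Second, even granting some bound on $\int_\Omega|\nabla u_i|_{g_0}^{d+1}\,dV_{g_0}$ by a constant, plugging it into the variational characterisation with $\int_\Sigma u_i^2\,dV_{\Sigma,g}\ge c|\Sigma|_g/(k+1)$ produces
\[
\sigma_k(\Omega,g)\,|\Sigma|_g\,|\Omega|_g^{(1-d)/(d+1)}\ \lesssim\ (k+1)\Bigl(\max_i\int_\Omega|\nabla u_i|_{g_0}^{d+1}\,dV_{g_0}\Bigr)^{2/(d+1)},
\]
so to reach the stated $k^{2/(d+1)}$ exponent you need the $(d+1)$-energy to \emph{decay} like $k^{(1-d)/2}$. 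A single-scale annular decomposition with only a Bishop--Gromov doubling bound gives at best a constant $(d+1)$-energy, hence a bound linear in $k$; this is fine for $d=1$ (where $2/(d+1)=1$ and H\"older degenerates to an identity) but insufficient for $d\ge 2$. Third, and most importantly, the \emph{additive} form $A_d\,a^2|\Omega|_{g_0}^{2/(d+1)}+B_d\,k^{2/(d+1)}$ is precisely what cannot be produced by your single family of annuli. This structure is the main innovation of \cite{Ha2011}: Hassannezhad's Theorem~2.1 builds the test functions in two regimes, one from a GNY-type annular decomposition at radii $\lesssim 1/a$ (where $\mathrm{Ric}_{g_0}\ge -a^2 d$ yields a uniform doubling constant), and one from a covering of $\Omega$ by $g_0$-balls of radius $\sim 1/a$ (which contributes the $a^2|\Omega|_{g_0}^{2/(d+1)}$ term). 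Treating all scales uniformly, as your sketch does, would at best recover the qualitative bound of \cite{CoElGi2011} with a non-explicit constant depending on the conformal class rather than only through $a^2|\Omega|_{g_0}^{2/(d+1)}$. Your closing remark that the post-decomposition steps are ``essentially bookkeeping'' misplaces the difficulty: the two-scale construction is the theorem's actual content.
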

In the situation where $M$ is a closed Riemannian manifold, inequality~\eqref{secondformulation} provides a uniform upper bound since $|\Omega|_{g_0}<|M|_{g_0}$. See section~\ref{UpperDomains} for further discussion.

\begin{remark} \label{rem: domain} Theorem \ref{asma2011} is established for domains in complete manifolds and not directly for manifolds with boundary. The reason for this is that constructions as in \cite{GrNeYa2004} or \cite{CoMa2008} used in the proof of the theorem are not established for manifolds with boundary without additional conditions on the geometry of the boundary.

A compact Riemannian manifold with smooth boundary $(\Omega,g_0)$ with $Ricci_g(\Omega)\ge -a^2d$ can always be seen as a domain of a complete manifold $M$ (without boundary) by extension of the manifold $(\Omega,g_0)$. However, in general, we cannot keep the same lower bound $-a^2d$ for the Ricci curvature of the complete manifold $M$. Despite this, Inequality (\ref{secondformulation}) shows that $\sigma_k(\Omega,g)\vert \partial \Omega \vert_g\vert \Omega\vert_g^{\frac{1-d}{d+1}}$ is bounded from above for $g\in [g_0]$, but the control of the bound is not explicit. The proof of Theorem \ref{asma2011} only requires that the curvature of the ambient manifold $M$ satisfy the specified lower curvature bound on a neighbourhood of $(\Omega,g_0) \subset M$ of radius at most the diameter of $(\Omega,g_0)$ and on such a neighbourhood, the Ricci curvature is bounded from below, however in general with a different bound than $-a^2d$.

 This question of controlling the curvature when extending a manifold with boundary to a complete manifold is discussed in detail by Pigola and Veronelli in \cite{PiVe2020}.
\end{remark}
 	
\begin{remark}\label{rem:prescription}
Let $(\Omega,g_0)$ be a compact Riemannian manifold with boundary. It follows from Theorem~\ref{asma2011} and from Remark \ref{rem: domain} that one cannot use a conformal perturbation $g\in[g_0]$ to prescribe at the same time the kth eigenvalue ($k>0$) and the volumes of $(\Omega,g)$ and of $(\partial \Omega,g)$. 
\end{remark}

Let us now give a series of examples to provide intuition. 
They will allow us to show that certain upper bounds presented below are sharp in the sense that all ingredients entering into the inequalities are necessary.

When we get upper bounds for $\sigma_k$ (in particular for $k=1$), the question arises of finding a maximising metric. These examples allow us to show that, sometimes, the expected metric (for example the ball in the case of domains) is not a maximiser.

\begin{ex} \label{confdef}A general question is to understand and distinguish the spectral effects of geometric perturbations of a manifold near its boundary from the spectral effects of perturbations occurring deep inside the manifold. In dimension $d+1\ge 3 $, one can construct examples with fixed boundary and arbitrarily large $\sigma_1$ staying within the conformal class \cite[ Theorem 1.1 (ii)]{CoElGi2019}.
Let $(\Omega,g)$ be a compact, connected Riemannian manifold of dimension $d+1\geq 3$ with smooth boundary $\Sigma$.
 There exists a one-parameter family of 
 Riemannian metrics $g_\varepsilon$  conformal to $g$  that coincide with $g$ on $\Sigma$ such that 
 \[\sigma_{1}(\Omega,g_\varepsilon) \to\infty \quad as\quad \varepsilon\to 0.\]

In this example, $\sigma_1(\Omega) \vert \Sigma \vert^{1/d}$ and $\sigma_1(\Omega) \vert \Omega \vert^{1/(d+1)}$ tend to $\infty$ as $\epsilon \to 0$, but this is not the case for $\sigma_1(\Omega) \vert \Sigma\vert\vert\Omega\vert^{\frac{1-d}{d+1}}$ because the conformal class is fixed (see Inequality (\ref{secondformulation}) and Remark~\ref{rem: domain} above).
\end{ex}

Example~\ref{confdef} leads to the question of whether there are similar examples for which $\sigma_1(\Omega) \vert \Sigma\vert\vert\Omega\vert^{\frac{1-d}{d+1}} \to \infty$ still keeping the boundary fixed (but not the conformal class). In full generality, this is unknown. However, it is possible to obtain examples in specific situations.

\begin{ex}\label{CianciGirouard}
 A family of examples are constructed by Cianci and Girouard ~\cite[Theorem 1.1]{CiGi2018}. The authors consider a compact manifold $\Omega$ of dimension $d+1\ge 4$ with connected boundary $\Sigma$ having a very particular property: there exists a Riemannian metric $g_{\Sigma}$ on $\Sigma$ which admits a unit Killing vector field $\xi$ with dual $1$-form $\eta$ whose exterior derivative is nowhere $0$. The odd-dimensional spheres have this property. But not every Riemannian manifold supports a Killing vector field: a vector field $X$ on a Riemannian manifold is a Killing field if and only if the $1$-parameter group generated by $X$ consists of local isometries.

Under this condition, the authors show that there exists a family $(g_{\epsilon})_{\epsilon>0}$ of Riemannian metrics on $\Omega$ which coincide with $g_{\Sigma}$ on $\Sigma$ such that $\vert (\Omega, g_{\epsilon})\vert =1$ and $\sigma_1(\Omega, g_{\epsilon}) \to \infty$ as $\epsilon \to 0$. As $\Sigma$ is fixed and $\vert (\Omega, g_{\epsilon})\vert =1$, the normalised eigenvalue $\sigma_1(\Omega,g_{\epsilon}) \vert \Sigma\vert\vert(\Omega, g_{\epsilon})\vert^{\frac{1-d}{d+1}} \to \infty$ as $\epsilon \to 0$.

\end{ex}

Let us observe that the geometry of the example is very special: as the boundary $\Sigma$ is fixed, its intrinsic diameter is fixed. However, its extrinsic diameter, that is its diameter in $\Omega$, goes to $0$ as $\epsilon \to 0$.

It would be very surprising if the existence of a Killing field was an important condition, but it is not clear how to avoid this hypothesis.
\begin{ques}\label{ques:intrinsicextrinsic}
Let $\Omega$ be a compact Riemannian manifold of dimension $d+1\ge 3$ with boundary $\Sigma$. Is it possible to construct a family $(g_{\epsilon})_{\epsilon>0}$ of Riemannian metrics on $\Omega$ that stays constant on $\Sigma$ and satisfies $\sigma_1(\Omega,g_{\epsilon}) \vert \Sigma\vert\vert(\Omega, g_{\epsilon})\vert^{\frac{1-d}{d+1}} \to \infty$ as $\epsilon \to 0$?
\end{ques}

 Note that fixing the Riemannian metric on the boundary is a strong constraint. If we relax it by requiring that the volume $\vert\Sigma\vert$ is prescribed instead, and if the dimension $d+1$ is $\ge 4$, then Example \ref{cylindricalend} allows us to construct such examples, at least when the boundary is connected. It suffices to introduce a cylindrical metric $\Sigma_{\epsilon} \times [0,1]$ near the boundary, with $\vert \Sigma_{\epsilon}\vert=1$, $\lambda_1(\Sigma_{\epsilon})\to \infty$ (which is possible by the work of Colbois and Dodziuk~\cite{CoDo1994} because the dimension of $\Sigma_{\epsilon}$ is $\ge 3$). This method may not be used in dimension $d+1=3$, but the following example shows another, more elaborate, approach.

\begin{ex}\label{example:Karpukhin}The authors would like to thank Mikhail Karpukhin for suggesting the following construction, which leads to Riemannian metrics $g$ on the ball $\B^3$ such that both $|\partial\B^3|_g=1$ and $|\B^3|_g=1$ are prescribed, while $\sigma_1(\B^3,g)$ is arbitrarily large.

Let $M=\Sp^3$ be equipped with any Riemannian metric $g$ of volume one. The homogenisation construction of Girouard and Lagac\'e~\cite{GiLa2021} provides a family of domains $\Omega^\eps\subset M$ such that $|\partial\Omega^\eps|\xrightarrow{\eps\to 0}|M|_g=1$ and
$\sigma_1(\Omega_\eps)\xrightarrow{\eps\to 0}\lambda_1(M,g)$.
See Theorem~\ref{thm:homoclosed}.
 The domains $\Omega^\eps$ are obtained by removing a finite number of disjoint small balls $B(p_i,r_\eps)$ from $M$. One then removes a finite number of thin tubes connecting the boundaries of these balls to each other sequentially, leading to a family of connected domains $\Omega_\delta^\eps\subset\Omega^\eps\subset M$, where $\delta$ represents the width of the excised tubes. As $\delta\to 0$, the tubes collapse to curves meeting the boundary of the balls $B(p_i,r_\eps)$ perpendicularly. It follows from the work of Fraser and Schoen~\cite[Theorem 1.2]{FrSc2019} that $\sigma_1(\Omega_\delta^\eps)\xrightarrow{\delta\to 0}\sigma_1(\Omega^\eps)$. 
 
 The connecting tubes being chosen sequentially means that the corresponding graph, with vertices corresponding to the balls $B(p_i,r_\eps)$  and with edges corresponding to the connecting tubes, is a linear graph. It follows that the excised region is itself diffeomorphic to a ball, and because $M=\Sp^3$, its complement $\Omega_\delta^\eps$ is also diffeomorphic to the ball $\B^3$. Using these diffeomorphisms to pull back the metric $g$ to the ball $\B^3$ and using a simple diagonal argument leads to a sequence of Riemannian metrics $g_m$ on $\overline{\B}^3$ such that $|\B^3|_{g_m}\xrightarrow{m\to+\infty}|M|_g=1$, $|\partial\B^3|_{g_m}\xrightarrow{m\to+\infty}|M|_g=1$ and 
 $\sigma_1(\B^3,g_m)\xrightarrow{m\to+\infty}\lambda_1(M,g)$. 
Because the metric $g$ is arbitrary, the conclusion follows from the well-known fact that $\lambda_1$ is not bounded above on $\Sp^3$. See for instance the work of Bleecker~\cite{Bl1983}. 
\end{ex}

Let us come back to the question of bounding Steklov eigenvalues for domains in a closed Riemannian manifold $M$ of dimension $d+1$. Theorem~\ref{asma2011} provides a good upper bound for the Karpukhin-M\'etras normalisation. 
\begin{ex}\label{example:KarpukhinMetrasNorm}   In \cite{GiLa2021}, the authors consider a continuous density $\beta>0$ on $M$ and use Theorem~\ref{thm:homoclosed} to construct a family $\Omega_{\epsilon}\subset M$ of domains with 
\[\sigma_1(\Omega_\eps)\to\lambda_1(\beta^{-1}\Delta_g),\quad\vert\partial\Omega_\eps\vert\to\int_M\beta\,dV_g,\quad\vert\Omega^\eps|\to |M|.\] 
In particular for $\beta>0$ constant, this leads to
\begin{gather*}
\sigma_1(\Omega_{\epsilon})\vert\partial\Omega_{\epsilon}\vert^{1/d}\to\beta^{-1+1/d}\lambda_1(M)|M|^{1/d};\\
\sigma_1(\Omega_{\epsilon})\vert \Omega_{\epsilon}\vert^{1/(d+1)} \to \beta^{-1}\lambda_1(M)|M|^{1/(d+1)};\\
\sigma_1(\Omega_{\epsilon})\vert\partial\Omega_{\epsilon}\vert\vert\Omega_\epsilon\vert^{\frac{1-d}{1+d}}\to\lambda_1(M)|M|^{2/(d+1)}.
\end{gather*}
Using \cite[Theorem 1.2]{FrSc2019}, the domains $\Omega_\epsilon$ can be chosen to be connected. See Example~\ref{exfraserschoen} and Example~\ref{example:Karpukhin} for other applications. This is another indication of the importance of the mixed normalisation proposed by Karpukhin and M\'etras \cite{KaMe2021}. In fact, it is easy to show using the same method that the functional $\sigma_1(\Omega)|\partial\Omega|^a|\Omega|^b$ (where $ad+b(d+1)=1$ to obtain scaling invariance) is not bounded above for any other normalisation.
Indeed for $\beta>0$ constant it follows as above that
\[\sigma_1(\Omega)|\partial\Omega|^a|\Omega|^b\to\beta^{a-1}\lambda_1(M)|M|^b,\]
so that $a=1$ is the only possibility keeping the functional bounded.
\end{ex}
\begin{ex}\label{JadeBrisson}
The previous example is based on homogenization theory.
A simpler approach to obtaining large eigenvalues for domains in compact manifolds was developed by Brisson in \cite{Br2022}, where she considers a closed connected submanifold $N\subset M$ of positive codimension. Let $T_{\epsilon}$ be the tubular neighborhood of $N$ defined by $T_{\epsilon}=\{x\in M:dist(x,N)<\epsilon\}$ where $dist$ denotes the Riemannian distance. For $\epsilon$ small enough the boundary $\Sigma_{\epsilon}:=\partial T_{\epsilon} $ is a smooth submanifold. The author considers the domain $\Omega_{\epsilon}=M\setminus T_{\epsilon}$ whose boundary is $\Sigma_{\epsilon}$. She investigates the properties of $\sigma_k(\Omega_{\epsilon})$, in particular the asymptotics of $\epsilon\sigma_k(\Omega_{\epsilon})$. Among other things, the following result is proved \cite[Corollary 1.6]{Br2022}: 
If the dimension $d+1$ of $M$ is $\ge 3$ and the dimension $n$ of $N$ satisfies $0<n\le d-1$, then 
\begin{equation}
\lim_{\epsilon \to 0} \vert \Sigma_{\epsilon}\vert^{1/d}\sigma_{1}(\Omega_{\epsilon})=\infty.
\end{equation}
\end{ex}

 \subsubsection{Surgery methods}
\begin{ex}\label{exfraserschoen}
In their papers ~\cite{FrSc2019,FrSc2020}, among other things, Fraser and Schoen propose many very enlightening constructions having consequences for the optimisation problem. Roughly speaking, they give a way to do surgery on a compact Riemannian manifold without affecting the Steklov spectrum too much. We give below some typical results and a couple of applications, and invite the interested readers to consult the two papers.

\begin{enumerate}
\item
The paper \cite{FrSc2019} mainly concerns the first nonzero eigenvalue in dimension higher than $2$. It shows in particular that it is not possible to generalise the Weinstock inequality to domains in Euclidean space of dimension higher than 2, even with strong hypotheses on the topology of the domains.
\begin{thm} \label{contractible} \cite[Theorem 1.1 ]{FrSc2019}. Let $\B$ denote the unit ball in the Euclidean space $\R^{d+1}$ of dimension $d+1\ge 3$. Then there exists a smooth contractible domain $\Omega$ in $\R^{d+1}$ with $\vert \partial \Omega\vert =\vert \partial  \B \vert$ and $\sigma_1(\Omega)>\sigma_1( \B)$.
\end{thm}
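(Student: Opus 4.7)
Plan. The statement asserts that Weinstock's inequality fails for contractible Euclidean domains once $d+1\geq 3$. Since $\sigma_1(\B)=1$ and $|\partial\B|=|\sph^d|$ (Example~\ref{ex: ball}), I want to exhibit a contractible $\Omega\subset\R^{d+1}$ with $|\partial\Omega|=|\sph^d|$ and $\sigma_1(\Omega)>1$. My strategy is \emph{scale-then-augment}: a ball of radius $R<1$ already satisfies $\sigma_1(\B_R)=1/R>1$, but its boundary area is only $R^d|\sph^d|$. I would therefore construct a domain $\Omega_{R,\eps}$ by smoothly attaching to $\B_R$ a finite family of contractible ``tentacles''---tubular protrusions of cross-section radius $\eps$ and length $L_\eps$ chosen so that $|\partial\Omega_{R,\eps}|=|\sph^d|$ (forcing $L_\eps\sim \eps^{-(d-1)}$). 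Contractibility is preserved because each tentacle is star-shaped and is attached along a topological disk.

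The heart of the argument is to prove that $\sigma_1(\Omega_{R,\eps})\to 1/R$ as $\eps\to 0$. For the upper bound I would use, after a Hersch-type re-centring so that $\int_{\partial\Omega_{R,\eps}}x_i\,dS=0$, the coordinate functions $x_1,\ldots,x_{d+1}$ as test functions in the variational characterisation~\eqref{eq:Stek Rayleigh min max}; since $|\nabla x_i|^2=1$, averaging the Rayleigh quotients over $i$ gives $(d+1)|\Omega_{R,\eps}|/\int_{\partial\Omega_{R,\eps}}|x|^2\,dS$, and this converges to $(d+1)|\B_R|/\int_{\partial\B_R}|x|^2\,dS=1/R$ because the thin tentacles contribute a vanishing share of both integrals. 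For the lower bound I would apply Dirichlet--Neumann bracketing (Proposition~\ref{prop: dir neum bracket}) across the attaching disks, decoupling the problem into a slightly perforated ball (whose first positive Steklov--Neumann eigenvalue tends to $1/R$ by a standard capacity argument as the attaching disks shrink) and individual tentacles (whose first positive Steklov--Neumann eigenvalue with Neumann condition at the attaching disk is controlled using the explicit $\tanh$-type formulas of Example~\ref{example:mixedcylinders}).

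Combining the two bounds, $\sigma_1(\Omega_{R,\eps})\to 1/R>1$, so for $R$ close enough to $1$ and $\eps$ small enough we have $\sigma_1(\Omega_{R,\eps})>1$, while $|\partial\Omega_{R,\eps}|=|\sph^d|$ by construction. A mild smoothing of the attaching corners then yields the desired smooth contractible domain.

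The main obstacle is the lower bound: thin protrusions are prone to generating spurious small eigenvalues in the familiar dumbbell fashion---an eigenfunction nearly of opposite constant signs on the bulk and on a tentacle, whose eigenvalue tends to zero as the bottleneck shrinks. Preventing this dumbbell mechanism requires choosing the tentacle geometry so that its lowest positive mixed Steklov--Neumann eigenvalue (with Neumann condition on the attaching disk) stays bounded above $1/R$ uniformly in $\eps$; in particular the attaching disk should have diameter comparable to the tentacle's transverse size rather than forming a narrow neck. Obtaining this quantitative control of the transverse modes, together with the capacity estimate on the perforated ball, is the crux of the proof.
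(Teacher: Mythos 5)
Your scale-then-augment strategy cannot work, and the obstruction is not merely the dumbbell phenomenon you flag as the crux---it is a more basic accounting problem visible already on global test functions. Consider the restriction to $\Omega_{R,\eps}$ of the linear function $u=x_{d+1}/R$ (which is harmonic on all of $\R^{d+1}$, so no extension issues arise). Arranging the tentacles symmetrically across the hyperplane $x_{d+1}=0$ makes $\int_{\partial\Omega_{R,\eps}}u\,dS=0$, so $u$ is admissible in~\eqref{eq:Stek Rayleigh min max}. Its Dirichlet energy is $|\Omega_{R,\eps}|/R^2\approx R^{d-1}|\sph^d|/(d+1)$, since the thin tentacles carry negligible volume. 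But the boundary $L^2$-norm $\int_{\partial\Omega_{R,\eps}}u^2\,dS$ receives contributions from \emph{all} of $|\partial\Omega_{R,\eps}|=|\sph^d|$, and since $u\approx x_{d+1}/R$ on the tentacle boundaries (tentacles are thin and short compared to $R$), the average of $u^2$ over $\partial\Omega_{R,\eps}$ is still roughly $1/(d+1)$, giving $\int_{\partial\Omega}u^2\,dS\approx|\sph^d|/(d+1)$. The Rayleigh quotient is therefore approximately $R^{d-1}$, which for $d\geq 2$ and $R<1$ is strictly less than $1=\sigma_1(\B)$. Hence $\sigma_1(\Omega_{R,\eps})\leq R^{d-1}<\sigma_1(\B)$, the opposite of your goal. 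The point is that thin protrusions add boundary $L^2$-mass to the denominator of the Rayleigh quotient while contributing almost no Dirichlet energy to the numerator---so padding out the boundary area this way necessarily \emph{decreases} the perimeter-normalised first eigenvalue rather than preserving it.

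The paper's proof (Fraser--Schoen, \cite{FrSc2019}) goes the other way: rather than shrinking the ball and adding material, they start with the unit ball and remove a small concentric ball $\B_\eps$, showing by direct analysis of the resulting annulus that $\sigma_1(\B_1\setminus\B_\eps)\,|\partial(\B_1\setminus\B_\eps)|^{1/d}>\sigma_1(\B_1)\,|\partial\B_1|^{1/d}$ for $\eps$ small. This produces a non-contractible domain with the desired strict inequality after rescaling. The contractible example is then obtained by a second removal---a thin tube joining $\partial\B_\eps$ to $\partial\B_1$---and the bulk of their argument (Section 4 of \cite{FrSc2019}) is a delicate stability analysis showing that excising this thin tube does not disturb the low Steklov eigenvalues. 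Note the contrast with your construction: their surgery removes a set of small volume \emph{and} small boundary area, so both numerator and denominator of the Rayleigh quotient are perturbed only slightly; whereas attaching tentacles of small volume but order-one boundary area perturbs the denominator substantially.
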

The proof of this result is hard, but the idea is simple and beautiful. First, in \cite[Section 3]{FrSc2019}, the authors study the spectrum of the annulus $\B_1\setminus \B_{\epsilon}$ where $\B_{\rho}$ is the ball of radius $\rho$ in $\R^{d+1}$, $d\ge 2$. They show that for all $k$ and $\epsilon$ small enough 
\[
\sigma_k(\B_1)\vert \partial \B_1\vert^{1/d} < \sigma_k(\B_1\setminus \B_{\epsilon})\vert \partial (\B_1\setminus \B_{\epsilon})\vert^{1/d}.
\] 
In particular, for $k=1$, there exists an annulus with normalised first nonzero eigenvalue greater than the first normalised eigenvalue of the ball.  But an annulus is not contractible. In order to obtain a contractible example, the authors perform a surgery: they remove a thin cylinder connecting the inner sphere to the outer sphere and consider its complement which is now contractible. The main difficulty, addressed in \cite[Section 4]{FrSc2019}, is to show that this operation does not affect the spectrum too much.

Some of the results were generalised in \cite[Section 3]{Ho2021} by Hong, who performed higher-dimensional surgery. As a corollary, Hong generalises Theorem \ref{contractible}:
\begin{thm}\cite[Corollary 1.2]{Ho2021}  Let $\B$ denote the unit ball in the Euclidean space of dimension $\ge 3$. Then for each $k$, there exists a smooth contractible domain $\Omega$ with $\vert \partial \Omega\vert = \vert \partial  \B\vert$ and $\sigma_j(\Omega)>\sigma_j( \B)$ for each $1\le j\le k$.\end{thm}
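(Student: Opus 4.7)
The plan is to adapt Fraser--Schoen's two-step proof of Theorem~\ref{contractible} so that it handles the first $k$ eigenvalues simultaneously in every dimension $d+1 \geq 3$. The first step reduces everything to a spherical shell. The single-index comparison recalled just before Theorem~\ref{contractible} gives, for each individual $j$, a threshold $\epsilon_j>0$ such that $\sigma_j(\B_1\setminus\B_\epsilon)|\partial(\B_1\setminus\B_\epsilon)|^{1/d}>\sigma_j(\B)|\partial\B|^{1/d}$ for all $0<\epsilon<\epsilon_j$. Setting $\epsilon_\ast:=\min\{\epsilon_1,\dots,\epsilon_k\}$ selects a single annulus $A:=\B_1\setminus\B_{\epsilon_\ast}$ that strictly beats the ball on every one of its first $k$ normalised Steklov eigenvalues.

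The second step is a surgery that produces a contractible domain from $A$. Fix a short radial segment $\gamma\subset A$ joining the inner and outer boundary spheres, and remove its open $\eta$-tubular neighborhood $T_\eta$; after smoothing the corners where $\partial T_\eta$ meets $\partial A$, set $\Omega_\eta:=A\setminus\overline{T_\eta}$. Because $A$ is diffeomorphic to $S^d\times [\epsilon_\ast,1]$ and $T_\eta$ is the preimage under radial projection of a small geodesic $d$-disk in $S^d$, we have $\Omega_\eta\simeq (S^d\setminus D^d)\times I$, which is contractible. Moreover $|\partial\Omega_\eta|$ differs from $|\partial A|$ by $O(\eta^{d-1})$ (loss of two small disks of area $O(\eta^d)$ on $\partial\B_1$ and $\partial\B_{\epsilon_\ast}$, gain of a lateral tube of area $O(\eta^{d-1})$), which vanishes with $\eta$ since $d\geq 2$. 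Granted the spectral stability statement
\[\sigma_j(\Omega_\eta)\longrightarrow\sigma_j(A)\qquad\text{as }\eta\to 0\]
for each fixed $j$, one may choose $\eta$ so small that the finitely many strict inequalities from the first step persist after surgery; a harmless rescaling then produces the desired smooth contractible $\Omega$ with $|\partial\Omega|=|\partial\B|$ and $\sigma_j(\Omega)>\sigma_j(\B)$ for $j=1,\ldots,k$.

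The main obstacle is precisely this spectral stability step. Dirichlet--Neumann bracketing as in Subsection~\ref{subsec:Dir Neum bracketing} does not apply verbatim because $\partial T_\eta$ lies in the interior of $A$ rather than on $\partial A$. The standard workaround is a capacity argument: since $\gamma$ has codimension $d\geq 2$ in $A$, it has vanishing $H^1$-capacity, so one can construct smooth cut-offs $\chi_\eta\in H^1(A)$ supported outside $T_\eta$, equal to $1$ away from a slightly thicker tube, and satisfying $\int_A|\nabla\chi_\eta|^2\to 0$ as $\eta\to 0$. Multiplying Steklov eigenfunctions of $A$ by $\chi_\eta$ yields trial functions on $\Omega_\eta$ whose Rayleigh--Steklov quotients converge to those on $A$, giving $\limsup_{\eta\to 0}\sigma_j(\Omega_\eta)\leq\sigma_j(A)$ via the variational principle~\eqref{eq:Stek Rayleigh min max}. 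The opposite inequality asks one to extend eigenfunctions from $\Omega_\eta$ back to $A$ (for instance by a harmonic extension across the thin tube, or by reflection), and the technical heart of the argument is controlling how the new lateral boundary $\partial T_\eta\cap\Omega_\eta$ enters the boundary integral in the denominator of the Rayleigh--Steklov quotient. Carrying this estimate out carefully in arbitrary dimension $d\geq 2$, uniformly for the first $k$ eigenvalues, completes the proof.
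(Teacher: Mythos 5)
Your overall strategy matches the one the paper attributes to Fraser--Schoen (for $k=1$, \cite{FrSc2019}) and Hong (for general $k$, \cite{Ho2021}): pick a spherical shell that strictly beats the ball on the first $k$ normalised eigenvalues, then perform a tube surgery to make it contractible, then argue spectral stability. The first two steps are fine --- taking $\epsilon_\ast=\min\{\epsilon_1,\dots,\epsilon_k\}$ and removing a tubular neighbourhood of a radial segment is exactly the right move, and your contractibility and boundary-area computations are correct since the lateral boundary area $O(\eta^{d-1})$ vanishes when $d\ge 2$.

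The gap is in the third step, and it is not a cosmetic one. Your cut-off argument only delivers $\limsup_{\eta\to0}\sigma_j(\Omega_\eta)\le\sigma_j(A)$, and that inequality points the \emph{wrong} way for the conclusion: if the normalised eigenvalues of $\Omega_\eta$ dropped strictly below those of $A$ as $\eta\to0$, they could dip below $\sigma_j(\B)|\partial\B|^{1/d}$ and the strict inequalities would be lost. You say, correctly, that the matching lower bound $\liminf_{\eta\to0}\sigma_j(\Omega_\eta)\ge\sigma_j(A)$ is ``the technical heart,'' but you then leave it unproven. This is precisely what constitutes the substance of Hong's Section~3 (generalising Fraser--Schoen's Section~4). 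The difficulty is genuine: a Steklov eigenfunction $v$ of $\Omega_\eta$ has its boundary $L^2$-mass spread over $\partial\Omega_\eta=(\partial A\setminus\text{two small disks})\cup(\partial T_\eta\cap\Omega_\eta)$, and there is no a priori uniform bound preventing $v$ from concentrating on the lateral tube, whose area shrinks but whose contribution to the denominator could still be comparable to the numerator. If such concentration occurred, extending $v$ to $A$ would produce a trial function whose Rayleigh quotient on $A$ is much \emph{larger} than $R_{\Omega_\eta}(v)$, and the min--max comparison would fail. Fraser--Schoen and Hong rule this out by constructing an explicit extension/comparison operator and proving quantitative estimates on both the gradient energy added in the tube and the loss of boundary mass; you should at minimum cite the specific lemmas you intend to invoke, or indicate how you would show an eigenfunction cannot accumulate $L^2$ boundary mass on the tube (for instance via a trace inequality on the tube boundary with a constant that degenerates slowly enough, combined with a uniform $H^1(\Omega_\eta)$ bound on normalised eigenfunctions). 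As written, the proposal asserts rather than proves the key step.
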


\item
In \cite{FrSc2020}, the authors further develop the behaviour of the spectrum under surgery. For example, Theorem 1.1 of that paper says the following: let $\Omega_1,...,\Omega_s$ be compact $(d+1)$-dimensional Riemannian manifolds with boundary. Given $\epsilon>0$, there exists a Riemannian manifold $\Omega_{\epsilon}$, obtained by appropriately gluing $\Omega_1,...,\Omega_s$ together along their boundaries, such that
$\lim_{\epsilon \to 0} \vert \partial \Omega_{\epsilon}\vert =\vert \partial(\Omega_1 \sqcup ... \sqcup \Omega_s) \vert$ and
$\lim_{\epsilon \to 0}\sigma_k(\Omega_{\epsilon})=\sigma_k(\Omega_1 \sqcup ... \sqcup \Omega_s)$ for all $k$.

The way to glue appropriately two manifolds $\Omega_1$ and $\Omega_2$ together is described in a sequence of lemmas in \cite[Section 4.2]{FrSc2020}. Roughly speaking, the idea is to make a hole on the boundary of both manifolds and to join these holes by a catenoid. The technical difficulty is to do this without perturbing the spectrum of $\Omega_1$ and $\Omega_2$ too much.

\item
The authors also apply the previous construction to a single manifold $\Omega$ (\cite[Theorem 4.8]{FrSc2020}). They make two holes on the boundary of $\Omega$ and join them by a catenoid. They obtain a family of manifolds $\Omega_{\epsilon}$ such that for all $k$ 
\[
\lim_{\epsilon \to 0} \vert \partial \Omega_{\epsilon}\vert=\vert \partial \Omega\vert;\ \lim_{\epsilon \to 0}\sigma_k(\Omega_{\epsilon})=\sigma_k(\Omega).
\]
\end{enumerate}
\end{ex}

 \subsection{Upper bounds: the case of domains in a Riemannian manifold.}\label{UpperDomains}

Let us start with an example which probably gives the simplest available upper bound for Steklov eigenvalues. Consider a domain in the round sphere $\Omega\subset\Sp^{d+1}$ such that each coordinate function $\pi_i$ satisfies \begin{gather}
\int_{\partial\Omega}\pi_i\,dA=0.
\end{gather} 
In particular any domain such that $-\Omega=\Omega$ satisfies this condition. It follows that the coordinate functions can be used as trial functions in Equation~\eqref{eq:Stek Rayleigh min max}, so that \[\sigma_1(\Omega)\int_{\partial\Omega}\pi_i^2\,dA\leq\int_{\Omega}|\nabla\pi_i|^2\,dV.\]
Summing over $i$ and using that $\sum_{i=1}^{d+2}\pi_i^2\equiv 1$ and $\sum_{i=1}^{d+1}|\nabla\pi_i|^2\equiv d+1$ leads to 
\begin{gather}
    \sigma_1(\Omega)|\partial\Omega| \leq (d+1)|\Omega|.
\end{gather}
Note that this does not violate scale invariance because the size of the sphere $\Sp^{d+1}$ is fixed. In particular, there are no homotheties in the ambient space $\Sp^{d+1}$. In~\cite{FrSc2019} Fraser and Schoen considered an arbitrary domain $\Omega\subset\R^{d+1}$. They precomposed the coordinate functions $\pi_i$ with an appropriate conformal diffeomorphism $\Omega\to\Sp^{d+1}$ and used H\"older's inequality to obtain
\begin{gather}\label{ineq:FraserSchoenPreResult}
    \sigma_1(\Omega)|\partial\Omega|\leq (d+1)|\Sp^{d+1}|^{\frac{2}{d+1}}|\Omega|^{\frac{d-1}{d+1}}.
\end{gather}
Their argument also applies verbatim to domains $\Omega\subset\Sp^{d+1}$, and in that case inequality~(\ref{ineq:FraserSchoenPreResult}) is sharp.
\begin{prop}\label{prop:bestupperboundDomainSphere}
Let $\Omega\subset\Sp^{d+1}$ be a domain with smooth boundary. Then
\[\sigma_1(\Omega)|\partial\Omega||\Omega|^{\frac{1-d}{d+1}}\leq (d+1)|\Sp^{d+1}|^{\frac{2}{d+1}}.\]
This inequality is sharp: there exists a family $\Omega^\eps\subset\Sp^{d+1}$ such that
\[\sigma_1(\Omega^\eps)|\partial\Omega^\eps||\Omega^\eps|^{\frac{1-d}{d+1}}\xrightarrow{\eps\to 0}
(d+1)|\Sp^{d+1}|^{\frac{2}{d+1}}.\]
\end{prop}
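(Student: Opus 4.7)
The plan is to mimic the argument sketched just above the proposition for Euclidean domains, but with conformal diffeomorphisms acting within the sphere itself rather than between Euclidean space and the sphere. First I would invoke Hersch's center-of-mass method: since the group of conformal automorphisms of $\Sp^{d+1}$ is transitive enough, a standard degree argument (which applies because the measure $dA|_{\partial\Omega}$ is supported on a smooth hypersurface, hence has no atoms) yields a conformal diffeomorphism $F:\Sp^{d+1}\to\Sp^{d+1}$ such that the functions $u_i:=\pi_i\circ F$ satisfy
\[\int_{\partial\Omega} u_i\,dA = 0 \qquad\text{for each } i=1,\dots,d+2,\]
making them admissible test functions in the variational characterisation~\eqref{eq:Stek Rayleigh min max} of $\sigma_1(\Omega)$.

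Next, I would insert each $u_i$ into the Rayleigh--Steklov quotient, sum over $i$, and use that $\sum_i\pi_i^2\equiv 1$ on $\Sp^{d+1}$ to obtain
\[\sigma_1(\Omega)|\partial\Omega| \leq \sum_i\int_\Omega|\nabla u_i|^2\,dV_g.\]
Writing $F^*g = \psi g$ for the conformal factor, the pointwise identity $\sum_i|\nabla u_i|^2 = (d+1)\psi$ holds because $\Sp^{d+1}$ is isometrically embedded in $\R^{d+2}$ via the $\pi_i$'s. This turns the right-hand side into $(d+1)\int_\Omega\psi\,dV_g$. Now I would apply H\"older's inequality with conjugate exponents $(d+1)/2$ and $(d+1)/(d-1)$:
\[\int_\Omega \psi\,dV_g \leq \left(\int_\Omega \psi^{(d+1)/2}\,dV_g\right)^{2/(d+1)} |\Omega|^{(d-1)/(d+1)}.\]
The crucial observation is that $\psi^{(d+1)/2}\,dV_g = dV_{F^*g}$, so
\[\int_\Omega \psi^{(d+1)/2}\,dV_g = |F(\Omega)|_g \leq |\Sp^{d+1}|.\]
Assembling these pieces and rearranging yields the desired upper bound. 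As a sanity check, specialising to $d=1$ recovers Kokarev's bound $\sigma_1|\partial\Omega|\leq 8\pi$ from Theorem~\ref{thm:KokarevGenus0}.

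For sharpness, I would apply Theorem~\ref{thm:homoclosed} to $(M,g)=(\Sp^{d+1},g_{\mathrm{round}})$ with constant density $\beta\equiv 1$. This produces a family $\Omega^\eps\subset\Sp^{d+1}$ with $|\partial\Omega^\eps|\to|\Sp^{d+1}|$, $|\Omega^\eps|\to|\Sp^{d+1}|$, and $\sigma_1(\Omega^\eps)\to\lambda_1(\Sp^{d+1})=d+1$. A direct computation then gives
\[\sigma_1(\Omega^\eps)\,|\partial\Omega^\eps|\,|\Omega^\eps|^{(1-d)/(d+1)} \longrightarrow (d+1)|\Sp^{d+1}|^{1+(1-d)/(d+1)} = (d+1)|\Sp^{d+1}|^{2/(d+1)},\]
matching the upper bound exactly.

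The main obstacle will be the center-of-mass step: one has to verify rigorously that the Hersch-type renormalisation applies to conformal automorphisms of $\Sp^{d+1}$ acting on the boundary measure $dA|_{\partial\Omega}$, rather than on the disk model as originally stated. Once that is in hand, every other step is either a direct computation (the identity $\sum_i|\nabla u_i|^2=(d+1)\psi$, the change-of-variables identity $\psi^{(d+1)/2}dV_g=dV_{F^*g}$) or a standard application of H\"older's inequality, and the sharpness half of the proposition follows immediately by specialising the already-established homogenisation theorem.
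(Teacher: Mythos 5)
Your proof is correct and follows exactly the route the paper indicates: Hersch renormalisation of the coordinate functions via a conformal automorphism $F$ of $\Sp^{d+1}$, the pointwise identities $\sum_i u_i^2\equiv 1$ and $\sum_i|\nabla u_i|^2 = (d+1)\psi$, H\"older with exponents $\tfrac{d+1}{2}$ and $\tfrac{d+1}{d-1}$ together with conformal invariance of $\psi^{(d+1)/2}\,dV_g$, and then sharpness by applying Theorem~\ref{thm:homoclosed} with $\beta\equiv 1$. You have simply spelled out the details that the paper leaves to the reference to Fraser--Schoen, whose argument (the paper notes) applies verbatim to domains in $\Sp^{d+1}$.
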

The family $\Omega^\eps$ is constructed using homogenisation techniques. See Theorem~\ref{thm:homoclosed}.
Notice that for $d=1$ one recovers Kokarev's inequality (\ref{ineq:Kokarev}).

For domains in an arbitrary closed Riemannian manifold $M$, uniform upper bounds for all eigenvalues $\sigma_k$ are obtained from Theorem~\ref{asma2011}. Indeed, inequality (\ref{secondformulation}) shows that for any domain $\Omega \subset M$,	
\begin{equation}\label{thirdformulation}
    \sigma_k(\Omega,g)\vert \partial \Omega \vert_g\vert \Omega\vert_g^{\frac{1-d}{d+1}} \le  A_d \vert M\vert_{g_0}^{\frac{2}{d+1}}a^2+B_dk^{\frac{2}{d+1}}.
\end{equation}
In fact, the best upper bound for $\sigma_k(\Omega)|\partial\Omega||\Omega|^{\frac{1-d}{1+d}}$ can be expressed in terms of the best upper bound for the eigenvalues of the weighted Laplace operator $\beta^{-1}\Delta_g$. Let us introduce
\begin{gather}\label{eq:bestdensityeigenvalues}
\lambda_k^\#(M,g):=\sup_{0<\beta\in C^0(M)}\lambda_k(\beta^{-1}\Delta_g)\int_M\beta\,dV_g.
\end{gather}
This should be compared with the definition of conformal eigenvalues $\lambda_k^\star(M,[g])$ (see equation~\eqref{eq:defConformalEigenvalues}). For surfaces, it follows from the conformal invariance of the Laplace operator that $\lambda_k^\star(M,[g])=\lambda_k^\#(M,g)$, and the following result is a direct generalisation of Theorem~\ref{thm: GKL sharp}. It is proved in exactly the same way. 
\begin{thm}\label{thm: GKL higherd}
Let $(M,g_0)$ be a closed Riemannian manifold of dimension $d+1$ with $Ric_{g_0}(M) \ge -a^2d$ for a constant $a\ge 0$. Then for each $g\in[g_0]$, each $k\in\N$ and each domain $\Omega\subset M$,
\begin{gather} 
    \sigma_k(\Omega)|\partial\Omega||\Omega|^{\frac{1-d}{1+d}}\leq\lambda_k^\#(M,g)|M|_g^\frac{1-d}{1+d}.
\end{gather}
Moreover, for each $k\in\N$, there exists a family of domains $\Omega^\eps\subset M$ such that \[\sigma_k(\Omega^\eps)|\partial\Omega^\eps||\Omega^\eps|^{\frac{1-d}{1+d}}\xrightarrow{\eps\to0}\lambda_k^\#(M,g)|M|_g^\frac{1-d}{1+d}.\]
\end{thm}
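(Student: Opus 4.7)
\textbf{The plan is to} adapt the two-part strategy of Theorem~\ref{thm: GKL sharp} to higher dimensions, using the transmission eigenvalue inequality together with the continuity of variational eigenvalues of Radon measures for the upper bound, and the homogenisation construction of Theorem~\ref{thm:homoclosed} for the sharpness.

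For the upper bound, given a domain $\Omega\subset M$ with boundary $\Sigma$, Example~\ref{example:transmission} identifies the transmission eigenvalues $\tau_k(\Omega)$ as the variational eigenvalues of the boundary measure $\mu=\iota_\star dV_\Sigma$ on $M$, and provides $\sigma_k(\Omega)\leq\tau_k(\Omega)$. I would then approximate $\mu$ in the weak-$\star$ topology by smooth positive densities, choosing a sequence $\beta_\eps\in C^0(M)$ such that $\beta_\eps\,dV_g\to\mu$ weakly-$\star$ and $\int_M\beta_\eps\,dV_g\to|\Sigma|_g$. The continuity of variational eigenvalues of Radon measures (Theorem~\ref{thm:ContinuityRadon}) yields
\[\lambda_k(\beta_\eps^{-1}\Delta_g)\int_M\beta_\eps\,dV_g\xrightarrow{\eps\to 0}\tau_k(\Omega)\,|\Sigma|_g\geq\sigma_k(\Omega)\,|\Sigma|_g,\]
while the left-hand side is bounded above by $\lambda_k^\#(M,g)$ by its definition~\eqref{eq:bestdensityeigenvalues}. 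Combined with the scale invariance of both sides of the claimed inequality under $g\mapsto t^2 g$, this should yield the announced upper bound.

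For the moreover part, I would apply Theorem~\ref{thm:homoclosed} directly: given a positive continuous density $\beta$ on $M$, it provides a family $\Omega^\eps_\beta\subset M$ with $\sigma_k(\Omega^\eps_\beta)\to\lambda_k(\beta^{-1}\Delta_g)$, $|\partial\Omega^\eps_\beta|\to\int_M\beta\,dV_g$, and $|\Omega^\eps_\beta|\to|M|_g$, so that
\[\sigma_k(\Omega^\eps_\beta)|\partial\Omega^\eps_\beta||\Omega^\eps_\beta|^{\frac{1-d}{1+d}}\xrightarrow{\eps\to 0}\lambda_k(\beta^{-1}\Delta_g)\int_M\beta\,dV_g\cdot|M|_g^{\frac{1-d}{1+d}}.\]
A diagonal extraction over a sequence of densities $\beta_n$ nearly saturating the supremum defining $\lambda_k^\#(M,g)$ then produces a family $\Omega^{\eps_n}$ along which the normalised eigenvalue tends to $\lambda_k^\#(M,g)|M|_g^{(1-d)/(1+d)}$.

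\textbf{The hard part} is the careful interplay between the two volume normalisations $|\Omega|^{(1-d)/(1+d)}$ and $|M|^{(1-d)/(1+d)}$ that appear on opposite sides of the inequality. In dimension $d+1\geq 3$ the Dirichlet energy is no longer conformally invariant, so one cannot simply substitute conformally equivalent Laplacians as in the surface case and must work throughout with the density-weighted operators $\beta^{-1}\Delta_g$. The concentrating densities $\beta_\eps$ must be selected so that the Karpukhin--M\'etras normalisation on the two sides is respected in the limit, and this matching -- together with the passage from the variational eigenvalue of a Radon measure back to the announced inequality in the form involving $|\Omega|^{(1-d)/(1+d)}$ -- is the most delicate technical step, replacing the use of conformal invariance of the Rayleigh quotient that is available only when $d=1$.
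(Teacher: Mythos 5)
Your treatment of the \emph{moreover} (saturation) statement is correct and matches the paper's approach: apply Theorem~\ref{thm:homoclosed} to each density $\beta$ to produce $\Omega^\eps_\beta$, then diagonalise over $\beta_n$ almost attaining the supremum in the definition of $\lambda_k^\#$.

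The upper-bound argument, however, contains a genuine gap, and you correctly locate it but do not close it. The chain $\sigma_k(\Omega)\leq\tau_k(\Omega)$, weak-$\star$ approximation of $\mu=\iota_\star dV_\Sigma$ by $\beta_\eps\,dV_g$, continuity of variational eigenvalues, and the definition of $\lambda_k^\#$ yields
\[
\sigma_k(\Omega)\,|\partial\Omega|_g\ \leq\ \lambda_k^\#(M,g).
\]
But this is \emph{strictly weaker} than the claimed inequality once $d\geq 2$: dividing the claimed inequality by $|M|_g^{(1-d)/(1+d)}$ turns it into
\[
\sigma_k(\Omega)\,|\partial\Omega|_g\ \leq\ \lambda_k^\#(M,g)\left(\frac{|\Omega|_g}{|M|_g}\right)^{\frac{d-1}{1+d}},
\]
and since $|\Omega|_g\leq|M|_g$ with $\frac{d-1}{1+d}>0$, the right-hand side is $\leq\lambda_k^\#(M,g)$ whenever $|\Omega|_g<|M|_g$. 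Your appeal to scale invariance under $g\mapsto t^2g$ cannot bridge this: a global rescaling multiplies $|\Omega|$ and $|M|$ by the same power of $t$ and hence leaves the ratio $|\Omega|/|M|$ untouched, so it cannot generate the factor $\left(|\Omega|/|M|\right)^{(d-1)/(1+d)}$ that is precisely what distinguishes the theorem from the weaker inequality you actually established. You flag this as ``the hard part'' and correctly observe that conformal invariance of the Dirichlet energy is lost for $d\geq 2$, but you do not supply a replacement. A natural mechanism would be to conformally deform $g$ on $M\setminus\overline{\Omega}$, keeping it fixed on $\overline{\Omega}$ so the left-hand side is unchanged, shrinking $|M|_{g'}$ down to $|\Omega|_g$ -- but for this to conclude one needs suitable control of $\lambda_k^\#(M,g')|M|_{g'}^{(1-d)/(1+d)}$ along the deformation, which is far from automatic since $\lambda_k^\#(M,g)$ is not a conformal invariant when $d\geq 2$. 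Alternatively, one could attempt a H\"older-type argument as in the proof of Proposition~\ref{prop:bestupperboundDomainSphere}, where the factor $|\Omega|^{(1-d)/(1+d)}$ enters precisely through H\"older's inequality applied to the Dirichlet energy. Either way, the proposal as written proves only $\sigma_k(\Omega)|\partial\Omega|\leq\lambda_k^\#(M,g)$ and does not establish the stated inequality.
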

The following is then obtained from~\eqref{thirdformulation}.
\begin{cor}
Let $(M,g_0)$ be a closed Riemannian manifold of dimension $d+1$ with $Ric_{g_0}(M) \ge -a^2d$ for a constant $a\ge 0$. Then
\[\lambda_k^\#(M,g)|M|_g^\frac{1-d}{1+d}\leq A_d \vert M\vert_{g_0}a^2+B_dk^{\frac{2}{d+1}}.\]
In particular, using the constant density $\beta\equiv 1$ leads to the following upper bound for the eigenvalues of the Laplace operator:
\begin{equation} 
    \lambda_k(M)\vert M \vert_g^{\frac{2}{d+1}} \le  A_d \vert M\vert_{g_0}^{\frac{2}{d+1}}a^2+B_dk^{\frac{2}{d+1}}.
\end{equation}
\end{cor}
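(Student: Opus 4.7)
The plan is to combine the two displayed inequalities from just before the corollary, namely Theorem~\ref{thm: GKL higherd} and inequality~\eqref{thirdformulation}, and then specialize to constant density to read off the Laplace bound. This is a short reduction rather than a new analytic argument.

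First, I would invoke Theorem~\ref{thm: GKL higherd}, which asserts two things: an upper bound $\sigma_k(\Omega)|\partial\Omega||\Omega|^{(1-d)/(1+d)}\leq\lambda_k^\#(M,g)|M|_g^{(1-d)/(1+d)}$ valid for every domain $\Omega\subset M$, together with the existence, for each $k$, of a saturating sequence $\Omega^\eps\subset M$ along which this inequality becomes an equality in the limit $\eps\to 0$. Consequently,
\[
\lambda_k^\#(M,g)\,|M|_g^{(1-d)/(1+d)} \;=\; \sup_{\Omega\subset M}\sigma_k(\Omega)\,|\partial\Omega|\,|\Omega|^{(1-d)/(1+d)},
\]
so it suffices to uniformly bound the quantity on the right over all domains $\Omega\subset M$.

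Next, I would apply inequality~\eqref{thirdformulation}, which, under the hypothesis $\operatorname{Ric}_{g_0}(M)\geq -a^2 d$, gives exactly such a uniform bound:
\[
\sigma_k(\Omega,g)\,|\partial\Omega|_g\,|\Omega|_g^{(1-d)/(d+1)}\;\leq\; A_d|M|_{g_0}^{2/(d+1)}a^2+B_d k^{2/(d+1)}
\]
for every $g\in[g_0]$, independently of the choice of domain. Evaluating this at each member of the saturating family $\Omega^\eps$ from Theorem~\ref{thm: GKL higherd} and passing to the limit $\eps\to 0$ yields the first claimed inequality of the corollary. (The $|M|_{g_0}$ in the statement should carry the exponent $2/(d+1)$ inherited from~\eqref{thirdformulation}.)

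For the ``in particular'' statement, I would simply choose the admissible density $\beta\equiv 1$ in the definition~\eqref{eq:bestdensityeigenvalues} of $\lambda_k^\#$. Since the supremum dominates any particular choice,
\[
\lambda_k(M,g)\,|M|_g \;=\; \lambda_k\!\left(1^{-1}\Delta_g\right)\int_M 1\,dV_g \;\leq\; \lambda_k^\#(M,g),
\]
and multiplying by $|M|_g^{(1-d)/(1+d)}$ gives $\lambda_k(M,g)|M|_g^{2/(d+1)}\leq \lambda_k^\#(M,g)|M|_g^{(1-d)/(1+d)}$. Combining with the first part of the corollary produces the asserted Laplace bound. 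No real obstacle is expected: both ingredients have been established earlier in the section, and the only subtle point is recognizing that Theorem~\ref{thm: GKL higherd} identifies $\lambda_k^\#$ as the supremum of the normalized Steklov functional, which is precisely what converts the pointwise-in-$\Omega$ bound~\eqref{thirdformulation} into a bound on $\lambda_k^\#$ itself.
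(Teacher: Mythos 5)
Your proposal is correct and follows the same route the paper intends: the paper's terse ``obtained from~\eqref{thirdformulation}'' is exactly your argument of feeding the uniform domain bound into the saturating sequence supplied by Theorem~\ref{thm: GKL higherd} (equivalently, the homogenisation family of Theorem~\ref{thm:homoclosed}), then specialising $\beta\equiv 1$ in the definition of $\lambda_k^\#$. You also correctly flag the missing exponent $2/(d+1)$ on $\vert M\vert_{g_0}$ in the first display of the corollary, which is consistent with both~\eqref{thirdformulation} and the second display.
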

\begin{remark}
The reader is invited to look at the very recent papers \cite{KaSt2022} by Karpukhin and Stern and \cite{Pe2022} by Romain Petrides for further investigation of links with harmonic maps to spheres.
\end{remark}

Let us come back to the question of bounding $\sigma_k(\Omega)$ for domains in complete manifolds. For a domain $\Omega \subset \R^{d+1}$, Fraser and Schoen combined~\eqref{ineq:FraserSchoenPreResult} with the classical isoperimetric inequality and obtained the following~\cite[Proposition 2.1]{FrSc2019}:
\begin{equation}\label{eq:fraserschoen}
    \sigma_1(\Omega)\vert\Sigma \vert^{1/d}\le \frac{(d+1)^{1/d}\vert \mathbb \Sp^{d+1}\vert^{\frac{2}{d+1}}}{\vert \mathbb \B^{d+1}\vert^{\frac{d-1}{d(d+1)}}}.
\end{equation}

More generally, if $\Omega$ is a domain in a complete manifold $(M,g_0)$ with non-negative Ricci curvature and $g$ is a Riemannian metric conformal to $g_0$, then Inequality \eqref{secondformulation} of Theorem \ref{asma2011} shows that 
\[
\sigma_k(\Omega,g)\vert \partial \Omega\vert_g \vert \Omega\vert_g^{\frac{1-d}{1+d}} \le B(d)k^{\frac{2}{d+1}}.
\]
Using the classical isoperimetric inequality, this implies that for domains $\Omega$ of the Euclidean space $\R^{d+1}$, of the hyperbolic space or of a hemisphere of the sphere $\Sp^{d+1}$, we have 
\begin{equation}\label{ineq:ceg}
	\sigma_k(\Omega)\vert \Sigma\vert^{\frac{1}{d}}\le C_dk^{\frac{2}{d+2}},
\end{equation}
where $C_d$ is a constant depending only on the dimension (see also \cite[Theorem 1.2]{CoElGi2011}). This implies that on these spaces, \emph{large boundary} implies \emph{small $\sigma_1$}. This leads to the question of whether these results may be generalised. In \cite[Examples 6.2, 6.3]{CoElGi2011}, it is shown that there does not exist an upper bound in full generality, independent of the normalisation. However, these examples are very specific constructions, and we can ask the following:
\begin{ques}\label{question:maxdom2}
Let $(M,g)$ be a complete Riemannian manifold of dimension $\ge 3$ of infinite volume, with Ricci curvature bounded from below. Discuss if one can construct domains $\Omega \subset M$ with arbitrarily large first nonzero normalised eigenvalue (for the different normalisations mentioned in this section).
\end{ques}

One can show that, in certain cases, by adding some hypotheses, we can get sharper inequalities for domains in Euclidean space, or more generally in non-compact rank-1 symmetric spaces such as the hyperbolic space. One can also characterise the case(s) of equality.

	It is known that the Weinstock inequality is not true for non simply-connected domains of the plane~\eqref{ineq:weinstock} and cannot be generalised without conditions in higher dimension. Because Fraser and  Schoen (Theorem~\ref{contractible}) showed the existence of contractible domains with larger first nonzero eigenvalue than the ball but with the same boundary area as the ball, it appears difficult to generalise Weinstock inequality to domains in $\R^{d+1}$ ($d+1\ge 3$) that are not convex. For convex domains, however, in \cite[Theorem 3.1]{BFNT2021}, Bucur, Ferone, Nitsch and Trombetti were able to show that this inequality remains true. 
	
	\begin{thm} Let $\Omega$ be a bounded convex domain in $\R^{d+1}$ and let $\Omega^*$ be the ball in $\R^{d+1}$ with $\vert \partial \Omega^*\vert =\vert \partial \Omega\vert$. Then
	\begin{equation}\label{weinstockconvex}
		\sigma_1(\Omega) \le \sigma_1(\Omega^*)
	\end{equation}
	with equality if and only if $\Omega$ is a ball.
	\end{thm}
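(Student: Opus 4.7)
The plan is to adapt the classical Weinstock center-of-mass argument: use the coordinate functions as trial functions in the variational characterisation~\eqref{eq:Stek Rayleigh min max}, and then close the estimate with a sharp geometric inequality that uses convexity in an essential way.

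First, I would translate $\Omega$ so that $\int_{\partial\Omega}x\,dA=0$; by convexity the origin then lies in the interior of $\Omega$, and each coordinate function $\pi_i(x)=x_i$ is admissible in~\eqref{eq:Stek Rayleigh min max}. Since $\sum_{i=1}^{d+1}|\nabla\pi_i|^2\equiv d+1$, summing over $i$ yields the trial-function bound
\[ \sigma_1(\Omega)\int_{\partial\Omega}|x|^2\,dA\ \leq\ (d+1)\,|\Omega|. \]
Writing $R^{*}=(|\partial\Omega|/|\Sp^d|)^{1/d}$, so that $\sigma_1(\Omega^*)=1/R^{*}$ and $(d+1)|\Omega^*|=R^{*}|\partial\Omega|$, the inequality $\sigma_1(\Omega)\leq\sigma_1(\Omega^*)$ will follow at once from the purely geometric statement
\[ \int_{\partial\Omega}|x|^2\,dA\ \geq\ (d+1)\,R^{*}\,|\Omega|, \qquad (\star) \]
so the proof reduces to establishing $(\star)$ for every convex body with centered boundary.

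The crux of the argument, and the main obstacle, is then $(\star)$. The divergence theorem gives the Minkowski identity $\int_{\partial\Omega}x\cdot\nu\,dA=(d+1)|\Omega|$; writing $|x|^2=(x\cdot\nu)^2+|x_T|^2$ with tangential component $x_T$, discarding $|x_T|^2$, and applying Cauchy--Schwarz would yield only
\[ \int_{\partial\Omega}|x|^2\,dA\ \geq\ \frac{(d+1)^2|\Omega|^2}{|\partial\Omega|}. \]
This bound is strictly weaker than $(\star)$ whenever $\Omega$ is not a ball, because the classical isoperimetric inequality gives $(d+1)|\Omega|\leq R^{*}|\partial\Omega|$, exactly the \emph{opposite} of what would be needed to close the chain. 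Something genuinely global has to be used: either a Minkowski--Alexandrov--Fenchel mixed-volume inequality comparing $\Omega$ with $\Omega^*$, or a rearrangement along the family of inner parallel sets of $\Omega$, both of which rest on the Brunn--Minkowski inequality and fail for non-convex bodies. That convexity is genuinely needed at this step is consistent with the construction of Fraser and Schoen (Theorem~\ref{contractible}), which exhibits contractible but non-convex domains in $\R^{d+1}$ with the same boundary area as the ball and strictly larger $\sigma_1$, hence violating $(\star)$.

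Finally, the equality analysis is direct. If $\sigma_1(\Omega)=\sigma_1(\Omega^*)$, then equality must hold in the trial-function estimate, which forces each centered coordinate $x_i$ to be a genuine Steklov eigenfunction for $\sigma_1(\Omega)$; and equality in $(\star)$ forces $\Omega$ to coincide with the ball. Together with the centering, this identifies $\Omega$ with $\Omega^*$ up to translation, completing the characterisation of equality.
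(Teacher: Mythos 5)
Your reduction is carried out correctly: centering so that $\int_{\partial\Omega}x\,dA=0$, using the coordinate functions as trial functions, and observing that $\sigma_1(\Omega)\leq\sigma_1(\Omega^*)$ follows once one has
\[
\int_{\partial\Omega}|x|^2\,dA\ \geq\ (d+1)\,R^*\,|\Omega|,\qquad R^*=\bigl(|\partial\Omega|/|\sph^d|\bigr)^{1/d}.\qquad(\star)
\]
You also correctly recognise $(\star)$ as the crux, that it cannot be obtained from Cauchy--Schwarz and the Minkowski identity, and that the Fraser--Schoen construction of Theorem~\ref{contractible} forces $(\star)$ to fail without convexity. (A side remark: you don't actually need the origin to be an interior point; admissibility of the $\pi_i$ follows solely from the centering.)

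The genuine gap is that $(\star)$ is asserted but never proven. Listing tools that ``rest on Brunn--Minkowski'' does not establish the inequality, and $(\star)$ is precisely the new geometric content of the cited theorem of Bucur, Ferone, Nitsch and Trombetti~\cite{BFNT2021}. Their proof in fact decouples $(\star)$ into two separate sharp comparisons, each with equality only for balls: (i) a new weighted isoperimetric inequality, namely that among convex bodies with fixed surface area and centered boundary the ball minimises the boundary momentum, $\int_{\partial\Omega}|x|^2\,dA\geq(R^*)^2|\partial\Omega|$; and (ii) the classical isoperimetric inequality $(d+1)|\Omega|\leq R^*|\partial\Omega|$. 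Chaining these gives $(\star)$ and then $\sigma_1(\Omega)\leq 1/R^*=\sigma_1(\Omega^*)$. The ingredient (i) is the hard step and is the main theorem of~\cite{BFNT2021}; its proof is a genuinely nontrivial piece of convex geometry and is not something one can wave at with ``mixed volumes'' or ``inner parallel sets.'' Your combined form $(\star)$ intertwines both the perimeter and the volume and is, if anything, a less tractable target than the perimeter-only statement (i), so attacking $(\star)$ directly is unlikely to simplify matters. Finally, the equality discussion suffers from the same gap: to conclude that equality forces $\Omega$ to be a ball you need the equality case of $(\star)$, which in turn rests on the equality cases of (i) and (ii), none of which are established here.
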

	
	\begin{ques}\label{ques:weinstockhnsn}
	Is such an inequality also true for convex domains in hyperbolic space or in the sphere?  
	\end{ques}
	
	A quantitative version of Inequality \eqref{weinstockconvex} was given in \cite[ Theorem 1.1]{GLPT2020} by Gavitone, La Manna, Paoli and Trani.

 If we consider a normalisation with the volume $\vert \Omega\vert$, the situation is different. First, there is an inequality, due to F. Brock \cite{Br2001} for domains of the Euclidean space $\R^{d+1}$ in all dimensions:

 \begin{thm} \label{thm: brock} Let $\Omega \subset \R^{d+1}$ and let $\Omega^{\star}$ be the Euclidean ball such that $\vert \Omega\vert=\vert \Omega^*\vert$. Then
 \begin{equation} \label{thm:brock}
     \sigma_1(\Omega)\le \sigma_1(\Omega^*).
 \end{equation}
 Equality holds if and only if $\Omega$ is isometric to $\Omega^{\star}$.
     \end{thm}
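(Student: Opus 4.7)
The plan is to extend the Hersch-type argument used to prove Weinstock's Theorem~\ref{thm:Weinstock}, taking the ambient coordinate functions as trial functions and then reducing the problem to a weighted isoperimetric inequality that identifies the centered ball as the extremum.

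First I would translate $\Omega$ so that $\int_{\partial\Omega}x_i\,dS=0$ for $i=1,\dots,d+1$; this is always possible by a continuity argument on the barycenter map, and it leaves the Steklov spectrum unchanged. The coordinate functions $x_1,\dots,x_{d+1}$ are then harmonic and orthogonal to the constants on $\partial\Omega$, hence admissible in the variational characterisation~\eqref{eq:Stek Rayleigh min max}, giving
\[
\sigma_1(\Omega)\int_{\partial\Omega}x_i^2\,dS\leq\int_\Omega|\nabla x_i|^2\,dV=|\Omega|.
\]
Summing over $i=1,\dots,d+1$ yields the key estimate
\[
\sigma_1(\Omega)\int_{\partial\Omega}|x|^2\,dS\leq (d+1)|\Omega|.
\]

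For the ball $\Omega^{*}=B_R$ centered at the origin with $|B_R|=|\Omega|$, one has $\sigma_1(B_R)=1/R$ and, since $\nu=x/R$ on $\partial B_R$, the divergence theorem gives $\int_{\partial B_R}|x|^2\,dS=R^2|\partial B_R|=(d+1)R\,|B_R|$, so the estimate above is saturated for $\Omega=\Omega^*$. Therefore Brock's bound~\eqref{thm:brock} reduces to proving the weighted isoperimetric inequality
\[
\int_{\partial\Omega}|x|^2\,dS\;\geq\;(d+1)R\,|\Omega|
\]
for every $\Omega$ centered as above and with $|\Omega|=|B_R|$, with equality only for the centered ball.

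The main obstacle is this weighted isoperimetric inequality, which lies somewhat deeper than the classical one: naive attempts via the divergence identity $\int_{\partial\Omega}x\cdot\nu\,dS=(d+1)|\Omega|$ combined with Cauchy--Schwarz and the classical isoperimetric bound $|\partial\Omega|\geq(d+1)|\Omega|/R$ yield estimates in the wrong direction. The route I would pursue is a Cavalieri layer-cake decomposition
\[
\int_{\partial\Omega}|x|^2\,dS=\int_0^\infty 2s\,\mathcal H^{d}\bigl(\{x\in\partial\Omega:|x|>s\}\bigr)\,ds,
\]
coupled with the classical isoperimetric inequality applied to each truncated region $\Omega\setminus\overline{B_s}$: after a spherical symmetrisation comparing $\Omega$ to the centered ball slice by slice, the centering constraint prevents mass from escaping to infinity and forces the desired lower bound. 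Equality then propagates through both the trial-function estimate (so each $x_i$ is a genuine $\sigma_1$-eigenfunction of $\Omega$) and the weighted isoperimetric inequality (so $\Omega$ must be the centered ball), yielding the rigidity statement.
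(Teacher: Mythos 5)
Your overall strategy is sound and in fact closely parallels Brock's original argument in \cite{Br2001}, which the paper merely cites rather than reproves. After translating $\Omega$ so that the boundary barycenter lies at the origin (this requires no continuity or degree argument --- unlike the Hersch trick for conformal maps, the group here is just translations, so you subtract $\frac{1}{|\partial\Omega|}\int_{\partial\Omega}x\,dS$ directly), the coordinate functions are admissible, summing $\sigma_1\int_{\partial\Omega}x_i^2\leq|\Omega|$ gives $\sigma_1\int_{\partial\Omega}|x|^2\,dS\leq(d+1)|\Omega|$, and the theorem reduces to the weighted isoperimetric inequality $\int_{\partial\Omega}|x|^2\,dS\geq(d+1)R\,|\Omega|$ with equality only for the centered ball. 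All of this is correct. Brock actually proves the stronger inequality $\sum_{i=1}^{d+1}\sigma_i(\Omega)^{-1}\geq(d+1)R$ by feeding the orthonormalised coordinate functions into the min-max characterisation of partial sums of reciprocal eigenvalues; your version uses only the $\sigma_1$ characterisation and is a simpler route to the weaker conclusion. One small confusion: the weighted isoperimetric inequality needs no centering hypothesis at all --- it holds for every domain of the prescribed volume, with equality exactly for balls centered at the origin, and translating to the boundary barycenter only decreases the left side. The centering is what makes the trial functions admissible, not what makes the isoperimetric step true; the phrase ``the centering constraint prevents mass from escaping to infinity'' has it backwards.

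The genuine gap is your sketch of the weighted isoperimetric inequality itself. The Cavalieri identity
\[
\int_{\partial\Omega}|x|^2\,dS=\int_0^\infty 2s\,\mathcal{H}^{d}\bigl(\partial\Omega\setminus\overline{B_s}\bigr)\,ds
\]
is correct, but a slice-by-slice comparison fails: it is \emph{not} true that $\mathcal{H}^{d}(\partial\Omega\setminus\overline{B_s})\geq\mathcal{H}^{d}(\partial B_R\setminus\overline{B_s})$ for each $s$ separately (take $\Omega$ a ball of radius $R$ translated so that $\partial B_s$ crosses it: the left side drops below $|\partial B_R|$), so one cannot invoke the classical isoperimetric inequality one radius at a time. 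Moreover, the classical isoperimetric inequality applied to $\Omega\setminus\overline{B_s}$ lower-bounds its \emph{total} boundary, which includes the extraneous spherical piece $\mathcal{H}^{d}(\Omega\cap\partial B_s)$, and removing that term requires an integrated argument that you have not supplied. The inequality you need is a genuine theorem in its own right --- a radially weighted isoperimetric inequality due to Betta, Brock, Mercaldo, and Posteraro (\emph{A weighted isoperimetric inequality and applications to symmetrization}, J.\ Inequal.\ Appl., 1999) --- and the cleanest fix is simply to cite it, exactly as Brock does. Once that is invoked, the rest of your argument, including the rigidity statement (which needs only the equality case of the isoperimetric step), is complete.
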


     A quantitative version of inequality \eqref{thm:brock} was given in \cite{BrDeRu2012} by Brasco, De Phillippis and Ruffini.
	 
	 Inequality \eqref{thm:brock} was extended by Raveendran and Santhanam  to rank-1 symmetric spaces of noncompact types \cite[Theorem 1.1]{BiSa2014}. 
\begin{thm}\label{binoysanthanam1}
Let $M$ be a noncompact rank-1 symmetric space with sectional curvature $-4 \le K(M)\le -1$ (a typical example being the hyperbolic space). Let $\Omega \subset M$ be a bounded domain with smooth boundary. Then
\begin{equation} \label{BiSa}
    \sigma_1(\Omega)\le \sigma_1(\Omega^*)
\end{equation}
where $\Omega^*\subset M$ is a geodesic ball  such that $\vert \Omega\vert =\vert \Omega^*\vert$. Equality holds if and only if $\Omega$  is isometric to $\Omega^*$.
\end{thm}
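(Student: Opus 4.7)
The plan is to extend Brock's Euclidean argument to the rank-one setting by using radial Steklov eigenfunctions of the geodesic ball as trial functions, combined with a Hersch-type center-of-mass trick and a weighted isoperimetric inequality. Fix a base point $p_0\in M$ and set $\Omega^*=B_R(p_0)$. Because $M$ is a rank-one symmetric space, the isotropy group at $p_0$ acts transitively on the unit tangent sphere, so separation of variables yields a first nontrivial Steklov eigenspace of the form $u_i=f(r)\phi_i$, where $r=d(p_0,\cdot)$, the functions $\phi_i$ ($i=1,\dots,d+1$) form an orthonormal basis of first spherical harmonics on the unit sphere of $T_{p_0}M$ (so that $\sum_i\phi_i^2\equiv 1$ and $\sum_i|\nabla_{\Sp^d}\phi_i|^2\equiv d$), and $f$ is a radial profile determined by the harmonicity equation and the Steklov boundary condition $f'(R)=\sigma_1(\Omega^*)f(R)$.

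Given an arbitrary $\Omega\subset M$ with $|\Omega|=|\Omega^*|$, define for each $p\in M$ the functions $u_i^p(q)=f(d(p,q))\phi_i\bigl(\exp_p^{-1}(q)/d(p,q)\bigr)$, i.e.\ the pullbacks of $u_i$ under the isometry of $M$ sending $p_0$ to $p$. Transitivity of the isometry group, together with a Hersch-type topological argument, provides a point $p^*\in M$ for which the centering conditions $\int_{\partial\Omega}u_i^{p^*}\,dS=0$ hold simultaneously for $i=1,\dots,d+1$, so that the $u_i^{p^*}$ are admissible in the variational characterisation of $\sigma_1(\Omega)$. Summing the individual Rayleigh--Steklov bounds yields
\[
\sigma_1(\Omega)\int_{\partial\Omega}f\bigl(d(p^*,q)\bigr)^2\,dS(q)\;\le\;\int_\Omega\Psi\bigl(d(p^*,q)\bigr)\,dV(q),
\]
where $\Psi(r)=(f'(r))^2+d\,f(r)^2/s(r)^2$ and $s(r)$ is the volume-density factor in the polar decomposition of $M$ about $p^*$, a product of hyperbolic sines reflecting the root structure of $M$.

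The remaining step reduces to a weighted isoperimetric inequality asserting that, among all domains of fixed volume $|\Omega^*|$, the geodesic ball $B_R(p^*)$ maximises the ratio appearing in the displayed inequality; that is, geodesic balls are precisely the extremisers. This is established by a Schwarz-type symmetrisation about $p^*$ combined with the classical isoperimetric inequality in rank-one symmetric spaces (for which the ball is the isoperimetric minimiser), exploiting the monotonicity properties of $f$ and $\Psi$ that follow from the radial ODE they satisfy. Since the ratio equals $1/\sigma_1(\Omega^*)$ when evaluated on $\Omega^*$, one deduces $\sigma_1(\Omega)\le\sigma_1(\Omega^*)$, and tracing the equality cases of the symmetrisation and the centering argument forces $\Omega$ to be isometric to $B_R(p^*)$.

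The main obstacle is this final step: the symmetrisation and the weighted isoperimetric inequality go through cleanly in the real hyperbolic case ($K\equiv -1$), but the hypothesis $-4\le K\le -1$ also permits the complex, quaternionic, and Cayley hyperbolic spaces, where the metric has variable curvature and the volume-density factor $s(r)$ is a product of more than one hyperbolic sine. Handling the interplay between the nontrivial root structure, the variable curvature, and the radial profile $f$ requires a delicate analysis, most conveniently by pushing both sides of the displayed inequality forward under the distance function from $p^*$ in order to reduce the problem to one-dimensional integrals against a common weight, for which the comparison with the ball case can be made explicit.
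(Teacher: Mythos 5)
The survey does not give a proof of this theorem --- it is stated with a citation to Raveendran and Santhanam \cite{BiSa2014}, and the verification is left to that paper. So there is no ``paper's own proof'' to compare against. What one can say is that your outline is a faithful reconstruction of the Brock-type scheme that \cite{BiSa2014} does follow: pull back the first Steklov eigenfunctions of the geodesic ball by isometries, center by a Hersch-type degree argument, and close via a weighted rearrangement inequality. The structure is the right one, and you correctly flag the non-real rank-one spaces as the genuine source of difficulty.

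That said, there are several concrete gaps. First, the formula $\Psi(r)=(f')^2+d\,f^2/s(r)^2$ is wrong even in the real hyperbolic case: for the warped product $dr^2+\sinh^2 r\,g_{\Sp^d}$ the tangential gradient contributes $f^2/\sinh^2 r$, i.e.\ the square of the \emph{warping function}, not of the volume-density $s(r)=\sinh^{d}r$. Worse, in $\mathbb{CH}^n$, $\mathbb{HH}^n$, $\mathbb{OH}^2$ the geodesic spheres are Berger-type spheres with two distinct scale factors (e.g.\ $\sinh^2 r$ on the contact distribution and $\tfrac14\sinh^2(2r)$ on the Reeb direction in $\mathbb{CH}^n$), so $\sum_i|\nabla u_i|^2$ is genuinely a sum of two radial terms, not the single-term expression you wrote; even the statement that the round first spherical harmonics span the first Steklov eigenspace of the ball needs an extra argument there (irreducibility of the isotropy representation plus Schur). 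Second, there is a direction slip: to conclude $\sigma_1(\Omega)\le\sigma_1(\Omega^*)$ from $\sigma_1(\Omega)\le\int_\Omega\Psi\big/\int_{\partial\Omega}f^2$, you need the ball to \emph{maximise} $\int_\Omega\Psi\big/\int_{\partial\Omega}f^2$ (equivalently, \emph{minimise} $\int_{\partial\Omega}f^2\big/\int_\Omega\Psi$), yet you simultaneously describe the ball as ``maximising the ratio'' and say that ratio equals $1/\sigma_1(\Omega^*)$ on $\Omega^*$, which is inconsistent.

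The most serious gap, however, is the treatment of the closing step. You assert the weighted isoperimetric inequality is ``established by a Schwarz-type symmetrisation about $p^*$ combined with the classical isoperimetric inequality.'' This understates it considerably. Even in Euclidean space, the statement that $\int_{\partial\Omega}|x|^2\,dS\ge\int_{\partial B_R}|x|^2\,dS$ for $|\Omega|=|B_R|$ is a nontrivial theorem (Betta--Brock--Mercaldo--Posteraro), not a direct consequence of symmetrisation. What you actually need here are three separate facts, each of which requires a proof: (a) $\Psi$ is radially non-increasing (this does hold in $\mathbb{H}^n$ for the eigenfunction profile, by differentiating $\Psi$, substituting the radial ODE for $f$, and checking the resulting expression is non-positive, but you give no argument); (b) $f^2$ is radially non-decreasing; (c) a Brock-type weighted isoperimetric inequality in rank-one symmetric spaces for radially increasing boundary weights. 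The standard route to (c) is via the divergence theorem applied to $f(r)^2\nabla r$, reducing to a bulk inequality that again requires monotonicity of $2ff'+f^2\Delta r$, and none of these reductions appear in your sketch. Until (a)--(c) are pinned down --- particularly in the complex, quaternionic and octonionic cases where $\Psi$, $\Delta r$, and the isoperimetric comparison all become more intricate --- the argument is incomplete.
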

Note that such a result is not true for rank-1 symmetric spaces of compact type. In \cite[Theorem 1.4]{CaRu2019},    Castillon and Ruffini construct a counterexample on the sphere. The domain $\Omega$ consists of the intersection of two antipodal geodesic balls, and for a ball $B$ with $\vert B\vert =\vert \Omega\vert$, they show that $\sigma_1(\Omega)>\sigma_1(B)$.

\noindent
\textbf{Doubly connected domains.} The study of eigenvalue problems for the Laplacian on doubly connected domains is a classical problem (optimal placement of an obstacle). Recently it has been studied for the Steklov eigenvalues, much of the time for the first nonzero eigenvalue, and with various boundary conditions like mixed Steklov-Dirichlet or Steklov-Neumann. These problems are usually defined in annular domains with a hole having a spherical shape. Different boundary conditions can be imposed on the inner and outer boundary, and, also, different optimisation problems can be studied.

A first result of this kind was obtained by Verma and Santhanam \cite[Theorem 1.1]{SaVe2020}. For $d+1>2$, they introduce a ball $B_1 \subset \R^{d+1}$ of radius $R_1$ and a ball $B_2 \subset \R^{d+1}$ of radius $R_2>R_1$ such that $\bar B_1\subset B_2$. They consider the mixed Steklov-Dirichlet problem on $B_2\setminus \bar B_1$ with Dirichet boundary condition on $\partial B_1$ and Steklov boundary condition on $\partial B_2$. Under these assumptions, they show that annular domains (concentric balls) maximise the first eigenvalue $\sigma_0^D$ of $B_2\setminus \bar B_1$.

In \cite[Theorem 1.2]{Ft2022}, Ftouhi gives another proof of this result, including in dimension $d+1=2$ which was left open in \cite{SaVe2020}. In the same paper, the author shows that for the first nonzero Steklov eigenvalue $\sigma_1(B_2\setminus \bar B_1)$, the maximum is also achieved uniquely when the balls are concentric (Theorem 1.1).

In \cite[Theorem 1]{Se2021}, Seo generalises the result of \cite{SaVe2020} to domains in rank one symmetric spaces. However, in the compact case, one needs to add the assumption that the radius $R_2$ of the larger ball $B_2$ is less that half of the injectivity radius of the space. For example, for a sphere, this means that the domain is contained in a hemisphere.

In this context, another natural question is to study the behaviour of the first eigenvalue when the center of $B_1$ is moving radially outward from the center of $B_2$ (with the Dirichlet boundary condition on $\partial B_1$). This question is studied by Hong, Lim and Seo ~\cite[Theorems 1-4]{HoLiSe2022} for domains in $\R^{d+1}$. In Section 6, the authors perform numerical estimates and conjecture that $\sigma_0^D(B_2\setminus \bar B_1)$ is decreasing as $B_1$ moves in the direction of the boundary. This was proved by Gavitone and Piscitelli \cite[Theorem 1.2]{GaPi2021}. In fact, their result is more general: instead of the ball $B_2$, the authors consider a domain $\Omega\subset \R^{d+1}$ which is connected and centrally symmetric with respect to a point.

This last result leads to a natural question for doubly connected domains in $\R^{d+1}$: fix the inner ball $B_1$ (with Dirichlet boundary condition on $\partial B_1$) and replace $B_2$ by a domain $\Omega$. Under certain conditions, one may hope that $\sigma_0^D(\Omega \setminus B_1)$ is maximal when $\Omega$ is a ball. This was investigated in \cite{PaPiSa2021} by Paoli, Piscitelli and Sannipoli  and in \cite{GPPS2021} by the same three authors and Gavitone.

In \cite[Definition 3.1]{PaPiSa2021} the authors introduce the concept of a nearly spherical set, which intuitively means a set close to a ball. The authors show that, among all nearly spherical sets having the same volume, the first eigenvalue $\sigma_0^D$ is maximal exactly when $\Omega$ is a ball \cite[Theorem 3.2]{PaPiSa2021}. 

In \cite[Theorem 1.1]{GPPS2021}, a similar type of result is shown when $\Omega$ is a convex domain satisfying the following condition: the convex domain $\Omega$ has to be contained in another ball of radius $R$, where $R$ depends on the radius $R_1$ of the inner ball $B_1$.

\begin{ques}\label{ques:manyquestionsdoublyconnected} These results lead to many new questions.  
\begin{itemize}
\item For example, most of the above results concern the mixed Steklov-Dirichlet problem. What about the mixed Steklov-Neumann problem (for the first nonzero eigenvalue $\sigma_1$)?

\item 
To what extent are the restrictions imposed on $\Omega$ in \cite{PaPiSa2021} and \cite{GPPS2021} necessary? 

\item
In \cite{PaPiSa2021} and \cite{GPPS2021}, the authors consider the spectrum normalised by the volume of the domain. Is it possible to obtain similar results with a normalisation by the volume of the boundary of the domain?

\item 
Is it possible to extend some of the results to domains in rank one symmetric spaces, as was done in \cite{Se2021} for the Steklov-Dirichlet problem on $B_2 \setminus B_1$?
\end{itemize}
\end{ques}

Recently, there also appeared interesting inequalities with normalisation on the diameter of the domain. We now discuss some of these.

In \cite[Theorem 1.2]{BiSa2014}, the authors obtain an inequality for domains in Cartan-Hadamard manifolds. The estimate was made more explicit recently by Li, Wang and Wu in \cite[Theorem 1.1]{LiWaWu2020}:

\begin{thm}\label{liwangwu} Let $(M,g)$ be a complete, simply-connected Riemannian manifold  of dimension $d+1$ and let $\Omega \subset M$ be a bounded domain with Lipschitz boundary. Let $M_{\kappa}$ be the $d+1$-dimensional space form of constant sectional curvature $\kappa \le 0$ and $\Omega^* \subset M_{\kappa}$ be a geodesic ball such that $\vert \Omega^*\vert =\vert \Omega \vert$. If the sectional curvature of $(M,g)$ is $\le \kappa$ and the Ricci curvature of $(M,g)$ is $\ge dKg$ with $K\le 0$, then
\begin{equation} \label{LiWaWu}
    \sigma_1(\Omega)\le \left(\frac{sn_{K}(\diam(\Omega))}{sn_{\kappa}(\\diam(\Omega))}\right)^{2d} \sigma_1(\Omega^*)
\end{equation} 
where $sn_0(t)=t$ and for $k<0$, $sn_k(t)=\frac{1}{\sqrt{-k}}\sinh(\sqrt{-k}t)$.
\end{thm}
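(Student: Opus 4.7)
The plan is to prove the bound by the standard variational strategy: apply the Rayleigh--Steklov principle on $\Omega$ with trial functions built from the first nonzero Steklov eigenfunctions of the model ball $\Omega^{\star}\subset M_{\kappa}$, then use curvature comparison to relate the resulting quotient to $\sigma_1(\Omega^{\star})$.

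First, recall that on the geodesic ball $\Omega^{\star}\subset M_{\kappa}$ centered at the origin, the first nonzero Steklov eigenspace is $(d+1)$-dimensional, spanned by functions of the form $\phi_i(r_{\kappa},\theta)=h(r_{\kappa})\,y_i(\theta)$, where $(r_{\kappa},\theta)$ are geodesic polar coordinates, $h$ is a common radial profile solving a Bessel-type ODE, and $y_1,\dots,y_{d+1}$ are the Cartesian coordinate functions restricted to the unit sphere in the tangent space, so that $\sum_i y_i^2\equiv 1$ and $\sum_i|\nabla_{S^d}y_i|^2\equiv d$. Since $M$ is Cartan--Hadamard, the exponential map $\exp_p:T_pM\to M$ is a diffeomorphism for any $p\in M$, giving global geodesic polar coordinates $(r,\theta)$ and allowing the transplantation $u_i^{p}(x):=h(d(p,x))\,y_i\bigl(\exp_p^{-1}(x)/d(p,x)\bigr)$ to $\Omega$.

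Second, I would use a Hersch-type center-of-mass argument to choose a basepoint $p\in M$ such that $\int_{\partial\Omega}u_i^{p}\,dA=0$ for every $i=1,\dots,d+1$; the existence of such $p$ is standard, via a topological degree applied to a continuous ``center-of-mass'' map. The $u_i^{p}$ are then admissible in the variational characterisation of $\sigma_1(\Omega)$, so summing $\sigma_1(\Omega)\int_{\partial\Omega}(u_i^{p})^2\le\int_{\Omega}|\nabla u_i^{p}|^2$ over $i$ and using $\sum_i y_i^2\equiv 1$ yields
\begin{equation*}
\sigma_1(\Omega)\int_{\partial\Omega}h(r)^2\,dA\;\le\;\int_{\Omega}\sum_{i=1}^{d+1}|\nabla u_i^{p}|^2\,dV.
\end{equation*}

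Third, I would invoke curvature comparison. In polar coordinates around $p$, write $g=dr^2+g_r$; Rauch comparison applied to $\sec\le\kappa$ gives $g_r\ge sn_{\kappa}(r)^2 g_{S^d}$, and Bishop--Gromov applied to $\operatorname{Ric}\ge dK g$ gives $\sqrt{\det g_r}\le sn_K(r)^d$. These bounds pointwise control both the angular part of $|\nabla u_i^{p}|^2$ (by at most $sn_{\kappa}(r)^{-2}|\nabla_{S^d}y_i|^2 h(r)^2$) and the Riemannian volume element. Using $r\le\diam(\Omega)$ throughout $\Omega$, together with the monotonicity of $r\mapsto sn_K(r)/sn_{\kappa}(r)$, lets me bound the numerator by $\bigl(sn_K(\diam\Omega)/sn_{\kappa}(\diam\Omega)\bigr)^d\,\int_{\Omega^{\star}}\sum_i|\nabla\phi_i|^2\,dV_{M_{\kappa}}$. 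A symmetric comparison for the boundary integral introduces another factor of the same type with inverted ratio, and dividing the two estimates gives the stated exponent $2d$. The main obstacle will be this boundary comparison: unlike the Dirichlet energy, the integral $\int_{\partial\Omega}h(r)^2\,dA$ cannot be bounded below pointwise in polar coordinates, and one must establish a weighted isoperimetric-type inequality (adapting Brock's and Binoy--Santhanam's arguments to variable curvature) relating it to $\int_{\partial\Omega^{\star}}h(r_{\kappa})^2\,dA$ with the correct $sn_{\kappa}/sn_K$ factor. Verifying that equality holds when $K=\kappa$ and $\Omega$ is already a ball in $M_{\kappa}$ (recovering Theorem~\ref{binoysanthanam1}) provides a useful consistency check on the bookkeeping.
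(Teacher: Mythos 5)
The paper only states this theorem as a citation to Li, Wang, and Wu \cite{LiWaWu2020} without supplying a proof, so there is nothing internal to compare against. Your framework --- transplanting the model-ball eigenfunctions via the exponential map, Hersch-type centering to make them admissible, then Rauch and Bishop comparison on the Rayleigh quotient --- is the natural Brock/Binoy--Santhanam template that this result generalizes, so you are aimed in the right direction.

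There are, however, two genuine gaps. First, the weighted isoperimetric inequality lower-bounding $\int_{\partial\Omega} h(r)^2\,dA$: you acknowledge this yourself, and it is in fact the heart of the argument. Until you prove a curvature-sensitive version of the Betta--Brock--Mercaldo--Posteraro / Binoy--Santhanam inequality, the denominator cannot be compared to $\int_{\partial\Omega^{\star}} h^2\,dA$ and the claimed exponent $2d$ does not materialize. Second, and separately, your asserted numerator bound
\[
\int_{\Omega}\sum_i |\nabla u_i^p|^2\,dV \;\leq\; \Bigl(\tfrac{sn_K(\diam\Omega)}{sn_{\kappa}(\diam\Omega)}\Bigr)^{d}\int_{\Omega^{\star}}\sum_i |\nabla\phi_i|^2\,dV_{M_{\kappa}}
\]
does not follow from the pointwise Rauch/Bishop bounds. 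Those bounds compare the integral over $\Omega$ to the integral over the polar-coordinate transplant $\widetilde\Omega := \exp_o(\exp_p^{-1}(\Omega))\subset M_{\kappa}$, which shares polar coordinates with $\Omega$ but satisfies $|\widetilde\Omega|\leq|\Omega|=|\Omega^{\star}|$ (by Rauch, $\sqrt{\det g_r}\geq sn_{\kappa}(r)^d$). When $\kappa=0$ this gap is invisible because $\sum_i|\nabla\phi_i|^2\equiv d+1$ is constant and $\int_{\Omega}(d+1)\,dV_g=(d+1)|\Omega^{\star}|$ directly; but for $\kappa<0$ the model energy density depends on $r$, so passing from $\widetilde\Omega$ to $\Omega^{\star}$ needs a rearrangement argument in $M_{\kappa}$ that you have not supplied. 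As a result, the bookkeeping leading to the precise exponent $2d$ is asserted rather than derived, and the consistency check with Theorem~\ref{binoysanthanam1} (equality when $K=\kappa$) cannot be carried out from your sketch.
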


  \begin{ques}\label{ques:diamterm}
 The term $\frac{sn_{K}(\diam(\Omega))}{sn_{\kappa}(\diam(\Omega))}$ may become very large when $\diam(\Omega)$ becomes large. Is it possible to establish a better estimate or to construct an example of a domain $\Omega$ with large diameter and $\sigma_1(\Omega)$ large?
  \end{ques}
   
  \begin{ques}\label{ques:cartanhadamard}
Can we get estimates like (\ref{LiWaWu}) for the other eigenvalues of domains in Cartan-Hadamard manifolds? The methods used in \cite{CoElGi2011} or \cite{Ha2011} do not seem to apply.
\end{ques}

In \cite[Proposition 4.3]{BoBuGi2017}, Bogosel, Bucur and Giacomini obtain an upper bound involving the diameter $\diam(\Omega)$ of the domain. 
\begin{thm} \label{diam1}There exists a constant $C_d>0$ such that for every $k \in \mathbb N$ and for every bounded connected Lipschitz open set
$\Omega$ in $\R^{d+1}$
  \begin{equation}
	\sigma_k(\Omega)\le C_d \frac{k^{\frac{d+3}{d+1}}}{\diam(\Omega)} 
\end{equation} 
\end{thm}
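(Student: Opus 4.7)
The plan is to apply the variational characterisation \eqref{eq:Stek Rayleigh min max} and exhibit a $(k+1)$-dimensional subspace $E \subset H^1(\Omega)$ on which the Rayleigh--Steklov quotient is bounded by $C_d k^{(d+3)/(d+1)}/\diam(\Omega)$.

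First I would set the geometry. Let $D = \diam(\Omega)$ and pick $x, y \in \overline{\Omega}$ with $|x-y| = D$; place coordinates so that $x = 0$ and $y = D e_1$. Since $\Omega$ is contained in a ball of diameter $D$, every cross-section $\Omega \cap \{z_1 = t\}$ lies in a $d$-dimensional disk of radius $D$, so each slab $\{a \le z_1 \le a+h\}$ meets $\Omega$ in a set of volume at most $c_d h D^d$. Next, using the monotonicity of the cumulative boundary measure $t \mapsto |\partial\Omega \cap \{z_1 \le t\}|$ (and, if necessary, a slight mollification to ensure continuity for general Lipschitz $\Omega$), I would partition $[0,D]$ into $k+1$ disjoint intervals $I_j = [t_j, t_{j+1}]$ with $|\partial\Omega \cap (\R^d \times I_j)| = |\partial\Omega|/(k+1)$ for every $j$.

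On each slab I would build a Lipschitz trial function $u_j(z) = \phi_j(z_1)$, where $\phi_j$ is supported in $I_j$, equals $1$ on the middle portion of $I_j$, and is linear on two transition strips of width $\delta_j$ chosen so that each transition strip carries boundary mass at most $|\partial\Omega|/(4(k+1))$. The supports are pairwise disjoint, hence the $u_j$ span a $(k+1)$-dimensional subspace. The cross-section estimate gives
\[\int_\Omega |\nabla u_j|^2\, dV \;\le\; \delta_j^{-2}\cdot 2\delta_j\cdot c_d D^d \;=\; \frac{2c_d D^d}{\delta_j},\]
while the choice of $\delta_j$ guarantees $\int_{\partial\Omega} u_j^2\, dS \ge |\partial\Omega|/(2(k+1))$. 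Optimising the $\delta_j$ and combining with the lower bound $|\partial\Omega| \gtrsim D^d/k^{(d-1)/(d+1)}$ (produced by a dual slab argument using the same cross-section estimate on subdomains of $\Omega$) collapses all the dependence on $|\partial\Omega|$ and yields the stated bound.

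The main obstacle I expect is the simultaneous control of slab widths and transition widths. The natural equal-boundary-mass partition can produce arbitrarily thin slabs (wherever the boundary measure is concentrated along $e_1$), which would blow up the Dirichlet energy. A naive fix -- using equal-width slabs -- instead destroys the lower bound on $\int_{\partial\Omega} u_j^2$. Threading the needle requires a two-scale/rebalancing construction in the spirit of Grigor'yan--Netrusov--Yau, grouping slabs dyadically by width and subdividing or merging to guarantee both $\delta_j \gtrsim D/k$ and a substantial boundary mass in the core of each $u_j$. An alternative route bypassing the geometric decomposition entirely would be to invoke heat-kernel / Nash-type capacity estimates adapted to the Steklov setting (as in Hassannezhad's work), which give directly trial functions with the correct asymptotics in $k$ but at the cost of importing more substantial machinery.
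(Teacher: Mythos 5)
The survey states this result as \cite[Proposition 4.3]{BoBuGi2017} and does not reproduce its proof, so I assess your attempt on its own terms. Your outline -- slabbing along the diameter direction, trial functions supported on slabs, control via cross-section and isoperimetric estimates -- is a reasonable starting point, and you correctly identify the main obstruction yourself. However, the asserted lower bound $|\partial\Omega|\gtrsim D^d/k^{(d-1)/(d+1)}$ is simply false for $d\ge 2$: a thin tube $\Omega_\varepsilon\approx[0,D]\times B^d_\varepsilon$ (suitably smoothed) has $\diam(\Omega_\varepsilon)\to D$ while $|\partial\Omega_\varepsilon|\approx c_dD\varepsilon^{d-1}\to 0$, so no lower bound on $|\partial\Omega|$ depending only on $D$ and $k$ can hold. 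Since that inequality is the step where ``all the dependence on $|\partial\Omega|$'' is supposed to collapse, the argument does not close without a replacement. A case split -- using the Colbois--El Soufi--Girouard bound $\sigma_k|\partial\Omega|^{1/d}\le C_dk^{2/(d+1)}$ when $|\partial\Omega|$ is not too small, and a direct slab construction otherwise -- is a plausible structure, but the ``thin'' regime is exactly the part left unspecified.

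There is also a quieter issue with the exponent. The only uniform way to bound the boundary integral on a slab from below is via the isoperimetric inequality for cross-sections: writing $\Omega_t=\Omega\cap\{z_1=t\}$ and $\nu(t)=|\Omega_t|$, the co-area formula applied to $z_1$ on the rectifiable set $\partial\Omega$, together with isoperimetry in $\R^d$, gives $|\partial\Omega\cap\{a\le z_1\le b\}|\ge c_d\int_a^b\nu(t)^{(d-1)/d}\,dt$. When $d=1$ the exponent on $\nu$ is zero, so every slab contributes boundary length proportional to its width, and an honest computation with equal-width slabs already yields $\sigma_k\diam(\Omega)\le Ck^2$, which matches $k^{(d+3)/(d+1)}$ at $d=1$. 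For $d\ge 2$ the quantity $\nu(t)^{(d-1)/d}$ vanishes with $\nu(t)$, so the control on boundary mass in each slab degrades in a way your sketch does not address, and the ``two-scale/rebalancing'' you gesture at in the spirit of Grigor'yan--Netrusov--Yau is exactly where the nontrivial work lies; it is not carried out. As written, the proposal establishes the theorem only in dimension $d+1=2$.
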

It would be interesting to investigate the optimality of the power of $k$.

From this theorem, we deduce that Euclidean domains with large diameter have small eigenvalues. It is known that this is not the case in general for arbitrary compact Riemannian manifolds with boundary.  Indeed by Inequality \ref{ineq:compmixed} and Proposition \ref{prop: dir neum bracket}, one can always modify the interior of the manifold without changing the spectrum very much. However, Theorem~\ref{diam1} leads to the following question:
	
	\begin{ques}\label{ques:diameterhnsn}
	Is it possible to get inequalities similar to those in Theorem \ref{diam1} for domains in the hyperbolic space or the sphere?
	\end{ques}

Regarding the importance of the diameter, Al Sayed, Bogosel, Henrot and Nacry proved the following inequality in \cite[Proposition 2.2]{ABHN2021}:
 
\begin{thm} \label{diam2} Let $\Omega$ be a convex domain in $\R^{d+1}$.
Then, there exists
an explicit constant $C = C(d, k)$ depending only on the dimension $d+1$ and on $k$ such that
\begin{equation}
	\sigma_k(\Omega)\le C \frac{\vert \Omega \vert^{\frac{1}{d}}}{\diam(\Omega)^{\frac{2d+1}{d}}}.
\end{equation}
\end{thm}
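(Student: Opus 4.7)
The plan is to apply the variational characterisation~\eqref{eq:Stek Rayleigh min max} of $\sigma_k(\Omega)$ with $k+1$ explicit trial functions depending only on the coordinate along the direction realising the diameter. After rotating and translating, one may assume that the diameter is realised by two points $p_0, p_1\in\overline{\Omega}$ with $p_1-p_0=De_1$, so that convexity forces $\Omega\subset [0,D]\times\R^d$ and each slice $\Omega(t):=\Omega\cap\{x_1=t\}$ is a convex subset of $\R^d$. Divide $[0,D]$ into $k+1$ consecutive intervals $I_j$ of equal length $h=D/(k+1)$, and on each $I_j$ take the Lipschitz trapezoidal function $\phi_j:[0,D]\to[0,1]$ equal to $1$ on the middle third of $I_j$, equal to $0$ outside $I_j$, and affine on the two outer sixths, so that $\|\phi_j'\|_\infty\le 6/h$. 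The functions $u_j(x):=\phi_j(x_1)$ have pairwise disjoint supports, span a $(k+1)$-dimensional subspace of $H^1(\Omega)$, and~\eqref{eq:Stek Rayleigh min max} gives
\[
\sigma_k(\Omega)\;\le\;\max_{0\le j\le k}\;\frac{\int_\Omega (\phi_j'(x_1))^2\,dV}{\int_{\partial\Omega}\phi_j(x_1)^2\,dS}.
\]

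The numerator is controlled trivially by $\int_\Omega(\phi_j')^2\,dV\le (6/h)^2\,V_j$ with $V_j:=|\Omega\cap(I_j\times\R^d)|$. For the denominator, I would combine the coarea formula (using $|\nabla^{\partial\Omega}x_1|\le 1$) with the $d$-dimensional Euclidean isoperimetric inequality applied to the convex slice $\Omega(t)$ to obtain
\[
\int_{\partial\Omega}\phi_j^2\,dS\;\ge\;\int_0^D\phi_j(t)^2\,\mathcal{H}^{d-1}(\partial\Omega(t))\,dt\;\ge\;c_d\int_0^D\phi_j(t)^2\,A(t)^{(d-1)/d}\,dt,
\]
where $A(t):=\mathcal{H}^d(\Omega(t))$. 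By the Brunn--Minkowski inequality, $A^{1/d}$ is concave on $[0,D]$. Setting $M_j:=\max_{I_j}A^{1/d}$, concavity yields $V_j\le h M_j^d$, hence $M_j\ge (V_j/h)^{1/d}$, and an elementary chord-comparison argument (separating into cases depending on whether the peak of $A^{1/d}$ on $I_j$ lies inside or outside the middle third) shows that $A(t)^{1/d}\ge M_j/3$ throughout the middle third of $I_j$. Since $\phi_j\equiv 1$ there, this gives
\[
\int_{\partial\Omega}\phi_j^2\,dS\;\ge\;c_d\cdot \frac{h}{3}\cdot\Bigl(\frac{M_j}{3}\Bigr)^{d-1}\;\ge\;\frac{c_d}{3^d}\,V_j^{(d-1)/d}\,h^{1/d}.
\]

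Combining the two estimates,
\[
\frac{\int_\Omega(\phi_j')^2\,dV}{\int_{\partial\Omega}\phi_j^2\,dS}\;\lesssim_d\;\frac{V_j/h^2}{V_j^{(d-1)/d}\,h^{1/d}}\;=\;\frac{V_j^{1/d}}{h^{(2d+1)/d}}\;\le\;\frac{|\Omega|^{1/d}}{h^{(2d+1)/d}}\;=\;\frac{(k+1)^{(2d+1)/d}\,|\Omega|^{1/d}}{D^{(2d+1)/d}},
\]
uniformly in $j$, which gives exactly the scaling of the claimed bound with an explicit constant of the form $C(d,k)=C_d(k+1)^{(2d+1)/d}$. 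The main obstacle is the slicing lower bound: an arbitrary convex body can concentrate its volume sharply in a single slab, so one cannot afford to plug $V_j$ directly into the Brunn--Minkowski lemma. The key point — and the only place convexity of the ambient body is really used — is the passage through the peak value $M_j$: the pointwise control $A^{1/d}\gtrsim M_j$ on the middle third, inherited from concavity of $A^{1/d}$, together with the trivial bound $M_j\ge (V_j/h)^{1/d}$, is precisely what prevents the denominator from collapsing and makes the argument robust against volume concentration within any one slab.
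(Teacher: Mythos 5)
Your proof is correct. The survey only cites \cite[Proposition 2.2]{ABHN2021} for this theorem and does not reproduce the argument, so no line-by-line comparison with a proof in the paper is possible; your route --- $k+1$ disjointly supported Lipschitz trial functions in slabs transverse to a diameter, then coarea plus the $d$-dimensional isoperimetric inequality for the convex slices plus Brunn--Minkowski concavity of $A^{1/d}$ --- is the natural one and does close to the claimed bound, with explicit constant $C(d,k)=\frac{36\cdot 3^{d}}{d\,|\B^{d}|^{1/d}}(k+1)^{(2d+1)/d}$. Two small points deserve to be spelled out. First, in the middle-third lemma the comparison chords must reach the far endpoints $0$ and $D$ of the \emph{whole} interval, using concavity and non-negativity of $f=A^{1/d}$ on $[0,D]$ rather than on $I_j$ alone; for example, if the maximiser $t^*$ of $f$ on $I_j=[a,a+h]$ satisfies $t^*\le a+h/3$, then for $t$ in the middle third
\[
f(t)\;\ge\;M_j\,\frac{D-t}{D-t^*}\;\ge\;M_j\,\frac{D-a-2h/3}{D-a}\;\ge\;\frac{M_j}{3},
\]
the last step because $D-a\ge h$; the case $t^*\ge a+2h/3$ is symmetric via a chord to $(0,f(0))$, and when $t^*$ lies in the middle third both comparisons in fact give the stronger bound $M_j/2$. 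Second, you should record that for every $t\in(0,D)$ the slice $\Omega(t)$ is a nonempty, relatively open convex set of positive $\mathcal{H}^d$-measure whose relative boundary equals $\partial\Omega\cap\{x_1=t\}$, so that each $V_j>0$ and each $\int_{\partial\Omega}\phi_j^2\,dS>0$; this is what makes $\mathrm{span}(u_0,\dots,u_k)$ a legitimate $(k+1)$-dimensional trial space to which~\eqref{eq:Stek Rayleigh min max} applies.
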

         
This shows that when the diameter is fixed, if the volume of a convex set tends to $0$ then all the eigenvalues tend to $0$.

\begin{ques}\label{ques:specconvtozero}
Let $\Omega_{\epsilon} \subset \R^{d+1}$ be a family of domains with fixed diameter (without convexity assumption). If the volume of $\Omega_{\epsilon}$ tends to $0$ as $\epsilon \to 0$, can one say that all the eigenvalues of $\Omega_{\epsilon}$ tend to $0$?
 \end{ques}

In the same paper, the authors show that when considering domains in $\R^{d+1}$ with fixed diameter, the ball is never a maximum for the kth eigenvalue $\sigma_k$ \cite[Theorem 3.2]{ABHN2021}.

	\subsection{Metric upper bounds for Riemannian manifolds.}\label{UpperRiemannian}
	
	We will now present some recent metric estimates: they do not directly involve the curvature. Let us begin with an estimate in terms of diameter and injectivity radius in the spirit of \cite{Be1979} by Berger and \cite{Cr1980} by Croke. This is \cite[Theorem 5]{CoGi2022} by Colbois and Girouard.
	
	For a compact Riemannian manifold $\Omega$ of dimension $d+1$ with boundary $\Sigma$, and for $x,y \in \Omega$, we denote by $d_{\Omega}(x,y)$ the distance in $\Omega$. The diameter of $\Sigma$ in $\Omega$ is defined by
\[\diam_\Omega(\Sigma):=\sup\left\{d_{\Omega}(x,y)\,:\,x,y\in\Sigma\right\}.\]

Let $\Sigma_1,\cdots,\Sigma_b$ be the connected components of $\Sigma$. On each $\Sigma_j$, there is the extrinsic distance $d_{\Omega}$ and the corresponding diameter $\\diam_\Omega(\Sigma_j)$. We will also consider the intrinsic distance $d_{\Sigma_j}(x,y)$ on $\Sigma_j$ and the intrinsic diameter $\diam(\Sigma_j)$ of $\Sigma_j$.

\begin{thm}\label{thm:upperboundDiam} (\cite[Theorem 5]{CoGi2022})
  Let $\Omega$ be a smooth connected compact Riemannian manifold of dimension $d+1$ with boundary $\Sigma$. Then, for each $j=1,\cdots,b$ and each $k\geq 1$,
  \begin{gather}\label{ineq:upperboundDiam}
    \sigma_k(\Omega)
    \leq
    K_d\frac{|\Omega|}{\diam_\Omega(\Sigma_j)^{2}}\frac{1}{\min(\inj(\Sigma_j)^d,diam_{\Omega}(\Sigma_j)^d)}k^{d+1},
  \end{gather}
  where $K_d$ is an explicit constant depending on the dimension of $\Omega$.
\end{thm}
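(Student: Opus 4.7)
The proof uses the variational characterisation~\eqref{eq:Stek Rayleigh min max}: it suffices to exhibit a $(k+1)$-dimensional subspace $E\subset H^1(\Omega)$ on which the Rayleigh--Steklov quotient is bounded above by the right-hand side of~\eqref{ineq:upperboundDiam}. Following the Berger--Croke philosophy, the plan is to construct $k+1$ extrinsic cutoff functions with pairwise disjoint supports, centered at well-chosen boundary points on $\Sigma_j$, and then to control their Rayleigh--Steklov quotients using Croke's isoperimetric inequality on $\Sigma_j$.

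Set $D:=\diam_\Omega(\Sigma_j)$ and $I:=\inj(\Sigma_j)$. I would use the scale $r:=c\min(I,D/k)$, chosen to reconcile the competing constraints that Croke's inequality requires $r\leq I$ while the disjoint-ball packing requires $r\lesssim D/k$. Fix $p,q\in\Sigma_j$ realizing $d_\Omega(p,q)=D$; the map $f(y):=d_\Omega(y,p)$ is continuous on $\Sigma_j$ and, by the intermediate value theorem applied along any path in $\Sigma_j$ from $p$ to $q$, surjects onto $[0,D]$. Slicing $[0,D]$ into $3N$ equal subintervals and selecting one point in every third nonempty preimage produces $N$ points on $\Sigma_j$ whose pairwise $d_\Omega$-distances are at least $2D/(3N)$. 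Choosing $N\approx 2(k+1)$ so that $D/(3N)\sim r$, the extrinsic balls $B_\Omega(x_i,r)$ are disjoint; since their volumes sum to at most $|\Omega|$, a pigeonhole argument selects $k+1$ indices $i_0,\dots,i_k$ with $|B_\Omega(x_{i_j},r)|\leq C|\Omega|/(k+1)$.

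For each selected hat function $u_i(x):=\max(0,1-d_\Omega(x,x_i)/r)$, the Dirichlet energy satisfies $\int_\Omega|\nabla u_i|^2\leq |B_\Omega(x_i,r)|/r^2\leq C|\Omega|/((k+1)r^2)$. For the boundary integral, note that $u_i\geq 1/2$ on $B_\Omega(x_i,r/2)$, and since $d_\Omega\leq d_{\Sigma_j}$ this extrinsic half-ball contains the intrinsic ball $B^{\Sigma_j}(x_i,r/2)$. Croke's universal isoperimetric inequality, applicable because $r\leq I$, yields $|B^{\Sigma_j}(x_i,r/2)|\geq c_d r^d$ and hence $\int_\Sigma u_i^2\geq c_d' r^d$. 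Because the supports are disjoint, every $u$ in the span of the selected $u_{i_j}$ satisfies
\[
R(u)\leq \max_j \frac{\int_\Omega|\nabla u_{i_j}|^2}{\int_\Sigma u_{i_j}^2}\leq \frac{C_d\,|\Omega|}{(k+1)\,r^{d+2}},
\]
and substituting $r=c\min(I,D/k)$ yields the claimed bound after separating the cases $I\geq D/k$ and $I<D/k$.

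The main technical obstacle is the two-scale nature of the estimate. When $I\geq D/k$ one simply takes $r\sim D/k$ and the construction above works with $N=2(k+1)$ candidate points. When $I<D/k$ the Croke constraint forces $r\sim I$, and one must instead pack many more candidate points (up to $N\sim D/(3I)$) on $\Sigma_j$ and refine the volume-averaging argument; it is precisely in this regime that the factor $\min(I,D)^d$ appearing in the denominator of the target bound becomes essential. A further subtlety, absent in the classical Berger--Croke estimate for the Laplacian, is that the test functions live on $\Omega$ while the denominator of the Rayleigh--Steklov quotient is an integral over $\Sigma$; the construction exploits the elementary inclusion $B^{\Sigma_j}(x_i,r)\subset B_\Omega(x_i,r)\cap\Sigma_j$ to transfer Croke's intrinsic volume lower bound on $\Sigma_j$ into a boundary estimate for the extrinsic hat function. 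No curvature hypothesis is used at any point.
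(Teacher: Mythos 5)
Your overall framework — Berger--Croke with disjointly supported hat functions centered at well-separated boundary points, the clever construction of separated points via $f(y):=d_\Omega(y,p)$ and the intermediate value theorem, and the inclusion $B^{\Sigma_j}(x_i,s)\subset B_\Omega(x_i,s)\cap\Sigma_j$ used to transfer Croke's intrinsic volume lower bound to the boundary integral — is the right one, and the regime $I\gtrsim D/k$ does go through exactly as you claim.

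However, the choice $r=c\min(I,D/k)$ does \emph{not} give the claimed inequality in the regime $I<D/k$, and you have a genuine gap there. Tracking your estimate with $r\sim I$ and $N\sim k$, the Rayleigh quotient comes out as
\[
R(u_i)\;\lesssim\;\frac{|\Omega|}{(k+1)\,I^{d+2}},
\]
whereas the target is $K_d\,|\Omega|\,k^{d+1}/(D^2 I^d)$. The ratio of the two is $\sim D^2/(k^{d+2}I^2)$, which is \emph{unbounded} as $D/I\to\infty$ (nothing prevents the injectivity radius from being arbitrarily small relative to the diameter). Your proposed alternative of packing $N\sim D/I$ centres at scale $r\sim I$ doesn't rescue this: it yields $R(u_i)\lesssim |\Omega|(k+1)/(D\,I^{d+1})$, and the ratio to the target is then $\sim(k+1)D/(k^{d+1}I)$, again unbounded in $D/I$. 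The phrase ``it is precisely in this regime that the factor $\min(I,D)^d\dots$ becomes essential'' flags the issue but does not resolve it.

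The source of the error is treating Croke's volume lower bound as imposing a hard constraint $r\leq I$ on the test-function scale. It doesn't: for any $s>0$ one has the monotonicity estimate $|B^{\Sigma_j}(x,s)|\geq |B^{\Sigma_j}(x,\min(s,I))|\geq c_d\min(s,I)^d$, so the boundary mass is bounded below at every scale, merely saturating at $c_d I^d$ once $s>I$. The fix is therefore to keep $r\sim D/k$ in \emph{all} regimes (this is what makes the numerator of the Rayleigh quotient produce the $D^{-2}$ and $k^{d+1}$ factors) and replace your lower bound $\int_\Sigma u_i^2\gtrsim r^d$ by $\int_\Sigma u_i^2\gtrsim\min(r/2,I)^d$. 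Concretely, with $r=D/(6(k+1))$, $N=2(k+1)$ separated centres, and pigeonhole giving $k+1$ balls with $|B_\Omega(x_i,r)|\leq|\Omega|/(k+1)$, one obtains
\[
R(u_i)\;\lesssim\;\frac{|\Omega|}{(k+1)\,r^2\,\min(r/2,I)^d},
\]
and the two cases $I\gtrless D/(12(k+1))$ each give $R(u_i)\lesssim K_d\,|\Omega|\,k^{d+1}/\bigl(D^2\min(I^d,D^d)\bigr)$, which is exactly the statement. The $\min(I^d,D^d)$ in the denominator of the theorem thus encodes the saturation of the boundary-ball volume at the injectivity-radius scale; it is not obtained by shrinking $r$ down to $I$.
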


To obtain upper bounds of a metric nature which have optimal exponent of $k$, we need to introduce the metric concepts of packing and growth constant, as is done in \cite{GrNeYa2004}. These constants avoid the need to introduce restrictions on the curvature. We need also to introduce a constant measuring how the boundary $\Sigma$ of $\Omega$ is distorted in $\Omega$. This is another way to compare the intrinsic diameter $\diam(\Sigma_j)$ and the extrinsic diameter $\diam_\Omega(\Sigma_j)$.

For $x\in \Omega$, let
		\[
		B^{\Omega}(x,r)=\{y \in \Omega:d_{\Omega}(x,y)<r\}.
		\]	
		For $x\in \Sigma_j$, let
		\[
		B^{\Sigma_j}(x,r)=\{y\in \Sigma:d_{\Sigma_j}(x,y)<r\}.
		\]
		
		The \emph{growth constant} $A$ is the smallest value such that for each $x \in \Sigma_j$, $j=1,...,b$, and $r>0$, $\vert B^{\Sigma_j}(x,r)\vert \le Ar^{d}$.
		
		The \emph{extrinsic packing constant} $N$ is the smallest value such that, for each $r>0$ and each $x\in \Sigma_j$, the extrinsic ball $B^{\Omega}(x,r)\cap \Sigma_j$ can be covered by $N$ extrinsic balls of radius $\frac{r}{2}$ centered at points $x_1,...,x_N \in \Sigma_j$.
		
		We measure the distortion of the boundary as follows: first, observe that for $x,y \in \Sigma_j$, $d_{\Omega}(x,y)\le d_{\Sigma_j}(x,y)$. Then if $\Sigma_j$ is a connected component of $\Sigma$, the distortion $\Lambda_j$ of $\Sigma_j$ is
		\[
		\Lambda_j=\inf\{c>0:d_{\Sigma_j}(x,y)\le cd_{\Omega}(x,y); x,y \in \Sigma_j\}.
		\]
		If $b$ is the number of connected components of $\Sigma$, the distortion $\Lambda$ of $\Sigma$ is
		\[
		\Lambda=\max\{\Lambda_1,...,\Lambda_b\}.
		\]
		
		Note that one can express the packing constant $N$ in terms of the distortion $\Lambda$ and the intrinsic packing constant of $(\Sigma,d_{\Sigma})$ \cite[ Lemma 13]{CoGi2022}. This is useful in discussing below the sharpness of the main estimate.
		
		With these definitions, we get  (\cite[Theorem 1]{CoGi2022})
		
		\begin{thm} \label{globalestimate}Let $\Omega$ be a connected compact Riemannian manifold of dimension $d+1$ with boundary $\Sigma$. For each $k\ge 1$,
		\begin{equation}\label{ineq}
			\sigma_k(\Omega)\le 512b^2N^3A\Lambda^2\frac{\vert \Omega\vert}{\vert\Sigma\vert^{\frac{d+2}{d}}}k^{2/d}
		\end{equation}
		Moreover, the exponent $2/d$ on $k$ is now optimal.
		\end{thm}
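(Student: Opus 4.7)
The strategy is to apply the min-max characterisation~\eqref{eq:Stek Rayleigh min max} by constructing $k+1$ mutually disjointly-supported test functions in $H^1(\Omega)$, each having Rayleigh--Steklov quotient bounded by the right-hand side of~\eqref{ineq}. The backbone is a Grigor'yan--Netrusov--Yau style metric decomposition: adapted to the present situation, I would produce on $\Sigma$ (viewed inside $\Omega$ with extrinsic distance) a family of about $C(N,b)\,k$ disjoint pairs $(B_i^1, B_i^2)$ of concentric extrinsic balls centered at points $x_i \in \Sigma_{j(i)}$, with $r(B_i^2) = 2\, r(B_i^1)$, the outer balls $B_i^2$ being pairwise disjoint in $\Omega$, and such that
\[
|B_i^1 \cap \Sigma|_\Sigma \;\geq\; \frac{c\, |\Sigma|}{b\, k}
\]
for an explicit universal constant $c$. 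The factor $b$ absorbs the distribution across connected components of $\Sigma$, while the power $N^3$ in the final bound stems from the number of packing iterations used to convert coverings by extrinsic balls into a mutually disjoint family.

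For each surviving index $i$, I would then take the Lipschitz test function
\[
\varphi_i(y) \;=\; \max\!\Bigl\{0,\; 1 - \tfrac{d_\Omega(y,\, B_i^1)}{r(B_i^1)}\Bigr\},
\]
so that $\varphi_i \equiv 1$ on $B_i^1$, vanishes off $B_i^2$, and satisfies $|\nabla \varphi_i|_g \leq 1/r(B_i^1)$ a.e.\ in $\Omega$. This immediately gives
\[
\int_\Omega |\nabla \varphi_i|^2\, dV \;\leq\; \frac{|B_i^2|_\Omega}{r(B_i^1)^2},
\qquad
\int_\Sigma \varphi_i^2\, dV_\Sigma \;\geq\; |B_i^1 \cap \Sigma|_\Sigma.
\]
The growth condition applied on $\Sigma_{j(i)}$ combined with the distortion bound $d_{\Sigma_{j(i)}} \leq \Lambda\, d_\Omega$ (which implies $B^\Omega(x_i, r) \cap \Sigma_{j(i)} \subset B^{\Sigma_{j(i)}}(x_i, \Lambda r)$) yields $|B_i^1 \cap \Sigma|_\Sigma \leq A\,(\Lambda\, r(B_i^1))^d$, hence the crucial \emph{lower} bound on the radius,
\[
r(B_i^1)^{2} \;\geq\; \left(\frac{c\,|\Sigma|}{b\, k\, A\, \Lambda^{d}}\right)^{\!2/d}.
\]

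Step three exploits disjointness via pigeonholing. Since $\sum_i |B_i^2|_\Omega \leq |\Omega|$, starting from $2(k+1)$ (up to constants depending on $N$ and $b$) candidate pairs produced above and discarding the half with largest extrinsic volume, I may retain $k+1$ indices satisfying $|B_i^2|_\Omega \leq C\,|\Omega|/k$. Combining with the previous two displays,
\[
R(\varphi_i) \;\leq\; \frac{C\,|\Omega|/(k\, r(B_i^1)^{2})}{c\,|\Sigma|/(b\,k)}
\;\leq\; C\, b^{2}\, N^{3}\, A\, \Lambda^{2}\; \frac{|\Omega|}{|\Sigma|^{(d+2)/d}}\; k^{2/d},
\]
and the min-max principle applied to $E = \mathrm{span}(\varphi_0, \ldots, \varphi_k)$ gives~\eqref{ineq}. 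Optimality of the exponent $2/d$ would be verified from Example~\ref{example: cylinder}: on cylinders $C_L = [0,L] \times M$ over a closed $d$-manifold $M$ with a suitably chosen metric, the Weyl asymptotics for $\lambda_k(M)$ together with the explicit cylinder formulas produce families for which the claimed rate in $k$ is attained once the scale-invariant quantities $|\Omega|/|\Sigma|^{(d+2)/d}$, $A$, $\Lambda$, $N$, $b$ are kept under uniform control.

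The hard part will be the proof of the metric decomposition lemma with the correct quantitative dependence on $N$, $A$, $\Lambda$, and $b$. Arranging \emph{simultaneously} that (i) the ambient balls $B_i^2$ are disjoint in $\Omega$, (ii) the intrinsic boundary measure of $B_i^1 \cap \Sigma$ satisfies the required lower bound, and (iii) the construction is carried out across possibly many components of $\Sigma$ is delicate, and it is precisely the interplay of these three requirements that forces the particular powers $b^{2}$, $N^{3}$, and $\Lambda^{2}$ appearing in~\eqref{ineq}. The remaining technical point is verifying the GNY decomposition in the extrinsic-distance context (where the relevant metric on $\Sigma$ is not the induced one), which is why the distortion $\Lambda$ enters both in controlling $r(B_i^1)$ from below and, through the packing estimates, in controlling the multiplicity of the resulting covers.
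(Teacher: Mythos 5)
Your conceptual roadmap is the same as the one used in the paper's reference \cite{CoGi2022}: construct roughly $k$ concentric pairs of extrinsic balls on $\Sigma$ via a Grigor'yan--Netrusov--Yau type decomposition, use plateau/bump Lipschitz functions supported in the outer balls as test functions, control the boundary trace by the growth constant and distortion, control the Dirichlet energy by a volume pigeonhole, and verify the optimality of $k^{2/d}$ on thin cylinders. The paper itself flags this strategy explicitly (``to obtain upper bounds of a metric nature which have optimal exponent of $k$, we need to introduce the metric concepts of packing and growth constant, as is done in \cite{GrNeYa2004}'') and reproduces only the cylinder argument for optimality, so on that score you are on the right track.

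There is, however, a concrete gap between your intermediate estimates and your final display. Substituting your own lower bound
\[
r(B_i^1)^{2}\;\geq\;\Bigl(\frac{c\,|\Sigma|}{b\,k\,A\,\Lambda^{d}}\Bigr)^{2/d}
\]
into
\[
R(\varphi_i)\;\leq\;\frac{C\,|\Omega|/(k\,r(B_i^1)^{2})}{c\,|\Sigma|/(b\,k)}
\]
gives
\[
R(\varphi_i)\;\leq\;\frac{C}{c^{1+2/d}}\,b^{1+2/d}\,A^{2/d}\,\Lambda^{2}\,\frac{|\Omega|}{|\Sigma|^{(d+2)/d}}\,k^{2/d},
\]
which has exponents $b^{1+2/d}$ and $A^{2/d}$, not $b^{2}$ and $A$, and no factor of $N$ at all. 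These agree with the theorem's $b^{2}A$ only at $d=2$, and for $d=1$ your exponent on $b$ is actually strictly larger. Moreover, the $N^{3}$ you write in the concluding line is asserted but never produced by the argument; if it enters through a factor of $N^{-3}$ in the constant $c$ from the GNY lemma, it would come out with the wrong exponent $N^{3(1+2/d)}$ after substitution. So the final inequality does not follow from the steps you carry out. You identify the quantitative decomposition lemma as ``the hard part'', and that diagnosis is correct; but you should also be aware that, as currently set up, your bookkeeping does not reproduce the constant $512\,b^{2}N^{3}A\Lambda^{2}$ (it reproduces the $k^{2/d}$ rate and the scale-invariant ratio $|\Omega|/|\Sigma|^{(d+2)/d}$, which is the heart of the statement). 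To get the stated powers you would either need a sharper formulation of the decomposition lemma whose constants are dimension-independent, or prove a bound with different exponents on $A$, $b$, $N$ and then compare it to the one in \cite{CoGi2022}.
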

		
		\begin{remark} \label{rem: necessity}
		Of course, the estimates in Theorems \ref{thm:upperboundDiam} and \ref{globalestimate} are not sharp in the sense that there are no cases of equality, even for the first nonzero eigenvalue $\sigma_1$. This comes from the fact that we use metric constructions.
However, the different quantities appearing in the right-hand side of this inequality cannot be removed. Precisely, for each such quantity, we construct a family of examples where all the other such quantities are constant or bounded, and where the first eigenvalue becomes arbitrarily large. 

\begin{enumerate}
\item 
Example \ref{confdef} shows that the presence of the volume $\vert \Omega\vert$ is necessary in both Theorems.  

\item
Example \ref{JadeBrisson} shows that the the injectivity radius is necessary in Theorem \ref{thm:upperboundDiam} and that the volume $\vert \Sigma\vert$ is necessary in Theorem \ref{globalestimate}. 

\item
In Example \ref{CianciGirouard}, the boundary is fixed. This shows that the extrinsic diameter $diam_{\Omega} \Sigma_j $ of the boundary component is necessary in Theorem \ref{thm:upperboundDiam}. This also shows that the distortion and the growth constant $A$ are necessary in Theorem \ref{globalestimate}. Note that the distortion and the growth constant (with respect to the extrinsic distance) are closely related.

\item
		The number $b$ of connected components of the boundary is necessary in Theorem \ref{globalestimate}: in \cite{CoGi2014}, we constructed a sequence of compact surfaces $\Omega_i$ with $\vert \partial \Omega_i \vert \sigma_1(\Omega_i) \to \infty$. The surface was locally a product $S^1\times [0,1]$ near each boundary component, so that $\Lambda=1$. After renormalisation, we obtain a sequence with $\vert \partial \Omega_i\vert=1$, $\sigma_1(\Omega_i) \to \infty$, $\vert \Omega_i\vert$ uniformly bounded from below and above, $\Lambda=1$, and the packing constant of each boundary component of $\Omega_i$ and the growth constant are independent of $i$ because the surface $\Omega_i$ is a product near each boundary component. This shows that $b$ has to go to $\infty$. We can adapt this construction to higher dimensional manifolds as well.
		
		\item
		In Theorem \ref{globalestimate}, the packing constant $N$ is necessary and the exponent $2/d$ is optimal. This last fact is surprising, because in comparison with the Weyl law, we could expect to have the exponent $1/d$. These two facts are consequences of Example \ref{example: cylinder}. For each closed Riemannian manifold $M$ of dimension $d$, we consider the cylinder $\Omega_L=[0,L]\times M  $. In this example, the distortion $\Lambda=1$ and the growth constant $A$ is fixed. For $L$ small enough, we have
		\[
		\sigma_k(\Omega_L)= \sqrt{\lambda_k(M)}\tanh(\sqrt{\lambda_k(M)}\frac{L}{2}).
		\]	
		As $\vert\Omega_L \vert=L \vert M \vert$, Inequality \eqref{ineq} becomes	
		\[
		\sqrt{\lambda_k(M)}\tanh(\sqrt{\lambda_k(M)}\frac{L}{2}) \le 2048N^3A\frac{L\vert M\vert}{\vert M\vert^{\frac{d+2}{d}}}k^{2/d}
		\]
		and, after division by $L$,
		\[
		\sqrt{\lambda_k(M)}\frac{1}{L}\tanh(\sqrt{\lambda_k(M)}\frac{L}{2}) \le 2048N^3A\frac{\vert M\vert}{\vert M\vert^{\frac{d+2}{d}}}k^{2/d}.
		\] 	
		Then we let $L$ tend to $0$. As $\lim_{L\to 0} \frac{1}{L}\tanh(\sqrt{\lambda_k(M)}\frac{L}{2})=\sqrt{\lambda_k(M)}$, and we get
		\begin{equation} \label{ineq1}
		\lambda_k(\Sigma) \le 2048N^3A\frac{\vert M\vert}{\vert M\vert^{\frac{d+2}{d}}}k^{2/d}
		\end{equation}
		
		Now the Weyl law for $\lambda_k$ implies that the exponent of $k$ cannot be smaller than $2/d$.
At the same time, we see that we need control of the packing constant $N$ of $\Omega$: if $N^3$ is bounded, Inequality \eqref{ineq1} would lead to a universal inequality for $\lambda_1(M)$, which is impossible, as  
		on each closed manifold $M$ of dimension $d\ge 3$, it is possible to construct a family of Riemannian metrics $g_{\epsilon}$ on $M$, $0<\epsilon <1$, with $\vert (M,g_{\epsilon})\vert =1$ and  $\lambda_1(M,g_{\epsilon})\to \infty$ as $\epsilon\to 0$ (see \cite{CoDo1994}). In fact, in this case, the intrinsic packing constant of $(M,g_{\epsilon})$ tends to $\infty$ as $\epsilon \to 0$  and this implies that the extrinsic packing constant $N$ of $\Omega_L=[0.L]\times (M,g_{\epsilon})$ tends to $\infty$ as $\epsilon \to 0$.
		
		\item
		The exponent $d+1$ of $k$ in Theorem \ref{thm:upperboundDiam} is probably not optimal.
		
\end{enumerate}
\end{remark}

	In dimension higher than $2$, we have obtained robust geometric estimates, where all the geometric and metric ingredients appearing in the inequalities are necessary. However, these estimates are too general to be sharp. The next subsection presents a setting where one can get sharp estimates: Riemannian manifolds of revolution and in particular hypersurfaces of revolution in Euclidean space.
	
\subsection{Upper and lower bounds: the case of manifolds of revolution} \label{UpperRev}

A manifold of revolution $\Omega$ of dimension $d+1$ is a warped product $\Omega= [0,L] \times \Sp^{d} $ with a Riemannian metric $g(r,p)=dr^2+h^2(r)g_{0}$ where $g_0$ is the usual canonical metric on $\Sp^{d}$. The function $h$ is smooth and satisfies $h(r)> 0$ in $[0,L[$. 

The situation where $h(L)>0$ corresponds to $\Omega$ being a topological cylinder, as expected from the product structure.
If instead $h(L)=0$, then $\Omega$ is homeomorphic to a ball. In order to obtain a smooth manifold, we also need to impose that $h(L)=h^{(2k)}(L)=0$ for $k >0$ and $h'(L)=-1$.

First, note that for the spectrum of the Laplacian, to our knowledge, there do not exist many contributions in the context of compact Riemannian manifolds of revolution. However, we can mention the paper \cite{AbFr2002} by Abreu and Freitas where the authors study the first eigenvalue for $S^1$-invariant abstract Riemannian metrics on the sphere and compare it with the situation where the metric is realised as the pullback of the canonical Euclidean metric in $\R^3$ by some $S^1$-invariant embedding (see also \cite{CoDrEl2008} for some generalisations). A series of papers by Ariturk (\cite{Ar2014, Ar2016, Ar2018}) is very inspiring. They mainly concern maximisation of the first eigenvalue of surfaces of revolution in Euclidean space with Dirichlet boundary conditions. However, the initial paper \cite{Ar2014} is not yet published to our knowledge.

Regarding the Steklov problem, for specific manifolds of revolution, such as the Euclidean ball or the cylinder $[0,L] \times \Sp^{d} $, it is possible to explicitly calculate the eigenvalues; see examples in \cite{GiPo2017} and Example \ref{example: cylinder}. The hope is that for more general revolution manifolds, it remains possible to get sharp estimates on the Steklov eigenvalues.

We discuss lower and upper bounds for the Steklov eigenvalues of such metrics. We will take $d+1 \ge 3$. The case of surfaces ($d=1$) is distinct, well-understood, and explained in \cite[Theorem 1.1]{FaTaYu2015} by Fan, Tam and Yu; we discuss this work further in Section \ref{applic.bounds}. They obtain the maximum of the $k$th normalised Steklov eigenvalue of all rotationally symmetric metrics on $[0,L] \times \mathbb \Sp^1$ for $k>2$, and the supremum for $k=2$.

We will consider two situations: we will first consider the general case of Riemannian metrics of revolution, and subsequently, the special
case of Euclidean hypersurfaces of revolution. 

\subsubsection{Riemannian metrics of revolution} If $\Omega$ is a Riemannian manifold of revolution of dimension $d+1\ge 3$ and fixed boundary (with one or two connected components) and without other assumptions, one can find a family of revolution metrics with that boundary and arbitrarily small eigenvalues $\sigma_{k}$ and another family of revolution metrics with that boundary and arbitrarily large $\sigma_{1}$. It is even possible to construct these families so that they are conformal to any initial Riemannian metric $g$ given on $\Omega$. In order to show these facts, we just have to adapt \cite[Theorem 1.1 and Proposition 2.1]{CoElGi2019} (see Example \ref{confdef}).
Roughly speaking, if $g(r,p)=dr^2+h^2(r)g_{0}$ is a revolution metric on $\Omega$ and $f=f(r)$ a positive smooth function taking the value $1$ on $\partial \Omega$, we consider the conformal metric $g_f(r,p)=f^2(r)g(r,p)$. (Note that, after a change of variable, the Riemannian metric may be written $dr^2+\tilde h^2(r)g_0$.)
The Rayleigh quotient $R(u)$ of a function $u$ on $\Omega$ is given by
\begin{equation}
	R(u)=\frac{\int_{\Omega} \vert du\vert^2_gf^{d-1}dV_{(\Omega,g)}}{\int_{\Sigma}u^2dV{(\Sigma,g)}}
\end{equation}
 Taking $f$ close to $0$ in the interior of $\Omega$ allows one to obtain as many arbitrarily small eigenvalues as one wishes. Taking $f$ large inside $\Omega$ leads to large first nonzero eigenvalue $\sigma_1(\Omega, f^2g)$.

Therefore, in order to get control over the Steklov spectrum, one has to add some geometric hypotheses.

In \cite{Xi2022}, Xiong considers the case of a revolution metric $g(r,p)=dr^2+h^2(r)g_{0}$ on a ball with constraint on the Ricci curvature and on the convexity of the boundary.
Because of the symmetries of revolution, the spectrum of $\Omega$ comes with multiplicity. In the sequel we will denote by $\sigma_{(k)}(\Omega)$ the Steklov eigenvalues of $\Omega$ counted without multiplicity, that is
\[
\sigma_{(0)}=\sigma_0=0<\sigma_{(1)}(\Omega) < \sigma_{(2)}(\Omega) <...
\]
The multiplicity of $\sigma_{(k)}$ is the multiplicity of the $k$th eigenvalue $\lambda_{(k)}$ of $\mathbb \Sp^d$. See \cite[Proposition 2 ]{CoVe2021}.

In Example \ref{ex: ball}, it was shown that for the Euclidean ball of radius $R$ in $\R^{d+1}$, we have $\sigma_{(k)}=\frac{k}{R}$ with multiplicity expressed in terms of binomial coefficients, namely $C_{d}^{d+k}-C_{d}^{d+k-2}$ when $k\ge 2$.
Note that for a metric of revolution with two boundary components, isolated examples with larger multiplicity may appear; the multiplicity of $\sigma_{(k)}$ is not always the same.

With the above notations, Xiong shows

\begin{thm}\label{thm:xiong1} \cite[ Theorems 2 and 3]{Xi2022} Suppose that $\Omega$ has nonnegative Ricci curvature and strictly convex boundary. Then $\sigma_{(k)}$ satisfies
\begin{equation} 
	\sigma_{(k)}(\Omega,g) \ge k\frac{-h'(0)}{h(0)}.
\end{equation}

If $\Omega$ has nonpositive Ricci curvature and strictly convex boundary, then $\sigma_{(k)}$ satisfies
\begin{equation} 
	\sigma_{(k)}(\Omega,g) \le k\frac{-h'(0)}{h(0)}.
\end{equation}
Moreover, in each case, we have equality if and only if $h(r)=L-r$ (that is, $\Omega$ is isometric to the Euclidean ball of radius $L$).  
\end{thm}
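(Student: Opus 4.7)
My plan is to exploit the rotational symmetry to reduce to a one-dimensional eigenvalue problem and then identify a monotone quantity that isolates the deviation of $\sigma_{(k)}$ from its value on the Euclidean ball. By separation of variables, a Steklov eigenfunction in the $k$th spherical-harmonic sector has the form $u(r,p)=f(r)\phi(p)$ with $\Delta_{\Sp^d}\phi=k(k+d-1)\phi$, and the harmonicity equation becomes
\[
f''+d\,\frac{h'}{h}\,f'-\frac{k(k+d-1)}{h^2}\,f=0,\qquad\text{or equivalently}\qquad (h^d f')'=k(k+d-1)\,h^{d-2}\,f.
\]
The outward unit normal at $\partial\Omega=\{r=0\}$ is $-\partial_r$, so the Steklov condition reads $\sigma_{(k)}=-f'(0)/f(0)$, and $f$ is pinned (up to scale) by regularity at the pole $r=L$; the indicial equation $\alpha^2+(d-1)\alpha-k(k+d-1)=0$ has roots $k$ and $-(k+d-1)$, so the regular solution behaves like $f(r)\sim c(L-r)^k$ near $L$. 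Because the model case $h(r)=L-r$ has $f_k=h^k$, I would make the substitution $f=h^k F$; a short calculation converts the ODE to
\[
(h^{2k+d} F')'=-k\,q(r)\,h^{2k+d}\,F,\qquad q(r):=\frac{h''}{h}-(k+d-1)\,\frac{1-(h')^2}{h^2},
\]
so the whole question reduces to controlling the sign of $q$ together with positivity of $F$.

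For the sign of $q$, I would use the warped-product Ricci formulas $\operatorname{Ric}(\partial_r,\partial_r)=-d\,h''/h$ and $\operatorname{Ric}(X,X)=(d-1)(1-(h')^2)/h^2-h''/h$ for a unit $X$ tangent to a spherical slice. Under $\operatorname{Ric}\geq 0$ these yield $h''\leq 0$ and $h''/h\leq(d-1)(1-(h')^2)/h^2$; combined with smoothness at the pole ($h(L)=0$, $h'(L)=-1$) and strict convexity of $\partial\Omega$ ($h'(0)<0$), the concavity of $h$ implies $h'\in[-1,h'(0)]\subset[-1,0)$ on $[0,L]$, so $1-(h')^2\geq 0$ and hence $q\leq 0$ (since $k\geq 1$, the factor $k+d-1$ dominates $d-1$). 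Under $\operatorname{Ric}\leq 0$ the signs reverse: $h''\geq 0$, $h'$ is non-decreasing, so $h'\leq h'(L)=-1$ and $(h')^2\geq 1$, making both terms of $q$ nonnegative. Positivity of $F$ is independent of the Ricci hypothesis: if $f$ had a largest zero $r_0\in[0,L)$, the divergence form $(h^d f')'=k(k+d-1)h^{d-2}f$ would force $(h^d f')'>0$ on $(r_0,L]$, hence $h^d f'$ strictly increasing; since $h^d f'(L)=0$, this gives $f'<0$ on $(r_0,L)$ and therefore $f<0$ just to the right of $r_0$, contradicting the maximality of $r_0$.

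With these ingredients the theorem falls out. Under $\operatorname{Ric}\geq 0$, the identity gives $(h^{2k+d}F')'\geq 0$, so $h^{2k+d}F'$ is non-decreasing; its value at $r=L$ is $0$ because $F$ extends smoothly across the pole while $h$ vanishes, so $h(0)^{2k+d}F'(0)\leq 0$ and hence $F'(0)\leq 0$. Substituting $f=h^k F$ in the Steklov formula gives
\[
\sigma_{(k)}=k\,\frac{-h'(0)}{h(0)}-\frac{F'(0)}{F(0)}\;\geq\;k\,\frac{-h'(0)}{h(0)}.
\]
Equality forces $F'(0)=0$, hence $h^{2k+d}F'\equiv 0$, hence $F$ is constant, hence $q\equiv 0$; together with $\operatorname{Ric}\geq 0$ and $(h')^2\leq 1$ this forces $(h')^2\equiv 1$ and $h''\equiv 0$, so $h(r)=L-r$, i.e.\ $\Omega$ is the Euclidean ball. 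The $\operatorname{Ric}\leq 0$ case is the mirror image: now $(h^{2k+d}F')'\leq 0$, $F'(0)\geq 0$, and $\sigma_{(k)}\leq k(-h'(0))/h(0)$ with the same rigidity. I expect the most delicate step to be the sandwich argument producing $|h'|\leq 1$ (respectively $\geq 1$): it requires combining strict boundary convexity, the one-sided curvature hypothesis, and the pole regularity $h'(L)=-1$ in a single stroke, and without all three the sign of $q$ is out of reach.
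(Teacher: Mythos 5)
Your proof is correct, and it follows the natural ODE route one would expect for this rigidity statement: separation of variables into a radial equation in divergence form, the normalisation $f=h^kF$ producing the monotone quantity $h^{2k+d}F'$ via $(h^{2k+d}F')'=-kq\,h^{2k+d}F$, a sign analysis of $q=\frac{h''}{h}-(k+d-1)\frac{1-(h')^2}{h^2}$ from the warped-product Ricci formulas combined with the concavity/convexity sandwich giving $|h'|\le 1$ (resp.\ $\ge 1$), and a divergence-form maximum-principle argument for positivity of $f$. Every step checks out, including the algebra for $q$, the boundary value $h^{2k+d}F'\to 0$ at the pole, and the rigidity analysis; this is, in substance, Xiong's argument.
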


In \cite{Xi2021}, Xiong investigates the same problem, but for the difference and the ratio of successive eigenvalues.

\begin{thm} \label{thm:xiong2}\cite[Theorem 2 and 5]{Xi2021} Suppose that $\Omega$ has nonnegative Ricci curvature and strictly convex boundary, then for $k\ge 0$
\begin{equation} 
	\sigma_{(k+1)}(\Omega,g)- \sigma_{(k)}(\Omega,g)\ge k\frac{-h'(0)}{h(0)}
\end{equation}
  and for $k\ge1$
\begin{equation}
    \frac{\sigma_{(k+1)}(\Omega,g)}{\sigma_{(k)}(\Omega,g)}\le \frac{k+1}{k}.
\end{equation}

If $\Omega$ has nonpositive Ricci curvature and strictly convex boundary, then for $k\ge 0$
\begin{equation} 
	\sigma_{(k+1)}(\Omega,g)- \sigma_{(k)}(\Omega,g)\le k\frac{-h'(0)}{h(0)}
\end{equation}
and for $k\ge1$
\begin{equation}
    \frac{\sigma_{(k+1)}(\Omega,g)}{\sigma_{(k)}(\Omega,g)}\ge \frac{k+1}{k}.
\end{equation}
In all these situations, equality holds if and only if $h(r)=L-r$ (that is, $\Omega$ is isometric to the Euclidean ball of radius $L$). 
\end{thm}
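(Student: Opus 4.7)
The plan is to separate variables on the warped product, reduce the problem to a family of radial ODEs indexed by the spherical harmonic degree, and then compare the ODEs for consecutive indices via a Riccati substitution keyed to the Euclidean ball as the rigidity model.

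The first step is to decompose Steklov eigenfunctions as $u=f_j(r)Y(\theta)$, where $Y$ is a spherical harmonic on $\Sp^d$ with $\Delta_{\Sp^d}Y=j(j+d-1)Y$. Harmonicity of $u$ on the warped product $dr^2+h^2(r)g_0$ reduces to the radial ODE $(h^d f_j')'=j(j+d-1)h^{d-2}f_j$ on $[0,L)$, with a one-dimensional solution space selected by regularity at the pole $r=L$ where $h=0$, and the Steklov condition at $r=0$ reads $\sigma_{(j)}=-f_j'(0)/f_j(0)$. The Euclidean ball $h(r)=L-r$ is the rigidity model: there one has $f_j(r)=h(r)^j$ and $\sigma_{(j)}=j(-h'(0)/h(0))$, which realises equality in both families of inequalities.

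The second step is to introduce the Riccati variable $\psi_j(r)=-f_j'(r)/f_j(r)$, so that $\psi_j(0)=\sigma_{(j)}$, and to study the normalised quantity $\Phi_j:=\psi_j/j$. Regularity at $r=L$ forces $f_j(r)\sim(L-r)^j$, hence $\Phi_j(r)\sim 1/(L-r)$ is asymptotic to a $j$-independent profile as $r\to L^-$, so $\Phi_{k+1}(r)-\Phi_k(r)\to 0$ at the pole. The ratio inequality $\sigma_{(k+1)}/\sigma_{(k)}\leq(k+1)/k$ is then equivalent to $\Phi_{k+1}(0)\leq\Phi_k(0)$, obtained by propagating $\Phi_{k+1}\leq\Phi_k$ from $r=L$ back down to $r=0$; the difference inequality is obtained analogously by comparing $\psi_{k+1}-\psi_k$ with the reference constant $-h'(0)/h(0)$, again propagating from the pole to the boundary.

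The core propagation step uses the translation of the Ricci hypothesis into pointwise conditions on $h$: $\mathrm{Ric}\geq 0$ forces $h''\leq 0$ together with $(d-1)(1-(h')^2)\geq hh''$, with the opposite signs under $\mathrm{Ric}\leq 0$; both conditions are saturated by $h(r)=L-r$. Writing the ODE for $\Phi_{k+1}-\Phi_k$ (respectively for $\psi_{k+1}-\psi_k$) and isolating the term whose sign is controlled by $h''$ and $1-(h')^2$ yields a differential inequality that transports the vanishing at $r=L$ down to the desired sign at $r=0$. Rigidity is immediate: equality at $r=0$ forces equality throughout $[0,L]$ in the pointwise estimate, hence $h''\equiv 0$ and $|h'|\equiv 1$, so $h(r)=L-r$.

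The main obstacle is identifying the right normalised variable. The substitution $\Phi_j=\psi_j/j$ is suggested by the Euclidean solution $f_j=h^j$ (for which $\Phi_j=-h'/h$ is identically $j$-independent), but the Riccati ODE for $\Phi_j$ mixes the drift $d(h'/h)$ (independent of $j$) with the forcing $(j+d-1)/h^2$ and the quadratic term $j\Phi_j^2$, so verifying that the difference of consecutive equations has the sign dictated by the Ricci inequality is the crux computation. A cleaner alternative, which would keep the argument structural, is to work directly with the logarithmic derivative of $f_j/h^j$ and exploit that it vanishes identically in the Euclidean model, so that its deviation from zero is what $h''$ and $1-(h')^2$ control along $[0,L]$.
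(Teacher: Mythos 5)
The survey does not prove this theorem; it cites \cite{Xi2021} without giving an argument, so there is no internal proof to compare against. I will therefore evaluate your proposal on its own merits.

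Your overall framework is the right one and is almost certainly the one Xiong follows: separation of variables reduces harmonicity to the radial ODE $(h^d f_j')' = j(j+d-1)h^{d-2}f_j$, with $\sigma_{(j)}=-f_j'(0)/f_j(0)$ and regularity at $r=L$ selecting the branch $f_j\sim (L-r)^j$; the Euclidean ball $h(r)=L-r$ with $f_j=h^j$ is the rigidity model; and the translation of $\mathrm{Ric}\ge 0$ into $h''\le 0$ and $(d-1)(1-(h')^2)\ge hh''$ (with $|h'|\le 1$ following from $h''\le 0$, $h'(L)=-1$ and strict convexity $h'(0)<0$) is correct.

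There is, however, a concrete error in the "core propagation step." You write that the ODE for $\Phi_{k+1}-\Phi_k$, where $\Phi_j=\psi_j/j$ and $\psi_j=-f_j'/f_j$, contains a term "whose sign is controlled by $h''$ and $1-(h')^2$." It does not. The Riccati equation for $\psi_j$ is
\begin{equation*}
\psi_j' \;=\; -\,\frac{dh'}{h}\,\psi_j \;-\; \frac{j(j+d-1)}{h^2} \;+\; \psi_j^2,
\end{equation*}
which involves only $h'/h$ and $1/h^2$; neither $h''$ nor $(h')^2$ appears, so there is nothing to "isolate," and the Ricci hypothesis does not enter this equation at all. The curvature hypothesis only becomes visible after the renormalisation you mention as an afterthought: writing $g_j=f_j/h^j$ and $\phi_j=g_j'/g_j$, one gets
\begin{equation*}
\phi_j' + \phi_j^2 + (2j+d)\,\frac{h'}{h}\,\phi_j + Q_j = 0,
\qquad
Q_j = \frac{j\bigl[(j+d-1)\bigl((h')^2-1\bigr)+hh''\bigr]}{h^2},
\end{equation*}
with $Q_j\le 0$ under $\mathrm{Ric}\ge 0$, $\phi_j(L)=0$, and $\sigma_{(j)}=j\cdot\frac{-h'(0)}{h(0)}-\phi_j(0)$. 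So what you flagged as the "cleaner alternative" is in fact the only route in your plan along which the Ricci condition is visible; it is not optional.

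Even along that route, the proposed one-step propagation does not close. The equation for $\Delta=\phi_{k+1}-\phi_k$ is
\begin{equation*}
\Delta' = -\Bigl[\phi_{k+1}+\phi_k+(2k+d)\tfrac{h'}{h}\Bigr]\Delta
\;-\;2\tfrac{h'}{h}\,\phi_{k+1}\;-\;\bigl(Q_{k+1}-Q_k\bigr),
\end{equation*}
and while $Q_{k+1}-Q_k=\bigl[(2k+d)((h')^2-1)+hh''\bigr]/h^2\le 0$ has the sign you want, the cross term $-2\frac{h'}{h}\phi_{k+1}$ has the \emph{opposite} sign (since $h'<0$ and $\phi_{k+1}\le 0$). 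The inhomogeneous forcing is therefore not of one sign, and the naive back-propagation from $\Delta(L)=0$ to a sign at $\Delta(0)$ does not follow. The same obstruction appears if one works with $\chi_j=\phi_j/j$: the forcing becomes $\tfrac{1}{h^2}-\bigl(\chi_{k+1}+\tfrac{h'}{h}\bigr)^2$, and the a priori bound $\phi_{k+1}\le 0$, equivalently $-f_{k+1}'/f_{k+1}\ge (k+1)\tfrac{-h'}{h}$, is in the wrong direction to dominate $1/h^2$ given $|h'|\le 1$. So the crux computation you identify as the bottleneck does in fact fail as stated; some additional idea (a better auxiliary function to compare against, an a priori two-sided bound on $\phi_j$, or an integral rather than pointwise comparison) is needed and is not supplied.

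Two smaller remarks. First, as a sanity check against the statement: for $h(r)=L-r$ one has $\sigma_{(j)}=j/L$, so $\sigma_{(k+1)}-\sigma_{(k)}=1/L$, and the survey's displayed bound $\ge k\cdot(-h'(0)/h(0))=k/L$ fails for $k\ge 2$; the intended gap bound is almost certainly $\sigma_{(k+1)}-\sigma_{(k)}\ge -h'(0)/h(0)$ without the factor $k$, which is also the version your proposal implicitly targets when you compare $\psi_{k+1}-\psi_k$ with the constant $-h'(0)/h(0)$. Second, your claim that $\Phi_j(r)\sim 1/(L-r)$ at the pole is correct, and so is $\phi_j(L)=0$; these boundary observations are fine, the gap is entirely in the propagation step.
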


These results lead to different kinds of questions:

\begin{ques}\label{ques:rev1}
What can be said for revolution metrics with other geometric constraints such as $\vert Ricci \vert \le a^2$?
\end{ques}  

\begin{ques}\label{ques:rev2} Can we get similar results for revolution metrics on manifolds with two boundary components?
\end{ques}

\subsubsection{Euclidean hypersurfaces of revolution} A particular case of a revolution manifold is when $\Omega$ is a (d+1)-dimensional hypersurface of revolution in Euclidean space $\R^{d+2}$. Without lost of generality, we fix the boundary to be $\mathbb \Sp^{d}\times \{0\} \subset \R^{d+1} \times \{0\}$ if the boundary has one connected component and $\left(\mathbb \Sp^{d}\times \{0\}\right) \cup \left(\mathbb \Sp^{d}\times \{\delta\}\right)$ if the boundary has two connected components. Note that we are assuming both boundary components are isometric to the unit sphere, in particular they have the same volume.  

The fact that $\om$ is a hypersurface has a strong consequence for the induced Riemannian metric: Consider the meridian curve $c$ of the hypersurface of revolution, that is the curve in $\om$ cut by a 2-dimensional half-plane whose edge is the axis of revolution ( i.e., the $x_{d+2}$ axis in the standard coordinates on $\R^{d+2}$). Denote by $L$ its length and introduce a parametrisation by arc-length $c(r)=(h(r),x_{d+2}(r)$, $0\le r \le L$, where $h(r)$ denotes the distance to the $x_{d+2}$ axis.

Then, we can write the metric of $\Omega$ as $g(r,p)=dr^2+h^2(r)g_{0}$ where $g_0$ is the canonical metric on $\mathbb \Sp^{d}$ and $r \in [0,L]$. For each $r$ we have $\vert h'(r)\vert \le 1$. This implies
\begin{equation}
	1-r \le h(r) \le 1+r;\ h(L)-r \le h(L-r) \le h(L)+r.
\end{equation}

If $\Omega$ has one boundary component, $h(L)=0$ and $0\le h(L-r) \le r$.

If $\Omega$ has two boundary components, $h(L)=1$ and $1-r \le h(L-r)\le 1+r$.           

\medskip
\noindent
\textbf{Case with one boundary component}: This case is now well understood. For each $k\ge 1$, we have the sharp inequalities
\begin{equation}
	k\le \sigma_{(k)}(\Omega) < k+d-1.
\end{equation}
Precisely, for lower bounds, Colbois, Girouard and Gittins show the following \cite[Theorem 1.8]{CoGiGi2019}:

\begin{thm} Let $\Omega$ be a hypersurface of revolution in $\R^{d+2}$, ($d\ge 2$), with connected boundary $\mathbb \Sp^{d} \times \{0\}$. Then for each $k\ge 1$, $\sigma_{(k)}(\Omega) \ge \sigma_{(k)}(\mathbb B^{d+1})=k$, where $\mathbb B^{d+1}$ denotes the unit ball in $\R^{d+1}$. For each given $k$, we have equality if and only if $\Omega =\mathbb B^{d+1} \times \{0\}$.
\end{thm}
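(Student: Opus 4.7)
The plan is to exploit the revolution symmetry via separation of variables, reducing the inequality to a weighted one-dimensional Rayleigh quotient, and then to establish the latter by a completion-of-square identity in which the geometric constraint $|h'|\leq 1$ (from arc-length parametrisation of the meridian) appears cleanly in the remainder. Parametrise $\Omega = [0,L]\times\mathbb{S}^d$ with warped-product metric $dr^2 + h(r)^2 g_0$; then $h(0)=1$ (the boundary is the unit sphere), $h(L)=0$ (smooth pole), and the arc-length relation $h'(r)^2 + x_{d+2}'(r)^2 = 1$ holds on $[0,L]$. Writing a trial function as $u(r,\omega) = f(r) Y_k(\omega)$ with $Y_k$ a spherical harmonic of degree $k$, the Rayleigh--Steklov quotient reduces to
\[ R(f) \;=\; \frac{\int_0^L \bigl[f'(r)^2 h(r)^d + k(k+d-1) f(r)^2 h(r)^{d-2}\bigr]\,dr}{f(0)^2}.\]
Since the stated multiplicity of $\sigma_{(k)}(\Omega)$ matches that of the degree-$k$ spherical harmonics, it suffices to prove $R(f)\geq k$ for every admissible radial $f$ with $f(0)\neq 0$.

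Normalise $f(0)=1$. The key step is to establish the algebraic identity
\[\int_0^L \bigl[f'^2 h^d + k(k+d-1)f^2 h^{d-2}\bigr]\,dr \;=\; k \;+\; \int_0^L \bigl(f'h^{d/2}+k f h^{(d-2)/2}\bigr)^2\,dr \;+\; k(d-1)\int_0^L f^2 h^{d-2}\bigl(1+h'(r)\bigr)\,dr,\]
which I will prove by expanding the square and integrating the resulting cross term $2k\int_0^L f'f\,h^{d-1}\,dr = k\int_0^L (f^2)'h^{d-1}\,dr$ by parts. The boundary contribution $k\bigl[f^2 h^{d-1}\bigr]_0^L = -k$ produces the constant $k$ on the right-hand side; at the degenerate endpoint $r=L$ it vanishes because $d-1\geq 1$ (as $\dim\Omega=d+1\geq 3$) and admissible radial test functions satisfy $f = O((L-r)^k)$ near the pole. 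Combining with the algebraic relation $k(k+d-1)-k^2 = k(d-1)$ yields the identity.

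The two extra terms on the right-hand side are manifestly non-negative: the square trivially, and the last integral because $d-1\geq 1>0$ and the arc-length relation forces $h'(r)\geq -1$, so $1+h'(r)\geq 0$ pointwise. Hence $R(f)\geq k$, giving $\sigma_{(k)}(\Omega)\geq k$. For the equality case, both extra terms must vanish: the vanishing of the $(1+h')$-integral, combined with the positivity of $f$ and $h$ on $(0,L)$, forces $h'(r)\equiv -1$, so $h(r)=1-r$ and $L=1$; the arc-length relation then gives $x_{d+2}'(r)^2 = 1 - h'(r)^2 = 0$, so $x_{d+2}\equiv 0$, the meridian is the segment from $(1,0)$ to $(0,0)$, and $\Omega = \mathbb{B}^{d+1}\times\{0\}$.

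The conceptual crux is identifying the correct completion-of-square, equivalently recognising that the constant $\alpha\equiv k$ solves the natural Riccati equation associated to the radial eigenvalue problem on the ball; once this is in hand, the geometric hypothesis $|h'|\leq 1$ plugs in exactly as needed to make the remainder non-negative. The main technical subtlety is justifying the vanishing of the boundary term at the degenerate pole $r=L$ across the full energy space for radial functions, which I would handle by a standard density argument using smooth functions vanishing to order $k$ at the pole, with the assumption $d\geq 2$ playing an essential role.
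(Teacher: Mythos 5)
Your argument is mathematically correct, and I have verified the central identity. Expanding the square and integrating the cross term by parts, with $f(0)=1$, $h(0)=1$, and the boundary contribution at $r=L$ vanishing, one indeed obtains
\[
\int_0^L\bigl[f'^2h^d+k(k+d-1)f^2h^{d-2}\bigr]\,dr \;=\; k \;+\; \int_0^L\bigl(f'h^{d/2}+kfh^{(d-2)/2}\bigr)^2\,dr \;+\; k(d-1)\int_0^L f^2h^{d-2}(1+h')\,dr,
\]
and the arc-length constraint $|h'|\le 1$, together with $d\ge 2$, makes the remainder manifestly nonnegative. The equality analysis forcing $h'\equiv -1$ (hence $h(r)=1-r$, $L=1$) and then $x_{d+2}\equiv 0$ is also sound, granted the positivity of the radial minimiser on $(0,L)$, which follows from the usual replacement $f\mapsto|f|$ plus unique continuation for the radial ODE. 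Two points worth making explicit: first, the reduction to radial test functions uses the structural fact (cited in the survey as \cite[Proposition 2]{CoVe2021}) that $\sigma_{(k)}(\Omega)$ is attained in the degree-$k$ sector; though in fact the inequality $\sigma_{(k)}\ge k$ already follows from $s_j\ge j$ for every $j$ by counting distinct eigenvalues below level $k$, so this is not essential for the lower bound. Second, the vanishing of the boundary term at $r=L$ is cleanest to justify by elliptic regularity of the actual Steklov eigenfunction (so that $f_k$ vanishes to order $k$ at the pole), rather than through a density argument across the whole form domain; your flag of this subtlety is appropriate but the regularity route is simpler. The survey does not reproduce a proof of this theorem, only citing \cite[Theorem 1.8]{CoGiGi2019}, so a line-by-line comparison is not possible from the text alone; your completion-of-square (equivalently Picone/Riccati) argument is the natural one in this setting and is, to the best of my knowledge, in the same spirit as the original.
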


For upper bounds, Colbois and Verma show~\cite[Theorem 1]{CoVe2021}:

\begin{thm} \label{upperrev} Let $\Omega$ be a hypersurface of revolution in $\R^{d+2}$ with one boundary component $\mathbb \Sp^{d} \times \{0\}$. Then, for $d\ge 2$ and for each $k \ge 1$, we have
\begin{equation}
	\sigma_{(k)}(\Omega) < k+d-1.
\end{equation}
Moreover, the result is sharp. For each $\epsilon >0$ and each $k \ge 1$, there exists a hypersurface of revolution $\Omega_{\epsilon}$ with one boundary component such that $\sigma_{(k)}(\Omega_{\epsilon}) > k+d-1-\epsilon$.
\end{thm}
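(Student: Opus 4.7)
The plan is to prove the strict upper bound by testing an explicit trial function in the degree-$k$ spherical-harmonic sector, then to establish sharpness via Dirichlet--Neumann bracketing against a long Euclidean-annulus neighbourhood of the boundary, using the explicit formulas from Example~\ref{annuli}.

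For the upper bound, write the metric as $dr^2+h(r)^2 g_0$ on $[0,L]\times\Sp^d$. Rotational symmetry decouples the Steklov problem by spherical-harmonic degree, and a Frobenius analysis of the radial ODE at the apex (where $h\sim L-r$) shows that the two local solutions behave like $(L-r)^k$ and $(L-r)^{-(k+d-1)}$, with only the first lying in $H^1$ when $d\ge 2$. Hence $\sigma_{(k)}(\Omega)$ is the unique Steklov eigenvalue in the sector of functions $f(r)Y_k(p)$ with $Y_k$ a fixed degree-$k$ spherical harmonic, and every such test function yields an upper bound. I would take
\[u(r,p)=(1+r)^{-(k+d-1)}Y_k(p),\]
inspired by the exterior harmonic $|x|^{-(k+d-1)}Y_k(x/|x|)$ on $\R^{d+1}$ pulled back by $(r,p)\mapsto(1+r)p$. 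With $h(0)=1$ and $\int_{\Sp^d}Y_k^2=1$, one finds $\int_{\partial\Omega}u^2\,dA=1$ and
\[\int_\Omega|\nabla u|^2\,dV=\int_0^L\!\Bigl[(k{+}d{-}1)^2(1+r)^{-2(k+d)}h^d+k(k{+}d{-}1)(1+r)^{-2(k+d-1)}h^{d-2}\Bigr]dr.\]
The crucial bound $h(r)\le 1+r$ follows from $|h'|\le 1$ and $h(0)=1$. For $d\ge 2$ this yields $h^d\le(1+r)^d$ and $h^{d-2}\le(1+r)^{d-2}$, so both integrands are dominated by $(k+d-1)(2k+d-1)(1+r)^{-(2k+d)}$, and an explicit integration gives $R(u)\le(k+d-1)\bigl[1-(1+L)^{-(2k+d-1)}\bigr]<k+d-1$.

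For sharpness, given $\eps>0$ and $k\ge 1$, I would construct, for large $T$, a smooth hypersurface of revolution $\Omega_T$ whose meridian follows $h(r)=1+r$ on $[0,T]$ and is then smoothly capped off on $[T,L_T]$ so as to close at the apex with $h(L_T)=0$, $h'(L_T)=-1$, and all higher even derivatives vanishing. The near-boundary neighbourhood $A_T=[0,T]\times\Sp^d$ is then isometric to the Euclidean annulus $B_{1+T}\setminus B_1\subset\R^{d+1}$, so Example~\ref{annuli} gives
\[\sigma_{(k)}^N(A_T)=\frac{k(k+d-1)\bigl((1+T)^{2k+d-1}-1\bigr)}{k(1+T)^{2k+d-1}+(k+d-1)}\xrightarrow[T\to\infty]{} k+d-1.\]
Dirichlet--Neumann bracketing (Inequality~\eqref{ineq:compmixed}), which respects the spherical-harmonic indexing by rotational symmetry, then yields $\sigma_{(k)}(\Omega_T)\ge\sigma_{(k)}^N(A_T)>k+d-1-\eps$ for $T$ large enough.

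The main obstacle I expect is the smooth closing of the meridian: producing a $C^\infty$ function $h$ on $[0,L_T]$ that equals $1+r$ on $[0,T]$, obeys $|h'|\le 1$ throughout, and realises the apex Taylor conditions, while matching all derivatives at $r=T$ where $h'\equiv 1$ and every higher derivative vanishes. A convenient workaround is to replace the flat segment by the slightly tilted cone $h(r)=1+r\sqrt{1-\delta^2}$ with $\delta\downarrow 0$, which creates the room needed for a $C^\infty$ cap at the price of an arbitrarily small loss in $\sigma_{(k)}^N(A_T)$, easily absorbed into $\eps$.
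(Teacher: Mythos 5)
Your proof is correct, and for the sharpness half it coincides with the argument the survey itself sketches just after the theorem: cap a long flat Euclidean annulus and apply Dirichlet--Neumann bracketing against the formulas of Example~\ref{annuli}. The survey does not reproduce the upper-bound proof from \cite{CoVe2021}, so I verified your computation directly: the trial function $u=(1+r)^{-(k+d-1)}Y_k$ has boundary trace $Y_k$ (using $h(0)=1$), lies in $H^1(\Omega)$ precisely because $d\ge 2$ makes the angular term $\int f^2 h^{d-2}\,dr$ finite near the apex, and combining your expression for the Dirichlet energy with the pointwise bound $h(r)\le 1+r$ (from $h(0)=1$, $|h'|\le 1$, the latter being the arc-length constraint on the meridian) does give $R(u)\le(k+d-1)\bigl[1-(1+L)^{-(2k+d-1)}\bigr]<k+d-1$; by the Dirichlet principle this dominates the Steklov eigenvalue in the degree-$k$ sector. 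Two remarks. First, the identification of that sector eigenvalue with $\sigma_{(k)}$, i.e.\ the strict monotonicity of the sector eigenvalues in $k$, is exactly what the survey cites from \cite[Proposition 2]{CoVe2021}; you invoke it tacitly (``Hence $\sigma_{(k)}$ is the unique Steklov eigenvalue in the sector\ldots''), and it is fine to rely on, but it is the hinge connecting a single degree-$k$ trial function to the $k$th \emph{distinct} eigenvalue and deserves an explicit pointer. Second, your worry about closing the meridian smoothly is unfounded and the tilted-cone workaround is unnecessary (though not wrong): writing $h'=\cos\phi$ and $x_{d+2}'=\sin\phi$ for the meridian's turning angle $\phi$, take $\phi\equiv 0$ on $[0,T]$ and any $C^\infty$ increase to $\pi$ on $(T,L_T]$ all of whose derivatives vanish at $T$; then $h$ and $x_{d+2}$ are $C^\infty$, the arc-length constraint holds identically, and the apex Taylor conditions are met.
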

 
In order to understand the geometry behind this estimate,  we can consider Formulas (\ref{Dirichlet}) and (\ref{Neumann}) of Example \ref{annuli}. The hypersurfaces of revolution with $\sigma_k$ close to $k+d-1$ contain annuli $\Omega_L$ with $L \to \infty$. The bracketing formula (\ref{ineq:compmixed}) shows that the $k$th eigenvalue $\sigma_{(k)}$ converges to $k+d-1$ as $L\to \infty$.

\begin{remark}
    The situation for $d=1$ is also interesting: all surfaces of revolution that share the same connected boundary are Steklov isospectral. Indeed they are $\sigma$-isometric in the sense of Definition~\ref{sigmaisom}, hence the observation follows from Corollary~\ref{cor:sigmaisom}.
\end{remark}

\medskip
\noindent
\textbf{Case with two boundary components}:
The situation with two boundary components is more complicated. For lower bounds, we have a result comparable to the case with one boundary component   \cite[Theorem 1.11]{CoGiGi2019}.

\begin{thm} \label{lowerrev2}Let $\Omega$ be a hypersurface of revolution in $\R^{d+2}$ with boundary $\left(\mathbb \Sp^{d} \times \{0\}\right) \cup \left(\mathbb \Sp^{d} \times \{\delta\}\right)$. Let $L$ be the arc length of the meridian. If $L\ge 2$, then for each $k\ge 1$,
\begin{equation}
	\sigma_k(\Omega) \ge \sigma_k(\mathbb B^{d+1} \sqcup \mathbb B^{d+1} ) \label{inlowerrev2}
\end{equation}
In particular, if $\delta \ge 2$ this is always true.

For $\delta \ge 2$, Inequality \eqref{inlowerrev2} is sharp: for each $k$ and each $\epsilon >0$, there exists a connected hypersurface $\Omega_{\epsilon,k}$ such that $\sigma_{k}(\Omega_{\epsilon,k})\le \sigma_k(\mathbb B^{d+1}  \sqcup \mathbb B^{d+1})+\epsilon$.
\end{thm}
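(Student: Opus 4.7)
The plan is to obtain the lower bound via Dirichlet--Neumann bracketing on collar neighbourhoods of the two boundary components and a one-dimensional Sturm--Liouville comparison via separation of variables.

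First I would set $A_1 = [0,1]\times\Sp^d$ and $A_2 = [L-1,L]\times\Sp^d$ with metrics induced from $\Omega$. The hypothesis $L\geq 2$ ensures these collars are disjoint (the boundary case $L=2$ is handled by replacing $A_1,A_2$ by $[0,1-\eta)$ and $(1+\eta,L]$ and letting $\eta\to 0^+$). On each $A_\ell$ one imposes the Steklov condition on the original boundary component and the Neumann condition on the interior slice, so that by~\eqref{ineq:compmixed},
\[\sigma_k(\Omega) \geq \sigma_k^N(A_1 \sqcup A_2).\]
The key geometric observation is that $h(0)=h(L)=1$ combined with $|h'|\leq 1$ forces $h(r)\geq 1-r$ on $[0,1]$ and $h(r)\geq 1-(L-r)$ on $[L-1,L]$; i.e., on each collar the meridian profile pointwise dominates the profile $1-r$ of a unit ball parametrised from its boundary.

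Then I would separate variables using spherical harmonics on $\Sp^d$. A harmonic function on $A_\ell$ takes the form $f(r)Y_j(p)$ where $\Delta_{\Sp^d}Y_j = \Lambda_j Y_j$ with $\Lambda_j = j(j+d-1)$, and $f$ satisfies the Sturm--Liouville equation $(h^d f')' = \Lambda_j h^{d-2}f$. For each mode $j$, the Neumann condition at $r=1$ together with the Steklov condition at $r=0$ selects a unique eigenvalue
\[\sigma_j^{A_\ell} \;=\; \inf_{f}\;\frac{\int_0^1\bigl(h^d\, f'^2 + \Lambda_j\, h^{d-2}\, f^2\bigr)\,dr}{f(0)^2}\]
(over $f\in H^1([0,1])$ with $f(0)\neq 0$), with multiplicity equal to the dimension of the degree-$j$ spherical harmonics on $\Sp^d$. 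The analogous Rayleigh quotient for $\B^{d+1}$ parametrised from its boundary (profile $h_B(r)=1-r$) has infimum $\sigma_j(\B^{d+1})=j$, attained by $f(r)=(1-r)^j$. Because $d\geq 2$ makes both exponents $d$ and $d-2$ non-negative, the bound $h\geq 1-r$ yields $h^d\geq (1-r)^d$ and $h^{d-2}\geq (1-r)^{d-2}$ pointwise, hence $R_{A_\ell,j}(f)\geq R_{\B^{d+1},j}(f)$ for every admissible $f$; taking infima gives $\sigma_j^{A_\ell}\geq j$. Since both spectra share identical per-mode multiplicities, a standard comparison of eigenvalue counting functions yields $\sigma_k^N(A_\ell)\geq \sigma_k(\B^{d+1})$, and the same counting argument applied to the disjoint union produces $\sigma_k^N(A_1\sqcup A_2)\geq \sigma_k(\B^{d+1}\sqcup\B^{d+1})$.

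For sharpness when $\delta\geq 2$, I would construct $\Omega_{\eps,k}$ from the piecewise meridian $(h_\eps(r), x_\eps(r))$: on $[0,1-\eps]$ take $h_\eps(r)=1-r$ with $x_\eps\equiv 0$ (a flat unit disk in $\R^{d+1}\times\{0\}$ with a central hole of radius $\eps$); on an interval of length $\delta$ take $h_\eps\equiv\eps$ with $x_\eps$ rising linearly from $0$ to $\delta$ (a thin cylindrical tube); and on the final interval of length $1-\eps$ mirror the first piece so that the other boundary sits in $\R^{d+1}\times\{\delta\}$. After smoothing the two corners while preserving $|h_\eps'|\leq 1$, $\Omega_{\eps,k}$ is a smooth connected hypersurface of revolution with unit-sphere boundaries at heights $0$ and $\delta$. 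As $\eps\to 0$ the end caps converge to genuine unit balls and the tube degenerates; using Steklov eigenfunctions of the two balls as test functions in the variational characterisation (or invoking the Fraser--Schoen surgery estimates discussed in Example~\ref{exfraserschoen}) combined with the already established lower bound gives $\sigma_k(\Omega_{\eps,k})\to\sigma_k(\B^{d+1}\sqcup\B^{d+1})$ as $\eps\to 0$.

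The main obstacle is the Sturm--Liouville comparison of the second paragraph, where the restriction $d\geq 2$ is essential: for $d=1$ the exponent $d-2=-1$ reverses the inequality $h^{d-2}\geq (1-r)^{d-2}$ and the pointwise Rayleigh quotient comparison fails (consistent with the fact that the $d=1$ case of the theorem is distinct and requires the separate treatment of~\cite{FaTaYu2015}). One must also verify that both infima are taken over the same admissible space $H^1([0,1])$; this relies on $h>0$ throughout the collar, which is guaranteed since $\Omega$ has two boundary components, together with $d\geq 2$ making the weight $h^{d-2}$ bounded. The remaining technicalities---the edge case $L=2$, smoothing of corners, and the upper-bound estimate via concentrated test functions across the thin tube---are standard.
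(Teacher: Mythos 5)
Your proof is correct and follows essentially the same approach that the paper points to for this result (which is \cite[Theorem~1.11]{CoGiGi2019}): Dirichlet--Neumann bracketing on disjoint collars of width one, separation of variables into spherical harmonic modes, and the pointwise Sturm--Liouville Rayleigh-quotient comparison driven by the hypersurface constraint $|h'|\le 1$, which forces $h(r)\ge 1-r$ on each collar. Your sharpness construction — two punctured balls joined by a thin cylinder — matches the paper's description verbatim.

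One small point worth tightening: in the per-mode comparison you should note explicitly that the infimum of the ball's Rayleigh quotient $R_{\B^{d+1},j}$ over $H^1([0,1])$ equals $j$ and not something smaller. This is not automatic because the natural function space for the ball is the (larger) weighted Sobolev space where the weight $(1-r)^{d-2}$ is allowed to degenerate at $r=1$; the observation needed is that for $d\ge 2$ the weights $(1-r)^d$ and $(1-r)^{d-2}$ are bounded, so $H^1([0,1])$ sits inside the weighted space and the two infima agree since the minimiser $(1-r)^j$ lies in $H^1$. Without this remark the chain $R_{A_\ell,j}(f)\ge R_{\B^{d+1},j}(f)\ge j$ is not quite justified. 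The counting argument also tacitly uses that the mixed Steklov--Neumann spectrum of $A_\ell$ has exactly one eigenvalue per mode (the interior Neumann condition selects a one-dimensional solution space of the ODE), which is correct but deserves a sentence. Neither of these affects the validity of the argument.
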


Note that, if we do not require the hypersurface to be connected, it suffices to take the disjoint union of two unit balls to have the sharpness. We obtain sharpness in the connected case by considering hypersurfaces formed by taking two copies of the unit ball, perforating each with a small hole about its center, and joining the two punctured balls by a thin cylinder.

When $\delta <2$, it appears to be more difficult to find a lower bound for $\sigma_{(k)}$. For $k=1$, the union of two balls gives the lower bound $0$.
But for larger $k$, using Example \ref{example: cylinder}, it is easy to see that, when $\delta$ becomes small, the cylinder $[0,\delta] \times \mathbb \Sp^{d}$ has plenty of small eigenvalues and the union of two balls no longer gives the lower bound.

\begin{ques}\label{ques:2bc1} Find a sharp lower bound for $\sigma_{k}(\Omega)$, where $\Omega$ is a hypersurface of revolution with boundary components $\left(\mathbb \Sp^{d} \times \{0\}\right) \cup \left(\mathbb \Sp^{d} \times \{\delta\}\right)$ and $\delta<2$.
\end{ques}

\begin{ques}\label{ques:2bc2} Find a sharp upper bound for $\sigma_{k}(\Omega)$, where $\Omega$ is a hypersurface of revolution with boundary components $\left(\mathbb \Sp^{d} \times \{0\}\right) \cup \left(\mathbb \Sp^{d} \times \{\delta\}\right)$.
\end{ques}

\begin{ques}\label{ques:2bc3}
In both cases (one or two boundary components) can we obtain results as in Theorem \ref{thm:xiong2}, that is, sharp bounds for the difference $\sigma_{(k+1)}-\sigma_{(k)}$ and the ratio $\frac{\sigma_{(k+1)}}{\sigma_{(k)}}$?
\end{ques}

  We have seen that considering hypersurfaces of revolution of Euclidean space allows one to obtain some sharp estimates. An intermediate situation between general Riemannian manifolds and hypersurfaces of revolution is to consider submanifolds of Euclidean space, and this is the object of the next subsection.
  
   \subsection{Upper and lower bounds: the case of submanifolds of Euclidean space}\label{UpperSub}
  
We consider the following situation: we fix a closed (not necessarily connected) submanifold $\Sigma$ of dimension $d$ of Euclidean space $\R^m$ and consider all possible $d+1$-dimensional submanifolds $\Omega \subset \R^m$ with boundary $\Sigma$. In \cite[Theorem 1.11]{CoGiGi2019}, Colbois, Girouard and Gittins prove the inequality
\begin{equation}
	\sigma_k(\Omega) \le A_{\Sigma} \vert \Omega \vert k^{2/d}
\end{equation}
where $A_{\Sigma}$ depends on the geometry of $\Sigma$. Moreover, the authors keep open the question of constructing $\Omega$ with $\sigma_1$ arbitrarily large or small (with $\Sigma$ fixed).
We can apply Inequality (\ref{ineq}) in this context, and this makes the dependence on the geometry of $\Sigma$ more explicit:
\begin{equation}
			\sigma_k(\Omega)\le 512b^2N^3A\Lambda^2\frac{\vert \Omega\vert}{\vert\Sigma\vert^{\frac{d+2}{d}}}k^{2/d}.
		\end{equation}
  The distortion $\Lambda$ of $\Sigma$ in $\Omega$ is bounded from above by the distortion of $\Sigma$ in $\R^m$. For the same reason as for Riemannian manifolds (Theorem \ref{globalestimate} and Point 5 of Remark \ref{rem: necessity}), the exponent is optimal: we can also consider in $\R^m$ a product of the type $M\times [0,L]$ as in Remark \ref{rem: necessity}. Similarly, one needs to control $\vert \Sigma\vert$: the construction of Brisson ~\cite{Br2022} and Example \ref{JadeBrisson} also works for a submanifold.
The only question is about the presence of $\vert \Omega \vert$. It does not seem possible to adapt the construction of \cite{CoElGi2019} in Example \ref{confdef} to submanifolds with fixed boundary, and it turns out that the question is open:

\begin{ques}\label{open:arbitrarilylarge}
Given a $d$-dimensional compact submanifold $\Sigma$ in $\R^m$, is it possible to construct a family of $(d+1)$- dimensional submanifolds $\Omega$ of $\R^m$ with boundary $\Sigma$ for which $\sigma_1(\Omega)$ becomes arbitrarily large?
\end{ques}

We can add different kinds of conditions in order to avoid the presence of $\vert \Omega\vert$. We can take advantage of being in the Euclidean space to introduce the \emph{intersection index} already considered in \cite{CoDrEl2010}.
For a compact immersed submanifold $N$ of dimension $q$ in $\R^{q+p}$, almost every
$p$-plane $\Pi$ in $\R^{q+p}$ is transverse to $N$, meaning that the intersection $\Pi \cap N$
consists of a finite number of points.
\begin{defn}
\label{def:index}
The intersection index of $N$ is
\begin{equation}
\label{eq:index}
i_p(N) := \sup_{\Pi} \{\# (\Pi \cap N)\},
\end{equation}
where the supremum is taken over the set of all $p$-planes $\Pi$ that are transverse to $N$ in $\R^{q+p}$.
\end{defn}

In \cite[Corollary 1.5]{CoGi2021}, the following result is proved by Colbois and Gittins:
\begin{thm}
\label{thm:3}
Let $d \geq 2$.
Let $\Sigma$ be a $d$-dimensional,
closed, smooth submanifold of $\R^m$. Let $\inj(\Sigma)$ denote the injectivity radius of $\Sigma$. There exist constants $A_d, B_d>0$ depending only on $d$ such that
for any compact $(d+1)$-dimensional submanifold $\Omega$ of $\R^m$ with boundary $\Sigma$ and for $k \geq 1$,
\begin{equation}\label{eq:T3}
  \sigma_k(\Omega) \leq A_d \frac{i(\Omega)}{\inj(\Sigma)} + B_d \, i(\Omega)\left(\frac{i(\Sigma) k}{\vert \Sigma \vert}\right)^{1/d},
\end{equation}
where $i(\Sigma) = i_{m-d}(\Sigma)$ and $i(\Omega) = i_{m-d-1}(\Omega)$.
\end{thm}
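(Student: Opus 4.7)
The plan is to apply the min-max characterisation~\eqref{eq:Stek Rayleigh min max} with $k+1$ disjoint test functions of capacitor type, each supported on the intersection of $\Omega$ with a small Euclidean ball centred on a point of the boundary $\Sigma$. To set this up, I would first translate the intersection indices into extrinsic volume bounds. Foliating a Euclidean ball $B_r(x)\subset\R^m$ by affine $(m-d-1)$-planes and invoking an integral-geometric identity together with Definition~\ref{def:index} yields $|\Omega\cap B_r(x)|\leq C_{d+1}\,i(\Omega)\,r^{d+1}$, and the analogous argument with $(m-d)$-planes gives $|\Sigma\cap B_r(x)|\leq C_d\,i(\Sigma)\,r^d$. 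In the reverse direction, the hypothesis on $\inj(\Sigma)$ delivers the matching lower bound $|\Sigma\cap B_r(x)|\geq c_d\,r^d$ whenever $x\in\Sigma$ and $r\leq\inj(\Sigma)$, because at these scales the exponential map of $\Sigma$ is a diffeomorphism and the intrinsic and ambient distances are comparable.

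Equipped with these Ahlfors-type estimates on $(\Sigma,d_{\mathrm{Euc}},dV_\Sigma)$, I would invoke a Grigor'yan--Netrusov--Yau style covering lemma to produce, for a prescribed scale $r^{\ast}$, a collection of $k+1$ pairwise disjoint extrinsic balls $B_{r_j}(x_j)$ with $x_j\in\Sigma$ and $r_j\leq r^{\ast}$, together with disjoint ``annuli'' $B_{2r_j}(x_j)\setminus B_{r_j}(x_j)$, such that $|\Sigma\cap B_{r_j}(x_j)|$ is bounded below by a definite constant times $\min\{r_j^d,\,|\Sigma|/k\}$. The crucial scale to select is
\[r^{\ast}:=\min\Bigl\{\inj(\Sigma),\;\alpha_d\bigl(|\Sigma|/(i(\Sigma)\,k)\bigr)^{1/d}\Bigr\}.\]
In the first regime one uses balls of the maximal admissible size, while in the second one uses balls just large enough for the covering lemma to produce the required number of capacitors from the Ahlfors-$d$-regular structure of $\Sigma$.

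On each capacitor I would take the Lipschitz bump $u_j$ equal to $1$ on $\Omega\cap B_{r_j}(x_j)$, equal to $0$ outside $\Omega\cap B_{2r_j}(x_j)$, and affine in the Euclidean distance to $x_j$ in between. Their supports in $\Omega$ are pairwise disjoint, so they span a $(k+1)$-dimensional subspace admissible in~\eqref{eq:Stek Rayleigh min max}, and the Rayleigh--Steklov quotient is controlled by
\[R(u_j)\leq\frac{|\Omega\cap B_{2r_j}(x_j)|\,r_j^{-2}}{|\Sigma\cap B_{r_j}(x_j)|}\leq\frac{C\,i(\Omega)\,(2r_j)^{d+1}\,r_j^{-2}}{c_d\,r_j^d}=\frac{C'\,i(\Omega)}{r_j}.\]
Taking the maximum over $j$ and substituting $r_j\leq r^{\ast}$ with the definition of $r^{\ast}$ produces the two terms of~\eqref{eq:T3}: the injectivity-radius contribution $A_d\,i(\Omega)/\inj(\Sigma)$ and the Weyl-type contribution $B_d\,i(\Omega)(i(\Sigma)k/|\Sigma|)^{1/d}$.

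The main obstacle is the covering step. The Grigor'yan--Netrusov--Yau lemma must be run on $\Sigma$ equipped with the restricted Euclidean distance, and the overlap multiplicities of the enlarged balls $B_{2r_j}(x_j)$ inside $\Omega$ must be controlled by the extrinsic invariant $i(\Omega)$ rather than by any global Riemannian quantity of $\Omega$. Verifying the requisite weak doubling/Vitali property in this extrinsic set-up, and ensuring that the combinatorics of overlaps among the surrounding balls contribute only a dimensional factor (so that the numerator of $R(u_j)$ is genuinely bounded by $i(\Omega)\,r_j^{d-1}$), is the technically delicate step that ultimately fixes both constants $A_d$ and $B_d$.
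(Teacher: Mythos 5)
Your overall strategy — two-sided extrinsic volume estimates via the intersection indices and injectivity radius, a GNY-type covering on $(\Sigma, d_{\mathrm{Euc}}|_\Sigma, dV_\Sigma)$ at the scale $r^*$ balancing the two regimes, and Lipschitz bump test functions fed into~\eqref{eq:Stek Rayleigh min max} — is indeed the route taken by Colbois and Gittins in \cite{CoGi2021}, and it does yield the two terms of~\eqref{eq:T3}. However, your justification for the lower Ahlfors bound is incorrect. You assert that for $r\le\inj(\Sigma)$ the estimate $|\Sigma\cap B_r(x)|\geq c_d\,r^d$ holds ``because at these scales the exponential map of $\Sigma$ is a diffeomorphism and the intrinsic and ambient distances are comparable.'' Neither clause suffices. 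The exponential map being a diffeomorphism on $B_r(0)\subset T_x\Sigma$ does not by itself provide a lower bound on its Jacobian — the conjugate radius exceeding $r$ only guarantees the Jacobian is positive, not that it is bounded away from zero — so you cannot directly conclude that intrinsic balls have volume $\geq c_d r^d$. And the claim that intrinsic and extrinsic distances are comparable below the injectivity radius is false for embedded submanifolds: the injectivity radius is an intrinsic invariant that says nothing about how the embedding may fold $\Sigma$ back close to itself in $\R^m$, which requires control of the reach. What you actually need is Croke's volume inequality: a lower bound on the injectivity radius alone implies $|B^\Sigma(x,r)|\geq c_d r^d$ for $r\lesssim\inj(\Sigma)$, and one then uses the one-sided comparison $d_{\mathrm{Euc}}\leq d_\Sigma$ (which always holds) to include the intrinsic ball inside $\Sigma\cap B_r(x)$. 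This is precisely where the $\inj(\Sigma)$-dependence in~\eqref{eq:T3} enters, so repairing this step matters.

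The second issue you flag — that the GNY decomposition must produce test functions whose supports are disjoint as subsets of $\Omega$, not merely as subsets of $\Sigma$ — is a real and correctly identified concern, but it is a fillable detail rather than a flaw in the approach. Disjointness of $B_{2r_j}(x_j)\cap\Sigma$ in $\Sigma$ only forces $|x_i-x_j|\geq 2\max(r_i,r_j)$, which is not enough to separate the ambient balls $B_{2r_j}(x_j)$ in $\R^m$. The cure is to run the covering lemma with a larger dilation factor (e.g.\ demand disjointness of the $\Sigma$-traces of $B_{4r_j}(x_j)$), which forces $|x_i-x_j|\geq 4\max(r_i,r_j)> 2r_i+2r_j$ and hence pairwise disjointness of the ambient balls $B_{2r_j}(x_j)$, and with them the supports of the $u_j$ in $\Omega$. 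One must also verify that the two-sided Ahlfors bounds supply the weak doubling/covering property required for GNY on the extrinsic metric measure space; once the lower bound is correctly established via Croke, this works. Finally, your integral-geometric derivation of $|\Omega\cap B_r(x)|\leq C\,i(\Omega)\,r^{d+1}$ via averaging projections over the Grassmannian will a priori give constants depending on $m$; some care (e.g.\ projecting onto approximate tangent planes) is needed to obtain $A_d,B_d$ depending only on $d$ as the statement requires.
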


In some sense, the index $i(\Omega)$ plays the role of the volume of $\Omega$, and it is also an open question to decide if we really need it. Note that the exponent of $k$ is optimal with respect to the Weyl asymptotics: to our knowledge, this is one of the few situations where one can get an optimal exponent in the upper bound without control of the curvature.
By the results of \cite{Br2022} in Example \ref{JadeBrisson} (see also the example in \cite[Section 4]{CoGi2021}), we need to take the injectivity radius into account for this inequality.

In contrast to the case of revolution manifolds, there do not exist many sharp inequalities in the case of submanifolds of the Euclidean space for which we can characterise cases of equality. One notable exception is the paper \cite{IlMa2011} by Ilias and Makhoul. In order to get a sharp inequality, they have to take the mean curvature into account. We state here part of   \cite[Theorem 1.2]{IlMa2011}.

\begin{thm} Let $\Omega$ be an immersed compact submanifold of dimension $d+1$ in Euclidean space $\R^m$ with boundary $\Sigma$. Let $H$ denote the mean curvature of $\Sigma$. 
\begin{enumerate}
\item
If $m>d+1$, then
\begin{equation} \label{ineqmean}
	\sigma_1(\Omega)\le (d+1)\frac{\vert \Omega\vert}{\vert \Sigma\vert^2}\int_{\Sigma} \vert H\vert^2dV_{\Sigma}.
\end{equation}
We have equality in (\ref{ineqmean}) if and only if $\Omega$ is a minimal submanifold of the ball $B(\frac{1}{\sigma_1})$ of radius $\frac{1}{\sigma_1}$ in $\R^m$, $\Sigma \subset \partial B(\frac{1}{\sigma_1})$, and $\Omega$ reaches the boundary of the ball orthogonaly. 
\item
If $m=d+1$, we have the same inequality, and we have equality if and only if $\Omega$ is a ball.
\end{enumerate}
\end{thm}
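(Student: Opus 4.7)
The plan is to apply the variational characterisation \eqref{eq:Stek Rayleigh min max} with the Euclidean coordinate functions as trial functions, after a Hersch-type translation making them admissible, and then to exploit the classical formula relating the Laplacian of the position vector of $\Sigma$ to its mean curvature. Let $X=(x_1,\ldots,x_m)$ denote the position vector in $\R^m$ and set $c_i:=|\Sigma|^{-1}\int_\Sigma x_i\,dV_\Sigma$, so that each $x_i-c_i$ is $L^2(\Sigma)$-orthogonal to constants. No topological centre-of-mass argument is needed here, since each coordinate can be translated independently of the others.

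Plugging these test functions into \eqref{eq:Stek Rayleigh min max} and summing over $i$ yields
\begin{equation*}
\sigma_1(\Omega)\sum_{i=1}^{m}\int_\Sigma (x_i-c_i)^2\,dV_\Sigma \;\le\; \sum_{i=1}^{m}\int_\Omega|\nabla_\Omega x_i|^2\,dV_\Omega \;=\; (d+1)\,|\Omega|,
\end{equation*}
where the final equality uses $\sum_i|\nabla_\Omega x_i|^2=d+1$ pointwise, a consequence of $\Omega\hookrightarrow\R^m$ being isometric. For the lower bound on the left-hand sum I would invoke the identity $\Delta_\Sigma X=-dH$ valid for any isometric immersion (with the paper's sign convention $\Delta=-\operatorname{div}\nabla$). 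Integration by parts on the closed manifold $\Sigma$ gives
\begin{equation*}
\int_\Sigma (X-C)\cdot H\,dV_\Sigma \;=\; -\tfrac{1}{d}\int_\Sigma(X-C)\cdot\Delta_\Sigma X\,dV_\Sigma \;=\; -\tfrac{1}{d}\int_\Sigma|\nabla_\Sigma X|^2\,dV_\Sigma \;=\; -|\Sigma|,
\end{equation*}
and Cauchy--Schwarz then yields
\begin{equation*}
|\Sigma|^2 \;\le\; \Bigl(\sum_{i=1}^{m}\int_\Sigma (x_i-c_i)^2\,dV_\Sigma\Bigr)\Bigl(\int_\Sigma|H|^2\,dV_\Sigma\Bigr).
\end{equation*}
Combining the two displayed chains gives \eqref{ineqmean}.

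The subtle part is the equality case. Cauchy--Schwarz forces $X-C$ to be pointwise proportional to $H$ along $\Sigma$, while equality in the Rayleigh step forces each component $x_i-c_i$ to be a $\sigma_1$-Steklov eigenfunction on $\Omega$. Harmonicity of the $x_i$ on $\Omega$ is equivalent, via the analogous identity $\Delta_\Omega X=-(d+1)H_\Omega$, to $\Omega$ being a minimal submanifold of $\R^m$. The Steklov boundary condition reads $\nu=\sigma_1(X-C)$ as a vector equation on $\Sigma$, where $\nu$ is the outward unit conormal to $\Sigma$ in $\Omega$; taking norms gives $|X-C|\equiv 1/\sigma_1$ on $\Sigma$, so $\Sigma\subset\partial B(C,1/\sigma_1)$, and since $\nu$ is radial from $C$, the submanifold $\Omega$ meets this sphere orthogonally. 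When $m=d+1$ minimality is automatic, and the orthogonality condition together with the fact that $\Sigma=\partial\Omega$ is a compact subset without boundary of the connected sphere $\partial B(C,1/\sigma_1)$ forces $\Sigma$ to be the entire sphere, hence $\Omega=B(C,1/\sigma_1)$. The main technical point to verify is that these conditions are also sufficient for equality, which is done by running both the Cauchy--Schwarz and Rayleigh steps backwards.
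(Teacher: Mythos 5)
Your derivation of the inequality is correct and is the standard Reilly-type argument (coordinate trial functions, the identity $\Delta_\Sigma X = -dH$ under the paper's sign convention, and Cauchy--Schwarz); this is surely the route of Ilias--Makhoul. The gap is in the equality case, in the direction you dispatch as ``running both the Cauchy--Schwarz and Rayleigh steps backwards.'' As you correctly note, equality forces two \emph{separate} things: (a) each $x_i-c_i$ is a $\sigma_1$-eigenfunction, giving minimality, $\Sigma\subset\partial B(C,1/\sigma_1)$, and orthogonal meeting; and (b) equality in Cauchy--Schwarz, i.e.\ $X-C$ pointwise proportional to the mean curvature vector $H$ of $\Sigma$ in $\R^m$. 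You derive (b) in the necessity direction but silently drop it when asserting sufficiency. It is not redundant: combined with $\nu=\sigma_1(X-C)$, condition (b) says $H$ lies along the conormal $\nu$, i.e.\ the component of $H$ in $(T\Omega)^\perp$ vanishes. Using the Gauss decomposition of $II_\Sigma^{\R^m}$ for $\Sigma\subset\Omega\subset\R^m$ together with minimality of $\Omega$, this is equivalent to $II_\Omega(\nu,\nu)\equiv 0$ along $\Sigma$, or again to $\Sigma$ being a minimal submanifold of the sphere $\partial B(C,1/\sigma_1)$. None of this follows from the free-boundary minimal configuration alone, so the Cauchy--Schwarz step cannot be reversed from the conditions you cite.

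The critical catenoid in $\B^3$ (here $d+1=2$, $m=3$) makes the gap concrete. It is a free boundary minimal annulus of spectral index one, so $\sigma_1=1$ and $C=0$, and every condition in your proposed equality characterisation holds. Its boundary consists of two circles of radius $\rho\approx0.83<1$, so $\int_\Sigma|H|^2=|\Sigma|/\rho^2$ while $\int_\Sigma|X|^2=|\Sigma|$; Cauchy--Schwarz $|\Sigma|^2\le\int_\Sigma|X|^2\int_\Sigma|H|^2$ is therefore strict. Using $|\Omega|=\tfrac12|\Sigma|$, the right-hand side of \eqref{ineqmean} evaluates to $1/\rho^2>1=\sigma_1$, so \eqref{ineqmean} is strict even though your stated hypotheses hold. (When $m=d+1$ the extra condition is automatic: a closed $d$-dimensional $\Sigma$ inside a connected $d$-sphere must be the whole sphere, whence $H$ is radial; so part (2) of the statement is unaffected.) To repair the argument, the proportionality condition (b) --- equivalently, the vanishing of $II_\Omega(\nu,\nu)$ on $\Sigma$ --- must be carried explicitly through the equality characterisation and proved to be part of the ``if'' hypotheses.
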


(We remark that the equality statement in the first item says that $\om$ is a free boundary minimal surface in the ball.  Such surfaces will be discussed in Section~\ref{sec:existence}; see in particular Definition~\ref{def.fbms}.)

The authors also observe that one can prove a similar result for submanifolds of the sphere and ask the question:
\begin{ques}\label{ques:eucl1}
Is it possible to get a similar inequality for submanifolds of hyperbolic space?
\end{ques}

Another natural question is

\begin{ques}\label{ques:eucl2}
Is it possible to generalise Inequality (\ref{ineqmean}) to other eigenvalues? Note that a similar question for the spectrum of the Laplacian was solved only recently (and partially) by Kokarev  in~\cite[Theorem 1.6]{Ko2020}.
\end{ques}

\noindent
For other results of this kind, see the two recent papers \cite{ChSh2018,ChSh2022} by Chen and Shi.

The question of constructing manifolds $\Omega$ with prescribed boundary $\Sigma$ and with arbitrarily small Steklov eigenvalue $\sigma_1$, or even $\sigma_k$, is also interesting in this context. In general, it is easy to deform a Riemannian metric in order to construct small eigenvalues (see Proposition~\ref{prop: no lower bound}). However, one cannot realise the first construction of Proposition \ref{prop: no lower bound} if the boundary is prescribed (as a Riemannian manifold). Moreover, the second construction requires a conformal perturbation, and so is not accessible if we consider submanifolds. Colbois, Girouard and M\'etras~\cite{CoGiMe2020} gave a construction of a family $\Omega_\eps\subset\R^n$ of $(d+1)$-dimensional submanifolds such that $\partial\Omega_\eps=\Sigma$ for each $\eps$ and $\sigma_k(\Omega_\eps)\xrightarrow{\eps\to 0}0$ for each $k$. This should be compared with Open Problem~\ref{open:arbitrarilylarge}.
The proof of the general case is rather technical, but its essence is captured by~\cite[ Example 4.1]{CoGiMe2020}, which we reproduce here.

\begin{ex}   \cite[Example 4.1]{CoGiMe2020} We consider the unit circle $\mathbb \Sp^1 \subset \R^2$ as a submanifold of dimension $d=1$ in $\R^3$ and we want to fill it with a surface $\Omega$ with $\sigma_1(\Omega)$ small. We will construct $\Omega$ as the graph in $\R^3$ of a function $f$ on the unit disc $\overline{\D}$. Given a smooth function $f:\overline{\D}\rightarrow\mathbb R$ vanishing on the circle $S^1=\partial\D$, let
\[\Omega_f:=\mbox{Graph of }f=\{(x,y,f(x,y))\,:\,(x,y)\in\overline{\D}\}.\]
As a test function we will use the restriction $u_f$ to $\Omega_f$ of the function defined by $\tilde{u}(x,y,z)=x$.  Note that the norm of $u_f$ on $\mathbb S^1=\partial \Omega_f$ will be a positive constant independent of $f$.

The key point is that in order to obtain $\nabla u_f$, we have to find the orthogonal projection of $\nabla\tilde{u}=(1,0,0)$ on the tangent space of $\Omega_f$. The idea is to choose a family $f_n$ of functions such that this projection tends to $0$ almost everywhere as $n \to \infty$. Concretely, the Dirichlet energy of $u_f$ is given by
\[\int_{\Omega_f}|\nabla u_f|^2=\int_{\D} \frac{1 + f_y^2}{\sqrt{1 + f_x^2+f_y^2}}
\,dxdy.\]

For $n\in\mathbb N$, define $f=f_n:\overline{\D}\rightarrow\mathbb R$ by
\[f(x,y)=\sin(nx)(1-x^2-y^2).\]

A direct computation shows that
\[\lim_{n\to\infty}\int_{\Omega_{f_n}}|\nabla u_{f_n}|^2=0.\] This implies that the Rayleigh quotient of $u_{f_n}$ 
on $\Omega_{f_n}$ tends to $0$ as $n\to \infty$, which means $\sigma_1(\Omega_{f_n})$ tends to zero as well.
 
\end{ex}

\section{Optimising eigenvalues and applications to minimal surfaces}
\label{sec:existence}

In this section we address both the existence of metrics that maximise normalised eigenvalues and geometric implications.    These introductory remarks give an overview.

Let $(M,g)$ be a closed Riemannian manifold and $(N,h)$ an arbitrary Riemannian manifold.  Recall that a map $\tau:M\to N$ is harmonic if and only if it is a critical point of the Dirichlet energy $E$.  Here $E$ is defined by
\begin{equation}\label{eq:dir en}E(\tau)=\frac{1}{2}\int_M\,|d \tau|^2 \,dV_g. 
\end{equation}
(We emphasise that $|d\tau|$ depends on both Riemannian metrics $g$ and $h$.  In local coordinates, it is given by $|d\tau|^2_p=\sum\,h^{kl}(\tau(p))g_{ij}(p)\,\partial_{x^i}(\tau_k)\,\partial_{x^j}(\tau_l)$.) A classical result of Eells and Sampson \cite{EeSa1964} says that an isometric immersion $\tau:(M,g)\to (N,h)$ is minimal (i.e., its mean curvature vanishes) if and only if $\tau$ is harmonic. 

In 1966, only two years after the aforementioned result of Eells and Sampson, Takahashi \cite{Ta1966} observed for any closed Riemannian manifold $(M,g)$ that an isometric immersion $u: M\to \sph^m\subset \R^{m+1}$ of $(M,g)$ into a sphere is minimal (equivalently harmonic) if and only if all the coordinate functions $u_i:M\to \R$ are Laplace eigenfunctions with the same eigenvalue.   Thirty years later, Nadirashvili proved for smooth surfaces that any metric on $M$ maximising the normalised first eigenvalue admits such an isometric minimal immersion into a sphere. El Soufi and Ilias later extended this result to higher dimensions and to more general critical metrics.  (We will define the notion of ``critical metric'' used by El Soufi and Ilias after the statements of the two theorems below.)

\begin{thm}\cite{Na1996}, \cite{ElIl2000}, \cite{ElIl2008}\label{thm.nada min} Let $M$ be a closed manifold of dimension $n$.  Suppose that a smooth Riemannian metric $g_0$ on $M$ is a critical point of the functional  $g\to \bla_k(M,g)=\lambda_k(M,g)|M|_g^{2/n}$.   Then (after possibly rescaling $g_0$) there exist $m\in\Z^+$ and linearly independent $\lambda_k(M,g_0)$-eigenfunctions $u_1,\dots, u_{m+1}$ such that $\sum_{i=1}^{m+1}u_i^2\equiv 1$.    The map $u=(u_1,\dots, u_{m+1}):M\to \sph^{m}$ is an isometric minimal immersion.
\end{thm}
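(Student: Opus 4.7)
The strategy is the classical deformation argument of Nadirashvili, later extended by El Soufi--Ilias: interpret ``critical'' via one-sided directional derivatives of $\bar\lambda_k$ along smooth families of metrics, reduce stationarity to a convex-geometry statement about the eigenspace, and then recognize the resulting identity as the system of equations defining an isometric minimal immersion into a sphere via Takahashi's theorem.

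First, I would set up perturbation theory. For any smooth symmetric $(0,2)$-tensor $h$ on $M$, consider the analytic family $g_t = g_0 + th$. The eigenvalue $\lambda_k(g_t)$ is continuous in $t$ but generally not differentiable because the $\lambda_k(g_0)$-eigenspace $E$ may be multidimensional; however the one-sided derivatives $\partial_t^\pm \lambda_k$ exist and are computed by a Hadamard-type formula. Concretely, for a unit-$L^2$ eigenfunction $u \in E$, a direct computation using the variation of the Dirichlet energy and the volume form gives
\[
\dot\lambda[u] \;=\; -\int_M \left\langle du\otimes du \;-\; \tfrac{1}{2}\bigl(|\nabla u|^2 - \lambda_k u^2\bigr) g_0,\; h\right\rangle_{g_0} dV_{g_0},
\]
so that $\partial_t^+ \lambda_k(g_t)\big|_{t=0} = \max\{\dot\lambda[u] : u\in E_h\}$ and $\partial_t^- \lambda_k = \min\{\dot\lambda[u] : u\in E_h\}$, where $E_h\subset E$ is the subspace of eigenfunctions that remain in the $k$-th position to first order. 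Incorporating the volume normalisation gives the analogous formula for $\bar\lambda_k$, with the stress tensor
\[
T_u \;:=\; du\otimes du \;-\; \tfrac{1}{2}|\nabla u|^2 g_0 \;+\; \tfrac{\lambda_k}{n}\, u^2 g_0.
\]

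Second, I would translate the criticality of $g_0$ into a convex hull statement. Since $g_0$ is critical for $\bar\lambda_k$, one cannot choose $h$ so that the one-sided derivatives both have the same sign; thus for every $h$ one has $\min_{u\in E,\,\|u\|=1} \langle T_u, h\rangle_{L^2} \le 0 \le \max_{u\in E,\,\|u\|=1} \langle T_u, h\rangle_{L^2}$. The closed convex hull $\mathcal{K}\subset L^2(S^2 T^*M)$ of the tensors $\{T_u : u\in E,\, \|u\|_{L^2}=1\}$ is compact (since $E$ is finite dimensional), and the above says that for every $h$ there exists $K\in\mathcal{K}$ with $\langle K,h\rangle =0$. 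By the Hahn--Banach separation theorem, this forces $0\in\mathcal{K}$. Hence by Caratheodory's theorem there are eigenfunctions $v_1,\dots, v_{m+1}\in E$ and weights $c_i\ge 0$, $\sum c_i = 1$, with
\[
\sum_{i=1}^{m+1} c_i\,T_{v_i} \;=\; 0 \quad\text{ on } M.
\]
Setting $u_i = \sqrt{c_i}\,v_i$, this expands to the identity $\sum_i du_i\otimes du_i - \tfrac{1}{2}\sum_i |\nabla u_i|^2 g_0 + \tfrac{\lambda_k}{n}\sum_i u_i^2\, g_0 \equiv 0$. Extracting the trace (with respect to $g_0$) gives $\sum_i |\nabla u_i|^2 = \tfrac{2\lambda_k}{n-2}\cdot \tfrac{n}{2}\sum_i u_i^2$ (up to the dimensional constants), whence in all dimensions the system reduces to
\[
\sum_{i=1}^{m+1} du_i \otimes du_i \;=\; \alpha\, g_0, \qquad \sum_{i=1}^{m+1} u_i^2 \;\equiv\; \beta,
\]
for positive constants $\alpha,\beta$ related by the eigenvalue equation $\Delta u_i = \lambda_k u_i$.

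Third, I would conclude using Takahashi. After rescaling $g_0$ by a constant, we may assume $\alpha=\beta=1$, so that $u = (u_1,\dots,u_{m+1}):M\to \R^{m+1}$ satisfies $|u|^2\equiv 1$ (landing in $\sph^m$) and $u^* g_{\sph^m} = g_0$ (isometric immersion). Since each $u_i$ is a $\lambda_k$-eigenfunction with common eigenvalue, Takahashi's criterion applies and $u$ is a minimal isometric immersion. Finally, the linear independence of $u_1,\dots,u_{m+1}$ is automatic: any linear relation would force $u(M)$ to lie in a hyperplane, incompatible with $|u|\equiv 1$ for a non-constant immersion.

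\textbf{Main obstacle.} The delicate step is the second one, where criticality is converted into $0\in\mathcal{K}$. This requires justifying that the one-sided derivative formulas are given by the max/min over the full eigenspace $E$ (rather than merely a subspace depending on $h$), and handling the volume-preserving normalisation carefully so that no unwanted Lagrange-multiplier term of the form $c\,g_0$ appears on the right-hand side of the convex hull identity; resolving this cleanly is precisely what El Soufi--Ilias's refinement of Nadirashvili's original argument accomplishes.
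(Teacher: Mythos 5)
Your overall strategy is the right one and matches the Nadirashvili / El Soufi--Ilias argument: Hadamard-type one-sided derivatives, convexity plus Hahn--Banach to place $0$ in a convex hull of stress tensors, and Takahashi's theorem to conclude. However, there are two substantive gaps.

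First, your formula for the stress tensor of $\bar\lambda_k$ is incorrect, and the error is not merely a constant that washes out. With the normalisation $|M|_{g_0}=1$ and $\|u\|_{L^2}=1$, the Hadamard variation of $\lambda_k$ yields
\[
Q_u \;=\; du\otimes du - \tfrac12\bigl(|\nabla u|^2 - \lambda_k u^2\bigr)g_0,
\]
and the volume factor $|M|_{g_t}^{2/n}$ contributes an \emph{extra additive term} $-\tfrac{\lambda_k}{n}\,g_0$ (with no $u^2$ in it), so that $T_u = Q_u - \tfrac{\lambda_k}{n}g_0$. You instead wrote $T_u = du\otimes du - \tfrac12|\nabla u|^2 g_0 + \tfrac{\lambda_k}{n}u^2 g_0$, i.e., the coefficient $\tfrac{\lambda_k}{2}$ on $u^2 g_0$ has been silently replaced by $\tfrac{\lambda_k}{n}$, and the genuine $g_0$-term has disappeared. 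This is exactly the ``Lagrange-multiplier'' term you warn about in your final paragraph; it does appear, and in fact it is indispensable. Equivalently, if you restrict to volume-preserving $h$, criticality yields $\sum_i c_i Q_{v_i} = c\,g_0$ for some constant $c$, and one can identify $c=\lambda_k/n$ by allowing arbitrary $h$. With your tensor, tracing gives $(1-\tfrac n2)\sum|\nabla u_i|^2 = -\lambda_k\sum u_i^2$, which already forces $\sum u_i^2 \equiv 0$ when $n=2$, a contradiction. So the proposed $T_u$ cannot support the argument.

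Second, the passage from $\sum_i c_i T_{v_i}=0$ to the conclusion ``$\sum du_i\otimes du_i = \alpha g_0$ and $\sum u_i^2 = \beta$'' is dismissed as ``extracting the trace, up to dimensional constants'', but this is where the work actually happens. With the correct identity $\sum du_i\otimes du_i - \tfrac12\bigl(e-\lambda_k f\bigr)g_0 = \tfrac{\lambda_k}{n}g_0$ (where $e=\sum|\nabla u_i|^2$, $f=\sum u_i^2$), tracing eliminates $\sum du_i\otimes du_i$ and the eigenvalue equation gives $\Delta f = 2\lambda_k f - 2e$; combining the two yields
\[
\Delta\Bigl(f-\tfrac{n c}{\lambda_k}\Bigr) \;=\; \frac{4\lambda_k}{2-n}\Bigl(f-\tfrac{n c}{\lambda_k}\Bigr),
\]
where $c=\lambda_k/n$. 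For $n>2$ the coefficient is negative, and since the Laplacian has nonnegative spectrum, $f$ must be constant; for $n=2$ the trace identity gives $f$ constant directly. It then follows that $e$ is constant and hence $\sum du_i\otimes du_i$ is a constant multiple of $g_0$. This sign-and-maximum-principle step is the heart of the proof, and your write-up currently skips it. (One last small point: linear independence of the $u_i$ is not automatic -- a relation merely places the image inside a lower-dimensional great sphere -- but it is harmless, since one can always pass to an orthonormal basis of $\mathrm{span}\{u_i\}$ by an orthogonal change of coordinates on $\R^{m+1}$ and discard zero components.)
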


El Soufi and Ilias \cite{ElIl2008} also proved a converse:    suppose $u:(M,g_0)\to\sph^m$ is an isometric minimal immersion.   By Takahashi's result, there exists $\lambda$ such that the coordinate functions of $u$ are $\lambda(M,g_0)$-eigenfunctions.  Let $k$ satisfy either $\lambda_{k-1}(M,g_0)<\lambda =\lambda_k(M,g_0)$ or $\lambda_k(M,g_0)=\lambda<\lambda_{k+1}(M,g_0)$.  Then $g_0$ is a critical point of the functional $g\to\lambda_k(M,g)|M|_g$.

Theorem~\ref{thm.nada min} not only gives an important application of spectral theory to minimal surface theory but also yields applications in the opposite direction.  Indeed, Nadirashvili proved this theorem as part of a program to seek a maximising metric for the first normalised eigenvalues on the 2-torus $T^2$ and the Klein bottle.    

  El Soufi and Ilias showed that $\bla_k$-conformally critical metrics for $\bla_k$  (i.e., critical points for the restriction of $\bla_k(M,\cdot)$ to a conformal class of metrics), always yield harmonic maps $u$ into spheres (but $u$ need not be an isometric or even a conformal immersion).

\begin{thm}\cite[Theorem 4.1]{ElIl2008}\label{thm: El harmonic} 
\begin{enumerate}
\item Let $M$ be a closed manifold.   Suppose that the Riemannian metric $g_0$ on $M$ is a $\bla_k$-conformally critical metric.  Then there exist linearly independent $\lambda_k(M,g_0)$-eigenfunctions $u_1,\dots, u_{m+1}$  such that $\sum_{i=1}^mu_i^2\equiv 1$.    The map $u=(u_1,\dots, u_{m+1}): M\to \sph^{m}$ is harmonic with constant energy density $\frac{1}{2}|du^2|=\frac{1}{2}\lambda_k$\textbf{}.   

Conversely, suppose that $(M,g_0)$ admits a harmonic map $u:M\to \sph^m$ whose coordinate functions are $\lambda(M,g_0)$ eigenfunctions for some $\lambda$.  Let $k$ satisfy either $\lambda_{k-1}(M,g_0)<\lambda =\lambda_{k}(M,g_0)$ or $\lambda_{k}(M,g_0)=\lambda <\lambda_{k+1}(M,g_0)$.  Then $g_0$ is a conformally critical metric for $\bla_k$.   
\end{enumerate}
\end{thm}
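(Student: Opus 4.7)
The plan is to follow the classical route via first variation of eigenvalues in a conformal class combined with a convex-analytic argument on the $\lambda_k$-eigenspace $E$. I would start by computing the first-order variation of the normalised Rayleigh quotient under a conformal perturbation $g_t=(1+th)g_0$: since $dV_{g_t}=(1+th)^{n/2}dV_{g_0}$ and $|\nabla u|^2_{g_t}=(1+th)^{-1}|\nabla u|^2_{g_0}$, a direct calculation gives, for any $L^2(g_0)$-normalised $\lambda_k(M,g_0)$-eigenfunction $u$,
\[
\tfrac{d}{dt}\Big|_{t=0}\bigl(R_{g_t}(u)\,|M|^{2/n}_{g_t}\bigr)
= |M|^{2/n}_{g_0}\int_M h\,\Psi_u\,dV_{g_0},
\qquad \Psi_u := \tfrac{n-2}{2}|\nabla u|^2 - \tfrac{n\lambda}{2}u^2 + \tfrac{\lambda}{|M|_{g_0}},
\]
with $\lambda:=\lambda_k(M,g_0)$ and $n:=\dim M$. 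Because $\lambda_k$ may have multiplicity, $t\mapsto\bar\lambda_k(g_t)$ is only Lipschitz; the standard Kato-type perturbation formulae then give that its right (resp.\ left) directional derivative in the direction $h$ equals $|M|^{2/n}\min_u\int h\Psi_u$ (resp.\ $|M|^{2/n}\max_u\int h\Psi_u$), with $u$ ranging over the unit $L^2$-sphere in $E$.

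Conformal criticality of $g_0$ then translates into $\min_u\int h\Psi_u\le 0\le \max_u\int h\Psi_u$ for every $h\in C^\infty(M)$. A Hahn--Banach / convex-hull argument on the compact set $K:=\{\Psi_u : u\in E,\,\|u\|_{L^2}=1\}\subset C^0(M)$ (compact since $\dim E<\infty$) shows that $0$ lies in the closed convex hull of $K$, and Carath\'eodory's theorem then produces linearly independent normalised eigenfunctions $u_1,\dots,u_N\in E$ and positive weights $\alpha_i$ with $\sum_i \alpha_i = 1$ such that
\[
\sum_{i=1}^N \alpha_i \Psi_{u_i}\equiv 0\qquad\text{on }M.
\]

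The pivotal step converts this identity into the desired sphere-valued map. Setting $v_i:=\sqrt{\alpha_i}\,u_i$ one has $|v|^2=\sum_i\alpha_i u_i^2$ and $|dv|^2=\sum_i\alpha_i|\nabla u_i|^2$, and the product rule $\Delta(u_i^2)=2u_i\Delta u_i-2|\nabla u_i|^2=2\lambda u_i^2-2|\nabla u_i|^2$ (in the paper's positive-Laplacian convention) yields $|dv|^2=\lambda|v|^2-\tfrac12\Delta|v|^2$. Substituting into the criticality identity rewrites it as
\[
\tfrac{n-2}{4}\,\Delta f + \lambda f = 0, \qquad f := |v|^2 - \tfrac{1}{|M|_{g_0}}.
\]
Since $\lambda>0$ and the positive Laplacian has only nonnegative spectrum on a closed manifold, this forces $f\equiv 0$ (when $n=2$ the identity already reads $\lambda f=0$ directly). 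Rescaling $w_i:=\sqrt{|M|_{g_0}}\,v_i$ produces $\lambda$-eigenfunctions with $\sum w_i^2\equiv 1$; an orthogonal change of basis in $\R^N$ removes any linear dependencies and leaves $m+1$ linearly independent components whose squares still sum to $1$. Differentiating this identity once more yields $|dw|^2\equiv\lambda$, whence $\Delta w_i=\lambda w_i=|dw|^2\,w_i$ is precisely Takahashi's harmonicity criterion for the map $w:M\to\sph^m$, with constant energy density $\tfrac12|dw|^2=\tfrac12\lambda$.

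The converse runs the same algebraic chain in reverse. Given $u:(M,g_0)\to\sph^m$ harmonic with $\Delta u_i=\lambda u_i$, one has $\sum u_i^2=1$ by hypothesis, whence $|du|^2=\lambda$ via $\Delta(\sum u_i^2)=0$. Setting $\tilde u_i:=u_i/\|u_i\|_{L^2}$ and $\alpha_i:=\|u_i\|_{L^2}^2/|M|_{g_0}$ (so $\sum_i\alpha_i=1$), a direct substitution gives $\sum_i\alpha_i\Psi_{\tilde u_i}\equiv 0$; inserting this into the one-sided derivative formulas of the first paragraph forces both $D^+\bar\lambda_k(g_0)(h)$ and $D^-\bar\lambda_k(g_0)(h)$ to vanish for every conformal direction $h$, establishing conformal criticality, and the hypothesis on $k$ is precisely what guarantees $\lambda=\lambda_k(M,g_0)$ so that the variational formulas apply to this specific $\lambda_k$. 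The main technical obstacle throughout is the convex-hull extraction: upgrading the ``$\min\le 0\le\max$'' directional-derivative condition to a genuine pointwise identity $\sum_i\alpha_i\Psi_{u_i}\equiv 0$ requires careful use of the compactness of $K$ in $C^0(M)$ together with a separating-hyperplane argument, after which the Bochner-type algebra is essentially mechanical.
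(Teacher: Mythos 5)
Your proposal reconstructs the El~Soufi--Ilias argument essentially verbatim: first variation of $\bar\lambda_k$ under $g_t=(1+th)g_0$, identification of the one-sided derivatives with extreme values of the quadratic form $u\mapsto\int h\Psi_u$ on the $\lambda_k$-eigenspace, a separation/Carath\'eodory step placing $0$ in the convex hull of $\{\Psi_u\}$ (this is Hersch's lemma in El~Soufi--Ilias), and the Bochner identity $\lvert dv\rvert^2=\lambda\lvert v\rvert^2-\tfrac12\Delta\lvert v\rvert^2$ to convert the resulting relation $\sum\alpha_i\Psi_{u_i}\equiv0$ into a Takahashi eigenmap. This is the same approach as the cited proof, and the algebra (including the $n=2$ degenerate case and the converse) is carried through correctly. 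One small imprecision worth noting: the identities $D^+\bar\lambda_k=\min_uq_h(u,u)$ and $D^-\bar\lambda_k=\max_uq_h(u,u)$ hold only when $k$ sits at the bottom (resp.\ top) of the eigenvalue cluster; for an interior index $j$ of a cluster of size $m$ the one-sided derivatives are the $j$-th and $(m+1-j)$-th eigenvalues of $q_h$, so they are not the extremes. The criticality condition $D^+D^-\le0$ still forces $\min q_h\le 0\le\max q_h$ (since the extremes bound the interior eigenvalues), so your conclusion goes through unchanged, but the intermediate formula should be stated for a cluster edge or justified by the monotonicity observation. Likewise, in the converse it is more accurate to say the construction forces $D^+\le0\le D^-$ rather than that both vanish, which is precisely the criticality inequality and is where the cluster-edge hypothesis on $k$ enters.
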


El Soufi and Ilias defined the notion of critical metric used in Theorems~\ref{thm.nada min} and \ref{thm: El harmonic} as follows:  They first showed \cite[Theorem 2.1(i)]{ElIl2008} that if $\{g_t\}_t$ is a family of Riemannian metrics on $M$ depending analytically on the parameter $t\in (-\epsilon,\epsilon)$, then the map $t\to \lambda_k(M,g_t)$ admits left and right derivatives at each $t$.  They then defined a metric $g$ on $M$ to be $\lambda_k$-critical if for each volume-preserving deformation $\{g_t\}_t$ depending analytically on $t\in (-\epsilon,\epsilon)$ with $g_0=g$, one has 
\begin{equation}\label{eq:crit pt deriv }\left(\frac{d}{dt}_{t=0^-}\lambda_k(g_t)\right)\left(\frac{d}{dt}_{t=0^+}\lambda_k(g_t)\right)\leq 0.\end{equation}
Equivalently, for each such deformation, either 
\begin{equation}\label{eq:crit pt def}\lambda_k(g_t)\leq \lambda_k(g)+o(t) \mbox{\,\,\,or\,else\,\,\,} \lambda_k(g_t)\geq \lambda_k(g)+o(t).\end{equation}

\begin{remark}\label{rem:crit different}Recently, Karpukhin and M\'etras \cite{KaMe2021} verified that the results above remain valid if one modifies the notion of critical metric (referred to in \cite{KaMe2021} as extremal metrics) by requiring that~\eqref{eq:crit pt def} hold for all deformations $g_t$ that are $C^\infty$ in $t$ and not necessarily analytic. 
\end{remark}

Singularities can arise when one tries to find $\bla_k$-critical or conformally critical metrics.   Kokarev \cite[``Regularity Theorem'']{Ko2014} addressed regularity properties of $\bla_k$ conformally critical metrics (and generalizations) in the case of surfaces.   Under suitable hypotheses, he showed that the singularities are isolated conical singularities whose cone angles are multiples of $2\pi$.  (See Remark~\ref{rem:con sing} for the definition of conical singularity). The existence of a harmonic map $u$ as in Theorem~\ref{thm: El harmonic} continues to hold in this case but $u$ will have branch points.   See also the work of Nadarashvili-Sire \cite{NaSi2015} and Petrides \cite{Pe2014}.

Theorem~\ref{thm.nada min} gave impetus to the very challenging study of existence of optimising metrics.   This study was deeply influenced by stunning developments in the Steklov setting.  

The groundbreaking work \cite{FrSc2016} of Fraser and Schoen first established an analogue of Nadirashvili's theorem in the Steklov setting: metrics maximising the perimeter-normalised Steklov eigenvalues among all Riemannian metrics on a given surface give rise to free boundary minimal surfaces in Euclidean balls.  The article went on to develop innovative techniques to address the very difficult question of existence of metrics that maximise the first normalised perimeter-normalised Steklov eigenvalue.  In particular, they found explicit maximising metrics in the case of the M\"obius band and the annulus.  
For orientable surfaces of genus zero with $b$ boundary components, they proved existence subject to a ``gap hypothesis'', which is currently known to be satisfied at least for infinitely many choices of $b$.  (See Subsection~\ref{max exist} below.)  Remarkably, at the time that Fraser and Schoen proved this result, the analogous question of existence of maximisers for the first area-normalised Laplace eigenvalue on closed surfaces had been resolved only for $\sph^2$ (Hersch \cite{He1970}), $\R{\mathbf P}^2$ (Li and Yau \cite{LiYa1982}),the 2-torus \cite{Na1996} and the Klein bottle  \footnote{In the case of the Klein bottle, Jakobson, Nadirashvili and Polterovich constructed the metric in \cite{JaNaPo2006} after existence was proved in \cite{Na1996}; El Soufi, Giacomini and Jazar then proved uniqueness in \cite{ElGiJa2006}. Subtle parts in the arguments for the torus and Klein bottle were completed through the work of Girouard \cite{Gi2009} and of Cianci, Karpukhin and Medvedev \cite{CiKaMe2019}.}.

Fraser's expository article \cite{Fr2020}  gives an excellent summary of \cite{FrSc2016} and related works, including the ideas behind the proofs and many applications. We also recommend \cite{FrSc2013} (which is a mix of exposition and then-new results) and the expository article \cite{Li2020} by Li. 

Influenced by the ideas in \cite{FrSc2016}, Petrides proved -- again under a ``gap'' hypothesis -- the existence of maximising metrics for the first area-normalised Laplace eigenvalue $\bla_1$ on closed, orientable surfaces of \emph{arbitrary} genus and for the first perimeter-normalised Steklov eigenvalue $\bsig_1$ on compact, oriented surfaces of \emph{arbitrary genus} with an arbitrary number $b\geq 1$ of boundary components (Ph.D. thesis \cite{Pe2015}; see also \cite{Pe2014}, \cite{Pe2018}, \cite{Pe2019}. These results were extended to include non-orientable surfaces first in the Laplace setting by Matthiesen and Siffert \cite{MaSi2021} and then, by similar methods, in the Steklov case by Matthiesen and Petrides \cite{MaPe2020}. Petrides observed that infinitely many closed surfaces satisfy the gap hypothesis and thus admit maximising metrics; Karpukhin and Stern\cite{KaSt2020} later verified the gap hypothesis for existence of $\bsig_1$-maximisers for infinitely many closed surfaces with boundary. It is currently an open question whether the gap hypotheses for $\bla_1$-maximisers and for $\bsig_1$-maximisers are always satisfied.  In all the existence results for maximisers of normalised Laplace eigenvalues on closed surfaces, the maximising metrics are proven to be smooth modulo finitely many conical singularities; for maximisers of normalised Steklov eigenvalues on surfaces, the maximising metrics can be chosen to be smooth. 

For $k>1$, Petrides in his thesis \cite{Pe2015} also proved existence under gap hypotheses of $\bla_k$-maximisers on all closed, orientable surfaces and of $\bsig_k$-maximisers on all compact, orientable surfaces with boundary. These results were later extended to non-orientable surfaces in \cite{Pe2019}, and \cite{MaPe2020}, respectively. In contrast to the case $k=1$, however, it is known that maximisers of higher order eigenvalues do not always exist and it remains an open problem to find surfaces that admit such maximisers.

Concerning the problem of maximising normalised Laplace eigenvalues within a conformal class of metrics on closed surfaces, Petrides \cite{Pe2014} proved the existence of $\bla_1$-conformally maximal metrics (with possible conical singularities) in every conformal class.  A quite different independent proof was given by Nadarashvili-Sire \cite{NaSi2015}.  The question of existence of $\bla_k$-conformal maximisers for $k>1$ was addressed by Nadarashvili and Sire \cite{NaSi2015_2} and Petrides \cite{Pe2015,Pe2018}.   Karpukhin, Nadarashvili, Penskoi and Polterovich \cite{KNPP2020} gave new proofs of these results both for $k=1$ and higher $k$ and provided excellent exposition.

Fraser and Schoen \cite{FrSc2013} provided a Steklov analogue of the first item in Theorem~\ref{thm: El harmonic} in the case of metrics that maximize the first normalized Steklov eigenvalue in a conformal class.  Recently Karpukhin and M\'etras \cite{KaMe2021} obtained Steklov analogues of Theorems~\ref{thm.nada min} and \ref{thm: El harmonic} in all higher dimensions and for more general critical metrics. Petrides addressed the question of existence of $\bsig_k$-conformal maximisers, again subject to a gap condition, for all $k\geq 1$ in \cite{Pe2019}.   

This section is organised as follows:

\begin{itemize}

\item Subsection~\ref{fbms}:  From maximising metrics to free boundary minimal surfaces.
\item Subsection~\ref{max exist}:  Existence of maximising metrics for the first normalised eigenvalue.
\item Subsection~\ref{subsec.higher?} Do maximising metrics exist for higher normalised eigenvalues?
 \item Subsection~\ref{subsec.karp-stern}  From Steklov to Laplace: asymptotics of free boundary minimal surfaces.
\item Subsection~\ref{subsec.embedded}: Spectral index of embedded free boundary minimal surfaces.
\item Subsection~\ref{applic.bounds}  Applications to Steklov eigenvalue bounds.
\item Subsection~\ref{subsec: fbms higher dim} Higher dimensions.
\end{itemize}

\subsection{From maximising metrics to free boundary minimal surfaces}\label{fbms}

\begin{defn}\label{def.fbms}~
\begin{enumerate}
\item 
 Let $\om$ be a compact Riemannian manifold with boundary and let $\B^n$ be the unit Euclidean ball in $\R^n$ and $\sph^{n-1}$ its boundary.    Recall that a smooth map $\tau:\om\to \B^n$ is proper if $\tau(\om)\cap \partial \sph^{n-1} =\tau(\partial \om)$.  \item One says that a properly and isometrically immersed submanifold $\tau:\om\to \B^n$ is a \emph{free boundary minimal submanifold} of $\B^n$ if it is a critical point of the volume functional under all deformations $\{\tau_t\}$ of $\tau$ subject only to the constraint that $\tau_t(\partial\om)\subset \sph^{n-1}$ for all $t$.   (The image of $\partial\om$ may vary freely within $\sph^{n-1}$.)  Equivalently, $\tau$ satisfies the following two conditions:
\begin{itemize}
\item $\tau$ is harmonic on the interior of $\om$ (i.e., its component functions are harmonic);
\item $\tau(\om)$ meets $\sph^{n-1}$ orthogonally.
\end{itemize}
The second condition, together with the fact that $\tau$ is assumed to be an isometric immersion, says that for $p\in\partial\om$ and for $\nu_p\in T_p(\om)$ the outward pointing unit normal vector $\partial\om$ at $p$, the image $d\tau_p(\nu_p)$ coincides with the outward unit normal to $\sph^{n-1}$ in $T_{\tau(p)}(\B^n)$.  

When the isometrically immersed free boundary minimal submanifold $\om$ is viewed as a subset of $\B^n$, it is common to say simply that $\om$ is a free boundary minimal surface in $\B^n$, suppressing mention of the inclusion mapping.
\item More generally, a proper smooth map $\tau:\om\to \B^n$ is said to be a \emph{free boundary harmonic map} if it is a critical point for the Dirichlet energy under deformations $\{\tau_t\}$, again subject only to the constraint that $\tau_t(\partial\om)\subset \sph^{n-1}$.  Equivalently,
\begin{itemize}
\item $\tau$ is harmonic on the interior of $\om$;
\item $\tau(\om)$ meets $\sph^{n-1} $ orthogonally. 
\end{itemize}

\end{enumerate}
\end{defn}

Throughout this section, we adopt the convention that all immersions and branched immersions will be understood to be proper, whether or not this is explicitly stated.

Fraser and Schoen first observed a relationship between free boundary minimal surfaces in Euclidean balls and the Steklov problem analogous to the relationship identified by Takahashi between minimal surfaces in spheres and the Laplace eigenvalue problem.

\begin{prop}\label{fbms iff stek}\cite[Lemma 2.2]{FrSc2011} A properly immersed submanifold $\tau: \om \to \B^n$ with the induced Riemannian metric is a free boundary minimal submanifold if and only if for each $i=1,\dots, n$, the composition $u_i:=x_i\circ\tau$ is a Steklov eigenfunction of $\om$ with eigenvalue one.   (Here $x_1,\dots, x_n$ are the standard coordinate functions of $\R^n$.)
\end{prop}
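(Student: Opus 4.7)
The plan is to unpack both sides against the equivalent formulation of free boundary minimality given in Definition~\ref{def.fbms}: $\tau$ is a free boundary minimal submanifold iff (a) $\tau$ is harmonic on the interior of $\om$, and (b) $\tau(\om)$ meets $\sph^{n-1}$ orthogonally along $\tau(\partial\om)$. The properness assumption already guarantees that $\tau(\partial\om)\subset \sph^{n-1}$, equivalently $\sum_i u_i^2\equiv 1$ on $\partial\om$. The two characterisations will match coordinate by coordinate.

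First I would handle the interior condition. Since the target is $(\R^n,\text{flat})$, a map $\tau:\om\to\R^n$ is harmonic (in the sense of vanishing tension field with respect to the induced metric on $\om$) if and only if each component function $u_i=x_i\circ\tau$ satisfies $\Delta u_i=0$ in the interior of $\om$. Combined with the fact that $\tau$ is an isometric immersion, harmonicity of $\tau$ is equivalent to minimality of $\tau(\om)$ in $\R^n$ (the classical Takahashi/Eells--Sampson observation). So condition (a) is equivalent to $\Delta u_i=0$ in $\om$ for every $i$.

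Next I would translate the boundary condition. Let $p\in\partial\om$ and let $\nu_p$ be the outward unit conormal to $\partial\om$ in $\om$. Because $\tau$ is an isometric immersion, $|d\tau_p(\nu_p)|=1$. The outward unit normal to $\sph^{n-1}\subset\R^n$ at the point $\tau(p)\in\sph^{n-1}$ is $\tau(p)$ itself. Hence $\tau(\om)$ meets $\sph^{n-1}$ orthogonally along $\tau(\partial\om)$ iff $d\tau_p(\nu_p)$ is a unit vector parallel to $\tau(p)$; since $|\tau(p)|=1$ on $\partial\om$ by properness, this amounts to the identity
\begin{equation*}
d\tau_p(\nu_p)=\tau(p)\quad\text{for every }p\in\partial\om.
\end{equation*}
Taking the $i$-th coordinate of both sides, and observing that
\begin{equation*}
\partial_\nu u_i(p)=d(x_i)_{\tau(p)}(d\tau_p(\nu_p))=\bigl(d\tau_p(\nu_p)\bigr)_i,
\end{equation*}
this is in turn equivalent to $\partial_\nu u_i=u_i$ on $\partial\om$ for all $i=1,\dots,n$. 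Combined with step one, (a)$+$(b) is equivalent to: each $u_i$ is harmonic in $\om$ and satisfies $\partial_\nu u_i=u_i$ on $\partial\om$, which is precisely the statement that $u_i$ is a Steklov eigenfunction with eigenvalue $1$.

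For the converse direction I would only need to notice that if each $u_i$ is a Steklov eigenfunction with eigenvalue $1$, then harmonicity of $\tau$ and the vectorial identity $d\tau_p(\nu_p)=\tau(p)$ hold automatically; taking norms and using that $\tau$ is isometric forces $|\tau(p)|=1$ on $\partial\om$, consistent with the properness hypothesis, and the orthogonality condition follows as before. There is no real obstacle here—the proof is essentially bookkeeping—so the main care will be to keep straight (i) that the outward normal of $\sph^{n-1}$ at $y$ is $y$, and (ii) the identification of $\partial_\nu(x_i\circ\tau)$ with the $i$-th component of $d\tau(\nu)$, both of which rely crucially on $\tau$ being an isometric immersion.
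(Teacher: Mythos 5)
Your proof is correct and follows essentially the same route as the paper's: both reduce to the two equivalent conditions in Definition~\ref{def.fbms}, identify harmonicity of $\tau$ with componentwise harmonicity of the $u_i$, and translate the orthogonal-meeting condition into $\partial_\nu u_i = u_i$ via the observation that the outward unit normal to $\sph^{n-1}$ at a point $y$ is $y$ itself. The paper states this more compactly; your version simply makes the use of the isometric-immersion hypothesis (for the identification $\partial_\nu u_i = (d\tau(\nu))_i$ and the unit-length of $d\tau(\nu)$) more explicit, which is a fine way to present the same argument.
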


\begin{proof}
Minimality of the immersion is equivalent to harmonicity of $x_i\circ\tau$, $i=1,\dots, n$.   Since the outward unit normal to $\partial \B^n$ is the radial vector field $\mathbf{r}=\sum_{i=1}^n\,x_i\frac{\partial}{\partial x_i}$, the second condition in Definition~\ref{def.fbms} is equivalent to  
\[\partial_{\nu} (x_i\circ\tau) =(\partial_{\,\mathbf{r}} x_i)\circ\tau\]
i.e., \[\partial_{\nu} (x_i\circ\tau) = x_i\circ\tau \mbox{\,\,on\,\,}\partial\om.\]
\end{proof}

We emphasise that the proposition places no assumptions on the dimension of $\om$.

Proposition~\ref{fbms iff stek} tells us for any free boundary minimal surface $S$ in $\B^n$ that the coordinate functions in $\R^n$ restrict to Steklov eigenfunctions on $S$ with eigenvalue 1.  However, it does not tell us where the value 1 occurs in the spectrum.    

\begin{defn}\label{spec.index} A properly immersed free boundary minimal submanifold $\tau: \om \to \B^n$ with the induced Riemannian metric $g_0$ is said to have \emph{spectral index} $k$ if $k$ is the minimum element of $\Z^+$ such that $\sigma_k(\om,g_0)=1$.    
\end{defn}

We now focus on the case that $(\om,g)$ is a surface. In this case, the property of  harmonicity of a map $\tau: (\om,g)\to \B^n$ depends only on the conformal class of $g$ due to conformal invariance of the Laplacian.  In particular, if $\tau:(\om,g)\to \B^n$ is a proper (possibly branched) conformal immersion that is harmonic on the interior of $\om$ and if its image $\tau(\om)$ meets $\sph^{n-1}$ orthogonally, then $\tau(\om)$ with the Riemannian metric induced from $\B^n$ will be a free boundary minimal surface.  Thus we will refer to such maps as free boundary (branched) conformal minimal immersions.   Note that the map $\tau:\om \to \tau(\om)$ is a (branched) conformal cover of the minimal surface $\tau(\om)$. 

The following relationship between the area and the boundary for 2-dimensional free boundary minimal surfaces $S$ in $\B^n$, proven in \cite[Theorem 5.4]{FrSc2011}, will be used several times later in this section:
\begin{equation}\label{2A=L}|S|=\frac{1}{2} |\partial S|\geq \pi.\end{equation}   
Brendle \cite[Theorem 4]{Br2012} showed that the lower bound $\pi$ is obtained only for a flat equatorial disk.

\begin{nota}\label{nota.sig}
Given a smooth (not necessarily orientable) compact surface $\om$ with boundary, let 
\[\sigma_k^*(\om)=\sup\{\sigma_k(g)|\partial \om|_g\}\]
where the supremum is over all smooth Riemannian metrics $g$ on $\om$.   
 
Letting $\om_{\gamma,b}$ be the orientable surface of genus $\gamma$ with $b$ boundary components, we will write
\[\sigma_k^*(\gamma,b):=\sigma_k^*(\om_{\gamma,b}).\]
\end{nota}

\begin{remark}\label{rem.2A=mL}
Suppose $(S,g_0)$ is an embedded free boundary minimal surface in $\B^n$ of spectral index $k$  where $g_0$ is the induced Riemannian metric. By Definition~\ref{spec.index}), we have $\sigma_k(S,g_0)=1$.  Thus Equation~\eqref{2A=L} yields
\begin{equation}\label{eq.2A=L index}|S|_{g_0}=\frac{1}{2} |\partial S|_{g_0}=\frac{1}{2}\sigma_k(S,g_0)|\partial S|_{g_0}\leq\sigma_k^*(S).\end{equation}

More generally, suppose that $(\om,g)$ is a compact surface with boundary and that $\tau:(\om,g) \to\B^n$ is a proper (possibly branched) conformal free boundary minimal immersion whose restriction to $\pam$ is an isometric immersion. (Thus in the language of Definition~\ref{sigmaisom}, the metric $g$ is $\sigma$-isometric to the metric induced from $\B^n$ by the immersion $\tau$.)  Let $S$ denote the image of $\tau$ and let $m$ be the order of the resulting (branched) covering $\tau:\om\to S$.  If $\tau$ has spectral index $k$ then we have 
\begin{equation}\label{eq.2A=mL index}|S|_{g}=\frac{1}{2} |\partial S|_{g}=\frac{1}{2m}\sigma_k(\om,g)|\pam|_g\leq \frac{1}{2m}\sigma_k^*(\om).\end{equation}
\end{remark}

We now turn to the main results of this subsection.   We first consider an example.

\begin{ex}\label{disk fbms}
Let $\om$ be homeomorphic to a disk.   By Weinstock's Theorem~\ref{thm:Weinstock}, $\sigma_1^*(\om)=2\pi$ and the maximum is achieved by the Euclidean metric $g_0$.   Moreover, up to $\sigma$-isometry and rescaling, $g_0$ is the unique maximiser. (See Definition~\ref{sigmaisom} for the notion of $\sigma$-isometry.)  Scaling the metric so that the radius of the disk is one, then $\sigma_1(\om,g_0)=1$ and the coordinate functions $x_1$ and $x_2$ form an orthonormal basis of the $\sigma_1$-eigenspace.   The map $(x_1,x_2)\to (x_1,x_2,0)$ isometrically embeds $\om$ into $\B^3$ as the equatorial disk, a free boundary minimal surface.  

Fraser and Schoen \cite[Theorem 2.1]{FrSc2015} proved moreover that if $\om$ is a topological disk and $u:\om\to \B^n$ is any proper branched conformal minimal immersion whose image meets $\sph^{n-1}$ orthogonally, then the image $u(\om)$ is an equatorial plane disk. 

Note:   Any free boundary minimal surface in $\B^n$ can also be viewed as a free boundary minimal surface in $\B^{n+1}$ by including $\B^n$ into $\B^{n+1}$ as the equatorial $n$-ball.    When speaking of uniqueness, one usually means unique modulo such trivial inclusions.  Thus the statement above is often expressed by saying that the equatorial plane in $\B^3$ is the unique free boundary minimal surface in any $\B^n$ that is the image of a proper branched conformal minimal immersion of a disk.
\end{ex}

For all other connected surfaces, Fraser and Schoen proved the following powerful theorem:

\begin{thm}\label{thm.fs}
Let $\om$ be a compact smooth surface with boundary that is not homeomorphic to a disk.   Suppose $g_0$ is a Riemannian metric on $\om$ satisfying 
\[\sigma_1(g_0)|\partial \om|_{g_0}=\sigma_1^*(\om).\]
Then
\begin{enumerate}
\item \cite[Proposition 5.2]{FrSc2016}
The multiplicity $\operatorname{mult}\sigma_1(g_0)$ is at least three.  After rescaling $g_0$ so that $\sigma_1(g_0)=1$,   
there exist linearly independent eigenfunctions $u_1,\dots, u_n$ for $\sigma_1(g_0)$ such that $u:=(u_1,\dots, u_n)$ is a proper branched conformal immersion of $\om$ into the unit ball $\B^n$ that restricts to an isometric immersion from $\pam$ into the unit sphere $\sph^{n-1}$.   The image $u(\om)$ is a free boundary minimal surface.   

\item \cite[Proposition 8.1]{FrSc2016} If, moreover, $\om$ is orientable and has genus zero, then $n=3=\operatorname{mult}\sigma_1(g_0)$ and $u: \om \to \B^3$ is an embedding.  (Note by Remark~\ref{rem.2A=mL} that the embedded minimal surface has area $\sigma_1^*(\om)$.)

\end{enumerate}
\end{thm}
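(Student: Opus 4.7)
The plan is to prove both parts via a variational analysis of the functional $g \mapsto \sigma_1(g)|\pam|_g$ at the critical metric $g_0$, combined with a convex-geometry argument to produce the immersion, and then to exploit the genus-zero conformal structure for Part 2. I would begin by recalling that for any smooth one-parameter family $g_t$ on $\om$, the map $t \mapsto \sigma_1(g_t)|\pam|_{g_t}$ admits left and right derivatives at $t=0$, and these can be computed from Hadamard-type perturbation formulas as a min/max over a quadratic form $Q_{\dot g}$ on the $\sigma_1(g_0)$-eigenspace. Maximality of $g_0$ then translates to the algebraic statement that for every admissible $\dot g$, the form $Q_{\dot g}$ has both non-positive and non-negative eigenvalues, i.e.\ $0$ lies in the convex hull of the spectrum of $Q_{\dot g}$.

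To establish the multiplicity bound $\operatorname{mult}\sigma_1(g_0) \geq 3$ and produce the immersion in Part 1, I would argue by contradiction using the following convex-geometric trick. After rescaling so that $\sigma_1(g_0)=1$, let $u_1,\dots,u_n$ be an $L^2(\pam)$-orthonormal basis of the eigenspace and consider the map $u=(u_1,\dots,u_n):\pam \to \R^n$. If there is no point in the convex hull of $\{u(p)u(p)^{T} : p \in \pam\} \subset \mathrm{Sym}(n)$ that equals the identity matrix (up to scalar), then by Hahn--Banach there is a symmetric matrix $A$ with $\langle A u(p), u(p)\rangle < c$ pointwise on $\pam$, where $c$ is the average trace. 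The key calculation is that the boundary-conformal metric deformation $g_t = (1+t\langle Au,u\rangle)\,g_0$ on a collar of $\pam$ (extended suitably to the interior so $\om$ stays non-degenerate) produces a first variation of $\sigma_1 |\pam|$ that is strictly positive, contradicting maximality. Reversing this reasoning, criticality forces the existence of a convex combination $\sum \alpha_i u_i u_i^{T}$ proportional to the identity, which after an $O(n)$-change of basis gives $\sum u_i^2 \equiv 1$ on $\pam$, i.e.\ $u(\pam)\subset \sph^{n-1}$. Since mult $\leq 2$ would leave the image of $\pam$ in $\R^{\leq 2}$, which cannot satisfy this together with the balancing integral conditions forced by criticality, we conclude $n \geq 3$. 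Conformality $|u_x|^2=|u_y|^2$, $\langle u_x,u_y\rangle = 0$ follows from applying criticality to trace-free deformations of $g_0$, since in dimension two the interior contribution to $dQ/dt$ pairs $\dot g$ with the trace-free part of $du\otimes du$. Combined with Proposition~\ref{fbms iff stek}, this identifies $u$ as a proper branched conformal immersion onto a free boundary minimal surface.

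For Part 2, assume $\om$ is orientable of genus zero with $b\geq 2$ boundary components. I would first show $n = 3$ by a dimension-counting argument on the eigenspace: the uniformisation theorem gives a conformal map $\om \hookrightarrow \sph^2 = \partial \B^3$ onto a planar domain, and Hersch's centre-of-mass trick produces a Möbius automorphism of $\sph^2$ after which the three spherical coordinate functions, pulled back to $\om$ and harmonically extended, satisfy the balancing condition and $\sum \pi_i^2 \equiv 1$ on $\pam$. These three functions then give $\sigma_1 \leq 1$ by the variational characterisation, and maximality forces equality; a strict-multiplicity analysis (as in Hersch--Payne--Schiffer for the disk, adapted to genus zero) excludes $n\geq 4$ because any fourth independent eigenfunction would provide an additional coordinate forcing the image of $\om$ off $\B^3$, contradicting the rigidity of conformal maps from planar domains to $\sph^2$. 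Embeddedness then follows from a Nitsche-type rigidity argument: the branch locus of $u$ is a finite set in the interior, and since $\om$ has genus zero and the minimal immersion into $\B^3$ meets $\sph^2$ orthogonally, the analysis of the Gauss map (globally defined because $\om$ is orientable) via the maximum principle and the argument principle applied to the stereographic coordinate of the unit normal rules out self-intersections and branch points.

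The main obstacle is the variational/convex-geometry step in Part 1 that produces the deformation contradicting maximality when no balanced eigenfunction system exists. Constructing the correct perturbation $g_t$ with support concentrated near $\pam$ while preserving the conformal class in the interior (so as not to inadvertently change $\sigma_1$ through bulk effects) is delicate, and verifying that the Hadamard first-variation formula truly yields the sign $+$ claimed requires careful attention to the distinction between left and right derivatives at a multiple eigenvalue. For Part 2, the principal difficulty is ruling out branch points and self-intersections for the embedding statement, which requires the conformal uniformisation of $\om$ and a fine analysis via the Gauss map specific to genus-zero orientable surfaces.
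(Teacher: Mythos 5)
Your framework for Part~1 (Hadamard-type perturbation, convexity of the derivative spectrum at a maximiser, Hahn--Banach to extract a balanced system of eigenfunctions with $\sum u_i^2\equiv 1$ on $\pam$, conformality from trace-free deformations, then Proposition~\ref{fbms iff stek}) is essentially the Fraser--Schoen approach, so the overall route is right. But there are three genuine gaps, two of which are serious.

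First, your justification for $\operatorname{mult}\sigma_1(g_0)\geq 3$ does not work. You write that a balanced pair in $\R^{\leq 2}$ ``cannot satisfy this together with the balancing integral conditions forced by criticality,'' but nothing is inconsistent about two eigenfunctions satisfying $u_1^2+u_2^2\equiv 1$ and $\int_{\pam}u_i\,ds=0$: every point of the circle $\sph^1$ provides such data. The actual argument must rule out $n=1$ and $n=2$ separately. The case $n=1$ is immediate ($u_1\equiv\pm1$ is constant, contradicting $\sigma_1>0$). The case $n=2$ uses the conformality you establish later: a conformal harmonic map $u:\om\to\B^2$ with $u(\pam)\subset\sph^1$ is holomorphic (or anti-holomorphic) and proper, and the resulting free boundary minimal surface in $\B^2$ is forced to be the flat equatorial disk; this is where the hypothesis that $\om$ is not homeomorphic to a disk enters (cf.\ Example~\ref{disk fbms}). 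Without this rigidity input your claim is unsupported.

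Second, your Part~2 argument that $n=3$ is a different (and incorrect) mechanism. The uniformisation/Hersch centre-of-mass construction is the tool that proves \emph{upper bounds} like Kokarev's $\sigma_1 L<8\pi$; it does not produce first eigenfunctions, and the sentence ``any fourth independent eigenfunction would provide an additional coordinate forcing the image of $\om$ off $\B^3$'' has no content -- a fourth eigenfunction would simply put the image in $\B^4$. The correct argument is the multiplicity bound of Karpukhin--Kokarev--Polterovich quoted in Equation~\eqref{eq.mult bound}: for an orientable surface of genus $\gamma=0$, $\operatorname{mult}(\sigma_1)\leq\min(4\cdot 0+3,\,2b+1)=3$. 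Combined with $\operatorname{mult}\geq 3$ from Part~1, this forces equality, and then $n\leq 3$ is automatic.

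Third, your embeddedness argument is a genuinely different route from the one actually used, and it is not clear it closes. You propose a Gauss-map/argument-principle analysis à la Nitsche for minimal surfaces in $\sph^3$. The proof of Proposition~\ref{first implies embed} (which is \cite[Proposition~8.1]{FrSc2016}) instead uses the Courant nodal domain theorem together with the maximum principle for harmonic functions: the key point is that each linear function $\langle a,u\rangle$ is a \emph{first} eigenfunction and therefore has exactly two nodal domains, from which one shows $0\notin u(\om)$, that each ray from the origin meets $u(\om)$ at most once, and hence that $u$ is an embedding. Your Gauss-map approach would in particular need to handle branch points on $\pam$ and would not obviously reproduce the star-shapedness conclusion; as written it is a sketch of a different theorem rather than a proof of this one.
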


A priori, in the case of higher genus, $n$ may be strictly smaller than $\operatorname{mult}\sigma_1(g_0)$.  (See, however, a conjecture by Kusner and McGrath~\ref{conj:mcKu} later in this section.) 

\begin{remark}\label{rem: fs explanation}
The fact that one obtains a branched \emph{conformal} immersion here as opposed to a branched \emph{isometric} immersion as in Theorem~\ref{thm.nada min} is due simply to the invariance of Steklov eigenvalues on surfaces under conformal changes of metric away from the boundary.   The pullback by the branched immersion $u$ of the metric on $u(\om)$ induced by the Euclidean metric is another maximising metric $g$ for $\sigma_1^*(\om)$ that is $\sigma$-isometric to $g_0$.    If $u$ has branch points, then $g$ will have conical singularities and the conformal factor will be singular at the cone points.  As noted in Remark~\ref{rem:con sing}, these isolated singularities do not affect the conclusion that $\stek(\om,g)=\stek(\om,g_0)$.
\end{remark}

 For an outline and key ideas of the interesting proof of the first statement, see Subsection 1.4.2 of Fraser's expository article \cite{Fr2020} referenced above.    The second statement is a special case of \cite[Proposition 8.1]{FrSc2016}; we give the full statement of that proposition later in this subsection as Proposition~\ref{first implies embed}.

Fraser and Schoen generalised the first statement of Theorem~\ref{thm.fs} to higher eigenvalues on all surfaces:

\begin{thm}\label{thm.fs2}\cite[Proposition 5.2]{FrSc2016}
Let $\om$ be a compact smooth surface with boundary.   Suppose $g_0$ is a Riemannian metric on $\om$ satisfying 
\[\sigma_k(g_0)|\partial \om|_{g_0}=\sigma_k^*(\om).\]   Then the multiplicity of $\sigma_k(g_0)$ is at least three.  After rescaling $g_0$ so that $\sigma_k(g_0)=1$,   there exist linearly independent eigenfunctions $u_1,\dots, u_n$ for $\sigma_k(g_0)$ such that $u:=(u_1,\dots, u_n)$ is a proper branched conformal minimal immersion of $\om$ into the unit ball $\B^n$ that restricts to an isometric immersion from $\pam$ into $\sph^{n-1}$.   The image $u(\om)$ is a free boundary minimal surface.   
\end{thm}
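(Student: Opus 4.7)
The plan is to adapt the classical convex-hull / Hersch–Hadamard variational strategy (used by Nadirashvili and El Soufi–Ilias in the Laplace setting, and carried out by Fraser–Schoen in \cite{FrSc2016} for the Steklov case $k=1$) to arbitrary index $k$. The reduction to the boundary is what makes the Steklov setting cleaner: because on a surface the Dirichlet energy is conformally invariant, a deformation $g_t$ of $g_0$ affects the Rayleigh–Steklov quotient only through the restriction $g_t|_{\Sigma}$ and through $|\partial\om|_{g_t}$. After rescaling so that $\sigma_k(g_0)=1$, I will therefore work with weighted Steklov problems $\sigma_k(\om,g_0,\rho)$ where $\rho$ ranges over positive densities on $\Sigma$, and perform all variations in this family.

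First I would record the one-sided variation formulas. For a smooth family $\rho_t = 1 + t\phi + O(t^2)$ with $\int_{\Sigma}\phi\,ds=0$ (so $|\Sigma|$ is preserved to first order), the min-max characterisation~\eqref{eq:Stek Rayleigh min max} together with standard perturbation theory gives the existence of left and right derivatives of $t\mapsto \sigma_k(\rho_t)|\Sigma|_{\rho_t}$ at $t=0$; these derivatives are computed by diagonalising the bilinear form $u\mapsto \int_{\Sigma} \phi u^2\,ds$ on the eigenspace $V:=\ker(\DtN - I)$. Maximality of $g_0$ forces the usual sign conditions $\partial_{t=0^-}\sigma_k \cdot \partial_{t=0^+}\sigma_k \le 0$, from which one extracts: for every admissible $\phi$ there exists $v\in V\setminus\{0\}$ with $\int_{\Sigma}\phi v^2\,ds \le 0$.

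Next comes the main argument, a Hahn–Banach/convex-hull step applied to the subset $K=\{v|_{\Sigma}^2 \colon v\in V,\ \|v\|_{L^2(\Sigma)}=1\}\subset L^1(\Sigma)$. The sign condition above translates exactly to the assertion that the constant $1/|\Sigma|$ belongs to the closed convex hull of $K$; otherwise, strict separation would produce a mean-zero $\phi$ contradicting the previous step. Carathéodory's theorem then yields finitely many $v_1,\dots,v_n\in V$ and weights $\lambda_i\ge 0$, $\sum\lambda_i=1$, with $\sum_i \lambda_i v_i|_{\Sigma}^2 \equiv 1/|\Sigma|$. Rescaling $u_i := \sqrt{\lambda_i|\Sigma|}\,v_i$ gives eigenfunctions, harmonic in the interior with $\partial_\nu u_i = u_i$ on $\Sigma$, satisfying $\sum u_i^2 \equiv 1$ on $\Sigma$. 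The map $u=(u_1,\dots,u_n)\colon \om \to \overline{\B^n}$ is then proper with $u(\Sigma)\subset\sph^{n-1}$, harmonic, and meets $\sph^{n-1}$ orthogonally since $\partial_\nu u = u$ is radial — i.e., a free boundary harmonic map. The lower bound $\mathrm{mult}\,\sigma_k(g_0)\geq 3$ follows: if $\dim V\le 2$ then $K$ is a compact arc (or a point) in a 1- or 2-dimensional cone inside $L^1(\Sigma)$, and a direct check shows $1/|\Sigma|$ cannot lie in its convex hull unless the constant function is itself in $V$, contradicting $\sigma_k\neq 0$.

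The remaining task — and the genuinely hard step — is to upgrade this harmonic free boundary map to a (branched) \emph{conformal} immersion whose restriction to $\Sigma$ is isometric. For the boundary isometry one uses an additional round of variations, this time varying the boundary arclength element by a density $\psi$ of arbitrary sign subject to $\int \psi\, ds = 0$, combined with the convex hull step, to obtain the pointwise identity $\sum_i (\partial_\tau u_i)^2 \equiv 1$ on $\Sigma$, where $\tau$ is the unit tangent. Conformality on the interior is then proved by a Hopf-differential argument: the quadratic differential $\Phi = \sum_i (\partial u_i)^2\,dz^2$ is holomorphic because each $u_i$ is harmonic; one shows it extends continuously to $\Sigma$ with vanishing real part there — a consequence of the orthogonality $\partial_\nu u_i = u_i$ and the boundary isometry — and hence vanishes identically by the Schwarz reflection principle. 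Variations by diffeomorphisms of $\om$ provide the required boundary identity when the more direct Hahn–Banach approach is not enough; this is the delicate point where the argument of \cite{FrSc2016} must be carried out carefully for general $k$, keeping track of the fact that perturbations of the conformal class are parametrised by holomorphic quadratic differentials with real boundary values. The zeros of $\Phi$, if any, are the branch points of the resulting conformal minimal immersion, completing the conclusion.
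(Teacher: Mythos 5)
Your first two steps — the one-sided derivative formulas for $\sigma_k$ under boundary density perturbations, the separation argument showing that $1/|\Sigma|$ lies in the convex hull of the squared restricted eigenfunctions, and the extraction of eigenfunctions $u_1,\dots,u_n$ with $\sum u_i^2\equiv 1$ on $\Sigma$ — are correct and reproduce the core of Fraser and Schoen's argument. The resulting map $u$ is indeed a proper free boundary \emph{harmonic} map, and the computation that the Hopf differential $\Phi=\sum(\partial_z u_i)^2\,dz^2$ is holomorphic with $\operatorname{Im}\Phi\equiv 0$ on $\Sigma$ (coming from $\partial_\nu u_i=u_i$ and $\sum u_i^2\equiv 1$) is also fine.

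The gap is in the passage from free boundary harmonic to free boundary \emph{conformal}, which is precisely the content that distinguishes a global maximiser from a conformal one (compare Theorem~\ref{thm: FrSc conf class}). You claim to obtain $\sum_i(\partial_\tau u_i)^2\equiv 1$ on $\Sigma$ by an ``additional round of variations, varying the boundary arclength element by a density $\psi$ with $\int\psi\,ds=0$.'' But varying the boundary arclength element \emph{is} varying the boundary density $\rho$, which is exactly your first round of variations: the Dirichlet energy in the numerator of $R$ is conformally invariant and does not see the boundary density, so the only quantity that appears in the first variation is $\int_\Sigma\phi\,u^2\,ds$. This delivers $\sum u_i^2\equiv 1$ and nothing more; it cannot produce the pointwise constraint on tangential derivatives. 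The tangential identity $\sum(\partial_\tau u_i)^2\equiv 1$ (equivalently, $\operatorname{Re}\Phi\equiv 0$ on $\Sigma$, hence $\Phi\equiv 0$ by reflection and analytic continuation) must come from the \emph{other} class of admissible perturbations: variations of the conformal structure on $\om$ (equivalently, pulling back $g_0$ by diffeomorphisms fixing $\Sigma$, or deforming by Beltrami differentials). These variations pair against $\Phi$ through the standard formula for the derivative of the Rayleigh quotient under metric variations away from the boundary, and extremality over all such deformations forces $\Phi\equiv 0$. You gesture at this in your last sentences, but the earlier $\psi$-variation cannot be what supplies the conclusion, and the key step of extracting $\Phi\equiv 0$ from the resulting sign conditions — which is not automatic, since the double of $\om$ generally carries nonzero holomorphic quadratic differentials — is left out.

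The multiplicity bound is also under-justified. For $\dim V=1$ the argument works (a single unit eigenfunction with $v^2\equiv 1/|\Sigma|$ is locally constant on $\Sigma$ and cannot satisfy $\partial_\nu v=v\neq 0$), but for $\dim V=2$ your set $K$ is a closed curve in a three-dimensional cone $\operatorname{Sym}^2 V$, and there is no abstract obstruction to $1/|\Sigma|$ lying in its convex hull. Fraser and Schoen instead argue via the structure of the map $u:\om\to\B^n$ that a free boundary branched conformal minimal immersion cannot land in $\B^1$ or $\B^2$ except in degenerate cases; as written, your ``direct check'' does not establish this. (You should also note that the statement, taken literally for the disk and $k=1$, conflicts with Weinstock's maximiser having multiplicity two; the $k=1$ case requires excluding the disk, as in Theorem~\ref{thm.fs}, and for $k\ge 2$ the disk has no maximiser by Theorem~\ref{no max}, so the issue is vacuous there — but this case distinction needs to appear somewhere in the argument.)
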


We now comment on the size of $n$ in Theorems~\ref{thm.fs} and \ref{thm.fs2}.    Necessarily $n\leq  \operatorname{mult}(\sigma_k)$ where $k$ is the eigenvalue under consideration.
Karpukhin, Kokarev, and Polterovich \cite[Theorem 1.1] {KaKoPo2014} (see also Jammes \cite[Theorem 1.5]{Ja2014} and Fraser-Schoen \cite[Theorem 2.3]{FrSc2016}) obtained multiplicity bounds on the $k$th Steklov eigenvalue of any compact Riemannian surface of genus $\gamma$ with $b$ boundary components:
\begin{equation}\label{eq.mult bound}
\begin{cases}
\operatorname{mult}(\sigma_k)\leq \min(4\gamma +2k +1, 4\gamma +2b+k)\hspace{1cm}	\om \mbox{\,orientable,}\\
\operatorname{mult}(\sigma_k)\leq \min(2\gamma +2k +1, 2\gamma +2b+k)\hspace{1cm}	\om \mbox{\,non-orientable.}
\end{cases}
\end{equation}
In particular for every Riemannian surface of genus zero, one has  $\operatorname{mult}(\sigma_1)(\om,g)\leq 3$.   It was not previously known whether this maximum is attained (except in the case of the disk, where the answer is no).     In the proof of Theorem~\ref{thm.fs}, the authors showed directly that the multiplicity has to be at least three for any metric realising $\sigma_1^*(\om)$.    This fact together with Theorem~\ref{fs max exist} and Proposition~\ref{lem: inf many b} below prove that the multiplicity bound of three is indeed attained for infinitely many surfaces of genus zero.

We end this subsection by  stating Fraser and Schoen's partial analogue of Theorem~\ref{thm: El harmonic} for conformally  maximising metrics.

\begin{thm}\cite[Proposition 2.8]{FrSc2013}\label{thm: FrSc conf class}
Let $(\om,g_0)$ be a compact Riemannian surface with boundary and suppose that 
\[\sigma_k(\om,g_0)|\partial\om|_{g_0}=\sup_{g\in [g_0]}\,\sigma_k(\om,g)|\partial\om|_g\]
where $[g_0]$ is the conformal class of $g_0$.   Then there exist linearly independent $\sigma_k(\om,g_0)$-eigenfunctions $u_1,\dots, u_m$ such that $\sum_{j=1}^mu_j^2= 1$ on $\partial \om$.  Thus 
$u:=(u_1,\dots, u_m):(\om,g_0)\to \B^m$ is a free boundary harmonic map.
\end{thm}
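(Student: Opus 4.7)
The plan is to mimic the variational argument of El Soufi--Ilias (Theorem~\ref{thm: El harmonic}), exploiting the fact that on a surface a conformal perturbation away from the boundary does not affect $\stek(\om)$, so the relevant degree of freedom within $[g_0]$ is carried by the conformal factor restricted to $\pam$. Concretely, given $\varphi\in C^\infty(\pam)$, extend $\varphi$ smoothly to $\om$ (keeping the same name) and consider the one-parameter family $g_t=e^{2t\varphi}g_0$. By Proposition~\ref{prop: conf invar} and Corollary~\ref{cor:sigmaisom} (plus the surface version allowing arbitrary conformal changes in the interior), the Steklov eigenvalues of $(\om,g_t)$ depend only on the restriction of $e^{t\varphi}$ to $\pam$, and the Rayleigh quotient~\eqref{eq:Rayleigh S} becomes
\[
R_t(u)=\frac{\int_\om|\nabla u|^2_{g_0}\,dV_{g_0}}{\int_{\pam}u^2\, e^{t\varphi}\,ds_{g_0}},
\]
while $|\pam|_{g_t}=\int_{\pam}e^{t\varphi}\,ds_{g_0}$.

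Next I would apply standard one-sided perturbation theory (e.g.\ of Kato type, as used in \cite[Theorem 2.1]{ElIl2008}) to the analytic family $R_t$. Letting $E$ denote the $\sigma_k(g_0)$-eigenspace, normalised so that $\int_{\pam}u^2\,ds_{g_0}=|\pam|_{g_0}$ for $u\in E$ on the unit sphere of the $L^2(\pam,ds_{g_0})$ inner product, a direct computation with the Hadamard formula gives
\[
\frac{d}{dt}\Big(\sigma_k(g_t)|\pam|_{g_t}\Big)\Big|_{t=0^{\pm}}
=\sigma_k(g_0)\cdot
\begin{cases}
\displaystyle\min_{u\in E}\\
\displaystyle\max_{u\in E}
\end{cases}
\int_{\pam}\varphi\bigl(1-u^2\bigr)\,ds_{g_0},
\]
where the minimum (resp.\ maximum) governs the right (resp.\ left) derivative. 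The hypothesis that $g_0$ is a conformal maximiser forces, for every admissible $\varphi$, the existence of $u\in E$ with $\int_{\pam}\varphi(1-u^2)\,ds_{g_0}\leq 0$ and another $u'\in E$ with the reverse inequality.

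Now I would translate this into a convex-geometry statement. Consider the convex subset
\[
K\;=\;\operatorname{conv}\Bigl\{\,u^2\restr{\pam}\;:\;u\in E,\ \textstyle\int_{\pam}u^2\,ds_{g_0}=|\pam|_{g_0}\Bigr\}\;\subset\;L^1(\pam,ds_{g_0}).
\]
The criticality condition above asserts that the constant function $1$ cannot be separated from $K$ by any continuous linear functional $\varphi\in C^\infty(\pam)$. A Hahn--Banach argument, together with the finite-dimensionality of the eigenspace $E$ (so that $K$ is closed), yields $1\in K$. Hence there exist eigenfunctions $v_1,\dots,v_m\in E$ and weights $c_1,\dots,c_m>0$ with $\sum c_j=1$ and $\sum_{j=1}^m c_j\,v_j^2\equiv 1$ on $\pam$. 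Setting $u_j:=\sqrt{c_j}\,v_j$ gives the desired family of $\sigma_k$-eigenfunctions with $\sum u_j^2\equiv 1$ on $\pam$.

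Finally I would verify that the map $u=(u_1,\dots,u_m):(\om,g_0)\to\R^m$ is a free boundary harmonic map into $\B^m$. Each $u_j$ is harmonic in $\om$, so the component functions of $u$ are harmonic. Since $|u|^2=\sum u_j^2$ is subharmonic with boundary value $1$, the maximum principle gives $|u|\le 1$ on $\om$ with equality on $\pam$, hence $u(\om)\subset\B^m$ and $u(\pam)\subset\sph^{m-1}$. The Steklov boundary condition $\partial_\nu u_j=\sigma_k(g_0)\,u_j$ then reads $\partial_\nu u=\sigma_k(g_0)\,u$ on $\pam$, i.e.\ the image of the outer normal is a positive multiple of the outer normal to $\sph^{m-1}$ at $u(p)$; equivalently, $u$ meets $\sph^{m-1}$ orthogonally, which is the required free boundary condition. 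The main technical obstacle is the second step: correctly handling the one-sided derivatives of $\sigma_k(g_t)|\pam|_{g_t}$ through the non-smooth eigenvalue branches at an eigenvalue of higher multiplicity, so that the min--max characterisation of the directional derivatives and hence the convex-hull conclusion remain valid.
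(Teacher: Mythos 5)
Your proposal is correct and follows essentially the same route as the paper's (and Fraser--Schoen's): a Hadamard-type variation of the boundary measure within the conformal class, producing the quadratic form $Q_\varphi(u)=\int_{\pam}\varphi(1-u^2)\,ds_{g_0}$ on the eigenspace, and then a Hahn--Banach separation argument to place $1$ in the convex hull of $\{u^2|_{\pam}\}$. The technical concern you flag about the exact min/max formula for the one-sided derivatives at a multiple eigenvalue can be bypassed entirely: by Kato perturbation theory the one-sided derivatives of $\sigma_k(g_t)|\pam|_{g_t}$ at $t=0$ are always (up to the positive factor $\sigma_k(g_0)$) among the eigenvalues of $Q_\varphi$ on $E$, so if $Q_\varphi$ were strictly positive definite then every branch derivative would be strictly positive, giving $F'(0^+)>0$ — incompatible with $g_0$ being a maximiser — and symmetrically with $-\varphi$; this is all the separation step needs, independently of where $k$ sits inside the multiplicity interval. (Two small housekeeping points worth adding: the conclusion needs $\sigma_k(g_0)>0$, i.e. $k\geq 1$ for $\om$ connected, to turn $\partial_\nu u=\sigma_k u$ into a \emph{positive} multiple of the outward normal; and to get the linear independence asserted in the statement one should diagonalise the Gram matrix of the $\sqrt{c_j}v_j$ to rewrite $\sum c_j v_j^2$ as a sum of squares of a linearly independent, in fact $L^2(\pam)$-orthogonal, family.)
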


\begin{remark}\label{rem: eigenfunc to fb harm}
It is immediate from Definition~\ref{def.fbms} that any proper map $u: (\om,g_0) \to \B^m$ by $\sigma_k$-eigenfunctions is a free boundary harmonic map.  However, given a Riemannian surface $(\om,g_0)$ and a free boundary harmonic map $\tau: (\om,g)\to \B^m$, one \emph{cannot} conclude that the component functions are Steklov eigenfunctions since the map $p\mapsto |d\tau_p\nu_p|$ need not be constant on $\partial\om$.   The question of a converse to Theorem~\ref{thm: FrSc conf class}, as well as analogues in higher dimensions, will be addressed in Subsection~\ref{subsec: fbms higher dim}.
\end{remark}
 
\subsection{Existence of maximising metrics for the first eigenvalue}\label{max exist}~

\subsubsection{Steklov maximisers on surfaces of genus zero.} 

The first general result concerning the existence of metrics realising $\sigma_1^*(\om)$ for a large class of surfaces $\om$ was proved by Fraser and Schoen:

\begin{thm}\label{fs max exist}\cite[Theorem 1.1]{FrSc2016}. 
Let $\om_{0,b}$ be the orientable surface of genus zero with $b$ smooth boundary components.   In the notation of~\ref{nota.sig}, assume that 
\begin{equation}\label{fs gap} \sigma_1^*(0, b) > \sigma_1^*(0, b-1).
\end{equation}
 Then there exists a smooth Riemannian metric $g$ on $\om_{0,b}$ such that \[\sigma_1(g)|\partial\om_{0,b}|_g=\sigma_1^*(\om_{0,b}).\]
\end{thm}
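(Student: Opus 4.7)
The plan is a direct method in the calculus of variations, applied after a conformal reduction that replaces the free maximisation over all smooth metrics on $\Omega_{0,b}$ with an optimisation over a finite-dimensional moduli space coupled with a cone of positive boundary densities.

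Using conformal uniformisation (Koebe) and the two-dimensional conformal invariance of the Dirichlet energy, every smooth metric $g$ on $\Omega_{0,b}$ is spectrally equivalent to a pair $(D,\rho)$, where $D\subset\Sp^2$ is a \emph{circle domain} (the complement of $b$ pairwise disjoint closed round disks) carrying the round metric $g_{\Sp^2}$, and $\rho$ is a positive boundary density with $\int_{\partial D}\rho\,ds_{g_{\Sp^2}}=|\partial\Omega_{0,b}|_g$; in the notation of Example~\ref{ex:varweightedSteklov}, $\sigma_k(\Omega_{0,b},g)=\sigma_k(D,g_{\Sp^2},\rho)$. Normalising the boundary integral to one, I would take a maximising sequence $(D_n,\rho_n)$ with $\sigma_1(D_n,g_{\Sp^2},\rho_n)\to\sigma_1^*(0,b)$. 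After quotienting by the M\"obius group of $\Sp^2$, the $D_n$ lie in a finite-dimensional slice of moduli, and $\rho_n$ admits a weak-$*$ convergent subsequence to a Radon probability measure $\mu$.

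The key dichotomy is whether $D_n$ subconverges to a circle domain $D_\infty$ with $b$ components or degenerates. Degeneration happens in essentially two ways: a boundary circle pinches to a point, or two boundary circles collide; in either case the limit conformal surface has at most $b-1$ boundary components. A test-function transplantation argument---truncating the $\sigma_1$-eigenfunctions away from the pinching or colliding region---combined with the continuity of variational eigenvalues under weak-$*$ convergence of Radon measures (Appendix~\ref{Section:Radon}) would yield
\[
\limsup_{n\to\infty}\sigma_1(D_n,g_{\Sp^2},\rho_n)\le \sigma_1^*(0,b-1),
\]
which strictly contradicts the gap hypothesis $\sigma_1^*(0,b)>\sigma_1^*(0,b-1)$. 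The sequence therefore subconverges non-degenerately, and the limit $(D_\infty,\rho_\infty)$ realises $\sigma_1^*(0,b)$; smoothness of $\rho_\infty$ (yielding a smooth maximising metric on $\Omega_{0,b}$, obtained by extending $\rho_\infty$ to a smooth positive conformal factor on $\overline{D}_\infty$) follows from elliptic regularity for the limit eigenfunctions, in the spirit of Kokarev's regularity theorem for conformally extremal metrics on surfaces.

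The hard part is the semicontinuity step. Under a degeneration, one must ensure that the eigenvalue mass does not concentrate on the vanishing neck in a way that would produce an atom of $\mu$ carrying more spectral weight than $\sigma_1^*(0,b-1)$. Concretely, one has to rule out the possibility that the $L^2(\partial D_n,\rho_n)$-normalised eigenfunctions $u_n$ satisfy $\int_{\mathrm{neck}}u_n^2\rho_n\,ds\to c>0$; this is controlled via a conformal capacity estimate on the pinching annulus, combined with the orthogonality-to-constants constraint in the variational characterisation of $\sigma_1$, which forces $u_n$ to be approximately constant on the collapsing circle so that its contribution vanishes in the limit. Once this semicontinuity is secured, the gap hypothesis closes the argument and $(D_\infty,\rho_\infty)$ yields the sought-after smooth maximising metric on $\Omega_{0,b}$.
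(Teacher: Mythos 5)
Your outline is roughly parallel to the paper's sketch (maximising sequence, conformal uniformisation, weak-$*$ compactness, the gap condition), and your handling of conformal degeneration corresponds to item~(i) of that sketch. But you miss item~(ii), which the paper flags as the other crux: ruling out degeneration of the \emph{boundary measures} even when the conformal structure does not degenerate. Under your scheme $D_n$ can subconverge to a genuine circle domain $D_\infty$ with $b$ boundary components while the densities $\rho_n$ degenerate --- concentrating at a boundary point or onto shrinking sub-arcs --- and the gap hypothesis is silent here, since the topology has not changed. Worse, the compactness input you cite does not say what you want: Appendix~\ref{Section:Radon} gives only \emph{upper semi}-continuity of variational eigenvalues under weak-$*$ convergence (due to Kokarev); full continuity (Theorem~\ref{thm:ContinuityRadon}) needs extra hypotheses (convergence in the dual of $W^{1,1}$, uniformly bounded extension maps) that a generic maximising sequence does not supply. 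And upper semicontinuity is vacuous in exactly the concentration scenario: if $\mu$ is a delta mass, then $L^2(\mu)$ is one-dimensional, the infimum in~\eqref{def:VarEigenRadon} defining $\lambda_1(\mu)$ runs over the empty set, $\lambda_1(D_\infty,\mu)=+\infty$, and the semicontinuity bound $\sigma_1^*(0,b)\le\lambda_1(D_\infty,\mu)$ is content-free. This is precisely why the paper says item~(ii) ``involves a carefully chosen choice of maximising sequence $\{g_j\}$'': Fraser--Schoen and Petrides build the sequence from \emph{conformal} maximisers, so each member satisfies an Euler--Lagrange equation realised by a free-boundary harmonic map (Theorem~\ref{thm: FrSc conf class}). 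It is that extra structure, not weak-$*$ compactness plus the gap, which forbids boundary concentration and simultaneously supplies the criticality needed to run any regularity argument in the spirit of Kokarev; a bare weak-$*$ limit carries neither.

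A concrete diagnostic: apply your scheme on $\om_{0,1}$ to $\sigma_k$ with $k\ge 2$. The moduli space is a point, so there is no conformal degeneration for any gap to rule out, and weak-$*$ compactness of probability measures on $\partial\D$ is free. Your argument would conclude that $\sigma_k^*(0,1)$ is attained by a smooth metric --- but by Theorem~\ref{no max} it is not: the maximising sequences for $\sigma_k$ on the disk bubble (Remark~\ref{rem: bubbles}), i.e.\ the boundary densities concentrate on $k$ shrinking arcs, and in the weak-$*$ limit nothing is attained. Your framework cannot see this failure mode. The correct gap condition (Definition~\ref{def.gap}, Theorem~\ref{pe gap thm}) is designed precisely to exclude such bubbling, not merely to keep the moduli parameter in a compact set, and your proposed proof does not use it where it is actually needed.
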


(If $b=1$, the right-hand side of Inequality~\eqref{fs gap} isn't defined, but in that case we already know that the flat disk realises $\sigma_1^*(0,1)$.)

See \cite[Theorem 1.4.4]{Fr2020} for an outline of the innovative and deep proof of Theorem~\ref{fs max exist}.  We make only brief comments here.  Let $\{g_j\}$ be a sequence of Riemannian metrics on $\om:=\om_{0,b}$ such that $\sigma_1(g_j)|\om|_{g_j}\to \sigma_1^*(\om)$.   One must show the following:
\begin{itemize}
\item[(i)] The conformal classes of these metrics stay within a compact subset of the moduli space of conformal structures.  This is where the gap inequality~\eqref{fs gap} plays an important role. 
\item[(ii)] The boundary measures do not degenerate.   The proof involves a carefully chosen choice of maximising sequence $\{g_j\}$. 

\end{itemize}

\begin{remark}\label{rem:fs gap}
Fraser and Schoen showed that the non-strict version of Inequality~\eqref{fs gap} always holds.  In fact, they showed more generally that for all $\gamma\geq 0$, one has 
\begin{equation}\label{eq: nonstrict gap}  \sigma_1^*(\gamma, b) \geq \sigma_1^*(\gamma, b-1)
\end{equation} (See the beginning of the proof of \cite[Proposition 4.3]{FrSc2016}.)    Thus if the strict inequality holds for $b$, then $\sigma_1^*(0,b)> \sigma_1^*(0,b')$ for all $b'<b$.    

Proposition 4.3 of \cite{FrSc2016} asserted that if $\sigma_1^*(0,b-1)$ is achieved by a smooth metric, then Inequality~\eqref{fs gap} holds.   This statement, induction on $b$ and  Theorem~\ref{fs max exist} led to the conclusion that a smooth maximising metric exists for all $b$.  However, Fraser and Schoen later communicated a gap in their proof of the strict inequality in Proposition 4.3 (see \cite[Appendix]{GiLa2021}).     
See also Remark~\ref{rem: errors} below.
 \end{remark}

While the general case of the gap condition remains open, one does have the following:

\begin{prop}\label{lem: inf many b}
Let $\mathcal{B}$ denote the set of positive integers for which the gap condition~(\ref{fs gap}) is satisfied.  Then $\mathcal{B}$ is infinite.
\end{prop}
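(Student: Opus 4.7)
The plan is to argue by contradiction. Suppose $\mathcal{B}$ is finite, and let $N$ be an integer strictly larger than every element of $\mathcal{B}$. For each $b>N$ the strict gap condition~\eqref{fs gap} fails, so combining with the non-strict monotonicity $\sigma_1^*(\gamma,b)\geq\sigma_1^*(\gamma,b-1)$ recorded in Remark~\ref{rem:fs gap} (applied with $\gamma=0$), one is forced to have $\sigma_1^*(0,b)=\sigma_1^*(0,b-1)$. Iterating from $N$ upwards, the sequence $b\mapsto\sigma_1^*(0,b)$ must be constant for all $b\geq N$, with common value $\sigma_1^*(0,N)$.

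Next I would identify this constant value using the asymptotics in Remark~\ref{rem:b to infty}. That remark combines Theorem~\ref{thm: GKL sharp} and its proof by homogenisation with the explicit evaluation $\lambda_1^*(\mathbb{S}^2,[g_{\mathbb{S}^2}])=8\pi$ to give
\[
\lim_{b\to\infty}\sigma_1^*(0,b)=\lambda_1^*(\mathbb{S}^2)=8\pi.
\]
Taken together with the previous paragraph, this forces $\sigma_1^*(0,N)=8\pi$.

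Finally, I would derive a contradiction from the quantitative sharpening of Kokarev's inequality given by Theorem~\ref{thm:8pi strict}. For every compact surface $\Omega$ of genus zero with exactly $N$ boundary components, that theorem guarantees
\[
\sigma_1(\Omega)L(\partial\Omega)\leq \max\bigl\{6\pi,\; 8\pi-6\pi\exp(-2N)\bigr\}<8\pi,
\]
since the hypothesis $\sigma_1 L\geq 6\pi$ only excludes values already strictly below $8\pi$. Taking the supremum over all smooth Riemannian metrics on $\omega_{0,N}$ yields $\sigma_1^*(0,N)<8\pi$, contradicting $\sigma_1^*(0,N)=8\pi$. Hence $\mathcal{B}$ must be infinite.

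No genuine obstacle arises in this plan: the argument is a short bookkeeping synthesis of three results already stated in the survey — the non-strict monotonicity in $b$, the homogenisation-based limit $\sigma_1^*(0,b)\to 8\pi$, and the per-$b$ quantitative strict upper bound $\sigma_1^*(0,b)<8\pi$. The only mild subtlety is that Theorem~\ref{thm:8pi strict} bounds $\sigma_1 L$ only when it is already at least $6\pi$, but since $6\pi<8\pi$ this does not affect the contradiction.
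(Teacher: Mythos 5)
Your proof is correct and follows essentially the same route as the paper, which simply invokes Theorem~\ref{thm:IsopStekOnePlanar} together with Theorem~\ref{thm:8pi strict}; your write-up spells out the bookkeeping (non-strict monotonicity forces eventual constancy, the homogenisation limit pins the eventual value at $8\pi$, and the quantitative Kokarev bound from Theorem~\ref{thm:8pi strict} rules this out). The only cosmetic difference is that you cite Remark~\ref{rem:b to infty} for the limit $\sigma_1^*(0,b)\to 8\pi$ where the paper points directly to Theorem~\ref{thm:IsopStekOnePlanar}; these carry the same information in this context.
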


\begin{proof}
The proposition follows immediately from Theorems~\ref{thm:IsopStekOnePlanar} and \ref{thm:8pi strict}.
\end{proof}

Theorems~\ref{thm.fs} and \ref{fs max exist} together imply the following:

\begin{cor}\label{cor: fbms genus 0} Let $\om$ be an orientable smooth surface of genus zero with $b$ boundary components where $b\in \mathcal{B}$ in the notation of Proposition~\ref{lem: inf many b}.   Then there exists an embedding of $\om$ in $\B^3$ as a free boundary minimal surface such that the metric $g_0$ on $\om$ induced by the Euclidean metric satisfies \[\sigma_1(g_0)|\partial\om|_{g_0}=\sigma_1^*(\om).\]
\end{cor}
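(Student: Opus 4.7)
The plan is to assemble the two hypotheses that the notation $b\in\mathcal{B}$ makes available. Since $b\in\mathcal{B}$ means precisely that the strict gap inequality $\sigma_1^*(0,b)>\sigma_1^*(0,b-1)$ holds (so in particular $b\geq 2$), Theorem~\ref{fs max exist} applies and furnishes a smooth Riemannian metric $g$ on $\om$ with $\sigma_1(g)|\partial\om|_g=\sigma_1^*(\om)$. Because $b\geq 2$, $\om$ is not homeomorphic to a disk, which is exactly the hypothesis required to invoke Theorem~\ref{thm.fs} next.

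The second step is to feed $g$ into part (2) of Theorem~\ref{thm.fs}. After rescaling $g$ so that $\sigma_1(g)=1$, that theorem produces three linearly independent $\sigma_1(g)$-eigenfunctions $u_1,u_2,u_3$ such that the map $u=(u_1,u_2,u_3):\om\to\B^3$ is a proper conformal embedding whose restriction to $\partial\om$ is an isometric immersion into $\sph^2$. Identifying $\om$ with $u(\om)\subset\B^3$, let $g_0$ denote the metric on $\om$ induced from the Euclidean metric on $\B^3$. That $u(\om)$ is a free boundary minimal surface is then immediate from Proposition~\ref{fbms iff stek}: the coordinate maps $u_i=x_i\circ u$ are Steklov eigenfunctions with eigenvalue $1$, which is exactly the characterisation of free boundary minimal surfaces in the ball established there.

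The remaining step is to verify the normalisation $\sigma_1(g_0)|\partial\om|_{g_0}=\sigma_1^*(\om)$. Because $u$ is a conformal immersion with respect to $g$, the pullback satisfies $u^\ast g_0=\tau g$ for some positive smooth function $\tau:\om\to\R$, and because $u|_{\partial\om}$ is an isometric immersion, $\tau\equiv 1$ on $\partial\om$. Thus $g_0$ (transported to $\om$ via the embedding) is $\sigma$-isometric to $g$ in the sense of Definition~\ref{sigmaisom}, so Corollary~\ref{cor:sigmaisom} gives $\sigma_1(g_0)=\sigma_1(g)$, while $|\partial\om|_{g_0}=|\partial\om|_g$ since $\tau\equiv 1$ there. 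Multiplying yields $\sigma_1(g_0)|\partial\om|_{g_0}=\sigma_1(g)|\partial\om|_g=\sigma_1^*(\om)$. All the genuine work is packed into the two input theorems — the compactness-of-conformal-structures and non-degeneration-of-boundary-measure arguments behind Theorem~\ref{fs max exist}, together with the regularity and eigenvalue-multiplicity analysis behind Theorem~\ref{thm.fs}(2); the corollary itself is then a short exercise in two-dimensional conformal invariance once these inputs are in hand.
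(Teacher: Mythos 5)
Your argument is correct and matches the paper's intent exactly: the paper does not give a separate proof of this corollary but simply remarks that Theorems~\ref{thm.fs} and~\ref{fs max exist} together imply it, and your write-up spells out precisely that combination. The only point worth flagging is that the appeal to Proposition~\ref{fbms iff stek} at the end of your second step is redundant, since Theorem~\ref{thm.fs} already states that the image $u(\om)$ is a free boundary minimal surface; and the final step correctly identifies the $\sigma$-isometry $g_0 = \tau g$ with $\tau\equiv 1$ on $\partial\om$ as the reason the normalised eigenvalue is preserved, which is the bookkeeping the paper leaves implicit.
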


(It was originally asserted in \cite{FrSc2016} that a sequence of free boundary minimal surfaces associated with maximising metrics converge to a double disk as the number of boundary components $b$ approaches $\infty$. Following communication from Fraser and Schoen regarding a subtle error in this assertion, Girouard and Lagac\'e \cite[Appendix]{GiLa2021} conjectured that these surfaces converge in the varifold sense to the boundary sphere $\sph^2$.   This conjecture was later affirmed along with a remarkable generalisation addressed in Subsection~\ref{subsec.karp-stern}.)

Fraser and Schoen also found explicit maximising metrics on two surfaces of genus zero:

\begin{thm}\label{fs max exist cat and mob}\cite[Theorem 1.1]{FrSc2016}. 
Let $\om$ be either an annulus or a M\"obius band.  Then there exists an explicit smooth Riemannian metric $g$ on $\om$ such that $\sigma_1(g)|\partial\om|_g=\sigma_1^*(\om)$.
\end{thm}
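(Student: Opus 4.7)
My approach exploits the one-dimensionality of the moduli space of conformal structures on the annulus and Möbius band to reduce the Steklov maximisation to a tractable one-parameter optimisation. By uniformisation, every conformal class on the annulus is represented by a flat cylinder $C_T=[0,T]\times\sph^1$ of modulus $T\in(0,\infty)$, and similarly the Möbius band admits a one-parameter family of flat conformal representatives. Using Proposition~\ref{prop: conf invar} I would set
\[M(T) := \sup\{\sigma_1(\om,g)|\partial\om|_g : g\in[g_T]\},\]
so that $\sigma_1^*(\om)=\sup_{T>0}M(T)$, and tackle the inner and outer suprema in turn.

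First, I would reduce the in-class supremum to an $\sph^1$-equivariant problem. Applying Theorem~\ref{thm: FrSc conf class}, any conformal maximiser produces a free boundary harmonic map whose eigenfunction data gives an extremal boundary weight; an averaging argument exploiting the rotation action (and the orientation-reversing cyclic action in the Möbius case) together with strict convexity of the inverse weighted Steklov eigenvalue functional in the boundary weight forces that extremal weight to be $\sph^1$-invariant. The residual problem is a Sturm--Liouville problem on $[0,T]$ which, via Example~\ref{example: cylinder}, can be solved in closed form using hyperbolic functions, producing an explicit expression for $M(T)$. I would then study the two degenerations: as $T\to 0$ the cylinder collapses to a circle and the explicit formula gives $M(T)\to 0$; as $T\to\infty$ the cylinder pinches into two disjoint disks, so Weinstock's Theorem~\ref{thm:Weinstock} and a quantitative gluing bound force $\limsup_{T\to\infty}M(T)\le 2\pi$. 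A direct computation produces an interior $T$ with $M(T)>2\pi$, and continuity of $M$ then yields a maximising modulus $T^*\in(0,\infty)$.

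Finally, I would identify the extremal geometry. At the maximiser $g^*\in[g_{T^*}]$, Theorem~\ref{thm.fs} provides a proper branched conformal free boundary minimal immersion $u:\om\to\B^n$ whose components are first eigenfunctions. The $\sph^1$-symmetry of $g^*$ together with an equivariant decomposition of the first eigenspace forces the image to be a rotationally symmetric free boundary minimal surface — in the annulus case, necessarily a portion of a catenoid meeting $\sph^2$ orthogonally, i.e.\ the \emph{critical catenoid} in $\B^3$; in the Möbius case, the analysis produces an embedded free boundary minimal immersion into $\B^4$ whose image is the \emph{critical Möbius band}, parametrised explicitly via modular functions as in \cite{FrSc2016}. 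The main obstacle is the equivariant reduction of the inner problem: one must rigorously rule out symmetry-breaking maximisers. The averaging/strict-convexity argument is delicate precisely near the degeneration endpoints of moduli space, where eigenvalue multiplicities can collide, and controlling this is what ultimately forces the hyperbolic identities that pin down $T^*$ and match it against the catenoid and Möbius band parameters.
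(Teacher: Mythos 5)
Your route is genuinely different from the one Fraser and Schoen actually take, and the step you flag as delicate is a real gap rather than a technicality. The paper's proof (sketched in Example~\ref{ex.cat}) does not reduce to a one-parameter optimisation over conformal moduli. It runs instead as: (a) establish existence of a global maximiser -- Theorem~\ref{fs max exist} for the annulus via the gap inequality $\sigma_1^*(0,2)>\sigma_1^*(0,1)=2\pi$, and a separate existence-and-regularity argument for the non-orientable M\"obius band; (b) apply Theorem~\ref{thm.fs} to realise the maximiser as a free boundary minimal immersion by first eigenfunctions; (c) invoke a uniqueness theorem \cite[Theorem 6.2]{FrSc2016} to the effect that the critical catenoid is, up to congruence, the \emph{only} embedded free boundary minimal annulus in any $\B^n$ whose coordinate functions are first eigenfunctions, with an analogous $\sph^1$-invariance-and-uniqueness analysis for the M\"obius band. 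The $\sph^1$-invariance of the extremal metric emerges as a \emph{consequence} of (c), not as a symmetrisation input.

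The central unproven claim in your plan is that the conformal maximiser for each modulus $T$ can be taken $\sph^1$-invariant by averaging plus strict convexity. The first weighted Steklov eigenvalue $\sigma_1(\rho)$ is an infimum over trial functions subject to the $\rho$-dependent orthogonality constraint $\int_{\partial\om}u\rho=0$; neither $\sigma_1$ nor $1/\sigma_1$ is thereby manifestly concave or convex in $\rho$, so the $\sph^1$-orbit-average of a maximising weight need not again be a maximiser. More tellingly, Fraser and Schoen prove (\cite[Theorem 3.1]{FrSc2020}, quoted in Subsection~\ref{applic.bounds}) the \emph{strict} inequality $\sigma_k^{\mathbb S^1}(\om_{0,2})<\sigma_k^*(\om_{0,2})$ for every $k\ge 2$: restricting to rotationally symmetric metrics genuinely loses ground for all higher eigenvalues. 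So any symmetrisation principle of the type you invoke would have to be specific to $k=1$, and no such argument appears in your sketch. What your framework does correctly reproduce is the Fan--Tam--Yu computation of $\sigma_1^{\mathbb S^1}$, which gives a lower bound matching the critical catenoid's value; but without step (c) you have no matching upper bound, and producing that upper bound is precisely the content of the theorem. (Existence of a conformal maximiser in each class $[g_T]$ -- which for $k=1$ hinges on the open Question~\ref{ques:sig >2pi} -- together with continuity of $M(T)$ and the endpoint asymptotics, are further uncontrolled steps, but they are secondary to the symmetrisation gap.)
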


 Their constructions of the maximisers and some of the ideas behind the proof of Theorem~\ref{fs max exist cat and mob} are described in the following example.  Note in particular the interesting role played by Theorem~\ref{thm.fs}.

\begin{ex}\label
{ex.cat}\label{ex.mobius}~
(i)Except for the equatorial disk, the earliest known example of a free boundary minimal surface in $\B^3$ is the so-called critical catenoid, the intersection with $\B^3$ of the unique catenoid about the $x_3$ axis centered at the origin that meets the unit sphere $\sph^2$ orthogonally. See Figure~\ref{fig:catenoid}.  Fraser and Schoen obtained an approximate value of $\frac{4\pi}{1.2}$ for its first normalized eigenvalue (thus improving the previously best known bound of $4\pi$ for $\sigma_1^*(0,2)$ given in Theorem~\ref{thm:karp hersch}).   Observe that the gap condition $\sigma_1^*(0, 2) > \sigma_1^*(0, 1) =2\pi$ holds, so Theorem~\ref{fs max exist} guarantees existence of a maximiser for $\sigma_1^*(0,2)$.   In \cite[Theorem 6.2]{FrSc2016}, Fraser and Schoen showed that the critical catenoid is the \emph{unique}, up to congruence, free boundary minimal annulus embedded in $\B^3$ -- in fact in any $\B^n$ -- whose coordinate functions are first Steklov eigenfunctions.    Comparing this uniqueness statement with Theorem~\ref{thm.fs},  they concluded that the critical catenoid does indeed realise $\sigma_1^*(0,2)$.     

\begin{figure}
  \centering
  \includegraphics[width=3cm]{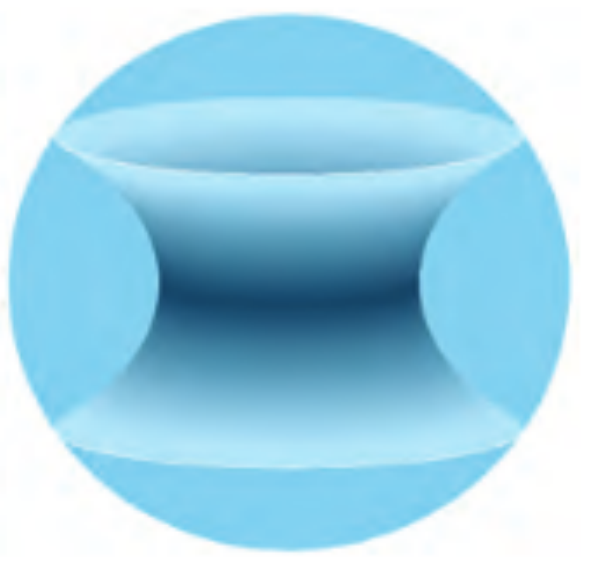}
 \caption{Critical catenoid drawn by Emma Fajeau (originally appeared in Notices of the American Mathematical Society \cite{BMWE2018}).}
  \label{fig:catenoid}
  \end{figure}
  
(ii)  Fraser and Schoen explicitly constructed a free boundary minimal embedding of the M\"obius band into $\B^4$ that is invariant under an action of $\sph^1$ by rotations.  Moreover they showed (i) that it is the unique free boundary minimal M\"obius band in any $\B^n$ that is $\sph^1$ invariant and (ii) that any free boundary minimal M\"obius band in $\B^n$ whose coordinate functions are first eigenfunctions must be $\sph^1$-invariant.  While Theorem~\ref{fs max exist} only applies to orientable surfaces, they separately proved existence and regularity of a maximising metric on the M\"obius band.  It thus followed that the metric on the M\"obius band $\om$ induced by the $\sph^1$ free boundary minimal embedding in $\B^4$ must realise $\sigma_1^*(\om)$.   As a further consequence, they found that $\sigma_1^*(\om)=2\pi\sqrt{3}$.     
\end{ex}

An early version of the article \cite{FrSc2016} containing Theorem~\ref{fs max exist} was first posted on the arXiv in 2012.  The innovative ideas introduced there quickly ignited major developments on the existence of maximising metrics for the normalised eigenvalues both for the Laplace eigenvalue problem on closed surfaces and the Steklov problem on compact surfaces with boundary. We will discuss the Laplace setting first and then return to the Steklov setting to address surfaces of genus greater than zero.

\subsubsection{Laplace maximisers.} 

Analogous to Notation~\ref{nota.sig}, for a closed surface $M$ we write 
\begin{equation}\label{nota.lam}\lambda_k^*(M)=\sup\{\lambda_k(g)|M|_g\}\text{ and  } \lambda_k^*(\gamma)=\lambda_k^*(M_{\gamma})\end{equation}
where the supremum is over all Riemannian metrics $g$ on $M$ and where $M_\gamma$ is the orientable closed surface of genus $\gamma$. 

Motivated by Theorem~\ref{fs max exist}, Petrides proved existence of maximising metrics for the first normalised eigenvalue on orientable surfaces of arbitrary genus provided a gap condition holds.

\begin{thm}\cite{Pe2014}\label{thm: lap max exists} Let $M_\gamma$ be a connected smooth oriented closed surface of genus $\gamma$.  Assume
\begin{equation}\label{eq.lagap}\lambda_1^*(\gamma)>\lambda_1^*(\gamma -1). \end{equation} 
   Then there exists a Riemannian metric $g_0$ on $M_\gamma$, smooth except for possibly a finite number of cone points, such that 
\[\lambda_1(M_\gamma,g_0)|M_{\gamma}|_{g_0}=\lambda_1^*(\gamma),\]
and there exists an associated branched minimal immersion by first eigenfunctions of $M_\gamma$ into a sphere.

Moreover, the set $\mathcal{B}_L$ of positive integers $\gamma$ that satisfy the gap condition~\eqref{eq.lagap} is infinite.
\end{thm}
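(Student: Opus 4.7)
The plan is to adapt the variational approach of Fraser--Schoen (see Theorem~\ref{fs max exist}) to the closed-surface Laplace setting. Starting from a maximising sequence $(g_j)$ of smooth unit-area Riemannian metrics on $M_\gamma$ with $\lambda_1(M_\gamma,g_j)\to \lambda_1^*(\gamma)$, I would seek a subsequential limit $g_0$ attaining the supremum. The framework of variational eigenvalues for Radon measures (Appendix~\ref{Section:Radon}) is well suited to this task: write $g_j = \beta_j g_{\mathrm{ref},j}$ for a reference smooth metric $g_{\mathrm{ref},j}$ in the conformal class $[g_j]$ and study the weak-$\star$ limit of the measures $\beta_j\,dV_{g_{\mathrm{ref},j}}$.

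Two compactness issues must be resolved: concentration of the area measure, and degeneration of the conformal class in the moduli space $\mathcal{M}_\gamma$. Upper semicontinuity of variational eigenvalues under weak-$\star$ limits handles the first provided the limiting measure does not charge a set of zero capacity; this is controlled through the sharp conformal bounds (Korevaar, Hassannezhad) showing $\lambda_1^*(M,[g])$ is finite on each conformal class. The main obstacle, and the step designed specifically to be defeated by the gap hypothesis, is conformal degeneration. If the conformal classes $[g_j]$ were to escape every compact subset of $\mathcal{M}_\gamma$, then by Deligne--Mumford compactness the surfaces would degenerate along a family of pinching simple closed geodesics, and the limit would be a nodal surface whose normalisation is a disjoint union of surfaces of \emph{strictly smaller} total genus. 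A careful cutting-and-gluing argument across the thin necks, together with the identification of a connected component carrying the dominant spectral mass, would then yield the contradictory bound $\lambda_1^*(\gamma) \leq \lambda_1^*(\gamma-1)$.

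Once conformal-class compactness is secured, I would invoke the existence of $\lambda_1$-conformal maximisers in each conformal class, due to Petrides~\cite{Pe2014} and Nadirashvili--Sire~\cite{NaSi2015}, to replace each $g_j$ (without loss) by such a conformal maximiser within $[g_j]$. Extracting a further subsequence convergent in $\mathcal{M}_\gamma$ yields a limit metric $g_0$ on $M_\gamma$, which by the regularity theory of Kokarev~\cite{Ko2014} is smooth away from finitely many interior conical singularities, and the equality $\lambda_1(M_\gamma,g_0)|M_\gamma|_{g_0} = \lambda_1^*(\gamma)$ follows from upper semicontinuity combined with the maximising nature of the sequence. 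The existence of an associated branched minimal immersion by first eigenfunctions into a sphere then follows from the version of Theorem~\ref{thm.nada min} valid for metrics with conical singularities, as developed by Nadirashvili--Sire and Petrides.

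For the infinitude of $\mathcal{B}_L$, I would establish two auxiliary facts. First, the weak monotonicity $\lambda_1^*(\gamma) \geq \lambda_1^*(\gamma-1)$, proved by attaching a thin handle of vanishing area to a near-maximising metric on $M_{\gamma-1}$ and invoking a continuity-of-eigenvalues argument under such a degeneration (in the spirit of the analogous Steklov inequality~\eqref{eq: nonstrict gap}). Second, the unbounded growth $\lambda_1^*(\gamma) \to \infty$, which follows from the existence of hyperbolic surfaces of genus $\gamma$ with $\lambda_1 \geq c > 0$ (for example via arithmetic surfaces and Selberg's bound, or via Buser--Burger--Dodziuk): Gauss--Bonnet forces the hyperbolic area to equal $4\pi(\gamma-1)$, whence $\lambda_1^*(\gamma) \geq 4\pi c(\gamma-1)$. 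A non-decreasing integer-indexed sequence that tends to $+\infty$ must strictly increase at infinitely many indices, so $\mathcal{B}_L$ is infinite.
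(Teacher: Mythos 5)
Your proof of the infinitude of $\mathcal{B}_L$ is essentially identical to what the paper itself records: weak monotonicity $\lambda_1^*(\gamma)\geq\lambda_1^*(\gamma-1)$ (attributed to Colbois--El Soufi \cite{CoEl2003}, proved exactly by your thin-handle argument) together with the explicit lower bound $\lambda_1^*(\gamma)\geq\tfrac34(\gamma-1)$ from Brooks--Makover/Buser--Burger--Dodziuk, so that a non-decreasing unbounded sequence must strictly increase infinitely often. For the main existence statement the paper gives no proof, merely citing \cite{Pe2014}, and your sketch captures the essential ingredients of Petrides' argument: conformal-class compactness via the gap hypothesis and Deligne--Mumford degeneration, conformal maximisers, Kokarev-type regularity, and the branched minimal immersion via Theorem~\ref{thm.nada min}.

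Two points in your sketch of the existence proof are, however, off. First, the claim that mere \emph{finiteness} of $\lambda_1^*(M,[g])$ (Korevaar, Hassannezhad) controls concentration of the area measure is incorrect. If the area concentrates at a point a round sphere bubbles off, and the normalised first eigenvalue tends to $\lambda_1^*(\sph^2)=8\pi$, which is finite. What actually rules out concentration within a fixed conformal class is the \emph{strict} inequality $\lambda_1^*(M,[g])>8\pi$ for $M$ not diffeomorphic to the sphere (Inequality~\eqref{eq:>8pi}, due to Petrides), not finiteness. Second, the order of operations should be reversed: one replaces each $g_j$ by a $\lambda_1$-conformal maximiser within $[g_j]$ at the outset, since it is the associated sphere-valued harmonic maps of uniformly bounded energy $\tfrac12\lambda_1^*(M,[g_j])|M|$ that make the Deligne--Mumford/bubble analysis under conformal degeneration tractable; your ``cutting-and-gluing across thin necks'' step is not readily carried out for an arbitrary maximising sequence $(g_j)$ without this structural input. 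Neither issue is fatal to the outline, since you ultimately invoke the conformal maximiser existence as a black box, but the first paragraph as written does not give a correct account of why concentration is excluded.
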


Colbois and El Soufi  \cite{CoEl2003} earlier showed that the non-strict version of Inequality~\eqref{eq.lagap} always holds.  Petrides noted that the final statement of the theorem is immediate from the lower bound 
\begin{equation}\label{eq:Brma}\lambda_1^*(\gamma)\geq \frac{3}{4}(\gamma-1),\end{equation}
 which -- as pointed out in \cite{FrSc2013} -- follows from work of Brooks and Makover \cite{BrMa2001} (see also Buser-Burger-Dodziuk \cite{BuBuDo1988}).

Matthiesen and Siffert extended Petrides' result to the non-orientable setting.  Following their notation, we will use the superscript $K$ (as in Klein) to denote non-orientable.  Thus $M_\gamma^K$ will denote the closed non-orientable surface of genus $\gamma$ ($\gamma$ is sometimes referred to as the ``non-orientable genus''), and we write
\[\lambda_k^*(\gamma)^K:=\lambda_k^*(M_\gamma^K).
\]

 \begin{thm}\label{thm: lap max exists 2}\cite[Theorem 1.2]{MaSi2021}
Let $M^K_\gamma$ denote the smooth closed non-orientable surface of non-orientable genus $\gamma$.   Assume 
\begin{equation}\label{eq:lagapK}\lambda_1^*(\gamma)^K>\max\left\{\lambda_1^*(\gamma-1)^K, \lambda_1^*\left(\left\lfloor\frac{\gamma-1}{2}\right\rfloor\right)^K\right\}.\end{equation}
Then there exists a Riemannian metric $g_0$ on $M^K_\gamma$, smooth except for possibly a finite number of cone points, such that 
\[\lambda_1(M^K_\gamma,g_0)|M^K_\gamma|_{g_0}=\lambda_1^*(\gamma)^K.\]
 \end{thm}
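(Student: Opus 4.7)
The plan is to adapt Petrides' proof of Theorem~\ref{thm: lap max exists} to the non-orientable setting, following the general variational strategy initiated by Fraser and Schoen in the Steklov case (Theorem~\ref{fs max exist}). Write $M = M^K_\gamma$ and let $\tilde M$ denote its orientable double cover, which has orientable genus $\gamma-1$. Take a sequence $\{g_j\}$ of smooth Riemannian metrics on $M$ with $|M|_{g_j}=1$ and $\lambda_1(M,g_j)\to \lambda_1^*(\gamma)^K$. The non-orientable analog of the conformal-class existence results of Nadirashvili--Sire and Petrides, obtained by lifting to $\tilde M$ and working $\mathbb Z_2$-equivariantly, allows us to replace each $g_j$ by a conformal maximiser, smooth away from a finite set of conical singularities, together with an associated harmonic map $\Phi_j:(M,g_j)\to \sph^{m_j}$ whose components are $\lambda_1(M,g_j)$-eigenfunctions, as in Theorem~\ref{thm: El harmonic}.

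The heart of the argument is to use the strict gap hypothesis~\eqref{eq:lagapK} to show that the sequence of conformal classes $[g_j]$ remains in a compact subset of the moduli space $\mathcal M_K(\gamma)$ of conformal structures on $M$. If it did not, a bubble-and-neck analysis on the harmonic maps $\Phi_j$, carried out on the cover $\tilde M$ so as to use the well-developed orientable theory, would produce in the limit a proper subsurface carrying all the normalised first eigenvalue mass. In the non-orientable case there are two types of neck pinching to control. A two-sided non-separating pinching reduces the non-orientable genus by one and bounds the limit by $\lambda_1^*(\gamma-1)^K$. A separating pinching, or a pinching of a one-sided curve which, when combined with the $\mathbb Z_2$-equivariance forced by $\tilde M$, restricts the surviving piece to have non-orientable genus at most $\lfloor(\gamma-1)/2\rfloor$, bounds the limit by $\lambda_1^*(\lfloor(\gamma-1)/2\rfloor)^K$. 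The strict gap~\eqref{eq:lagapK} excludes both scenarios.

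Having secured conformal compactness, pass to a subsequential limit $[g_\infty]$ and to a weak-$\ast$ limit $\mu$ of the area measures $dV_{g_j}$. The continuity of variational eigenvalues for Radon measures (cf.\ Theorem~\ref{thm:ContinuityRadon}) identifies $\mu$ as an extremal measure for $\lambda_1$ in the conformal class $[g_\infty]$; a further bubble analysis rules out any atomic part of $\mu$, since atoms would again split off a surface controlled by the right-hand side of~\eqref{eq:lagapK}. The regularity theory of Kokarev, Nadirashvili--Sire and Petrides then implies that $\mu=e^{2u_\infty}\,dV_{g_\infty}$ for a function $u_\infty$ that is smooth except at finitely many conical singularities, yielding the desired metric $g_0$. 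The principal obstacle is the degeneration analysis of the previous paragraph: enumerating the possible pinchings of non-orientable conformal structures and proving sharp bubble/neck estimates that tie each of them to one of the two terms in~\eqref{eq:lagapK}. The appearance of the $\lfloor(\gamma-1)/2\rfloor$ term, which has no counterpart in the orientable Theorem~\ref{thm: lap max exists}, precisely reflects this essentially non-orientable phenomenon, arising from the interplay between pinching curves and the orientation-reversing involution on $\tilde M$.
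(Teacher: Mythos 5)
The survey does not prove this theorem: it is stated as a cited result of Matthiesen and Siffert \cite[Theorem 1.2]{MaSi2021}, with no argument given in the paper. So there is no ``paper's own proof'' to compare against; I can only assess your sketch on its own merits, against the known structure of the argument in the cited reference.

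Your skeleton---maximising sequence, replacement by conformal maximisers, conformal compactness via the gap hypothesis, passage to a weak-$\ast$ limit of area measures, exclusion of atoms, and regularity modulo conical singularities---is the correct framework, and it is indeed the Petrides-style strategy that Matthiesen and Siffert follow. The genuine gap is that the decisive step is asserted rather than argued: you claim, but do not show, that every degeneration of a sequence of non-orientable conformal structures forces the limiting normalised first eigenvalue below one of the two terms in~\eqref{eq:lagapK}. Moreover, the topological accounting you do give appears to be wrong in at least one place. On $M^K_\gamma$ (with $\chi = 2-\gamma$), pinching a \emph{two-sided} non-separating curve and resolving the node raises $\chi$ by $2$, producing $M^K_{\gamma-2}$ (or an orientable surface), not $M^K_{\gamma-1}$; it is pinching a \emph{one-sided} curve that raises $\chi$ by $1$ and yields $M^K_{\gamma-1}$. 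You have the roles reversed, apparently by carrying over the orientable intuition. Finally, the derivation of the $\lfloor(\gamma-1)/2\rfloor$ term from ``$\mathbb Z_2$-equivariance forced by $\tilde M$'' is a heuristic, not a proof: one has to enumerate the nodal limits of a degenerating sequence in the non-orientable moduli space, track which components remain non-orientable, account for orientable pieces (note that in the analogous Steklov gap~\eqref{eq.gap.nonorient} the corresponding term is an \emph{orientable} quantity $\sigma_1^*(\lfloor(\gamma-1)/2\rfloor,b)$, a hint that the accounting is subtler than your sketch suggests), and bound the concentration of the limit measure on each piece. That case analysis is precisely the technical content of \cite{MaSi2021}, and in your sketch it is left as a black box.
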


As in the orientable case, we have

\begin{prop}\label{prop:gapK inf many}
Inequality~\eqref{eq:lagapK} holds for infinitely many $\gamma\in \Z^+$.
\end{prop}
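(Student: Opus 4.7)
The plan is to reduce Proposition~\ref{prop:gapK inf many} to two facts about the sequence $\bigl(\lambda_1^*(\gamma)^K\bigr)_{\gamma\geq 1}$: that it is non-decreasing in $\gamma$, and that it tends to infinity. Granting these, notice that since $\lfloor(\gamma-1)/2\rfloor\leq \gamma-1$, iterating monotonicity gives $\lambda_1^*\bigl(\lfloor(\gamma-1)/2\rfloor\bigr)^K\leq \lambda_1^*(\gamma-1)^K$, so the maximum on the right-hand side of~\eqref{eq:lagapK} reduces to $\lambda_1^*(\gamma-1)^K$. A non-decreasing sequence of real numbers that is unbounded cannot be eventually constant, so the set of indices $\gamma$ at which it strictly increases is infinite; each such $\gamma$ satisfies~\eqref{eq:lagapK}.

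For the monotonicity step I would adapt the construction used in the orientable case by Colbois and El Soufi~\cite{CoEl2003}. Given a Riemannian metric $g$ on $M^K_{\gamma-1}$ whose normalised first eigenvalue is within $\varepsilon$ of $\lambda_1^*(\gamma-1)^K$, one excises a small geodesic disk and attaches a crosscap (a small M\"obius band) of scale $\delta>0$, producing a Riemannian metric $g_\delta$ on $M^K_\gamma$. A standard continuity/surgery argument, analogous to the one used for handle attachments on closed orientable surfaces in~\cite{CoEl2003}, shows that $\lambda_1(M^K_\gamma,g_\delta)|M^K_\gamma|_{g_\delta}\to \lambda_1(M^K_{\gamma-1},g)|M^K_{\gamma-1}|_g$ as $\delta\to 0$. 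Letting first $\delta\to 0$ and then $\varepsilon\to 0$ yields $\lambda_1^*(\gamma)^K\geq \lambda_1^*(\gamma-1)^K$.

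For the unboundedness step, the plan is to establish a non-orientable analogue of the Brooks--Makover lower bound~\eqref{eq:Brma} of the form $\lambda_1^*(\gamma)^K\geq c(\gamma-2)$ for some absolute constant $c>0$. The cleanest route is to take an orientable Brooks--Makover hyperbolic surface $(\widetilde M_{\gamma-1},\widetilde g)$ of genus $\gamma-1$ carrying a free, orientation-reversing isometric involution $\iota$; such surfaces can be arranged by performing the trinion gluings associated to a random cubic graph admitting a fixed-point-free involution, and then dividing by $\iota$ to obtain a hyperbolic non-orientable surface $M^K_\gamma$ of non-orientable genus $\gamma$. Since the projection is a Riemannian covering, any eigenfunction of the quotient lifts to an eigenfunction of the cover with the same eigenvalue, giving $\lambda_1(M^K_\gamma)\geq \lambda_1(\widetilde M_{\gamma-1})$; combining this with $|M^K_\gamma|=\tfrac{1}{2}|\widetilde M_{\gamma-1}|=\pi(\gamma-2)$ and the Brooks--Makover spectral gap yields a lower bound of the advertised form, in particular $\lambda_1^*(\gamma)^K\to\infty$.

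The main obstacle will be verifying that the Brooks--Makover construction can genuinely be arranged $\iota$-equivariantly for all sufficiently large $\gamma$; this requires a careful, essentially combinatorial argument about random involutive cubic graphs, and is where the proof departs from Petrides' direct application of~\eqref{eq:Brma} in the orientable setting. Once this construction is in hand, the remaining steps are entirely parallel to the orientable situation treated in the final sentence of Theorem~\ref{thm: lap max exists}.
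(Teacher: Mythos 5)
Your proposal follows essentially the same route as the paper's own argument: reduce to showing $\lim_{\gamma\to\infty}\lambda_1^*(\gamma)^K=\infty$ by passing to the orientable double cover and invoking the Brooks--Makover lower bound~\eqref{eq:Brma}, then note that the gap condition~\eqref{eq:lagapK} must then hold infinitely often. Two remarks on the differences.

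First, the monotonicity step you introduce (adapting Colbois--El Soufi's crosscap attachment to show $\lambda_1^*(\gamma)^K\geq\lambda_1^*(\gamma-1)^K$) is not actually needed, and the paper dispenses with it. Unboundedness alone suffices: an unbounded sequence $(a_\gamma)$ has infinitely many record indices $\gamma$ at which $a_\gamma>\max_{\gamma'<\gamma}a_{\gamma'}$ (otherwise the sequence would be eventually dominated by its maximum up to some $\gamma_0$, hence bounded). At each record index both $a_\gamma>a_{\gamma-1}$ and $a_\gamma>a_{\lfloor(\gamma-1)/2\rfloor}$, since both indices on the right are smaller than $\gamma$, so~\eqref{eq:lagapK} holds there. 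Your monotonicity lemma is not harmful, but it is a detour and would require its own careful proof in the non-orientable setting.

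Second, you correctly flag a point that the paper's very terse proof glides over. The observation that $\operatorname{spec}(M^K_\gamma,g)\subset\operatorname{spec}(\widetilde M,\widetilde g)$ gives $\lambda_1(M^K_\gamma,g)\,|M^K_\gamma|_g\geq\tfrac12\,\lambda_1(\widetilde M,\widetilde g)\,|\widetilde M|_{\widetilde g}$, but the right-hand side is a supremum only over $\iota$-invariant metrics on the orientable double cover, which is not a priori comparable to $\lambda_1^*(\gamma-1)$. One does indeed need to know that the Brooks--Makover (or Buser--Burger--Dodziuk) construction can be carried out equivariantly, producing closed hyperbolic surfaces of every sufficiently large genus that carry a free orientation-reversing isometric involution and have $\lambda_1$ bounded away from zero; equivalently, one needs a uniform spectral-gap bound for non-orientable hyperbolic surfaces. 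This is the genuine content of the unboundedness step, and your plan is right to identify the combinatorial input (involutive trivalent graphs with good expansion) as the place where the argument departs from a verbatim citation of~\eqref{eq:Brma}.
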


\begin{proof}
Given a closed non-orientable manifold $M$, every Riemannian $g$ on $M$ pulls back to a Riemannian metric $\tilde{g}$ on the two-fold orientable cover $\tilde{M}$, and the Laplace spectrum of $(M,g)$ is contained in that of $(\tilde{M},\tilde{g})$.     In view of Inequality~\eqref{eq:Brma}, we thus have
\[\lim_{\gamma\to\infty}\,\lambda_1^*(\gamma)^K=\infty\]
from which the proposition follows.
\end{proof}

Explicit maximising metrics are known only for the four surfaces mentioned earlier ($\sph^2$, $T^2$, $\R {\mathbf P}^2$ and the Klein bottle) and the orientable surface of genus two.   For the latter, Nayatani and Shoda \cite{NaSh2019} verified that a singular Bolza metric realises $\lambda_1^*(2)$ as conjectured in \cite{JLNNP2005}.

\subsubsection{Steklov maximisers on surfaces of higher genus.}

Petrides generalised Theorem~\ref{fs max exist} to surfaces of genus $>0$.    

\begin{thm}\label{mp max exists}\cite[Th\'eor\`eme 16]{Pe2015},\cite[Theorem 1]{Pe2019}
 Let $\om:=\om_{\gamma,b}$ be the compact orientable surface of genus $\gamma$ with $b$ smooth boundary components.  If 
 \begin{equation}\label{eq.gap}
\sigma_1^*(\gamma,b)> \max\{\sigma_1^*(\gamma-1, b+1), \sigma_1^*(\gamma, b-1)\},
\end{equation}
then there exists a smooth Riemannian metric $g$ on $\om$ such that 
\[\sigma_1(g)|\partial\om|_g=\sigma_1^*(\om).\]
\end{thm}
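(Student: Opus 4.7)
The plan is to adapt the Fraser--Schoen strategy from Theorem~\ref{fs max exist} while accounting for the richer set of degenerations available in the moduli space of surfaces of higher genus. Start with a maximising sequence of smooth Riemannian metrics $(g_n)_{n\in\N}$ on $\om=\om_{\gamma,b}$, normalised so that $|\partial\om|_{g_n}=1$ and $\sigma_1(\om,g_n)\to\sigma_1^*(\gamma,b)$. By the conformal invariance discussed in Subsection~\ref{subsec: conf invar}, only the conformal class $[g_n]$ and the boundary measure really matter, so I would first replace each $g_n$ by a conformal maximiser within its conformal class. Such conformal maximisers exist by an analogue of the conformal version of Theorem~\ref{thm: FrSc conf class} established by Petrides in arbitrary genus, and by that result each comes equipped with a free boundary harmonic map $u_n:\om\to\B^{m_n}$ whose components are $\sigma_1(\om,g_n)$-eigenfunctions and whose squared norms sum to $1$ on $\partial\om$.

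The first main step is to show that the conformal classes $[g_n]$ stay in a compact part of the moduli space $\mathcal M_{\gamma,b}$ of conformal structures on surfaces of genus $\gamma$ with $b$ boundary components. Working in a Deligne--Mumford-type compactification of $\mathcal M_{\gamma,b}$, if $[g_n]$ leaves every compact set then after passing to a subsequence we obtain a nodal limit surface $\om_\infty$. The two possible boundary-preserving degenerations are: (i) a boundary component collapses to a point, producing a surface of type $(\gamma,b-1)$ (possibly with an interior node); (ii) an interior simple closed curve pinches, producing either a connected surface of lower genus with additional boundary components, or a disjoint union of two pieces whose combined type is bounded by $(\gamma-1,b+1)$ in the relevant partial order on $(\gamma,b)$. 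In all cases, the limiting configuration is a nodal surface whose spectral behaviour is controlled by $\sigma_1^*$ of surfaces of lower complexity. This is where the gap hypothesis~\eqref{eq.gap} is used: if the conformal classes degenerated, a standard argument (localising the Steklov eigenfunctions using test-function arguments on the components of $\om_\infty$, à la Fraser--Schoen Section~5) would produce the bound $\sigma_1^*(\gamma,b)\leq\max\{\sigma_1^*(\gamma-1,b+1),\sigma_1^*(\gamma,b-1)\}$, contradicting the strict inequality~\eqref{eq.gap}. Hence $[g_n]$ subconverges to a conformal class $[g_\infty]$ on $\om_{\gamma,b}$.

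The second main step is to prevent degeneration of the boundary measures induced by the free boundary harmonic maps $u_n$. By the first step, after fixing a smooth reference metric $g_\infty$ in the limit conformal class, the $u_n$ are $\sigma_1(\om,g_n)$-eigenmaps on $(\om,g_\infty)$ with uniformly bounded Dirichlet energy. Standard elliptic theory (bubble-tree/concentration analysis for free boundary harmonic maps into $\B^m$) produces a weak limit $u_\infty:\om\to\overline{\B^m}$, and the potential loss of energy is captured by a finite collection of bubbles: either interior harmonic spheres, or free boundary half-harmonic discs at boundary points. The possibility that the boundary measure of $u_n$ concentrates at a point corresponds to the bubbling of a free boundary half-disc whose own first Steklov eigenvalue times boundary length is exactly $2\pi=\sigma_1^*(0,1)$. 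Arguing as in Fraser--Schoen, one uses the gap condition (which in particular implies $\sigma_1^*(\gamma,b)>2\pi$ via Weinstock, since $\sigma_1^*(\gamma,b)>\sigma_1^*(\gamma,b-1)\geq\cdots\geq\sigma_1^*(0,1)$ by Inequality~\eqref{eq: nonstrict gap}) to rule out bubble loss, so that $u_\infty$ already saturates the maximum.

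The final step is to extract from $u_\infty$ the maximising metric $g$ and verify smoothness. The pullback by $u_\infty$ of the Euclidean metric on $\B^m$ (or more precisely the metric $\sigma_\infty$-isometric to $g_\infty$ whose boundary length density is $|du_\infty|^2$ on $\pam$) gives the candidate maximiser; the characterisation in Theorem~\ref{thm.fs2} identifies it as coming from a proper branched conformal free boundary minimal immersion, and the regularity theory for such immersions and for conformal maximisers (as in Kokarev's regularity theorem and Petrides' subsequent work) shows that the resulting metric $g$ is smooth on $\om$. The hardest step, and the only one really specific to the higher genus setting, is the compactness of conformal structures in step one: the presence of handles allows for genuinely new pinching modes compared with the planar case handled by Fraser and Schoen, and the book-keeping of how the nodal limit surface's complexity relates to $(\gamma-1,b+1)$ and $(\gamma,b-1)$ is the heart of the argument.
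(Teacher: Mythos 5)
Your outline captures the high-level architecture that the survey itself ascribes to Petrides' proof: pass to conformal maximisers (Theorem~\ref{thm:ex stek conf max}), use their associated free boundary harmonic maps, control degeneration of conformal structures in moduli space, and rule out concentration/bubbling of the boundary measure, with the gap hypothesis~\eqref{eq.gap} doing the work at both of the latter two steps. The survey does not reproduce Petrides' proof (it cites \cite{Pe2015} and \cite{Pe2019} and gives only a high-level commentary), so what can be checked is consistency with that commentary; your sketch is consistent with it.

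Two points deserve correction or elaboration. First, the chain of inequalities you use to conclude $\sigma_1^*(\gamma,b)>2\pi$, namely
\[
\sigma_1^*(\gamma,b)>\sigma_1^*(\gamma,b-1)\geq\cdots\geq\sigma_1^*(0,1),
\]
is incomplete as written: Inequality~\eqref{eq: nonstrict gap} only decreases $b$ at fixed genus and cannot, by itself, take you from $\sigma_1^*(\gamma,1)$ down to $\sigma_1^*(0,1)$. You should interleave with the genus-decreasing direction of the non-strict version of~\eqref{eq.gap}, e.g.\
\[
\sigma_1^*(\gamma,b)>\sigma_1^*(\gamma,b-1)\geq\cdots\geq\sigma_1^*(\gamma,1)\geq\sigma_1^*(\gamma-1,2)\geq\sigma_1^*(\gamma-1,1)\geq\cdots\geq\sigma_1^*(0,1)=2\pi.
\]
This also matters for your use of Theorem~\ref{thm:ex stek conf max}: that theorem requires the conformal gap $\sigma_1^*(\om,[g_n])>2\pi$ for each $n$, which is not automatic (cf.\ Open Question~\ref{ques:sig >2pi}) but does hold for $n$ large in a maximising sequence, since $\sigma_1^*(\om,[g_n])\geq\sigma_1(\om,g_n)|\partial\om|_{g_n}\to\sigma_1^*(\gamma,b)>2\pi$; you should state this explicitly rather than implicitly assuming the replacement is always available.

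Second, you acknowledge that the moduli-space compactness step is ``the heart of the argument,'' and indeed your sketch there is schematic: to turn it into a proof one has to carefully match each boundary-preserving degeneration in the Deligne--Mumford compactification (boundary component collapsing, separating and non-separating interior pinches, possibly several at once) to a sum of normalised eigenvalue contributions from the pieces of the nodal limit, and show that this sum is bounded by $\max\{\sigma_1^*(\gamma-1,b+1),\sigma_1^*(\gamma,b-1)\}$ using the subadditivity encoded in~\eqref{eq.gapk} and its ilk. The survey also signals that Petrides' argument hinges on ``a special choice of maximising sequence,'' not merely passing to conformal maximisers; this is a genuine technical ingredient that your sketch omits and that makes the $H^1$/measure compactness tractable. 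Neither of these is a conceptual error in your approach, but they are the places where a fleshed-out proof would have to do real work.
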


(On the right-hand side of Inequality~\eqref{eq.gap}, if either of the two quantities is of the form $\sigma_1^*(\gamma', b')$ with $\gamma'<0$ or $b' <1$, then we replace $\sigma_1^*(\gamma', b')$ by zero.)
The non-strict version of Inequality~\eqref{eq.gap} always holds.

While the proof of Theorem~\ref{mp max exists} is motivated by and uses many ideas from the proof of Theorem~\ref{fs max exist}, it differs from that of Theorem~\ref{fs max exist} even in the genus zero case and thus provides a new proof of the latter theorem. 

Matthiesen and Petrides \cite{MaPe2020} proved an analog of Theorem~\eqref{mp max exists} for non-orientable surfaces.  
Let $\om^K_{\gamma,b}$ be the non-orientable surface of non-orientable genus $\gamma$ with $b$ boundary components.    Write 
\[\sigma_1^*(\gamma,b)^K=\sigma_1^*(M_{\gamma, b}^K).\]
   
\begin{thm}\cite{MaPe2020}\label{mp max exists K}
In the notation of the preceding paragraph, if 
  \begin{equation}\label{eq.gap.nonorient}
\sigma_1^*(\gamma, b)>\max\left\{\sigma_1^*\left(\left\lfloor\frac{\gamma-1}{2}\right\rfloor, b\right),\, \sigma_1^*(\gamma-1,b)^K,\, \sigma_1^*(\gamma, b-1)^K\right\},
\end{equation}
 then there exists a smooth Riemannian metric $g$ on $M_{\gamma, b}^K$ that realises $\sigma_1^*(\gamma, b)$.

 \end{thm}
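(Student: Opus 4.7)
\textbf{Proof proposal for Theorem~\ref{mp max exists K}.} The plan is to follow the variational strategy of Fraser--Schoen and of Petrides for the orientable case (Theorem~\ref{mp max exists}), using the orientable double cover $\pi:\widetilde{\om}\to\om$ with covering involution $\iota$ to transfer the problem to an equivariant one. A Riemannian metric $g$ on $\om=M^K_{\gamma,b}$ pulls back to an $\iota$-invariant metric $\widetilde{g}=\pi^{\star}g$ on $\widetilde{\om}$, and Steklov eigenfunctions on $(\om,g)$ correspond to $\iota$-invariant eigenfunctions on $(\widetilde{\om},\widetilde{g})$. In particular $\sigma_1(\om,g)|\partial\om|_g=\tfrac{1}{2}\sigma_1^{\,\iota}(\widetilde{\om},\widetilde{g})|\partial\widetilde{\om}|_{\widetilde{g}}$, where $\sigma_1^{\,\iota}$ denotes the first nonzero eigenvalue restricted to $\iota$-invariant functions. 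I would start with a maximising sequence $g_n$ of smooth metrics on $\om$, renormalised so that $|\partial\om|_{g_n}=1$, and consider the lifts $\widetilde{g}_n$.

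The first main task is compactness in the moduli space of conformal structures. Using conformal invariance of the Dirichlet energy (Proposition~\ref{prop: conf invar}) together with Corollary~\ref{cor:sigmaisom}, I may choose representatives of $[\widetilde{g}_n]$ that are $\iota$-invariant and put into a uniformisation normal form; the goal is to show that the (equivariant) conformal structures stay in a compact part of the corresponding moduli space. The Deligne--Mumford-type compactification for surfaces with boundary has a boundary parametrising degenerate surfaces obtained by pinching disjoint simple closed geodesics. In the non-orientable equivariant setting these geodesics fall in three classes: (a) two-sided non-separating curves, whose collapse yields a surface of smaller non-orientable genus but the same number of boundary components, producing in the limit a candidate dominated by $\sigma_1^*(\gamma-1,b)^K$; (b) two-sided separating curves, whose collapse produces a bordered surface with fewer boundary components, giving a bound by $\sigma_1^*(\gamma,b-1)^K$; (c) one-sided curves, whose collapse corresponds on the double cover to pinching a symmetric pair of two-sided non-separating curves, producing an orientable piece of genus $\lfloor(\gamma-1)/2\rfloor$ with $b$ boundary components and hence a bound by $\sigma_1^*(\lfloor(\gamma-1)/2\rfloor,b)$. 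This is where the full strength of the gap inequality~\eqref{eq.gap.nonorient} is used: each possible degeneration leads (via a neck-decomposition argument as in \cite{FrSc2016,Pe2019,MaSi2021}) to an upper bound on $\limsup_{n\to\infty}\sigma_1(g_n)|\partial\om|_{g_n}$ strictly smaller than $\sigma_1^*(\gamma,b)^K$, which is a contradiction.

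Having fixed the conformal class modulo a compact subset of moduli space, the second main task is to prevent the boundary measure from concentrating or vanishing in the limit. Following Petrides' strategy (adapted in \cite{MaSi2021,MaPe2020} to the non-orientable case), I would first replace the maximising sequence $g_n$ by a careful modification obtained by a conformal renormalisation (a center-of-mass argument applied on the double cover and made equivariant using the involution $\iota$) so that the boundary measures are equidistributed in a quantitative sense. Using Theorem~\ref{thm: FrSc conf class}, any conformal maximiser admits a free-boundary harmonic map whose coordinate functions are first Steklov eigenfunctions; the $\varepsilon$-regularity for such maps, together with the equivariance, allows one to argue that bubbling at boundary points would produce in the limit a free-boundary harmonic disk attached to a surface of strictly smaller complexity, again contradicting the gap~\eqref{eq.gap.nonorient}.

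Once compactness in moduli and non-degeneration of the boundary measure are established, a subsequence of the $\widetilde{g}_n$ converges (in a weak sense, modulo an $\iota$-equivariant diffeomorphism) to an $\iota$-invariant limiting metric $\widetilde{g}_\infty$ on $\widetilde{\om}$, descending to a metric $g_\infty$ on $\om$ that realises $\sigma_1^*(\gamma,b)^K$. A priori $g_\infty$ is only smooth away from finitely many interior conical singularities coming from branch points of the associated free-boundary harmonic map; but by Remark~\ref{rem:con sing} the Steklov spectrum is preserved under conformal removal of interior conical singularities, so we may in fact choose $g_\infty$ smooth on all of $\om$. The main obstacle, and the most delicate step, is the analysis of case (c) above: the one-sided curve degeneration requires a careful equivariant neck analysis on the double cover, because a single pinching downstairs corresponds to two simultaneous pinchings upstairs, and one needs to verify that the resulting limit is genuinely orientable of genus $\lfloor(\gamma-1)/2\rfloor$ so that the correct term in the gap hypothesis intervenes.
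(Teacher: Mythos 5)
First, be aware that the survey does not itself prove Theorem~\ref{mp max exists K}: the result is cited from \cite{MaPe2020}, and the text immediately after the statement warns that the gap inequality~\eqref{eq.gap.nonorient} is not even written explicitly there but had to be distilled from Subsection~2.2 of that reference. So there is no in-paper proof to compare against. That said, the framework you outline — lifting to the orientable double cover $\pi:\widetilde{\om}\to\om$ with deck involution $\iota$, noting $\sigma_1(\om,g)|\partial\om|_g=\tfrac{1}{2}\sigma_1^{\iota}(\widetilde{\om},\widetilde{g})|\partial\widetilde{\om}|_{\widetilde{g}}$, running Petrides' variational scheme equivariantly, controlling compactness in moduli space through the gap inequality, ruling out concentration of the boundary measure via a renormalisation/bubbling analysis, and finally conformally removing interior conical singularities by Remark~\ref{rem:con sing} — is the right one and mirrors how Matthiesen and Siffert \cite{MaSi2021} handle the closed non-orientable case, which \cite{MaPe2020} adapts.

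Where your sketch would run into trouble is the dictionary between degeneration types and the three terms in \eqref{eq.gap.nonorient}. You attribute the orientable term $\sigma_1^*\bigl(\lfloor(\gamma-1)/2\rfloor,b\bigr)$ entirely to one-sided curve collapse, but an Euler characteristic count shows this is only half the story. Removing a cross-cap from $M^K_{\gamma,b}$ raises $\chi$ by $1$; if the complement (with the new circle capped by a disk) is non-orientable the result is $M^K_{\gamma-1,b}$ — this is actually the primary source of the term $\sigma_1^*(\gamma-1,b)^K$, not a two-sided pinch as in your (a) — while if the complement is orientable the result is $M_{(\gamma-1)/2,b}$, which is only possible for $\gamma$ odd. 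For $\gamma$ even, the orientable term must instead come from pinching a \emph{two-sided} non-separating curve whose complement is orientable ($\chi$ rises by $2$, giving $M_{(\gamma-2)/2,b}=M_{\lfloor(\gamma-1)/2\rfloor,b}$); your case analysis omits this channel. Conversely, your case (a) — two-sided non-separating pinch — reduces the non-orientable genus by $2$, not $1$, when the complement remains non-orientable; the bound by $\sigma_1^*(\gamma-1,b)^K$ still follows from the non-strict monotonicity of $\sigma_1^*$, but the mechanism is different from what you describe. Finally, the equivariant picture for the one-sided degeneration is also slightly off: the lift to $\widetilde{\om}$ of a one-sided simple closed curve is a single $\iota$-invariant circle (the restricted covering is the connected degree-two cover of the circle, since a loop around the core of a Möbius band reverses orientation), not a pair of curves swapped by $\iota$ as you claim. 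Since the compactness argument is by contradiction — every degeneration channel must be shown to force $\limsup\sigma_1(g_n)|\pam|_{g_n}$ strictly below $\sigma_1^*(\gamma,b)^K$ — a missing or misclassified channel is a genuine hole, not a cosmetic error, and this is exactly the place the proof needs the most care.
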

 
The gap inequality~\eqref{eq.gap.nonorient} is not explicitly stated in \cite{MaPe2020} but is explained in Subsection 2.2 of the article.   

  \begin{remark}\label{rem: errors} 
In \cite{MaPe2020}, the authors put forth a proof of both Inequality~\eqref{eq.gap} and Inequality~\eqref{eq.gap.nonorient}.   The argument followed similar lines to a proof put forward in \cite{MaSi2019} of the gap inequality \eqref{eq.lagap}.  However, as we were completing this survey, we learned that the authors had very recently discovered a computational error in the proofs of all these gap inequalities.  They are working to determine whether the proofs can be corrected.   Thus while it is widely believed that all these gap inequalities hold in full generality, the problem remains open both in the Laplace and the Steklov settings at the time of this writing.      \end{remark}

Karpukhin and Stern proved that there are infinitely many pairs $(\gamma,b)$ for which the gap inequality~\eqref{eq.gap} holds and thus for which $\sigma_1^*(\gamma,b)$ is realised by a smooth metric.  More precisely, they showed:

\begin{prop}\cite[Theorem 1.8]{KaSt2020}\label{prop: KS gap inf} Let $\mathcal{B}_L$ be the infinite set defined in Theorem~\ref{thm: lap max exists}.   For each $\gamma\in \mathcal{B}_L$, there exists infinitely many $b\in \Z^+$ such that the pair $(\gamma,b)$ satisfies the Steklov gap inequality~\eqref{eq.gap}.
\end{prop}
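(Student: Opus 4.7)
We need infinitely many $b\in\Z^+$ satisfying
\begin{equation*}
\sigma_1^*(\gamma,b) > \max\{\sigma_1^*(\gamma-1,b+1),\,\sigma_1^*(\gamma,b-1)\}.
\end{equation*}
The argument relies on three inputs from the excerpt: (i) the uniform bound $\sigma_1^*(\gamma',b)\leq \lambda_1^*(\gamma')$ together with the homogenisation limit $\sigma_1^*(\gamma',b)\to \lambda_1^*(\gamma')$ as $b\to\infty$ (Remark~\ref{rem:b to infty}); (ii) the hypothesis $\gamma\in\mathcal{B}_L$, i.e.\ $\lambda_1^*(\gamma)>\lambda_1^*(\gamma-1)$; and (iii) the strict Karpukhin--Stern inequality~\eqref{eq:karp stern}. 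Set $\delta:=\lambda_1^*(\gamma)-\lambda_1^*(\gamma-1)>0$. By (i), there is a threshold $b_0$ with $\sigma_1^*(\gamma,b)>\lambda_1^*(\gamma)-\delta/2$ for all $b\geq b_0$; since $\sigma_1^*(\gamma-1,b+1)\leq \lambda_1^*(\gamma-1)=\lambda_1^*(\gamma)-\delta$, the first half $\sigma_1^*(\gamma,b)>\sigma_1^*(\gamma-1,b+1)$ holds for every $b\geq b_0$.

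For the second half, the sequence $\{\sigma_1^*(\gamma,b)\}_b$ is non-decreasing (non-strict version of~\eqref{eq.gap}, noted right after Theorem~\ref{mp max exists}), bounded above by $\lambda_1^*(\gamma)$, and converges to $\lambda_1^*(\gamma)$. Thus it either strictly increases infinitely often---giving infinitely many $b$ with $\sigma_1^*(\gamma,b)>\sigma_1^*(\gamma,b-1)$, and intersecting with $\{b\geq b_0\}$ finishes the proof---or it is eventually constant at $\lambda_1^*(\gamma)$. To exclude the latter, it suffices to prove the key claim that $\sigma_1^*(\gamma,b)<\lambda_1^*(\gamma)$ for every finite $b\geq 1$.

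The key claim is the main obstacle. A direct application of~\eqref{eq:karp stern}---fix a metric $g$ on $\Omega_{\gamma,b}$, realise it as the restriction of some $\tilde g$ on $M_\gamma$, and write $\sigma_1(g)L(\partial\Omega)<\lambda_1^*(M_\gamma,[\tilde g])\leq \lambda_1^*(\gamma)$---produces only a pointwise strict bound whose supremum over admissible $(g,\tilde g)$ could in principle still equal $\lambda_1^*(\gamma)$. To upgrade this to a uniform gap, following Karpukhin and Stern~\cite{KaSt2020}, one performs a concentration--compactness analysis on a maximising sequence $g_n$ of metrics on the fixed surface $\Omega_{\gamma,b}$, normalised using scale invariance and $\sigma$-conformality (Corollary~\ref{cor:sigmaisom}). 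If the conformal classes $[g_n]$ remain in a compact part of moduli space, the limit is a genuine smooth $\sigma_1 L$-maximiser on $\Omega_{\gamma,b}$, to which~\eqref{eq:karp stern} applies with strict inequality. Otherwise the sequence degenerates by pinching a neck, producing lower-complexity limit pieces whose $\sigma_1 L$-values can be controlled by induction on $(\gamma,b)$ combined with (i); intuitively, the homogenisation saturation $\lambda_1^*(\gamma)$ is only attained as $b\to\infty$, so no fixed-$b$ configuration can reach it. Carrying out this dichotomy rigorously---especially excluding the possibility that a finite-$b$ maximising sequence witnesses the full conformal supremum---is the substantive technical content.
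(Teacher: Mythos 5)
Your argument correctly reduces the problem to showing $\sigma_1^*(\gamma,b)<\lambda_1^*(\gamma)$ for every finite $b$, and you correctly observe that a pointwise application of~\eqref{eq:karp stern} to individual metrics does not immediately rule out the supremum equaling $\lambda_1^*(\gamma)$. But your proposed fix---redoing a concentration--compactness analysis on a maximising sequence---is not carried out; you explicitly label it ``the substantive technical content'' and stop there. That is where the gap lies, and it is worth being precise about how the paper avoids it.

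The paper does not prove the key claim directly. Instead it argues by contradiction exactly at the point where your proposal stalls: if the non-decreasing sequence $b\mapsto\sigma_1^*(\gamma,b)$ is eventually constant, it equals $\lambda_1^*(\gamma)$ for all large $b$ by~\eqref{eq:limsig}. Take $b_0$ to be the \emph{smallest} such $b$. Then $\sigma_1^*(\gamma,b_0)>\sigma_1^*(\gamma,b_0-1)$ by minimality, and (since $b_0$ is in particular large enough) $\sigma_1^*(\gamma,b_0)>\lambda_1^*(\gamma-1)\geq\sigma_1^*(\gamma-1,b_0+1)$. So the full Steklov gap inequality~\eqref{eq.gap} holds \emph{at} $b_0$---precisely the hypothesis of Petrides' Theorem~\ref{mp max exists}. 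That theorem then hands you a genuine smooth metric $g$ on $\om_{\gamma,b_0}$ realising $\sigma_1(g)|\partial\om_{\gamma,b_0}|_g=\lambda_1^*(\gamma)$. Filling the $b_0$ boundary disks and extending $g$ to a metric on $M_\gamma$, the \emph{pointwise} strict inequality~\eqref{eq:karp stern} now applies to this particular metric and gives $\lambda_1^*(\gamma)=\sigma_1(g)|\partial\om_{\gamma,b_0}|_g<\lambda_1^*(\gamma)$, a contradiction. In other words, the trick is that the contradiction hypothesis forces the gap condition at $b_0$, so you do not need to build your own maximiser by concentration--compactness: Petrides' existence result supplies it for free, and then the very fact you worried was ``only pointwise'' is exactly what closes the argument.
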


At the time they proved this result, the set $\mathcal{B}_L$ was understood to be all of $\Z^+$ and thus the condition that $\gamma\in \mathcal{B}_L$ does not explicitly appear in their statement.   

We include the short proof.
\begin{proof} 
By Remark~\ref{rem:b to infty}, we have $\sigma_1^*(\gamma,b)\leq \lambda_1^*(\gamma)$ for all $b$ and 
\begin{equation}\label{eq:limsig}\lim_{b\to\infty}\,\sigma_1^*(\gamma,b)= \lambda_1^*(\gamma).\end{equation}
The hypothesis that $\gamma\in \mathcal{B}_L$ thus implies for all sufficiently large $b$  that \begin{equation}\label{eq:half gap}\sigma_1^*(\gamma, b)>\lambda_1^*(\gamma-1) \geq \sigma_1^*(\gamma-1, b+1). \end{equation}

Comparing ~\eqref{eq:half gap} with the Steklov gap inequality, it remains to show that 
\begin{equation}\label{eq: wts}\sigma_1^*(\gamma,b)>\sigma_1^*(\gamma, b-1)\end{equation} for infinitely many $b$.  Suppose the contrary.  Since the non-strict version of the Steklov gap inequality is known to hold, the map $b\to \sigma_1^*(\gamma, b)$ is monotone non-decreasing and thus must eventually be constant.  Equation~\eqref{eq:limsig} thus implies that 
\begin{equation}\label{eq: equality}\sigma_1^*(\gamma, b)=\lambda_1^*(\gamma)\end{equation} for all sufficiently large $b$.  Letting $b_0$ be the smallest $b$ that satisfies Equation~\eqref{eq: equality}, we have \begin{equation}\label{eq:b0}\sigma_1^*(\gamma,b_0)>\sigma_1^*(\gamma, b_0-1).\end{equation}  
The facts that $b_0$ satisfies Equation~\eqref{eq: equality} and that $\gamma\in \mathcal{B}_L$ together imply that $b_0$ satisfies Equation~\eqref{eq:half gap} and thus (by~\eqref{eq:b0}) also the Steklov gap inequality~\eqref{eq.gap}.    
Hence Theorem~\ref{mp max exists} yields a smooth metric $g$ on $\om_{\gamma,b_0}$ such that 
\begin{equation}\label{eq:contra} \sigma_1^*(\om_{\gamma,b_0},g)|\om_{\gamma,b_0}|_g=\sigma_1^*(\gamma,b)=\lambda_1^*(\gamma).\end{equation}
One may fill in the holes in $\om_{\gamma,b_0}$ and extend the Riemannian metric $g$ to a Riemannian metric on the closed surface $M_\gamma$.    Equation~\eqref{eq:contra} then contradicts the sharp inequality~\eqref{eq:karp stern}.
\end{proof}
\begin{remark}\label{rem.Mape} Suppose that $b_i$, $i=1,2$ are distinct positive integers such that each of $\sigma_1^*(\gamma,b_i)$, $i=1,2$ is realised by a Riemannian metric $g_{\gamma, b_i}$ . Theorem~\ref{thm.fs} yields 
free boundary minimal surfaces $S_i$ in balls $\B^{n_i}$ together with conformal branched coverings $u_i:(\om_{\gamma,b_i},g_{\gamma, b_i})  \to S_i$ that restrict to regular Riemannian coverings $\pam_{\gamma,b_i}\to \partial S_i$. In case $\gamma=0$, these branched coverings are homeomorphisms (see Corollary~\ref{cor: fbms genus 0}) and the $S_i$'s are distinct, in fact mutually non-homeomorphic.  In contrast, if $\gamma\geq 1$, one could have $S_i=S_j$ for some $i\neq j$.   However, Matthiesen and Petrides \cite[Theorem 1.1]{MaPe2020} showed that, for fixed $\gamma$, there can be only finitely many $b_i$ for which $(\om_{\gamma,b_i}, g_{\gamma, b_i})$ is a branched covering of a given minimal surface $S$.   The proof is an elementary consequence of Equation/Inequality~\eqref{eq.2A=mL index} along with the existence of upper bounds for $\sigma_1^*(\gamma,b)$ that are independent of $b$ as in Remark~\ref{rem:has cgr}.

 \end{remark}

 \subsection{Do maximising metrics exist for higher normalised eigenvalues?}\label{subsec.higher?}

For $k\geq 2$, we are not aware of examples of surfaces with boundary, respectively closed surfaces, for which the existence of maximisers for the $k$th normalised Steklov, respectively Laplace, eigenvalue has been established.  In contrast to the case $k=1$, the following theorem proves non-existence of maximising metrics for the higher eigenvalues on the disk $\om_{0,1}$.

\begin{thm}\label{no max}\cite[Theorem 2.3]{FrSc2020} 
For $k\geq 2$, the value $\sigma_k^*(0,1)$ is not attained by any smooth Riemannian metric.    
\end{thm}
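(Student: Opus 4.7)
First, suppose for contradiction that some smooth metric $g$ on $\om := \om_{0,1}$ satisfies $\sigma_k(g)|\partial\om|_g = \sigma_k^*(\om_{0,1})$ for some $k \geq 2$, and rescale so that $\sigma_k(g) = 1$. Applying Theorem~\ref{thm.fs2} produces linearly independent $\sigma_k$-eigenfunctions $u_1, \dots, u_n$ (with $n \geq 3$, since the multiplicity of $\sigma_k(g)$ is at least three) such that $u := (u_1, \dots, u_n) : \om \to \B^n$ is a proper branched conformal minimal immersion whose restriction $u|_{\partial\om}$ is an isometric immersion into $\sph^{n-1}$, and whose image $u(\om)$ is a free boundary minimal surface in $\B^n$.

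Next, I would invoke the rigidity of free boundary minimal disks in Euclidean balls. Since $\om$ is a topological disk, the result of Fraser and Schoen described in Example~\ref{disk fbms} (namely \cite[Theorem 2.1]{FrSc2015}) forces $u(\om)$ to be an equatorial plane disk: that is, $u(\om) = \B^n \cap P$ for some two-dimensional linear subspace $P \subset \R^n$ through the origin. In particular, the image of $u$ is entirely contained in the two-plane $P$, regardless of the degree of the branched cover $u : \om \to u(\om)$.

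To close the argument, pick any nonzero vector $v \in P^\perp$, which exists since $n \geq 3$. The containment $u(\om) \subset P$ then gives $\sum_{i=1}^n v_i u_i(p) = \langle u(p), v \rangle = 0$ for every $p \in \om$, i.e., a nontrivial linear relation among $u_1, \dots, u_n$. This contradicts their linear independence, and hence no such $g$ can exist.

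The entire plan hinges on the combination of two deep inputs, namely the multiplicity bound $\geq 3$ from Theorem~\ref{thm.fs2} and the disk rigidity from \cite[Theorem 2.1]{FrSc2015}; once both are in hand, the finish is a short dimension count. I expect the main subtlety to lie in correctly invoking Theorem~\ref{thm.fs2} in the disk setting -- the hypothesis $k \geq 2$ is essential there, since for $k = 1$ the flat disk is a genuine smooth maximiser (by Weinstock's theorem) whose first nonzero Steklov eigenvalue has multiplicity exactly two, so the multiplicity lower bound would necessarily fail in that case.
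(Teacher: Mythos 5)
Your proposal is correct and takes essentially the same approach as the paper: the paper's one-sentence proof says the theorem follows immediately from Theorem~\ref{thm.fs2} together with the Fraser--Schoen rigidity of free boundary minimal disks (Example~\ref{disk fbms}), and you have correctly unpacked this, closing with the observation that a conformal immersion by $n\geq 3$ linearly independent eigenfunctions cannot have image inside a $2$-plane. One small imprecision worth noting: the parenthetical ``$n\geq 3$, since the multiplicity of $\sigma_k(g)$ is at least three'' is not a direct implication -- the survey itself warns (right after Theorem~\ref{thm.fs}) that $n$ can in principle be strictly smaller than the multiplicity. The cleaner justification here is that Theorem~\ref{thm.fs2} produces $u(\om)$ as a properly immersed $2$-dimensional free boundary minimal surface in $\B^n$, which already forces $n>2$. (Alternatively, one can bypass the linear-algebra step entirely: since $u(\om)$ is a flat equatorial disk and $u|_{\pam}$ is an isometric immersion, $g$ is $\sigma$-isometric to an $m$-fold branched cover of the flat unit disk, all of whose nonzero Steklov eigenvalues have multiplicity exactly $2$, directly contradicting the multiplicity bound of Theorem~\ref{thm.fs2}.) Your concluding remark about $k\geq 2$ being essential is accurate and shows good awareness of where the argument breaks for $k=1$.
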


The theorem is an immediate consequence of Theorem~\ref{thm.fs2} and the fact that the equatorial disk is the only free boundary minimal disk in any Euclidean ball as noted in Example~\ref{disk fbms}.

The case $k=2$ was proven earlier by Girouard and Polterovich \cite{GiPo2010}.   

\begin{remark}\label{rem: bubbles} Recall that the Hersch-Payne-Schiffer inequality states that $\sigma_k^*(0,1)\leq 2\pi k$.  In the article just cited, Girouard and Polterovich   constructed for each $k\geq 2$ a family of simply-connected plane domains $\om_{\epsilon}$ such that $\lim_{\epsilon\to 0} \sigma_k(\om_{\epsilon})|\pam_{\epsilon}|\to 2\pi k$.   The domains $\om_{\epsilon}$ converge as $\epsilon\to 0$ to a union of $k$ touching disks.  As conjectured by Nadirashvili \cite{Na2002} in 2002 and recently proved by Karpukhin, Nadirashvili, Penskoi and Polterovich \cite{KNPP2021}, analogous behaviour occurs in the case of the Laplace eigenvalues of metrics on the 2-sphere: $\lambda_k^*(\sph^2)=8k\pi$, and there exists a  maximising sequence of metrics converging to the union of $k$ touching spheres each with the standard round metric. Moreover, for $k\geq 2$, the value $8k\pi$ is not achieved within the class of all metrics on $\sph^2$ that are smooth except possibly for finitely many conical singularities.
\end{remark}

\begin{ques}\label{ques:any higher maximisers} Can one find examples of compact surfaces with boundary and integers $k>1$ for which a $\sigma_k$-maximising metric exists.

\end{ques}
 For arbitrary orientable compact surfaces with smooth boundary and for each positive integer $k$,  Petrides introduced an expression that we will refer to as $\gap_k(\om)$ (defined below).  The gap precludes the type of ``bubbling'' phenomenon just described in the case of the disk.   
 
Generalising the results described in the previous subsection, Petrides proved the following:

\begin{thm}\label{pe gap thm}\cite[Th\'eor\`eme 16]{Pe2015},\cite[Theorem 1]{Pe2019} Let $\om$ be an orientable surface with smooth boundary.  If $\gap_k(\om) >0$, then there exists a smooth Riemannian metric on $\om$ realising $\sigma_k^*(\om)$.
\end{thm}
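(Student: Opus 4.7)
The plan is to produce a smooth maximiser by the direct method of the calculus of variations, following the strategy of Fraser--Schoen (Theorem~\ref{fs max exist}) and its extension by Petrides to higher $k$ and higher genus. The positivity of $\gap_k(\Omega)$ will play the role of forbidding the two failures of compactness which typically obstruct such maximisation problems: degeneration of the conformal structure and concentration of the boundary measure.

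First I would fix a sequence $g_j$ of smooth Riemannian metrics on $\Omega$ with $|\partial\Omega|_{g_j}=1$ and $\sigma_k(\Omega,g_j)\to\sigma_k^*(\Omega)$. Using conformal invariance (Proposition~\ref{prop: conf invar}) the Steklov spectrum is a functional of the conformal class and the boundary measure, so two degrees of freedom remain, namely $[g_j]\in\mathcal{M}_{\gamma,b}$ and the boundary probability measure $\mu_j:=\iota_\star dV_{\partial\Omega,g_j}$. The first task is to prove that $\{[g_j]\}$ stays in a compact part of moduli space. If not, then after passing to a subsequence, a simple closed curve or a boundary-to-boundary arc pinches in the limit and $\Omega$ splits into simpler pieces $\Omega^{(1)},\ldots,\Omega^{(r)}$ of smaller genus or with fewer boundary components. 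A $\Gamma$-convergence analysis of the Rayleigh--Steklov quotients across the neck, combined with a spectral accounting on the pieces, yields a limit inequality of the form
\[
\sigma_k^*(\Omega)\;\leq\;\sup_{k_1+\cdots+k_r=k}\ \max_{i}\,\sigma_{k_i}^*(\Omega^{(i)}),
\]
and this is exactly the kind of quantity cut off by $\gap_k(\Omega)$; the strict positivity of the gap thus rules this alternative out.

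Having fixed a subsequence whose conformal structures converge to some $g_\infty$, I would then extract a weak-$\star$ limit $\mu$ of the measures $\mu_j$ and rule out atoms. If a mass $\alpha>0$ concentrates at a boundary point $p$, a small ``Steklov bubble'' centred at $p$ appears in the blow-up; by the Hersch--Payne--Schiffer inequality the bubble contributes at most $2\pi k'$ to its $k'$-th Steklov eigenvalue, while the residual measure $\mu-\alpha\delta_p$ feeds into the spectrum of the bulk. Splitting the spectrum accordingly produces once more a bound of the type excluded by $\gap_k(\Omega)>0$. With concentration forbidden, $\mu$ is a non-atomic Radon probability measure on $\partial\Omega$; by the continuity of variational eigenvalues under weak-$\star$ convergence of Radon measures (Appendix~\ref{Section:Radon}, and in particular the analogue of Theorem~\ref{thm:HomoBdryGKL}) one deduces
\[
\lambda_k(\Omega,g_\infty,\mu)\;=\;\sigma_k^*(\Omega).
\]

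Finally, the extremal measure $\mu$ yields, through its $k$-th eigenfunctions, a free boundary harmonic map $u=(u_1,\ldots,u_n):\Omega\to\B^n$ with $\sum u_i^2\equiv 1$ on $\partial\Omega$, exactly as in Theorems~\ref{thm.fs2} and~\ref{thm: FrSc conf class}. Elliptic regularity up to the boundary, together with the constraint $|u|\equiv 1$ on $\partial\Omega$, forces the Radon--Nikodym derivative $\rho$ of $\mu$ with respect to $dV_{\partial\Omega,g_\infty}$ to be smooth and strictly positive. Choosing any smooth representative $g\in[g_\infty]$ whose restriction to $\partial\Omega$ equals $\rho^2\, g_\infty|_{\partial\Omega}$ and invoking Proposition~\ref{prop: conf invar} one last time produces a smooth Riemannian metric on $\Omega$ with $\sigma_k(\Omega,g)|\partial\Omega|_g=\sigma_k^*(\Omega)$. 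The hard part of the argument, and the one on which the gap condition is custom-made, is the quantitative bubble analysis in the two compactness steps: correctly attributing the loss of spectrum along pinching or concentrating sequences to pieces of smaller complexity, and matching that loss exactly with the combination on the right-hand side of the definition of $\gap_k(\Omega)$.
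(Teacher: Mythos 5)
Your overall plan — maximising sequence, compactness of conformal classes via the gap, non-degeneration of boundary measures via a bubble analysis, extraction of a free boundary harmonic map, regularity — is indeed the strategy of Fraser--Schoen and Petrides on which this theorem rests, so your outline is not off base. There is, however, a concrete error in the spectral accounting inequality that is supposed to mesh with the gap hypothesis, and it sits precisely at what you yourself identify as the hard part. You wrote
\[
\sigma_k^*(\Omega)\;\leq\;\sup_{k_1+\cdots+k_r=k}\ \max_{i}\,\sigma_{k_i}^*(\Omega^{(i)}),
\]
but the estimate that a degenerating maximising sequence actually yields, and the one that matches the explicit form \eqref{eq.gapk} of $\gap_k(\Omega)>0$, involves a \emph{sum} over the pieces, not a max:
\[
\sigma_k^*(\Omega)\;\leq\;\max_{\substack{k_1+\cdots+k_r=k\\ k_i\geq 1}}\ \sum_{i=1}^r\sigma_{k_i}^*(\Omega^{(i)}).
\]
The sum is forced by the boundary-length normalisation. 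If the limiting pieces receive boundary lengths $\ell_1,\ldots,\ell_r$ with $\sum\ell_i=1$, the best $k_i$-th eigenvalue a piece can attain is $\sigma_{k_i}^*(\Omega^{(i)})/\ell_i$; for the $k$-th eigenvalue of the disjoint union to exceed $\sigma$ one needs $\ell_i\leq\sigma_{k_i}^*(\Omega^{(i)})/\sigma$ for a valid index allocation, and summing over $i$ gives $\sigma\leq\sum_i\sigma_{k_i}^*(\Omega^{(i)})$. Your max bound is in general strictly smaller (already for a split into a disk and an annulus), and it is simply false as a bound on the supremum: by distributing boundary length over several pieces simultaneously, a degenerating sequence can push $\sigma_k L$ above $\max_i\sigma_{k_i}^*(\Omega^{(i)})$. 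So the inequality your neck analysis would have to produce is the sum version, and only that version is what the gap hypothesis rules out.

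Two further caveats. First, the passage from the extremal boundary measure $\mu$ to a free boundary harmonic map cannot be done by citing Theorems~\ref{thm.fs2} or~\ref{thm: FrSc conf class}: both are stated for a \emph{smooth} maximising metric, which is exactly what you are trying to produce. One must derive the vector-valued eigenfunction and its regularity directly from the weak variational structure of the measure-valued maximiser, and this is where the genuine elliptic work is. Second, as the survey emphasises in its outline of Theorem~\ref{fs max exist}, controlling the non-degeneration of the boundary measures is achieved through a carefully chosen maximising sequence (typically passing to conformally maximal metrics in each conformal class), not by Banach--Alaoglu together with a blow-up at atoms alone; Kokarev's upper semicontinuity gives $\sigma_k^*(\Omega)\leq\lambda_k(\mu)$, but the reverse inequality, the admissibility of $\mu$, and the absolute continuity of $\mu$ all require the additional structure that the careful choice provides. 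Your sketch compresses this into a bubble count under the gap hypothesis, which is true in spirit but understates the mechanism that makes the limit argument close.
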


\begin{remark}\label{rem:pe2018}
In \cite{Pe2018}, Petrides proved an analogous result in the Laplace setting, but we will focus here only on the Steklov case.
\end{remark}
 
The definition of $\gap_k(\om)$ appears in the right-hand side of inequality (0.2) in \cite{Pe2019}.     Inequality~\eqref{eq.gapk} below corrects a typo in that expression.   We wish to thank Petrides both for providing the corrected expression and for the following more intuitive definition of $\gap_k$.

\begin{defn}\label{def.gap} Let $\mathcal{S}$ be the collection of all surfaces $\hat{\om}$ that can be obtained from $\om$ by cutting along a non-empty finite collection of embedded closed curves $\tau:[0,1]\to \om$ with $\tau(]0,1[)\subset \operatorname{int}(\om)$ and $\tau(0),\tau(1)\subset \partial\om$.   We identify two surfaces if they are homeomorphic.  Set 
\[\gap_k(\om)=\sigma_k^*(\om)-\max_{\hat{\om}\in \mathcal{S}}  \sigma_k^*(\hat{\om}).\]  
\end{defn}

\begin{remark}\label{rem.gap}  For $\om_{\gamma,b}$ as in Notation~\ref{nota.sig}, a cut in $\om_{\gamma,b}$ along a single curve $\tau$ as above  will result in one of the following:
\begin{enumerate}
\item A surface with two components $\om_{\gamma_1,b_1}$ and $\om_{\gamma_2,b_2}$ satisfying $\gamma_1 +\gamma_2\leq \gamma$ and $b_1+b_2=b+1$ .   The curve $\tau$ appears as a boundary component of both.   
\item The connected surface $\om_{\gamma-1,b+1}$.    
\item The connected surface $\om_{\gamma, b-1}$.
\end{enumerate}
Cuts that yield surfaces of either of the first two types arise from curves $\tau$ both of whose endpoints lie on a single boundary component; for the third type, the endpoints of $\tau$ must lie on different boundary components.  
\end{remark}

When $k=1$, the condition ``$\gap_1(\om) >0$'' is equivalent to Inequality~\eqref{eq.gap}.

Beginning with a surface $\om=\om_{\gamma, b}$, one sees from Remark~\ref{rem.gap} that the set $\mathcal{S}$ in Definition~\ref{def.gap} consists of all surfaces distinct from $\om_{\gamma,b}$ itself of the form $\om_{\gamma_1,b_1}\sqcup\dots \sqcup \om_{\gamma_s,b_s}$, where $s\in \Z^+$,  $\gamma_1+\dots \gamma_s\leq \gamma$ and $\gamma_1+\dots +\gamma_s+b_1+\dots b_s\leq \gamma+b +s-1$.  Inducting on the number of components and taking into account the effect of the length normalisation in computing $\sigma_k^*(\hat{\om})$ when $\hat{\om}$ is not connected, one can express the condition $\gap_k(\om_{\gamma,b})>0$ as follows:

\begin{equation}\label{eq.gapk} \sigma_k^*(\gamma,b) >\max_{\substack{i_1+\dots +i_s=k\\i_j\geq 1\,\,\,\forall j\\\gamma_1+\dots +\gamma_s\leq \gamma\\\gamma_1+\dots +\gamma_s+b_1+\dots +b_s\leq \gamma+b+s-1\\(\gamma_1,b_1)\neq (\gamma, b)\,\, \rm{if}\,\,\,s=1
}}\, \sum_{q=1}^s\,\sigma_{i_q}^*(\gamma_q,b_q)
\end{equation}
Fraser and Schoen \cite[Corollary 4.11]{FrSc2020} proved that the non-strict verstion of Inequality~\eqref{eq.gapk} is always satisfied.

By Theorem~\ref{pe gap thm}, any positive results on the following question would provide answers to Open Question~\ref{ques:any higher maximisers}:

\begin{ques}\label{ques:gapk} For given $k$, which compact surfaces $\om$ satisfy $\gap_k(\om)>0$?

\end{ques}

We emphasise that when $k=1$, it is widely expected that the gap inequality is always satisfied. Proposition~\ref{prop: KS gap inf} lends support to this expectation.  In contrast, Theorems~\ref{no max} and \ref{pe gap thm} show that the gap condition fails for all $k\geq 2$ in the case of the topological disk, and the question is wide open for other surfaces.

\subsection{Existence results for conformally maximising metrics}\label{subsec:conf max}

We begin with the case of the Laplacian.  Given a closed surface $M$ and a conformal class $[g]$ of metrics on $M$, let 
\begin{equation}\label{eq;lak[g]}
\lambda_k^*(M,[g])=\sup_{h\in g}\,\lambda_k(M,h)|M|_h  
\end{equation}

Nadarashvili and Sire first put forth an argument in \cite{NaSi2015} to prove existence of a metric realising $\lambda_1^*(M,[g])$ under the condition that $\lambda_1^*(M,[g])>8\pi$.  In \cite{NaSi2015_2}, they similarly put forward an argument to realise $\lambda_k^*(M,[g])$ for $k\geq 2$ under the assumption of a gap condition.  Influenced by the results of Fraser and Schoen, Petrides \cite{Pe2014} gave a complete and very different existence proof for a maximiser of $\lambda_1^*(M,[g])$ without a gap hypothesis.   His proof of existence of a conformal maximiser was the first step in his proof of Theorem  ~\ref{thm: lap max exists} discussed above.  Similarly, as a step in the result cited in Remark~\ref{rem:pe2018}, he proved existence modulo a gap condition of metrics realising $\lambda_k^*(M,[g])$ for every $k\geq 2$. 

Karpukhin, Nadarashvili, Penskoi and Polterovich's resolution \cite{KNPP2020} of Nadarashvili's conjecture (see Remark~\ref{rem: bubbles} above) enabled them to give a simpler expression for the gap condition.   They also completed the arguments put forward by Nadarashvili and Sire (using results of Grigor'yan, Nadarashvili and Sire \cite{GrNaSi2016}).  We state here the version appearing in \cite{KNPP2020}.

\begin{thm}\cite{NaSi2015, NaSi2015_2,Pe2014, Pe2018, KNPP2020}\label{thm:ex lap conf max}
Let $M$ be a closed surface and let $[g]$ be any conformal class of metrics on $M$.  Then:
\begin{enumerate}
\item For every $k\geq 1$, either there exists a 
metric $h\in[g]$ that is smooth except possibly for finitely many singularities such that 
\begin{equation}\label{eq:conf max 1}\lambda_k(M,h)|M|_h=\lambda_k^*(M,[g])>\lambda_{k-1}^*(M,[g])+8\pi\end{equation}
or else 
\begin{equation}\label{eq:conf max 2}\lambda_k^*(M,[g])=\lambda_{k-1}^*(M,[g])+8\pi.\end{equation}
\item For $k=1$, the first case always holds; i.e., there exists $h$ as above satisfying Equation~\eqref{eq:conf max 1}.
\end{enumerate}

\end{thm}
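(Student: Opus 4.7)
My plan is to adopt the concentration-compactness strategy that has become standard in this area, working within the Radon-measure framework of Appendix~\ref{Section:Radon} via Example~\ref{ex: vareigen weighted lap}. Fix a background metric $g\in[g]$ and, using the scale invariance of $\lambda_k|M|$, restrict attention to conformal metrics $h=e^{2\phi}g$ with $|M|_h=1$; then the conformal invariance of the Dirichlet energy in dimension two gives
\[
\lambda_k(M,h)|M|_h=\lambda_k(M,g,\mu),\qquad \mu=e^{2\phi}\,dV_g,
\]
so the problem is reformulated as maximisation over probability measures $\mu$ in the Radon class associated with $dV_g$. First I would take any maximising sequence $\mu_n$ and extract a weak-$*$ subsequential limit $\mu_\infty$; by a standard decomposition $\mu_\infty=f\,dV_g+\sum_{i=1}^N c_i\delta_{p_i}$ with $f\in L^1_{\geq 0}$, $c_i>0$ and distinct $p_i\in M$. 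The continuity of variational eigenvalues under this kind of convergence (Theorem~\ref{thm:ContinuityRadon}, restricted to the appropriate admissible subclass) hands me a maximiser $h_\infty=f\,g$---smooth away from the vanishing locus of $f$, which can be shown to consist of isolated conformal/conical singularities as in Remark~\ref{rem:con sing}---whenever the atomic part is absent ($N=0$).

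The crux of the plan is therefore to quantify the spectral contribution of each atom. For each $p_i$, I would perform a conformal blow-up: select normal coordinates at $p_i$, rescale by a sequence of dilations chosen so that a fixed fraction of the mass $c_i$ remains in a unit ball in the rescaled picture, and identify the rescaled plane with $\sph^2\setminus\{\infty\}$ via inverse stereographic projection. The rescaled measures converge to a probability measure $\nu_i$ on $\sph^2$ lying in the standard conformal class. The key spectral input is the sharp equality $\lambda_k^*(\sph^2,[g_{\mathrm{std}}])=8\pi k$ established in \cite{KNPP2020,KNPP2021} (the resolution of Nadirashvili's conjecture), which forces the spectral mass absorbed at the $i$-th bubble to be an integer multiple $8\pi m_i$ with $m_i\geq 1$. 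A cutoff-and-glue argument, using test functions supported on disjoint annular neighborhoods separating bubbles from bulk, should then yield the splitting
\[
\lambda_k^*(M,[g]) \;=\; \lambda_{k-m}^*(M,[g])+8\pi m, \qquad m:=\sum_{i=1}^N m_i .
\]
Combined with the non-strict baseline inequality $\lambda_k^*(M,[g])\geq\lambda_{k-1}^*(M,[g])+8\pi$---obtainable by welding a nearly-round spherical cap carrying mass $8\pi$ onto a near-extremal metric for $\lambda_{k-1}^*$ in the style of \cite{CoEl2003,KNPP2020}---together with an induction on $k$, I would force $m=1$ whenever $N\geq 1$, which produces the stated dichotomy. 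I expect the main obstacle to be precisely this quantised splitting of the eigenvalue count: rigorously ruling out fractional, bubble-shared, or bulk-bubble-hybrid contributions requires delicate capacity-type and quasi-mode estimates, and is why the argument relies on the sharp spherical value rather than merely on Hersch's bound.

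For part~(2), the alternative at $k=1$ reads $\lambda_1^*(M,[g])=8\pi$; the bubble analysis then singles out a unique $\nu_1$ attaining equality in Hersch's inequality, hence equal (up to a M\"obius transformation) to the round volume measure on $\sph^2$. I would close the argument by pulling back the round spherical metric to $M$ via a conformal branched map $M\to\sph^2$---always available on any Riemann surface, with finitely many branch points contributing only admissible conical singularities in the sense of Remark~\ref{rem:con sing}---to exhibit an extremal metric realising $\lambda_1|M|=8\pi$, placing us back in the first case (with equality interpreted as the limiting instance, realised by the sphere itself when $M=\sph^2$ with its standard conformal class). This pullback procedure is genuinely available only for $k=1$ because the extremal configurations on $\sph^2$ for $k\geq 2$ are multi-bubble objects (the $k$ touching round spheres of \cite{Na2002,KNPP2021}) with no single-metric realisation on a general surface $M$; this is precisely why the dichotomy cannot be collapsed for higher eigenvalues.
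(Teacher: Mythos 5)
The survey itself does not prove Theorem~\ref{thm:ex lap conf max}; it is stated as a citation to \cite{NaSi2015, NaSi2015_2, Pe2014, Pe2018, KNPP2020}, with only a brief remark that part~(2) follows from part~(1) together with Petrides' result \cite[Theorem~1]{Pe2014} that $\lambda_1^*(M,[g])>8\pi$ unless $M$ is diffeomorphic to $\sph^2$. Your strategy for part~(1) — reformulation via Radon measures, weak-$\star$ limits with atomic and diffuse parts, blow-up at atoms producing spherical bubbles whose contribution is quantised through $\lambda_k^*(\sph^2)=8\pi k$, and bootstrapping against the non-strict inequality $\lambda_k^*\geq\lambda_{k-1}^*+8\pi$ of \cite{CoEl2003} — is indeed the concentration-compactness / bubble-tree approach those papers use, and your closing observation that any bubbling collapses the dichotomy onto $\lambda_k^*=\lambda_{k-1}^*+8\pi$ is the right bookkeeping.

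However, your argument for part~(2) has a genuine gap. You propose: if $\lambda_1^*(M,[g])=8\pi$, pull back the round metric $g_{\sph^2}$ by a branched conformal map $\pi:M\to\sph^2$ to exhibit a maximiser. But if $\pi$ has degree $N$, the pullback metric $\pi^*g_{\sph^2}\in[g]$ has area $4\pi N$ while $\lambda_1(M,\pi^*g_{\sph^2})\leq\lambda_1(\sph^2)=2$, so $\lambda_1|M|\leq 8\pi N$ and is generically strictly less than $8\pi$; this construction does not realise $\lambda_1^*(M,[g])$ and certainly does not place you in case~(eq:conf max 1), which demands the strict inequality $\lambda_1^*(M,[g])>8\pi$. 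What is actually needed here is Petrides' theorem that $\lambda_1^*(M,[g])>8\pi$ for every $M\not\cong\sph^2$ — a substantive independent result relying on a delicate test-function construction — after which part~(1) produces a maximiser, while the remaining sphere case is handled directly by Hersch. Your bubble analysis merely describes what a degenerating maximising sequence must look like when $\lambda_1^*=8\pi$; it does not show that this scenario cannot occur, nor does it furnish a competitor metric. You should therefore flag the strict inequality as an external input rather than a consequence of the blow-up picture.
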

The second statement follows from the first together with Petrides' result \cite[Theorem 1]{Pe2014} that 
\begin{equation}\label{eq:>8pi}
\lambda_1^*(M,[g])>8\pi 
\end{equation}
except when $M$ is diffeomorphic to a sphere.

The article \cite{KNPP2020} gives an informal interpretation of the first item in the theorem as follows:  Either a metric realising $\lambda_k^*(M,g)$ exists or else for some $j$ with $1<j<k$, a maximising sequence of metrics for $\lambda_k^*(M,[g])$ degenerates to a disjoint union of the surface $(M,h_j)$, where $h_j\in [g]$ where $h_j$ is a maximising metric for $\lambda_{k-j}^*(M,[g])$, together with $j$ identical round spheres (''bubbles'') each of volume $\frac{8\pi}{\lambda_k^*(M,g)}$.  This informal statement is clarified in \cite[Subsection 5.1]{KNPP2020}.

Note that the first item says that a metric realising $\lambda_k^*(M,[g])$ exists provided that the gap condition $\lambda_k^*(M,[g])>\lambda_{k-1}^*(M,[g])+8\pi$ is satisfied.  The reader may find both this formulation of the first item and also the informal statement in the previous paragraph helpful in comparing Theorem~\ref{thm:ex lap conf max} with Theorem~\ref{thm:ex stek conf max} below.

Next consider the Steklov problem on compact surfaces $\om$ with boundary.  Given a conformal class $[g]$, let 
\begin{equation}\label{eq;stek[g]}
\sigma_k^*(\om,[g])=\sup_{h\in [g]}\,\sigma_k(\om,h)|\pam|_h.   
\end{equation}
As the first step in his proof of Theorem~\ref{mp max exists}, Petrides proved the following analogue of the first item in Theorem~\ref{thm:ex lap conf max}:

\begin{thm}\label{thm:ex stek conf max}\cite[Theorem 2]{Pe2019}
Let $\om$ be a connected compact surface with boundary and $[g]$ a conformal class of metrics on $\om$.  If 
\begin{equation}\label{eq: conf stek gap} \sigma_k^*(\om,[g])> \max_{1\leq j<k}\,\sigma_{k-j}^*(\om,[g]) +2\pi j
\end{equation}
then there exists a smooth Riemannian metric $h\in [g]$ such that 
\[\sigma_k(\om,h)|\pam|_h=\sigma_k^*(\om,[g]).\]
\end{thm}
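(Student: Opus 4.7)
The plan is to follow the now classical approach, pioneered in this context by Fraser–Schoen and extended by Petrides, of reformulating the conformal maximisation problem in terms of variational eigenvalues of Radon measures and then ruling out concentration via the gap condition. Fix a background metric $g_0\in[g]$. By the conformal invariance of the Dirichlet energy in dimension two (\propref{prop: conf invar}), if $h=e^{2\varphi}g_0$ then $\sigma_k(\om,h)|\pam|_h=\lambda_k(\om,g_0,\mu)|\mu|(\om)$ where $\mu$ is the Radon measure supported on $\pam$ given by $\mu=\iota_\star(e^{\varphi}dV_{\pam,g_0})$ (see Example~\ref{ex:vareigenSteklov}). After normalising $|\mu|(\om)=1$, the problem becomes the maximisation of $\lambda_k(\om,g_0,\mu)$ over probability measures on $\pam$ absolutely continuous with respect to $dV_{\pam,g_0}$ with smooth positive density. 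I would pick a maximising sequence of such measures $\mu_n$ and extract, by Banach–Alaoglu, a weak-$\star$ subsequential limit $\mu_\infty$ on $\overline\om$. A priori $\mu_\infty$ need not be supported only on $\pam$ nor be absolutely continuous: it decomposes as $\mu_\infty=\nu+\sum_\ell c_\ell\delta_{p_\ell}$ for a regular part $\nu$ and finitely many atoms at boundary points $p_\ell\in\pam$ with $c_\ell>0$.

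The next step is a compactness-with-bubbles analysis in the spirit of the Sacks–Uhlenbeck/Fraser–Schoen blow-up scheme. Around each concentration point $p_\ell$ I would perform a conformal rescaling adapted to a half-disk chart and extract a limiting free-boundary harmonic map of a disk into a Euclidean ball, whose coordinates are Steklov eigenfunctions on a rescaled model domain conformally equivalent to $\D$. Each such bubble carries an integer ``index'' $i_\ell\in\Z^+$ measuring how many eigenvalues are absorbed by it, and the sharp Hersch–Payne–Schiffer bound $\sigma_{i_\ell}(\D)L(\partial\D)=2\pi i_\ell$ gives that the energy captured by the bubble is exactly $2\pi i_\ell$. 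Combined with weak-$\star$ lower semicontinuity of $\mu\mapsto\lambda_{k-j}(\om,g_0,\mu)$ on the regular part (see the continuity statements for Radon eigenvalues, Theorem~\ref{thm:ContinuityRadon} via Appendix~\ref{Section:Radon}), one obtains the splitting
\[
\sigma_k^*(\om,[g])=\lim_{n\to\infty}\lambda_k(\om,g_0,\mu_n)
\;\leq\;\lambda_{k-j}(\om,g_0,\nu)|\nu|(\om)+2\pi j,
\qquad j=\sum_\ell i_\ell.
\]
If any mass concentrates ($j\geq 1$), the right-hand side is at most $\sigma_{k-j}^*(\om,[g])+2\pi j$, contradicting the gap hypothesis \eqref{eq: conf stek gap}. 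Therefore $\mu_\infty$ has no atoms and $\mu_n\to\mu_\infty$ without loss of mass; the regular part $\nu=\mu_\infty$ is then a maximiser among probability measures on $\pam$.

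It remains to promote $\mu_\infty$ to a smooth density. Because $\mu_\infty$ is a maximiser of $\lambda_k(\om,g_0,\cdot)$, \thmref{thm: FrSc conf class} yields linearly independent $\lambda_k(\om,g_0,\mu_\infty)$-eigenfunctions $u_1,\dots,u_m$ satisfying $\sum_j u_j^2\equiv 1$ $\mu_\infty$-a.e. on $\pam$, and the vector $u=(u_1,\dots,u_m)$ is a free-boundary harmonic map of $(\om,g_0)$ into $\B^m$. The weighted boundary condition $\partial_\nu u_j=\lambda_k(\om,g_0,\mu_\infty)\rho\, u_j$, where $\rho=d\mu_\infty/dV_{\pam,g_0}$, combined with the identity $|u|^2\equiv 1$ on $\mathrm{supp}(\mu_\infty)$, forces $\rho=\lambda_k(\om,g_0,\mu_\infty)^{-1}\,|\partial_\nu u|^2/2$ wherever $\rho>0$. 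Standard elliptic boundary regularity for harmonic maps with free boundary on a sphere then gives $u\in C^\infty(\overline\om)$, hence $\rho$ is smooth, and one checks using the positivity argument (pairing $u$ with test functions as in Fraser–Schoen) that $\rho>0$ everywhere on $\pam$; the corresponding conformal factor $\varphi$ with $e^\varphi=\rho$ produces the sought smooth metric $h=e^{2\varphi}g_0\in[g]$ with $\sigma_k(\om,h)|\pam|_h=\sigma_k^*(\om,[g])$.

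\emph{Main obstacle.} The crux is the bubble analysis: showing that mass loss at each concentration point is quantised to $2\pi$ times an integer index and that the indices sum correctly, together with the semicontinuity of $\lambda_{k-j}$ on the regular part. This requires a delicate blow-up around each $p_\ell$, identification of the half-space bubble limit as a disk Steklov problem via stereographic/half-plane conformal identification, and control of exchange of eigenvalues between the bulk and the bubbles — a two-dimensional Palais–Smale/concentration-compactness argument whose execution is the technical heart of Petrides' proof and the source of the gap condition.
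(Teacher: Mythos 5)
Your proposal correctly identifies the structure of Petrides' argument (reformulation as eigenvalue maximisation over Radon measures on the boundary, extraction of a weak-$\star$ limit, decomposition into a regular part plus atoms, quantisation of concentrated mass by the Hersch--Payne--Schiffer bound, use of the gap hypothesis to exclude bubbling, and the harmonic map/free boundary characterisation from \thmref{thm: FrSc conf class} to upgrade the limiting measure to a smooth positive density). This is the same route the survey indicates in the one-sentence gloss it gives after the theorem ("the gap hypothesis prevents disks from bubbling off''), so your approach is essentially the paper's.

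A few points worth flagging. First, a notational slip: $\sigma_{i_\ell}(\D)L(\partial\D)$ for the round unit disk equals $2\pi\lceil i_\ell/2\rceil$, not $2\pi i_\ell$; what you want to invoke is $\sigma_{i_\ell}^*(0,1)=2\pi i_\ell$, the supremum over all conformal factors, which is what bounds the energy a bubble can capture. Second, you write ``lower semicontinuity'' for $\mu\mapsto\lambda_{k-j}(\om,g_0,\mu)$; what Kokarev proves, and what is cited in Appendix~\ref{Section:Radon}, is \emph{upper} semicontinuity $\limsup_n\lambda_j(\mu_n)\le\lambda_j(\mu)$, and in any case the splitting inequality $\lambda_k(\mu_n)\le\lambda_{k-j}(\nu)|\nu|+2\pi j+o(1)$ that you need is strictly stronger than either form of semicontinuity — it is the conclusion of the blow-up, not an input to it. Third, and most substantively, you cannot simply run the argument with an arbitrary maximising sequence: the concentration-compactness/quantisation analysis requires uniform control that a generic weak-$\star$ convergent sequence does not provide, which is why the survey emphasises that "the proof uses a special choice of maximising sequence'' (in Petrides' work, one takes critical or near-critical metrics for a regularised subcritical functional, giving Palais--Smale-type bounds on the associated sphere-valued harmonic maps). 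Your sketch acknowledges the blow-up as the technical heart but should record that the careful choice of sequence is part of that heart, not an inessential detail.
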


As before, the stronger conclusion that the metric is smooth is due to the fact that any interior conical singularities can be conformally removed without affecting the spectrum.   

The proof uses a special choice of maximising sequence and a careful argument using the relationship between conformally critical metrics and harmonic maps. The gap hypothesis prevents disks from bubbling off.

Observe that when $k=1$, existence of a maximising metric in $[g]$ would follow from an affirmative answer to the following question:

\begin{ques}\label{ques:sig >2pi} Is $\sigma_1^*(\om,[g])>2\pi$ for every conformal class $[g]$ when the surface $\om $ is not diffeomorphic to a disk? 
\end{ques}

The inequality in the question above would be the Steklov analogue of Inequality~\eqref{eq:>8pi}.
 
  \subsection{From Steklov to Laplace: asymptotics of free boundary minimal surfaces}\label{subsec.karp-stern}

 Let $M$ be a closed surface (not necessarily orientable) and let $M_b$ be the compact surface with boundary obtained by removing $b$ disjoint disks from $M$.  Thus for example, if $M=M_\gamma$, the closed orientable surface of genus $\gamma$, then $M_b=\om_{\gamma, b}$, the orientable surface of genus $\gamma$ with $b$ boundary components.   
 
Assume
\begin{itemize}
\item[(H1)] There exists a Riemannian metric (possibly with conical singularities) realising $\lambda_1^*(M)$.
\item[(H2)] There exist infinitely many positive integers $b$ for which $\sigma_1^*(M_b)$ is realised by a Riemannian metric on $M_b$.   In what follows, we will let $\{b_j\}$ be an increasing infinite sequence of such integers and $g_j$ will denote a Riemannian metric on $M_{b_j}$ realising $\sigma_1^*(M_{b_j})$.
\end{itemize}
 
 By Propositions~\ref{lem: inf many b} and \ref{prop: KS gap inf}, there are infinitely many closed surfaces $M$ for which these hypotheses are satisfied.  

 We've seen
 \begin{itemize}
 \item[(i)] $\lim_{j\to\infty}\, \sigma_1^*(M_{b_j}) = \lambda_1^*(M)$.  (See Remark~\ref{rem:b to infty}.)   
 \item[(ii)] For each $j$, there exists a proper branched conformal minimal immersion $\tau_j:(M_{b_j},g_j)\to \B^{n_j}$ whose image $S_j$ is a free boundary minimal surface, and $\tau_j$ restricts to an isometric immersion from $\partial M_{b_j}$ to $\sph^{n_j-1}$.  (See Theorem~\ref{thm.fs}.)

   \end{itemize}
   
The multiplicity of $\sigma_1(M_{b_j}, g_b)$, and thus the value of $n_j$ in item (ii), is bounded above by an integer $n$ depending only on the genus of $M$.   (See  the multiplicity bound ~\eqref{eq.mult bound}.)  By the observation at the end of Example~\ref{disk fbms}, the free boundary minimal surfaces $S_j$ in (ii) may be viewed as free boundary minimal surfaces in $\B^n$. Thus we may replace $n_j$ by $n$ for all $j$.
 
   Karpukhin and Stern proved the following striking asymptotic result:
   
 \begin{thm}\label{thm.KS asymptotics}\cite[Theorem 1.1]{KaSt2021} Assume $M$ satisfies the hypotheses (H1) and (H2).   Up to a choice of a subsequence of $\{b_j\}$, the free boundary minimal surfaces $\tau_j:M_{b_j}\to \B^n$ given by (ii) above converge in the varifold sense to a (branched) minimal surface $\tau:M\to \sph^{n-1}$ inducing a Riemannian metric (possibly with conical singularities) on $M$ realising $\lambda_1^*(M)$.
 
 As a consequence, their supports $S_j$ converge in the Hausdorff sense to that of the limit surface, and their boundary measures converge to twice the area measure of the limit surface.
 
 \end{thm}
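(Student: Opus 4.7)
My proof plan proceeds by combining the Steklov/harmonic-map machinery from Theorem~\ref{thm.fs} with varifold compactness, exploiting the rigid $2A=L$ identity~\eqref{2A=L} for free boundary minimal surfaces in the ball.

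\textbf{Step 1: Normalisation and energy identity.}  After rescaling I may assume $\sigma_1(M_{b_j},g_j)=1$, so that $|\partial M_{b_j}|_{g_j}=\sigma_1^*(M_{b_j})$. By Theorem~\ref{thm.fs}, $\tau_j$ is a proper branched conformal minimal immersion with $\sum_i (\tau_j)_i^2\equiv 1$ on $\partial M_{b_j}$, and its components are $\sigma_1$-eigenfunctions. A direct computation using the Steklov equation gives the Dirichlet energy
\[
E(\tau_j)=\sum_{i=1}^n\int_{M_{b_j}}|\nabla(\tau_j)_i|^2\,dV_{g_j}
=\sum_{i=1}^n\int_{\partial M_{b_j}}(\tau_j)_i^2\,ds=\sigma_1^*(M_{b_j}).
\]
By Remark~\ref{rem:b to infty}, $E(\tau_j)\to\lambda_1^*(M)$. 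Moreover, since $\tau_j$ is a (branched) conformal immersion whose image $S_j$ is a free boundary minimal surface, $E(\tau_j)=2|S_j|_{g_j}$, so by~\eqref{2A=L},
$|S_j|_{g_j}=\tfrac12|\partial S_j|_{g_j}=\tfrac12\sigma_1^*(M_{b_j})$ is uniformly bounded.

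\textbf{Step 2: Compactness.}  Viewing the $S_j$ as integer rectifiable varifolds in $\overline{\B^n}$ with uniformly bounded mass, standard compactness yields a subsequential varifold limit $V$ in $\overline{\B^n}$ with $\|V\|(\overline{\B^n})\le\tfrac12\lambda_1^*(M)$. Simultaneously, by Hausdorff convergence of the uniformly energy-bounded harmonic maps $\tau_j$ (viewed on the fixed topological surface $M$, with the removed disks shrinking to points as $b_j\to\infty$) and a Sacks–Uhlenbeck type bubbling analysis, I extract a weakly conformal harmonic limit map $\tau\colon M\to\overline{\B^n}$ whose image supports $V$ (away from finitely many bubble points).

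\textbf{Step 3: The limit lies in $\sph^{n-1}$.}  This is where I expect the main technical difficulty. The key identity $E(\tau_j)=2|S_j|_{g_j}$ holds in equality, and by lower semicontinuity of both sides together with the fact that the boundary length $|\partial S_j|_{g_j}\to\lambda_1^*(M)$ sits entirely on $\sph^{n-1}$, any mass of $V$ lying in the open ball would either (a) strictly increase the limiting area above $\tfrac12\lambda_1^*(M)$, contradicting $|S_j|_{g_j}=\tfrac12\sigma_1^*(M_{b_j})$, or (b) force extra Dirichlet energy uncompensated by the boundary-length contribution. Ruling out interior bubbles requires the disk-rigidity observation of Fraser--Schoen (Example~\ref{disk fbms}): any nonconstant harmonic bubble $\mathbb S^2\to\overline{\B^n}$ arising from a free boundary problem is, by removable singularities and the equatorial disk characterisation, excluded by the free boundary/area balance; similarly closed minimal bubbles in $\B^n$ don't exist. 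Combining these, $\tau(M)\subset\sph^{n-1}$.

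\textbf{Step 4: Identification of the extremal metric.}  A harmonic map $\tau\colon M\to\sph^{n-1}$ that is weakly conformal is, by the converse to Takahashi's theorem (Theorem~\ref{thm: El harmonic}), a branched minimal immersion, and its coordinates $\tau_i$ satisfy $\Delta\tau_i=2\tau_i$ for the induced (possibly conically singular) metric $h=\tau^\ast g_{\sph^{n-1}}$ on $M$. Since $\tau$ is conformal, $|M|_h=\tfrac12 E(\tau)=\tfrac12\lambda_1^*(M)$, hence the $\tau_i$ are eigenfunctions of $\Delta_h$ with eigenvalue $2$ and $2\cdot|M|_h=\lambda_1^*(M)$. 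Writing $2=\lambda_k(M,h)$ for the appropriate index $k$, the definition of $\lambda_1^*(M)$ gives $\lambda_1(M,h)|M|_h\le\lambda_1^*(M)=\lambda_k(M,h)|M|_h$, which forces $k=1$: otherwise a first eigenfunction of $h$ could be used to produce a competitor contradicting maximality, and moreover existence of a $\lambda_1^*(M)$-realising metric (hypothesis (H1)) rules out degenerate alternatives. Thus $h$ achieves $\lambda_1^*(M)$, and the final varifold and Hausdorff statements follow from Step 2 together with $\|V\|=\tfrac12\lambda_1^*(M)$ distributed along $\tau(M)\subset\sph^{n-1}$, doubled because $\partial S_j$ accumulates on both ``sides'' of $\tau(M)$ as the boundary components proliferate.
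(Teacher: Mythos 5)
The survey does not reproduce a proof of this theorem; it cites the original paper [KaSt2021], so there is no ``paper's own proof'' to compare against line by line. Assessing your sketch on its own merits, the high-level architecture (energy identity from the Steklov condition, varifold compactness, ruling out interior/bubble energy, Takahashi-type identification of the limit) matches the broad shape of the Karpukhin--Stern argument. However, the two most technical steps, which carry essentially all of the difficulty, contain a logical error and a misdescription, respectively.

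First, in Step~2 you describe the passage to the limit as ``the removed disks shrinking to points as $b_j\to\infty$.'' That is not what happens. The number $b_j$ of boundary circles goes to infinity, the total boundary length $|\partial M_{b_j}|_{g_j}=\sigma_1^*(M_{b_j})$ converges to the \emph{positive} number $\lambda_1^*(M)$, and the conformal structure of $M_{b_j}$ degenerates in a nontrivial way. There is no fixed domain on which $\tau_j$ is defined and nothing obviously forces the boundary circles to collapse to isolated points. Extracting a limiting map $\tau:M\to\overline{\B^n}$ from maps defined on surfaces of escaping complexity is itself a substantial piece of analysis, not a soft compactness statement, and your sketch elides it.

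Second, the argument in Step~3(a) is logically backwards. Varifold compactness gives $\|V\|(\overline{\B^n})\le\liminf_j \tfrac12\sigma_1^*(M_{b_j})=\tfrac12\lambda_1^*(M)$; mass escaping to the interior does not ``strictly increase the limiting area,'' it leaves the total mass unchanged or reduced. There is no contradiction of the form you claim. The genuine issue is the opposite one: one must rule out that a positive fraction of the limiting mass (or of the Dirichlet energy of $\tau_j$) detaches from the sphere and lives in the open ball, or is lost through necks or bubbles, in which case the limit map would realise strictly less energy than $\lambda_1^*(M)$ and the inequality in Step~4 would become strict, destroying the identification of the maximiser. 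Excluding such energy loss requires a quantitative ``no-neck'' and bubble-quantisation analysis adapted to the free boundary problem; the reference to the equatorial-disk rigidity of Example~\ref{disk fbms} handles only one specific type of bubble and does not address necks or interior sphere bubbles without further work. Similarly, in Step~4 the claim that the eigenvalue index is forced to be $1$ is asserted rather than proved; the competitor construction you allude to needs the conical-singularity regularity theory and hypothesis (H1) to be used in a precise way. As it stands, Steps~2 and~3 contain the real content of the theorem and are not established by the proposal.
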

 
 See Subsection 2.6 of \cite{KaSt2021} for a brief summary of the concept of convergence in the varifold sense.   Compare the final statement of the theorem with Equation~\eqref{2A=L}.  
 
As an example (see \cite[Corollary 1.4]{KaSt2021}), let $M$ be the topological 2-sphere.  By the last statement in Theorem~\ref{thm: lap max exists} along with Proposition~\ref{prop: KS gap inf}, $M$ satisfies (H1) and (H2).  Note that $M_b=\om_{0,b}$.  By Remark~\ref{rem:b to infty}, we have 
\[\lim_{b\to\infty}\sigma_1^*(0,b)=8\pi =\lambda_1^*(M)=\lambda_1(\sph^2)|\sph^2|\]
 where $\sph^2$ is the round unit 2-sphere.   By Theorem~\ref{thm.fs}, the maximising metric for the first Steklov eigenvalue on $\om_{0,b_j}$ (where $b_j$ is given as in (H2)) is induced by an embedding of $\om_{0,b_j}$ as a free boundary minimal surface in $\B^3$ for each $j$. Theorem~\ref{thm.KS asymptotics} states in this case that (up to a choice of  subsequence) the resulting sequence of free boundary minimal surfaces converges in the varifold sense to the boundary sphere $\sph^2$ itself, as was conjectured in \cite{GiLa2021}.
 
The following is one of many interesting open questions raised by Karpukhin and Stern in \cite{KaSt2021}.  

\begin{ques}\label{ques:ksoq3}\cite[Open Question 3]{KaSt2021}. In the setting of Theorem~\ref{thm.KS asymptotics}, if the limiting surface in $\sph^{n-1}$ realising $\lambda_1^*(M)$ is embedded, does it necessarily follow that the minimal surfaces in $\B^{n}$ realising $\sigma_1^*(M_{b_j})$ are embedded for all sufficiently large $j$?
\end{ques}

\subsection{Spectral index of embedded free boundary minimal surfaces}\label{subsec.embedded}

Knowledge of the spectral index of a free boundary minimal surface, as defined in Definition~\ref{spec.index}, has many applications.  We have already seen one application in Remark~\ref{rem.2A=mL}, and we will discuss an application to obtaining eigenvalue bounds in the next subsection.

 A. Fraser and M. Li conjectured:

\begin{conj}\label{fraser li conj}\cite[Conjecture 3.3]{FrLi2014} If $\om$ is a properly embedded free boundary minimal hypersurface in $\B^n$, then $\sigma_1(\om)=1$; i.e,. $\om$ has spectral index one. \end{conj}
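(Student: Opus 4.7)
The plan is to argue by contradiction: suppose $\om\hookrightarrow\B^n$ is a properly embedded free boundary minimal hypersurface but $\sigma_1(\om)<1$, and let $\phi\in H^1(\om)$ be a corresponding first Steklov eigenfunction, so that $\phi$ is harmonic in $\om$, $\partial_\nu\phi=\sigma_1\phi$ on $\pam$, and $\int_{\pam}\phi\,dA=0$. The first step is to record an automatic orthogonality: applying Green's identity to $\phi$ and each coordinate function $x_i\restr{\om}$ (both harmonic in $\om$, with $\partial_\nu x_i=x_i$ on $\pam$ by the free boundary condition) yields $(\sigma_1-1)\int_{\pam}x_i\phi\,dA=0$, hence $\int_{\pam}x_i\phi\,dA=0$ for every $i$. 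This simply reflects orthogonality of the eigenspaces corresponding to $\sigma_1$ and $1$, but it suggests trying to upgrade the orthogonality to $\phi\equiv 0$.

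The next step is to exploit the embedded structure. By a two-piece property for properly embedded free boundary minimal hypersurfaces (analogous to Choe's closed-manifold result), $\B^n\setminus\om$ has exactly two components $\om^\pm$. Combined with Courant's nodal domain theorem for the Dirichlet-to-Neumann operator, which forces the nodal set of $\phi\restr{\pam}$ to partition $\pam$ into exactly two domains, one would try to match the nodal components of $\phi\restr{\pam}$ with the two sides $\om^\pm$ and compare $\phi$ with a linear function $x\mapsto\langle a,x\rangle$ whose nodal locus separates $\B^n$ in the same way. The comparison might be made quantitative via a Reilly-type integral identity for harmonic Steklov eigenfunctions on minimal hypersurfaces, using that the ambient space is flat, that the mean curvature of $\om$ vanishes, and that $\pam\subset\sph^{n-1}$ meets $\sph^{n-1}$ orthogonally; combined with the orthogonality from Step 1, this identity would aim to force $\sigma_1\geq 1$. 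An alternative would be a rearrangement argument using the Möbius group of $\B^n$: after composing with a suitable conformal automorphism, one would try to arrange that a translate of each coordinate function is admissible against $\phi$ in the variational characterisation, producing a contradictory inequality.

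The hard part, and the reason the conjecture has resisted proof, is bridging the gap between the global topological condition ``$\om$ is embedded'' and the purely local/analytic datum ``$\sigma_1<1$.'' Without embeddedness the conclusion fails (one can construct immersed free boundary minimal surfaces of arbitrary spectral index), so any successful argument must use the two-piece property in an essential way, and no known technique converts global embeddedness into a sharp spectral lower bound without substantial extra structure. Realistic intermediate targets would be: (i) establish the conjecture for $\om$ with a large isometry group, reducing via symmetry to an ODE on the orbit space; (ii) prove it under bounds on the Morse index of $\om$, where stability calculations for the Jacobi operator connect directly to the Steklov spectrum; and (iii) transfer recent progress on Yau's analogous conjecture for embedded minimal hypersurfaces of $\sph^n$ via doubling or cone constructions. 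The main obstacle in each case will be the same: controlling the interior nodal geometry of $\phi$ from the global embedding data.
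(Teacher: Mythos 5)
This statement is an open conjecture (the Fraser--Li conjecture), not a theorem of the paper: the survey records it precisely because nobody has proved it. There is therefore no ``paper's own proof'' to compare against, and your proposal is quite rightly not a proof. What you have written is a survey of attack lines together with an honest admission of where they stop, which is the appropriate thing to produce here. A few specific comments.

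Your Step~1 is correct and is the standard orthogonality observation: since the coordinate functions are Steklov eigenfunctions with eigenvalue~$1$ (by Proposition~\ref{fbms iff stek}) and $\phi$ has eigenvalue $\sigma_1\neq 1$, Green's identity forces $\int_{\pam}x_i\phi\,dA=0$. This is exactly what one writes down first, and it is consistent with what is known. The Reilly-identity idea is also the right one to mention: it is essentially the mechanism behind the Fraser--Li lower bound $\sigma_1\geq\frac12$ (and Batista--Cunha's improvement to strict inequality), both cited in Subsection~\ref{subsec.embedded}. Your item~(i), symmetry reduction, is precisely what McGrath and then Kusner--McGrath carried out, and Girouard--Lagac\'e applied those results to specific families; your item~(iii), transfer from Yau's conjecture in $\sph^n$, is the standard analogy everyone has in mind.

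Two cautions. First, the two-piece property you invoke is not available in full generality for properly embedded free boundary minimal hypersurfaces in $\B^n$; it is known in some low-dimensional or symmetric settings (work of Lima in $\B^3$, for instance), but treating it as established in the generality your argument requires would be a gap if you ever tried to complete the proof along these lines. Second, your parenthetical ``one can construct immersed free boundary minimal surfaces of arbitrary spectral index'' is stated more confidently than the literature supports; what is clear is that embeddedness must be used (the conjecture is false for immersions in general), but ``arbitrary spectral index'' for immersed examples is not something I would assert without a reference. Neither of these undermines your main point, which is the correct one: there is no known way to convert global embeddedness into the sharp bound $\sigma_1\geq 1$, and Question~\ref{fraser li conj} remains open.
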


This conjecture is analogous to a longstanding conjecture of S. T. Yau \cite{Ya1982} asking whether every embedded minimal surface in $\sph^3$ is embedded by first Laplace eigenfunctions.

We emphasise that if the conjecture holds, then for any choice of $\gamma$ and $b$, Inequality~\eqref{eq.2A=L index} yields an upper bound of $\frac{1}{2}\sigma_1^*(\gamma,b)$ on the area of any embedded minimal surface in $\B^n$ of  given genus $\gamma$ with $b$ boundary components.

In support of the conjecture, Fraser and Li \cite[Corollary 3.2]{FrLi2014} proved that every properly embedded free boundary minimal hypersurface in $\B^n$ satisfies $\sigma_1(\om)\geq \frac{1}{2}$.  Batista and Cunha \cite{BaCu2016} showed that this inequality is strict.  

Much work on the conjecture has focused on the case of $n=3$.    Lee and Yeon \cite{LeYe2021} presented new approaches to the conjecture; e.g., they showed that the Gauss map of any embedded free boundary minimal annulus in $\B^3$ is one-to-one and re-interpreted the conjecture as the problem of determining the Gauss map.    
McGrath \cite[Theorem 4]{Mc2018} affirmed the conjecture for free boundary minimal surfaces in $\B^3$ that satisfy certain symmetry conditions; these conditions are satisfied by large families of examples in the literature.   Girouard and Lagac\'e \cite[Theorem 1.13]{GiLa2021} applied McGrath's result and other tools to verify the conjecture for all free boundary minimal surfaces in $\B^3$ of genus zero with tetrahedral, octahedral, or isosahedral symmetry and $b$ boundary components where $b\in \{4\}, \{6,8\}, \{8,12,20\}$, respectively.  

Recently, again in the case $n=3$, Kusner and McGrath \cite{KuMc2022} proved the conjecture for surfaces satisfying substantially weaker symmetry conditions.   These new results encompass the results of \cite{Mc2018} and \cite{GiLa2021} and apply to many more types of examples. Moreover, under their symmetry assumptions, they prove that not only is the spectral index one as conjectured but also that the coordinate functions of the embedded surface span the full $\sigma_1$ eigenspace.  They then put forth the following stronger version of the Fraser-Li conjecture:

\begin{conj}\label{conj:mcKu}\cite[Conjecture 5]{KuMc2022} The first Steklov eigenspace of any properly embedded free boundary minimal surface $S$ in $\B^n$ coincides with the span of the coordinate functions of the embedding.

\end{conj}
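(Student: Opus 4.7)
My plan is to attack the conjecture in two stages. Assume first (by restricting to its affine hull) that $S$ is \emph{full} in $\R^n$. By Proposition~\ref{fbms iff stek}, the coordinate functions $x_1,\dots,x_n$ are Steklov eigenfunctions of $S$ with eigenvalue~$1$, and fullness makes them linearly independent on $\partial S$. Thus $1 \in \stek(S)$ with multiplicity at least $n$, and the conjecture reduces to (i)~the Fraser--Li conjecture that $\sigma_1(S) = 1$, and (ii)~the statement that no Steklov eigenfunction with eigenvalue~$1$ is $L^2(\partial S)$-orthogonal to every coordinate function.

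\textbf{Approach to (i).} I would argue by contradiction: suppose $u$ is a non-constant eigenfunction with $\int_{\partial S} u \, dA = 0$ and Rayleigh--Steklov quotient $\sigma < 1$. Build on the framework of Kusner--McGrath by replacing the role of a finite symmetry group with a continuous family of test functions. Specifically, for each $a \in \sph^{n-1}$ the linear function $\ell_a(p) = \langle a, p\rangle$ restricts to a Steklov eigenfunction of eigenvalue $1$, satisfying $\int_{\partial S} \ell_a^2\, dA = \int_S |\nabla \ell_a|^2\, dV$. Decompose $u$ on $\partial S$ into its $L^2$-projection onto $\operatorname{span}(x_1,\dots,x_n)$ and its orthogonal complement, and exploit the free boundary condition to show that the orthogonal part contributes more than one to the Rayleigh--Steklov quotient. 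Key tools would be the pointwise identity $\sum_i x_i^2 \equiv 1$ on $\partial S$ together with the coordinate-wise Green identity $\int_S \langle \nabla x_i, \nabla u\rangle\, dV = \int_{\partial S} u\, x_i\, dA$, which together encode the orthogonal intersection $S \perp \partial \B^n$ as a spectral constraint coupling $u$ to $\ell_a$.

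\textbf{Approach to (ii).} Assume $\sigma_1(S)=1$ and suppose, for contradiction, that $v$ is a $1$-eigenfunction $L^2(\partial S)$-orthogonal to every $x_i$. Let $\tilde v$ be its harmonic extension and consider the augmented map $\Psi = (x_1,\dots,x_n,\tilde v) : S \to \R^{n+1}$: every component is harmonic and satisfies $\partial_\nu(\cdot) = (\cdot)$ on $\partial S$. A direct application of the converse to Proposition~\ref{fbms iff stek} would require $|\Psi|^2 \equiv 1$ on $\partial S$, whereas in fact $|\Psi|^2 = 1 + v^2$ there. The plan is to absorb this boundary defect via a $\sigma$-isometric conformal change of metric (Definition~\ref{sigmaisom}): produce $\tau \in C^\infty(S)$ with $\tau|_{\partial S} \equiv 1$ such that a suitably rescaled $\Psi'$ built from $\Psi$ and $\tau$ becomes a free boundary branched conformal minimal immersion into $\B^{n+1}$. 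Its image would be a free boundary minimal submanifold whose projection to $\R^n$ contains $S$; embeddedness and codimension then force $\tilde v \equiv 0$, contradicting $v \not\equiv 0$ on $\partial S$.

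\textbf{Main obstacle.} Step~(i) is the crux and is precisely the open Fraser--Li conjecture, the Steklov analogue of Yau's conjecture on first eigenfunctions of closed embedded minimal hypersurfaces in spheres. All known partial results rely decisively on symmetry, and the hyperplane-sweep scheme above is optimistic: converting the orthogonal intersection of $S$ with $\partial \B^n$ into a rigorous lower bound on $\sigma_1$ would require a new integral identity tying the second fundamental form of the free boundary $\partial S \subset \partial \B^n$ to the Steklov Rayleigh quotient, in the spirit of Choi--Wang or Ros for closed minimal surfaces. Step~(ii) should be more tractable once (i) is in hand, but constructing the conformal adjustment $\tau$ and verifying that $\Psi'$ is a genuine branched conformal minimal immersion is itself a nontrivial analytic task.
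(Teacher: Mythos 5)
This statement is an \emph{open conjecture} — Conjecture 5 of Kusner--McGrath~\cite{KuMc2022} — which the paper records as such. There is no ``paper's own proof'' to compare your proposal against, and indeed no proof is currently known; the point of including it in the survey is precisely that it is unresolved. You are candid about this, so the appropriate evaluation is of the strategy sketch rather than of a purported proof.

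Your two-step reduction is the natural one, and you correctly diagnose step~(i) as the genuine obstruction: it is the open Fraser--Li Conjecture~\ref{fraser li conj}, and the hyperplane-sweep heuristic you describe is, at present, only a plan, not an argument. One codimension caveat: as stated in the paper, Fraser--Li concerns free boundary minimal \emph{hypersurfaces} in $\B^n$, whereas Conjecture~\ref{conj:mcKu} speaks of surfaces in $\B^n$ for arbitrary $n$. A full surface in $\B^n$ with $n>3$ has codimension $\geq 2$, so your reduction would need a higher-codimension analogue of Fraser--Li that is not part of the cited conjecture.

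Step~(ii) has a concrete gap. A $\sigma$-isometry (Definition~\ref{sigmaisom}) changes the metric by a conformal factor $\tau$ with $\tau\equiv 1$ on $\partial S$; by construction it alters nothing at the boundary. It therefore cannot ``absorb'' the defect $|\Psi|^2 = 1 + v^2 \neq 1$ on $\partial S$: after any such change the boundary values of $\Psi$ are unchanged, and $|\Psi|\equiv 1$ on $\partial S$ is \emph{equivalent} to $v\equiv 0$ on $\partial S$, which is exactly what you are trying to prove. Rescaling $\Psi$ pointwise by $(1+v^2)^{-1/2}$ would fix the boundary constraint but destroy harmonicity of the components. Moreover, even with the boundary condition in hand, the image of $\Psi$ would be a minimal surface only if $\Psi$ were a (branched) \emph{conformal} immersion; $(x_1,\dots,x_n)$ is conformal (it is the inclusion of $S$), but appending $\tilde v$ generically breaks conformality, and nothing in the hypotheses forces $\partial_z\tilde v\cdot\partial_z x = 0$. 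So the augmented-map plan cannot succeed without a separate idea that is not present in the sketch. Any successful resolution of step~(ii) would likely proceed, as Kusner--McGrath do under symmetry hypotheses, by a direct spectral-geometric or maximum-principle argument on $S$ rather than by lifting $S$ to a free boundary minimal surface in a higher-dimensional ball.
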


The first item of the following proposition yields a partial converse to the Fraser-Li conjecture:

\begin{prop}\label{first implies embed}\cite[Proposition 8.1]{FrSc2016}
Let $\om$ be a Riemannian surface of genus zero with more than one boundary component. Suppose that $\tau:\om\to \B^3$ is a branched minimal immersion satisfying the free boundary condition and that $\tau(\om)$ has spectral index one.   Then
\begin{enumerate}
\item $\tau$ is an embedding;
\item $\tau(\om)$ does not contain the origin;
\item Any ray through the origin intersects $\tau(\om)$ in at most one point;
\item $\om$ is a stable minimal surface with area bounded above by $4\pi$.    
\end{enumerate}
\end{prop}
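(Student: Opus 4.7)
The plan is to exploit that, since $\tau(\om)$ has spectral index one, Proposition~\ref{fbms iff stek} forces $\sigma_1(\om)=1$ and makes every non-constant linear function $h_v := \langle v,\tau\rangle$, with $v\in\R^3\setminus\{0\}$, a first Steklov eigenfunction. The central analytic tool will be a Courant-type nodal-domain bound for the Steklov problem: every first Steklov eigenfunction on the connected surface $\om$ has exactly two nodal domains in $\om$.

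For (2), I would suppose $\tau(p_0)=0$ and derive a contradiction via a local expansion. In an isothermal coordinate $z$ centered at $p_0$, conformality and componentwise harmonicity of $\tau$ give $\tau(z)=az+O(|z|^2)$ with $a\in\mathbb{C}^3$ isotropic, whose real and imaginary parts span $T_{\tau(p_0)}\tau(\om)$. Choosing $v\in\R^3$ normal to this tangent plane makes $h_v(z)=O(|z|^2)$ vanish to order at least two at $p_0$, so the zero set of $h_v$ has at least four branches meeting at $p_0$, producing at least three nodal domains in $\om$ and contradicting the Courant bound. Branch points and boundary points are handled similarly, using Schwarz reflection across $\partial\om$ together with the fact that $\tau(\om)$ meets $\sph^2$ orthogonally.

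For (1) and (3), I would prove them together, reducing (1) to (3): if $\tau(p)=\tau(q)$ for $p\neq q$, then by (2) this common value is nonzero and so $\tau(p),\tau(q)$ lie on the ray generated by $v:=\tau(p)/|\tau(p)|$. To prove (3), assume $\tau(p)=t_pv$ and $\tau(q)=t_qv$ with distinct $p,q\in\om$ and $t_p,t_q>0$. For every $w\in v^\perp\setminus\{0\}$, the first eigenfunction $h_w$ vanishes at both $p$ and $q$ while $h_v$ is strictly positive at both. The principal technical step is to analyze the two-parameter family $\{h_w : w\in v^\perp\}$ of first eigenfunctions whose nodal sets all contain $\{p,q\}$: using that $\om$ is planar (genus zero with $b\geq 2$), the connectedness of $\{h_w>0\}$ combined with a local nodal analysis at $p$ and $q$ (where higher-order vanishing is ruled out by the argument in (2) applied to the tangent planes of $\tau(\om)$ at $\tau(p),\tau(q)$) forces the nodal arcs through $p$ and $q$ to separate $\om$ in mutually incompatible ways, producing the contradiction. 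This case analysis in the planar topology is the main obstacle and requires careful distinction according to whether the tangent planes of $\tau(\om)$ at $\tau(p)$ and at $\tau(q)$ coincide, intersect transversely, or contain $v$.

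For (4), the area bound is immediate from the general identity $2|\om|=|\partial\om|$ for free boundary minimal surfaces (Equation~\eqref{2A=L}), combined with $\sigma_1(\om)=1$ and Kokarev's strict inequality (Theorem~\ref{thm:KokarevGenus0}):
\[
2|\om| \;=\; |\partial\om| \;=\; \sigma_1(\om)\,|\partial\om| \;<\; 8\pi,
\]
hence $|\om|<4\pi$. Stability will follow from the star-like property established in (3): since every ray from the origin meets $\tau(\om)$ transversely at most once, the surface is radially graphical over a domain in $\sph^2$, and a positive solution of the Jacobi equation satisfying the free-boundary Robin condition can be built from the radial distance function, giving nonnegativity of the second-variation quadratic form and hence stability.
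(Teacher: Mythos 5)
The overall framework is the right one and matches the paper's: parts (1)--(3) are nodal-domain arguments applied to the first-eigenfunction family $h_v=\langle v,\tau\rangle$ (Courant's theorem forcing exactly two nodal domains), and the area bound in (4) is Kokarev's estimate together with $2|\om|=|\partial\om|$. Two issues, one a gap and one an error, need attention.

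In (2), you pass from ``$h_v$ vanishes to order $\geq 2$ at $p_0$'' to ``at least four branches meeting at $p_0$, producing at least three nodal domains.'' That implication does not come for free. Locally you get $\geq 4$ alternating sectors, but these can a priori be joined globally into only two nodal domains. The missing ingredient is the maximum principle, which the paper's own summary names explicitly: a nodal arc of a harmonic function cannot bound a disk contained in the interior of $\om$, so all four local arcs must reach $\partial\om$, and then a short Euler-characteristic or Jordan-curve count on the planar surface $\om$ forces at least three nodal domains. Exactly the same missing argument resurfaces in your treatment of (1) and (3), where you acknowledge the case analysis ``is the main obstacle'' without supplying it; what is needed there is again a combination of the two-piece property with the maximum principle controlling the global structure of the nodal sets $\{h_w=0\}$, $w\in v^\perp$, all of which contain both $p$ and $q$.

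In (4), the stability argument as you phrase it is wrong. You propose a positive Jacobi field ``satisfying the free-boundary Robin condition,'' built ``from the radial distance function.'' Neither half is correct. The relevant Jacobi field is the support function $\langle\tau,N\rangle$ (the function $|\tau|$ is not a Jacobi field), and along $\partial\om$ the outward conormal of $\om$ equals the position vector $\tau\in\sph^2$, which is perpendicular to $N$; hence $\langle\tau,N\rangle\equiv 0$ on $\partial\om$, a Dirichlet condition, not the Robin condition $\partial_\nu u=u$. This distinction is not cosmetic: a positive Jacobi field with the free-boundary Robin condition would mean $\om$ is free-boundary stable, which is false even for the equatorial disk (free-boundary index $1$) and certainly for any other genus-zero free boundary minimal surface (the critical catenoid has index $4$). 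The ``stable minimal surface'' in the proposition is stability for the fixed-boundary second variation of area, and it is precisely the Dirichlet-vanishing, interior-positive support function that delivers it via star-shapedness from item (3).
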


The proof of the first three items uses the nodal domain theorem along with the maximum principle for harmonic functions.   The area bound follows from Kokarev's bound $\sigma_1^*(\om)\leq 8\pi$ for all surfaces of genus zero (see Theorem~\ref{thm:KokarevGenus0}).  

As a consequence of Corollary~\ref{cor: fbms genus 0}, and the fact \cite{GiHeLa2021} that $\lim_{b\to\infty}\, \sigma_1^*(0,b)=8\pi$ (see Remark \ref{rem:b to infty}), one has the following:

\begin{cor}
The bound $4\pi$ in Proposition~\ref{first implies embed} is sharp.
\end{cor}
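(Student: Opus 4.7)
The plan is to exhibit, for each sufficiently large $b$ in the infinite set $\mathcal{B}$ of Proposition~\ref{lem: inf many b}, an embedded free boundary minimal surface $S_b \subset \B^3$ of genus zero with $b$ boundary components and spectral index one, and then show that $|S_b| \to 4\pi$ as $b \to \infty$ along $\mathcal{B}$. This will witness the sharpness of the bound.

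First I would invoke Corollary~\ref{cor: fbms genus 0}: for each $b \in \mathcal{B}$, there is an embedding of $\om_{0,b}$ into $\B^3$ as a free boundary minimal surface such that the induced metric $g_b$ realises $\sigma_1^*(0,b)$. Let $S_b$ denote the image. By Proposition~\ref{fbms iff stek} the coordinate functions on $S_b$ are Steklov eigenfunctions with eigenvalue $1$. The proof of Theorem~\ref{thm.fs} constructs the embedding after rescaling so that $\sigma_1(S_b, g_b) = 1$; hence $\sigma_1(S_b) = 1$ and the spectral index of $S_b$ is precisely one, placing $S_b$ in the hypothesis of Proposition~\ref{first implies embed}.

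Next I would compute the area. Applying Equation~\eqref{eq.2A=L index} (or equivalently Equation~\eqref{2A=L} together with $\sigma_1(S_b) = 1$) gives
\[
|S_b|_{g_b} = \tfrac{1}{2}|\partial S_b|_{g_b} = \tfrac{1}{2}\,\sigma_1(S_b, g_b)\,|\partial S_b|_{g_b} = \tfrac{1}{2}\,\sigma_1^*(0,b).
\]
By Remark~\ref{rem:b to infty}, we have $\lim_{b \to \infty} \sigma_1^*(0,b) = \lambda_1^*(\sph^2) = 8\pi$, with the limit taken over all positive integers but a fortiori valid along the infinite subsequence $\mathcal{B}$. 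Consequently $|S_b|_{g_b} \to 4\pi$ as $b \to \infty$ in $\mathcal{B}$, matching the upper bound of Proposition~\ref{first implies embed}.

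There is no real obstacle here beyond the careful bookkeeping of normalisations: one must verify that the rescaling used in the proof of Theorem~\ref{thm.fs} gives $\sigma_1 = 1$ (so that the spectral index is one rather than merely some $k$ with $\sigma_k = 1$), and one must use that the surfaces $S_b$ produced by Corollary~\ref{cor: fbms genus 0} lie in $\B^3$ (not some higher-dimensional ball), which is exactly what that corollary guarantees in the genus-zero case. Everything else is an immediate concatenation of the cited results.
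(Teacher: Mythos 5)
Your proof is correct and follows exactly the paper's route: invoke Corollary~\ref{cor: fbms genus 0} to obtain, for each $b$ in the infinite set $\mathcal{B}$, an embedded genus-zero free boundary minimal surface in $\B^3$ of spectral index one realising $\sigma_1^*(0,b)$, compute its area as $\tfrac{1}{2}\sigma_1^*(0,b)$ via Equation~\eqref{2A=L}, and pass to the limit using Remark~\ref{rem:b to infty} (where $\sigma_1^*(0,b)\to 8\pi$). Your care about the rescaling in Theorem~\ref{thm.fs} ensuring spectral index exactly one, and the $\B^3$ (rather than $\B^n$) embedding in genus zero, matches the paper's reasoning precisely.
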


The literature contains many constructions of embedded free boundary minimal surfaces in $\mathbb B^3$.  In many cases the spectral index is not known.    We give a sampling here.
\begin{itemize}
\item Folha, Pacard and Zolotareva \cite{FoPaZo2017} gave examples of surfaces of genus zero with $b$ boundary components for each large $b$, converging to the double disk as $b\to \infty$.  Although their construction is a priori different from the construction by Fraser-Schoen \cite{FrSc2016} of free boundary minimal surfaces converging to the double disk, they conjectured that their surfaces are congruent to those of Fraser-Schoen.  They also proved the existence for all large $b$ of free boundary minimal surfaces in $\mathbb B^3$ of genus one with $b$ boundary components converging as $b\to\infty$ to a double copy of the unit equatorial disk punctured at the origin.  The convergence is uniform on compact subsets of $\mathbb B^3-\{0\}$.   McGrath in the work \cite{Mc2018} cited above verified that all the surfaces in both of these families have spectral index one.
\item Ketover \cite{Ke2016} and also Kapouleas and Li \cite{KaLi2021} constructed surfaces of genus $\gamma$ with three boundary components converging as varifolds to the union of the disk and the critical catenoid  as $\gamma\to \infty$.  
\item Carlotto, Franz and Schulz \cite{CaFrSc2020} construct examples of every genus with one boundary component and with dihedral symmetry.
\end{itemize}

\subsection{Applications of free boundary minimal surfaces to finding Steklov eigenvalue bounds and maximising metrics}\label{applic.bounds}

Currently, the disk, annulus and M\"obius strip are the only surfaces with boundary for which explicit maximising metrics have been determined.    As seen in Example~\ref{ex.cat}, the interplay with free boundary minimal surfaces played a critical role in both the case of the annulus and M\"obius strip.

It is rare that one can establish the type of uniqueness statements that were used in Example~\ref{ex.cat} to find the actual maximiser of $\sigma_k^*(\om)$.    However, any embedding of $\om$ as a free boundary minimal surface of spectral index $k$ in a Euclidean ball $\B^n$ can be used to obtain a lower bound for $\sigma_k^*(\om)$, since by Equation~\eqref{2A=L} and Proposition~\ref{fbms iff stek}, we have  
\[2|\om|_g= |\partial \om|_g=\sigma_k(\om,g)|\partial \om|_g\leq\sigma^*_k(\om)\]   where $g$ is the metric  on $\om$ induced by the Euclidean metric on $\B^n$.   In particular, in view of the many examples in the literature of embedded free boundary minimal surfaces in $\B^3$, the Fraser-Li Conjecture~\ref{fraser li conj} has strong implications for eigenvalue bounds.
:

If the Fraser-Li Conjecture~\ref{fraser li conj} is affirmed, then the known examples of embedded free boundary minimal surfaces in $\B^3$ will yield lower bounds for $\sigma_1^*(\om)$ for surfaces of many topological types.

We now consider higher eigenvalues.   Interesting results on maximisers have been obtained when one restricts to a special class of metrics. Given a surface that admits a circle action, let $\mathcal{S}(\om)$ denote the set of $\mathbb S^1$-invariant smooth metrics on $\om$ and define
\[\sigma_k^{\mathbb S^1}(\om):=\sup_{g\in \mathcal{S}(\om)}\,\sigma_k(\om,g)|\partial \om|_g.\]

Fan, Tam and Yu \cite{FaTaYu2015}  considered rotationally symmetric Riemannian metrics on the topological annulus $\om_{0,2}$.  We summarise their results:
\begin{enumerate}
\item $\sigma_2^{\mathbb S^1}(\om_{0,2})=4\pi$.    No metric in $\mathcal{S}(\om_{0,2})$ attains this supremum.  As $T\to\infty$, the normalised second eigenvalue on the cylinder $[0,T]\times \sph^1$ converges to $4\pi$.  
\item For every $k>2$, they both find $\sigma_k^{\mathbb S^1}(\om_{0,2})$ and find explicit metrics  $g\in\mathcal{S}(\om_{0,2})$ that attain $\sigma_k^{\mathbb S^1}(0,2)$.
\item For $k>2$ odd,  the maximising metric for $\sigma_k^{\mathbb S^1}(\om_{0,2})$ is induced by an immersion of $\om_{0,2}$ into $\B^3$ whose image is a free boundary minimal surface, the so-called critical $k$-catenoid.
\item For $k>2$ even, the metric is induced by an immersion of $\om_{0,2}$ into $\B^4$ whose image is a free boundary minimal M\"obius strip.
\end{enumerate}

Subsequently, Fraser and Sargent \cite{FrSa2021} classified all free boundary rotationally symmetric minimal annuli and M\"obius bands in $\B^n$ for arbitrary $n$.   We will denote the topological M\"obius strip by $\text{M\"ob}$.  Some highlights of their results are:

\begin{enumerate}
\item For all $k\geq 1$, they show that $\sigma_{2k-1}^{\mathbb S^1}(\mob)=\sigma_{2k}^{\mathbb S^1}(\mob)$.   Moreover they compute this value explicitly and show that it is attained by a free boundary minimal embedding of $\mob$ in $\B^4$.    
\item Every rotationally symmetric free boundary minimal annulus or M\"obius band in any $\B^n$ is critical for some $k$-th normalised eigenvalue within the space of rotationally symmetric Riemannian metrics. 

\end{enumerate}

Example~\ref{ex.cat} shows that for both the annulus and the M\"obius band, 
the supremum of the first normalised Steklov eigenvalue is attained by a rotationally symmetric  metric.  In contrast, for all $k\geq 2$,
Fraser and Schoen \cite[Theorem 3.1]{FrSc2020} showed that 
\[\sigma_k^{\mathbb S^1}(\om_{0,2})<\sigma_k^*(\om_{0,2})\mbox{\,\,\,and\,\,\,}\sigma_k^{\mathbb S^1}(\mob)<\sigma_k^*(\mob)\]
for all $k\geq 2$.

\subsection{Higher dimensions}\label{subsec: fbms higher dim}

In this final subsection, we address Karpukhin and M\'etras' extensions to arbitrary dimension of Theorems~\ref{thm.fs} and \ref{thm: FrSc conf class}.

For motivation, we will first consider the setting of the Laplacian on closed manifolds.

\subsubsection{Laplace conformally critical metrics and $n$-harmonic maps.}

 Let $(M,g)$ be a closed Riemannian manifold.  By the variational formula for the Dirichlet energy, a map $u:(M,g)\to\sph^m$ is harmonic if and only if
\[\Delta_g u=|du|_g^2u.\]

\begin{defn}\label{def:eigenmap}   A map $u: (M,g) \to S^m$ is said to be a $\lambda$-\emph{eigenmap} if $u$ is harmonic and the component functions $u_1,\dots, u_{m+1}$ are $\lambda(M,g)$-Laplace eigenfunctions.   Equivalently, $u$ is harmonic with constant energy density $|du|^2\equiv\lambda $. \end{defn}

Now suppose that $\dim(M)=2$ and let $u:(M,g)\to \sph^m$.   Since the Dirichlet energy is a conformal invariant in dimension two, the property of being harmonic depends only on the conformal class of $g$.   
 We say that $u$ is non-degenerate if $|du|_g^2$ is nowhere zero.   In this case we can define
\[g_u=|du|^2_g g.\]
Since $\dim(M)=2$, we have 
\begin{equation}\label{eq:Delta gu} \Delta_{g_u}u = \frac{1}{|du|^2_g}\Delta_gu =u,\end{equation}
so  $u$ is an eigenmap with eigenvalue one with respect to $g_u$.    Moreover,
Theorem~\ref{thm: El harmonic} tells us that $g_u$ is $\bla_k$-conformally critical where $k$ is chosen so that either $\lambda_k(M,g_u)=1<\lambda_{k+1}(M,g_u)$ or $\lambda_{k-1}(M,g_u)<1=\lambda_{k}(M,g_u)$.

An insight of Matthiesen~\cite{Ma2019} is that in any dimension $n\geq 2$, one can get a similar result by replacing harmonic maps by $n$-harmonic maps, defined as follows:

\begin{defn}\label{def:nharm}  Let $(M,g)$ and $(N,h)$ be Riemannian manifolds with $M$ compact.  The $n$-energy of a map $u:M\to N$ is defined by 
\[E_n(u)=\int_M\,|d u|_g^n \,dV_g.\]
(We again emphasise that, as in Equation~\eqref{eq:dir en}, the integrand depends on both metrics $g$ and $h$.)
The map $u$ is said to be $n$-\emph{harmonic} if it is a critical point of $E_n$.  \end{defn}

 Note that when $n=2$, $E_n$ agrees with the Dirichlet energy (see Equation~\eqref{eq:dir en}) except for the missing coefficient $\frac{1}{2}$.  (Some authors, including Karpukhin and M\'etras, include a coefficient of $\frac{1}{n}$ in the definition of $n$-energy.  We followed the convention of \cite{Ma2019} here.)
In particular, a 2-harmonic map is the same as a harmonic map.  There are numerous references on $n$-harmonic maps; \cite{Ta1994} addresses the relationship with minimal immersions.

We will be interested in the case $n=\dim(M)$.   In this case, one has for any $m\in\Z^+$ that
\begin{itemize}
\item A map $u: (M,g)\to \R^m$ is $n$-harmonic if and only if
\begin{equation}\label{eq:nharm R} \delta_g(|du|_g^{n-2}du)=0.\end{equation}
where $\delta_g$ is the dual to $d$.
\item A map $u: (M,g)\to \sph^m$ is $n$-harmonic if and only if
\begin{equation}\label{eq:n harm}\delta_g(|du|_g^{n-2}du) =|d u|_g^n u
\end{equation}
\end{itemize}

The $n$-energy is a conformal invariant for $n$-dimensional manifolds $M$.   Moreover, we have the following:

\begin{lemma}\cite{Ma2019}\label{lem: eigenmap}
Let $(M,g)$ be an $n$-dimensional closed Riemannian manifold.  
\begin{enumerate}
\item  If $u:M\to \sph^m$ is an eigenmap with respect to $g$, then $u$ is necessarily both $n$-harmonic and harmonic. 
\item If $u:(M,g)\to \sph^m$ is a non-degenerate $n$-harmonic map, then $u$ is an eigenmap with respect to the conformally equivalent metric $g'=|du|^2_g g$.
\end{enumerate}
\end{lemma}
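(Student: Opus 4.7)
My plan for part (1) is to use the explicit characterisations of harmonicity and $n$-harmonicity for maps into $\sph^m\subset\R^{m+1}$. Viewing $u$ as $\R^{m+1}$-valued, harmonicity is equivalent to $\Delta_g u=|du|_g^2\,u$ (coming from the fact that the tension field must be normal to $\sph^m$, and the outward normal at $u(p)$ is $u(p)$ itself), while equation~\eqref{eq:n harm} expresses $n$-harmonicity as $\delta_g(|du|_g^{n-2}du)=|du|_g^n u$. Since $u$ is by definition an eigenmap, it is already harmonic, yielding the first half of the conclusion, and moreover $|du|_g^2\equiv\lambda$ is constant. Therefore $|du|_g^{n-2}\equiv\lambda^{(n-2)/2}$ is constant and can be pulled outside the codifferential:
\[\delta_g(|du|_g^{n-2}du)=\lambda^{(n-2)/2}\,\delta_g(du)=\lambda^{(n-2)/2}\Delta_g u=\lambda^{(n-2)/2}\cdot\lambda u=\lambda^{n/2}u=|du|_g^n\,u,\]
which is exactly the $n$-harmonic map equation.

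For part (2), the key input is the conformal invariance of the $n$-energy when $\dim M=n$. Under a conformal change $g'=\alpha^2 g$ with $\alpha>0$, we have $|du|_{g'}^2=\alpha^{-2}|du|_g^2$ and $dV_{g'}=\alpha^n dV_g$, so $|du|_{g'}^n\,dV_{g'}=|du|_g^n\,dV_g$; hence $E_n(u)$ and its critical points are conformally invariant. The non-degeneracy assumption ensures that $g':=|du|_g^2\,g$ is a well-defined smooth Riemannian metric (positivity and smoothness of the conformal factor), so $u$ remains $n$-harmonic with respect to $g'$. A direct computation with $\alpha=|du|_g$ gives $|du|_{g'}^2\equiv 1$; substituting this into the $n$-harmonic map equation~\eqref{eq:n harm} for $g'$ collapses it to $\delta_{g'}(du)=u$, i.e.\ $\Delta_{g'}u=u$. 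Since $|du|_{g'}^2\equiv 1=|du|_{g'}^2\cdot 1$, this says precisely that $u$ is harmonic with constant energy density $1$ with respect to $g'$, i.e.\ an eigenmap with eigenvalue $1$.

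Neither direction presents a serious obstacle; both reduce to algebraic manipulations once one has the correct formulations of the Euler--Lagrange equations and of conformal scaling. The only subtle points I would want to verify carefully are: (a) that the conformal invariance argument genuinely applies pointwise (which it does because $\alpha$ is smooth and positive on all of $M$ by non-degeneracy), and (b) the derivation of the characterisation~\eqref{eq:n harm} of $n$-harmonicity for sphere-valued maps, which follows from the standard Euler--Lagrange computation for the $n$-energy together with the constraint $|u|^2=1$ (yielding, as in the harmonic case, that the tension-type quantity $\delta_g(|du|_g^{n-2}du)$ must be parallel to $u$, with the precise proportionality constant fixed by differentiating $|u|^2=1$). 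After that, the proof is essentially a one-line computation in each direction.
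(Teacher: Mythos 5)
Your proof is correct and takes essentially the same route as the paper's (very terse) sketch: part (1) is exactly the paper's ``immediate from Equation~\eqref{eq:n harm} since eigenmaps have constant energy density,'' and part (2) is the natural generalisation of the paper's $n=2$ argument, replacing the dimension-two conformal scaling of $\Delta_g$ by the conformal invariance of the $n$-energy together with the observation that $|du|^2_{g'}\equiv 1$.
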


The first statement is immediate from Equation~\eqref{eq:n harm} since eigenmaps have constant energy density, and the proof of the second statement is similar to the case $n=2$ above.

Karpukhin and M\'etras \cite{KaMe2021} observed that Lemma~\ref{lem: eigenmap} enables Theorem~\ref{thm: El harmonic} to be reformulated as in the corollary below and also showed that the result remains valid with the notion of critical metric alluded to in Remark~\ref{rem:crit different}.  

\begin{cor}(See \cite[Theorem 1]{KaMe2021}.) \label{cor: KM Lap}
Let $(M,g)$ be a compact $n$-dimensional Riemannian manifold.
\begin{enumerate}
\item Let $k\in \Z^+$.   If there exists a $\bla_k$-conformally critical metric $g_0$ in $[g]$, then there exists $m\in \Z^+$ and a non-degenerate $n$-harmonic map $u: (M,[g])\to \sph^m$. 
\item Conversely, if $u: (M,g)\to \sph^m$ is a non-degenerate $n$-harmonic map, then there exists $k\in \Z^+$ such that $g_u =|du|^2_g g$ is a $\bla_k$-conformally critical metric and $u$ is a $\lambda_k(M,g_0)$ eigenmap.
 \end{enumerate}

\end{cor}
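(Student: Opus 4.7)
The plan is to derive Corollary~\ref{cor: KM Lap} directly from Theorem~\ref{thm: El harmonic} and Lemma~\ref{lem: eigenmap}, essentially as a translation between the language of harmonic eigenmaps (with a fixed metric) and the language of $n$-harmonic maps (with a fixed conformal class). The guiding observation is that in dimension $n$, the $n$-energy is conformally invariant, so $n$-harmonicity is a property of the conformal class $[g]$ rather than of a particular representative $g \in [g]$, whereas being an eigenmap is a metric property. Lemma~\ref{lem: eigenmap} is exactly the dictionary between the two notions.

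For part (1), I would start with a $\bla_k$-conformally critical metric $g_0 \in [g]$ and apply the first statement of Theorem~\ref{thm: El harmonic} to produce linearly independent $\lambda_k(M,g_0)$-eigenfunctions $u_1,\dots,u_{m+1}$ with $\sum u_i^2 \equiv 1$, so that the map $u=(u_1,\dots,u_{m+1}):(M,g_0)\to \sph^m$ is a $\lambda_k(M,g_0)$-eigenmap. By Lemma~\ref{lem: eigenmap}(1), $u$ is automatically $n$-harmonic with respect to $g_0$. Conformal invariance of the $n$-energy in dimension $n$ then guarantees that $u$ is $n$-harmonic with respect to every metric in $[g_0]=[g]$. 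Non-degeneracy follows because $|du|_{g_0}^2 \equiv \lambda_k(M,g_0)>0$ (since $\lambda_k>0$ for $k\geq 1$ on a connected closed manifold), and $|du|^2$ is positive at a point independently of which conformal representative we use.

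For part (2), given a non-degenerate $n$-harmonic map $u:(M,g)\to \sph^m$, Lemma~\ref{lem: eigenmap}(2) gives that $u$ is an eigenmap for the conformally equivalent metric $g_u = |du|_g^2\,g$. The short calculation behind this conclusion shows the eigenvalue is in fact $\lambda=1$: writing $\phi=|du|_g$, the formula $\Delta_{g_u}f=-\phi^{-n}\,\mathrm{div}_g(\phi^{n-2}\nabla_g f)$ combined with the $n$-harmonic equation $-\mathrm{div}_g(\phi^{n-2}\nabla_g u_i)=\phi^n u_i$ yields $\Delta_{g_u}u_i=u_i$. Hence the coordinate functions of $u$ are $\lambda(M,g_u)$-eigenfunctions with $\lambda=1$. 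Choosing $k\in\Z^+$ so that either $\lambda_{k-1}(M,g_u)<1=\lambda_k(M,g_u)$ or $\lambda_k(M,g_u)=1<\lambda_{k+1}(M,g_u)$, the converse half of Theorem~\ref{thm: El harmonic} concludes that $g_u$ is $\bla_k$-conformally critical, completing the proof.

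The only genuine subtlety — and the one piece worth checking carefully — is Remark~\ref{rem:crit different}, namely that the conclusions of Theorem~\ref{thm: El harmonic} remain valid when the definition of critical metric is strengthened from analytic to $C^\infty$ deformations, as formulated by Karpukhin and M\'etras. I would verify that the arguments of El Soufi–Ilias for the one-sided derivative formula~\eqref{eq:crit pt deriv } and its consequences go through under $C^\infty$ variations; this is essentially a perturbation-theory statement about smooth families of self-adjoint operators, and the main input one needs is differentiability of the eigenbranches under smooth variation, which holds (a similar verification appears in \cite{KaMe2021}). Once this is in place, parts (1) and (2) are essentially formal consequences of Lemma~\ref{lem: eigenmap} and Theorem~\ref{thm: El harmonic}, and the principal content of the corollary lies in recognising $n$-harmonic maps as the right conformally invariant reformulation of eigenmaps.
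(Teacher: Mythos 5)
Your argument is correct and mirrors the paper's intended derivation: the corollary is presented there as a direct reformulation of Theorem~\ref{thm: El harmonic} through the dictionary of Lemma~\ref{lem: eigenmap}, with the Karpukhin--M\'etras observation that the conclusions persist under the $C^\infty$ (rather than analytic) notion of critical metric (Remark~\ref{rem:crit different}). Your two-step translation---eigenmap implies $n$-harmonic (using conformal invariance of the $n$-energy in dimension $n$) for the forward direction, and the computation $\Delta_{g_u}u_i = u_i$ feeding into the converse half of Theorem~\ref{thm: El harmonic} for the reverse---is exactly the argument the paper has in mind, and your care about non-degeneracy (constancy of $|du|^2_{g_0} = \lambda_k > 0$, and conformal invariance of the vanishing set) is correctly placed.
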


\subsubsection{Critical metrics for Steklov eigenvalues}

Karpukhin and M\'etras had two critical insights that enabled them to generalise Theorems~\ref{thm: FrSc conf class} and \ref{thm.fs} to higher dimensions:

\begin{enumerate}
\item For manifolds of dimension $d+1$, one must use the normalisation 
\begin{equation}\label{eq: KM normalisation}\bsig_k(\om,g)=\sigma_k(\om,g)|\om|_g^{\frac{1-d}{d+1}}|\partial\om|_g.\end{equation}
Indeed they showed that this is the \emph{only} normalisation for which smooth critical metrics can exist.  (Compare with  Example~\ref{example:KarpukhinMetrasNorm}, which illustrates another instance in which this normalisation is distinguished by its special behavior.)
\item One must introduce densities and consider the weighted Steklov spectrum.
\end{enumerate}

The new normalisation that they introduced was discussed in Section~\ref{section:boundshigher}.     Here we address the second insight above.  To adapt Matthiesen's ideas to the Steklov case, one considers free boundary $(d+1)$-harmonic maps, where as usual $d+1 =\dim(\om)$.    

Free boundary $(d+1)$-harmonic maps $u: (\om,g)\to \B^m$ are defined analogously to free boundary harmonic maps as in Definition~\ref{def.fbms}.   In view of Equation~\eqref{eq:nharm R}, the condition to be a free boundary $(d+1)$-harmonic map can be expressed as 
\begin{equation}\label{eq: fb harm ball}
\begin{cases}
\delta_g(|du|_g^{d-1}du)= 0\,\,&\mbox{in}\,\,\om,\\
\,\,\,\,\partial\nu_gu =|\partial\nu_{g}u|u\,\,\,&\mbox{on}\,\,\partial\om.
\end{cases}
\end{equation}

Now assume that $u$ is non-degenerate.  If we replace $g$ by the conformally equivalent metric $g_u=|du|_g^2 g$, then (\ref{eq: fb harm ball}) can be expressed as
\begin{equation}\label{eq: fb harm ball'}
\begin{cases}
\Delta_{g_u}u= 0\,\,&\mbox{in}\,\,\om,\\
\partial\nu_{g_u}u =|\partial\nu_{g_u}u|u \,\,\,&\mbox{on}\,\,\partial\om.
\end{cases}
\end{equation}
Equivalently, if one defines $\rho_u:\partial \om \to \R$, by 
\begin{equation}\label{eq:rho sub phi}\rho_u=|\partial\nu_{g_u}u|\end{equation}
then \eqref{eq: fb harm ball'} says that the coordinate functions of $u$ are eigenfunctions with eigenvalue one of the weighted Steklov problem on $(\om,g_u)$ with density $\rho_u$ as in Example~\ref{ex:varweightedSteklov}.   Denoting the eigenvalues by $\sigma_k(\om,g_u, \rho_u)$, one defines the \emph{spectral index} of $u$ to be the minimum $k$ for which $\sigma_k(\om,g_u, \rho_u)=1$.

Moreover, Karpukhin and M\'etras \cite[Lemma 2]{KaMe2021} showed that the density $\rho_u$ is strictly positive.   Given any Riemannian metric $g$ on $\om$ and any strictly positive density $\rho$, write 
\begin{equation}\label{eq:dtn weight}
\DtN_{g,\rho}:=\frac{1}{\rho}\DtN_{g}
\end{equation} 
where $\DtN_g$ is the Dirichlet-to-Neumann operator of $(\om,g)$.   As noted in  Example~\ref{ex:varweightedSteklov}, the weighted Steklov eigenvalues $\sigma_k(\om,g,\rho)$ are the eigenvalues of $\DtN_{g,\rho}$. 
In particular, the coordinate functions of the free boundary $(d+1)$-harmonic map  $u:(\om,g)\to \B^m$ are eigenfunctions of $\DtN_{g_u,\rho_u}$ with eigenvalue one.  Note, therefore, that they are both harmonic and $(d+1)$-harmonic with respect to $g_u$ (the latter because $(d+1)$-harmonicity is a conformal invariant in dimension $d+1$ as noted above). 

 Generalising their notion of critical metric (see Remark~\ref{rem:crit different}), Karpukhin and M\'etras define the notions of $\bsig_k$-critical pairs and $\bsig_k$-conformally critical pairs $(g,\rho)$ as follows:  
 
 \begin{defn}\label{def: conf ext pair} Let $\om$ be a $(d+1)$-dimensional compact Riemannian manifold with boundary. Denote by $\mathcal{M}_\om$ the space of all Riemannian metrics on $\om$, and denote by $C^\infty(\sig)_{>0}$ the space of positive smooth densities on the boundary $\sig$ of $\om$.
 
 \begin{enumerate} 
  \item 
 Given a pair $(g,\rho)\in \mathcal{M}_\om\times C^\infty(\sig)_{>0}$, denote by $\sigma_k(\om,g,\rho)$ the $k$th eigenvalue for the associated weighted Steklov problem. Define the normalised weighted Steklov eigenvalues by   
\begin{equation}\label{eq: weight normalis}\bsig_k(\om,g,\rho)=\sigma_k(\om,g,\rho)\,|\om|^{\frac{1-d}{d+1}} \,\|\rho\|_{L^1(\om,g)}.\end{equation}
Observe that this normalisation is the natural extension to the weighted Steklov problem of the normalisation~\eqref{eq: KM normalisation}.
 \item Given a functional $F:\mathcal{M}_\om\times C^\infty(\sig)_{>0}\to \R$, we say $(g,\rho)\in \mathcal{M}_\om\times C^\infty(\sig)_{>0}$ is an $F$-critical pair if for each deformation $\{(g_t,\rho_t)\}_t$ depending smoothly on $t\in (-\epsilon, \epsilon)$ with $(g_0,\rho_0)=(g,\rho)$, either 
 \begin{equation}\label{eq:extremal pt def}F(g_t,\rho_t)\leq F(g,\rho)+o(t) \mbox{\,\,\,or\,else\,\,\,} F(g_t,\rho_t)\geq F(g,\rho)+o(t).\end{equation}
 We say $(g,\rho)$ is an $F$-conformally critical pair if \eqref{eq:extremal pt def} holds for all such deformations $(g_t,\rho_t)$ that satisfy $g_t\in [g]$ for all $t$.

\end{enumerate}
\end{defn}

We note that Karpukhin and M\'etras use the language ``extremal'' rather than ``critical''.

Karpukhin and M\'etras \cite{KaMe2021} observed that a modification of the proof of Hassannezhad's theorem~\ref{asma2011} yields
\[\bsig_k(\om,g,\rho)\leq Ck^{2/n}\]
where $C$ is a constant depending only on the conformal class $[g]$.

\begin{remark}\label{rem:resc} ~
\begin{enumerate}
\item Observe that the Riemannian metric $g$ and the positive density $\rho$ can be rescaled independently of each other without affecting the normalised eigenvalues, i.e., 
\begin{equation}\label{eq:higherresc}\bsig_k(\alpha g, \beta \rho)=\bsig_k(g,\rho) \,\,\,\,\,\forall \,\alpha, \beta\in \R^+.\end{equation}
Moreover, the eigenvalues themselves satisfy
\[\sigma_k(\alpha g, \alpha^{-1/2}\rho)=\sigma_k(g,\rho) \]
for all $\alpha \in \R^+$.   
\item In the special case that $\dim(\om)=2$, the eigenvalues moreover satisfy the following conformal invariance property:
\[\sigma_k(\tau g,  \tau^{-1/2}\rho)=\sigma_k(g,\rho)\,\,\,\,\,\forall \,\tau\in C^{\infty}_{>0}(\om).\]
The conformal change $(g,\rho)\to (\tau g,  \tau^{-1/2}\rho)$ extends the notion of $\sigma$-isometry from metrics to metric-density pairs.  (See Definition~\ref{sigmaisom}.)  We will refer to such changes as \emph{weighted $\sigma$-isometries}.   

Moreover, again in dimension 2, the normalisation~\eqref{eq: weight normalis} coincides with the standard normalisation $\bsig_k(\om,g,\rho)=\sigma_k(\om,g,\rho) \,\|\rho\|_{L^1(\om,g)}$.   In particular, weighted $\sigma$-isometries preserve both the weighted Steklov eigenvalues, as just noted above, and also the normalised weighted eigenvalues.   In particular, if $(g,\rho)$ is $\bsig_k$-critical or conformally critical, then so is any weighted $\sigma$-isometric pair.

\item Again in dimension 2, observe that every weighted $\sigma$-isometry class of pairs $(g,\rho)$ contains pairs of the form $(g',1)$.  Indeed one can take $g'=\frac{1}{\tilde{\rho}^2}g$, where $\tilde{\rho}$ is any strictly positive smooth function on $\om$ that restricts to $\rho$ on $\partial\om$.   Moreover $g'$ is unique up to $\sigma$-isometry in the usual sense of Definition~\ref{sigmaisom}.   We caution, however, that $g'$ is not $\sigma$-isometric to $g_u$ unless $\rho\equiv 1$.   
\end{enumerate}

\end{remark}

   We now state Karpukhin and M\'etras' characterisation of conformally critical pairs.

\begin{thm}\label{thm: KM stek conf extremals}\cite[Theorems 2 and 3]{KaMe2021}
Let $\om$ be a compact manifold with boundary of dimension $d+1\geq 2$.   We use the notation of Definition~\ref{def: conf ext pair} and Remark~\ref{rem:resc}.
\begin{enumerate}
\item Suppose that $(g,\rho)$ is $\bsig_k$-conformally critical.   Then  there exists $m>1$ and a non-degenerate free boundary $(d+1)$-harmonic map $u:(\om, [g])\to \B^m$ such that $(g,\rho)=( g_u, \rho_u)$ up to rescaling as in Equation~\eqref{eq:higherresc} (and, in the case of dimension two, also up to weighted $\sigma$-isometries).  
\item Conversely, let $u: (\om, [g])\to \B^m$ be a non-degenerate free boundary $(d+1)$-harmonic map.   Then $(g_u,\rho_u)$ is a $\bsig_k$-conformally critical pair (as is any weighted $\sigma$-isometric pair in case $\dim(\om)=2$), where $k$ is the spectral index of $u$.
\end{enumerate}

\end{thm}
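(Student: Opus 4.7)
The proof of both directions rests on a Hadamard-type variational formula for a Steklov eigenvalue of possibly multiplicity along a smooth path. Given a smooth deformation $(g_t,\rho_t)$ with $(g_0,\rho_0)=(g,\rho)$ and $\sigma:=\sigma_k(g,\rho)$ of multiplicity $N\geq 1$, the one-sided derivatives of $\sigma_k(g_t,\rho_t)$ at $t=0$ exist and are the smallest and largest eigenvalues of an explicit symmetric bilinear form $Q_{(\dot g,\dot\rho)}$ on the $\sigma$-eigenspace $E\subset H^1(\om)$, obtained by linearizing the weighted Rayleigh quotient of Example~\ref{ex:varweightedSteklov}. A conformal direction is parametrized by $\dot g=2\phi g$ and $\dot\rho=\psi$; after incorporating the derivatives of the two normalization factors $|\om|^{\frac{1-d}{d+1}}$ and $\|\rho\|_{L^1}$ from~\eqref{eq: weight normalis}, $Q_{(\phi,\psi)}(f,f)$ takes the schematic form
\[
Q_{(\phi,\psi)}(f,f)=\int_{\om} \phi\, A(f)\,dV_g+\int_{\pam}\bigl(\phi\,B(f)+\psi\,C(f)\bigr)\,dV_{\pam,g},
\]
with $A(f)$ pointwise quadratic in $|df|_g$ (including a term coming from the volume normalization) and $B(f),C(f)$ pointwise quadratic in $f$.

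For the converse (part~2), let $u\colon(\om,g)\to\B^m$ be a non-degenerate free boundary $(d+1)$-harmonic map with $g_u=|du|_g^2 g$ and $\rho_u=|\partial_{\nu_{g_u}}u|$. By~\eqref{eq: fb harm ball'} each coordinate $u_i$ satisfies $\Delta_{g_u}u_i=0$ in $\om$ and $\partial_{\nu_{g_u}}u_i=\rho_u u_i$ on $\pam$, so the $u_i$ are $\sigma$-eigenfunctions with $\sigma=1=\sigma_k(\om,g_u,\rho_u)$ for $k$ the spectral index. The image lies in $\sph^{m-1}$, hence $\sum_i u_i^2\equiv 1$ on $\pam$, and by construction $|du|_{g_u}\equiv 1$ on $\om$. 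Substituting $u_1,\dots,u_m$ into $Q_{(\phi,\psi)}$ and summing, these two pointwise identities together with the precise exponents in~\eqref{eq: weight normalis} force $\sum_{i=1}^m Q_{(\phi,\psi)}(u_i,u_i)=0$ for every admissible $(\phi,\psi)$. Hence $Q_{(\phi,\psi)}$ restricted to $\operatorname{span}(u_1,\dots,u_m)\subset E$ has both non-positive and non-negative eigenvalues, which is precisely the mixed-sign condition equivalent to~\eqref{eq:extremal pt def} via the formula above, so $(g_u,\rho_u)$ is $\bsig_k$-conformally critical.

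For the forward direction (part~1), assume $(g,\rho)$ is $\bsig_k$-conformally critical with eigenspace $E$. Let $L\colon C^\infty(\om)\times C^\infty(\pam)\to\operatorname{Sym}(E)$ be the linear map $L(\phi,\psi)=Q_{(\phi,\psi)}$. Criticality says that $L$ misses the open cone of positive definite forms on $E$, so by the Hahn--Banach separation theorem (as in the proofs of Theorem~\ref{thm.nada min} and Theorem~\ref{thm: FrSc conf class}) there exists a nonzero positive semidefinite form $P$ on $E$, of some rank $m$, that annihilates the image of $L$. Diagonalizing $P$ gives eigenfunctions $u_1,\dots,u_m\in E$, and the vanishing $\langle P,L(\phi,\psi)\rangle=0$ for all $(\phi,\psi)$ yields the pointwise identities $\sum_i u_i^2\equiv c\rho$ on $\pam$ (from the $\psi$-component) and $\sum_i A(u_i)\equiv c'$ in $\om$ (from the $\phi$-component), for positive constants $c,c'$. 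The second identity, combined with harmonicity of each $u_i$, implies $\sum_i|du_i|_g^2\equiv c'$ in $\om$. After rescaling, $u=(u_1,\dots,u_m)\colon\om\to\R^m$ takes values in the closed unit ball with $u|_{\pam}\subset\sph^{m-1}$ and, by Lemma~\ref{lem: eigenmap}(2) together with~\eqref{eq: fb harm ball}, is a non-degenerate free boundary $(d+1)$-harmonic map satisfying $(g,\rho)=(g_u,\rho_u)$ up to the rescalings of Remark~\ref{rem:resc} (and, in dimension two, weighted $\sigma$-isometry).

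The principal obstacle lies in part~1 and has two sources. First, the bookkeeping of the Hadamard formula must be done with the precise normalization~\eqref{eq: weight normalis}: only the exponents $(1-d)/(d+1)$ on $|\om|$ and $1$ on $\|\rho\|_{L^1}$ cause the $\phi$ and $\psi$ contributions to separate cleanly into an interior identity $\sum|du_i|_g^2=\text{const}$ and a boundary identity $\sum u_i^2\propto\rho$; with any other normalization the system produced by the convex-duality step is either over- or under-determined, and no such $u$ need exist. Second, one must verify that the resulting $u$ is \emph{non-degenerate}, i.e., $|du|_g\neq 0$ pointwise in $\om$; this requires a unique-continuation argument at the boundary together with enough regularity of the critical pair $(g,\rho)$ to apply it, and it is at this step that the rank $m$ of $P$, and hence the dimension of the target ball, is controlled.
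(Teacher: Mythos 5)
Your strategy is the right one, matching the approach of \cite{KaMe2021} (which itself follows El Soufi--Ilias and Fraser--Schoen): one-sided derivatives of $\bsig_k$ realized as extremal eigenvalues of a quadratic form on the eigenspace, a Hahn--Banach separation producing a positive semidefinite form, diagonalization giving $u_1,\dots,u_m$, and pointwise identities assembling $u$ into a free boundary $(d+1)$-harmonic map. However, there is a concrete error in the boundary identity you extract in part~1, and it is not peripheral.

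The $\psi$-variation does not give $\sum_i u_i^2 \equiv c\rho$ on $\pam$; it gives $\sum_i u_i^2 \equiv c$, a \emph{constant}. Normalizing so that $\int_{\pam}\rho u_i^2\,dV_{\pam}=1$ for each $i$, the derivative of the Rayleigh quotient in the $\rho$-direction $\psi$ at $u_i$ is $-\sigma\int_{\pam}\psi u_i^2\,dV_{\pam}$, while the derivative of $\|\rho\|_{L^1}$ is $\int_{\pam}\psi\,dV_{\pam}$; summing over $i$ and requiring vanishing for all $\psi$ forces $\sum_i u_i^2\equiv m/\|\rho\|_{L^1}$ (the boundary part of the $\phi$-variation gives the same constraint, so the system is consistent). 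This matters: the constant identity is precisely what lets $u$, after rescaling, map $\pam$ into $\sph^{m-1}$ and hence be a proper map into $\B^m$. With $\sum_i u_i^2\propto\rho$ and $\rho$ non-constant, no rescaling achieves this and the construction does not close. The density $\rho$ is recovered not from this identity but from the boundary condition: since each $u_i$ is a weighted Steklov eigenfunction, $\partial_{\nu_g}u_i=\sigma\rho u_i$ on $\pam$, so together with $\sum_i u_i^2\equiv 1$ and $|du|_g^2\equiv c'$ one finds $\rho_u=|\partial_{\nu_{g_u}}u|=\sigma\rho/\sqrt{c'}$, a constant multiple of $\rho$, as required by the conclusion up to the rescalings of Remark~\ref{rem:resc}. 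A minor secondary point: Lemma~\ref{lem: eigenmap}(2) concerns $n$-harmonic maps into a \emph{sphere}, not the free boundary setting; here $(d+1)$-harmonicity is immediate, since $|du|_g$ constant and $\Delta_g u_i=0$ give $\delta_g(|du|_g^{d-1}du)=|du|_g^{d-1}\Delta_g u=0$ directly, and the free boundary condition follows from the weighted Steklov boundary condition combined with $\sum_i u_i^2\equiv 1$ on $\pam$.
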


The coordinate functions of $u$ are eigenfunctions with eigenvalue one of the weighted Steklov problem for the pair $(g_u,\rho_u)$.   In particular, $u$ is harmonic as well as $(d+1)$-harmonic with respect to $g_u$.

Consider the case of surfaces.
In view of Remark~\ref{rem:resc}, the first statement of Theorem~\ref{thm: KM stek conf extremals} in this case  is a reformulation of Theorem~\ref{thm: FrSc conf class}.  (While Fraser and Schoen stated Theorem~\ref{thm: FrSc conf class} only for conformally maximal metrics, Karpukhin and M\'etras observe that their proof is easily modified to address more general conformally critical metrics.)

The second statement similarly guarantees that the coordinate functions of an arbitrary free boundary harmonic map $u: (\om, [g])\to \B^m$ can be realised as eigenfunctions for some $\bsig_k$ critical metric $g'$ for the unweighted Steklov problem;  one needs only choose $g'$ so that $(g',1)$ is weighted $\sigma$-isometric to $(g_u,\rho_u)$ as in Remark~\ref{rem:resc}.

\begin{ex}\label{ex:kmannulus}\cite[Theorem 6]{KaMe2021} Let $\om$ be an annulus.  Conformal classes of metrics on $\om$ are in one-to-one correspondence with cylinders $[0,T]\times S^1$ for $T\in \R^+$.    The global $\bsig_1$-maximising metric (the critical catenoid described in Example~\ref{ex.cat}) lies in the conformal class  $[g_{t^*}]$ for which $t^*=\frac{t_0}{2}$ where $t_0$ is the unique solution of the equation $t=\coth t$.  For those conformal classes $[g_t]$ with $t\geq t^*$, the authors explicitly construct a rotationally symmetric free boundary harmonic map $u_t: (\om, g_t)\to \B^3$ of spectral index one.  (For $t=t^*$, $u_t$ is the immersion to the critical catenoid.)  For each such $t$, the last statement in Remark~\ref{rem:resc} yields a $\bsig_1$-conformally critical metric $g'_t$ for the unweighted Steklov problem in the conformal class $[g_t]$ such that the coordinate functions of $u$ are $\sigma_1(\om, g'_t)$-eigenfunctions.   However, they show that for $T$ sufficiently large, $\rho_{u_t}$ takes on different constant values on the two boundary circles of $\om$, so $g'_t$ is not $\sigma$-isometric to $g_{u_t}$.   

They conjecture that these $\bsig_1$-conformally critical metrics are $\bsig_1$-conformal maximisers.
\end{ex}

Next we consider critical pairs in the full space $\mom\times C^\infty_{> 0}$.   Since any $\bsig_k$-critical pair $(g,\rho)$ is also $\bsig_k$-conformally equivalent, Theorem~\ref{thm: KM stek conf extremals} yields a free boundary $(d+1)$-harmonic map $u: (\om,[g])\to \B^m$ such that $(g,\rho)= (g_u, \rho_u)$.  As noted after the statement of Theorem~\ref{thm: KM stek conf extremals}, the coordinate functions of $u$ are harmonic with respect to $g_u$.   Karpukhin and M\'etras show that $u$ is moreover a \emph{conformal} map and thus the image is a free boundary minimal submanifold.  Denoting by $g_{\B^m}$ the Euclidean metric on $\B^m$, we moreover have 
\[u^*g_{\B^m}=|du |^2_g g=g_u\]
and $\rho_u\equiv 1$.   This argument implies the first statement of the following theorem:

\begin{thm}\cite[Theorem 5]{KaMe2021}\label{thm:KM global}
Let $\om$ be a compact manifold with boundary and let $(g,\rho)$ be a $\bsig_k$-critical pair.   Then there exists $m\in \Z^+$ and a free boundary minimal immersion $u:\om\to \B^m$ such that $(g,\rho)= (g_u, 1)$ up to rescaling as in Equation~\eqref{eq:higherresc} (and, in the case of dimension two, also up to weighted $\sigma$-isometries).  

Conversely, if $u:\om\to \B^m$ is a free boundary minimal immersion, then $(u^*g_{B^m},1)$ is a $\bsig_k$-critical pair, where $k$ is the spectral index of $u$.

\end{thm}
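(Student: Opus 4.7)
The plan is to treat the two implications separately, using Theorem~\ref{thm: KM stek conf extremals} as a black box for the conformal part and squeezing non-conformal directions for the extra rigidity.

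For the forward direction, I would note that a $\bsig_k$-critical pair $(g,\rho)$ is automatically $\bsig_k$-conformally critical, so Theorem~\ref{thm: KM stek conf extremals} yields a non-degenerate free boundary $(d+1)$-harmonic map $u\colon(\om,[g])\to\B^m$ with $(g,\rho)=(g_u,\rho_u)$ up to rescaling, whose coordinates $u_1,\dots,u_m$ are weighted Steklov eigenfunctions with eigenvalue $1$. It remains to establish (a) $u$ is conformal, hence $u(\om)$ is a free boundary minimal submanifold, and (b) $\rho_u\equiv 1$. For (a), I would test~\eqref{eq:extremal pt def} against deformations $g_t=g+th$ with $h$ a symmetric trace-free $(0,2)$-tensor on $\om$ and $\rho$ fixed. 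A Hadamard-type first variation formula for $\sigma_k$ gives
\[
\frac{d}{dt}\bigg|_{t=0}\sigma_k(\om,g_t,\rho)=-\int_{\om}\Big\langle\sum_{j}du_j\otimes du_j-\tfrac{1}{d+1}|du|_g^{2}\, g,\; h\Big\rangle_g\,dV_g,
\]
and criticality forces this pairing to vanish for every trace-free $h$, so the integrand is pointwise a pure trace; this is precisely the conformality statement $u^{*}g_{\B^m}=|du|_g^{2}\,g=g_u$. For (b), I would vary $\rho$ alone along $\rho_t=\rho+t\psi$ with $\int_{\partial\om}\psi\,dV_\Sigma=0$; the first variation reduces to $-\int_{\partial\om}\psi\,|u|^{2}\,dV_\Sigma$, which must vanish, giving $|u|^{2}\equiv\mathrm{const}$ on $\partial\om$. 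Since $u(\partial\om)\subset\sph^{m-1}$ this constant equals $1$, and the boundary condition $\partial_{\nu_{g_u}}u=\rho_u u$ combined with the established conformality forces $\rho_u$ to be a constant, which is absorbed by the rescaling freedom~\eqref{eq:higherresc}. In dimension two, Remark~\ref{rem:resc}(3) supplies the additional weighted $\sigma$-isometry ambiguity stated in the theorem.

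For the converse, let $u\colon\om\to\B^m$ be a free boundary minimal immersion and set $g_0=u^{*}g_{\B^m}$. By Proposition~\ref{fbms iff stek} the coordinate functions are Steklov eigenfunctions with eigenvalue $1$, and orthogonality with $\sph^{m-1}$ gives $\rho_u=|\partial_{\nu_{g_0}}u|\equiv 1$; thus $\sigma_k(\om,g_0,1)=1$, where $k$ is by definition the spectral index of $u$. To establish criticality of $(g_0,1)$, I would decompose an arbitrary smooth deformation $(g_t,\rho_t)$ into a trace (conformal) part of $g$, a trace-free perturbation of $g$, and a perturbation of $\rho$. The conformal direction is handled by the converse half of Theorem~\ref{thm: KM stek conf extremals}, which applies because $u$ is $(d+1)$-harmonic with $(g_u,\rho_u)=(g_0,1)$. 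For trace-free perturbations of $g_0$, the Hadamard integrand above vanishes pointwise because $\sum_j du_j\otimes du_j=g_0$ (as $u$ is isometric) so its trace-free part is zero. For a density perturbation $\psi$, the first-variation integral $\int_{\partial\om}\psi|u|^{2}dV_\Sigma=\int_{\partial\om}\psi\,dV_\Sigma$ is exactly cancelled by the derivative of the $\|\rho\|_{L^{1}}$ factor in the normalisation~\eqref{eq: weight normalis}. Assembling these three vanishings yields~\eqref{eq:extremal pt def} and hence criticality.

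The main technical obstacle is standard in this circle of ideas: the eigenvalue $\sigma_k$ generically has multiplicity greater than one at a critical pair (as in Theorem~\ref{thm.fs2}), so $t\mapsto\sigma_k(g_t,\rho_t)$ is only one-sided differentiable and the Hadamard formula above is an equality only for simple eigenvalues. In the degenerate case the left and right derivatives at $t=0$ must be interpreted as min and max of quadratic forms on the $\sigma_k$-eigenspace, and converting the criticality inequality $(\partial_{t^{-}}\sigma_k)(\partial_{t^{+}}\sigma_k)\le 0$ for every trace-free $h$ into the pointwise tensorial identity $\sum_j du_j\otimes du_j=\tfrac{1}{d+1}|du|_g^{2}g$ requires a min--max separation argument in the style of \cite{ElIl2008} that extends to joint $(g,\rho)$-variations on a manifold with boundary; this is the most delicate point. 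A secondary technical item is keeping careful track of the derivatives of the normalisation factors $|\om|_g^{\frac{1-d}{d+1}}$ and $\|\rho\|_{L^{1}}$ and showing that their contributions consolidate into the pure-trace component, cleanly isolating the trace-free piece analysed above.
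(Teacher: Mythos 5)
Your top-level strategy coincides with the paper's: apply Theorem~\ref{thm: KM stek conf extremals} to produce a non-degenerate free boundary $(d+1)$-harmonic map $u$ with $(g,\rho)=(g_u,\rho_u)$ up to the natural equivalences, then exploit criticality against \emph{non-conformal} metric variations to upgrade $u$ to a conformal map, after which the image is a free boundary minimal submanifold and $\rho_u$ is constant. The survey defers precisely this conformality step to Karpukhin--M\'etras, so your Hadamard-first-variation sketch fills in the part the survey leaves implicit, and your converse via the explicit eigenfunctions $u_1,\dots,u_m$ is likewise the expected route.

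Two points in your sketch are inaccurate, neither fatal. First, the $\rho$-variation in step (b) is vacuous: for any free boundary harmonic map $u$ coming out of Theorem~\ref{thm: KM stek conf extremals} one already has $u(\partial\om)\subset\sph^{m-1}$, hence $|u|^2\equiv 1$ on $\partial\om$, so the condition you extract from varying $\rho$ is automatically satisfied and does not by itself determine $\rho_u$. The constancy of $\rho_u$ is a \emph{consequence} of the conformality proved in (a) together with the free boundary condition -- which is what you fall back on in the next sentence, and that should be the argument, not the $\rho$-variation. Second, your displayed first-variation formula for a trace-free perturbation $h$ omits the contributions from the induced boundary volume form $dV_{\Sigma,g}$ (which depends on $h$ through $h(\nu,\nu)$ even when $\operatorname{tr}_g h=0$) and from the normalisation factors $|\om|_g^{(1-d)/(d+1)}$ and $\|\rho\|_{L^1(\Sigma,g)}$ in~\eqref{eq: weight normalis}. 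One sidesteps all of these by taking $h$ trace-free and compactly supported in $\operatorname{int}(\om)$, which yields the pointwise tensorial identity on the interior, and then extends to $\overline{\om}$ by continuity; this restriction needs to be stated. You are right that the genuinely delicate step is converting the one-sided criticality inequality at a multiple eigenvalue into the existence of a convex combination of eigenfunctions realising the tensorial identity -- that is the El Soufi--Ilias / Karpukhin--M\'etras separation argument, and flagging it as the crux is the correct assessment.
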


If $(g, 1)$ is a $\bsig_k$-critical pair, respectively $\bsig_k$-conformally critical pair, then $g$ is necessarily a $\bsig_k$-conformally critical metric.    In view of the results above, it is interesting to ask the converse:

\begin{ques}\label{ques: conf ext metric vs pair}
If $g$ is a $\bsig_k$-(conformally) critical metric, is $(g,1)$ necessarily a $\bsig_k$-(conformally) critical pair?  
\end{ques} 

We wish to thank Karpukhin and M\'etras for clarifying the discussion below.

Using \cite[Theorems 9 and 10]{KaMe2021} for conformally critical metrics and \cite[Theorem 12]{KaMe2021}, along with an analogue of \cite[Theorem 9]{KaMe2021} for globally critical metrics, Question~\ref{ques: conf ext metric vs pair} has an affirmative answer provided that the $\bsig_k$-(conformally) critical metric $g$ satisfies the condition that
\begin{equation}\label{eq:simple cond} \bsig_k(\om,g)>\sigma_{k-1}(\om,g)\,\,\mbox{or}\,\,\bsig_k(\om,g)<\sigma_{k+1}(\om,g).  \end{equation}

Consider the case of conformally \emph{maximising} metrics.    For $k=0,1,2,\dots$ and $g$ any Riemannian metric on $\om$, write    
\[\bsig_k^*(\om, [g])=\sup_{h\in [g]}\, \bsig_k(\om,h).\]
Observe that if $g$ is a conformal maximiser of $\bsig_k$ and if $\bsig^*_{k-1}(\om,[g])< \bsig_k^*(\om, [g])$, then the first inequality in \eqref{eq:simple cond} necessarily holds, so $(g,1)$ is a conformally critical pair.  Thus 
an affirmative answer to Question~\ref{ques: conf ext metric vs pair} for conformally maximising metrics would follow from an affirmative answer to the following:

\begin{ques}\label{ques: conf spec simple} Given a compact manifold $\om$ with boundary and a conformal class $[g]$ of Riemannian metrics on $\om$, do we have
\[\bsig_k^*(\om,[g])>\bsig_{k-1}^*(\om, [g]) \mbox{\,\,for all}\,\,k\geq 1?\]
\end{ques}

The analogous question for the Laplacian on closed manifolds was answered affirmatively by Colbois and El Soufi \cite{CoEl2003}.

Similarly, if $g$ is a $\bsig_k$-maximising metric on $\om$, one could conclude that $(g,1)$ is a $\bsig_k$-maximising pair if one could show that $\bsig_{k-1}^*(\om)<\bsig_{k}^*(\om)$.

\begin{remark} As we were completing this survey, the extensive article \cite{KaSt2022} appeared on the mathematics arXiv. This article contains many interesting results addressing harmonic maps and eigenvalue optimisation in dimension greater than two.

\end{remark}

\section{Discretisation and Steklov eigenvalues of graphs} 
\label{Section: discretisation}

To facilitate the study of the eigenvalues of the Laplacian on a Riemannian manifold, numerous papers in the 1980's (for example \cite{Bu1982,Br1986, Ka1985}) have shown that it is very useful to introduce rough discretisation of manifolds. Then one can compare the spectrum of the manifold (in particular the first nonzero eigenvalue) to the spectrum of the combinatorial Laplacian on a graph associated to the discretisation. This idea of rough discretisation has applications not only for the study of the spectrum but also for other geometric concepts. This is very well explained in the two books by Chavel \cite{Ch1993, Ch2001}. See in particular \cite[Chapter V]{Ch2001} for isoperimetric inequalities and \cite[Chapter VI]{Ch2001} for Sobolev inequalities.

More generally, the question behind this approach is the following: if two metric spaces are "close in large scale", in some sense, what do they have in common? One potential meaning of "close in large scale" is the existence of a rough quasi-isometry: if $(X,d_X),(Y,d_Y)$ are two metric spaces, a map $\phi:X\rightarrow Y$ is a rough quasi-isometry if there exist constants $a\ge 1,b\ge 0$
and $\epsilon>0$ such that
\[
a^{-1}d_X(x_1,x_2)-b\le d_Y(\phi(x_1),\phi(x_2))\le ad_X(x_1,x_2)+b
\]
for all $x_1,x_2 \in X$ and 
\[\bigcup_{x\in X}B(\phi(x),\epsilon)=Y.\] 
  Observe that two compact metric spaces are always roughly quasi-isometric, so when we consider compact metric spaces, we will need more information. 

Throughout this section, the metric space $X$ will be a closed Riemannian manifold $M$ or a compact Riemannian manifold with boundary $\Omega$ and $Y$ will be a subset of points of $X$ equipped with a graph structure. The goal is to compare the spectrum of a differential operator on $X$ with the spectrum of a combinatorial operator on $Y$.
It is important to note that this type of discretisation is different from those used in numerical analysis, which were extended to Riemannian manifolds by different authors (see the initial contribution by Dodziuk and Patodi \cite{DoPa1976} and the more recent approaches of Burago, Ivanov and Kurilev \cite{BuIvKu2015} and Aubry \cite{Au2013}). In the situation we will describe, the mesh $\epsilon>0$ of the discretization $Y$ will be fixed. For the finite elements method and its extension to the Riemannian setting, the mesh tends to $0$ and the quotient between the k th eigenvalue of the differential operator on the Riemannian manifold and the k th eigenvalue of its combinatorial approximation tends to $1$ as $\epsilon \to 0$ under suitable assumptions (\cite{DoPa1976}). In \cite{BuIvKu2015} and \cite{Au2013}, the authors make explicit the rate of convergence by universal functions depending on the geometry of the manifold $M$ (bounds on the sectional curvature, injectivity radius, diameter).

 The outline of this section is as follows.

\begin{itemize}

\item Subsection~\ref{Discretisation and spectrum}: Discretisation and spectrum. We will in particular recall the case of the Laplacian before looking at the case of the Steklov problem on graphs.
\item Subsection~\ref{Discrete}: Study of the spectrum of the Steklov problem on a graph.
\item Subsection~\ref{Diameter}: Lower bounds in terms of the diameter.
 
 \item Subsection~\ref{UpperGraphs}: Upper bounds for subgraphs.

\end{itemize}

\subsection{Discretisation and spectrum.} \label{Discretisation and spectrum}

\subsubsection{Case of the Laplacian.}\label{subsec:disc lap} We will recall the case of the Laplacian, because this is what we imitate in the case of the Steklov problem on graphs. First, note that the study of the spectrum of the combinatorial Laplacian is a well-established subject with a large literature (see for example \cite{BrHa2012}), which is not the case for the Steklov problem on graphs.

We use a coarse and uniform discretisation that is not sensitive to the local geometry. We fix a (large) family of compact Riemannian manifolds without boundary given by a set of geometric parameters: bounds on the curvature in order to avoid local deformations that are invisible to a discretisation, and a lower bound on the injectivity radius in order to guarantee that the volumes of small balls are comparable (this last point could be avoided by using weighted graphs). Then we choose a mesh (smaller than the injectivity radius) for the discretisation (the distance between the points of the discretisation) and discretise each manifold using the same mesh of discretisation. The goal is then to establish a uniform comparison between the spectra of the smooth Laplacian on the manifolds and the combinatorial ones on the corresponding graphs. We refer to the paper~\cite{Ma2005} by Mantuano that we describe briefly below.

 We associate a graph to a Riemannian manifold following 
\cite[Section V.3.2]{Ch2001}.
Let $(M^n,g)$ be a connected closed n-dimensional Riemannian 
manifold. We denote by $d_M$ the usual Riemannian distance.
A discretisation of $M$, of mesh $\epsilon > 0$,  is a graph $\Gamma = (V, E)$ such that the set $V$ of vertices 
is a maximal $\epsilon$-separated subset of $M$. That is, for any $v$, $ w \in V$, $v\neq w$, we have 
$d_M(v,w) \geq \epsilon$ and $\cup_{v \in V} B(v,\epsilon) = M $.
Moreover, the graph structure of $\Gamma$ is entirely determined by the 
collection of neighbours that we define as follows. 
For each $v \in  V$, a point $w \in V$ is declared to be a neighbour of $v$ (that is $v\sim w$ is an edge of $\Gamma$), if $ 0 < d_M(v,w) < 3 \epsilon$, see \cite[p.140]{Ch2001}. 
In the sequel, we denote by $N(v)$ the set of neighbours of $v$.

We denote by $i$, $1\le i\le \vert V\vert$, the vertices of $\Gamma$. If there is an edge between $i$ and $j$, we denote this by $i\sim j$. By functions on $\Gamma$ we mean functions from $V$ to $\R$; the set of functions forms a vector space of dimension $\vert V\vert$. The Laplacian $\Delta u$ of a function $u$ on $\Gamma$ is given by 
\[
\Delta u(i)=\sum_{j\sim i}(u(i)-u(j)).
\]
This is a symmetric operator on $V$ and it has a set of eigenvalues
\[
0=\lambda_0(\Gamma)< \lambda_1(\Gamma)\le ...\le \lambda_{\vert V\vert-1}(\Gamma).
\]

We have the following important result \cite[Theorem 3.7]{Ma2005}): if 
$\mathcal{M}(n, \kappa, r_{0})$ denotes the class of 
all closed $n$-dimensional Riemannian manifolds with Ricci curvature 
and injectivity radius uniformly bounded below 
(i.e. with $Ricci(M,g) \geq -(n-1) \kappa  g$, $\kappa \geq 0$ and 
$Inj(M) \geq r_{0} > 0$) and if we fix $\epsilon < \frac{r_0}{2}$,
  there exist positive constants $c$, $C$ 
  such that for all manifolds $M$ in $\mathcal{M}(n, 
\kappa, r_{0})$ and any discretisation $\Gamma$ of $M$ of mesh $\epsilon$, we have
\begin{equation} \label{discretelaplacian}
    c \leq \frac{\lambda_{k}(M) }{\lambda_{k}(\Gamma)} \leq C
\end{equation}
for all $0< k < |V|$. The crucial point is this: the constants $c$ and $C$ depend on $n$, 
$\kappa$, $r_{0}$, and $\epsilon$, but not on the particular manifold or discretisation we consider.  
\begin{remark}
Observe that $|V|$ behaves like the volume of $M$ for a \emph{fixed} mesh size $\epsilon$, for example $\epsilon=\frac{r_0}{4}$, in the sense that $|V|/|M|$ is bounded above and also bounded away from zero by constants depending on the bounds on the geometry (curvature, injectivity radius) in the class $\mathcal{M}(n, \kappa, r_{0})$ . To see this, we use the fact that the mesh is smaller than the injectivity radius and that the curvature is bounded. So the volumes of the ball of radius $\epsilon$ are uniformly comparable.
\end{remark}

\subsubsection{The Steklov spectrum on a compact Riemannian manifold with boundary.} In \cite{CoGiRa2018}, Colbois, Girouard and  Raveendran introduced a discretisation for the Steklov spectrum in the spirit of what was done for the Laplacian. 

In comparison with the case of the Laplacian, the two main difficulties are the presence of a boundary and the necessity to define what plays the role of the Steklov problem on graphs and of its spectrum.

Regarding the Steklov spectrum, the definition considered in \cite{CoGiRa2018} differs slightly from the definitions commonly used recently (as will be explained below). In \cite{CoGiRa2018}, a graph with boundary is a pair $(\Gamma,B)$ where $\Gamma=(V,E)$ is a finite, simple, connected graph, and $B\subset V$ is a non empty set of vertices called the boundary. The eigenvalues of the Steklov problem on $(\Gamma,B)$ are defined by 
\begin{equation} \label{def:var1}
    \sigma_k(\Gamma,B)= \min_F \max_{u\in F,u\not=0}\frac{\sum_{i\sim j}(u(i)-u(j))^2}{\sum_{i\in B}u(i)^2}
\end{equation}
 where $F$ is a vector subspace of dimension $k+1$ of the functions on $\Gamma$. That is, in order to define the eigenvalues, we imitate the usual min-max characterisation in the discrete setting, but \emph{without} defining a discrete Dirichlet-to-Neumann operator. Observe that there are exactly $\vert B\vert$ eigenvalues (counted with multiplicity) and that, in the particular case where $B=V$, we recover the spectrum of the usual combinatorial Laplacian.

With this definition, a relation between the Steklov spectrum of a Riemannian manifold and the spectrum of its discretisation can be established as follows. For $\kappa\geq 0$ and $r_0\in (0,1)$, we introduce the space $\mathcal M(d,\kappa,r_0)$ of Riemannian manifolds $\Omega$ of dimension $d+1$ with non empty boundary and with the following properties:

\begin{itemize}
\item The boundary $\Sigma$ admits a neighbourhood
which is isometric to the cylinder $[0,1]\times\Sigma$, with the
boundary corresponding to $\{0\}\times\Sigma$;
\item The Ricci curvature of $\Omega$ is bounded below by $-d\kappa$;
\item The Ricci curvature of $\Sigma$ is bounded below by $-(d-1)\kappa$;
\item For each point $p\in \Omega$ such that $d(p,\Sigma)>1$, $\mbox{inj}_{\Omega}(p)>r_0$;\item  For each point $p\in \Sigma$, $\mbox{inj}_\Sigma(p)>r_0$.
\end{itemize}

The first condition, requiring that a neighborhood of the boundary be cylindrical, seems to be very strong. However, in many applications, it suffices to have a Riemannian manifold which is locally quasi-isometric (in the Riemannian sense) to a cylinder near the boundary.

To discretise the elements $\Omega \in \mathcal M(d,\kappa,r_0)$ we first fix the mesh $\epsilon\in(0, \frac{r_0}{4})$. We choose a maximal $\epsilon$-separated set $V_{\Sigma}$ on the boundary $\Sigma$ of $\Omega$, that is, a discretisation of the boundary. Then we introduce a copy $V'_{\epsilon}$ of $V_{\Sigma}$ using the fact that $\Omega$ is cylindrical near the boundary:
\[
V'_{\epsilon}=\{4\epsilon\} \times V_{\Sigma}.
\]
We then introduce a maximal $\epsilon$-separated set $V_{I}$ on $\Omega \setminus (0,4\epsilon)\times \Sigma$ such that $V'_{\epsilon} \subset V_I$. Then the discretisation is given by the set $V=V_{\Sigma} \cup V_I$.

We associate to $V$ the structure of a graph with boundary $B=V_{\Sigma}$. Regarding the edges, they are of two types. First, as in Subsection~\ref{subsec:disc lap}, we join two vertices $v$ and $w$ by an edge if $d_{\Omega}(v,w)<3\epsilon$ where $d_{\Omega}$ denotes the Riemannian distance on $\Omega$. Secondly, we join each vertex $(4\epsilon,v)$ in $V'_{\epsilon}$ to the vertex $(0,v)\in B$.
We thus obtain a graph $\Gamma=(V,E)$ with boundary $B$. 

With this definition, we obtain a uniform comparison between the first nonzero Steklov eigenvalue of $\Omega \in \mathcal M(d,\kappa,r_0)$ and the first nonzero eigenvalue of the Steklov problem on the associated graph with boundary $\Gamma=(V,E)$ \cite[Theorem 3]{CoGiRa2018}. Given $\epsilon \in (0,\frac{r_0}{4})$, there exist two constants $A_1,A_2$ depending on $\kappa, r_0,d,\epsilon$ such that for any discretisation of a manifold $\Omega \in \mathcal M(d,\kappa,r_0)$,
\[
A_1 \le \frac{\sigma_1(\Omega)}{\sigma_1(\Gamma)}\le A_2
\]
where $\Gamma$ is the graph associated to the discretisation. 

If we want to take all possible eigenvalues into account, we have that for $1\le k \le \vert B\vert-1$ (with $B$ the boundary of $\Gamma$)
\begin{equation} \label{discretesteklov}
    \frac{A_1}{k} \le \frac{\sigma_k(\Omega)}{\sigma_k(\Gamma)}\le A_2
\end{equation}
 We see that for large $k$, the inequality is weaker than one might expect in view of Inequality (\ref{discretelaplacian}) in the case of the discrete Laplacian.

This result, \cite[Theorem 3]{CoGiRa2018}, has three immediate applications: \cite[Theorems 4,5,6]{CoGiRa2018}). In particular, Theorem 6 shows that one can construct a family $\Omega_l$ of surfaces with connected boundary $\Sigma_l$ such that $\lim_{l\to \infty} \sigma_1(\Omega_l)\vert \Sigma_l\vert=\infty$. 

\begin{ques}\label{ques:discretcomp} For all $1\le k \le \vert B\vert-1$,
is it possible to obtain an inequality of the form
\[
A_1 \le \frac{\sigma_k(\Omega)}{\sigma_k(\Gamma)}\le A_2?
\]
\end{ques}

\subsection{Study of the spectrum of the Steklov problem on a graph.} \label{Discrete}

 In the previous subsection we defined a possible concept of a graph with boundary, which was adapted to the discretisation of a compact Riemannian manifold with boundary. We did not define the Steklov problem on a graph, but we directly defined the eigenvalues via the min-max characterisation using the natural definition of the Rayleigh quotient. We can extend this point of view to all finite graphs: we just have to choose the boundary of a given finite graph among the vertices of the graph. The two extremal situations are the case where the boundary has only one vertex, and the only eigenvalue is $0$, and the case where the boundary consists of all the vertices of the graph, in which case we recover the usual combinatorial Laplacian: this was done by Colbois and Girouard in \cite{CoGi2014}.

A particular class of Riemannian manifolds with boundary consists of the manifolds obtained as subdomains of a larger manifold. There is a corresponding concept for graphs with boundary
which are obtained as subgraphs of a larger graph. We will consider these extensively in what follows. In this case, we cannot choose the boundary: it is precisely defined by the situation. Moreover, there are \emph{no} edges between any two vertices in the boundary. In this particular case, there is a definition of a Steklov problem on graphs, whose eigenvalues are the Steklov eigenvalues.

\begin{defn} \label{def: graphsteklov}
Let $G = (V,E)$ be a graph and let $\Omega$ be a finite domain of $V$ (that is a finite \emph{connected} subset of $V$). The vertex boundary of $\Omega$ is $B=\delta \Omega=\{i\in V, i\not \in \Omega: \exists j\in \Omega,\  i\sim j\}$. The set of edges between two subsets $\Omega_1,\Omega_2 \subset V$ is $E(\Omega_1,\Omega_2)=\{i\sim j,\  i\in \Omega_1,j\in \Omega_2\}$.
Let $\Omega \subset V$ with $\Omega\not=V$. We define a graph with boundary $\Gamma=(\bar \Omega , E')$ associated to $\Omega$ by 
\[
\bar{\Omega}=\Omega \cup \delta \Omega;\  E'=E(\Omega,\bar{\Omega}).
\]
This defines a graph $\Gamma$ with vertices $\bar{\Omega}$ and with boundary $B=\delta \Omega$. By definition, between any two elements of $B$, there is no edge. 
\end{defn}

\begin{remark}
Note that sometimes the authors do not suppose $\Omega$ to be connected, see for example \cite[page 3]{HuHuWa2017}. But in that case, the multiplicity of the eigenvalue $0$ is equal to the number of connected components of the graph (see \cite[Proposition 3.2]{HuHuWa2017}) and in order to obtain nontrivial lower bounds, $\sigma_1$ denotes the first nontrivial eigenvalue (\cite[page 4]{HuHuWa2017}).
\end{remark}

\begin{ex}\label{ex: graph1}
Let $G$ be the lattice $\mathbb Z^2$ with its usual graph structure (a vertex $(p,q)$ has $4$ neighbours $(p+1,q); (p-1,q); (p,q+1); (p,q-1)$). Let $\Omega$ be the subset of $\mathbb Z^2$ defined by
\[
\Omega=\{(p,q)\in \mathbb Z^2: -1 \le p,q\le 1\}.
\]
We have 
\[
\delta \Omega=\{(-2,q)\cup (2,q)\cup (p,-2)\cup(p,2):-1 \le p,q\le 1\}
\]
and $\bar \Omega= \Omega \cup \delta \Omega$.
In this case, the points of $\delta \Omega$ have only one edge.
\end{ex}

For domains in ambient graphs (as in Definition \ref{def: graphsteklov}), there is a natural notion of Steklov problem corresponding to the definition in the continuous case. As before, the Laplacian $\Delta u$ of a function $u$ is given by $\Delta u(i)=\sum_{j\sim i}(u(i)-u(j))$ and we introduce the normal derivative at a point $i\in B$ given by $\partial_{\nu}u(i)=\frac{\partial u}{\partial \nu}(i)=\sum_{j\sim i}(u(i)-u(j))$. Then the Steklov problem on the graph with boundary $(\Gamma,B)$ consists of finding $\sigma \in \R$ for which there exists a non-trivial solution of
\[\begin{cases}
  \Delta u(i)=0&\text{if } i\in B^c,\\
  \partial_{\nu}u(i)=\sigma u(i)&\text{if } i\in B.
\end{cases}\]
The numbers $\sigma$ are the Steklov eigenvalues and we have
\[
0=\sigma_0< \sigma_1\le ...\le \sigma_{\vert B\vert-1}.
\]

The eigenvalues have the variational characterisation
\begin{equation} \label{def:var2}
    \sigma_k(\Gamma,B)= \min_F \max_{u\in F,u\not=0}\frac{\sum_{i\sim j}(u(i)-u(j))^2}{\sum_{i\in B}u(i)^2}
\end{equation}
where $F$ is a vector subspace of dimension $k+1$ of the functions on $\Gamma$. Note that we recover the notion of the Rayleigh quotient introduced in (\ref{def:var1}).

In this situation, one can also define a discrete Dirichlet-to-Neumann operator as follows (see for example \cite[(1.3)]{HuHuWa2017} and \cite{Per2019} in a more general situation):
\[
\Lambda: \mathbb R^{\delta \Omega}\rightarrow \mathbb R^{\delta \Omega}
\]
given by 
\[
\Lambda v(i)= \partial_{\nu}u_v(i)
\]
where $u_v$ denote the harmonic extension of $v$ to $\Omega$.

\subsubsection{A Cheeger-type inequality.} \label{cheegerdiscrete}To our knowledge, the first estimate for $\sigma_1$ in the discrete case was a Cheeger-type inequality obtained independently by Hassannezhad and Miclo in \cite{HaMi2020} (as a particular case of Theorem A) and 
in the paper \cite{HuHuWa2017} by Hua, Huang and Wang. With the above notations, the authors consider a finite graph $\Omega \subset V$ and establish a Cheeger-type inequality following Escobar \cite{Es1997} and Jammes \cite{Ja2015}.

Let $\bar \Omega =\Omega \cup \delta \Omega$. If $A\subset \bar \Omega$, we denote by $\partial A$ the edge boundary of $A$, that is the subset of edges $E(A,A^c)$ and by $\partial_{\Omega}A$ the subset $\partial A\cap E(\Omega,\bar \Omega)$. For example, in Example \ref{ex: graph1}, if we take $A\subset \bar \Omega$ given by $A=\{(0,2)\}$, then $\partial A$ consists of the $4$ edges emanating from $(0,2)$ but $\partial_{\Omega}A$ consists only of the edge between $(0,2)$ and $(0,1)$. As in the continuous case (see (\ref{ctescobar}) and (\ref{ctjammes})), beside the classical Cheeger constant
\[
h_C(\bar \Omega)= \min_{\vert A\vert \le \frac{1}{2} \vert \bar \Omega\vert} \frac{\vert \partial_{\Omega}A \vert }{\vert A\vert},
\]
one has to introduce two other constants: the Escobar-type Cheeger constant 
\[
h_E(\bar \Omega)=\min_{A \subset \bar \Omega: \vert A \cap \delta \Omega\vert \le \frac{1}{2}\vert \delta \Omega \vert} \frac{\vert \partial_{\Omega}A \vert}{\vert A\cap \delta \Omega \vert},
\]
and the Jammes-type Cheeger constant
\[
h_J(\bar \Omega)=\min_{A \subset \bar \Omega: \vert A \vert \le \frac{1}{2}\vert \bar \Omega \vert} \frac{\vert \partial_{\Omega}A \vert}{\vert A\cap \delta \Omega \vert}.
\]

The main result is a Cheeger-Jammes type inequality \cite[Theorem 1.3]{HuHuWa2017} and \cite[Theorem A]{HaMi2020}):
\[
\sigma_1(\bar \Omega) \ge \frac{1}{2} h_C(\bar \Omega) h_J(\bar \Omega).
\]
This inequality has the same defect as in the continuous case: one can deform a graph far from the boundary and make $h$ very small without significantly affecting the Steklov spectrum. However, there are situations where the inequality is asymptotically sharp \cite[ Example 5.1]{HuHuWa2017}.

In \cite[Proposition 4.2]{HuHuWa2017}, the authors also establish an upper bound involving the isoperimetric constant $h_E(\bar \Omega)$: 
\[
\sigma_1(\bar \Omega) \le 2 h_E(\bar \Omega).
\]
In \cite[Example 4.2 ]{HuHuWa2017}, the authors give a situation where this inequality is sharp. In particular, one cannot avoid the factor $2$ in the inequality.

Note that the results of \cite{HaMi2020} and \cite{HuHuWa2017} are more general than what we mention, because they are established for weighted graphs.

In the same spirit of what they did in the Riemannian (and measurable) cases, Hassannezhad and Miclo establish a higher order Cheeger-type inequality for finite graphs \cite[Theorem A ]{HaMi2020}. Here also, as in the continuous case (see \ref{highercheegermanifold}), there is a term in $k^6$ in the denominator of the inequality.  Hassannezhad and Miclo also obtain a better inequality when estimating $\sigma_{2k+1}$ with respect to the Cheeger constant of order $k+1$ \cite[Proposition 1]{HaMi2020}.

\begin{ques}\label{ques:cheegerdiscrete} Same question as in the Riemannian case -- is it possible to have a higher order Cheeger inequality without the term in $k$?
\end{ques}

\subsection{Lower bound via the diameter.} \label{Diameter}  

For Riemannian manifolds, there are only a few results concerning lower bounds for the first nonzero Steklov eigenvalue. In the context of the Steklov problem on graphs, lower bounds depending on the extrinsic diameter of the boundary of a graph are established by Perrin in   \cite{Per2019}.

Let $\Gamma=(V,E)$ a finite graph, $B \subset V$ the boundary. Then, the diameter of a finite connected graph with boundary $(\Gamma, B)$ is the maximum distance between any two vertices of $(\Gamma, B)$, where the distance of two adjacent vertices is $1$. The extrinsic diameter $d_B$ of the boundary $B$ of $\Gamma$ is the maximum distance in $\Gamma$ between any two vertices of $B$.

In \cite[Theorem 1]{Per2019}, it is shown that, for a finite connected graph with boundary $(\Gamma, B)$
\[
\sigma_1(\Gamma,B)\ge \frac{\vert B\vert}{(\vert B\vert-1)^2d_B}.
\]

Note that the bound is optimal for $\vert B\vert=2$ and that its proof is very easy. The simplest graph with $\vert B\vert=2$ of diameter $d_B$ is given by $d_B+1$ vertices $0,...,d_B$ with edges $i\sim i+1$, $i=0,d_B-1$ and boundary the two extremities $B=\{0;d_B\}$. The first nonzero eigenvalue $\sigma_1$ of this graph is $\frac{2}{d_B}$.

In Theorem 2, a stronger, but much more elaborate estimate is proved:
\[
\sigma_1(\Gamma,B)\ge \frac{\vert B\vert}{\lfloor \frac{\vert B\vert}{2}\rfloor \lceil \frac{\vert B\vert}{2} \rceil d_B}.
\]
Moreover, this bound is sharp for any $B$ as $d_B \to \infty$ in the following sense. In \cite[Lemma 2]{Per2019}, the author constructs for each given $b$ a family of graphs $H^b$ with $\vert B\vert=b$, diameter $d_b$ and first nonzero eigenvalue $\sigma_1(H^b)= \frac{\vert B\vert}{\lfloor \frac{\vert B\vert}{2}\rfloor \lceil \frac{\vert B\vert}{2} (\rceil d_B-2)+b}$.

Note that from \cite[Theorems 1 and 2]{Per2019} we can deduce a similar estimate for Riemannian manifolds with boundary that we can discretise described in Section \ref{Discretisation and spectrum}. However, the estimate depends not only on the extrinsic diameter of the boundary, but also on the bounds on the Ricci curvature and on the injectivity radius that we impose.

\subsection{Upper bounds for subgraphs.} \label{UpperGraphs}

 In their paper \cite{HaHu2020}, Han and Hua study the Steklov eigenvalue problem on subgraphs of integer lattices. One can compare this problem with the study of the spectrum of Euclidean domains in the continuous case. This paper leads naturally to the same type of questions for subgraphs of more general lattices. We will briefly describe some recent results about this question because they lead to different approaches to the problem.

In \cite{HaHu2020}, the authors consider the lattice $\mathbb Z^n$ with its usual graph structure, and a finite subset $\Omega$ which defines a graph $\bar \Omega$ with boundary $B$. In \cite[Theorem 1.2]{HaHu2020}, they prove a Brock-type inequality (compare to Theorem \ref{thm: brock} in the Euclidean case):

\begin{equation} \label{In:Brock1}
    \sum_{i=1}^n \frac{1}{\sigma_i(\bar \Omega)} \ge C_1 \vert \Omega \vert^{1/n}-\frac{C_2}{ \vert \Omega\vert},
\end{equation}
 with explicit constants $C_1=(64 n^3 \omega_n^{\frac{1}{n}})^{-1}$, where $\omega_n$ is the volume of the unit ball in $\R^n$, and $C_2=\frac{1}{32n}$.

Note that in order for the right-hand side to be positive, one needs to have $\vert \Omega \vert > \left(\frac{C_2}{C_1}\right)^{n/(n+1)}$ which is satisfied if
  $\vert \Omega \vert$ is large enough in terms of $n$. As a corollary, the authors show (Corollary 1.4) that
\begin{equation}\label{In:Brock2}
    \sigma_1(\Omega) \le \frac{n}{C_1 \vert \Omega \vert^{\frac{1}{n}}- \frac{C_2}{\vert \Omega\vert}}.
\end{equation}
 In particular, as $\vert \Omega \vert \to \infty$, $\sigma_1(\Omega)\to 0$, at least at rate $\frac{C(n)}{\vert \Omega\vert^{\frac{1}{n}}}$.

The proof of the theorem is quite tricky. The authors associate to the subgraph a domain of $\R^n$, and they apply a weighted isoperimetric inequality due to Brock to this domain (see Inequality (\ref{thm:brock})). The difficulty is the construction of a convenient domain, which creates a lot of technical problems.

In \cite{Per2021}, Perrin investigates the same type of problem, but with a more geometric viewpoint, in the sense where the author does not use classical results from analysis as the Brock inequality, but works directly on the graph itself. The author observes that the crucial point is to control the growth of balls in the graph, which need not be a Euclidean lattice. For example, the Heisenberg lattice $H_3(\mathbb Z)$ of the Heisenberg group $H_3(\mathbb R)$ is convenient. 
In \cite{Per2021}, the author considers a finite subgraph $\Omega$ of a Cayley graph $\Gamma$ of a group with polynomial growth of order $D$: there exists $C_0=C_0(\Gamma)$ such that, if $B(N)$ is any ball of radius $N$ in $\Gamma$, 
\[
C_0^{-1}N^D \le \vert B(N)\vert \le C_0N^D
\]
for each $N \in \mathbb N^*$. For example, for the lattice $\mathbb Z^n$, $D=n$, but for the lattice $H_3(\mathbb Z)$, $D=4$. 

In this setting, Perrin shows that there exists a constant $C(\Gamma)$ such that, if $B$ denotes the boundary of the finite subgraph $\Omega \subset \Gamma$, we have \cite[Theorem 1]{Per2021}:
\begin{enumerate}
    \item If $D \le 2$, $\sigma_1(\bar \Omega) \le C(\Gamma) \frac{1}{\vert B\vert}$.
 \item If $D >2$,  $\sigma_1(\bar \Omega) \le C(\Gamma) \frac{\vert \Omega\vert^{\frac{D-2}{D}}}{\vert B\vert}$.
 \end{enumerate}

As a corollary \cite[Corollary 2]{Per2021}, Perrin observes that  $\sigma_1(\bar \Omega) \le \frac{1}{\vert \bar \Omega\vert^{\frac{1}{D}}}$.

The proof of Perrin's result is completely different from, and simpler than, the proof Inequality (\ref{In:Brock2}). The idea is to look directly at the graph and to construct a test function for the Rayleigh quotient, using the fact that the growth of the balls is controlled.
For the first nonzero eigenvalue $\sigma_1$, this result generalises the Inequality (\ref{In:Brock2}), but with a less explicit constant $C(\Gamma)$, and it says nothing for the next eigenvalues: it does not seem possible to use the same method to estimate the higher eigenvalues.

However, the higher eigenvalues may be estimated by other means, as in the recent paper \cite{Ts2022} by Tschanz. The main result is \cite[Theorem 5]{Ts2022}): for a subgraph $\Omega$ of a Cayley graph $\Gamma$ of polynomial growth of order $D\ge 2$, there exists a constant $C(\Gamma)>0$ such that for all $k\le \vert B\vert-1$,
\begin{equation}\label{tschanz1}
    \sigma_k(\bar \Omega)\le C(\Gamma) \frac{1}{\vert B \vert^{\frac{1}{D-1}}}k^{\frac{D+2}{D}}.
\end{equation}
In particular, for a fixed $k$, we see that a large volume of $\Omega$ implies a small eigenvalue $\sigma_k(\Omega)$ \cite[Corollary 6]{Ts2022}. This comes from the fact that there exists a constant $C=C(D)>0$ such that $\vert B\vert \ge \vert \Omega\vert^{\frac{D-1}{D}}$ (\cite[Proposition 9]{Ts2022}).

The proof of the result uses a third method; as in \cite{HaHu2020}, the author cannot work directly on the graph. But the idea is to associate to $\Gamma$ a complete Riemannian manifold $M$ of dimension $D$ modelled on it and to $\Omega$  a domain $U$ of $M$. Then, one shows that the growth of balls in $M$ is polynomial of order $D$, and the author adapts the method of \cite{CoElGi2011} to obtain a result on the domain $U$. Using \cite{CoGiRa2018}, one can transfer the estimate from $U$ to $\Omega$.

A further question is whether or not we need to have polynomial growth to get such results. This is not the case, as was shown by He and Hua in \cite{HeHu2022}. The authors study the case of subgraphs of a tree, and it is easy to produce trees where the volume of balls grows exponentially, for example the regular tree of degree $\ge 3$. The authors establish a variety of results (Theorem 1.1, 1.2, 1.5). In particular, Theorem 1.5 says the following: consider a finite tree $\Gamma=(\Omega,E)$ with boundary $B=\delta \Omega$, where the boundary denotes the vertices of degree one. Suppose that $\Gamma$ has degree bounded by $D$. Then, for any $3\le k \le \vert B\vert-1$,
\begin{equation}\label{ineghehua}
\sigma_k(\bar \Omega)\le \frac{8(D-1)^2(k-1)}{\vert B\vert}.
\end{equation}

In Theorem 1.3, the authors obtain an upper bound in terms of the diameter $L$ of the tree: 
\[
\sigma_1(\bar \Omega)\le \frac{2}{L},
\]
and they characterise the equality case.

This result leads to another natural question: a tree may have exponential growth but is very different from a space of negative curvature like a Cartan-Hadamard manifold, for example the hyperbolic space. Precisely, there does not exist a rough quasi-isometry between a tree and the hyperbolic space. The question is to decide what can be said for subgraphs of Cayley graphs roughly quasi-isometric to a Cartan-Hadamard manifold. 

A partial answer to this question is given by Tschanz in \cite{Ts2023} for a finite subgraph of a graph $\Gamma$ roughly quasi-isometric to the hyperbolic space. In order to construct such a graph $\Gamma$, the author uses a tiling of the hyperbolic space by triangles. The vertices $V$ of $\Gamma$ correspond to the triangles of the tiling and the edges to the adjacent triangles. By construction, $\Gamma$ is roughly quasi-isometric to the hyperbolic space.

In \cite[Theorem 9]{Ts2023}, it is shown that there exists a constant $C(\Gamma)$ such that for all finite subgraphs $\Omega$ of $\Gamma$ with boundary $B$, one has
\begin{equation} \label{inegtschanz2}
    \sigma_k(\bar \Omega) \le C(\Gamma)\frac{k^2}{\vert B\vert^2}.
\end{equation}
The first idea of the proof is, in some sense, comparable to the proof of \cite{HaHu2020}, that is to associate a domain of the hyperbolic space to the subgraph. Then Tschanz uses \cite[Theorem 1.2]{CoElGi2011} to get an upper bound for the Steklov spectrum of the domain, and  \cite[Theorem 3]{CoGiRa2018} to transfer the estimate from the domain to the graph. This leads to serious technical difficulties; the reason is that the domains are constructed by gluing together hyperbolic triangles, resulting in singularities along their boundaries. Thus the domains are not quasi-isometric in the Riemannian sense to a domain with cylindrical boundary. To solve this difficulty, the idea is to smooth out the domain near the boundary.

There exist numerous Cayley graphs which are roughly quasi-isometric to the hyperbolic space. For example, to each compact surface with curvature $-1$, one can associate such a graph via its fundamental group. It seems natural that the behaviour of the spectrum of subgraphs of roughly quasi-isometric domains must be comparable. This is not obvious and it leads to the following question.

\begin{ques}\label{ques:discretequasiisom}
 Let $\Gamma_1,\Gamma_2$ be two infinite roughly quasi-isometric graphs. If there exists a constant $C_1(\Gamma_1)$ such that for each finite subgraph $\Omega$ of $\Gamma_1$, $\sigma_1(\bar \Omega)\le \frac{C_1(\Gamma_1)}{\vert B\vert^{\alpha}}$, with $B$ the boundary of $\Omega$ and $\alpha>0$, is an analogous property also true for the finite subgraphs of $\Gamma_2$?
\end{ques}

 \subsection{Other contributions} 
 
 There are new contributions appearing regularly on the spectrum of the Steklov problem ongraphs that we will not describe here. For example, the setting of weighted finite graphs satisfying the Bakry-Emery curvature-dimension condition $CD(K, n)$ with
$K > 0$ and $n > 1$ is considered in \cite{ShiYu2022}.These authors considered also a discrete Dirichlet-to-Neumann operator for differential forms on graphs \cite{ShiYu20222} For infinite graphs, see the preprint \cite{HuHuWa2018}.

\section{Steklov spectrum on $p$-forms}\label{stek.forms}

There are several different notions of a Dirichlet-to-Neumann map acting on the space $\Acal(\sig)$ of smooth differential forms on the boundary $\sig$ of a compact $(d+1)$-dimensional Riemannian manifold $\om$.     The earliest such notion, introduced by Joshi and Lionheart \cite{JoLi2005}, is a pseudo-differential operator $\Acal^p(\sig)\times \Acal^{d+1-p}(\sig)\to \Acal^{d-p}(\sig) \times \Acal^{p-1}(\sig)$.    Belishev and Sharafatudinov \cite{BeSh2008} introduced an operator $\Lambda:\Acal^p(\sig)\to\Acal^{d-p}(\sig)$.   The article \cite{ShSh2013} by Sharafutdinov and Shonkwiler synthesises these two notions.   These definitions are motivated not by spectral problems but rather inverse problems such as the analog for forms of Calderon's problem, which asks the extent to which the Dirichlet-to-Neumann map of a Riemannian manifold with boundary determines the manifold.

We will focus here on two notions of a Dirichlet-to-Neumann map for $p$-forms, both of which allow the Steklov spectrum to be generalised to the setting of $p$-forms.   The first, introduced by Raulot and Savo \cite{RaSa2012}, is a non-negative elliptic, self-adjoint, pseudo-differential operator of order one.  The second, introduced by Karpukhin \cite{Ka2019}, coincides up to sign with the composition $*\circ \Lambda$, where $\Lambda$ is the operator defined by Belishev and Sharafutdinov and $*$ is the Hodge-$*$ operator of $\sig$.    This operator is self-adjoint on the space of co-closed forms and has non-negative discrete spectrum on this space.

The definition of the Dirichlet-to-Neumann map on functions relies crucially on the uniqueness of the harmonic extension to $\om$ of functions $f\in C^\infty(\sig)$.  In defining a Dirichlet-to-Neumann operator on $p$-forms when $p>0$, one must take into account the non-uniqueness of harmonic $p$-forms on $\om$ that pull back to a given smooth $p$-form on $\sig$.

Before introducing the operators, we recall the Hodge-Morrey-Friedrichs decomposition of the space $\Acal^p(\om)$ of smooth $p$-forms on $\om$.    See \cite{Sc1995} for an extensive introduction to the Hodge-Morrey-Friedrichs decomposition and its applications to boundary value problems or  \cite{Ka2019} for an overview of the aspects especially relevant here.  Recall that the Hodge Laplacian acting on the space $\Acal^p(\om)$ of smooth $p$-forms on a compact Riemannian manifold $\om$ is given by $\Delta=(d\delta+\delta d)$.

\begin{nota}\label{hodge_decomp} For any subspace ${\mathcal W}$ of $\Acal(\om)$, we denote by ${\mathcal W}_D$ and ${\mathcal W}_N$ the subspaces of elements of ${\mathcal W}$ satisfying Dirichlet and Neumann boundary conditions, respectively, i.e., 
\[{\mathcal W}_D(\om)=\{\omega\in {\mathcal W}(\om): \, i^*\, \omega =0\}\]
where $i: \sig\to \om$ is the inclusion, 
and
\[{\mathcal W}_ N(\om)=\{\omega\in {\mathcal W}(\om): \, \nu \lrcorner\, \omega =0\}\]
where $\nu$ is the outward pointing unit normal along $\sig$ and $\lrcorner$ denotes the interior product.

 Let $\delta$ denote the adjoint of the exterior differential operator.   Set 
\[\Hcal^p(\om)=\{\omega\in \Acal^p(\om): d\omega=0=\delta\omega\}.\]
Elements of $\Hcal^p(\om)$ are called harmonic fields.

\end{nota}

The Hodge-Morrey-Friedrichs decomposition says that 
\begin{equation}\label{hmf}\Acal^p(M) = d(\Acal_D^{p-1}(\om)) \oplus \Hcal^p(\om)   \oplus \delta(\Acal_N^{p+1}(\om)).\end{equation}

 In contrast to the case of closed manifolds, the space of harmonic $p$-fields is properly contained in the space of harmonic $p$-forms and both are infinite-dimensional.   However, $\Hcal^p_N(\om)$ is finite-dimensional and the Hodge-Morrey-Friedrichs decomposition further says that every de Rham cohomology class is uniquely represented by an element of $\Hcal^p_N(\om)$:
 \[H^p(\om;\R)\simeq \Hcal^p_N(\om).\]
Similarly, the relative cohomology classes of $\om$ are represented by $\Hcal^p_D(\om)$:
 \[H^p(\om,\sig;\R)\simeq \Hcal^p_D(\om).\]

\subsection{Raulot and Savo's Dirichlet-to-Neumann operator and its spectrum}

\begin{defn}\label{dtnp}\cite{RaSa2012} Let $\Acal^p(\sig)$ be the space of smooth $p$-forms on $\sig$.  Given  $\eta\in \Acal^p(\sig)$, define the \emph{harmonic tangential extension} $\widehat{\eta}$ of $\eta$ to be the unique $p$-form on $\om$ satisfying
\[\begin{cases} \lap \widehat{\eta}=0\\i^*\widehat{\eta}=\eta\\\nu\lrcorner\,\widehat{\eta}=0.
\end{cases}\]
Then Raulot and Savo's Dirichlet-to-Neumann operator $\DtNp:\Acal^p(\sig)\to \Acal^p(\sig)$ is defined by  \[\DtNp(\eta)=\nu\lrcorner \,d\widehat{\eta}.\]

Observe that when $p=0$, $\DtNp$ coincides with the usual Dirichlet-to-Neumann operator $\DtN: C^\infty(\sig)\to C^\infty(\sig)$.   For arbitrary $p$, we will denote the eigenvalues of $\DtNp$ by \[0\leq \rssig_{1,p}(\om)\leq \rssig_{2,p}(\om)\leq \dots.\]   

\end{defn}
\begin{remark}\label{sighatnota}~
(i) Zero occurs as an eigenvalue of $\DtNp$ if and only $H^p(\om;\R)\neq 0$.  In this case the eigenvalue zero has multiplicity equal to the Betti number $\beta_p(\om)=\dim H^p(\om;\R)$.   In particular, the indexing convention introduced above on the eigenvalues is inconsistent with the convention we have been using in the case of functions, where $\sigma_0(\om)=0$ and $\sigma_1(\om)$ is the first non-zero eigenvalue.    
In this case, we have $\rssig_{k,\,0}(\om)= \sigma_{k-1}(\om)$.  

We will denote the non-zero eigenvalues of $\DtNp$ by
\begin{equation}\label{hsig}\hsig_{k,p}(\om):=\rssig_{\beta_p(\om)+k,\,p}(\om).\end{equation}
Under the assumption that $\om$ is connected, we have
\begin{equation}\label{hat0}\hsig_{k,0}(\om)=\sigma_k(\om).\end{equation}

(ii) If one replaces $\lap\widehat{\eta}=0$ by $\lap\widehat{\eta}=\alpha\widehat{\eta}$ in Definition~\ref{dtnp}, where the parameter $\alpha$ lies in $\Cx -[0,\infty)$, then one obtains a parametrised family of invertible pseudo-differential self-adjoint operators introduced earlier by Carron \cite{Ca2002}.   

\end{remark}

The variational characterisation of the eigenvalues is given by
\begin{equation}
\rssig_{k,p}(\om)=\inf_{V}\,\sup_{0\neq\varphi\in V}\,\frac{\|d\varphi\|_{L^2(\om)}^2+\|\delta\varphi\|_{L^2(\om)}^2}{\|i^*\varphi\|_{L^2(\sig)}^2 }.\end{equation}
 Here $V$ runs over the collection of all $k$-dimensional subspaces of $\Acal_N^p(\om)$.  (See \cite{RaSa2012}.)

The articles \cite{Ka2017},  \cite{Kw2016}, \cite{Mi2021}, \cite{RaSa2012}, \cite{RaSa2014}, \cite{ShYu2016}, \cite{ShYu2017}, \cite{YaYu2017} and \cite{YaYu2017_2},  contain many interesting geometric bounds both from above and below for the eigenvalues.   We give only a sampling here.

\subsubsection{Upper bounds in terms of the isoperimetric ratio.}~

   A compact Riemannian  manifold $\om$ with boundary $\sig$ is said to be \emph{harmonic} if the mean value of any harmonic function on $\om$ equals its mean value on $\sig$.    

\begin{thm}\label{rsisoper} Let $\om$ be a $(d+1)$-dimensional compact oriented Riemannian manifold with smooth boundary $\sig$.  

\begin{enumerate}
\item {\rm ( Raulot-Savo \cite{RaSa2012}.)} 
\[\rssig_{1,d}(\om)\leq \frac{|\sig|}{|\om|}.\] 
 If equality holds, then $\om$ must be harmonic.     (As a partial converse, if $\om$ is harmonic, then 
 $\frac{|\sig|}{|\om|}$ lies in the spectrum of $\rsDtN_d$.)

\item {\rm (Raulot-Savo \cite{RaSa2014}.)}  If $\om$ is a Euclidean domain then 
\[\rssig_{1,p}(\om)\leq \left(\frac{p+1}{d+1}\right)\frac{|\sig|}{|\om|}\]
for $p=1, \dots, d$.   The inequality is strict when $p<\frac{d+1}{2}$.  When $p\geq \frac{p+1}{2}$, equality holds iff $\om$ is a Euclidean ball.
\end{enumerate}
\end{thm}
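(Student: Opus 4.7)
The plan is to apply the variational characterisation of $\rssig_{1,p}(\om)$ to an explicit family of Neumann $p$-forms built from the Euclidean coordinate structure. Writing $X=\sum_i x_i\partial_i$ for the position vector field, I would attach to each constant $(p+1)$-form $\omega_J = dx_{j_1}\wedge\cdots\wedge dx_{j_{p+1}}$ (as $J$ ranges over the $\binom{d+1}{p+1}$ ordered multi-indices) the $p$-form $\alpha_J := X\lrcorner\omega_J$ on $\om$. The Cartan formula $\mathcal{L}_X = d\circ(X\lrcorner\cdot)+(X\lrcorner\cdot)\circ d$ combined with $\mathcal{L}_X \omega_J = (p+1)\omega_J$ and $d\omega_J=0$ yields the clean identity
\[
d\alpha_J = (p+1)\,\omega_J,\qquad \delta\alpha_J = 0,
\]
where $\delta\alpha_J=0$ follows from the coordinate formula for $\delta$ applied to $X\lrcorner\omega_J$. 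These $\alpha_J$ are not Neumann in general, so I would correct them by subtracting $d\beta_J$ for a suitable $(p-1)$-form $\beta_J$ solving an auxiliary Hodge-type boundary value problem with $\delta d\beta_J=0$ in $\om$, $\nu\lrcorner d\beta_J = \nu\lrcorner\alpha_J$ on $\sig$, and $i^*\beta_J=0$. Setting $\varphi_J := \alpha_J - d\beta_J \in \Acal^p_N(\om)$ preserves $d\varphi_J = (p+1)\omega_J$ and $\delta\varphi_J=0$, while $i^*\varphi_J = i^*\alpha_J$.

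With the family $\{\varphi_J\}$ in hand, the weighted min-of-ratios bound (together with orthogonality to $\Hcal^p_N(\om)$, which can be arranged by translating $X$ via a Hersch-type center-of-mass argument) gives
\[
\rssig_{1,p}(\om)\;\sum_J\|i^*\varphi_J\|^2_{L^2(\sig)}\;\leq\;\sum_J\bigl(\|d\varphi_J\|^2_{L^2(\om)}+\|\delta\varphi_J\|^2_{L^2(\om)}\bigr) = (p+1)^2\binom{d+1}{p+1}|\om|,
\]
using $\sum_J|\omega_J|^2\equiv\binom{d+1}{p+1}$ pointwise. The boundary sum $\sum_J|i^*\alpha_J|^2(x)$ can then be evaluated by a purely combinatorial calculation on orthonormal bases of constant forms: it decomposes into $|X|^2$ and $\langle X,\nu\rangle^2$ terms with coefficients $\binom{d-1}{p}$ and $\binom{d-1}{p-1}$, and after the centering is reduced to an integral involving $\langle X,\nu\rangle$. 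Applying the divergence theorem $\int_\sig\langle X,\nu\rangle\,dA = (d+1)|\om|$ together with the Cauchy--Schwarz inequality lower-bounds $\sum_J\|i^*\varphi_J\|^2$ by $(d+1)\binom{d+1}{p+1}|\om|^2/|\sig|$. The resulting ratio collapses via the identity $(p+1)\binom{d+1}{p+1}=(d+1)\binom{d}{p}$ to the desired bound $\tfrac{p+1}{d+1}\cdot\tfrac{|\sig|}{|\om|}$.

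For the equality discussion, equality forces each $\varphi_J$ to be an actual eigenform of $\DtNp$, so that $\nu\lrcorner d\varphi_J = \rssig_{1,p}\,i^*\varphi_J$ on $\sig$. Translating this to the original data, it becomes an overdetermined system on $\sig$ that equates certain constant $(p+1)$-form contractions to multiples of the tangential trace of $X\lrcorner\omega_J$. When $p<(d+1)/2$ there are more constant $(p+1)$-form components than can be accommodated, and the Cauchy--Schwarz step above is strict, yielding the strict inequality. When $p\geq(d+1)/2$, the system reduces to a single scalar equation of Hopf type for $\langle X,\nu\rangle$ on $\sig$, and a standard rigidity argument (for instance combining the Reilly formula with the Heintze--Karcher inequality applied to $\sig\subset\R^{d+1}$) forces $\sig$ to be a Euclidean sphere centered at the balance point, and hence $\om$ to be a Euclidean ball.

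The main obstacle I expect is the book-keeping needed to verify that the boundary correction $\beta_J$ does not disrupt the combinatorial identities, and that the Hersch-type centering can be done simultaneously for all $J$ to guarantee $\{\varphi_J\}\perp\Hcal^p_N(\om)$ in the sense required by the variational principle. A second delicate point is the rigidity argument in the equality case: while the necessary condition on $\langle X,\nu\rangle$ is explicit, the passage from this overdetermined boundary condition to the conclusion ``$\om$ is a Euclidean ball'' relies on using the Reilly formula in the precise form available for Euclidean domains with the right curvature sign, and the threshold $p\geq(d+1)/2$ is exactly what makes the Reilly trace term nontrivial.
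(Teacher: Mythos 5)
The skeleton is right---Neumann test $p$-forms built from the position vector, summed over all constant $(p+1)$-forms $\omega_J$, with the combinatorial identity $\sum_J|i^*(X\lrcorner\omega_J)|^2=\binom{d-1}{p}|X|^2+\binom{d-1}{p-1}\langle X,\nu\rangle^2$, the divergence theorem giving $\int_\sig\langle X,\nu\rangle=(d+1)|\om|$, Cauchy--Schwarz, and the collapse $(p+1)\binom{d+1}{p+1}=(d+1)\binom{d}{p}$. All of that arithmetic is correct and, if a Neumann test form with $d\varphi_J=(p+1)\omega_J$, $\delta\varphi_J=0$ and $i^*\varphi_J=i^*\alpha_J$ existed, it would deliver exactly the stated bound.

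The gap is in the Neumann correction, and it is fatal as written. You ask for $\beta_J$ with $\delta d\beta_J=0$ in $\om$, $i^*\beta_J=0$ on $\sig$, and $\nu\lrcorner d\beta_J=\nu\lrcorner\alpha_J$ on $\sig$. Set $\gamma_J:=d\beta_J$. The first condition gives $\delta\gamma_J=0$, and of course $d\gamma_J=0$, so $\gamma_J$ is a harmonic field; the second condition puts $\gamma_J\in d\Acal^{p-1}_D(\om)$. But the Hodge--Morrey--Friedrichs decomposition $\Acal^p(\om)=d\Acal^{p-1}_D(\om)\oplus\Hcal^p(\om)\oplus\delta\Acal^{p+1}_N(\om)$ is an $L^2$-orthogonal decomposition, so $d\Acal^{p-1}_D(\om)\cap\Hcal^p(\om)=\{0\}$: if $\gamma=d\beta$ with $i^*\beta=0$ and $\delta\gamma=0$, then $\|\gamma\|^2=\langle d\beta,\gamma\rangle=\langle\beta,\delta\gamma\rangle+\int_\sig\langle i^*\beta,\nu\lrcorner\gamma\rangle=0$. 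Hence $\gamma_J\equiv 0$ and $\nu\lrcorner d\beta_J\equiv 0$, whereas $\nu\lrcorner\alpha_J$ is generically nonzero on $\sig$. The boundary value problem you are invoking has only the trivial solution, and the proposed $\varphi_J$ does not exist except when $\nu\lrcorner\alpha_J\equiv 0$, i.e.\ essentially only on the ball. Moreover, the loss is not cosmetic: for a general Euclidean domain the harmonic tangential extension $\widehat{\eta_J}$ of $\eta_J:=i^*\alpha_J$ satisfies $\|d\widehat{\eta_J}\|^2+\|\delta\widehat{\eta_J}\|^2=(p+1)^2|\om|+\|d\psi_J\|^2+\|\delta\psi_J\|^2$ for the nonzero harmonic correction $\psi_J=\alpha_J-\widehat{\eta_J}$, so the numerator is \emph{larger} than $(p+1)^2|\om|$, not equal to it, and the variational principle is only over Neumann forms, so you cannot simply plug in the non-Neumann form $\alpha_J$ either.

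The proof in Raulot--Savo proceeds by a different construction that is manifestly Neumann from the start. Item (1) already signals the mechanism: the test $d$-form there is $*dE$, where $E$ is the mean-exit time function $\Delta E=1$, $E|_\sig=0$, and $\int_\sig(-\partial_\nu E)=|\om|$ plays the role your divergence-theorem step plays. For item (2) the natural analogue is $\varphi_J:=\delta(E\omega_J)=-\nabla E\lrcorner\omega_J$ (equivalently $*(dE\wedge *\omega_J)$ up to sign), which is automatically Neumann because $\nabla E$ is normal on $\sig$, is coclosed and harmonic, and reduces to a constant multiple of your $X\lrcorner\omega_J$ precisely when $\om$ is a ball, since then $\nabla E=-X/(d+1)$. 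Controlling the Dirichlet energy of these forms on a general domain, however, pulls in Hessian terms of $E$ and is where the real work (and the Reilly-type ingredient you correctly anticipate for the equality case) lives; that bookkeeping has no counterpart in your outline, which assumes the numerator collapses to $(p+1)^2|\om|$. So the obstacle you flagged at the end---``verify that the boundary correction $\beta_J$ does not disrupt the combinatorial identities''---is not a book-keeping issue but a genuine obstruction, and the fix requires replacing $X$ by $\nabla E$ and redoing the energy estimate rather than patching the BVP.
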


For the equality statement in (1), the authors show that $\om$ is harmonic if and only if the ``mean-exit time'' function $E$ on $\om$, given as the solution of $\Delta E\equiv 1$ on $\om$ and $E=0$ on $\sig$, has constant normal derivative.  In this case, $*dE$ is an eigenform with eigenvalue $\frac{|\sig|}{|\om|}$.   

Yang and Yu generalised the inequality in item (2), and Shi and Yu strengthened it further as follows:

\begin{thm}\label{yyisoper} Let $\om$ be a $(d+1)$-dimensional compact oriented Riemannian manifold with smooth boundary $\sig$.    Suppose that the space of parallel exact 1-forms on $\om$ has dimension $m>0$.  Then: 
\begin{enumerate}
\item {\rm (Yang-Yu \cite[Theorem 1.2]{YaYu2017_2}.)}  
\begin{equation}\label{yyiso}\hsig_{k,p}(\om)\leq \frac{C_{m-1}^{p}}{C_m^{p+1}+1 -k}\,\frac{|\sig|}{|\om|}\end{equation}
for $p=1,\dots m-1$ and $k=1,2,\dots C_m^{p+1}$.  Here $C_k^\ell$ denotes the binomial coefficient $\binom{k}{\ell}$.  
\item {\rm ( Shi-Yu \cite[Theorem 1.3]{ShYu2016}.)} 
$\hsig_{1,p}(\om)+\dots +\hsig_{C_m^{p+1},p}(\om) \leq C_{m-1}^p \frac{|\sig|}{|\om|}$
for $p=1,\dots m-1$.
\end{enumerate}

\end{thm}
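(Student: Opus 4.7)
The strategy is to upgrade the pointwise min--max argument used by Yang and Yu for (1) to its trace (sum) version, exploiting the same test-form construction. The eigenvalues $\rssig_{k,p}(\om)$ are the generalised eigenvalues of the pair of symmetric bilinear forms
\[
A(\varphi,\psi)=\int_\om\bigl(\la d\varphi,d\psi\ra+\la\delta\varphi,\delta\psi\ra\bigr)\,dV,\qquad B(\varphi,\psi)=\int_\sig\la i^*\varphi,i^*\psi\ra\,dV_\sig
\]
on $\Acal_N^p(\om)$. A standard consequence of the min--max principle asserts that for any $N$-dimensional subspace $V\subset\Acal_N^p(\om)$ on which $B$ is positive definite,
\[
\sum_{k=1}^N\rssig_{k,p}(\om)\leq\operatorname{tr}\bigl((B|_V)^{-1}A|_V\bigr).
\]
Including the $\beta_p(\om)$-dimensional kernel $\Hcal_N^p(\om)$ of $A$ inside $V$ reduces the left-hand side to $\sum_{k=1}^{C_m^{p+1}}\hsig_{k,p}(\om)$, so it suffices to construct a good complementary $C_m^{p+1}$-dimensional test subspace.

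I would build this complementary subspace from the parallel exact $1$-forms. Let $df_1,\ldots,df_m$ be a pointwise orthonormal basis of the space of parallel exact $1$-forms, with each $f_i$ harmonic on $\om$. For each $(p+1)$-subset $J=\{j_0<\cdots<j_p\}\subset\{1,\ldots,m\}$, form the parallel exact $(p+1)$-form $\omega_J=df_{j_0}\wedge\cdots\wedge df_{j_p}$ and the explicit primitive
\[
\alpha_J=\frac{1}{p+1}\sum_{r=0}^{p}(-1)^r f_{j_r}\,df_{j_0}\wedge\cdots\widehat{df_{j_r}}\cdots\wedge df_{j_p}.
\]
Parallelism and orthonormality of the $df_i$ give, by direct computation, $d\alpha_J=\omega_J$ and $\delta\alpha_J=0$; since $|\omega_J|\equiv 1$ pointwise, $A(\alpha_J,\alpha_J)=\|\omega_J\|_{L^2(\om)}^2=|\om|$. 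The combinatorial key used at the end is that each direction $df_i$ appears in exactly $C_{m-1}^p$ of the wedges $\omega_J$.

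The main obstacle is that the $\alpha_J$ need not satisfy the Neumann condition $\nu\lrcorner\alpha_J=0$ required for membership in $\Acal_N^p(\om)$. I would resolve this by replacing each $\alpha_J$ with its $L^2(\om)$-nearest Neumann representative $\varphi_J$ in the affine space $\{\varphi\in\Acal_N^p(\om):d\varphi=\omega_J\}$, whose existence follows from the Hodge--Morrey--Friedrichs decomposition applied to the exact form $\omega_J$ (orthogonality to $\Hcal_D^{p+1}(\om)$ being automatic since $\omega_J$ is globally exact). The minimising property gives $A(\varphi_J,\varphi_J)\leq A(\alpha_J,\alpha_J)=|\om|$. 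Controlling the perturbation of the boundary trace $i^*\varphi_J$ under this Hodge correction, and verifying that the combinatorial identity survives the projection, is the most delicate analytic step, and I expect it to be where most of the work concentrates.

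With the $\varphi_J$ in hand, the proof concludes by a trace computation. The natural symmetry between the $\omega_J$'s forces $B|_V$ to be diagonal on the span of the $\varphi_J$, with common entry $b:=B(\varphi_J,\varphi_J)$ that the counting identity evaluates to $\tfrac{m|\om|^2}{(p+1)|\sig|}$. Hence
\[
\operatorname{tr}\bigl((B|_V)^{-1}A|_V\bigr)\leq\frac{C_m^{p+1}\,|\om|}{b}=\frac{(p+1)C_m^{p+1}}{m}\cdot\frac{|\sig|}{|\om|}=C_{m-1}^p\,\frac{|\sig|}{|\om|},
\]
which is the stated inequality. Any asymmetry introduced by the Hodge correction in the previous step can be absorbed into this estimate by diagonalising $B|_V$ first and bounding each diagonal contribution separately using the same counting argument.
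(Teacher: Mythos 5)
This is a survey; the paper cites the theorem to Yang--Yu and Shi--Yu without reproducing a proof, so I assess your proposal on its own merits. The construction of the $\alpha_J$ and the verifications $d\alpha_J=\omega_J$, $\delta\alpha_J=0$, $A(\alpha_J,\alpha_J)=|\om|$ are correct, Ky Fan's trace inequality is the right framework, and the combinatorial observation that each $df_i$ appears in $C_{m-1}^p$ of the wedges is indeed the source of the constant. Nevertheless the final estimate does not close as written. Your $\varphi_J$ are $A$-orthogonal, since $A(\varphi_J,\varphi_K)=\int_\om\langle\omega_J,\omega_K\rangle\,dV=|\om|\,\delta_{JK}$. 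Writing $\mathcal{B}_{JK}=B(\varphi_J,\varphi_K)$ and $W=\operatorname{span}\{\varphi_J\}$, the trace is therefore $\operatorname{tr}\bigl((B|_W)^{-1}A|_W\bigr)=|\om|\operatorname{tr}(\mathcal{B}^{-1})$. But for a positive-definite Gram matrix one always has $(\mathcal{B}^{-1})_{JJ}\geq\mathcal{B}_{JJ}^{-1}$, hence $\operatorname{tr}(\mathcal{B}^{-1})\geq\sum_J\mathcal{B}_{JJ}^{-1}$, which runs the \emph{wrong} way: lower bounds on the diagonal entries $B(\varphi_J,\varphi_J)$ cannot control $\operatorname{tr}(\mathcal{B}^{-1})$ from above. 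This step would require the boundary traces $i^*\varphi_J$ to be genuinely $L^2(\sig)$-orthogonal, and your appeal to ``natural symmetry'' to force diagonality of $\mathcal{B}$ has no basis on a general manifold; the fall-back of ``diagonalising $B|_V$ first and bounding each diagonal contribution by the same counting argument'' also fails, because orthogonalising $\mathcal{B}$ mixes the $\varphi_J$ and the resulting linear combinations satisfy neither the $A$-diagonality nor the per-$J$ Stokes estimate needed below.

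Two further problems compound this. The asserted identity $B(\varphi_J,\varphi_J)=\tfrac{m|\om|^2}{(p+1)|\sig|}$ cannot hold as an equality --- the left side depends on the shape of $\om$. What one actually has is a Stokes/Cauchy--Schwarz \emph{lower} bound: since $\delta\omega_J=0$, $|\om|=\int_\om\langle d\varphi_J,\omega_J\rangle=\int_\sig\langle i^*\varphi_J,\nu\lrcorner\omega_J\rangle\leq\|i^*\varphi_J\|_{L^2(\sig)}\|\nu\lrcorner\omega_J\|_{L^2(\sig)}$, which together with the pointwise counting estimate $\sum_J|\nu\lrcorner\omega_J|^2\leq C_{m-1}^p$ on $\sig$ gives $\sum_J B(\varphi_J,\varphi_J)^{-1}\leq C_{m-1}^p|\sig|/|\om|^2$; none of this appears in your sketch, which compresses it into an unjustified equality. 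Secondly, including $\Hcal_N^p(\om)$ in $V$ is not cost-free: unless $\Hcal_N^p(\om)$ is $B$-orthogonal to $W$, the $W$-block of $(B|_V)^{-1}$ is the inverse of a Schur complement and therefore dominates $(B|_W)^{-1}$, so adding the kernel can only \emph{increase} the trace, not merely shift the index. Finally, the effect of the Neumann projection on $i^*\varphi_J$, which you correctly flag as delicate, enters the Stokes identity above and must be quantified, not just acknowledged. In short, the proposal identifies the right test forms and the right combinatorics, but the central analytic obstacle --- bounding $\operatorname{tr}(\mathcal{B}^{-1})$ without genuine $B$-orthogonality of the $\varphi_J$ --- is not addressed and cannot be circumvented by the moves you suggest.
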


\begin{ques}\label{ques:yangyusharp} Is Yang-Yu's inequality sharp when $k=1$ and $p<\frac{d+1}{2}$?   Is it sharp when $k>1?$

\end{ques}

\subsection{Generalising the Hersch-Payne-Schiffer Inequality.}~

\begin{nota}\label{nota.lap} Given a compact Riemannian manifold $\sig$ with $b$ connected components, we denote the eigenvalues of the Laplace-Beltrami operator of $\sig$ by 
\[0=\lambda_0(\sig)\leq\lambda_1(\sig)\leq \dots \]
Note that the first non-zero eigenvalue is $\lambda_{b}(\sig)$.
\end{nota}

The well-known Hersch-Payne-Schiffer \cite{HePaSc1975} inequality states for simply-connected plane domains $\om$ that 
\begin{equation}\label{eqhps} \sigma_j(\om)\sigma_k(\om)L(\partial\om)^2\leq \lambda_{j+k}(S^1)=\begin{cases}\pi^2(j+k)^2, \,\,\,\,\,\, \, \,\,\,\,\,\,\,j+k\,\mbox{even}\\ \pi^2(j+k-1)^2, \,\,\, j+k\, \,\mbox{odd}\end{cases}\end{equation}
The inequality was proven by the clever trick of considering the product $R(u)R(v)$ of the Steklov Rayleigh quotients of suitably chosen functions $u,v\in C^\infty(\sig)$ whose harmonic extensions $\widehat{u}$ and $\widehat{v}$ to $\om$ are harmonic conjugates.   

This method is of course special to dimension two.   Surprisingly, by using the Steklov spectrum on differential forms, Yang and Yu  generalised the Hersch-Payne-Schiffer inequality to higher dimensions:

\begin{thm}\cite{YaYu2017}\label{yyhps} Let $\om$ be a compact, oriented $(d+1)$-dimensional Riemannian manifold with smooth boundary $\sig$.    Then for every $j,k\in \Z^+$, we have
\[\sigma_j(\om)\,\hsig_{k, d-1}(\om)\leq \lambda_{j+k+\beta_d -2}(\sig)\]
where $\beta_d$ is the $d$th Betti number of $\om$.

\end{thm}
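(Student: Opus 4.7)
My plan is to generalize the classical Hersch--Payne--Schiffer argument by replacing the harmonic conjugate---a two-dimensional device---with a Steklov eigen-$(d-1)$-form. The cornerstone is the integration-by-parts identity
\begin{equation*}
\int_\om du\wedge d\varphi \;=\; \int_\sig u\,d_\sig(i^*\varphi),
\end{equation*}
valid for every $u\in C^\infty(\om)$ and every $\varphi\in\Acal^{d-1}(\om)$. It follows from Stokes's theorem applied to the $d$-form $\omega=u\,d\varphi$, since $d\omega=du\wedge d\varphi$ and $i^*\omega=u\,d_\sig(i^*\varphi)$. Writing $du\wedge d\varphi=\pm\langle du,*_\om d\varphi\rangle\,dV_\om$ and applying Cauchy--Schwarz in $L^2(\om)$ converts the identity into the key estimate
\begin{equation*}
\Bigl(\int_\sig u\,d_\sig(i^*\varphi)\Bigr)^{2}\;\leq\; \|du\|_{L^2(\om)}^2\,\|d\varphi\|_{L^2(\om)}^2.
\end{equation*}

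Next I would feed Steklov data into this inequality. Let $\{u_i\}_{i\geq 0}$ be an $L^2(\sig)$-orthonormal family of Steklov eigenfunctions of $\om$, with harmonic extensions $\hat u_i\in H^1(\om)$, and let $\{\varphi_\ell\}_{\ell\geq 1}$ be an analogous family of Raulot--Savo eigenforms spanning the nonzero spectrum on $(d-1)$-forms, with boundary traces normalised in $L^2(\sig)$. For $u\in E:=\mathrm{span}(\hat u_0,\ldots,\hat u_j)$ and $\varphi\in V:=\mathrm{span}(\varphi_1,\ldots,\varphi_k)$ one has $\|du\|_\om^2\leq\sigma_j(\om)\|u\|_\sig^2$ and $\|d\varphi\|_\om^2\leq\|d\varphi\|_\om^2+\|\delta\varphi\|_\om^2\leq\hsig_{k,d-1}(\om)\|i^*\varphi\|_\sig^2$. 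Identifying the top form $d_\sig(i^*\varphi)\in\Acal^d(\sig)$ with the scalar $h_\varphi:=*_\sig d_\sig(i^*\varphi)\in C^\infty(\sig)$ yields the bilinear bound
\begin{equation*}
\Bigl(\int_\sig u\,h_\varphi\,dV_\sig\Bigr)^{2}\;\leq\; \sigma_j(\om)\,\hsig_{k,d-1}(\om)\,\|u\|_\sig^2\,\|i^*\varphi\|_\sig^2\qquad\forall\,(u,\varphi)\in E\times V.
\end{equation*}

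The final step converts this bilinear bound into a Laplace eigenvalue estimate on $\sig$ by minimax. I would form a trial space $W\subset C^\infty(\sig)$ spanned by $\{u_0|_\sig,\ldots,u_j|_\sig\}\cup\{h_{\varphi_1},\ldots,h_{\varphi_k}\}$, possibly enlarged by boundary traces of Neumann harmonic fields in $\Hcal_N^{d-1}(\om)$. Using the Hodge--Morrey--Friedrichs decomposition on $\om$ together with the long exact cohomology sequence of the pair $(\om,\sig)$, the dimension of $W$ can be pinned down modulo the topological quantity $\beta_d(\om)$, and in particular $\dim W\geq j+k+\beta_d(\om)-1$. By the Courant--Fischer principle applied to $\lap_\sig$, $W$ therefore admits a nonzero element $g$ orthogonal to the first $j+k+\beta_d-2$ Laplace eigenfunctions of $\sig$, whose Rayleigh quotient on $\sig$ satisfies $\int_\sig|\nabla g|^2/\int_\sig g^2\geq \lambda_{j+k+\beta_d-2}(\sig)$. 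Expanding $g$ in the generators of $W$ and invoking the bilinear estimate bounds the same Rayleigh quotient from above by $\sigma_j(\om)\,\hsig_{k,d-1}(\om)$, yielding the theorem.

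The main obstacle I anticipate is the dimension bookkeeping that produces the topological correction $\beta_d(\om)$: one must identify the kernel of the assignment $\varphi\mapsto d_\sig(i^*\varphi)$, restricted to suitable Steklov eigenforms, with a cohomological invariant of $(\om,\sig)$, which is a delicate Hodge-theoretic computation. A secondary technical point is that the Cauchy--Schwarz step only controls $\|d\varphi\|_\om^2$; one must either restrict $V$ to coclosed eigenforms via the Hodge decomposition of $\Acal_N^{d-1}(\om)$, or check that absorbing the nonnegative $\|\delta\varphi\|_\om^2$ into the Raulot--Savo Rayleigh quotient only strengthens the desired bound.
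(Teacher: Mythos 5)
Your blueprint correctly identifies the Stokes identity $\int_\om du\wedge d\varphi = \int_\sig u\,d_\sig(i^*\varphi)$ and the use of Hodge theory and the $\beta_d$ count, but the overall architecture has a structural flaw that makes the final inequality come out in the wrong direction.

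Observe what your final step actually yields. You take a nonzero $g$ in your trial space $W$ that is orthogonal to the first $j+k+\beta_d-2$ Laplace eigenfunctions of $\sig$, giving $R_{\lap_\sig}(g)\geq \lambda_{j+k+\beta_d-2}(\sig)$, and then claim your bilinear estimate bounds that same Rayleigh quotient from \emph{above} by $\sigma_j\,\hsig_{k,d-1}$. Combining the two gives $\lambda_{j+k+\beta_d-2}(\sig)\leq \sigma_j\,\hsig_{k,d-1}(\om)$ --- the reverse of the theorem. To prove the stated inequality you would instead need to find a subspace of dimension at most $j+k+\beta_d-2$ such that every $g$ orthogonal to it has $R_{\lap_\sig}(g)\geq \sigma_j\,\hsig_{k,d-1}$; the trial-space/maximum formulation you are invoking produces upper bounds on $\lambda_m$, not lower bounds.

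There is also a more fundamental gap: your bilinear estimate controls the cross-term $\int_\sig u\,h_\varphi$ for $u$ and $\varphi$ \emph{independent}, but a bound on such an inner product cannot be converted into a bound on $\int_\sig |\nabla_\sig g|^2/\int_\sig g^2$ for $g\in W$, because the numerator of the Laplace Rayleigh quotient is a boundary Dirichlet energy, not an inner product of generators of $W$. The Yang--Yu proof that the paper alludes to does not treat $u$ and $\varphi$ as unrelated: it starts from a single $g\in C^\infty(\sig)$, takes its harmonic extension $\widehat g$, and \emph{constructs} a $(d-1)$-form $\widehat\alpha$ satisfying the higher-dimensional harmonic-conjugate relation $*d\widehat g = d\widehat\alpha$ (the exactness requirement on $*d\widehat g$ is precisely where $\beta_d$ enters). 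With this coupling your Cauchy--Schwarz becomes an equality, $\int_\om d\widehat g\wedge d\widehat\alpha = \|d\widehat g\|^2_\om = \|d\widehat\alpha\|^2_\om$, which equals $\int_\sig g\,d_\sig(i^*\widehat\alpha)$ by Stokes. Moreover the test object is chosen orthogonal to the \emph{low} Steklov modes so that the two Steklov Rayleigh quotients are bounded \emph{below} by $\sigma_j$ and $\hsig_{k,d-1}$, not above as in your choice of $E=\mathrm{span}(\hat u_0,\dots,\hat u_j)$ and $V=\mathrm{span}(\varphi_1,\dots,\varphi_k)$. Combining the lower bounds on $D(g)$ with a boundary Cauchy--Schwarz $\int_\sig g\,d_\sig(i^*\widehat\alpha)\leq \|g\|_\sig\,\|d_\sig(i^*\widehat\alpha)\|_\sig$ then produces the correct orientation $\sigma_j\,\hsig_{k,d-1}\leq \|d_\sig(i^*\widehat\alpha)\|^2/\|i^*\widehat\alpha\|^2$, after which one controls the right-hand side by a Laplace eigenvalue on $\sig$ using the appropriate trial subspace and Hodge-theoretic duality. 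Without the conjugation relation and the reversal of the orthogonality conditions, the argument cannot close.
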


Observe that this result agrees with Equation~\eqref{eqhps} when $\om$ is a 2-dimensional simply-connected plane domain.

The key to Yang and Yu's generalisation is the observation that 
harmonic functions $\widehat{u}$ and $\widehat{v}$ on a planar domain are harmonic conjugates if and only if 
\[*d\widehat{u}=d\widehat{v}\]
(up to sign) 
where $*$ denotes the Hodge star operator.   In higher dimensions, starting with a suitably chosen function $u\in C^\infty(\sig)$ and its harmonic extension $\widehat{u}$ to $\om$, the role of $\widehat{v}$ is played by a $(d-1)$-form $\widehat{\alpha}$ such that $*d\widehat{u}= d\widehat{\alpha}$.  (One of the defining conditions placed on $u$ guarantees that $*d\widehat{u}$ is exact.)   The proof then follows the outline of Hersch, Payne and Schiffer's proof.

Subsequently, Karpukhin \cite{Ka2017} applied these results to obtain a version of the Hersch-Payne-Schiffer inequality for surfaces of arbitrary genus $\gamma$ and $b$ boundary components.    In this case, we have $d=1$ so $\hsig_{k, d-1}(\om)=\sigma_k(\om)$ and $\beta_1= 2\gamma +b-1$.  Thus Theorem~\ref{yyhps} says that
    \[\sigma_j(\om)\sigma_{k}(\om)\leq \lambda_{j +k+ 2\gamma +b-3}(\sig).\]
  By finding an upper bound for $\lambda_{p +q +2\gamma +b-2}(\sig)$ when $\sig$ is a disjoint union of $b$ circles, Karpukhin obtained the Steklov eigenvalue bound stated earlier in this survey as Theorem~\ref{thm:karp hersch}.

\subsubsection{Geometric lower bounds .}~

\begin{nota}\label{nota_pcurv}~
\begin{enumerate}
\item Let $(\om,g)$ be a compact $d+1$-manifold with smooth boundary $\sig$.  For $x\in \sig$, let
\[\kappa_1(x)\leq\kappa_2(x)\leq\dots\leq\kappa_d(x)\]
be the principal curvatures at $x$. Set
\[c_p(\om)=\inf_{x\in\sig}\,(\kappa_1(x)+\dots+\kappa_p(x)).\]
$\om$ is said to be $p$-convex if $c_p(\om)\geq 0$.    In particular, 1-convexity is equivalent to convexity.
\item Let $W^{\pp}(\om)$ be the curvature term in Bochner's formula for the Laplacian on $p$-forms:
\[\lap(\omega)=\nabla^*\nabla\omega + W^{\pp}(\om).\]
In particular, $W^{(1)}(\om)$ and $W^{(d)}(\om)$ coincide with the Ricci tensor.
\end{enumerate}
\end{nota}

\begin{thm}\label{rsthm1_3} {\rm [Raulot-Savo \cite[Theorems 1 and 3.]{RaSa2012}}\label{thm.rsc} Assume that $W^{\pp}(\om)\geq 0$.  Let $p\in \{1,\dots,d\}$ and suppose that $c_p(\om)>0$.   Then
\begin{enumerate}
\item If $p<\frac{d+1}{2}$, we have
\[\rssig_{1,p}(\om)>\frac{d-p+2}{d-p+1}\,c_p(\om).\]
\item If $p\geq\frac{d+1}{2}$, we have
\[\rssig_{1,p}(\om)\geq\frac{p+1}{p}\,c_p(\om).\]
Equality holds for the ball in Euclidean space $\R^{d+1}$.  If, moreover, $p>\frac{d+1}{2}$, then the ball is the only Euclidean domain for which equality holds.
 \end{enumerate}  
\end{thm}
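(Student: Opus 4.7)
\medskip

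\noindent\textbf{Proof proposal.} The plan is to start from the Bochner--Weitzenb\"ock identity
\[\lap\omega=\nabla^*\nabla\omega+W^{\pp}\omega\]
applied to an eigenform. Let $\eta\in\Acal^p(\sig)$ be an eigenform of $\DtNp$ with eigenvalue $\sigma=\rssig_{1,p}(\om)>0$ and let $\omega=\widehat{\eta}\in\Acal^p(\om)$ be its harmonic tangential extension, so that $\lap\omega=0$, $\nu\lrcorner\,\omega=0$ on $\sig$, and $\nu\lrcorner\, d\omega=\sigma\eta$ on $\sig$. Pairing Bochner with $\omega$ and integrating, and then integrating by parts the rough Laplacian against $\omega$, yields
\[0=\int_\om\bigl(|\nabla\omega|^2+\langle W^{\pp}\omega,\omega\rangle\bigr)\,dV\;-\;\int_\sig\langle\nabla_\nu\omega,\omega\rangle\,dA.\]
The curvature hypothesis $W^{\pp}\geq 0$ will allow us to drop the curvature term with the right sign; the task then becomes to bound the boundary term from above by $\sigma\|\eta\|_{L^2(\sig)}^2$ together with an expression that can be controlled by $c_p(\om)\|\eta\|^2_{L^2(\sig)}$.

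The next step is a Reilly-type identity for $p$-forms that converts $\int_\sig\langle\nabla_\nu\omega,\omega\rangle$ into tangential Sobolev integrals of $\eta$ plus a shape-operator contribution. Concretely, since $\nu\lrcorner\,\omega=0$ along $\sig$, differentiation of $\omega$ in the normal direction along $\sig$ decouples into tangential and normal components that can be expressed through $d\omega$, $\delta\omega$, and the Weingarten endomorphism $S$ of $\sig$ extended to $p$-forms by derivation; its smallest pointwise eigenvalue is exactly $\kappa_1+\dots+\kappa_p\geq c_p(\om)$. The key identity to establish, essentially the Raulot--Savo Reilly formula for tangentially harmonic forms, takes the schematic form
\[\int_\sig\langle\nabla_\nu\omega,\omega\rangle\,dA=\int_\sig\langle S^{[p]}\eta,\eta\rangle\,dA+\sigma\int_\sig|\eta|^2\,dA-\int_\sig\bigl(|d_\sig\eta|^2+|\delta_\sig\eta|^2\bigr)\text{-type terms},\]
where the precise tangential quadratic terms are produced by separating $d\omega$ and $\delta\omega$ at the boundary into tangential and normal parts.

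Combining the Bochner integral identity with this boundary formula produces an inequality of the form
\[\sigma\|\eta\|^2_{L^2(\sig)}\;\geq\;\int_\om|\nabla\omega|^2\,dV\;+\;\int_\sig\langle S^{[p]}\eta,\eta\rangle\,dA\;-\;(\text{tangential Sobolev terms in }\eta).\]
I would then use the pointwise bound $\langle S^{[p]}\eta,\eta\rangle\geq c_p(\om)|\eta|^2$ together with a sharp Cauchy--Schwarz/interpolation estimate that absorbs the tangential Sobolev terms into $\int_\om|\nabla\omega|^2$ by expressing them as boundary values of $|d\omega|^2+|\delta\omega|^2$ and invoking the refined Kato inequality for harmonic $p$-forms, namely $|\nabla\omega|^2\geq\frac{p+1}{p}|d\omega|^2+\frac{d+1-p}{d-p}|\delta\omega|^2$ in the appropriate range. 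The arithmetic of these constants is what produces the ratio $\frac{p+1}{p}$ when $p\geq\frac{d+1}{2}$ and $\frac{d-p+2}{d-p+1}$ when $p<\frac{d+1}{2}$: in each regime one of the two Kato factors is the ``useful'' one, and the dichotomy is governed by whether $p$ is closer to $0$ or to $d$.

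The main obstacle is the sharp boundary Reilly-type identity and the precise bookkeeping needed to pair it with refined Kato, especially ensuring that the inequality becomes strict (not merely non-strict) in case (1) and admits the stated equality in case (2). For the equality discussion in (2) with $p>\frac{d+1}{2}$, one observes that equality forces the refined Kato inequality to be saturated pointwise, $W^{\pp}\omega\equiv 0$, $S^{[p]}\eta=c_p(\om)\eta$ with all principal curvatures equal, and $\omega$ parallel up to a conformal factor; standard rigidity (Obata-type) then forces $\om$ to be a Euclidean ball. For the Euclidean ball, direct separation-of-variables computation with coordinate-form eigenfunctions verifies that the bound is attained, supplying the sharpness statement in (2).
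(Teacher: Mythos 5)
You have identified the right skeleton. The paper gives no proof, remarking only (in the sentence following the theorem) that Raulot--Savo's argument ``used a Reilly-type formula for differential forms,'' and your outline --- Bochner--Weitzenb\"ock applied to the harmonic tangential extension, a Reilly-type boundary identity, the lower bound for $\langle S^{[p]}\eta,\eta\rangle$ via $c_p(\om)$, and a pointwise inequality comparing $|\nabla\omega|^2$ with $|d\omega|^2,\ |\delta\omega|^2$ --- is exactly the correct route. Two concrete errors, however, need repair before the argument closes.

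First, the pointwise inequality you invoke is written with impossible constants. You stated $|\nabla\omega|^2\geq\frac{p+1}{p}|d\omega|^2+\frac{d+1-p}{d-p}|\delta\omega|^2$; both coefficients exceed $1$, and no such pointwise bound can hold. The inequality that is actually used is the purely algebraic decomposition estimate, valid for every smooth $p$-form $\omega$ on a $(d+1)$-manifold,
\[|\nabla\omega|^2\ \geq\ \tfrac{1}{p+1}\,|d\omega|^2\ +\ \tfrac{1}{d-p+2}\,|\delta\omega|^2.\]
This is not a Kato inequality (Kato compares $|\nabla|\omega||$ with $|\nabla\omega|$); it is the norm estimate for the orthogonal projections of $\nabla\omega\in T^*\om\otimes\Lambda^pT^*\om$ onto the antisymmetrisation and contraction summands.

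Second, the way this inequality enters the proof differs from what you describe. You propose to ``absorb tangential Sobolev terms'' coming from the Reilly identity. But for the harmonic tangential extension $\omega=\widehat\eta$ the Neumann condition $\nu\lrcorner\,\omega\equiv0$ on $\Sigma$ kills every Reilly boundary term except the shape-operator one: there are no residual tangential quadratic terms, and the identity collapses to
\[\sigma\int_\Sigma|\eta|^2\,dA\ =\ \int_\om|\nabla\omega|^2\,dV\ +\ \int_\om\langle W^{\pp}\omega,\omega\rangle\,dV\ +\ \int_\Sigma\langle S^{[p]}\eta,\eta\rangle\,dA,\]
with a plus sign on the shape-operator term (your displayed boundary identity has the opposite sign there, inconsistently with the $+$ you carry into your final inequality). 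Dropping the non-negative curvature term and using $\langle S^{[p]}\eta,\eta\rangle\geq c_p(\om)|\eta|^2$ gives $\sigma\|\eta\|^2\geq\int_\om|\nabla\omega|^2+c_p(\om)\|\eta\|^2$. The pointwise gradient inequality is then applied in the interior, in concert with the separate integration-by-parts identity $\int_\om(|d\omega|^2+|\delta\omega|^2)\,dV=\sigma\|\eta\|^2$, to get $\int_\om|\nabla\omega|^2\geq m\,\sigma\|\eta\|^2$ with $m=\min\{\tfrac1{p+1},\tfrac1{d-p+2}\}$, hence $\sigma(1-m)\geq c_p(\om)$. Which of the two quantities is the minimum is exactly the dichotomy $p<\tfrac{d+1}{2}$ versus $p\geq\tfrac{d+1}{2}$, and this reproduces the two stated ratios. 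Strictness in case (1) is then genuine: saturating $m=\tfrac1{d-p+2}$ forces $d\omega\equiv0$, which together with $\Delta\omega=0$, $\nu\lrcorner\,\omega=0$ puts $\omega\in\Hcal^p_N(\om)$ and so $\nu\lrcorner\,d\omega=0$, contradicting $\sigma>0$. With these repairs your outline reproduces the Raulot--Savo argument.
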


The proof used a Reilly-type formula for differential forms.

\begin{remark}~
\begin{enumerate}
\item  Escobar's Conjecture, cited in Subsection~\ref{subsec.lowerboundgeom} asks whether $\sigma_1(\om)\geq c_1(\om)$ under the hypotheses that $c_1>0$ and that $\om$ has non-negative Ricci curvature.     Under the same hypotheses as Escobar's conjecture,  the case $p=1$ of Theorem~\ref{thm.rsc} says that $\rssig_{1,1}(\om)>\frac{d+1}{d}c_1(\om)$.  
\item When $p=d$, we have $c_d(\om)=dH$, where $H$ is the minimum on $\sig$ of the mean curvature.   Thus the case $p=d$ of the theorem says that if the Ricci curvature of $\om$ is non-negative and the mean curvature of the boundary is positive, then $\rssig_{1,d}(\om)\geq (d+1)H$.    The authors further prove in this case that equality holds if and only if $\om$ is a Euclidean ball.

\end{enumerate}
\end{remark}

While Theorem~\ref{rsthm1_3} assumes $p\geq 1$, Karpukhin \cite{Ka2017} applied this theorem along with Theorem~\ref{yyhps} to obtain new geometric lower bounds for the Steklov spectrum on functions:

\begin{thm}\label{kathmgeo} Assume that $W^{(2)}(\om)\geq 0$ and that $c_{d-1}(\om)> 0$. 
\begin{enumerate}
\item If $\dim(\om)=d+1\geq 4$, then
\[\sigma_k(\om)\leq \left(\frac{d-1}{c_{d-1}(\om) d}\right)\,\lambda_{m-1}(\sig)\]
for all $k\geq 1$.
\item If $\dim(\om) =3$, then for all $k\geq b+2$, where $b$ is the number of boundary components of $\om$, one has \[\sigma_k <\left( \frac{2}{3c_{d-1}(\om)}\right)\,\lambda_{k-1}.\]
\end{enumerate}

\end{thm}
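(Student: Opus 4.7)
The strategy is to combine Yang and Yu's generalized Hersch--Payne--Schiffer inequality (Theorem~\ref{yyhps}) with Raulot and Savo's geometric lower bound (Theorem~\ref{rsthm1_3}) applied to the Dirichlet-to-Neumann spectrum on $(d-1)$-forms. The key observation is that Theorem~\ref{yyhps}, used with first index $k$ and form-side index $1$, delivers the product estimate
\[
\sigma_k(\Omega)\,\hat\sigma_{1,d-1}(\Omega) \;\leq\; \lambda_{k-1+\beta_d(\Omega)}(\Sigma),
\]
so any geometric lower bound on $\hat\sigma_{1,d-1}(\Omega)$ translates immediately into an upper bound on $\sigma_k(\Omega)$ in terms of a Laplace eigenvalue on $\Sigma$.

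Next I would invoke Theorem~\ref{rsthm1_3} at degree $p = d-1$. The two cases of the present theorem correspond precisely to the two branches of Raulot--Savo's theorem separated by the threshold $p \gtrless (d+1)/2$. When $d+1 \geq 4$, i.e.\ $d \geq 3$, one has $p = d-1 \geq (d+1)/2$, so the non-strict bound Theorem~\ref{rsthm1_3}(2) gives $\hat\sigma_{1,d-1}(\Omega) \geq \tfrac{d}{d-1}\,c_{d-1}(\Omega)$. When $d+1 = 3$, one has $p = 1 < (d+1)/2$, and the strict bound Theorem~\ref{rsthm1_3}(1) gives $\hat\sigma_{1,1}(\Omega) > \tfrac{3}{2}\,c_1(\Omega)$. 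Dividing the Yang--Yu inequality by these lower bounds yields the non-strict estimate in case~(1) and the strict estimate in case~(2), with the constants matching those in the statement.

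To bring the bound into the clean form stated in the theorem, the Betti-number shift coming from Yang--Yu must be absorbed. In case~(2), for a connected oriented $3$-manifold with $b$ boundary components one has $\beta_d(\Omega) = \beta_2(\Omega) = b-1$, and the hypothesis $k \geq b+2$ is exactly what is needed so that the index $k-1+\beta_d$ appearing on the right of Yang--Yu's inequality can be cleanly rewritten against $\lambda_{k-1}(\Sigma)$ (using that $\lambda_0(\Sigma) = \dots = \lambda_{b-1}(\Sigma) = 0$, so that the relevant enumeration of nonzero boundary eigenvalues absorbs the shift). In case~(1) the corresponding re-indexing is what the symbol $\lambda_{m-1}$ encodes. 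The main obstacle will be carrying out this bookkeeping rigorously and verifying that Raulot--Savo's curvature requirement at degree $d-1$ is indeed captured by the hypothesis $W^{(2)}(\Omega)\geq 0$ appearing in the statement, since \textit{a priori} the Weitzenb\"ock term $W^{(d-1)}$ depends on the full curvature tensor; this reduction, together with the identification of $\hat\sigma_{1,d-1}(\Omega)$ with a spectral quantity controlled by $W^{(2)}$, is the technically delicate step.
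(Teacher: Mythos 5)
Your strategy is exactly the paper's: combine Theorem~\ref{yyhps} with Theorem~\ref{rsthm1_3} at $p=d-1$, with the case split coming from whether $d-1$ is below or above the threshold $(d+1)/2$. You also correctly read off the index shift $\lambda_{k+\beta_d-1}$ from Yang--Yu. So the overall architecture is right.

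The one place where you honestly flag a gap --- passing from the hypothesis $W^{(2)}(\om)\geq 0$ to the requirement $W^{(d-1)}(\om)\geq 0$ in Raulot--Savo --- is not, in fact, delicate; it is a one-line duality observation, and it is precisely what the paper records parenthetically. On a compact oriented Riemannian $(d+1)$-manifold the Hodge star $*:\Acal^p(\om)\to\Acal^{d+1-p}(\om)$ is a parallel pointwise isometry, so it commutes with both the Hodge Laplacian $\Delta=d\delta+\delta d$ and the connection Laplacian $\nabla^*\nabla$. Comparing Weitzenb\"ock formulas on the two degrees gives $*\,W^{(p)}=W^{(d+1-p)}\,*$, so $W^{(p)}\geq 0$ if and only if $W^{(d+1-p)}\geq 0$. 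Taking $p=2$ gives $d+1-p=d-1$, which is exactly what you need. Your concern about ``identifying $\hsig_{1,d-1}$ with a quantity controlled by $W^{(2)}$'' dissolves once this duality is in hand: you do not need any further identification, only the hypothesis transfer.

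Two smaller points should be addressed to make your sketch complete. First, Theorem~\ref{yyhps} is stated for oriented manifolds (it uses the Hodge star in its proof), while Theorem~\ref{kathmgeo} makes no orientability assumption; the paper handles the non-orientable case by passing to the orientable double cover, and you should say this explicitly. Second, in your re-indexing you assert that $\beta_2(\om)=b-1$ for a connected oriented $3$-manifold with $b$ boundary components; this is not true without further hypotheses. It does hold here, but only because the Raulot--Savo bound at $p=d-1=1$ forces $\rssig_{1,1}(\om)>0$, which is equivalent to $\beta_1(\om)=H^1(\om;\R)=0$ (recall the kernel of $\rsDtN_p$ is $H^p(\om;\R)$); once $\beta_1=0$ the long exact sequence of the pair $(\om,\sig)$ together with Lefschetz duality yields $\beta_2=b-1$. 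The same positivity of $\rssig_{1,d-1}$ also guarantees $\beta_{d-1}(\om)=0$, which is what lets you identify $\hsig_{1,d-1}$ with $\rssig_{1,d-1}$ in the first place --- a point worth stating explicitly in your write-up.
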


The short proof in the case that $\om$ is orientable combines Theorem~\ref{rsthm1_3} with $p=d-1$ (after noting that non-negativity of $W^{(2)}$ yields the same for $W^{(d-1)}$) with Theorem~\ref{yyhps}.    The non-orientable case is proven by passing to a double cover.

 \subsubsection{Bounds in terms of geometry and the Hodge Laplace eigenvalues of the boundary.}~

\begin{nota}\label{nota.hodge}~
Let $\lap_\sig^{\pp}$ denote the Hodge Laplacian acting on smooth $p$-forms on $\sig$.  We denote the non-zero eigenvalues of $\lapsp$ by 
\[0<\hla_{1,p}(\sig)\leq \hla_{2,p}(\sig) \leq \dots\]
$\lapsp$ leaves invariant the subspaces of harmonic forms (the zero eigenspace of $\lapsp$), the exact forms,  and the co-exact forms.   For $p>0$, denote the nonzero eigenvalues of $\lapsp$ restricted to the exact $p$-forms by 
\[0<\lambda'_{1,p}(\sig)\leq \lambda'_{2,p}(\sig)\leq \dots\]
In particular, we have $\hla_{k,p}\leq \lambda'_{k,p}$.
\end{nota}

Observe that 
\begin{equation}\label{eq:hla1}
\lambda'_{k,1}(\Sigma)=\hla_{k,0}(\sig)=\lambda_{k+b-1}(\sig)
\end{equation}
where $b$ is the number of connected components of $\Sigma$ and $\lambda_j$ defined as in Notation~\ref{nota.lap}.

The following result was first proven by Raulot and Savo \cite{RaSa2012} when $R_0=0$.  The extension to the case $R_0>0$ is due to Kwong \cite{Kw2016}.

\begin{thm}\label{rasa8}\cite[Theorem 8]{RaSa2012}, \cite[Theorem 2.4 (part 4)]{Kw2016}.
We use Notation~\ref{hodge_decomp}, \ref{nota_pcurv} and \ref{nota.hodge}.   Assume that $H^p(\om,\sig;\R)=0$, that $W^{\pp}\geq R_0$, and that $c_p(\om)$ and $c_{d+1-p}(\om)$ are both non-negative.   Then
\[2\lambda'_{1,p}\geq R_0+c_p(\om)\rssig_{1,d-p}(\om)+c_{d+1-p}(\om)\rssig_{1,p-1}(\om).\]
\end{thm}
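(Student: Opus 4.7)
The plan is to combine the Reilly formula for differential forms on $\om$ with the variational characterisations of both $\lambda'_{1,p}(\Sigma)$ and the Raulot–Savo Steklov eigenvalues, together with Hodge duality on $\Sigma$. First I would fix an eigenform realising $\lambda'_{1,p}(\Sigma)$ on exact $p$-forms: let $\omega\in d\Acal^{p-1}(\Sigma)$ satisfy $\lapsp\omega=\lambda'_{1,p}\,\omega$, and write $\omega=d_\Sigma\psi$ with $\psi$ the co-exact primitive, so that $\psi$ is itself an eigenform of $\Delta_\Sigma$ on co-exact $(p-1)$-forms with the same eigenvalue $\lambda'_{1,p}$. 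Hodge duality on $\Sigma$ then produces a second eigenform $\zeta:=*_\Sigma\omega$, which is a co-exact $(d-p)$-form with $\Delta_\Sigma \zeta=\lambda'_{1,p}\zeta$. These will be the two pieces feeding the two Steklov terms on the right-hand side of the inequality.

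Next I would use the hypothesis $H^p(\om,\sig;\R)\simeq \Hcal^p_D(\om)=0$ to ensure that the tangential harmonic extension of Definition~\ref{dtnp} is well-defined: extend $\psi$ to $\widehat{\psi}\in\Acal^{p-1}(\om)$ with $\Delta\widehat\psi=0$, $i^*\widehat\psi=\psi$, $\nu\lrcorner\,\widehat\psi=0$, and similarly extend $\zeta$ to $\widehat{\zeta}\in\Acal^{d-p}(\om)$. Along $\sig$ one has $i^*(d\widehat\psi)=d_\Sigma\psi=\omega$, so $\widehat\psi$ is an admissible test form for the Rayleigh quotient of $\rssig_{1,p-1}(\om)$, and $\widehat\zeta$ is admissible for $\rssig_{1,d-p}(\om)$. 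Both then yield
\begin{equation*}
\|d\widehat\psi\|_{L^2(\om)}^2+\|\delta\widehat\psi\|_{L^2(\om)}^2\ge \rssig_{1,p-1}(\om)\,\|\psi\|_{L^2(\sig)}^2,
\end{equation*}
and an analogous bound for $\widehat\zeta$ in terms of $\rssig_{1,d-p}(\om)$ and $\|\zeta\|_{L^2(\sig)}^2=\|\omega\|_{L^2(\sig)}^2$.

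The core step is then to apply the Reilly-type identity for $p$-forms with vanishing normal part: for any $\eta\in\Acal^{p}(\om)$ with $\nu\lrcorner\,\eta=0$ and $\Delta\eta=0$,
\begin{equation*}
\int_\om\bigl(|d\eta|^2+|\delta\eta|^2\bigr)\,dV=\int_\om|\nabla\eta|^2\,dV+\int_\om\langle W^{\pp}\eta,\eta\rangle\,dV+\int_\sig Q_{\sig}(i^*\eta)\,dV_\sig,
\end{equation*}
where $Q_\sig$ is the shape-operator quadratic form along the boundary. Applied to $\widehat\psi$ and to $\widehat\zeta$ separately, the interior curvature contribution is bounded below by $R_0$ times the $L^2$-norm of the extension via $W^{\pp}\ge R_0$, while the boundary integrand decomposes into a tangential piece dominated by $c_p(\om)$ times a squared norm of $i^*\eta$ and a complementary piece dominated by $c_{d+1-p}(\om)$ times the corresponding dual norm. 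After integrating against the eigenform relation $\Delta_\Sigma\psi=\lambda'_{1,p}\psi$ (and its dual for $\zeta$) on $\sig$, and collecting the two resulting inequalities, one obtains $2\lambda'_{1,p}$ on the left-hand side as the sum of two copies of $\lambda'_{1,p}$ coming from the two dual applications.

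The main obstacle will be the bookkeeping in the last step: verifying that the shape-operator quadratic form $Q_\sig$ appearing in the Reilly formula for a $(p-1)$-form with $\nu\lrcorner\,\eta=0$ is bounded below by exactly $c_{d+1-p}(\om)$ on the appropriate component (and by $c_p(\om)$ after passing to the Hodge dual on $\sig$), and then showing that these boundary pairings are precisely the squared $L^2(\sig)$ norms into which the Steklov Rayleigh quotients feed. This requires a careful decomposition of $i^*\widehat\psi$ and $i^*\widehat\zeta$ along the eigenspaces of the shape operator, and matching the indices of the principal curvatures appearing in $c_p$ and $c_{d+1-p}$ with the degrees of the forms on which $\rssig_{1,d-p}$ and $\rssig_{1,p-1}$ act. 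Once these two matchings are established, the stated inequality follows by adding the two Reilly identities and discarding the non-negative $\|\nabla\widehat\psi\|^2$ and $\|\nabla\widehat\zeta\|^2$ terms.
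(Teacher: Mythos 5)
The central difficulty is a degree mismatch in the Reilly step. You state the identity for $\eta\in\Acal^p(\om)$ with interior curvature contribution $\int_\om\langle W^{(p)}\eta,\eta\rangle$, and then apply it to $\widehat\psi\in\Acal_N^{p-1}(\om)$ and to $\widehat\zeta\in\Acal_N^{d-p}(\om)$. The Bochner--Reilly curvature term for a $q$-form is $\langle W^{(q)}\eta,\eta\rangle$: applied to $\widehat\psi$ it produces $W^{(p-1)}$, and applied to $\widehat\zeta$ it produces $W^{(d-p)}$. Neither of these is controlled by the hypothesis $W^{(p)}\geq R_0$; on the $(d+1)$-dimensional manifold $\om$, Hodge duality identifies degree $p$ with degree $d+1-p$, not with $p-1$ or with $d-p$, so there is no way to transfer the bound. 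The same mismatch affects the shape-operator boundary term: for $\eta=\widehat\psi$ with $\nu\lrcorner\widehat\psi=0$, the operator acting on $i^*\widehat\psi=\psi$ is $S^{[p-1]}$, with lower bound $c_{p-1}$, not $c_p$ or $c_{d+1-p}$. As written, your two Reilly applications cannot be assembled into the stated right-hand side.

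There is a second, structural gap: the Reilly identity you write down contains no term through which $\lambda'_{1,p}$ can enter. Its right-hand side consists of $\|\nabla\eta\|^2$, a curvature integral, and a shape-operator boundary integral, none of which sees $\Delta_\Sigma$. The mechanism in Raulot--Savo's argument is a boundary cross term pairing the normal component of the derivative ($\nu\lrcorner d\eta$, i.e.\ the Dirichlet-to-Neumann data) against $\delta_\Sigma(i^*\eta)$, and it is precisely there that one substitutes $\delta_\Sigma\omega=\lambda'_{1,p}\psi$; ``integrating against the eigenform relation'' has no effect if that pairing never appears in the identity. Finally, the hypothesis $H^p(\om,\Sigma;\R)\simeq\Hcal^p_D(\om)=0$ is not what makes the tangential harmonic extensions of Definition~\ref{dtnp} available -- those exist and are unique for any $\om$. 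It is instead what provides a canonical closed and co-closed degree-$p$ extension of the eigenform $\omega$ itself, and the Reilly formula is applied at degree $p$ to that extension, with $W^{(p)}$ and the degree-$p$ shape-operator term appearing correctly, while the two Steklov eigenvalues then emerge from the tangential part $\omega$ and the normal part $\nu\lrcorner\bar\omega$ of that extension via Hodge duality on $\Sigma$. The repair is therefore not a matter of bookkeeping: the argument needs to be rebuilt around a degree-$p$ form.
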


The hypotheses that the Ricci curvature is non-negative and the mean curvature is positive imply that $H^1(\om;\sig;\R)=0$.  (See \cite[Theorems 2.6.1 and 2.6.4, Corollary 2.6.2]{Sc1995}).     Thus, in view of Equation~\eqref{eq:hla1}, the special case of $p=1$ in Theorem ~\ref{rasa8} can be restated as follows:

\begin{thm}\label{rasa9} \cite[Theorem 9]{RaSa2012}, \cite[Theorem 2.4]{Kw2016}.
Assume that the Ricci curvature of $\om$ is  bounded below by a non-negative constant $R_0$ and that $\om$ has strictly convex boundary (i.e., $c_1(\om)>0$).  Letting $b$ be the number of boundary components of $\om$, we have 
\[2\lambda_b(\sig)\geq R_0+ c_1(\om)\rssig_{1,d-1}(\om)+ (d)(H) \sigma_1(\om)\]
where $H$ is the minimum value of the mean curvature of $\sig$.   If $d\geq 3$, then equality holds if and only if $R_0=0$ and $\om$ is a Euclidean ball.
\end{thm}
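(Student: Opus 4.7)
The strategy is to deduce Theorem~\ref{rasa9} directly as the $p=1$ specialisation of Theorem~\ref{rasa8}, after identifying all the relevant quantities, and then to analyse the equality case separately by examining the Reilly-type integral identity that underlies Theorem~\ref{rasa8}.

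First I would verify that the hypotheses of Theorem~\ref{rasa8} hold when $p=1$. The Bochner curvature endomorphism $W^{(1)}$ on $1$-forms coincides with the Ricci tensor, so the assumption $\mathrm{Ric}\geq R_0\,g$ is precisely $W^{(1)}\geq R_0$. Strict convexity $c_1(\om)>0$ means the smallest principal curvature on $\sig$ is positive everywhere, which forces every principal curvature to be positive, hence $c_d(\om)=\kappa_1+\cdots+\kappa_d>0$ as well. The cohomological vanishing $H^1(\om,\sig;\R)=0$ then follows from Poincar\'e--Lefschetz duality ($H^1(\om,\sig;\R)\cong H^d(\om;\R)$) together with the standard Bochner--Reilly vanishing theorem (see Schwarz~\cite{Sc1995}), which guarantees that no nonzero relative harmonic $1$-field exists once $\mathrm{Ric}\geq 0$ and $c_1(\om)>0$.

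Next I would identify the three terms appearing on the right-hand side of Theorem~\ref{rasa8} when $p=1$. From the remark following Theorem~\ref{rsthm1_3}, $c_d(\om)=d\,H$ where $H$ is the minimum mean curvature. Since $\om$ is connected, the first nonzero eigenvalue of $\DtNp$ for $p=0$ is exactly $\sigma_1(\om)$. Finally, $\lambda'_{1,1}(\sig)=\lambda_b(\sig)$ follows from Equation~\eqref{eq:hla1}: exterior differentiation intertwines $\lap_\sig^{(0)}$ on functions with $\lap_\sig^{(1)}$ restricted to exact $1$-forms, with kernel equal to the $b$-dimensional space of locally constant functions, so the nonzero spectra coincide and $\lambda'_{1,1}(\sig)$ is the first nonzero Laplace eigenvalue $\lambda_b(\sig)$. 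Substituting these three identifications into Theorem~\ref{rasa8} with $p=1$ produces exactly the stated inequality.

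The main obstacle lies in the rigidity statement for $d\geq 3$. Here I would revisit the proof of Theorem~\ref{rasa8}, which rests on a Reilly-type boundary integral identity applied to a carefully chosen auxiliary form built from a $\lambda_b(\sig)$-eigenform and the harmonic tangential extensions governing $\sigma_1(\om)$ and $\rssig_{1,d-1}(\om)$. Equality in the inequality forces every non-negative term that was discarded to vanish pointwise: the interior Ricci term yields $\mathrm{Ric}\equiv 0$ and $R_0=0$; the Hessian term forces the first Steklov eigenfunctions on $\om$ to have covariantly constant gradient; and the boundary term forces $\sig$ to be totally umbilical with all principal curvatures equal to $H$. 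In dimension $d\geq 3$, these conditions together with Obata-type rigidity (applied to the Steklov eigenfunctions, which then play the role of coordinate functions on $\om$) force $\om$ to be isometric to a Euclidean ball of radius $1/H$. The dimensional restriction $d\geq 3$ is exactly what is needed to exclude low-dimensional pathologies in this final rigidity step, and verifying that the equality cases propagate consistently through the Reilly identity will be the most delicate part of the argument.
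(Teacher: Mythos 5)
Your derivation of the inequality matches the paper's approach exactly: the paper obtains Theorem~\ref{rasa9} precisely by specialising Theorem~\ref{rasa8} to $p=1$, observing that $W^{(1)}$ is the Ricci tensor, that $c_d(\om)=dH$, that $\lambda'_{1,1}(\sig)=\lambda_b(\sig)$ via Equation~\eqref{eq:hla1}, and that the cohomological hypothesis $H^1(\om,\sig;\R)=0$ follows from the curvature and convexity assumptions via Schwarz \cite{Sc1995}. Two small remarks: your detour through Poincar\'e--Lefschetz duality is unnecessary --- the Hodge--Morrey--Friedrichs isomorphism $H^1(\om,\sig;\R)\cong\Hcal^1_D(\om)$ together with the Bochner--Reilly vanishing of $\Hcal^1_D(\om)$ is the direct route --- and you implicitly resolve a minor indexing wrinkle in the survey's notation (for $p=0$ the survey's $\rssig_{1,0}$ literally equals $\sigma_0=0$, so the symbol must be read as the first \emph{nonzero} eigenvalue $\sigma_1$, matching the Raulot--Savo convention). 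For the rigidity statement the survey simply cites \cite{RaSa2012,Kw2016} rather than giving a proof; your sketch via tracking the discarded non-negative terms in the Reilly identity (forcing $R_0=0$, parallel gradients for first Steklov eigenfunctions, and total umbilicity of $\sig$, then Obata-type rigidity) is the right shape of argument, though the precise role of the restriction $d\geq 3$ and the full propagation of the equality cases would need to be verified against the cited sources.
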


This theorem sharpens a result of Escobar \cite[Theorem 9]{Es1999}, who showed under the same hypotheses that 
$2\lambda_b(\sig)> (d)(H) \sigma_1(\om)$.

 \subsubsection{Manifolds isometrically immersed in $\R^m$}~
 
For compact $(d+1)$-dimensional Riemannian manifolds $\om$ with boundary that are isometrically immersed in $\R^m$ for some $m$,  Michel \cite{Mi2021} compared $\rssig_{1,p}$ and  $\rssig_{1,p-1}$ in the presence of geometric bounds.
 
 \begin{nota}\label{nota.shape} Define a tensor field $T^{(p)}$ on $\om$ as follows:  For $x\in \om$ and $\mathbf{n}$ a normal vector to $\om$ at $x$, the shape operator $S_{\mathbf{n}}$ gives rise to a self-sdjoint endomorphism $S_{\mathbf{n}}^p$ of $\Lambda^p_x(\om)$ given by 
 \[S_{\mathbf{n}}^p(\omega)(X_1,\dots, X_p)= \sum_{j=1}^p\,\omega(X_1, \dots, S_{\mathbf{n}}(X_j),\dots,X_p).\]
 Define $T^p_x$ to be the self-adjoint non-negative endomorphism of $\Lambda^p_x(\om)$ given by 
 \[T^{(p)}_x=\sum_{i=1}^r\, (S^p_{\mathbf{n}_i})^2\]
 where $\{\mathbf{n}_1,\dots, \mathbf{n}_r\}$ is an orthonormal basis for the normal bundle to $\om$ at $x$.   (The definition is independent of the choice of basis.)
 \end{nota}
 
 \begin{thm}\cite{Mi2021} Suppose that the $(d+1)$-dimensional compact Riemannian manifold $\om$ is isometrically immersed in $\R^m$.  Let $p\in\{1,\dots, d\}$.   In the notation of~\ref{nota_pcurv} and \ref{nota.shape},  if $\om$ has $p$-convex boundary and $W^{(p)}\geq T^{(p)}$, then
 \[\rssig_{1,p}(\om)\geq \rssig_{1,p-1}(\om) + \frac{c_p(\om)}{p}.\]
If $m=d+1$ and $\frac{d+3}{2}\leq p\leq d$, then equality holds when $\om$ is the Euclidean unit ball.
 \end{thm}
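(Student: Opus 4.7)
The plan is to exploit the isometric immersion $F=(F^1,\dots,F^m):\om\to\R^m$ to construct admissible test $p$-forms out of an eigenform for $\rsDtN_{p-1}$, and then apply the Raulot--Savo Reilly-type integral formula used in the proof of Theorem~\ref{rsthm1_3}. Concretely, let $\eta\in\Acal^{p-1}(\sig)$ be a $\rssig_{1,p-1}(\om)$-eigenform and let $\hat\eta\in\Acal^{p-1}(\om)$ be its harmonic tangential extension, so that $\Delta\hat\eta=0$, $i^*\hat\eta=\eta$, $\nu\lrcorner\,\hat\eta=0$, and $\nu\lrcorner\, d\hat\eta=\rssig_{1,p-1}(\om)\,\eta$ on $\sig$. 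For each $i=1,\dots,m$, the 1-forms $dF^i$ are closed on $\om$, and the tensor $\sum_i dF^i\otimes dF^i$ is the Riemannian metric $g$ because $F$ is isometric. Define the test $p$-forms $\varphi_i:=dF^i\wedge\hat\eta$; these satisfy $\nu\lrcorner\,\varphi_i=(\nu\lrcorner\, dF^i)\hat\eta-dF^i\wedge(\nu\lrcorner\,\hat\eta)=(\nu\lrcorner\, dF^i)\hat\eta$, which is tangential, so a small modification (subtracting the harmonic tangential extension of its boundary trace, which does not change the Rayleigh quotient up to controlled error) produces forms in $\Acal^p_N(\om)$ admissible in the variational characterisation of $\rssig_{1,p}(\om)$.

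The second step is to sum the Rayleigh quotients over $i$. Using $d\varphi_i=-dF^i\wedge d\hat\eta$ together with the identities $\sum_i dF^i\wedge(X\lrcorner\cdot)+\sum_i(\nabla_X F^i)\cdot=X\cdot$ and the fact that $\sum_i|dF^i|^2=d+1$, the numerator $\sum_i\bigl(\|d\varphi_i\|^2_{L^2(\om)}+\|\delta\varphi_i\|^2_{L^2(\om)}\bigr)$ simplifies via Bochner--Weitzenb\"ock to
\[
\int_\om\bigl(|\nabla\hat\eta|^2+\langle W^{(p-1)}\hat\eta,\hat\eta\rangle\bigr)\,dV+\text{extrinsic correction},
\]
where the extrinsic correction term, arising from commuting $\nabla$ with the tangential components of $F$, equals $\int_\om\langle T^{(p-1)}\hat\eta,\hat\eta\rangle\,dV$ by the Gauss formula for an isometric immersion into Euclidean space. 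Meanwhile the denominator $\sum_i\|i^*\varphi_i\|^2_{L^2(\sig)}$ collapses, using $\sum_i(i^*dF^i\otimes i^*dF^i)=g_\sig+\nu^\flat\otimes\nu^\flat$ and $\nu\lrcorner\,\hat\eta=0$, to a multiple of $\|\eta\|^2_{L^2(\sig)}$.

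The third step is to apply the Raulot--Savo Reilly formula for $p$-forms to the expression $\int_\om(|\nabla\hat\eta|^2+\langle W^{(p-1)}\hat\eta,\hat\eta\rangle)\,dV$: it converts this interior integral into the boundary pairing $\rssig_{1,p-1}(\om)\,\|\eta\|^2_{L^2(\sig)}$ plus a boundary term quadratic in $\eta$ controlled below by $(c_p(\om)/p)\,\|\eta\|^2_{L^2(\sig)}$ by the $p$-convexity hypothesis (this is precisely the pointwise inequality used by Raulot--Savo in the scalar-coefficient case, applied here to the tangential $p$-form $(dF^i\wedge\eta)|_\sig$, and the factor $1/p$ arises from tracing the shape-operator lift $S^{(p)}$ against $p$-forms). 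The curvature hypothesis $W^{(p)}\geq T^{(p)}$ is then used to dominate the extrinsic correction by the intrinsic Bochner term for $W^{(p-1)}$ after applying the identity relating $W^{(p)}$ and $W^{(p-1)}$, yielding
\[
\rssig_{1,p}(\om)\geq \rssig_{1,p-1}(\om)+\frac{c_p(\om)}{p}.
\]

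The main obstacle I expect is Step~3: keeping careful track of the combinatorial factors that relate the Bochner operators $W^{(p-1)}$ and $W^{(p)}$ and the shape tensors $T^{(p-1)}$ and $T^{(p)}$ when one wedges with $dF^i$ and sums over $i$, so that the hypothesis $W^{(p)}\geq T^{(p)}$ is the one that falls out (rather than a stronger or weaker condition). For the equality statement with $\om=\B^{d+1}\subset\R^{d+1}$ and $\tfrac{d+3}{2}\leq p\leq d$, one checks directly from the explicit computation of $\stek^{\rs}_p$ on Euclidean balls (where all principal curvatures equal $1$, so $c_p=p$) that both sides of the inequality equal $p+1$, matching the equality cases of Theorem~\ref{rsthm1_3}(2) at consecutive orders $p-1$ and $p$.
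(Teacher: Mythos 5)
Your plan builds test $p$-forms $\varphi_i = dF^i\wedge\hat\eta$ out of a $\rssig_{1,p-1}$-eigenform $\eta$ and feeds them into the variational characterisation of $\rssig_{1,p}$. This has the direction of the inequality backwards. Plugging any admissible $p$-form $\varphi$ into the min-max formula
\[
\rssig_{1,p}(\om)=\inf_{V}\,\sup_{0\neq\varphi\in V}\,\frac{\|d\varphi\|_{L^2(\om)}^2+\|\delta\varphi\|_{L^2(\om)}^2}{\|i^*\varphi\|_{L^2(\sig)}^2 }
\]
yields an \emph{upper} bound $\rssig_{1,p}\leq R(\varphi)$. Your Steps 2--3 therefore produce an estimate of the form $\rssig_{1,p}\cdot(\text{denominator})\leq\rssig_{1,p-1}\|\eta\|^2-(\text{boundary term})+(\text{extrinsic correction})$, which is an upper bound for $\rssig_{1,p}$ in terms of $\rssig_{1,p-1}$, not the lower bound you want. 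There is no step in the argument that turns this around.

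The failure of the Neumann condition you flag -- that $\nu\lrcorner(dF^i\wedge\hat\eta)=(\nu\lrcorner dF^i)\hat\eta$ does not vanish -- is also a symptom of pointing the construction the wrong way. The inequality should be established by working with a $\rssig_{1,p}$-eigenform $\omega$ and its harmonic tangential extension $\hat\omega$, not with $\eta$. One applies the Raulot--Savo Reilly-type identity \emph{to $\hat\omega$ itself}, giving $\rssig_{1,p}\|\omega\|^2_{L^2(\sig)}=\int_\om(|\nabla\hat\omega|^2+\langle W^{\pp}\hat\omega,\hat\omega\rangle)\,dV+\text{(boundary term)}$; the boundary term is then bounded below by $(c_p/p)\|\omega\|^2$ using $p$-convexity. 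To control the interior integral, one contracts with the gradients: set $\psi_i:=\nabla F^i\lrcorner\hat\omega$. Because $\nu\lrcorner\hat\omega=0$, each $\psi_i$ satisfies $\nu\lrcorner\psi_i=-\nabla F^i\lrcorner(\nu\lrcorner\hat\omega)=0$, so the $\psi_i$ lie in $\Acal^{p-1}_N(\om)$ automatically -- no ad hoc modification is needed, in contrast to your $\varphi_i$. The variational principle applied to the $\psi_i$'s then gives $\rssig_{1,p-1}\sum_i\|i^*\psi_i\|^2\leq\sum_i(\|d\psi_i\|^2+\|\delta\psi_i\|^2)$, and the Bochner identity plus the hypothesis $W^{\pp}\geq T^{\pp}$ relates the right-hand side to $\int_\om(|\nabla\hat\omega|^2+\langle W^{\pp}\hat\omega,\hat\omega\rangle)$ after the extrinsic correction is absorbed. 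Combining the two estimates yields the stated lower bound. Your check of the equality case on the Euclidean ball is fine, but it does not rescue the structure of the main argument.
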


Note that when $m=d+1$, i.e., $\om$ is a Euclidean domain, the only relevant hypothesis is $p$-convexity since $W^{(p)}\equiv T^{(p)} \equiv 0$ in that case.

\subsection{Karpukhin's Dirichlet-to-Neumann operator and its spectrum}

In this subsection, $\om$ is assumed to be oriented.   Karpukhin's notion of the Dirichlet-to-Neumann operator on forms is a modification of that of Belishev and Sharafutdinov and is motivated by Maxwell's equations.  

\begin{defn}\label{dtnk}\cite{Ka2019} Given  $\eta\in \Acal^p(\sig)$, let $\teta\in \Acal^p(\om) $ be any solution of 
\begin{equation}
\label{eqteta}\begin{cases} \lap \teta=0 \\ i^*\teta=\eta\\\delta\teta=0
\end{cases}\end{equation}
Then Karpukhin's Dirichlet-to-Neumann operator $\kDtN_p:\Acal^p(\sig)\to \Acal^p(\sig)$ is defined by  \[\kDtN_p(\eta)=\nu\lrcorner \,d\teta.\]
\end{defn}
The author proves that $\kDtN_p$ is well-defined by first showing that the set of solutions of Equation~\eqref{eqteta} is an affine space with associated vector space $\Hcal_D(\om)$.
As was the case with $\rsDtN_0$, the operator $\kDtN_0$ coincides with $\DtN: C^\infty(\sig)\to C^\infty(\sig)$. 

Note that the condition $\nu\,\lrcorner\,\widehat{\eta}=0$ in Definition~\ref{dtnp} has been replaced here by the condition that $\teta$ be co-closed.  Observe that $i^*\omega=0 = \nu\,\lrcorner\,d\omega$ for all $\omega\in d(\Acal_D^{p-1}(\om))$, and the Hodge-Morrey-Friedrichs decomposition~\eqref{hmf} shows that $\Acal^p(\om)$ is the direct sum of $d(\Acal_D^{p-1}(\om))$ with the space of co-closed $p$-forms.

    The Hodge decomposition and the fact that all harmonic forms on a closed manifold are co-closed show that $\Acal^p(\sig)$ is the vector space direct sum of the subspaces of exact $p$-forms and of co-closed $p$-forms.  Karpukhin~\cite[Theorem 2.3]{Ka2019} showed 
\begin{itemize}
\item[(i)] $\kDtN_p$ is identically zero on the space of exact forms on $\sig$.
\item[(ii)]  The restriction to the space of co-closed forms on $\sig$ is a non-negative operator with compact resolvent and discrete spectrum.     Moreover, the kernel of the restricted operator has dimension $I_p$ given by
\begin{equation}\label{Ip}I_p:=\dim(i^*H^p(\om;\R)).\end{equation} 
\item[(iii)] $\kDtN_d \equiv 0$; thus the operator is of interest only on forms of degree at most $d-1$.
\end{itemize}
We will denote the eigenvalues of $\kDtN_p$ restricted to the space of co-closed forms by \[0\leq \ksig_{1,p}(\om)\leq \ksig_{2,p}(\om)\leq \dots\]   
and the non-zero eigenvalues by 
\[\hsigk_{k,p}(\om):=\ksig_{k+I_p,p}(\om).\]

\begin{ques}\label{ques: karpukhin pseudo} Is $\kDtN_p$ a pseudo-differential operator?
\end{ques}

\begin{ex}\cite{RaSa2014}, \cite{Ka2019}\label{pball} Let $\om=\B^{d+1}$ be the Euclidean unit ball of dimension $d+1$ and thus $\Sigma=\sph^d$.   Recall (Example~\ref{ex: ball}) that the Dirichlet-to-Neumann operator on $C^\infty(\sph^d)$ and the Laplace-Beltrami operator of $\sph^d$ have the same eigenspaces, namely the restrictions to $\sph^d$ of the homogeneous harmonic polynomials of degree $k$ for $k=0,1,2,\dots$.     
In the case of $p$-forms, somewhat similar relationships exist between the eigenspaces of the Hodge Laplacian of $\sph^d$ (computed by  Ikeda and Taneguchi  in 1978) and those of both $\DtNp$ and $\kDtN_p$ as we now describe.  
 
Let $P_{k,p}$ denote the space of all homogeneous polynomial $p$-forms of degree $k$.  (Here $\omega\in \Acal^p(\om)$ is said to be a homogeneous polynomial $p$-form of degree $k$ if, relative to the standard basis $\{dx^{i_1}\wedge\dots\wedge dx^{i_p}; \,1\leq 1_1 <\dots  i_p\leq d+1\}$ of $\Lambda^p(\R^{d+1})$, all its coefficient functions are homogeneous polynomials of degree $k$.)    
 Let \[P_{k,p}^{c\,c}= \{\omega \in P_{k,p}: \delta \omega =0\}.\] 
We first recall Ikeda and Taneguchi's computation of the eigenspaces of the Hodge Laplacian on $\sph^d$.    Recalling Notation~\ref{hodge_decomp}, let
\[H'_{k,p}=P_{k,p}^{c\,c}\cap \Hcal^p(\R^{d+1})\]
and 
\[H''_{k,p}= P_{k,p}^{c\,c}\cap \Acal^p_N(\R^{d+1}).\]
For $1\leq p\leq d-1$, the collection of subspaces $\{ i^*H'_{k,p},  i^*H''_{k,p} : k=1,2,\dots\}$ is a complete eigenspace decomposition for the Hodge Laplacian on $\Acal^p(\sph^d)$.  Moreover, the subspaces $i^*H'_{k,p}$, $k=1,2,\dots$ together span the space of exact forms, and the subspaces $i^*H''_{k,p}$ together span the co-exact forms. 

We first consider the co-exact subspaces.   Let $\eta \in  i^*H''_{k,p}$.  Since $H''_{k,p}$ consists of tangential co-closed  harmonic forms, we have $\widehat{\eta}=\teta $ in the notation of Definitions~\ref{dtnp} and \ref{dtnk}.  Thus $\DtNp$ and $\kDtN_p$ agree on this subspace.   For both operators, $i^*H''_{k,p}$ is an eigenspace with eigenvalue $p+k$. 

Next consider the exact subspaces.   It turns out that each $i^*H'_{k,p}$ is an eigenspace for $\DtNp$ with eigenvalue $(k+p-1)\frac{d+2k+1}{d+2k-1}$.  However, in contrast to the case of the co-closed forms, the tangential harmonic extensions of elements of $i^*H'_{k,p}$ are \emph{not} polynomial forms.   On the other hand, for all $k$, elements $\eta\in i^*H'_{k,p}$ have co-closed polynomial harmonic extensions $\teta$ lying in $H'_{k,p}$.    Since $d\teta=0$ and thus $\nu\,\lrcorner \,\teta=0$, the operator $\kDtN_p$ is identically zero on the exact forms, in agreement with statement (i) above.

Finally, for $p=d$, we have $\Acal^d(\sph^d) = d\,\Acal^{d-1}(S^d) \oplus \R\omega$, where $\omega$ is the volume form of $\sph^d$.   All three operators (the Hodge Laplacian, $\rsDtN_d$ and $\kDtN_d$) vanish on $\omega$ while the behaviour on the exact forms is the same as in the case $1\leq p\leq d-1$.   In particular, $\DtN^K_d\equiv 0$ in agreement with (iii).
 
 \end{ex}

The eigenvalues $\ksig_{k,p}$, $k=0,1,2\dots$, satisfy the variational characterisation (see \cite[Theorem 2.3]{Ka2019}):
\begin{equation}\label{kvar}
\ksig_{k,p}(\om)=\max_{E}\,\min_{\substack{\eta\perp E\\i^*\varphi=\eta}}\,\frac{\|d\varphi\|^2_{L^2(\om)}}{\|\eta\|^2_{L^2(\sig)}}.\end{equation}
where $E$ varies over all $k$-dimensional subspaces of $\Acal^p(\sig)$ consisting of co-closed $p$-forms.  The maximum is achieved when $E$ is spanned by the first $k$-eigenforms, $\eta$ is a $\ksig_{k,p}$-eigenfunction, and $\varphi$ satisfies Equations~\eqref{eqteta} (with $\varphi$ playing the role of $\teta$).  (Note that $d\varphi$ is independent of the choice of solution to Equations~\eqref{eqteta}.

\begin{thm}\label{kar_compare}\cite[Theorem 2.5]{Ka2019}. For $0\leq p\leq d-1$, one has 
\[\hsig_{k,p}(\om)\leq \hsigk_{k,p}(\om)\]
for all $k=1,2,\dots$
\end{thm}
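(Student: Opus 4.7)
The plan is to derive the inequality from a min-max argument after placing both variational characterisations on common ground as min-max problems over subspaces of the boundary space $\Acal^p(\sig)$. Applying Green's identity to the tangential harmonic extension $\widehat\eta$, which satisfies $\Delta\widehat\eta=0$, $i^*\widehat\eta=\eta$ and $\nu\lrcorner\widehat\eta=0$, yields
\[
\|d\widehat\eta\|_{L^2(\om)}^2+\|\delta\widehat\eta\|_{L^2(\om)}^2 = \langle\rsDtN_p\eta,\eta\rangle_{L^2(\sig)},
\]
so via the bijection $\eta\leftrightarrow\widehat\eta$ the Raulot--Savo variational becomes a min-max over $k$-dimensional subspaces $V\subset\Acal^p(\sig)$; the Karpukhin variational is already of this form, but restricted to subspaces of co-closed forms and with Rayleigh quotient $\|d\teta\|^2/\|\eta\|^2$.

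For a co-closed $\eta$ I would first compare the two numerators by analysing the difference $\psi:=\teta-\widehat\eta$, which lies in $\Acal^p_D(\om)$, is harmonic, and satisfies $\delta\psi=-\delta\widehat\eta$. A short double integration by parts, using $\delta d=\Delta-d\delta$, produces the identity
\[
\|d\teta\|^2 = \|d\widehat\eta\|^2-\|d\psi\|^2,
\]
whence $\langle\rsDtN_p\eta,\eta\rangle-\langle\kDtN_p\eta,\eta\rangle=\|\delta\widehat\eta\|^2+\|d\psi\|^2\ge 0$, with equality precisely when $\widehat\eta$ is a harmonic field (equivalently, when $\eta\in\ker\rsDtN_p$). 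The proof is then completed by exhibiting, for each $k\ge 1$, a $(\beta_p+k)$-dimensional test subspace $V=N\oplus F'\subset\Acal^p(\sig)$ on which the Raulot--Savo Rayleigh quotient is at most $\hsigk_{k,p}$, where $N:=i^*\Hcal^p_N(\om)$ is the $\beta_p$-dimensional boundary trace of the kernel of $\rsDtN_p$ and $F'$ is a $k$-dimensional complement. Since $\rsDtN_p$ vanishes on $N$ and $N$ is $L^2(\sig)$-orthogonal to $F'$, the bound need only be verified on $F'$.

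The main obstacle is the choice of $F'$. The pointwise comparison just established goes in the wrong direction, so one cannot simply take $F'$ to be the span of the first $k$ Karpukhin eigenforms on co-closed forms. Instead, the construction must exploit the Hodge decomposition $\Acal^p(\sig)=d\Acal^{p-1}(\sig)\oplus\delta\Acal^{p+1}(\sig)\oplus\Hcal^p(\sig)$ on the closed boundary to replace or augment those co-closed directions with test forms whose boundary values are exact. The ball computation of Example~\ref{pball} illustrates the mechanism: the Raulot--Savo eigenforms with exact boundary values, with eigenvalues $(k+p-1)\tfrac{d+2k+1}{d+2k-1}$, interleave with the co-exact eigenvalues $p+k$ in precisely the way needed for the $k$-th Raulot--Savo nonzero eigenvalue never to exceed the $k$-th Karpukhin one. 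Making this interlacing quantitative in full generality --- most plausibly by pairing each co-closed Karpukhin eigenform $\eta$ with a canonically associated exact test form (for instance transplanted from a $\kDtN_{p-1}$-eigenform via $d$, or built from $i^*\delta\widehat\eta$ after removal of its harmonic part) --- and verifying the Rayleigh quotient bound through another Green-identity computation comparable in length to the one of the second paragraph, is the technical heart of the proof.
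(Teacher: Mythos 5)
Your first two paragraphs carry out a correct and clean computation: the reformulation of the Raulot--Savo min--max over subspaces of $\Acal^p(\sig)$ is valid, the orthogonality $\langle d\teta,d\psi\rangle=0$ (via Green's formula, $\delta d\teta=0$, and $i^*\psi=0$) gives $\|d\teta\|^2=\|d\widehat\eta\|^2-\|d\psi\|^2$, and hence $\langle\rsDtN_p\eta,\eta\rangle-\langle\kDtN_p\eta,\eta\rangle=\|\delta\widehat\eta\|^2+\|d\psi\|^2\ge 0$. Your diagnosis that this points the wrong way for the theorem is exactly right. One local error, though: equality in this identity holds iff $\delta\widehat\eta=0$ (equivalently $\psi\in\Hcal^p_D(\om)$, so that $\widehat\eta$ itself solves the Karpukhin boundary value problem), \emph{not} iff $\widehat\eta$ is a harmonic field or $\eta\in\ker\rsDtN_p$. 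Indeed $\delta\widehat\eta=0$ does not force $d\widehat\eta=0$; for the Euclidean ball the co-exact eigenforms with positive eigenvalue all realise equality while lying well outside $\ker\rsDtN_p$.

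The more serious issue is that the proof is not completed. Having correctly reduced the problem to finding a $k$-dimensional complement $F'\perp N$ in $\Acal^p(\sig)$ on which the Raulot--Savo Rayleigh quotient is bounded by $\hsigk_{k,p}$, the remaining paragraph is speculation. The interlacing mechanism you extract from Example~\ref{pball} relies on features special to the ball: there $\rsDtN_p$ is block-diagonal with respect to the Hodge decomposition of $\Acal^p(\sph^d)$, and on co-exact forms it coincides with $\kDtN_p$, so the Raulot--Savo spectrum is literally the Karpukhin spectrum shuffled together with an explicitly smaller ``exact'' sequence. Neither property is available on a general $\om$: $\rsDtN_p$ need not preserve the Hodge decomposition of $\Acal^p(\sig)$, and even restricted to co-closed forms one has $\langle\rsDtN_p\eta,\eta\rangle>\langle\kDtN_p\eta,\eta\rangle$ generically. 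The candidate constructions you float for $F'$ --- pushing $\kDtN_{p-1}$-eigenforms forward by $d$, or manufacturing exact forms from $i^*\delta\widehat\eta$ --- are not carried out, and I do not see how to verify the needed Rayleigh quotient bound $\le\hsigk_{k,p}$ for them. As written, the argument stops at precisely the point you flag as the ``technical heart,'' so the proposal leaves a genuine gap.
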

The proof compares the variational characterisations of eigenvalues.

Girouard, Karpukhin, Levitin and Polterovich proved the following analogue of Equation~\eqref{eq: sigma = lambda +O(1)}.  Recall that the Hodge Laplacian $\lap_\sig^{\pp}$ on $\sig=\partial\om$ leaves invariant the subspace of co-closed $p$-forms in $\Acal^p(\sig)$.

\begin{thm}\cite[Theorem 5.6]{GKLP2021}\label{thm:ksig hodge}
 Let 
 \[0\leq \widetilde{\lambda}_0(\sig)\leq \widetilde{\lambda}_1(\sig)\leq \dots\]
 denote  the eigenvalues of $\lap_\sig^{\pp}$ restricted to the subspace of co-closed forms in $\Acal^p(\sig)$.  Then there exists a constant $C$ such that 
\begin{equation}\label{eq:karp p weyl}\left|\ksig_{k,p}(\om)-\sqrt{\widetilde{\lambda}_k(\sig)}\right|\leq C.\end{equation}
\end{thm}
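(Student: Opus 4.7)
The plan is to show that $\kDtN_p$ restricted to the co-closed $p$-forms on $\sig$ shares the principal symbol of $\sqrt{\lap_\sig^{(p)}}$ restricted to the same subspace, and then promote this agreement to the uniform $O(1)$ bound via Weyl's law combined with a Dirichlet--Neumann-type bracketing argument.

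First I would compute the principal symbol of $\kDtN_p$ on co-closed forms near $\sig$. Working in Fermi coordinates $(x,r)$ in a collar $\sig \times [0,\eps)$, for high-frequency co-closed boundary data $\eta \in \Acal^p(\sig)$ I would look for the co-closed harmonic extension $\teta$ in the form $e^{-r\sqrt{\lap_\sig^{(p)}}}\eta$ plus lower-order tangential and normal corrections chosen to enforce $\lap\teta = 0$ and $\delta\teta=0$ simultaneously. The key point is that since $\eta$ is already co-closed on $\sig$, no leading-order tangential correction is forced by the constraint $\delta\teta=0$, so the normal derivative $\nu \lrcorner d\teta$ at $r=0$ produces $\sqrt{\lap_\sig^{(p)}}\eta$ as its principal part, matching that of $\sqrt{\lap_\sig^{(p)}}$.

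Second, I would invoke Weyl's law for $\lap_\sig^{(p)}$ on the co-closed subspace (a standard consequence of the ellipticity of the Hodge Laplacian and its preservation of the Hodge decomposition), which gives a precise leading-order asymptotic for $\widetilde{\lambda}_k(\sig)$. Upgrading the symbol matching to the stronger $O(1)$ bound requires showing that $\kDtN_p - \sqrt{\lap_\sig^{(p)}}$ is an operator of order zero on the co-closed subspace; then the standard perturbation estimate $|\mu_k(A+B)-\mu_k(A)| \le \|B\|_{\mathrm{op}}$ for self-adjoint operators with compact resolvent yields the uniform constant.

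The main obstacle is that it is not known whether $\kDtN_p$ is a pseudodifferential operator (Question~\ref{ques: karpukhin pseudo}), so one cannot directly invoke symbol calculus to conclude the difference is of order zero. I would circumvent this by working with the quadratic form in \eqref{kvar} and adapting the Dirichlet--Neumann bracketing of Subsection~\ref{subsec:Dir Neum bracketing} to co-closed $p$-forms. Concretely, given a neighborhood of $\sig$ isometric to a cylinder $\sig\times[0,L]$ (or, in general, using that only a neighborhood of $\sig$ matters for the principal behavior, as in Theorem~\ref{thm: hislop lutzer}), one computes the mixed Steklov-type eigenvalues for $p$-forms on the cylinder by separation of variables in the co-closed eigenbasis of $\lap_\sig^{(p)}$. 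The eigenvalues of $\kDtN_p$ on the cylinder take the form $\sqrt{\widetilde{\lambda}_k(\sig)}\tanh(\sqrt{\widetilde{\lambda}_k(\sig)} L/2)$ and $\sqrt{\widetilde{\lambda}_k(\sig)}\coth(\sqrt{\widetilde{\lambda}_k(\sig)} L/2)$, exactly as in Example~\ref{example: cylinder}. Since $\tanh(t),\coth(t) = 1 + O(t^{-\infty})$, each bracketing bound differs from $\sqrt{\widetilde{\lambda}_k(\sig)}$ by a uniformly bounded amount, yielding the desired constant $C$. Extending this from the cylindrical model to general $(\om,g)$ requires a stability argument for the form \eqref{kvar} under local perturbations supported away from $\sig$, which is the most technically delicate step because one must control how the co-closed constraint on $\teta$ interacts with such interior perturbations.
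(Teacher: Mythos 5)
Your workaround has a genuine gap at the step where you pass from a cylindrical boundary neighborhood to a general $(\om,g)$. The bracketing argument on a cylinder $\sig\times[0,L]$, by separation of variables in the co-closed eigenbasis of $\lapsp$, does give $\ksig_{k,p}=\sqrt{\widetilde\lambda_k(\sig)}+O(k^{-\infty})$ when such a collar exists; this is a forms analogue of Example~\ref{cylindricalend}. But for a generic $\om$ the metric near $\sig$ is \emph{not} a product, and your proposed reduction via ``only a neighborhood of $\sig$ matters, as in Theorem~\ref{thm: hislop lutzer}'' fails on two counts. First, that theorem requires the boundary \emph{neighborhoods}, not merely the boundaries, to be isometric, and a generic $\om$ admits no neighborhood of $\sig$ isometric to a cylinder, so the hypothesis cannot even be arranged. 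Second, the proof of Theorem~\ref{thm: hislop lutzer} rests on $\DtN$ being a pseudodifferential operator whose full symbol is determined near $\sig$ --- exactly the structure that Question~\ref{ques: karpukhin pseudo} tells you is unavailable for $\kDtN_p$ --- so the appeal is circular. Relatedly, you locate the remaining delicacy as controlling ``local perturbations supported away from $\sig$,'' but the bracketing inequality $\sigma_k^N(A)\le\sigma_k\le\sigma_k^D(A)$ already disposes of the far interior; the real difficulty is that the collar metric \emph{near} $\sig$ differs from a product, and quantifying that for the mixed problems on $p$-forms would require a forms version of the Provenzano--Stubbe/CoGiHa-type analysis (compare Theorem~\ref{comparison}), with all the attendant curvature and rolling-radius inputs, rather than exact separation of variables. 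That analysis is essentially the content of the theorem, not a ``stability'' afterthought.

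For contrast, the proof in \cite{GKLP2021} avoids both symbol calculus and any cylindrical model. It establishes a Pohozaev--H\"ormander-type quadratic form identity for co-closed extensions $\teta$ of $\eta\in\Acal^p(\sig)$, comparing $\|\nu\lrcorner\,d\teta\|^2_{L^2(\sig)}$ to $\|\nabla_\sig\eta\|^2_{L^2(\sig)}$ up to terms bounded in terms of the geometry of $\om$ in a neighborhood of $\sig$. This produces a uniformly bounded remainder at the level of forms, and a standard abstract spectral comparison lemma then upgrades it to $|\ksig_{k,p}(\om)-\sqrt{\widetilde\lambda_k(\sig)}|\le C$. Your first route (symbol matching plus perturbation of self-adjoint operators) correctly names the goal but cannot start, since boundedness of $\kDtN_p-\sqrt{\lapsp}$ is not known; your second route works only on cylinders. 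The missing ingredient in both is precisely the quadratic form identity on a non-product collar.
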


As a consequence, they obtained the following Weyl law \cite[Theorem 5.8]{GKLP2021}:
\begin{equation}\label{eq: Weyl Karp p}
  \#\{\ksig_{k,p}(\om,g)<\sigma\}=\binom{d-1}{p}\frac{|\B^d||\sig|_g}{(2\pi)^d}\,\sigma^d + o(\sigma^{d})  
\end{equation}
where $d+1=\dim(\om)\geq 2$.

\begin{ques}\label{ques: Karp Weyl error}
Can the error bound in the Weyl Law~\eqref{eq: Weyl Karp p} for the asymptotics  of $\ksig_{k,p}$ be improved to $O(\sigma^{d-1})$?
\end{ques}
This would follow from an affirmative answer to Question~\ref{ques: karpukhin pseudo}.  See \cite[Remark 5.9]{GKLP2021} for further comments.

Karpukhin remarks that many of the eigenvalue inequalities for the Raulot-Savo eigenvalues $\rssig_{k,p}$ have analogues for $\ksig_{k,p}$ and in fact can be reproven using Theorem~\ref{kar_compare}.    He illustrates with the following generalised and strengthened version of Theorem~\ref{yyhps}.

\begin{thm}\label{khps}\cite[Theorem 2.7]{Ka2019}  Let $\om$ be a compact, oriented $(d+1)$-dimensional Riemannian manifold with smooth boundary $\sig$.    Then for every $j,k\in \Z^+$ and every $p\in\{0,1,\dots, d-1\}$, we have 
\[\hsigk_{j,p}(\om)\hsigk_{k, d-1-p}(\om) \leq \lambda^{c\,c}_{I_p+j+k -1 +\beta_{d-p} }(\sig).\]
Here $0\leq \lambda^{c\,c}_{1,p}(\sig)\leq \lambda^{c\,c}_{2,p}(\sig)\leq\dots$ are the eigenvalues of the Hodge Laplacian of $\sig$ acting on the space of co-closed $p$-forms.

If $j=k=1$, then equality holds for all $p\in\{0,1,\dots, d-1\}$ when $\om$ is a Euclidean unit ball.
\end{thm}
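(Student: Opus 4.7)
Plan: I would adapt the Yang--Yu proof of Theorem~\ref{yyhps} to Karpukhin's setting, using the Hodge star as the higher-dimensional substitute for the classical harmonic-conjugate construction of Hersch--Payne--Schiffer. The driving observation is the following: if $\eta \in \Acal^p(\sig)$ is co-closed and $\teta$ is its harmonic, co-closed extension as in Definition~\ref{dtnk}, then $d\teta$ is a harmonic field on $\om$ of degree $p+1$ (it is closed trivially, and co-closed because $\lap\teta = d\delta\teta + \delta d\teta = 0$ together with $\delta\teta = 0$ forces $\delta d\teta = 0$). Consequently $*d\teta$ is a harmonic field of complementary degree $d-p$, and the pointwise identity $|*d\teta| = |d\teta|$ gives $\|*d\teta\|_{L^2(\om)}^2 = \|d\teta\|_{L^2(\om)}^2$. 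This plays the role of the Cauchy--Riemann relation $|d\widehat{u}^*|=|d\widehat{u}|$ used in the classical argument and is what will link the Rayleigh quotients of $\kDtN_p$ and $\kDtN_{d-1-p}$.

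The first step is to convert $*d\teta$ into the exterior derivative of a co-closed form so that the Karpukhin operator in the complementary degree appears. Using the Hodge--Morrey--Friedrichs decomposition~\eqref{hmf}, one writes
\[*d\teta \;=\; d\widetilde{\alpha} \;+\; \kappa,\]
where $\widetilde{\alpha}$ is a co-closed $(d-1-p)$-form on $\om$ and $\kappa \in \Hcal^{d-p}_N(\om)$. The space of obstructions $\kappa$ has dimension $\beta_{d-p}(\om)$; this accounts for the $\beta_{d-p}$ shift in the index on the right-hand side. Applying the boundary identity $i^*(*\omega) = \pm *_\sig(\nu\,\lrcorner\,\omega)$ to $\omega = d\teta$, one verifies that $\kDtN_{d-1-p}(i^*\widetilde{\alpha})$ equals, modulo the cohomological piece coming from $\kappa$, the $*_\sig$ of $\kDtN_p(\eta)$, so that $\widetilde{\alpha}$ plays the role of a \emph{Hodge conjugate} of $\teta$ relative to Karpukhin's operators.

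The second step is to build a trial subspace $V \subset \Acal^p(\sig)$ consisting of co-closed forms, of dimension $I_p + j + k - 1 + \beta_{d-p}$, assembled from: (i) a basis of the $I_p$-dimensional kernel of $\kDtN_p$ on co-closed $p$-forms; (ii) $\beta_{d-p}$ forms produced from the cohomological obstructions in Step~1; (iii) the boundary restrictions $i^*\widetilde{\eta}_1,\ldots,i^*\widetilde{\eta}_{j-1}$ of harmonic co-closed extensions of the first $j-1$ non-kernel eigenforms of $\kDtN_p$; and (iv) the Hodge conjugates $i^*\widetilde{\beta}_1,\ldots,i^*\widetilde{\beta}_k$ obtained by running Step~1 in the opposite direction (starting from the first $k$ non-kernel eigenforms of $\kDtN_{d-1-p}$ and producing co-closed $p$-forms on $\sig$). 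Apply the min--max characterization of $\lambda^{c\,c}_{I_p+j+k-1+\beta_{d-p},\,p}(\sig)$ to $V$: for $\zeta\in V$, expand in this basis and bound the Dirichlet energy of $\zeta$ on $\sig$ using the identity $\|d\teta\|^2_{L^2(\om)} = \|d\widetilde{\alpha}\|^2_{L^2(\om)} + \|\kappa\|^2_{L^2(\om)}$ together with the boundary formula from Step~1. This yields a Cauchy--Schwarz-type product of Rayleigh quotients for $\kDtN_p$ and $\kDtN_{d-1-p}$, whose supremum over $V$ is exactly $\hsigk_{j,p}(\om)\cdot\hsigk_{k,d-1-p}(\om)$.

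The hard part will be the cohomological bookkeeping in Steps~1--2: showing that the spaces of dimensions $I_p$, $\beta_{d-p}$ and $j+k-1$ can be assembled into a trial subspace of the advertised total dimension without collisions, and verifying that the Rayleigh quotient estimate produces the product $\hsigk_{j,p}(\om)\cdot\hsigk_{k,d-1-p}(\om)$ rather than a coarser bound. For the equality case, take $\om = \B^{d+1}$ with $j = k = 1$ and $1 \le p \le d-1$: Example~\ref{pball} gives $I_p = 0$ and $\beta_{d-p} = 0$ (since $H^q(\B^{d+1};\R) = 0$ for $q \ge 1$), together with $\hsigk_{1,p}(\B^{d+1}) = p+1$ and $\hsigk_{1,d-1-p}(\B^{d+1}) = d-p$ coming from the eigenspaces $i^*H''_{1,p}$ and $i^*H''_{1,d-1-p}$; their product $(p+1)(d-p)$ is exactly the first non-zero Hodge--Laplace eigenvalue $\lambda^{c\,c}_{1,p}(\sph^d)$ on co-closed $p$-forms, so the inequality is saturated. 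The boundary cases $p=0$ and $p=d-1$ are handled similarly using $\sigma_1(\B^{d+1}) = 1$ and Hodge duality on $\sph^d$.
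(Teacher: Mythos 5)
The variational strategy in Step~2 runs in the wrong direction. You construct a trial subspace $V$ of co-closed $p$-forms on $\sig$, of dimension $m=I_p+j+k-1+\beta_{d-p}$, out of Steklov eigenforms and cohomological pieces, apply min--max for $\lambda^{c\,c}_{m}(\sig)$ to $V$, and claim to bound $\sup_{\zeta\in V}\|d_\sig\zeta\|_{L^2(\sig)}^2/\|\zeta\|^2_{L^2(\sig)}$ above by $\hsigk_{j,p}(\om)\hsigk_{k,d-1-p}(\om)$. But the Courant--Fischer min--max over an $m$-dimensional trial subspace yields $\lambda^{c\,c}_{m}(\sig)\leq\sup_{\zeta\in V}\|d_\sig\zeta\|_{L^2(\sig)}^2/\|\zeta\|^2_{L^2(\sig)}$; if your bound on that supremum held, the conclusion would be $\lambda^{c\,c}_{m}(\sig)\leq\hsigk_{j,p}(\om)\hsigk_{k,d-1-p}(\om)$, which is the \emph{reverse} of the inequality to be proved. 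In the Hersch--Payne--Schiffer / Yang--Yu / Karpukhin scheme the trial subspaces enter the variational characterisations of the \emph{Steklov} eigenvalues $\hsigk_{j,p}$ and $\hsigk_{k,d-1-p}$: one populates them with suitable linear combinations of the low eigenforms of the Hodge Laplacian on co-closed $p$-forms on $\sig$ (after discarding spaces of dimensions $I_p$ and $\beta_{d-p}$ so that the Hodge-star conjugation produces valid co-closed harmonic extensions), uses a Hersch-type linear-algebra argument to satisfy the orthogonality constraints for both Steklov quotients simultaneously, and then invokes Cauchy--Schwarz together with $\|d\teta\|_{L^2(\om)}=\|{*}d\teta\|_{L^2(\om)}$ to control the \emph{product} of the two Steklov Rayleigh quotients by the largest boundary Hodge eigenvalue used, which is at most $\lambda^{c\,c}_{m}(\sig)$. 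Your Step~1 ingredients (Hodge-star conjugation, the degree shift $p\mapsto d-1-p$, the cohomological bookkeeping) and the equality computation for the ball are all in the right spirit; the error is in which side of the min--max the trial subspace belongs to.

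Two smaller corrections to Step~1. Since $*d\teta$ is itself a harmonic field, it sits entirely in the $\Hcal^{d-p}(\om)$ summand of the Hodge--Morrey--Friedrichs decomposition~\eqref{hmf}, which therefore does not split it further; the decomposition you actually need is the Friedrichs decomposition of the harmonic fields, $\Hcal^{d-p}(\om)=\bigl(\Hcal^{d-p}(\om)\cap d\Acal^{d-1-p}(\om)\bigr)\oplus\Hcal^{d-p}_N(\om)$, and you should also argue that the primitive $\widetilde\alpha$ of the exact part may be chosen co-closed (and hence harmonic), so that $\kDtN_{d-1-p}(i^*\widetilde\alpha)$ can be read off from $\widetilde\alpha$. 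Moreover, the boundary identity $i^*(*\omega)=\pm{*}_\sig(\nu\lrcorner\omega)$ applied to $\omega=d\teta$ gives $d_\sig(i^*\widetilde\alpha)=\pm{*}_\sig\kDtN_p(\eta)$, while the companion identity $\nu\lrcorner(*\omega)=\pm{*}_\sig(i^*\omega)$ gives $\kDtN_{d-1-p}(i^*\widetilde\alpha)=\pm{*}_\sig\bigl(d_\sig\eta\bigr)$ modulo $\kappa$. Your statement that $\kDtN_{d-1-p}(i^*\widetilde\alpha)$ equals ``the $*_\sig$ of $\kDtN_p(\eta)$'' conflates these; the crossed structure -- normal trace of one conjugate equals the boundary exterior derivative of the other -- is precisely what allows the boundary Hodge eigenvalue to cap the product, and it must be kept straight.
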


Observe that Theorem~\ref{yyhps} follows from the case $p=0$ of Theorem~\ref{khps} together with Theorem~\ref{kar_compare}.    In view of the fact that the inequality in Theorem~\ref{khps} is sharp when $j=k=1$, it is interesting to ask whether other inequalities for the Raulot-Savo eigenvalues have sharp versions for Karpukhin's eigenvalues.

\subsubsection{Conformally invariant case: $d=2p+1$}~

Suppose that the dimension $d+1$ of $\om$ is even and let $p=\frac{d-1}{2}$.  Then, analogous to the case of the Steklov spectrum on functions in dimension two, the variational characterisation of eigenvalues~\eqref{kvar} shows that the eigenvalues $\ksig_{k,p}$ are invariant under conformal changes of metric provided the conformal factor is trivial on the boundary $\sig$.

Motivated by the literature on optimizing Steklov eigenvalues on surfaces discussed in Section~\ref{sec:existence}, Karpukhin proposes:

\begin{prob}\cite{Ka2019}\label{prob:karp} Given a closed oriented Riemannian manifold $(\sig,h)$ of dimension $d=2p+1$, let $[\sig,h]_m$ denote the collection of all orientable Riemannian manifolds $(\om,g)$ with $\partial \om =\sig$, $g_{|\sig}=h$ and $\beta_{d-p}(\om)=m$.  For fixed $m$ and $k$, investigate \[\sup\,\{\ksig_{k,p}(\om,g): (\om,g)\in [\sig,h]_m\}.\] 
\end{prob}

The following theorem shows that the supremum is always finite.  

\begin{thm}\cite[Theorem 2.11]{Ka2019}\label{khpsp} Suppose $d=2p+1$.   Then with the notation and hypotheses of Theorem~\ref{khps}, we have
 \[\hsigk_{k,p}(\om)^2\leq \lambda^{c\,c}_{I_p+2k -1 +\beta_{p+1} }(\sig).\]
 Moreover, when $k\leq \frac{1}{2}\binom{2p+2}{p+1}$, equality holds for the Euclidean unit ball.

\end{thm}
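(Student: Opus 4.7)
The first assertion is an immediate specialization of Theorem~\ref{khps}. When $d = 2p+1$, one has $d-1-p = p$ and $d-p = p+1$, so setting $j = k$ in the conclusion of Theorem~\ref{khps} yields exactly
\[\hsigk_{k,p}(\om)^2 \leq \lambda^{c\,c}_{I_p+2k-1+\beta_{p+1}}(\sig),\]
with no additional argument required. The content of the theorem is therefore the sharpness of this specialization for the Euclidean ball.

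For the equality statement on $\B^{d+1}$ with $d = 2p+1$, the plan is a direct verification using the spectral information collected in Example~\ref{pball}. Since $\B^{d+1}$ is contractible, $\beta_{p+1}(\B^{d+1}) = 0$ and (assuming $p\geq 1$) also $I_p = \dim(i^*H^p(\B^{d+1};\R)) = 0$, so the index on the right-hand side reduces to $2k-1$. By Example~\ref{pball}, the nonzero spectrum of $\kDtN_p$ on co-closed $p$-forms of $\sph^d$ is exhausted by the eigenvalues $p+m$ on the subspaces $i^*H''_{m,p}$ for $m = 1, 2, \ldots$, and on the \emph{same} subspaces the Hodge Laplacian $\lap_\sig^{\pp}$ restricted to co-closed forms acts by $(p+m)(d-1-p+m)$ (a standard consequence of Ikeda--Taneguchi). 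The crucial observation is that when $d = 2p+1$ this second expression collapses to $(p+m)^2$, exactly the square of the Karpukhin eigenvalue. Thus, eigenspace by eigenspace,
\[\big(\ksig_{\cdot,p}(\B^{d+1})\big)^2 \;=\; \lambda^{c\,c}_{\cdot,p}(\sph^{d}),\]
and so equality in the theorem becomes an assertion about matching \emph{indices}.

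The remaining task is combinatorial: one must show that the $k$th Karpukhin eigenvalue $\hsigk_{k,p}(\B^{d+1})$ equals $p+1$ precisely when the $(2k-1)$st co-closed Laplace eigenvalue $\lambda^{c\,c}_{2k-1,p}(\sph^d)$ equals $(p+1)^2$. Since both operators share the same eigenspace decomposition, this reduces to comparing the partial-sum count $\sum_{j=1}^m N_{j,p}$ of multiplicities $N_{m,p} := \dim i^*H''_{m,p}$ against the doubled index $2k-1$. I expect the hypothesis $k \leq \tfrac{1}{2}\binom{2p+2}{p+1}$ to correspond exactly to the condition $2k - 1 \leq N_{1,p}$, which places both $\hsigk_{k,p}$ and $\lambda^{c\,c}_{2k-1,p}$ within the first nontrivial eigenspace $i^*H''_{1,p}$; equality then follows trivially as $p+1$ versus $(p+1)^2$. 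The main obstacle will be bookkeeping the multiplicity $N_{1,p}$ on $\sph^{2p+1}$ and verifying $N_{1,p} = \binom{2p+2}{p+1}$ (which reflects the fact that elements of $H''_{1,p}$ are degree-$1$ co-closed polynomial $p$-forms on $\R^{2p+2}$, a space whose dimension is governed by this binomial coefficient via the Euler vector field construction). Once that count is in hand, the equality statement and its precise range follow without further difficulty.
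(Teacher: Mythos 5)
Your derivation of the inequality matches the survey exactly: the paper records only that the inequality is immediate from Theorem~\ref{khps}, which is precisely your one-line specialization $j=k$, $d-1-p=p$, $d-p=p+1$. The equality part, which the survey does not reprove but cites from \cite{Ka2019}, is where your argument carries real content, and the structure you lay out is right. For $\om=\B^{d+1}$ with $p\geq 1$ you correctly get $I_p=\beta_{p+1}=0$; Example~\ref{pball} gives $\kDtN_p$-eigenvalue $p+m$ on $i^*H''_{m,p}$; Ikeda--Taniguchi gives the co-exact Hodge eigenvalue $(p+m)(d-1-p+m)$, which collapses to $(p+m)^2$ when $d=2p+1$; and since for $1\leq p\leq d-1$ there are no harmonic $p$-forms on $\sph^d$, the co-closed spectrum is exactly the co-exact spectrum and both operators share the eigenspaces $\{i^*H''_{m,p}\}$. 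Equality is then purely an index-matching question, and the sharp condition is that both $k$ and $2k-1$ fall within the first eigenspace, i.e.\ $2k-1\leq N_{1,p}$ where $N_{1,p}=\dim i^*H''_{1,p}$.

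The gap is the final step. You write $N_{1,p}=\binom{2p+2}{p+1}$ as something you ``expect,'' but this is exactly the quantitative content that produces the threshold in the theorem, and as stated your proof is not complete without it. The fact is true and not hard, but it does need to be said: the first co-exact eigenspace of $p$-forms on $\sph^{2p+1}$ consists of the Killing $p$-forms, which arise as $\iota_{\mathbf{r}}\alpha|_{\sph^{2p+1}}$ for constant $(p+1)$-forms $\alpha$ on $\R^{2p+2}$, and this assignment is injective, so the dimension is $\dim\Lambda^{p+1}(\R^{2p+2})^*=\binom{2p+2}{p+1}$. Since the central binomial coefficient $\binom{2p+2}{p+1}$ is even for $p\geq 1$, the condition $2k-1\leq N_{1,p}$ is equivalent to $k\leq \tfrac{1}{2}\binom{2p+2}{p+1}$, which closes the argument; and for $k=\tfrac{1}{2}\binom{2p+2}{p+1}+1$ one gets $\hsigk_{k,p}=p+1$ but $\lambda^{c\,c}_{2k-1,p}=(p+2)^2$, showing the threshold is sharp. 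One further small caveat you already flagged: your reduction $I_p=0$ requires $p\geq 1$; the degenerate case $p=0$, $d=1$ (the disk) works out but uses $I_0=1$ and needs to be handled separately.
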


The inequality is immediate from Theorem~\ref{khps}.

\begin{conj}\label{conjs:karpuhkin}\cite{Ka2019}~
\begin{itemize}
\item The inequality in Theorem~\ref{khpsp} is sharp for all $k$.
\item When $k\leq \frac{1}{2}\binom{2p+2}{p+1}$, equality holds only for the Euclidean unit ball.
\end{itemize}

\end{conj}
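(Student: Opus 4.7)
The plan has two quite different components, reflecting the two parts of the conjecture.

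For the sharpness assertion (part 1), I would pursue a homogenisation/cylindrical-extension construction in the spirit of Subsection~\ref{subsec:homolarge} and Example~\ref{cylindricalend}, generalised from functions to co-closed $p$-forms. The first step is to compute $\stek(\kDtN_p)$ explicitly on a product cylinder $C_L=[0,L]\times(\sig,h)$ with boundary on one end (or both, with a topological cap attached so that $\beta_{d-p}=m$). On such a cylinder, separation of variables should decompose co-closed $p$-forms into $\varphi(t)\,\eta(x)+\psi(t)\,dt\wedge\zeta(x)$ where $\eta,\zeta$ are co-closed eigenforms of $\lap_\sig^{(p)}$ on $\sig$; the harmonic/co-closed conditions on $\om$ then reduce to ODEs in $t$, yielding eigenvalues of the form $\sqrt{\tilde\lambda_k(\sig)}\tanh(\sqrt{\tilde\lambda_k(\sig)}L/2)$ and $\coth$-analogues, exactly parallel to Example~\ref{example: cylinder}. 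Letting $L\to\infty$ would give $\hsigk_{k,p}(C_L)\to\sqrt{\tilde\lambda_k^{\,cc}(\sig)}$, and squaring yields the claimed bound. A separate bookkeeping step, using a cap attachment that does not disturb the behaviour near $\sig$, is needed to match the prescribed $I_p$ and $\beta_{p+1}$. Alternatively, one could try a $p$-form analogue of Karpukhin–Stern's concentration arguments (Theorem~\ref{thm.KS asymptotics}), using the conformal invariance of $\kDtN_p$ in the case $d=2p+1$ to transplant maximising metrics from a closed manifold to a domain.

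For the rigidity assertion (part 2), my approach is to unwind the proof of Theorem~\ref{khps} in the equality case. Recall that Theorem~\ref{khps} is proved by an HPS-type test-form argument using the Hodge-$*$-pairing between degrees $p$ and $d-1-p$ (which coincide when $d=2p+1$), combined with a centre-of-mass argument placing the boundary co-closed eigenforms in a distinguished position (as in the ball Example~\ref{pball}). In the equality case, the test forms must themselves be genuine co-closed $\ksig_{k,p}$-eigenforms whose harmonic-$\delta$-closed extensions $\teta$ satisfy $*d\teta=d(\widetilde{*\eta})$ pointwise in $\om$. Together with the Hersch-type normalisation $\sum_i |\tilde\eta_i|^2\equiv\mathrm{const}$ on $\sig$, this should yield a free-boundary $p$-harmonic-type map into a Euclidean ball whose coordinate forms are first $\kDtN_p$-eigenforms; the constraint $k\leq\tfrac12\binom{2p+2}{p+1}$ is precisely the range in which the multiplicity bookkeeping on the ball (via $P_{1,p}^{cc}$) leaves no room for additional eigenforms. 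A Fraser–Schoen-type uniqueness argument (as in Theorem~\ref{first implies embed}) should then force $\om$ to be a round ball: the $\teta$ are linear-coordinate forms, hence parallel on $\R^{d+2}$, which pins down the ambient geometry.

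The hard part will be the rigidity step, for two reasons. First, even in the functional case $p=0$ the analogous rigidity (characterising the disk/ball in Weinstock-type equalities) is delicate and relies on very special features of harmonic functions and conformal maps; adapting the Fraser–Schoen machinery to co-closed $p$-forms requires a substitute for the Riemann mapping theorem and a good regularity theory for the resulting overdetermined boundary value problem on forms. Second, the precise threshold $k\leq\tfrac12\binom{2p+2}{p+1}$ must be matched by the argument: below it one expects rigidity, but at and above it one must either produce non-ball extremisers or else show the inequality becomes strict. I would first test the strategy in the lowest interesting case $p=1$, $d=3$ (so $\dim\om=4$), where $\kDtN_1$ admits the cleanest Maxwell-type interpretation and where the ball's spectrum is fully computed in Example~\ref{pball}, and then try to propagate to general $p$.
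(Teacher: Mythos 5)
You should first note that this statement is a \emph{conjecture}, attributed to Karpukhin \cite{Ka2019}; the paper offers no proof (it is listed among the open problems), so your proposal can only be judged as a plan of attack, and its first half has a concrete flaw. The sharpness construction via long cylinders cannot work because of an index mismatch. By the $p$-form analogue of Example~\ref{example: cylinder} --- and, in general, by Theorem~\ref{thm:ksig hodge} --- one has $\hsigk_{k,p}(\om)=\sqrt{\widetilde{\lambda}_k(\sig)}+O(1)$, so on a cylinder over $\sig$ with $L\to\infty$ your construction produces $\hsigk_{k,p}^2\to\widetilde{\lambda}_k$, i.e.\ a co-closed Hodge eigenvalue of the boundary with index of size about $k$ (or $k/2$ if both ends remain in the boundary). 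But the right-hand side of Theorem~\ref{khpsp} is $\lambda^{c\,c}_{I_p+2k-1+\beta_{p+1}}(\sig)$, whose index grows like $2k$; for any fixed boundary this is strictly larger than $\widetilde{\lambda}_k(\sig)$ once $k$ is large, so cylinders (and cap attachments that merely adjust $I_p$ and $\beta_{p+1}$) remain far below the bound rather than saturating it. Already in the case $p=0$, $d=1$ the inequality is the Hersch--Payne--Schiffer bound $\sigma_kL\le 2\pi k$, and the known saturating families are degenerating domains converging to $k$ touching disks (Remark~\ref{rem: bubbles}), not long cylinders; the open content of part (1) is precisely whether such a bubbling mechanism has a $p$-form analogue, and your plan does not supply one. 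Note also that for $k\le\frac12\binom{2p+2}{p+1}$ equality is already attained by the unit ball by Theorem~\ref{khpsp}, so the nontrivial sharpness question concerns large $k$, exactly the regime where the cylinder idea is weakest.

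For part (2), your outline identifies what would be needed but assumes the decisive ingredients rather than supplying them. Karpukhin's inequality (Theorem~\ref{khps}) is obtained by comparing variational characterisations through the relation $*d\teta=d\widetilde{(\cdot)}$ (not by a Hersch centre-of-mass normalisation), and in an equality case one would indeed obtain distinguished co-closed eigenforms; but there is at present no analogue for $\kDtN_p$ of the Fraser--Schoen rigidity machinery (Proposition~\ref{first implies embed}, Theorem~\ref{thm.fs}): no substitute for the Riemann mapping theorem, no regularity theory for the resulting overdetermined boundary-value problem on forms, and nothing in your sketch that produces the threshold $k\le\frac12\binom{2p+2}{p+1}$ beyond the observation that it matches the multiplicity of $P_{1,p}^{c\,c}$ on the ball (Example~\ref{pball}). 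You flag these as ``the hard part,'' but they are not refinements of a working argument --- they \emph{are} the argument, which is consistent with the statement's status as an open conjecture.
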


\section{Inverse problems: positive results}\label{sec:inv probs pos}

In this section we discuss some recent ``positive" inverse spectral results for the Steklov problem. By ``positive" we mean that we are looking for geometric information that \emph{can} be recovered from the Steklov spectrum. 

The analogous problem for the Laplacian, either on a closed manifold or with Dirichlet, Neumann, or Robin conditions on a domain, has been well-studied. The traditional approach is to look for \emph{spectral invariants} -- quantities which can be determined from the spectrum of the Laplacian and which have some geometric meaning. With these invariants, one may then try to prove results such as spectral rigidity: proving that all elements of a certain restricted class of domains or manifolds are spectrally distinguishable.

The hunt for spectral invariants of the Laplacian starts with Weyl's asymptotics. Weyl proved in 1911 (in the cases $d=1$ and $d=2$) that when $\Omega\subset\mathbb R^{d+1}$ is bounded and has sufficiently regular boundary,
\[N(\lambda_k\leq\lambda) = C_d|\Omega|\lambda^{(d+1)/2} + o(\lambda^{(d+1)/2}),\]
where $C_d$ is a dimensional constant \cite{We1911}. This estimate has been generalised to all dimensions and refined significantly over the last century. Notably, in 1980, Ivrii proved a \emph{two-term} asymptotic formula under the assumption that the set of all periodic geodesic billiards on $\Omega$ has measure zero \cite{Iv1980}:
\[N(\lambda_k\leq\lambda) = C_d|\Omega|\lambda^{(d+1)/2} + C_d'|\partial\Omega|\lambda^{d/2} + o(\lambda^{d/2}).\]
Similar asymptotics hold under similar assumptions in the case where $\Omega$ is a closed manifold (see for example Duistermaat and Guillemin \cite{DuGu1975}), or a domain in a complete manifold; we will not discuss them here.

The use of these asymptotics for inverse problems is that since the counting function is spectrally determined, so are the coefficients of the expansion on the right-hand side. This shows that the volume of $\Omega$ may be determined from the spectrum of the Laplacian, and similarly the volume of $\partial\Omega$ may be recovered under the aforementioned dynamical assumptions. 

In order to obtain more spectral invariants, we may turn to \emph{heat} invariants. It was shown by McKean and Singer in 1967 \cite{McSi1967} that for a smooth manifold with boundary of dimension $d+1$, the trace of the heat kernel, $Tr H(t)$, has a short-time asymptotic expansion as $t\to 0$ given by
\[Tr H(t) = \sum_{j=0}^{\infty}a_jt^{-(d+1)/2 + j/2}.\]
Here the quantities $a_j$ may be computed recursively, and each is an integral, over $\Omega$, of a polynomial in the curvature of $\Omega$ and its derivatives, plus an analogous integral over $\partial\Omega$ involving the principal curvatures of the boundary. See the appendix of \cite{OPS1988a} and the references therein, including \cite{Me1983}. The key point is that since
\[Tr H(t)=\sum_{k=1}^{\infty}e^{-\lambda_kt},\]
the heat trace, and thus all of the coefficients $a_i$, are spectral invariants. We call the $a_i$ the ``heat invariants". 

The first term $a_0$ is a dimensional constant times the volume of $\Omega$, so that volume is a spectral invariant. Likewise, it follows from examining $a_1$ that the volume of $\partial\Omega$ is a spectral invariant, even when the dynamical assumption need to obtain a second term in Weyl's law is not satisfied. The coefficient $a_2$ was first studied by McKean and Singer \cite{McSi1967}, who showed that if $\Omega$ is a compact surface with boundary then $a_2$ is a nonzero multiple of the Euler characteristic, and therefore the Euler characteristic is a spectral invariant. More can be done, though the calculation of the $a_i$ is recursive and some of the formulas for $a_i$, $i\geq 3$, are harder to interpret geometrically. See, for example, \cite{Gi2003} for a detailed discussion of these formulas.

A more exotic approach to spectral invariants is to make use of the \emph{spectral zeta function}, defined for $\Re s$ large by
\[\zeta(s):=\sum_{k=1}^{\infty}\lambda_k^{-s}.\]
This function may then be meromorphically continued to the complex plane by writing
\[\zeta(s)=\frac{1}{\Gamma(s)}\int_{0}^{\infty}t^{s-1}Tr H(t)\, dt\]
and taking advantage of the short-time asymptotic expansion of the heat trace. Since the spectral zeta function is spectrally determined, so are all special values of the zeta function and its derivatives. 

A particularly useful special value is the \emph{determinant of the Laplacian}, defined by
\[\det\Delta := e^{-\zeta'(0)}.\]
In a series of celebrated papers in the late 1980s, Osgood, Phillips, and Sarnak used both the determinant and the heat invariants to show that any isospectral family of planar domains is compact in a natural $C^{\infty}$ topology \cite{OPS1988,OPS1988a}.  Specifically, they used the determinant together with some calculus of variations and the first couple of heat invariants to prove compactness in the Sobolev space $H^s$ for some $s>0$ \cite{OPS1988}. They then used the heat invariants $\{a_i\}$ to bootstrap the argument and obtain compactness for every real $s$, with each successive heat invariant improving the Sobolev order by one \cite{OPS1988a}. Although they were not able to explicitly compute each heat invariant without recursion, they were able to isolate key terms which enabled them to prove compactness. See \cite[Appendix]{OPS1988a}.

\subsection{Steklov spectral invariants}

Many of the techniques used for the Laplacian are naturally used in the Steklov problem as well. For example, there are Weyl asymptotics:
\begin{equation}\label{eq:stekweyl}
N(\sigma_k\leq\sigma) = C_d|\partial\Omega|\sigma^{d} + o(\sigma^{d}),
\end{equation}
where again $C_d$ is an explicit dimensional constant. 

The asymptotics \eqref{eq:stekweyl} are due to various authors under various assumptions on smoothness of the boundary. In the case where $\Omega$ is a domain in Euclidean space and $\partial\Omega$ is $C^1$, it was proved by Agranovich in 2006 \cite{Ag2006}. 
\begin{ques}\label{ques:weylrough} Show that \eqref{eq:stekweyl} holds whenever $\partial\Omega$ is Lipschitz.
\end{ques} 
Recently, Karpukhin, Lagac\'e, and Polterovich have done this for $d+1=2$ \cite{KLP2022}.

Observe that the exponent in \eqref{eq:stekweyl} is $d$ rather than $(d+1)/2$. At least in the case of smooth boundary, this is because the Steklov spectrum is the spectrum of a pseudodifferential operator of order 1 on a $d$-dimensional manifold, rather than the spectrum of a differential operator of order 2 on a $d+1$-dimensional manifold. For the same reason, the leading term is a geometric constant times the volume of the \emph{boundary} $\partial\Omega$, showing that $|\partial\Omega|$ is a Steklov spectral invariant.

As with the Laplacian, there is sometimes a second term in these asymptotics, subject to certain assumptions concerning the dynamics of the geodesic flow on $\partial\Omega$. For example, if $\partial\Omega$ is smooth and its periodic geodesics form a set of measure zero in its cotangent bundle, then one recovers a two-term Weyl law from the results of \cite{DuGu1975}, see \cite[Remark 5.7]{PoSh2015}. However, as with the Laplacian, there are no further terms.

Although the Weyl asymptotics are the best results available in dimension $d+1>2$, the case $d+1=2$ is quite different, due to the conformal invariance of the Laplacian. Suppose that $\Omega$ is a simply connected surface with smooth boundary. In this case, it has been known since the 1970s that the Steklov spectrum has very precise asymptotic behaviour \cite{Sh1971,Ro1979}. Specifically, let $D_{|\partial D|}$ be a disk with the same perimeter as $\Omega$. Then
\[|\sigma_k(\Omega)-\sigma_k(D_{|\partial D|})| = O(k^{-\infty}).\]
In other words, the eigenvalues of $\Omega$ converge quite rapidly to a doubled arithmetic progression.

\begin{remark}
\label{rem:gpps2014}
In 2014, Girouard, Parnovski, Polterovich and Sher investigated the $d=2$ case where $\Omega$ is not simply connected \cite{GPPS2014}. Suppose that $\Omega$ is a smooth surface with boundary, which has $N$ boundary components with lengths $\ell_1,\ldots,\ell_N$, and let $U$ be a union of $N$ disks with perimeters $\ell_1,\ldots,\ell_N$. Then
\[|\sigma_k(\Omega)-\sigma_k(U)|=O(k^{-\infty}).\]
Using some elementary number theory, they were able to prove that the multiset $\{\ell_1,\ldots,\ell_N\}$ of boundary lengths may be determined algorithmically from the Steklov spectrum $\sigma(\Omega)$ \cite{GPPS2014}. This work is described in detail in \cite[Section 2]{GPPS2014}.
\end{remark}

The other methods of obtaining spectral invariants have also been used in the Steklov setting. Polterovich and Sher made a systematic study of Steklov heat invariants \cite{PoSh2015}, proving among other things that the total mean curvature of $\partial\Omega$ is a Steklov spectral invariant. This was then used to show that a ball in $\mathbb R^3$ is uniquely determined by its Steklov spectrum among all domains in $\mathbb R^3$ \cite{PoSh2015}.

The Steklov spectral zeta function may be defined similarly to the Laplace spectral zeta function. It was exploited in a notable series of papers by Julian Edward from the late 1980s and early 1990s. In his first paper he used the zeta function to show that a disk in $\mathbb R^2$ is uniquely determined among simply connected domains by its Steklov spectrum \cite{Ed1993}. (This may also be deduced either from the Weyl asymptotics or from Weinstock's inequality \ref{ineq:weinstock}, and is in fact true even among all domains since the number of boundary components is determined, see Remark \ref{rem:gpps2014}.) In a follow-up work he investigated compactness of isospectral sets of simply connected planar domains along the lines of Osgood, Phillips, and Sarnak \cite{Ed1993a}. Although he was not quite able to prove compactness of isospectral sets in the $C^{\infty}$ topology, he was able to prove it in the $H^{5/2-\epsilon}$ topology for any $\epsilon>0$, thus carrying out the first step of the program of Osgood, Phillips, and Sarnak \cite{OPS1988,OPS1988a} in the Steklov setting. 

\subsection{Isospectral compactness}

Recently, Alexandre Jollivet and Vladimir Sharafutdinov \cite{JoSh2018_2} have completed the program started by Edward.
\begin{thm}\label{thm:joshmain} \cite[Theorem 1.3]{JoSh2018_2} Suppose that $\{\Omega_i\}$ is a sequence of Steklov isospectral, compact, simply connected, possibly multisheeted planar domains. Then there exists a subsequence of $\{\Omega_i\}$ which converges to a limit $\Omega$ in the $C^{\infty}$ topology.
\end{thm}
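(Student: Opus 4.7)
The plan is to carry out the Osgood--Phillips--Sarnak bootstrap programme, adapted to the Steklov setting, by combining Edward's earlier partial compactness result~\cite{Ed1993a} with the full machinery of Steklov heat invariants developed by Polterovich--Sher~\cite{PoSh2015}. First I would use conformal uniformisation to parametrise the isospectral class: for each simply connected $\Omega_i$, fix a Riemann map $\Phi_i:\mathbb{D}\to\Omega_i$, so that (after the usual pre-composition with a M\"obius transformation to normalise) the domain is encoded by the boundary density $\rho_i:=|\Phi_i'|\restr{\partial\mathbb{D}}\in C^\infty(\mathbb S^1)$. By the conformal invariance discussed in Proposition~\ref{prop: conf invar} and the weighted Steklov viewpoint of Example~\ref{ex:varweightedSteklov}, the Steklov spectrum of $(\Omega_i,g_{\mathrm{eucl}})$ coincides with the spectrum of the weighted problem $\sigma_k(\mathbb{D},g_{\mathrm{eucl}},\rho_i^{-1})$, so Steklov isospectrality translates into the assertion that all the operators $\rho_i\,\DtN_{\mathbb{D}}=\rho_i\sqrt{\Delta_{\mathbb S^1}}$ are isospectral on $\mathbb{S}^1$.

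Next, I would install the starting estimate. Edward's result~\cite{Ed1993a} already supplies $H^{5/2-\epsilon}$ precompactness of the family $\{\rho_i\}$ for every $\epsilon>0$, so after passing to a subsequence we may assume $\rho_i\to\rho_\infty$ strongly in $H^{5/2-\epsilon}(\mathbb S^1)$, with $\rho_\infty$ bounded and bounded away from zero (the last point following from $|\partial\Omega_i|=\int\rho_i$ being a Weyl invariant and from the lower bound implicit in Edward's argument). The target is to upgrade this to convergence in $H^s(\mathbb S^1)$ for every $s$, which, via Sobolev embedding, yields the claimed $C^\infty$ convergence.

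The bootstrap is driven by the Steklov heat invariants $\{a_j\}_{j\geq 0}$ of~\cite[Theorem 1.1]{PoSh2015}. Each $a_j$ is a spectral invariant of the DtN operator and, after transplantation to $\mathbb{D}$ via $\Phi_i$, becomes a universal integral expression in $\rho_i$ and its tangential derivatives along $\mathbb S^1$. I expect, in parallel with the Laplace case treated by Osgood--Phillips--Sarnak~\cite{OPS1988a}, that one can isolate in $a_j$ a ``principal'' quadratic term of the form $c_j\int_{\mathbb S^1}\rho_i^{\alpha_j}\bigl(\partial_\theta^{k_j}\log\rho_i\bigr)^2\,d\theta$ with $c_j\neq 0$, where $k_j\to\infty$ as $j\to\infty$, plus a remainder involving only lower-order derivatives of $\log\rho_i$. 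Given such a decomposition and the inductive hypothesis $\rho_i\to\rho_\infty$ in $H^s$, the spectral identity $a_j(\Omega_i)=a_j(\Omega_1)$ combined with the strictly positive leading coefficient forces uniform control of $\|\partial_\theta^{k_j}\log\rho_i\|_{L^2}$, hence $H^{k_j+1}$-precompactness of the subsequence after one more diagonal extraction. Iterating over $j$ yields precompactness in every $H^s$.

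The main obstacle will be precisely the identification of the leading term in each $a_j$ and the verification that its coefficient $c_j$ does not vanish. In the Laplace case this is a delicate recursive computation~\cite[Appendix]{OPS1988a}; here one works with a pseudodifferential operator of order one whose full symbol near $\partial\Omega$ is explicit (cf.~\cite{LeUh1989} and~\cite[Section 2]{PoSh2015}), so the same strategy should go through, but the algebra is technical and the key input is the strict positivity of the highest-derivative coefficient. A secondary technical point will be checking that the conformal normalisations can be chosen uniformly so that no boundary parametrisation degenerates, which follows from the uniform lower bound on $\rho_i$ inherited from the initial $H^{5/2-\epsilon}$ compactness and from the invariance of $|\partial\Omega|$.
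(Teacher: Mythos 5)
The high-level skeleton you propose—pull back to the disk via a normalised Riemann map, translate Steklov isospectrality into isospectrality of an operator of the form (multiplier)${}\cdot\DtN_{\mathbb{D}}$ on $\mathbb{S}^1$, start from Edward's $H^{5/2-\epsilon}$ precompactness, and then bootstrap in Sobolev order using a hierarchy of spectral invariants whose leading terms control successively higher derivatives—is exactly the framework of the actual proof. (One small slip: with $\Phi_i:\mathbb{D}\to\Omega_i$ and $\rho_i=|\Phi_i'|$ on $\partial\mathbb{D}$, the transplanted operator is $\rho_i^{-1}\DtN_{\mathbb{D}}=a\,\DtN_{\mathbb{D}}$ with $a=|\Phi'|^{-1}$, not $\rho_i\DtN_{\mathbb{D}}$; the density to feed into the weighted Steklov problem is $\rho_i$, not $\rho_i^{-1}$. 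This is cosmetic.)

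The genuine gap is in the engine you use to drive the bootstrap. You propose to mimic Osgood--Phillips--Sarnak~\cite{OPS1988a} by isolating, in each Steklov heat invariant $a_j$ of Polterovich--Sher~\cite{PoSh2015}, a principal quadratic term $c_j\int\rho^{\alpha_j}(\partial_\theta^{k_j}\log\rho)^2$ with $c_j\neq 0$ and $k_j\to\infty$. This is precisely the step that cannot work: in two dimensions the Steklov heat trace coefficients beyond the first are \emph{non-local}, because $\DtN$ is a first-order pseudodifferential operator on the one-dimensional circle and only the first $d=1$ coefficient is a local integral of the symbol. There is no recursive structure theorem of OPS type to exploit, no way to identify a highest-derivative term with strictly positive coefficient, and hence no bootstrap via $\{a_j\}$. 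This is not a technical inconvenience but a structural obstruction that the paper highlights explicitly.

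What Jollivet and Sharafutdinov do instead is replace the heat invariants by the zeta-invariants
\[
Z_m(a)=\operatorname{Tr}\Bigl[(a\,\DtN_{\mathbb{S}^1})^{2m}-\bigl(-ia\,\tfrac{d}{d\theta}\bigr)^{2m}\Bigr],
\]
which \emph{are} computable in terms of the Fourier coefficients of powers of $a$. The role played by the positive leading coefficient in your proposed expansion is instead played by their lower bound $Z_m(a)\geq c_m\sum_{n\geq m+1}n^{2m+1}|\widehat{b}_n|^2$ with $b=a^m$. Since each $Z_m$ is a spectral invariant, this inequality gives uniform control on $\|a_i^m\|_{H^{m+1/2}}$ along the isospectral family, hence $H^{m+1/2-\epsilon}$ precompactness for every $m$. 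Without the zeta-invariants and this lower bound, the bootstrap beyond Edward's $H^{5/2-\epsilon}$ does not get started.
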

The proof is an interesting variation on the techniques of Osgood, Phillips, and Sarnak in \cite{OPS1988,OPS1988a} and we give a sketch of it here.

The main problem is that in \cite{OPS1988a}, the authors used the Laplace heat invariants to perform their bootstrapping procedure. However, in two dimensions, all Steklov heat invariants beyond the first are \emph{non-local} and as such cannot be computed by the same recursive methods typically used to compute Laplace heat invariants. In general, as shown in \cite{PoSh2015}, only the first $d$ Steklov heat invariants are local. So there is no hope of being able to use a structure theorem for heat invariants to complete the proof. 

Nevertheless, Jollivet and Sharafutdinov are able to use a set of invariants drawn from the Steklov zeta function, which they call ``zeta-invariants", to replace the role of the heat invariants. These invariants were originally defined in a paper of Malkovich and Sharafutdinov \cite{MaSh2015}. To define these zeta-invariants, Jollivet and Sharafutdinov use the well-known fact that if $\Omega$ is a compact, simply connected planar domain, then the Steklov eigenvalue problem is equivalent to the eigenvalue problem for the operator $a\mathcal D_{\mathbb S^1}$, where
\begin{equation}\label{eq:a(z)}a(z)=|\Phi'(z)|^{-1},\ \Phi:\mathbb D\to \Omega\textrm{ is biholomorphic.}\end{equation}
In this translation, also used by Edward \cite{Ed1993,Ed1993a} and dating back much earlier \cite{Ro1979}, the inverse spectral problem is equivalent to the problem of determining $a(z)$ from the spectrum of $a\mathcal D_{\mathbb S^1}$. The zeta-invariants are defined by
\[Z_m(a):=\textrm{Tr}[(a\mathcal D_{\mathbb S^1})^{2m} - (-ia\frac{d}{d\theta})^{2m}].\]

Although Edward did not use the notation of zeta-invariants, his proof of isospectral compactness in $H^{5/2-\epsilon}$ uses the zeta-invariants $Z_1(a)$ and $Z_2(a)$ as well as the special values of the zeta function $\zeta(-1)$ and $\zeta(-3)$ \cite{Ed1993a}. To do this he computed a formula for $Z_1(a)$ in terms of the Fourier coefficients $\hat a_m$ of $a$:
\[Z_1(a)=\frac 23\sum_{n=2}^{\infty}(n^3-n)|\hat a_n|^2.\]
This formula was then generalised by Malkovich and Sharafutdinov to a formula for $Z_m(a)$ \cite{MaSh2015}.

The key novel observation of Jollivet and Sharafutdinov is a lower bound for $Z_m(a)$, namely
\begin{thm} \cite[Theorem 1.1]{JoSh2018_2}. Let $m\geq 1$ and let $b=a^m$. Then there is a constant $c_m$ depending only on $m$ such that for any smooth function $a$,
\[Z_m(a)\geq c_m\sum_{n=m+1}^{\infty}n^{2m+1}|\hat b_n|^2.\]
\end{thm}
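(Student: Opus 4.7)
The plan is to derive an explicit Fourier representation of $Z_m(a)$ and extract the desired lower bound from it. Write $\mathcal{D}_{\mathbb{S}^1}=|D|$ with $D=-i\partial_\theta$, and expand $a=\sum_k \hat a_k e^{ik\theta}$. Since both $|D|$ and $D$ act diagonally on the Fourier basis with eigenvalues $|n|$ and $n$ respectively, the matrix elements of $(a|D|)^{2m}$ and $(aD)^{2m}$ in this basis involve products of Fourier coefficients of $a$ weighted by products of the form $\prod_{j=1}^{2m}|\phi_j|$ and $\prod_{j=1}^{2m}\phi_j$, where the $\phi_j$ are the partial sums of the indices that appear after successive applications of multiplication by $a$ and of $|D|$ (resp.\ $D$). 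Taking the trace forces the indices to sum to zero and, after collecting, produces
\[
Z_m(a)\;=\;\sum_{k_1+\cdots+k_{2m}=0}\Bigl(\prod_{j=1}^{2m}|\phi_j|\;-\;\prod_{j=1}^{2m}\phi_j\Bigr)\,\hat a_{k_1}\cdots\hat a_{k_{2m}},
\]
which is essentially the multilinear formula of Malkovich--Sharafutdinov. The first step is to put this formula in a form where only terms with at least one sign change among the $\phi_j$ contribute, since the two products agree on configurations with all $\phi_j\ge 0$.

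The second step is to repackage this multilinear expression in terms of $\hat b$, where $b=a^m$. The identity $\hat b_n=\sum_{k_1+\cdots+k_m=n}\hat a_{k_1}\cdots\hat a_{k_m}$ together with the reality condition $\overline{\hat a_k}=\hat a_{-k}$ suggests partitioning the $2m$ indices into two blocks of $m$; the natural pairing gives a ``diagonal'' contribution of the form $\sum_n p_m(n)\,|\hat b_n|^2$, where $p_m$ is a polynomial whose leading term is (up to a combinatorial constant) $n^{2m+1}$. For $m=1$ this recovers Edward's formula $Z_1(a)=\tfrac{2}{3}\sum_{n\ge 2}(n^3-n)|\hat a_n|^2$ with $b=a$, providing a sanity check. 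The challenge is that the repackaging also produces ``off-diagonal'' terms $\hat b_{n_1}\overline{\hat b_{n_2}}$ with $n_1\ne n_2$, whose signs are a priori indefinite.

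The main obstacle, and where I expect most of the work, is controlling these off-diagonal terms. A naive Cauchy--Schwarz would dominate them by $\tfrac12(p_m(n_1)|\hat b_{n_1}|^2+p_m(n_2)|\hat b_{n_2}|^2)$ but with a total multiplicative constant that may exceed the diagonal weight and thus destroy positivity. The approach I would take instead is to symmetrize the multilinear sum over the $(2m)!$ orderings of the indices (permitted because the trace is cyclic and because the weights $\prod|\phi_j|-\prod\phi_j$ behave well under reordering), with the aim of rewriting the cross terms as a manifestly non-negative quadratic form in $(\hat b_n)_{n\ge m+1}$. An alternative fallback is to interpret the combination $(a|D|)^{2m}-(aD)^{2m}$ as the trace of $\bigl((a|D|)^m-(aD)^m\bigr)\bigl((a|D|)^m+(aD)^m\bigr)$ or a similar commutator splitting, hoping that one factor is essentially non-negative on the Hardy-type subspace spanned by negative frequencies.

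Finally, the lower bound of the summation index at $n=m+1$ (rather than $n=1$) is forced by the structure of the polynomial $p_m$: for $|n|\le m$ the partitions $k_1+\cdots+k_m=n$ include many configurations with small $|\phi_j|$ for which the weight $\prod|\phi_j|-\prod\phi_j$ degenerates or vanishes, so the low-frequency coefficients $\hat b_n$ simply cannot be controlled by $Z_m(a)$ alone. In the final inequality these terms are discarded, yielding the stated bound with a constant $c_m$ depending only on $m$.
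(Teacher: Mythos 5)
The survey states this result by citing \cite{JoSh2018_2} without reproducing its proof, so there is no in-paper argument to compare against; the remarks below assess your proposal on its own merits.

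Your outline stops short of a proof at exactly the step that constitutes the theorem. You flag yourself that ``most of the work'' lies in controlling the off-diagonal terms, and you then offer two strategies without executing either. The symmetrization route states only an ``aim'' of rewriting the cross terms as a manifestly non-negative quadratic form; nothing in the sketch explains why symmetrizing the Malkovich--Sharafutdinov weight should yield positivity, and the required lower bound on that degree-$2m$ multilinear functional is exactly the delicate combinatorial estimate that \cite{JoSh2018_2} establishes (building on \cite{JoSh2018}). Your fallback is algebraically incorrect: $a|D|$ and $aD$ do not commute for nonconstant $a$, so $(a|D|)^{2m}-(aD)^{2m}$ does not factor as $\bigl((a|D|)^m-(aD)^m\bigr)\bigl((a|D|)^m+(aD)^m\bigr)$; already for $m=1$ the two sides differ by the commutator $[a|D|,\,aD]$.

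There is also a structural imprecision upstream of the main gap. The Malkovich--Sharafutdinov weight is obtained only after summing the quantity $\prod_j|\phi_j|-\prod_j\phi_j$ over a free basis index $n_0$ that does not appear in your displayed formula; without that hidden sum the $m=1$ weight would vanish identically rather than reproduce Edward's polynomial $\tfrac13(n^3-n)$. More importantly, the resulting weight depends on \emph{all} the intermediate partial sums along the path, not merely on the two block sums $n_1=k_1+\cdots+k_m$ and $n_2=k_{m+1}+\cdots+k_{2m}$. Grouping the $2m$ indices into two blocks of $m$ therefore does not make the weight factor through $(n_1,n_2)$, and there is no intrinsic ``diagonal'' contribution $\sum_n p_m(n)\,|\hat b_n|^2$ that simply falls out of the expansion. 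Isolating such a contribution and then showing the remaining configurations cannot swamp it is precisely the content of the proof, and it has not been carried out here.
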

This lower bound builds on their prior work in \cite{JoSh2018} and turns out to be enough for them to prove Theorem \ref{thm:joshmain}. In essence it fills the same role that the structure theorem for higher heat invariants plays in \cite{OPS1988a}. Given a sequence of isospectral domains $\om_k$, let $a_k$ be the corresponding sequence of functions defined as in Equation~\eqref{eq:a(z)}. The authors use their lower bound on $Z_m(a)$ to obtain control over the $H^{m+\frac 12}$-norms of $a_k$ and thus prove compactness in $H^{m+\frac 12-\epsilon}$ for each $m$.

One might also consider the case of multiply connected domains. Indeed, Osgood, Phillips, and Sarnak were able to prove an isospectral compactness theorem in that setting as well \cite{OPS1989}.
\begin{ques}\label{ques:multconnops}
Are families of multiply connected, compact, Steklov isospectral planar domains necessarily compact in the $C^{\infty}$ topology?
\end{ques}

\begin{remark} For certain restricted classes of manifolds, much stronger results than isospectral compactness may sometimes be deduced. See, for example, the papers \cite{DaKaNi2021} by Daud\'e, Kamran and Nicolau and \cite{Gendron2020} by Gendron, in the setting of warped product manifolds.
\end{remark}

\subsection{Steklov spectral asymptotics with corners}

As discussed in Remark \ref{rem:gpps2014}, Steklov spectral asymptotics in two dimensions, for $\partial\Omega$ smooth, are straightforward: the spectrum converges extremely rapidly to a doubled arithmetic progression, or a union of such progressions in the case where $\partial\Omega$ has more than one connected component. However, it has been known for a long time that the presence of corners on $\partial\Omega$ changes the picture substantially. We highlight some recent developments on asymptotics in these settings and then discuss the inverse spectral applications.

\subsubsection{Sloshing asymptotics} The first recent results on these asymptotics come in the case of a mixed Steklov-Neumann problem known as the \emph{sloshing problem}. In this problem, $\Omega$ is taken as a simply connected region in $\mathbb R^2$ whose boundary decomposes in a piecewise smooth fashion as $S\cup W$, where $S$ is a segment of the $x$-axis of length $L$ from point $A$ to point $B$, and $W$ is a curve in the lower half-plane from point $B$ to point $A$. We let $\alpha$ and $\beta$ be the interior angles of $\Omega$ at $A$ and $B$ respectively. We then consider the mixed Steklov-Neumann problem obtained by imposing Steklov conditions on $S$ and Neumann conditions on $W$. There is an analogous Steklov-Dirichlet problem as well.

Computations of these eigenvalues can be done directly in some very specific cases such as the equilateral triangle. Based on these computations and on considerable numerical evidence, Fox and Kuttler conjectured in 1983 \cite{FoKu1983} that in the case where $\alpha=\beta$,
\[\sigma_k\cdot L = \pi(k+\frac 12) - \frac{\pi^2}{4\alpha}+o(1).\]
In 2017, Levitin, Parnovski, Polterovich, and Sher proved the following generalisation of the Fox-Kuttler conjecture \cite{LPPS2017}:
\begin{thm}\label{thm:sloshingmain} \cite[Theorem 1.1]{LPPS2017}
Suppose that $\Omega$ is a sloshing domain where $W$ is piecewise $C^1$ and the angles $\alpha$ and $\beta$ are less than $\pi/2$. Then
\[\sigma_k\cdot L = \pi(k+\frac 12) -\frac{\pi^2}{8}(\frac 1{\alpha}+\frac 1{\beta}) + o(1).\]
\end{thm}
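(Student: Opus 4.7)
The plan is to construct high-frequency quasi-modes concentrated near the free surface $S$ and to derive the asymptotics from a quantisation condition obtained by phase-matching at the two corners $A$ and $B$. At leading order, on a segment of length $L$ carrying Steklov boundary data, the spectrum is quantised as $\sigma_k L\sim \pi k$; the angular geometry of the corners contributes the subleading correction.

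First I would analyse the model problem in the infinite wedge $W_\alpha := \{(r,\theta)\,:\,r>0,\ 0\le\theta\le\alpha\}$ with Steklov condition $\partial_\nu u=u$ on $\{\theta=0\}$ and Neumann condition on $\{\theta=\alpha\}$. This is the classical Peters problem; for $\alpha<\pi/2$ there is, up to normalisation, a distinguished bounded harmonic solution $P_\alpha$, and its trace on the Steklov side admits an asymptotic expansion of the form
\begin{equation*}
P_\alpha(r,0) \;=\; \cos\bigl(r - \psi_\alpha\bigr) \;+\; O(r^{-\infty}) \quad\text{as } r\to\infty,
\end{equation*}
where the phase $\psi_\alpha$ is the quantity one must compute to identify the correction constant. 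I would obtain $P_\alpha$ via a Mellin transform and Wiener--Hopf factorisation, reducing the problem to a scalar Riemann--Hilbert problem on a strip in the Mellin variable; the factorisation is explicit, and saddle-point evaluation of the resulting contour integral yields $\psi_\alpha$ as an explicit linear function of $1/\alpha$ matching the Fox--Kuttler constant. The hypothesis $\alpha<\pi/2$ enters decisively here, ensuring that the Wiener--Hopf factor has no zeros in the right half-plane, so that no additional ``surface wave'' mode pollutes the asymptotics.

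Second, I would build quasi-modes on $\Omega$ by interpolating rescaled Peters solutions near the two corners with a bulk oscillation along $S$. Concretely, near $A$, rescale by the trial eigenvalue $\sigma$ so that the corner becomes the infinite wedge $W_\alpha$, paste in $P_\alpha(\sigma\,\cdot\,)$ using a cutoff supported in an $O(\sigma^{-1/2})$ neighbourhood of $A$, and do the analogous construction near $B$ with $P_\beta$. Between the corner regions, in straightened boundary coordinates $(x,y)$ with $S=[0,L]\times\{0\}$, paste in a half-plane Steklov mode $e^{-\sigma y}\cos(\sigma x - \psi_\alpha)$. Requiring that the oscillations read off from the two ends of $S$ agree up to the sign $(-1)^k$ forces the quantisation
\begin{equation*}
\sigma L \;=\; \pi\bigl(k+\tfrac12\bigr) \;-\; \tfrac{\pi^2}{8}\!\left(\tfrac{1}{\alpha}+\tfrac{1}{\beta}\right) + o(1).
\end{equation*}
Standard resolvent estimates for the self-adjoint operator associated with the sloshing problem then upgrade these quasi-modes to honest eigenvalues with error $o(1)$, and a matching Weyl count (together with the near-orthogonality of the constructed quasi-modes at distinct $k$) shows that the full spectrum is captured and correctly indexed.

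The principal difficulty is the exact determination of $\psi_\alpha$: extracting the constant $\pi^2/(8\alpha)$ is the hard analytic core of the proof and hinges on explicit Mellin/Wiener--Hopf computations in the Peters wedge, together with a careful treatment of the exponentially small error. A secondary subtlety is the regularity of $W$: only piecewise $C^1$ smoothness is assumed, so the bulk parametrix must accommodate curved, non-smooth walls, and one must verify that geometry outside a neighbourhood of the corners contributes only $o(1)$ to the eigenvalue asymptotics (essentially because the Steklov quasi-modes concentrate to within distance $O(\sigma^{-1})$ of $S$). Finally, the constraint $\alpha,\beta<\pi/2$ is essential: above that threshold, additional bound states of the wedge problem would appear, altering both the constant and the nature of the expansion.
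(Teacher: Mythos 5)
Your outline matches the paper's strategy at the level of the quasi-mode construction: both use the method of matched asymptotic expansions, gluing rescaled Peters wedge solutions near the corners $A$ and $B$ to a decaying traveling wave along $S$, and both extract the phase shift $\pi^2/(8\alpha)$ from the large-$r$ asymptotics of the Peters solution (which Peters himself derived by Wiener--Hopf/Mellin techniques). The quantisation condition and the upgrade from quasi-modes to genuine eigenvalues via the spectral theorem are likewise the same.

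However, there is a genuine gap in your treatment of completeness, and it is precisely the step the authors single out as the central difficulty. You propose to establish completeness — that every Steklov eigenvalue is captured and correctly indexed — by combining a Weyl count with near-orthogonality of the quasi-modes. This does not work. Near-orthogonality plus the quasi-mode estimate shows that in a small window around each $\hat\sigma_k$ there is at least one true eigenvalue, and the Weyl law controls the counting function only up to a sub-leading error (at best $O(1)$, typically $o(\sigma)$), which is far too coarse to exclude extra, unmatched eigenvalues being interleaved among the quasi-eigenvalues and shifting the index by an unbounded amount. In other words, the Weyl asymptotics alone cannot distinguish between the claimed expansion and, say, the same expansion with $k$ replaced by $k+1$ beyond some point, or an expansion polluted by sporadic additional eigenvalues. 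The paper resolves this by an entirely different mechanism: one-dimensional ODE comparison arguments in the spirit of Nazarov's work on the sloshing problem, combined with domain monotonicity (Dirichlet--Neumann bracketing applied to comparison domains with explicitly computable spectra), which together pin down the exact index of each eigenvalue. Without a substitute for this step, the argument you sketch establishes existence of eigenvalues near the quasi-eigenvalues but not the asserted two-term asymptotic formula for $\sigma_k$ as a function of $k$.

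A secondary, smaller point: the restriction $\alpha,\beta<\pi/2$ is not only about absence of additional wedge modes; it also controls the decay rate in the Peters solution asymptotics, which governs the size of the gluing error. The paper handles the limiting case $\alpha=\pi/2$ (and the piecewise-$C^1$ walls more generally) via a local John's condition, and your bulk parametrix would need similar care to show the geometry away from $S$ contributes only $o(1)$ — you flag this but a rigorous proof needs the quantitative concentration estimates rather than the heuristic that modes live within $O(\sigma^{-1})$ of $S$.
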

Observe that the angles produce a shift in the asymptotics of the eigenvalues. Moreover, if the domain is symmetric, the angles may be recovered from those eigenvalue asymptotics.
\begin{remark} Some comments:
\begin{itemize}
    \item There is a similar result for the Steklov-Dirichlet problem; all that changes is that the minus sign in front of the term with the angles becomes a plus sign.
    \item If $W$ is straight near the corners, the remainder estimate can be improved from $o(1)$.
    \item The angles $\alpha$ and/or $\beta$ may be allowed to equal $\pi/2$ under the assumption that $\Omega$ satisfies a local John's condition (see \cite{LPPS2017} for the definition).
\end{itemize}
\end{remark}

The proof of Theorem \ref{thm:sloshingmain} relies on a construction of quasimodes, that is, approximate Steklov eigenfunctions corresponding to approximate Steklov eigenvalues. These quasimodes are constructed by using what physicists call the method of matched asymptotic expansions: namely, one may glue a traveling wave along $S$ to solutions of model problems near its endpoints. Conveniently, appropriate model solutions near the corners were originally given by Peters \cite{Pe1950}. The gluing construction, in the end, produces a sequence of quasimodes which are accurate enough to show that there is a real Steklov eigenvalue near each quasi-eigenvalue.

A major challenge in the proof, and in any similar quasimode construction, is that it is difficult to show that this construction does not ``miss" any Steklov eigenvalue -- in essence, that the set of quasimodes is complete. In this case this difficulty is handled by combining ODE techniques based on \cite{Na1967} with domain monotonicity.

\subsubsection{Steklov spectral asymptotics for polygons}

The methods used to tackle the sloshing problem in \cite{LPPS2017} were further developed by the same authors to give precise Steklov spectral asymptotics for curvilinear polygons \cite{LPPS2019}. Throughout this subsection, we let $\Omega$ be a curvilinear polygon, piecewise smooth, with finitely many nonzero interior angles $\alpha_j$ and side lengths $\ell_j$, arranged so that the $\ell_j$ appear in clockwise order and $\alpha_j$ is the angle between $\ell_j$ and $\ell_{j+1}$.

\begin{thm}\label{thm:steklovpolymain} \cite[Theorem 1.4]{LPPS2019} Suppose that $\Omega$ is a curvilinear polygon all of whose interior angles are in $(0,\pi)$. Then there exists a sequence of quasi-eigenvalues $\{\hat\sigma_k\}$, which may be explicitly computed in terms of the side lengths and the angles, as well as an explicitly computable $\epsilon_0>0$, for which
\[\sigma_k=\hat\sigma_k + O(k^{-\epsilon_0}).\]
\end{thm}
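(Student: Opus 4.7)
The plan is to extend the matched asymptotic expansion technique of Theorem~\ref{thm:sloshingmain} from the sloshing setting to the full Steklov problem on a curvilinear polygon, where \emph{every} vertex now plays the role that $A$ and $B$ did before. The core heuristic is that for large $\sigma$, a Steklov eigenfunction on $\Omega$ localises near $\partial\Omega$ and behaves, away from vertices, like a combination of travelling waves $e^{\pm i\sigma s}$ in the arclength coordinate $s$ along each smooth side. Each vertex with interior angle $\alpha_j\in(0,\pi)$ acts as a scatterer that imposes a $\sigma$-independent phase shift determined by the Peters solution on the infinite wedge of opening $\alpha_j$.

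Concretely, I would first set up, for each vertex $v_j$, the model problem on an infinite straight wedge $W_{\alpha_j}$ with Steklov conditions on both sides. Peters' explicit solutions produce an incoming-wave/outgoing-wave scattering relation whose ``phase of transmission'' is precisely the angle-dependent shift $-\pi^2/(4\alpha_j)$ (with the sign and form that already appeared in the sloshing asymptotics and will reappear here as one traverses the corner). Along each side of length $\ell_j$, the eigenfunction is approximated by a linear combination of $e^{i\sigma s}$ and $e^{-i\sigma s}$. Following the boundary once around $\Omega$, the quantization (monodromy) condition becomes
\begin{equation*}
    \sigma \sum_j \ell_j \;-\; \sum_j \Phi(\alpha_j) \;\equiv\; 0 \pmod{2\pi},
\end{equation*}
where $\Phi(\alpha_j)$ is the explicit Peters phase shift; the roots $\hat\sigma_k$ of this transcendental equation, indexed by $k$, are the quasi-eigenvalues. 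They depend only on $\{\ell_j\}$ and $\{\alpha_j\}$, as required. When all $\alpha_j=\pi$ (i.e.\ no genuine corners) this recovers $\sigma_k\sim 2\pi k/|\partial\Omega|$ up to lower-order terms, consistent with the Shamma--Rozenblum picture.

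Next I would construct genuine quasimodes by gluing: take a travelling-wave Ansatz $u_{\mathrm{side}}$ on a neighbourhood of each side, truncate it by a cutoff that vanishes near the vertex, and in a ball around each $v_j$ insert a rescaled Peters solution $u_{\mathrm{vertex}}^{(j)}$ matched to the travelling wave in an overlap annulus at scale $\sigma^{-1/2}$. The combined function $\hat u_k$ satisfies $\Delta \hat u_k = 0$ in $\Omega$ (it is harmonic by construction away from the transition zones, and the error from the cutoff is small in the appropriate weighted norm) and $\partial_\nu \hat u_k - \hat\sigma_k\hat u_k=r_k$ on $\partial\Omega$ with $\|r_k\|$ decaying as a power of $\hat\sigma_k^{-1}$. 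A standard spectral-theoretic argument then produces an actual Steklov eigenvalue within $O(\hat\sigma_k^{-\epsilon_0})\subset O(k^{-\epsilon_0})$ of $\hat\sigma_k$, giving one direction of the asymptotics.

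The main obstacle, as in \cite{LPPS2017}, is \emph{completeness}: showing that no actual eigenvalue is missed, i.e.\ that the quasi-eigenvalues $\hat\sigma_k$ enumerate \emph{all} $\sigma_k$ with the correct density. In the sloshing setting this was handled by an ODE reduction plus domain monotonicity, exploiting the one-dimensional structure of the ``wet'' boundary $S$. For a polygon the boundary is a closed loop of several smooth arcs meeting at corners, so the analogous argument requires counting via the Weyl law~\eqref{eq:stekweyl} together with a separate lower bound on eigenvalue spacing. I would close the argument by (i) verifying via the monodromy equation that the $\hat\sigma_k$ themselves obey the sharp one-term Weyl law $\hat\sigma_k \sim 2\pi k/|\partial\Omega|$, hence their counting function matches $N(\sigma_k\le\sigma)$ to leading order, (ii) combining the injection ``each $\hat\sigma_k$ is near some $\sigma_{k'}$'' from the quasimode step with this matching of counting functions to force $k=k'$ for all sufficiently large $k$, and (iii) controlling multiplicities and clustering by a quantitative resolvent estimate that exploits the assumption $\alpha_j<\pi$ (uniform ellipticity of the corner model, boundedness of Peters scattering operators). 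The exponent $\epsilon_0$ comes out of tracking the size of the mismatch in the matching annulus and of the remainder in the monodromy equation; convexity of each corner (i.e.\ $\alpha_j<\pi$) is what makes these remainders polynomially small rather than merely $o(1)$.
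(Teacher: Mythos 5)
Your overall strategy is the right one and matches the paper: travelling-wave Ansatz along each smooth side, Peters (doubled) model solutions at each corner, gluing at scale between $\sigma^{-1}$ and $1$, a quasimode estimate producing an actual eigenvalue near each quasi-eigenvalue, and then a separate completeness argument. But there is a concrete error in the heart of the construction: your quantization condition is wrong.

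You treat each vertex as a pure phase shifter, leading to the scalar monodromy condition
\[
\sigma\sum_j\ell_j-\sum_j\Phi(\alpha_j)\equiv 0\pmod{2\pi}.
\]
This would be correct only if the corner scattering were unitary and diagonal in the $(e^{i\sigma s},e^{-i\sigma s})$ basis, i.e.\ if a wave arriving at a corner were fully transmitted into the next side with no back-reflection. That is what happens at the endpoints of the wet boundary in the \emph{sloshing} problem, where the wave must entirely reflect and one only needs to track the accumulated phase. For a genuine polygonal corner with Steklov conditions on both sides, however, the doubled Peters solution gives a $2\times 2$ scattering matrix with nonzero off-diagonal entries: part of the wave reflects back along the incoming side. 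Concretely, in the paper the corner contributes the matrix
\[
A(\alpha_j)=\begin{bmatrix}\csc\bigl(\tfrac{\pi^2}{2\alpha_j}\bigr)&-i\cot\bigl(\tfrac{\pi^2}{2\alpha_j}\bigr)\\ i\cot\bigl(\tfrac{\pi^2}{2\alpha_j}\bigr)&\csc\bigl(\tfrac{\pi^2}{2\alpha_j}\bigr)\end{bmatrix},
\]
which is not diagonal for generic $\alpha_j$ (the off-diagonal entries vanish only at $\alpha_j=\pi/(2m+1)$). The correct quantization condition is therefore that $1$ be an eigenvalue of the full $2\times 2$ transfer matrix $T(\vec\alpha,\vec\ell,\sigma)=\prod_j A(\alpha_j)B(\ell_j,\sigma)$. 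This is a transcendental determinant condition involving all the side lengths and angles jointly; it does not reduce to a scalar phase accumulation, and the set of roots has a much more intricate structure (in particular, it can have near-degeneracies that the scalar model cannot see). Your Ansatz of one outgoing travelling wave per side cannot close the monodromy loop once reflection is present; you need to carry two amplitudes per side from the start.

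This matters downstream as well. Your completeness argument via counting would only see the right leading Weyl density (which both the scalar and matrix models reproduce), but the quasi-eigenvalues from the scalar model will not match the true $\sigma_k$ at the $O(k^{-\epsilon_0})$ level, because the near-degenerate clusters produced by the matrix condition are absent. The arithmetic sensitivity to the angles (the special role of $\alpha_j=\pi/n$, as the paper flags) is also invisible in the scalar picture. To repair the proposal, replace the scalar phase condition with the matrix eigenvalue condition, keep track of both amplitudes per side, and derive the quasi-eigenvalues as the $\sigma$ for which $\mathrm{tr}\,T(\sigma)=2$; the rest of your plan (quasimode gluing, quantitative remainder estimates using $\alpha_j<\pi$, Weyl-counting plus a spacing/cluster argument for completeness) is then the right framework to push through.
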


Although $\hat\sigma_k$ may be computed in all circumstances, giving a full account is notationally complicated. Here we restrict to the case where no $\alpha_j$ has the form $\pi/n$ for $n\in\mathbb N$. In this case, we define matrices as follows:
\[A(\alpha_j) = \begin{bmatrix} \csc(\frac{\pi^2}{2\alpha_j}) & -i\cot(\frac{\pi^2}{2\alpha_j}) \\ i\cot(\frac{\pi^2}{2\alpha_j}) & \csc(\frac{\pi^2}{2\alpha_j})\end{bmatrix};\quad B(\ell_j,\sigma) = \begin{bmatrix} e^{i\ell_j\sigma} & 0 \\ 0 &e^{-i\ell_j\sigma} \end{bmatrix};\quad C(\alpha_j,\ell_j,\sigma)=A(\alpha_j)B(\ell_j,\sigma).\]
Then for any given $\Omega$ with angles $\vec\alpha=(\alpha_1,\ldots,\alpha_n)$ and side lengths $\vec\ell=(\ell_1,\ldots,\ell_n)$, we can define the parameter-dependent matrix $T(\vec\alpha,\vec\ell,\sigma)$ by
\[T(\vec\alpha,\vec\ell,\sigma) = C(\alpha_n,\ell_n,\sigma)C(\alpha_{n-1},\ell_{n-1},\sigma)\dots C(\alpha_1,\ell_1,\sigma).\]
With this, we define the following:
\begin{definition}
Suppose that no $\alpha_i$ is equal to $\pi/n$. Then the \emph{quasi-eigenvalues} $\hat\sigma_k$ of $\Omega$ are the non-negative values of $\sigma$ for which 1 is an eigenvalue of $T(\vec\alpha,\vec\ell,\sigma)$.
\end{definition}
The multiplicity of $\hat\sigma_k$ is the geometric multiplicity of 1 as an eigenvalue of $T(\vec\alpha,\vec\ell,\hat\sigma_k)$, except that if $\hat\sigma_1=0$ it has multiplicity 1. Note also that 1 is an eigenvalue of $T(\vec\alpha,\vec\ell,\sigma)$ if and only if the trace of $T(\vec\alpha,\vec\ell,\sigma)$ equals 2, which assists in computations \cite{LPPS2019}. There are similar, if more notationally involved, expressions for $\hat\sigma_k$ when one or more of the angles $\alpha_j$ has form $\pi/n$.

The condition that $T(\vec\alpha,\vec\ell,\sigma)$ have eigenvalue 1 should be viewed as a quantisation condition. Along each side of a polygon, a Steklov eigenfunction of sufficiently high energy $\sigma$ will look like a traveling wave with frequency $\sigma$. The matrices $A(\alpha)$ should be viewed as transmission matrices at the corners, and $B(\ell,\sigma)$ as matrices representing propagation along each side. The condition that 1 be an eigenvalue of $T$ is the condition that when the wave propagates all the way around the polygon, it comes back with the same phase.

The proof of Theorem \ref{thm:steklovpolymain} is again a quasimode construction, gluing together traveling waves along each side with model solutions in each corner. The model solutions in the corner are obtained by doubling the Peters solutions used in the sloshing problem. There are a number of technical complications related to the curvilinear boundary, and proving completeness is again quite difficult, but all this can be overcome.

A natural open question is the following:
\begin{ques}\label{ques:polygonsbigangles}
Can the condition that the angles of $\Omega$ are in $(0,\pi)$ be relaxed to $(0,2\pi)$?
\end{ques}
Based on numerical evidence the answer appears to be ``yes", but the proof techniques used here depend very strongly on the angles being less than $\pi$.

\subsubsection{Inverse spectral problem for polygons}

As discussed earlier, it is possible to recover the boundary lengths of a surface from its Steklov spectrum. This poses the natural question: what geometric information can be recovered from the Steklov spectrum of a polygon $\Omega$? In particular, can the side lengths and/or angles be recovered? The following theorem, due to Krymski, Levitin, Parnovski, Polterovich, and Sher \cite{KLPPS2021}, indicates that the answer is \emph{generically} yes, at least up to a natural symmetry involving the angles.

\begin{thm}\label{thm:mainklpps} \cite[Theorem 1.9]{KLPPS2021} Suppose that $\Omega$ is a curvilinear polygon, with all interior angles in $(0,\pi)$, and with no interior angle equal to $\pi/n$ for any $n$. Suppose also that the lengths $\{\ell_1,\ldots,\ell_n\}$ are incommensurable over $\{-1,0,1\}$. Then
\begin{enumerate}
    \item The side lengths $\ell_j$, in order, may be recovered from the Steklov spectrum.
    \item The vector
    \[\cos(\vec\alpha):=(\cos\frac{\pi^2}{2\alpha_1},\ldots,\cos\frac{\pi^2}{2\alpha_n})\]
    may be recovered from the Steklov spectrum.
\end{enumerate}
Moreover, the procedure to recover these is constructive.
\end{thm}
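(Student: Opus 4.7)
The starting point is Theorem~\ref{thm:steklovpolymain}, which provides an almost-periodic secular function
$$F(\sigma):=\operatorname{tr}T(\vec\alpha,\vec\ell,\sigma)-2,$$
whose positive zeros (with multiplicity) are the quasi-eigenvalues $\hat\sigma_k$, and which approximates the Steklov spectrum to order $O(k^{-\epsilon_0})$. My plan is to reduce the inverse spectral problem to reconstructing the finite-dimensional object $F$ from knowledge of $\{\sigma_k\}$, and then to read off $\vec\ell$ (cyclically) and $\cos(\vec\alpha)$ from the frequencies and Fourier coefficients of $F$. First I would write $B(\ell_j,\sigma)=\operatorname{diag}(e^{i\ell_j\sigma},e^{-i\ell_j\sigma})$ and expand the product $T=A(\alpha_n)B(\ell_n,\sigma)\cdots A(\alpha_1)B(\ell_1,\sigma)$ by summing over binary index tuples. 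This yields
$$F(\sigma)=-2+\sum_{\vec\epsilon\in\{-1,+1\}^n}c_{\vec\epsilon}(\vec\alpha)\,e^{i\sigma\langle\vec\epsilon,\vec\ell\rangle},$$
with each coefficient $c_{\vec\epsilon}$ a product of entries $\csc(\pi^2/(2\alpha_j))$ (where the binary indices agree on consecutive factors) and $\pm i\cot(\pi^2/(2\alpha_j))$ (at the sign flips). By the incommensurability hypothesis, the $2^n$ exponents $\langle\vec\epsilon,\vec\ell\rangle$ are pairwise distinct.

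The next step is to argue that the full secular function $F$ can be extracted from the asymptotic behaviour of the quasi-eigenvalues, via a smoothed-counting or Poisson-type argument: for any Schwartz function $\phi$ with compactly supported Fourier transform, the sum $\sum_k\phi(\hat\sigma_k-\tau)$ depends only on $F$ near $\tau$, and as $\tau\to\infty$ it isolates individual Fourier modes $e^{i\tau L_{\vec\epsilon}}$ with amplitudes depending linearly on $c_{\vec\epsilon}$. Combining this with the pointwise approximation $\sigma_k=\hat\sigma_k+O(k^{-\epsilon_0})$ from Theorem~\ref{thm:steklovpolymain} shows that the same asymptotic test statistics can be computed from the actual Steklov spectrum, so the entire set $\{(L_{\vec\epsilon},c_{\vec\epsilon})\}$ is spectrally determined.

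Once the frequencies $L_{\vec\epsilon}$ are known, the perimeter $L=\ell_1+\cdots+\ell_n$ appears as the unique largest frequency, and the next $n$ largest frequencies are the $L-2\ell_j$, recovering the multiset of side lengths. To fix the cyclic order, I would examine the coefficients $c_{\vec\epsilon}$ associated with double sign-flips at positions $m,m'$: a direct computation shows that when $m$ and $m'$ are cyclically adjacent one obtains a product involving $\cot(\pi^2/(2\alpha_{m-1}))\cot(\pi^2/(2\alpha_{m+1}))$ and $\csc(\pi^2/(2\alpha_m))$, whereas for non-adjacent pairs one obtains a product of the form $\cot(\pi^2/(2\alpha_m))\cot(\pi^2/(2\alpha_{m-1}))\cot(\pi^2/(2\alpha_{m'}))\cot(\pi^2/(2\alpha_{m'-1}))$. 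This structural dichotomy determines the cyclic adjacency graph of the side lengths (which is a cycle) and hence the cyclic order of $\vec\ell$ up to dihedral symmetry. With the order pinned down, the ratio of the coefficients of $e^{i\sigma L}$ and $e^{i\sigma(L-2\ell_m)}$ equals $\cos(\pi^2/(2\alpha_m))\cos(\pi^2/(2\alpha_{m-1}))$; combining these $n$ relations with analogous ratios coming from adjacent and non-adjacent double flips, which yield $\cos^2(\pi^2/(2\alpha_j))$, one solves a finite system to determine each component of $\cos(\vec\alpha)$.

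The main obstacle is the reconstruction of $F$ from the actual Steklov eigenvalues rather than from the (unknown) $\hat\sigma_k$. The error $O(k^{-\epsilon_0})$ in Theorem~\ref{thm:steklovpolymain} is not small compared with the typical spacing between $\hat\sigma_k$, which is $O(1)$, so one cannot match $\sigma_k$ to $\hat\sigma_k$ individually with controllable pointwise error. Instead one must work with smoothed spectral statistics whose Fourier content is concentrated at bounded frequencies, transfer the bound $O(k^{-\epsilon_0})$ into distributional smallness of $\sum_k\delta_{\sigma_k}-\sum_k\delta_{\hat\sigma_k}$, and verify that the resulting test statistics still detect each exponential mode $e^{i\sigma L_{\vec\epsilon}}$ separately and uniformly in $\vec\epsilon$. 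Making this quantitative is in my view the heart of the argument; the combinatorial steps concerning $\vec\ell$ and $\cos(\vec\alpha)$ are elementary once the secular function is in hand.
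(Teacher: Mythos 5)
Your expansion of $\operatorname{tr}T(\vec\alpha,\vec\ell,\sigma)$ over binary index sequences, and the combinatorial analysis that follows — extreme frequencies give the multiset of side lengths, the structural dichotomy between adjacent and non-adjacent double sign-flips determines the cyclic order, and coefficient ratios yield $\cos\vec\alpha$ — is essentially correct and closely parallels the second, ``algorithmic'' step that the paper actually performs. Two small remarks there: your adjacency test compares the double-flip coefficient ratio $\rho_{m-1}\rho_{m+1}$ against the product $r_m r_{m+1}=\rho_{m-1}\rho_m^2\rho_{m+1}$ of single-flip ratios, and this degenerates precisely when $\rho_m^2=\cos^2(\pi^2/2\alpha_m)=1$, i.e. when $\alpha_m=\pi/(2k)$; this is already excluded by the hypothesis that no angle is $\pi/n$, but the reliance on that exclusion should be made explicit. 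Also, the system $\{r_m\}\cup\{\rho_m^2\}$ leaves a residual global sign ambiguity in the $\rho_m$'s when $n$ is even; one has to check that this does not obstruct the claimed recovery, or note that it is part of the unavoidable symmetry.

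The genuine gap is in Step 1, and it is more serious than ``incomplete.'' The paper recovers the secular function not by inspecting spectral statistics but by forming an explicit infinite product over the (approximately known) eigenvalues and invoking the Hadamard--Weierstrass factorisation theorem; since $F$ is an entire function of exponential type with real zeros of Weyl density $L/\pi$, the product reproduces $F$ up to an elementary normalisation, and the $O(k^{-\epsilon_0})$ error is absorbed quantitatively. Your alternative, extracting Fourier modes of $F$ from $\sum_k\phi(\sigma_k-\tau)$, runs into a concrete obstruction: the assertion that this statistic ``isolates individual Fourier modes $e^{i\tau L_{\vec\epsilon}}$ with amplitudes depending linearly on $c_{\vec\epsilon}$'' is not correct. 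Writing $N(\tau)\approx\frac{1}{\pi}\Im\log F(\tau)$ via the argument principle and expanding $\log F$ shows that the oscillatory part of the counting function has Bohr spectrum contained in the integer module generated by the differences $L-L_{\vec\epsilon}$, not the $L_{\vec\epsilon}$ themselves, and the amplitudes are polynomial — not linear — in the ratios $c_{\vec\epsilon}/c_{(+,\ldots,+)}$. Disentangling the original frequencies and coefficients from these nonlinear combinations is exactly the problem the Hadamard product sidesteps by reconstructing $F$ as an analytic function directly. I would therefore replace the smoothed-counting argument by the factorisation argument, after which your analysis of $F$ applies verbatim.
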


The procedure has two steps. First, the Steklov spectrum is used to reconstruct the polynomial $T(\vec\alpha,\vec\ell,\sigma)$ via the construction of an infinite product and use of the Hadamard-Weierstrass factorisation theorem. Second, the vectors $cos(\vec\alpha)$ and $\vec\ell$ are recovered from $T(\vec\alpha,\vec\ell,\sigma)$. This second recovery is fully algorithmic and can be implemented by computer.

\begin{remark} 
\begin{itemize}
    \item Observe that there is no hope of recovering $\vec\alpha$ from $T(\vec\alpha,\vec\ell,\sigma)$, as the definition of $T$ may be written so that it only involves $\cos(\vec\alpha)$. 
\item There are further results when one or more of the angles is $\pi/n$ for $n$ even; see \cite[Section 1.3]{KLPPS2021} for details.
\item The condition about incommensurability is necessary, as without this assumption one may construct two different polygons with the same $T(\sigma)$. The same is true if one or more angles are $\pi/n$ for $n$ odd. See \cite[Subsection 3.3]{KLPPS2021}.
\end{itemize}
\end{remark}

Many questions here are still open, for example:
\begin{ques}\label{ques:recoverangles}
Under the hypotheses of Theorem \ref{thm:mainklpps}, can the angles themselves, and not just $\cos(\pi^2/2\alpha)$, be recovered from the Steklov spectrum?
\end{ques}
In order to address this one will need to use some sort of non-asymptotic information from the Steklov spectrum, as the asymptotic behaviour does not distinguish between two different angles with the same value of $\cos(\pi^2/2\alpha)$.

\subsection{Inverse problems on orbifolds}\label{subsec:orb pos}

Orbifolds are a generalisation of manifolds in which certain types of singularities may occur.  The simplest examples of orbifolds, the so-called ``good orbifolds'', are quotients $\orb:=H\bs M$ of a manifold $M$ by a discrete group $H$ acting smoothly and effectively with only finite isotropy.   (``Effective'' means that the identity element of $\Gamma$ is the only element that acts as the identity transformation.)  Points in $\orb$ can be viewed as $H$-orbits of the action of $H$ on $M$, hence the name ``orbifold''.  Letting $\pi: M\to \orb$ be the projection, the boundary of $\orb$ is given by 
\begin{equation}\label{eq.boundary}\partial\orb=\pi(\partial M).\end{equation}
Singularities occur at elements of $\orb$ for which the corresponding orbit has non-trivial isotropy in $H$.  The \emph{isotropy type} of the singularity is the isomorphism class of the isotropy group.   

Arbitrary $n$-dimensional orbifolds locally have the structure of good orbifolds.  Every point in the interior of an orbifold has a neighborhood $U$ modelled on a quotient $\Gamma\bs\widetilde{U}$ of an open set $\widetilde{U} \subset\R^n$ by an effective action of a (possibly trivial) finite group.   A neighborhood of each boundary point has a similar model but with the role of $\R^n$ played by a closed half-space of $\R^n$.

 Orbifolds are stratified spaces:  the regular points form an open dense stratum, and the singular strata are connected components of the set of singular points with a given isotropy type.
 
 \begin{ex}\label{2dorb}
In the two-dimensional case, all singular strata are classified as follows:
\begin{enumerate}
\item Cone points $p$ of order $m$, $2\leq m\in \Z^+$:  A neighborhood of $p$ is modelled on the quotient of a disk by the cyclic group generated by rotation.  
\item Reflectors: These are 1-dimensional singular strata.   A neighborhood of a point in the reflector is modelled on the quotient of a disk by reflection across a diameter.
\item Dihedral points:   These are points where two reflectors come together at an angle of $\frac{\pi}{m}$ for some $m\in\Z^+$.  A neighborhood of the point is modelled on the quotient of a ball by a dihedral group of order $2m$.
  \end{enumerate}

For $n$-dimensional orbifolds, one can have singular strata of any dimension $<n$.  The only strata of co-dimension one are reflectors (modelled on quotients of a ball by reflection across a plane through the center).
\end{ex}

Riemannian metrics on a good orbifold $\mathcal{O}:=H\bs M$ are defined by $H$-invariant Riemannian metrics on $M$.  Letting $\pi:M\to \orb$ be the projection, a function $f:\orb\to\R$ is said to be $C^\infty$ if $\pi^*f\in C^\infty(M)$.  Since the Laplacian of $M$ commutes with the isometric action of $H$, one can define a Laplace operator on $C^\infty(\orb)$ by $\pi^*\Delta_\orb f =\Delta_M\circ \pi^*f$, where $\Delta_M$ is the Laplace-Beltrami operator of $M$.  On arbitrary orbifolds, Riemannian metrics are defined locally using orbifold charts, subject to a compatibility condition to obtain a global Riemannian metric.  The Laplacian is similarly defined.

The Dirichlet-to-Neumann operator and Steklov spectrum on orbifolds were introduced in \cite{ADGHRS2019} by Arias-Marco, Dryden, Gordon, Hassannezhad, Ray, and Stanhope  The Dirichlet-to-Neumann operator on compact Riemannian orbifolds is a pseudo-differential operator with discrete spectrum.  The variational characterisation of eigenvalues \ref{eq:Stek Rayleigh min max} remains valid in the orbifold setting.

\begin{remark}\label{paorb}
Any orbifold $\orb$ whose only singular strata are reflectors is a good orbifold.  Moreover, the underlying topological space $\underline{\orb}$ of $\orb$ has the structure of a manifold with boundary.    $\partial\underline{\orb}$ is the union of the orbifold boundary $\partial\orb$ and the reflector strata.  For example, a ``half-disk'' orbifold, the quotient of a disk by reflection, is pictured in Figure~\ref{fig:halfdisk}; the reflector stratum is indicated by a double line.
\begin{figure}
  \centering
  \includegraphics[width=3cm]{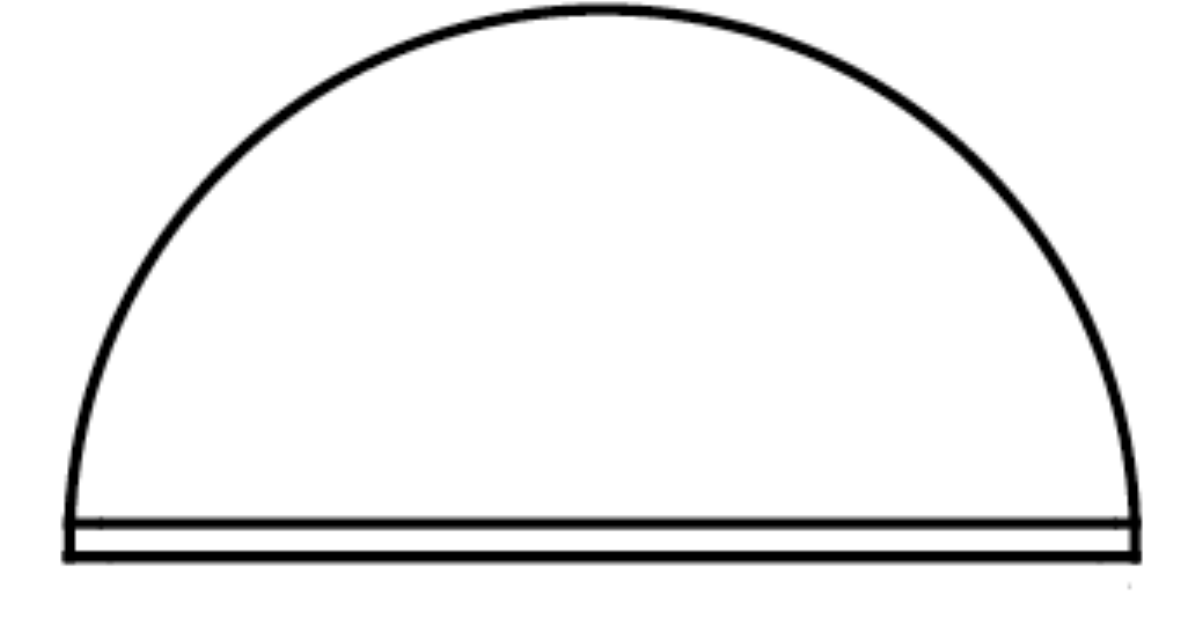}
 \caption{}
  \label{fig:halfdisk}
  \end{figure}

 Functions $f\in C^\infty(\orb)$ must have normal derivative zero on the reflectors since they pull back to reflection-invariant functions on the double $M$.    Thus the Steklov eigenvalue problem on $\orb$ is equivalent to a mixed Steklov-Neumann eigenvalue problem on the manifold $\underline{\orb}$.
\end{remark}

For both the Laplace spectrum and the Steklov spectrum, it is natural to ask the following:

\begin{itemize}\item \emph{Does the spectrum detect the presence of singularities?   Equivalently, are Riemannian orbifolds with singularities spectrally distinguishable from Riemannian manifolds?}

\end{itemize}

This question remains open even in the case of the Laplace spectrum although there are many partial results.    The heat asymptotics for closed Riemannian orbifolds were developed by Donnelly \cite{Do1976} in the case of good Riemannian orbifolds and generalised to arbitrary closed Riemannian orbifolds by Dryden, Gordon, Greenwald, and Webb (\cite{DGGW2008} and errata \cite{DGGW2017}).   From the heat asymptotics, one can show for example that the Laplace spectrum detects whether an orbifold contains any singular strata of odd co-dimension.  

In the setting of the Steklov spectrum, the question above can be split into two questions:

\begin{ques}\label{ques: orb boundary}
Does the Steklov spectrum detect the presence of singularities on the boundary of a compact Riemannian orbifold?  
\end{ques}

\begin{ques}\label{ques: orb interior}
Does the Steklov spectrum detect the presence of singularities in the interior of a compact Riemannian orbifold?
\end{ques}

These questions were studied in the case of orbisurfaces, i.e., two-dimensional orbifolds, in \cite{ADGHRS2019}.   Before stating the results, we note that every component of the boundary of an orbisurface is a closed one-dimensional orbifold that has one of the following two structures: 
\begin{itemize}
\item Type I:  a circle.   This type has no singularities.
\item Type II:  a ``half-circle" $\Z_2\bs S^1$, the quotient of a circle by a reflection.  This type has two singularities, both of which are reflector points.
\end{itemize}
For example, the half-disk orbifold in Remark~\ref{paorb} has boundary of the second type.

The following proposition and corollary generalise the results of Girouard, Parnovski, Polterovich and Sher discussed in Remark~\ref{rem:gpps2014}.
  
\begin{prop}\cite[Theorem 1.2]{ADGHRS2019}\label{adghrs}  Let $\orb$ be a compact Riemannian orbisurface.  Suppose that $\partial\orb$ consists of $r$ type I components of lengths $\ell_1,\dots,\ell_r$ and $s$ type II components of lengths $\ell_1',\dots,\ell_s'$.     Let $U$ be the disjoint union of $r$ disks of circumference $\ell_1,\dots, \ell_r$ and $s$ half-disk orbifolds of boundary length $\ell_1',\dots,\ell_s'$.  Then
\[|\sigma_k(\orb)-\sigma_k(U)| =O(k^{-\infty}).\]
\end{prop}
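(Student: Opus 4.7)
My plan is to reduce the orbifold statement to the smooth-surface case via a doubling trick. Since an orbisurface whose only singularities are reflectors is itself a quotient of a smooth surface with boundary by a $\Z_2$-action by reflection (see Remark~\ref{paorb}), I would form the \emph{reflector double} $\widetilde{\orb}$ of $\orb$: cut $\orb$ open along each reflector stratum and glue two isometric copies along these strata. The result is a smooth compact Riemannian surface with boundary carrying an isometric $\Z_2$-involution $\tau$, and $\orb$ is identified with $\Z_2\backslash\widetilde\orb$. A function on $\orb$ pulls back to a $\tau$-invariant function on $\widetilde\orb$, and this identification intertwines the boundary trace maps. Consequently the Steklov spectrum of $\orb$ coincides with the $\tau$-invariant part of the Steklov spectrum of $\widetilde\orb$, i.e., $\sigma(\orb)=\sigma(\widetilde\orb)^{+}$.

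The next step is to track what doubling does to boundary components. Type~I components of $\orb$ lie away from the reflector set, so each such component of length $\ell_i$ lifts to two disjoint copies in $\partial\widetilde\orb$ of length $\ell_i$, swapped by $\tau$. A Type~II component of $\orb$, of length $\ell_j'$, has its two reflector endpoints identified under doubling with those of its mirror copy, producing a single smooth circle of length $2\ell_j'$ in $\partial\widetilde\orb$ on which $\tau$ acts by reflection across its two former reflector points. Thus $\widetilde\orb$ is a smooth compact surface with boundary having $2r+s$ smooth boundary circles. By the Girouard--Parnovski--Polterovich--Sher asymptotics of Remark~\ref{rem:gpps2014},
\[ |\sigma_k(\widetilde\orb)-\sigma_k(\widetilde U)| = O(k^{-\infty}), \]
where $\widetilde U$ is the disjoint union of disks with perimeters matching those of $\partial\widetilde\orb$.

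To finish, I would upgrade the GPPS comparison to the $\tau$-equivariant setting. The proof of GPPS proceeds by showing that $\DtN_{\widetilde\orb}$, as a pseudodifferential operator of order one on $\partial\widetilde\orb$, agrees with $\sqrt{\Delta_{\partial\widetilde\orb}}$ modulo an operator that is smoothing of infinite order; this is microlocal and therefore compatible with the action of any isometry of $\widetilde\orb$, in particular $\tau$. Restricting the resulting operator equivalence to the $\tau$-invariant subspace (and using the min-max principle for this subspace) yields $|\sigma_k(\widetilde\orb)^{+}-\sigma_k(\widetilde U)^{+}|=O(k^{-\infty})$. It remains to identify $\sigma(\widetilde U)^{+}$: on a pair of disks of equal perimeter $\ell_i$ swapped by $\tau$, the invariant eigenfunctions are those that are copied identically on both components, giving the Steklov spectrum of a single disk of perimeter $\ell_i$; on a disk of perimeter $2\ell_j'$ where $\tau$ is reflection, the invariant eigenfunctions are precisely the Neumann-symmetric ones, so the invariant spectrum is the mixed Steklov--Neumann spectrum of a half-disk, which is exactly the Steklov spectrum of the half-disk orbifold of boundary length $\ell_j'$. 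Hence $\sigma(\widetilde U)^{+}=\sigma(U)$, and the conclusion $|\sigma_k(\orb)-\sigma_k(U)|=O(k^{-\infty})$ follows.

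The main obstacle is step three: making the GPPS comparison genuinely $\tau$-equivariant rather than just eigenvalue-wise asymptotic. One must verify that the unitary intertwiners used to identify $\DtN$ with $\sqrt{\Delta_{\partial}}$ modulo smoothing operators can be chosen to commute with $\tau$. This should follow from the symbol calculations being local and diffeomorphism-natural, combined with the fact that $\tau$ restricts to an isometry of each boundary component. A parallel (perhaps cleaner) approach would bypass the doubling altogether and directly develop a Hislop--Lutzer type theorem (\thmref{thm: hislop lutzer}) for orbifolds with reflector boundary, comparing $\DtN_{\orb}$ to $\DtN$ on the model half-disk orbifold using an orbifold pseudodifferential calculus near the reflector endpoints; but the doubling route has the advantage of directly reusing the already-proven smooth result.
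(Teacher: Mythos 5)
Your doubling strategy is sound and, as far as we can determine, is essentially the approach of the cited reference \cite{ADGHRS2019}. Two points deserve attention, one a genuine omission and one a clarification of the technical crux you correctly identified.

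First, the hypothesis allows $\orb$ to be an arbitrary compact Riemannian orbisurface, which may contain interior cone points and dihedral points as well as reflectors. Your construction of the reflector double $\widetilde{\orb}$ presumes that the only singular strata are reflectors. This is not a real restriction, but the reduction must be stated: by the discussion following Corollary~\ref{cor:stek orbisurface} together with Remark~\ref{rem:con sing}, an extended $\sigma$-isometry conformally removes all interior cone points and merges the two reflector arms at every dihedral point, without changing the Steklov spectrum, the boundary metric, or the type~I/type~II structure of $\partial\orb$. After this preliminary reduction, your doubling applies.

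Second, you rightly flag the $\tau$-equivariant upgrade of the GPPS comparison as the crux, but the justification you sketch (``the proof is microlocal, hence compatible with isometries'') is too coarse, and your worry about choosing the intertwining unitaries to commute with $\tau$ is in fact a non-issue once the comparison is set up at the right level. What is actually needed is the operator-level version of the GPPS result, namely that
\[
\DtN_{\widetilde{\orb}} - \sqrt{\Delta_{\partial\widetilde{\orb}}}
\]
is a smoothing operator, rather than merely the eigenvalue consequence $|\sigma_k(\widetilde{\orb})-\sigma_k(\widetilde U)|=O(k^{-\infty})$ quoted in Remark~\ref{rem:gpps2014}. This operator statement follows by performing a $\sigma$-isometry (which can be chosen $\tau$-invariant, since $\tau$ is an isometry, by averaging the conformal factor) to make a collar of $\partial\widetilde{\orb}$ a flat cylinder, then invoking the explicit cylinder computation together with the fact that the full symbol of $\DtN$ is determined by the metric in a collar of the boundary, as in the proof of Theorem~\ref{thm: hislop lutzer}. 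With the operator statement in hand, $\tau$-equivariance is automatic: $\DtN_{\widetilde{\orb}}$ commutes with $\tau^*$ because $\tau$ is an isometry of $\widetilde{\orb}$, and $\sqrt{\Delta_{\partial\widetilde{\orb}}}$ commutes with $\tau^*$ because $\tau$ restricts to an isometry of $\partial\widetilde{\orb}$; hence their difference, the smoothing error, commutes with $\tau^*$ as well. The restriction of a smoothing operator to the $\tau$-invariant subspace is still smoothing, and the min-max characterisation on that subspace yields the $O(k^{-\infty})$ comparison of the invariant eigenvalues. Your identification of the $\tau$-invariant Steklov spectrum of $\widetilde U$ with $\sigma(U)$ is correct, completing the argument. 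No separate examination of intertwining unitaries in the GPPS proof is required.
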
. 

\begin{cor}\label{cor:stek orbisurface} The Steklov spectrum of a compact Riemannian orbisurface determines:
\begin{enumerate} 
\item  the number of boundary components of each type and their lengths up to an equivalence.  
\item the number of singular points on the boundary.
\end{enumerate}
\end{cor}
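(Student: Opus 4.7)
The strategy is to reduce everything to the model problem on $U$ via Proposition~\ref{adghrs}, explicitly identify the Steklov spectrum of $U$ as a union of arithmetic progressions labeled by boundary type, and then extract these progressions from the asymptotic structure of $\stek(\orb)$. Since $|\sigma_k(\orb)-\sigma_k(U)|=O(k^{-\infty})$, the given spectrum $\stek(\orb)$ determines $\stek(U)$ up to a rapidly decaying correction; in particular, the eigenvalues of $U$ together with their multiplicities can be recovered from $\stek(\orb)$ (for $k$ large enough that the $O(k^{-\infty})$ perturbation is smaller than any spectral gap that appears in the model problem).

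First I would write down $\stek(U)$ explicitly. A Euclidean disk of circumference $\ell$ has Steklov eigenvalues $\tfrac{2\pi n}{\ell}$ for $n=0,1,2,\dots$, with multiplicity $2$ for $n\geq 1$ (Example~\ref{ex: ball}). For a half-disk orbisurface $\Z_2\backslash \D$ of boundary length $\ell'$, Remark~\ref{paorb} identifies the Steklov problem with a mixed Steklov--Neumann problem on a half-disk, whose eigenfunctions are the reflection-invariant Steklov eigenfunctions $\cos(n\theta)$ of a disk of circumference $2\ell'$; this gives eigenvalues $\tfrac{\pi n}{\ell'}$ for $n=0,1,2,\dots$, each of multiplicity one. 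Therefore $\stek(U)$ is, as a multiset, the union
\[
\bigsqcup_{i=1}^r \left\{\tfrac{2\pi n}{\ell_i}\;:\; n\geq 0\right\}_{\text{mult }2 \,(n\geq 1)}\;\sqcup\;\bigsqcup_{m=1}^s\left\{\tfrac{\pi n}{\ell'_m}\;:\;n\geq 0\right\}_{\text{mult }1}.
\]

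Next I would recover the two multisets $\{\ell_i\}$ and $\{\ell'_m\}$ from this spectrum. This is an elementary-but-delicate number-theoretic exercise, essentially identical to the one carried out by Girouard--Parnovski--Polterovich--Sher~\cite{GPPS2014} in the manifold case discussed in Remark~\ref{rem:gpps2014}, but with the additional input that each arithmetic progression carries a \emph{label} given by its multiplicity: a progression of common difference $\alpha$ appearing with multiplicity two corresponds to a type~I component of length $2\pi/\alpha$, while one appearing with multiplicity one corresponds to a type~II component of length $\pi/\alpha$. Concretely, one can process the eigenvalues in increasing order, at each stage identifying the smallest common difference $\alpha$ present, reading off its multiplicity to determine the type, removing the corresponding arithmetic progression (with the correct multiplicity) from the spectrum, and iterating. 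The care required is exactly as in~\cite{GPPS2014}: one must rule out the possibility that two progressions with nearly coincident common differences conspire to look like a single progression of some other type. The main obstacle in the argument is thus the combinatorial/number-theoretic bookkeeping for the degenerate case in which the lengths $\ell_i$ and $\ell'_m$ are commensurable, where one must carefully compare multiplicities to untangle overlapping progressions; however, the multiplicity labels ($2$ versus $1$) make the untangling no harder than in the all-type-I case treated in~\cite{GPPS2014}.

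Once the multisets $\{\ell_i\}_{i=1}^r$ and $\{\ell'_m\}_{m=1}^s$ are recovered, part~(1) of the corollary is immediate: the numbers $r$ and $s$ of boundary components of each type, together with their lengths as unordered tuples, have been read off from $\stek(\orb)$. Part~(2) then follows at once from the classification of boundary components of orbisurfaces recalled just before Proposition~\ref{adghrs}: a type~I boundary circle carries no singular points, whereas a type~II component $\Z_2\backslash S^1$ carries exactly two reflector points. Hence the total number of singular points on $\partial\orb$ equals $2s$, which is spectrally determined since $s$ is.
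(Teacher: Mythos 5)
Your identification of the model spectrum is correct: a disk of circumference $\ell$ contributes $\frac{2\pi n}{\ell}$ with multiplicity $2$ (for $n\geq 1$), and a half-disk orbifold of boundary length $\ell'$ contributes $\frac{\pi n}{\ell'}$ with multiplicity $1$. The reduction to $U$ via Proposition~\ref{adghrs} and the analogy with \cite{GPPS2014} are also the right framework. However, there is a genuine gap at the heart of your argument.

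You claim that "a progression of common difference $\alpha$ appearing with multiplicity two corresponds to a type~I component of length $2\pi/\alpha$, while one appearing with multiplicity one corresponds to a type~II component." This is false: two type~II components of the \emph{same} length $\pi/\alpha$ pool to give a multiplicity-two progression with common difference $\alpha$, which is indistinguishable -- as a multiset of nonzero eigenvalues, and in particular in the asymptotic regime -- from the contribution of a single type~I component of length $2\pi/\alpha$. This is precisely why the corollary only claims recovery of the lengths "up to an equivalence," a qualifier your argument omits entirely and which it cannot account for. The paper's remark immediately after the corollary makes this explicit: a disk of length $\ell_1$ together with two half-disks of length $\ell_2$ is \emph{exactly} Steklov isospectral to a disk of length $2\ell_2$ together with two half-disks of length $\ell_1/2$. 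In that pair the multiplicity-two progressions $\frac{2\pi n}{\ell_1}$ and $\frac{\pi n}{\ell_2}$ both occur on each side, but which one is "the disk" is swapped; your algorithm, applied to either side, would read \emph{both} multiplicity-two progressions as type~I, returning $(r,s)=(2,0)$ instead of the correct $(r,s)=(1,2)$. So the failure is not cosmetic -- it would also derail your derivation of part~(2), since $s$ itself is misidentified. The actual proof in \cite{ADGHRS2019} must therefore do something more than label progressions by multiplicity; you would need an additional spectral mechanism (beyond the asymptotic model $U$) to pin down $r$ and $s$ in the presence of coinciding type~II lengths, and your proposal does not supply one.
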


The equivalence relation in the statement of the corollary is generated by the following:    a single circular boundary component of length $\ell_1$ together with a pair of type II boundary components each of length $\ell_2$ is equivalent to a single type I boundary component of length $2\ell_2$ together with two type II components each of length $\frac{1}{2}\ell_1$.   The first statement cannot be improved since the disjoint union of a disk of boundary length $\ell_1$ and two`half-disk orbifolds each of boundary length $\ell_2$ is Steklov isospectral to the disjoint union of a disk and two half-disks with the equivalent boundary lengths.

Next, consider Question~\ref{ques: orb interior} for orbisurfaces.  By Example~\ref{2dorb}, there are three types of singularities to consider.  The first of these, orbifold cone points, can be viewed as a special case of conical singularities.  As discussed in Remark~\ref{rem:con sing}, they can be removed by a $\sigma$-isometry and thus are not detected by the Steklov spectrum.    Dihedral points can similarly be removed; the two reflectors that meet at the dihedral point then become a single reflector.   Thus every orbisurface is $\sigma$-isometric to an orbisurface whose only interior singularities are reflectors. It is not known whether the Steklov spectrum detects reflectors that do not intersect the orbifold boundary.    In view of Remark~\ref{paorb}, this question is a special case of the following much more general question that we pose in arbitrary dimension:

\begin{ques}\label{ques:mixed vs pure} Can a mixed Steklov-Neumann problem on a compact Riemannian manifold $\om_1$  and a pure Steklov problem on a compact Riemannian manifold $\om_2$ have the same spectrum?
\end{ques}

In dimension greater than two, Questions~\ref{ques: orb boundary} and \ref{ques: orb interior} are completely open.

One can also ask:

\begin{ques}\label{ques.sing type} Among Riemannian orbifolds with singularities, to what extent does the Steklov spectrum recognise the types of singularities?
\end{ques}

As the heat asymptotics yielded some positive inverse spectral results for the Laplacian on orbifolds, one might start with the following:

\begin{prob}\label{prob: orb heat}  Develop Steklov heat asymptotics for compact Riemannian orbifolds with boundary.
\end{prob}

\section{Inverse problems: negative results}\label{sec: isospectrality}

 In this section, we address constructions of Steklov isospectral Riemannian manifolds that are not isometric.   Such constructions enable us to identify geometric or topological invariants that are not spectrally determined.  
 
 Recall by Corollary~\ref{cor:sigmaisom} that $\sigma$-isometric Riemannian surfaces (as defined in Definition~\ref{sigmaisom}) are necessarily Steklov isospectral.  Thus when working in dimension two, our interest will be in Steklov isospectral surfaces that are not $\sigma$-isometric.   
 
The current state of the art for constructing Steklov isospectral manifolds is almost identical to that for Laplace isospectrality.   There are two general techniques: the Sunada technique (and various generalisations) and the torus action technique.  These techniques were initially introduced to construct Laplace isospectral manifolds, with or without boundary.  In the case of non-trivial boundary, the methods generally produce manifolds that are both Neumann and Dirichlet isospectral; moreover, the boundaries of the manifolds are Laplace isospectral as well.  Both methods are quite robust in that they yield pairs or families of Riemannian manifolds that are simultaneously isospectral for a wide range of spectral problems.  In particular, in the case of manifolds with boundary, the manifolds constructed by these methods are now known to be Steklov isospectral as well.  

Beyond these techniques, most constructions of either Laplace isospectral or Steklov isospectral   manifolds in the literature are via ad hoc methods, e.g., direct computations of the spectra.

There is a very large literature on Riemannian manifolds constructed via the Sunada and torus action techniques.  While most of this literature pre-dated the application to the Steklov problem, it immediately yields a vast number of examples of Steklov isospectral manifolds.  In particular, almost all known Laplace isospectral Riemannian manifolds with boundary are also Steklov isospectral.    (Plane domains are a notable exception, as we will discuss later in this section.)      

However, it is disappointing that we do not currently have methods for isospectral constructions specific to the Steklov problem.  
 In particular, the following questions remain open:

 \begin{ques}\label{ques.NotLapIsosp}
Do there exist Steklov isospectral Riemannian manifolds whose boundaries are not Laplace isospectral?
\end{ques} 
This question can be rephrased as
\emph{Does the Steklov spectrum of a Riemannian manifold determine the Laplace spectrum of its boundary?}

\begin{ques}\label{ques.NotNeumIsosp}
Do there exist Steklov isospectral Riemannian manifolds that are not Neumann isospectral (other than $\sigma$-isometric surfaces)?  
\end{ques}

The analogous question with ``Neumann'' replaced by ``Dirichlet'' is also open.

Question~\ref{ques.NotLapIsosp} was first posed in the earlier survey \cite{GiPo2017}.  We are hesitant to conjecture an answer to this question.  Throughout this survey, we have seen many relationships between the Steklov spectrum of a Riemannian manifold $(\om,g)$ and the Laplace spectrum of $(\partial\om,g)$, so this question is natural and interesting.   On the other hand, there is no obvious connection between the  Steklov spectrum and the Neumann or Dirichlet spectrum.  Thus one would expect Question~\ref{ques.NotNeumIsosp} to have a positive answer. The fact that the question is open seems primarily indicative of the challenge of coming up with new methods for constructing Steklov isospectral manifolds.

All known examples of Steklov isospectral manifolds satisfy the following notion of ``strong Steklov isospectrality'':  

 \begin{nota}\label{nota.strongStek} Given a compact Riemannian manifold $\om$ with boundary and $\alpha\in \R$, consider the \emph{Steklov eigenvalue problem with potential} $\alpha$: 
 \begin{equation}\label{eq.alpha}
\begin{cases}\Delta u=\alpha\,u \mbox{\,\,\,\,on\,\,}\om\\
\partial_\nu u=\sigma u \mbox{\,\,\,\,on\,\,}\partial\om
\end{cases}
\end{equation}
This is a well-defined problem with discrete spectrum, provided that $\alpha$ is not a Dirichlet eigenvalue for the Laplacian on $\om$.    We will denote its spectrum by 
$\stek_\alpha(\om)$. One has an orthonormal basis of $L^2(\partial\om)$ consisting of $\alpha$-Steklov eigenfunctions (more precisely, of the restrictions to $\partial\om$ of the $\alpha$-Steklov eigenfunctions).  

Observe that $\stek_0(\om)=\stek(\om)$. 
 

We will say that two compact Riemannian manifolds $\om$ and $\om'$ are \emph{strongly Steklov isospectral} if they have the same Dirichlet spectrum and 
$\stek_\alpha(\om)=\stek_{\alpha}(\om')$ for all $\alpha$ not in their common Dirichlet spectrum.  
\end{nota}
  
\begin{remark} If one fixes $\sigma$ and treats $\alpha$ as the unknown, then Equation~\eqref{eq.alpha} becomes the Robin eigenvalue problem with parameter $\sigma$.    Thus $\om$ and $\om'$ are strongly Steklov isospectral if and only if they are isospectral for the Robin problem for every parameter $\sigma$.   

\end{remark}  

We note that except when $\alpha=0$, the spectrum $\stek_\alpha(\om,g)$ of a Riemannian surface $(\om,g)$ is \emph{not} invariant under $\sigma$-isometries.   (See Definition~\ref{sigmaisom}.)

\begin{ques}\label{stek not strong}
Do there exist Steklov isospectral Riemannian manifolds that are not strongly Steklov isospectral (other than $\sigma$-isometric surfaces)?
\end{ques}

There is no reason to expect the Steklov spectrum to determine all the $\alpha$-Steklov spectra.  As in the case of Question~\ref{ques.NotNeumIsosp}, new methods are needed to produce counterexamples.

The outline of this section is as follows: 

\begin{itemize}
\item Subsection~\ref{subsec.prod}: Product manifolds.   (We will see that strong Steklov isospectrality behaves better than simple -- i.e., not strong -- Steklov isospectrality under Riemannian direct products.)  
\item Subsection~\ref{subsec.sun}: Sunada's technique. 
\item Subsection~\ref{subsec.torus}: Torus action technique.  
\item Subsection~\ref{ballquots}: Quotients of Euclidean balls.  
 \end{itemize}
 
  \subsection{Product manifolds.}\label{subsec.prod}
  
As noted in the discussion following Example~\ref{example: cylinder}, the earliest known non-trivial construction of Steklov isospectral manifolds consisted of pairs of cylinders $I\times N$ and $I\times N'$, where $N$ and $N'$ are any pair of non-isometric Laplace isospectral closed manifolds and $I$ is an interval.   One can replace $I$ in this construction by any \emph{fixed} compact Riemannian manifold with boundary to obtain Steklov isospectral manifolds  $\om\times N$ and $\om\times N'$.   (This elementary fact follows from Proposition~\ref{isosp_prod} below.)    

Strong Steklov isospectrality is preserved by much more general products:

\begin{prop}\label{isosp_prod}
Let $N$ and $N'$ be Laplace isospectral closed Riemannian manifolds and let $\om$ and $\om'$ be strongly Steklov isospectral compact Riemannian manifolds with boundary.   Then $\om\times N$ is strongly Steklov isospectral to $\om'\times N'$.
\end{prop}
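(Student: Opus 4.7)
The plan is to use separation of variables on the $\alpha$-Steklov problem for the product manifold, reducing everything to a family of $(\alpha-\lambda)$-Steklov problems on the factor $\om$, indexed by Laplace eigenvalues $\lambda$ of $N$.

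First I would make sure that $\stek_\alpha(\om \times N)$ is defined for a given $\alpha$ precisely when $\om' \times N'$ is also defined at $\alpha$. Because $N$ is closed, $\partial(\om \times N) = \partial\om \times N$ and the outward unit normal is tangential only to the $\om$-factor. Writing $\Delta_{\om \times N} = \Delta_\om + \Delta_N$, the Dirichlet spectrum of $\om \times N$ is the multiset $\{\mu_i + \lambda_j\}$ where $\{\mu_i\}$ is the Dirichlet spectrum of $\om$ and $\{\lambda_j\}$ the Laplace spectrum of $N$. Strong Steklov isospectrality of $\om, \om'$ forces $\operatorname{Spec}_{\mathrm{Dir}}(\om) = \operatorname{Spec}_{\mathrm{Dir}}(\om')$, and isospectrality of $N, N'$ gives $\operatorname{Spec}(\Delta_N) = \operatorname{Spec}(\Delta_{N'})$, so $\om \times N$ and $\om' \times N'$ have identical Dirichlet spectra, and it suffices to prove $\stek_\alpha(\om \times N) = \stek_\alpha(\om' \times N')$ for all $\alpha$ outside this common spectrum.

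Next, I would fix such an $\alpha$ and separate variables. Let $\{\varphi_j\}$ be an $L^2(N)$-orthonormal basis of Laplace eigenfunctions with $\Delta_N \varphi_j = \lambda_j \varphi_j$. A function $u(x,y) = f(x)\varphi_j(y)$ solves $\Delta u = \alpha u$ on $\om \times N$ and $\partial_\nu u = \sigma u$ on $\partial(\om \times N)$ if and only if $f$ solves
\begin{equation*}
\Delta_\om f = (\alpha - \lambda_j) f \text{ on } \om,\qquad \partial_\nu f = \sigma f \text{ on } \partial\om,
\end{equation*}
which is precisely the $(\alpha - \lambda_j)$-Steklov problem on $\om$. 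Our assumption on $\alpha$ guarantees $\alpha - \lambda_j$ is never a Dirichlet eigenvalue of $\om$, so each such problem is well-posed with discrete spectrum $\stek_{\alpha - \lambda_j}(\om)$. This gives the inclusion of multisets
\begin{equation*}
\bigsqcup_{j} \stek_{\alpha - \lambda_j}(\om) \subseteq \stek_\alpha(\om \times N).
\end{equation*}

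The crux is the reverse inclusion, i.e.\ completeness: every $\alpha$-Steklov eigenfunction of $\om \times N$ (viewed on the boundary) is captured by the separation-of-variables Ansatz. Given a boundary eigenfunction $F \in L^2(\partial\om \times N)$, expand $F(x,y) = \sum_j f_j(x)\varphi_j(y)$ in the $L^2(N)$-basis $\{\varphi_j\}$, with $f_j \in L^2(\partial\om)$. Pair against $\varphi_j$ in $L^2(N)$ at the level of the harmonic extension (well defined since $\alpha$ avoids the Dirichlet spectrum of $\om \times N$): the component in the $\varphi_j$ direction must then be the harmonic extension in the sense of the $(\alpha - \lambda_j)$-problem on $\om$, forcing $f_j$ to be an $(\alpha-\lambda_j)$-Steklov eigenfunction on $\om$ with the same Steklov eigenvalue $\sigma$. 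Putting this together yields equality of multisets
\begin{equation*}
\stek_\alpha(\om \times N) = \bigsqcup_{j} \stek_{\alpha - \lambda_j}(\om),
\end{equation*}
and similarly for $\om' \times N'$. By Laplace isospectrality of $N, N'$ the indexing multisets of $\lambda_j$ agree, and by strong Steklov isospectrality of $\om, \om'$ each corresponding summand agrees, so the two totals coincide. The main obstacle is the completeness step: one must justify the termwise expansion in $\{\varphi_j\}$ and verify that each component $f_j$ extends to a genuine eigenfunction of the shifted Steklov problem on $\om$, which requires a careful application of the spectral theorem for $\rsDtN_{\alpha}$ (the Dirichlet-to-Neumann map associated with the operator $\Delta - \alpha$), combined with its commutation with the unitary $N$-action on the boundary.
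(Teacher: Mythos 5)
Your proof is correct and takes essentially the same separation-of-variables route as the paper's: identify the common Dirichlet spectra, fix admissible $\alpha$, and decompose $\stek_\alpha(\om\times N)$ as $\bigsqcup_{\lambda\in\spec(N)}\stek_{\alpha-\lambda}(\om)$. The one place you could streamline is the completeness step you flag as the main obstacle: the paper disposes of it in one line by noting that $L^2(\partial(\om\times N))=L^2(\partial\om)\otimes L^2(N)$, so the boundary restrictions of the separated products $u\,v$ (with $v$ a Laplace eigenfunction on $N$ and $u$ an $(\alpha-\lambda)$-Steklov eigenfunction on $\om$) already constitute a complete orthonormal system, making your termwise expansion of an arbitrary eigenfunction and the appeal to commutation with the $N$-action unnecessary.
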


\begin{proof} Denote by $\spec(N)$ the Laplace spectrum of $N$ (and of $N'$).  The  definition of strong Steklov isospectrality in Notation~\ref{nota.strongStek} implies that $\om$ and $\om'$ have the same Dirichlet spectrum, which we denote by $\spec_D(\om)$.   We then have 
\[\spec_D(\om\times N)=\{\beta+\lambda: \beta\in \spec_D(\om),\,\,\lambda\in \spec(N)\}=\spec_D(\om'\times N') .\]
 We fix $\alpha\not\in\spec_D(\om\times N)$ and solve the eigenvalue problem~\eqref{eq.alpha} by separation of variables.  For $u\in C^\infty(\om)$ and $v\in C^\infty(N)$, we find that $uv\in C^\infty(\om\times N)$ is an eigenfunction for~\eqref{eq.alpha} with eigenvalue $\sigma$ if there exists $\lambda\in \spec(N)$ such that $\Delta_N v=\lambda v$ and such that $u$ is a $\sigma$-eigenfunction of the Steklov eigenvalue problem on $\om$ with potential $\alpha-\lambda$.  The $\alpha-\lambda$ Steklov eigenvalue problem is well-defined since $\alpha-\lambda$ is not in the Dirichlet spectrum of $\om$.  Since the restrictions to $\partial(\om\times N)$ of the resulting eigenfunctions span $L^2(\partial(\om\times N))$, we conclude that
\begin{equation}\label{sepvar}\stek_\alpha(\om\times N) = \bigsqcup_{\lambda\in \spec(N)}\,\stek_{\alpha-\lambda} (\om)\,=\, \stek_\alpha( \om'\times N')\end{equation}
(By disjoint union in the middle term, we mean that the eigenvalues are repeated by multiplicity.)
\end{proof}

Equation~(\ref{sepvar}) shows that the Steklov spectrum of $\om\times N$ depends on the strong Steklov spectrum of $\om$ as well as the Laplace spectrum of $N$, so there is no analogue of Proposition~\ref{isosp_prod} for simple (i.e., not strong) Steklov isospectrality even if one assumes that $N=N'$.  

Proposition~\ref{isosp_prod} partially generalizes to warped products.    Recall that if $(M,g_M)$ and $(N,g_N)$ are Riemannian manifolds and $f\in C^\infty(M)$  is a strictly positive function, the associated \emph{warped product}  $(M,g)\times_f (N,g_N)$ (which we will denote as $M\times_f N$ if $g_M$ and $g_N$ are understood) is the Riemannian manifold 
\[M\times_f N=(M\times N,  g_M + f^2 g_N).\]
(More, precisely, the metric is given by $\pi_M^*g_M\,+\, (f\circ\pi_M)^2\,\pi_N^*g_N$ where $\pi_M:M\times N\to M$ and $\pi_N:M\times N\to N$ are the projections.)    

For any pair of closed Laplace isospectral Riemannian manifolds $(N,g)$ and $(N',g')$ and any fixed closed Riemannian manifold $(M,g)$ and function $0<f\in C^\infty(M)$, Ejiri \cite{Ej1979} showed that $M\times_f N$ and $M\times_f N'$ are Laplace isospectral.  
Motivated by Ejiri's result along with work of Xiong \cite{Xi2021} addressing the Steklov spectra of warped products, we show the following:
 
 \begin{prop}\label{prop:warp} Let $(\om,g_\om)$ be a compact Riemannian manifold with boundary, let $0<f\in C^\infty(\om)$, and let $(N,g_N)$ and $(N',g'_N)$ be Laplace isospectral closed Riemannian manifolds.  Then $\om\times_f N$ and $\om\times_f N'$ are strongly Steklov isospectral.    
 \end{prop}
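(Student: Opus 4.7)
The plan is to mimic the proof of Proposition~\ref{isosp_prod}, replacing standard separation of variables with its warped-product analogue. Let $n = \dim N$ and let $\{(\lambda_j, v_j)\}_{j \geq 0}$ enumerate (with multiplicity) an orthonormal eigenbasis of $\Delta_N$ in $L^2(N, g_N)$. By Laplace isospectrality, the sequence $\{\lambda_j\}$ is also, with the same multiplicities, the $\Delta_{N'}$-spectrum. A standard computation (O'Neill-type, using $dV_{\om \times_f N} = f^n\,dV_\om\,dV_N$) shows that for $\phi(x,y) = u(x)v_j(y)$,
\[
\lap_{\om \times_f N}\phi \;=\; v_j(y)\, L_{\lambda_j} u(x),
\qquad
L_\mu u \;:=\; \lap_\om u \;-\; n\,\tfrac{g_\om(\nabla f,\nabla u)}{f} \;+\; \tfrac{\mu}{f^2}\,u.
\]
Equivalently $L_\mu u = -f^{-n}\operatorname{div}_\om(f^n\nabla_\om u) + \mu f^{-2} u$, so $L_\mu$ is symmetric on $L^2(\om, f^n\,dV_\om)$ and elliptic. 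First I would record the boundary geometry: $\partial(\om \times_f N) = \pam \times N$, the warping affects only $N$-directions, and $f$ is pulled back from $\om$; hence the outward unit normal along the boundary coincides with the outward unit normal of $\pam$, and $\partial_\nu(u \otimes v_j) = v_j \cdot \partial_\nu u$.

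Fix $\alpha \in \R$. Separation of variables then shows that $u \otimes v_j$ solves the $\alpha$-Steklov problem on $\om \times_f N$ with eigenvalue $\sigma$ if and only if
\[
\begin{cases}
L_{\lambda_j} u = \alpha u & \text{on } \om,\\
\partial_\nu u = \sigma u & \text{on } \pam,
\end{cases}
\]
and the analogous decomposition holds for the Dirichlet problem on $\om \times_f N$. In particular $\alpha$ is a Dirichlet eigenvalue of $\om \times_f N$ if and only if $\alpha$ is a Dirichlet eigenvalue of $L_{\lambda_j}$ on $\om$ for some $j$; since this depends only on the sequence $\{\lambda_j\}$, the manifolds $\om \times_f N$ and $\om \times_f N'$ share a common Dirichlet spectrum. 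Now fix $\alpha$ outside this spectrum. Then $\alpha$ avoids the Dirichlet spectrum of each $L_{\lambda_j}$, so the auxiliary problem above is a well-posed elliptic Steklov-type problem whose eigenvalues form a discrete unbounded sequence which I shall denote $\Sigma_\alpha^{\lambda_j}(\om, f)$.

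The final step is to assemble these pieces. Using orthonormality and totality of $\{v_j\}$ in $L^2(N)$ together with completeness of the boundary traces of the auxiliary eigenfunctions in $L^2(\pam, f^n\,dV_{\pam})$, one obtains (with multiplicities) the decomposition
\[
\stek_\alpha(\om \times_f N) \;=\; \bigsqcup_{j \geq 0} \Sigma_\alpha^{\lambda_j}(\om, f).
\]
Since the right-hand side depends on $N$ only through $\{\lambda_j\} = \spec(\lap_N) = \spec(\lap_{N'})$, the same identity holds verbatim for $\om \times_f N'$, yielding $\stek_\alpha(\om \times_f N) = \stek_\alpha(\om \times_f N')$ for every admissible $\alpha$, hence strong Steklov isospectrality.

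The main obstacle will be the completeness claim underlying the disjoint-union formula: one must show rigorously that finite sums $\sum_j u_{j,k}(x) v_j(y)$ (with $u_{j,k}$ a boundary trace of an eigenfunction of the $\lambda_j$-auxiliary problem) are dense in $L^2(\pam \times N,\; f^n\,dV_{\pam} \otimes dV_N)$. This is routine via the Hilbert tensor product $L^2(\pam) \otimes L^2(N)$ combined with the spectral theorem applied to each auxiliary Dirichlet-to-Neumann-type operator, but it must be handled carefully because of the weight $f^n$ on the boundary measure and the non-selfadjoint appearance (in the unweighted $L^2$) of $L_\mu$. Everything else is a direct transcription of the argument of Proposition~\ref{isosp_prod}.
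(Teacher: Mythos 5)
Your proposal is correct and follows essentially the same route as the paper: separation of variables in the warped product reduces the $\alpha$-Steklov problem to a family of Steklov-type problems for the operators $L_\mu = \mathcal{A} + \mu f^{-2}$, where $\mathcal{A}$ is exactly the drifting (weighted) Laplacian on $(\om,g_\om,f^n\,dV)$ that the paper introduces, and then $\stek_\alpha(\om\times_f N)$ is assembled as the disjoint union of these auxiliary spectra, which depends on $N$ only through $\spec(\Delta_N)$. Your extra remarks about the boundary normal and the completeness of the tensor-product eigenbasis in the weighted $L^2$ are legitimate points that the paper elides by citing the literature on weighted Steklov problems, but they do not change the structure of the argument.
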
  
 
Under the hypotheses of Proposition~\ref{prop:warp}, it is straightforward to modify the proof of Ejiri's result to show that $\om\times_f N$ and $\om\times_f N'$ are Dirichlet isospectral and also Neumann isospectral.    The proof of Proposition~\ref{prop:warp} is similar to that of Proposition~\ref{isosp_prod}, using separation of variables.   We give a brief overview.

\begin{proof}[Sketch of proof of Proposition~\ref{prop:warp}]
We denote by $\Delta_\om$, $\Delta_N$, and $\Delta_f$ the Laplacians of $\om$, $N$, and $\om\times_f N$, respectively.  For $u\in C^\infty(\om)$ and $v\in C^\infty(N)$, a computation yields 
\[\Delta_f(uv)=\left(\Delta_\om u -\frac{n}{f}\langle\nabla f,\nabla u\rangle\right) v +\frac{u}{f^2} \Delta_N v\]
where $\langle\,,\,\rangle$ is the pointwise inner product given by $g_\om$.   
In particular, if $v$ is an eigenfunction of $\Delta_N$ with eigenvalue $\lambda$, then
\begin{equation}\label{eq:warp sep1}\Delta_f(uv)=\left(\Delta_\om u -\frac{n}{f}\langle\nabla f,\nabla u\rangle+\frac{\lambda}{f^2} u\right) v.\end{equation}

Define operators $\mathcal{A}, \,\mathcal{A}_\beta:C^\infty(\om)\to C^\infty(\om)$ for $\beta\in\R$, by 
\begin{equation}\label{eq:warp sep2}\mathcal{A}(u)=\Delta_\om u -\frac{n}{f}\langle\nabla f,\nabla u\rangle \mbox{\,\,and\,\,}\mathcal{A}_\beta (u)=\left(\mathcal{A}+\frac{\beta}{f^2}\right)(u).\end{equation}
The operator $\mathcal{A}$ is well-studied as it coincides with the so-called ``weighted'' or ``drifting'' Laplacian on the metric measure space $(\om,g_\om, \mu)$ where $\mu= f^n dV$.   (Here $dV$ denotes the Riemannian volume form associated with $g_\om$.)   The operator is known to have discrete Steklov spectrum.  See, for example, \cite{DuMaWaXi2021} and references therein.

If $v$ is an eigenfunction of $\Delta_N$ with eigenvalue $\lambda$, then  Equation~\eqref{eq:warp sep1} implies
\begin{equation}\label{eq:iff}\Delta_f(uv) = \alpha (uv) \iff \mathcal{A}_\lambda u =\alpha u.\end{equation}    Denote the Laplace eigenvalues of $N$ by $0=\lambda_0\leq \lambda_1 \leq$.  

Using Equation~\eqref{eq:iff}, one shows that
\[\spec_D(\om\times_f N)=\bigsqcup_{j=0}^\infty\,\spec_D(\mathcal{A}_{\lambda_j})=\spec_D(\om\times_f N').\]
Next, one again uses Equation~\eqref{eq:iff} to show for $\alpha\not\in\spec_D(\om\times_f N)$ that
\[\stek_\alpha(\om\times_f N)=\bigsqcup_{j=0}^\infty\,\stek_{\alpha}\,(\mathcal{A}_{\lambda_j})=\stek_\alpha(\om\times_f N').\]
Here $\stek_{\alpha}(\mathcal{A}_{\beta})$ is defined as in Notation~\ref{nota.strongStek} with $\Delta$ replaced by $\mathcal{A}_\beta$.  The proposition then follows.
\end{proof}

 \subsection{Sunada's technique}\label{subsec.sun}
 
 Toshikazu Sunada \cite{Su1985} introduced an elegant and simple technique to construct pairs of Laplace isospectral compact manifolds with a common Riemannian covering.   If the manifolds have boundary, the technique works with both Dirichlet and Neumann boundary conditions.   Moreover, the manifolds constructed by this technique are strongly isospectral:  they are isospectral with respect to all natural self-adjoint elliptic differential operators; e.g., they have the same Hodge spectra on $p$-forms for all $p$.   The technique remains the most widely used method for isospectral constructions.   
 
 \begin{defn} Subgroups $H_1$ and $H_2$ of a finite group $G$ are said to be \emph{almost conjugate} in $G$ if each $G$-conjugacy class intersects $H_1$ and $H_2$ in the same number of elements.
 
 \end{defn} 
 
 The following version of the Sunada Theorem has the same hypotheses as Sunada's original theorem (except of course that we require the boundary to be non-trivial).
  
  \begin{thm}\label{thm.sun}\cite[Theorem 2.3]{GoHeWe2021}
Assume that $H_1$ and $H_2$ are almost conjugate subgroups of a finite group $G$.   Suppose that $G$ acts by isometries on a compact $(d+1)$-dimensional Riemannian manifold $\om$ with boundary.  Then
\begin{enumerate}
\item $H_1\bs \om$ and $H_2\bs\om$ are strongly Steklov isospectral;
\item $\stek_p^{\rs}(H_1\bs \om)= \stek_p^{\rs}(H_2\bs \om)$ and $\stek_p^{K}(H_1\bs \om)= \stek_p^{K}(H_2\bs \om)$ for every $p\in \{1,\dots, d\}$, where $\stek_p^{\rs}$ and $\stek_p^K$ denote the Steklov spectra on $p$ forms defined by Raulot-Savo and by Karpukhin, respectively.  (See Section~\ref{stek.forms}.)

\end{enumerate}
 \end{thm}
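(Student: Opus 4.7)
The plan is to run the representation-theoretic Sunada argument for each of the operators whose spectra appear in (1) and (2). Recall that the almost conjugacy hypothesis is equivalent to the equality of characters $\chi_{\mathrm{Ind}_{H_1}^G \mathbf{1}} = \chi_{\mathrm{Ind}_{H_2}^G \mathbf{1}}$ of the quasi-regular representations $\mathbb{C}[G/H_i]$, which is the classical Gassmann reformulation. The single common engine underlying both parts is the following multiplicity identity: if $L$ is a $G$-equivariant self-adjoint operator with discrete spectrum acting on a $G$-Hilbert space $V$, and $V_\lambda$ is the (finite-dimensional, $G$-invariant) $\lambda$-eigenspace, then Frobenius reciprocity gives
\[
\dim V_\lambda^{H_i} \;=\; \frac{1}{|H_i|}\sum_{h\in H_i}\chi_{V_\lambda}(h) \;=\; \bigl\langle \chi_{V_\lambda},\, \chi_{\mathrm{Ind}_{H_i}^G\mathbf{1}}\bigr\rangle_G,
\]
so the almost conjugacy of $H_1,H_2$ forces $\dim V_\lambda^{H_1}=\dim V_\lambda^{H_2}$ for every $\lambda$. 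Since the spectrum of $L$ restricted to $V^{H_i}$ is precisely the multiset $\{\lambda^{(\dim V_\lambda^{H_i})}\}$, this gives equality of spectra of $L|_{V^{H_1}}$ and $L|_{V^{H_2}}$.

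The remaining task is to exhibit, for each spectrum in (1)–(2), a $G$-equivariant self-adjoint operator $L$ such that its spectrum on $H_i\backslash\Omega$ is identified with the spectrum of $L$ upstairs on $H_i$-invariant sections. For the strong Steklov spectrum, classical Sunada applied to the Dirichlet Laplacian first yields $\mathrm{spec}_D(H_1\backslash\Omega)=\mathrm{spec}_D(H_2\backslash\Omega)$. For $\alpha$ not in the (strictly larger) Dirichlet spectrum of $\Omega$, the Dirichlet-to-Neumann map with potential, $\Lambda_\alpha\colon C^\infty(\partial\Omega)\to C^\infty(\partial\Omega)$, is well-defined; the $\alpha$-harmonic extension of an $H_i$-invariant boundary datum is $H_i$-invariant by uniqueness, so $\Lambda_\alpha$ restricts to $C^\infty(\partial\Omega)^{H_i}$, and this restriction is unitarily equivalent to the $\alpha$-Steklov Dirichlet-to-Neumann map on $H_i\backslash\Omega$. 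The above multiplicity identity then gives $\mathrm{Stek}_\alpha(H_1\backslash\Omega)=\mathrm{Stek}_\alpha(H_2\backslash\Omega)$ for all such $\alpha$. For $\alpha \in \mathrm{spec}_D(\Omega)\setminus \mathrm{spec}_D(H_i\backslash\Omega)$, one argues by a small-perturbation/continuity argument, or intrinsically via the resolvent on each quotient, to extend the equality to the full set of $\alpha$ admissible for strong isospectrality.

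For part (2), the same argument applies verbatim to the Raulot–Savo operator $\mathcal{D}^{\mathrm{rs}}_p$ and Karpukhin's operator $\mathcal{D}^K_p$ acting on the appropriate spaces of smooth $p$-forms (respectively coclosed $p$-forms) on $\partial\Omega$. Each is built from natural Riemannian operations—harmonic tangential or coclosed extension, exterior derivative $d$, codifferential $\delta$, interior product $\nu\lrcorner$, Hodge star—all of which commute with isometries; hence both operators are $G$-equivariant, and the spectrum on the quotient corresponds to the restriction to $H_i$-invariant forms upstairs. Applying the Frobenius reciprocity step to each eigenspace finishes the proof of (2).

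The main obstacle will be the careful verification that sections on the (possibly orbifold) quotient $H_i\backslash\Omega$ correspond precisely to $H_i$-invariant sections upstairs, both as topological vector spaces and with respect to the operators in question, and that the operators really do commute with the $G$-action in the stated function spaces (including domain issues at the boundary). In particular, one must check that the orthogonal decompositions of $p$-forms used in defining $\mathcal{D}^{\mathrm{rs}}_p$ and $\mathcal{D}^K_p$ (Hodge–Morrey–Friedrichs and related decompositions) are preserved by the $G$-action, so that invariant subspaces are mapped to invariant subspaces and the eigenvalue problem genuinely restricts. Secondarily, handling the $\alpha$-Steklov problem when $\alpha$ lies in the upstairs Dirichlet spectrum but not in the downstairs ones requires a separate but standard perturbation argument; once this is in place the rest is bookkeeping.
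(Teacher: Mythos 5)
Your argument is correct in its essentials. The Gassmann--Frobenius reciprocity count, applied eigenspace by eigenspace to each $G$-equivariant self-adjoint operator whose restriction to $H_i$-invariant data realises the quotient spectrum, is one of the standard formulations of Sunada's method; the other common route builds an explicit transplantation map intertwining the operators on $L^2(H_1\bs\om)$ and $L^2(H_2\bs\om)$ (compare the intertwining maps $F_W$ in the torus-action Theorem~\ref{thm:Torus_method}). The two are morally equivalent. The survey cites \cite{GoHeWe2021} without reproducing the proof, so I assess your argument on its own.

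One place where your argument is heavier than it needs to be is the handling of $\alpha$-Steklov spectra for $\alpha\in\spec_D(\om)\setminus\spec_D(H_i\bs\om)$. Working directly with the Dirichlet-to-Neumann operator with potential $\alpha$ on $\om$ forces you to exclude those values (the operator is undefined upstairs), and you then patch the gap by a continuity argument. That argument is correct: the $\alpha$-Steklov eigenvalues of $H_i\bs\om$ depend analytically on $\alpha$ off $\spec_D(H_i\bs\om)$, and the nonexceptional $\alpha$ are dense, so agreement extends. But the detour is avoidable. As the remark after Notation~\ref{nota.strongStek} observes, strong Steklov isospectrality is the same as Robin isospectrality for every boundary parameter $\sigma$, and the Robin Laplacian (the self-adjoint realisation of $\Delta$ on $L^2(\om)$ with boundary condition $\partial_\nu u=\sigma u$) is a $G$-equivariant elliptic operator with discrete spectrum for \emph{every} $\sigma$, with no excluded parameter. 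Applying your Frobenius count to the Robin Laplacian for each $\sigma$ gives part~(1) in one pass, with no perturbation step.

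Your handling of part~(2) is fine: both operators underlying $\stek_p^{\rs}$ and $\stek_p^K$ are built from $d$, $\delta$, $\nu\lrcorner$, and extension operators determined by elliptic boundary value problems, all of which commute with the isometric $G$-action; both are therefore $G$-equivariant on $G$-invariant domains (all smooth $p$-forms on $\Sigma$ for the former, co-closed $p$-forms for the latter), their eigenspaces are finite-dimensional $G$-modules, and the same Frobenius count applies. The housekeeping items you flag -- $G$-equivariance of the Hodge--Morrey--Friedrichs decomposition, $H_i$-invariance of the tangential harmonic (respectively co-closed harmonic) extension, and the need to work in the orbifold category when $H_i$ has fixed points -- are exactly the right things to verify. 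For the extensions, $H_i$-invariance follows by averaging the extension over $H_i$ and invoking the uniqueness (respectively well-definedness independent of the choice of extension) results underlying the definitions.
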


  The theorem becomes trivial if $H_1$ and $H_2$ are conjugate subgroups of $G$; i.e., the isospectral quotients are isometric in that case.  When the subgroups are only almost conjugate, the quotient manifolds will usually be non-isometric, but one must always check.  
  
  If $H_1$ and $H_2$ do not act freely on $\om$, then $H_1\bs \om$ and $H_2\bs\om$ will be orbifolds with singularities; otherwise they are smooth manifolds.

  There is a huge literature on isospectral manifolds based on Sunada's theorem.  Many use a delightful method introduced by Buser \cite{Bu1986} for constructing examples using the patterns of the Schreier graphs of the coset spaces $H_1\bs G$ and $H_2\bs G$, where $(G,H_1,H_2)$ satisfies the hypothesis of Sunada's theorem.
  Figure~\ref{fig:Buser_surfaces} (reprinted from \cite{GoWeWo1992}) illustrates Buser's construction in this manner of a pair of Neumann and Dirichlet isospectral flat surfaces embedded in $\R^3$. Theorem~\ref{thm.sun} implies that they are also Steklov isospectral.   One can easily adjust the shape of the building block (a cross in this example) so that the resulting surfaces have smooth boundary. 
  
 Sunada's technique enables one to identify global invariants that are not determined by the Steklov spectrum (as well as by the many other spectra to which the technique applies.)    For example, the following invariants are not spectrally determined:
 
 \begin{itemize}
 \item the diameter of the manifold and the intrinsic diameter of its boundary (as evident in Buser's example in Figure~\ref{fig:Buser_surfaces});
 \item the fundamental group of the manifold.   (Such examples can arise when the almost conjugate subgroups $H_1$ and $H_2$ used in the construction are non-isomorphic.) 
  \end{itemize}
  \begin{figure}
    \centering
  \includegraphics[width=7cm]{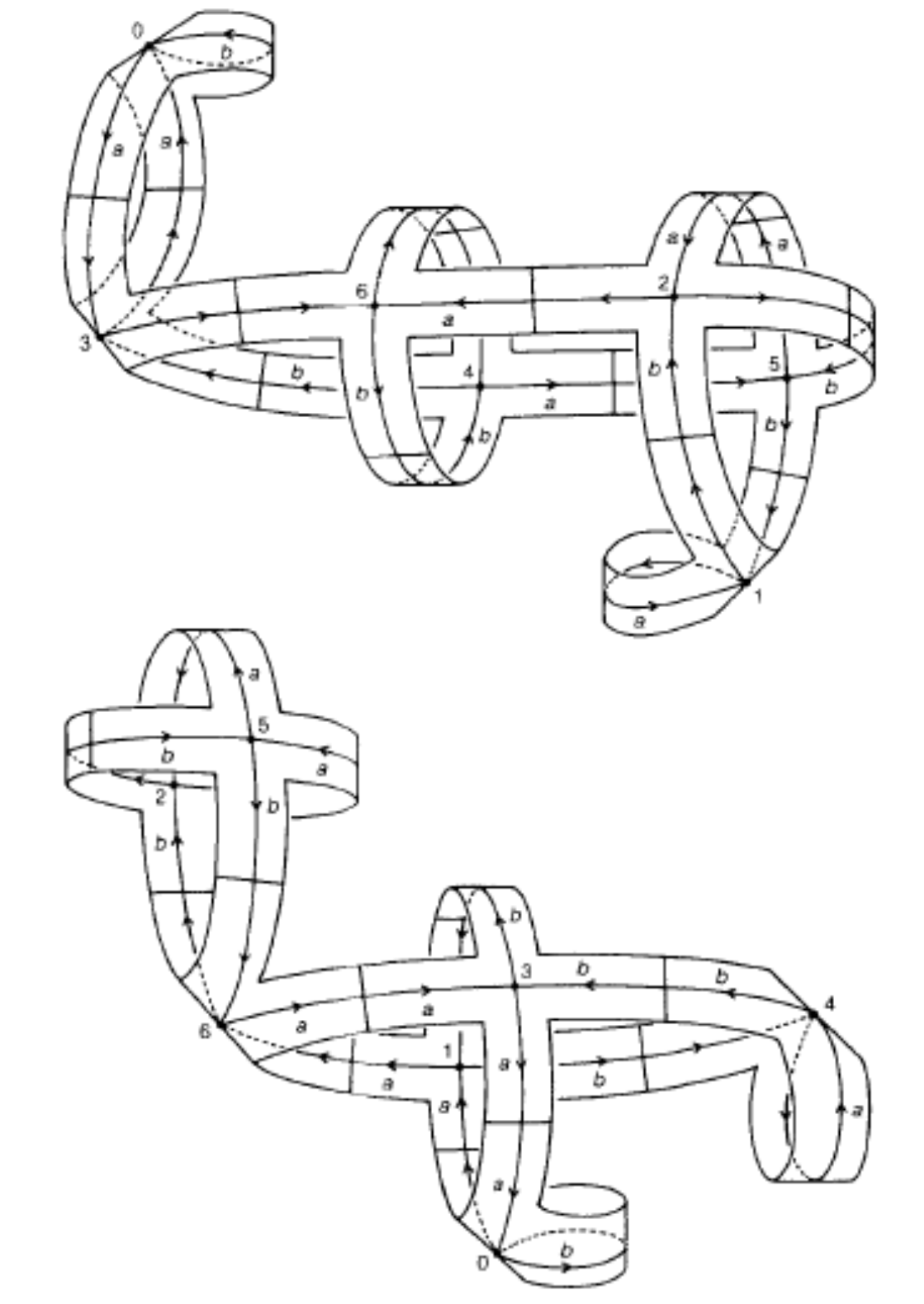}
  \caption{Isospectral surfaces (reprinted from \cite{GoWeWo1992}).}
  \label{fig:Buser_surfaces}
  \end{figure}
 
 \begin{remark}\label{sun.orb}~ 
In \cite{GoHeWe2021}, the first statement of Theorem~\ref{thm.sun} is stated in the more general setting of weighted Steklov problems.

One can further generalise Sunada's Theorem to the setting of variational eigenvalue problems associated with admissible Radon measures as follows:  In addition to the hypotheses of Theorem~\ref{thm.sun}, assume that $\mu$ is a $G$-invariant admissible Radon measure on $\om$.  The induced Radon measures on $H_1\bs \om$ and $H_2\bs\om$ will then have the same variational spectrum.  (See Subsection~\ref{subsection:introvareigenradon} and Appendix~\ref{Section:Radon}, in particular Definition~\ref{def:admissible} for the concept of variational eigenvalue problems associated with such measures.)  
 
  \end{remark}
 
 We end our discussion of Sunada's Theorem by remarking on the following open question: 
  \begin{ques}\label{ques:isospplanedomains}
Do there exist non-isometric Steklov isospectral plane domains?
\end{ques}

Sunada's technique has been used to construct pairs of Neumann and Dirichlet isospectral plane domains. We've seen that Neumann and Dirichlet isospectral manifolds constructed via Sunada's technique are generally Steklov isospectral as well.  However, there is a subtlety in the application of Sunada's technique to plane domains that does not allow us to conclude Steklov isospectrality. Consider the pair of orbifolds $\orb_1$ and $\orb_2$ in Figure~\ref{fig:planar} (reprinted from \cite{GoWeWo1992}), obtained as quotients by reflection of Buser's surfaces $\om_1$ and $\om_2$ shown in Figure~\ref{fig:Buser_surfaces}. 
\begin{figure}
  \centering
  \includegraphics[width=9cm]{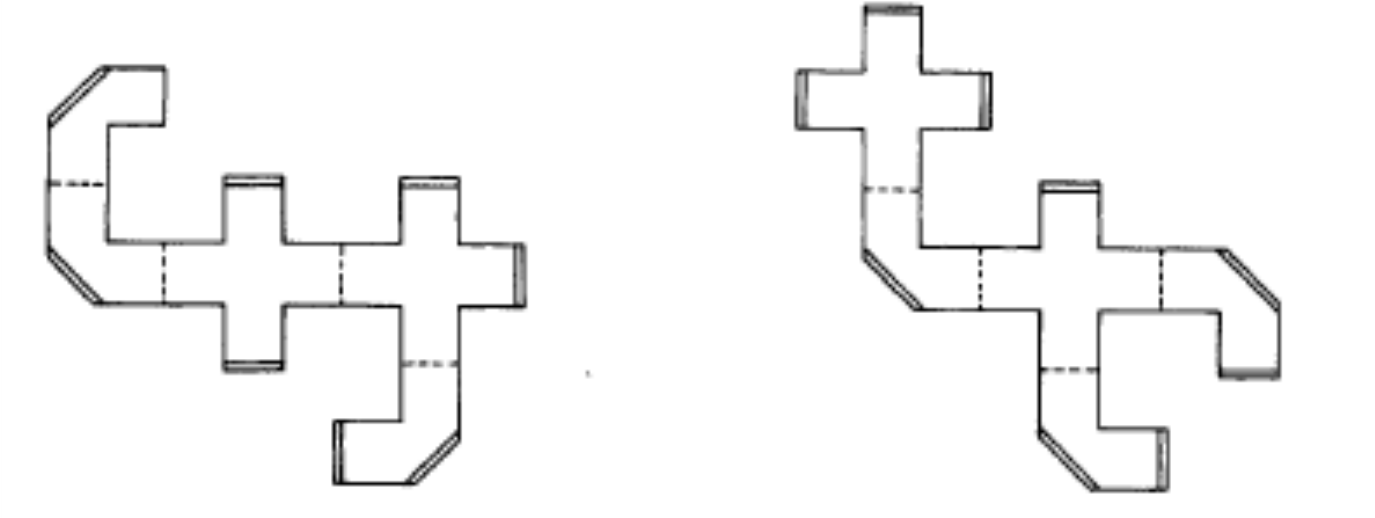}
  \caption{Isospectral orbifolds (reprinted from \cite{GoWeWo1992}).}
  \label{fig:planar}
  \end{figure}
The edges marked by double lines are reflectors. (See Subsection~\ref{subsec:orb pos} for a review of orbifolds and, in particular, Example~\ref{2dorb} for the notion of reflectors.) Let $D_1$ and $D_2$  be the underlying plane domains.   In \cite{GoWeWo1992}, Gordon, Webb, and Wolpert used Sunada's technique, along with Remark~\ref{paorb} to show that $D_1$ and $D_2$ are Neumann isospectral and also isospectral for the mixed Dirichlet-Neumann problem, with Neumann conditions on the edges that correspond to reflectors.   Dirichlet isospectrality of $D_1$ and $D_2$ was then proved by using the facts that (i) Buser's surfaces are Dirichlet isospectral, (ii) the domains are Dirichlet-Neumann isospectral, and (iii) the mixed Dirichlet-Neumann, respectively, Dirichlet, spectra of the domains correspond to the part of the Dirichlet spectra of Buser's surfaces for which the eigenfunctions lie in the subspace of reflection-invariant, respectively anti-invariant, functions.  

As noted in \cite{GoHeWe2021}, Sunada's Theorem along with Remark~\ref{paorb} does show that $D_1$ and $D_2$ are isospectral for the mixed Steklov-Neumann problem, with Neumann conditions on the edges corresponding to reflectors.  Although not specifically mentioned in \cite{GoHeWe2021}, they are also isospectral for the analogous mixed Steklov-Dirichlet problem; this follows by the same argument used to prove Dirichlet isospectrality of $D_1$ and $D_2$ described above.

 \subsection{Torus action technique}\label{subsec.torus}
 
The torus action technique yields isospectral manifolds that differ in their local as well as global geometry, thus allowing one to identify curvature properties that are not spectral invariants.  The method was first introduced for the Laplacian in a very special setting in \cite{Go1994} and then generalised in a series of papers by numerous authors.   The most general version is due to Schueth \cite{Sc2001}.   (See \cite{Sc2001} for further history and references.)  The Steklov version below is  the analog in the Steklov setting of Schueth's method.   

View tori as compact, connected, abelian Lie groups.   Let~$T$ be a torus acting effectively by isometries on a compact, connected Riemannian manifold~$\om$.  The union of those orbits on which $T$ acts freely is an open, dense submanifold of $\om$ that we will denote by $\widehat{\om}$; it carries the structure of a principal $T$-bundle.

\begin{thm}\label{thm:Torus_method}\cite[Theorem 3.2]{GoHeWe2021}
Let $T$ be a torus which acts isometrically and effectively on two compact,
connected Riemannian manifolds $(\om,g)$ and $(\om',g')$ with smooth boundary.   For each subtorus $W\subset T$ of codimension one, suppose that there exists
a $T$-equivariant diffeomorphism $F_W: \om\to \om'$ such that
\begin{enumerate}
\item\label{F_WVolPres} 
$F_W:\om\to \om'$ is volume-preserving with respect to the Riemannian volume densities;
\item $F_W|_{\partial \om}: \partial\om\to \partial \om'$ is volume preserving.  (Here the volumes are measured with respect to the Riemannian metrics induced on the boundaries by $g$ and $g'$.)

\item $F_W$ induces an isometry $\overline{F}_W: (W\bs\widehat{\om}, g_W)\to(W\bs\widehat{\om'},g'_W)$, where $g_W$ and $g'_W$ are the metrics induced by $g$ and $g'$ on the quotients.
\end{enumerate}

Then $(\om,g)$ and $(\om',g')$ are strongly Steklov isospectral.
\end{thm}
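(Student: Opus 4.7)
The plan is to extend the torus-action method of Schueth from the Laplace setting to the full family of $\alpha$-Steklov problems. The idea is to use the isometric $T$-action to decompose the relevant function spaces into isotypic components and then to match the $\alpha$-Steklov operators component by component using the family of diffeomorphisms $\{F_W\}$.

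First, I would decompose
\[
L^2(\om) \;=\; \bigoplus_{\mu \in \hat T} L^2(\om)_\mu, \qquad L^2(\om') \;=\; \bigoplus_{\mu \in \hat T} L^2(\om')_\mu,
\]
where $L^2(\om)_\mu=\{u\in L^2(\om)\,:\,u(tx)=\mu(t)u(x)\text{ for all }t\in T\}$, and analogously for $L^2(\partial\om)$ and $H^1(\om)$. Since $T$ acts by isometries preserving $\partial\om$, each of the quadratic forms entering the weak formulation of the $\alpha$-Steklov problem — namely $\int_\om|\nabla u|^2\,dV$, $\int_\om|u|^2\,dV$, and $\int_{\partial\om}|u|^2\,dA$ — respects this decomposition. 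Strong Steklov isospectrality (which includes Dirichlet isospectrality, since the Dirichlet problem involves only the first two forms on $H^1_0$) thus reduces to constructing, for each character $\mu$, a bijection $L^2(\om')_\mu\to L^2(\om)_\mu$ that simultaneously preserves all three quadratic forms on the $H^1$-isotypic subspace.

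Second, for $\mu\neq 0$ I would take $W_\mu:=\ker\mu$, which is a codimension-one subtorus, and use the diffeomorphism $F_{W_\mu}$ provided by the hypothesis (for $\mu=0$, any codimension-one $W$ may be chosen, since $T$-invariant functions are automatically $W$-invariant). Because $F_W$ is $T$-equivariant, pullback $F_W^*$ maps $L^2(\om')_\mu$ bijectively onto $L^2(\om)_\mu$. Condition (1) gives $\int_\om|F_W^*u|^2\,dV=\int_{\om'}|u|^2\,dV'$, and condition (2) gives the analogous identity on the boundary. For the Dirichlet energy, a $\mu$-equivariant function $u$ on $\widehat\om$ corresponds to a section of a Hermitian line bundle over $W_\mu\backslash\widehat\om$ twisted by $\mu$. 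Decomposing $|\nabla u|^2$ into its horizontal and vertical parts relative to the principal $T$-bundle $\widehat\om\to T\backslash\widehat\om$, the horizontal part descends to a function on $W_\mu\backslash\widehat\om$, and condition (3) — that $\overline F_{W_\mu}$ is an isometry of these quotients — guarantees its preservation.

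The main obstacle, as usual in the torus-action method, will be the vertical Dirichlet energy $|\nabla^v u|^2$. Although $F_{W_\mu}$ is only asserted to be an isometry of the $W_\mu$-quotients, one must verify that the $W_\mu$-averaged metric data on $T$-fibers of $\om$ match those of $\om'$. The key point is that $|\nabla^v u|^2$ for a $\mu$-equivariant $u$ factors as $|u|^2$ times an explicit function of the $T$-orbit metric and the character $\mu$; integration along $T$-orbits then produces a contribution weighted by fiber volume. Condition (1) combined with condition (3) — volume preservation on the total space together with isometry on the $W_\mu$-quotient — supplies exactly the identities needed to conclude that these integrated fiber-weighted quantities agree under $F_{W_\mu}$. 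A minor technical point is that $F_W$ is defined on $\om$ rather than on the open dense subset $\widehat\om$ of free $T$-orbits, but $\om\setminus\widehat\om$ has measure zero, so all integrals may be computed on $\widehat\om$ without affecting the argument.
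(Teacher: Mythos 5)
Your overall plan matches the paper's: Fourier-decompose with respect to $T$ and, on the $\mu$-isotypic subspace, match via pullback by $F_{W_\mu}$ where $W_\mu$ is the codimension-one subtorus on which $\mu$ is trivial. Conditions (1) and (2) give preservation of the interior and boundary $L^2$-norms by change of variables; the crux is the Dirichlet energy, and this is where your treatment falls short.

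You split $\nabla u$ into horizontal and vertical parts relative to the full $T$-action, producing a vertical term $|\nabla^v u|^2=|u|^2\,h_\mu$, and then assert that conditions (1) and (3) together ``supply exactly the identities needed'' to match it. That is not a proof. The quantity $h_\mu(p)$ is not a fiber volume; writing $G(p)=\bigl(\langle X_i^*(p),X_j^*(p)\rangle_g\bigr)$ for the Gram matrix of the Killing fields generated by a basis of $\mathfrak t$ adapted to $\mathfrak w_\mu$, and $a$ for $d\mu$, one has $h_\mu(p)=(2\pi)^2\,a^T G(p)^{-1} a$. To see that condition (3) preserves this one needs to observe that $a$ annihilates $\mathfrak w_\mu$ and then apply the block-inversion (Schur complement) identity identifying $(G^{-1})_{nn}$ with the reciprocal of the squared $g_{W_\mu}$-norm of the residual $T/W_\mu$ Killing field on $W_\mu\backslash\widehat{\om}$, a quantity preserved because $\overline{F}_{W_\mu}$ is a $T/W_\mu$-equivariant isometry. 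None of this appears in your write-up; the step is merely asserted. More to the point, the whole detour is avoidable, and that is the route the paper's sketch intends: a $\mu$-equivariant function is $W_\mu$-invariant (since $\mu$ restricts trivially to $W_\mu$), so $\nabla u$ has \emph{no} $W_\mu$-vertical component at all, and $|\nabla u|_g^2$ is exactly the pullback to $\widehat{\om}$ of $|\nabla_{g_{W_\mu}}\bar u|^2$ from $W_\mu\backslash\widehat{\om}$, with no leftover. Condition (3) then preserves this $W_\mu$-invariant density pointwise and condition (1) preserves its integral, with nothing further to check. I would drop the $T$-horizontal/vertical split and work directly with the $W_\mu$-Riemannian submersion.
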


The proof uses Fourier decomposition with respect to the $T$-action to decompose $C^\infty(\om)$ and $C^\infty(\om')$ into a sum of subspaces $C^\infty(\om)^W$, respectively $C^\infty(\om')^W$, where $C^\infty(\om)^W$ and $C^\infty(\om')^W$ consist of all $W$-invariant elements of $C^\infty(\om)$, respectively $C^\infty(\om')$.   We have $F_W^* : C^\infty(\om')^W\simeq C^\infty(\om)^W$ and one shows that the map $F_W^*$ preserves the Steklov-Rayleigh quotients, i.e., 
\[\frac{\|\nabla(F_W^*u)\|^2_{L^2(\om)}}{\|F_W^*u\|^2_{L^2(\partial \om)}}=\frac{\|\nabla u\|^2_{L^2(\om')}}{\|u\|^2_{L^2(\partial \om')}}\]

Like Sunada's technique, the torus action technique is quite robust in the sense that the same method works for a wide range of operators.  For example, if $\mu$ and $\mu'$ are $T$-invariant admissible Radon measures on $\om$ and $\om'$, respectively, and if one replaces condition (2) in the theorem by the condition $F_W^*\mu'=\mu$, then one can conclude that $\mu$ and $\mu'$ have the same variational spectrum.
On the other hand, because the theorem depends on the use of Riemannian submersions, the technique is not applicable to the Hodge Laplacian on $p$-forms or to the Dirichlet-to-Neumann operators on $p$-forms defined by Raulot-Savo and Karpukhin.
  
\begin{ex}\cite{Go2001}, \cite{Sc2001}, \cite{GoHeWe2021}
Let $B^{d+1}$ be a $(d+1)$-dimensional ball.    For $d\geq 7$, there exist continuous families of non-isometric Riemannian metrics on $B^{d+1}$ that are mutually strongly Steklov isospectral.   The metrics are also isospectral for both the Neumann and Dirichlet problems and the boundary spheres have the same Laplace spectra.  For $d=5,6$, there exist pairs of such metrics satsifying the same properties.

\end{ex}

\begin{ex}\label{canthear}  For each of the following geometric properties, the torus action method yields pairs of manifolds $\om$ and $\om'$ such that $\om$ and $\om'$ are simultaneously Steklov, Neumann, and Dirichlet isospectral and their boundaries $\sig$ and $\sig'$ are Laplace isospectral, but
\begin{itemize}
\item $\om$ has constant Ricci curvature while $\om'$ has variable Ricci curvature (examples in dimension 10) ;
\item  $\sig$ has constant scalar curvature while $\sig'$ has variable scalar curvature (examples for $\om$ and $\om'$ of dimension 9) ;
\item the curvature tensor of $\om$ is parallel, the curvature tensor of $\om'$ is not (examples in dimension $4(\ell +1)$, $\ell\geq 2$).
\end{itemize}

The counterexamples appear in \cite{GoSz2002} although the Steklov isospectrality was not mentioned. there.    (The second item is stated in a different form in \cite{GoSz2002}:  a pair of (non-product) isospectral metrics $g_1$ and $g_2$ is defined on the manifold $\om:=\R^6\times T^3$, where $T^3$ is a 3-torus.   One of these metrics induces a metric of constant scalar curvature on $\sig=\partial \om$ and the other a metric of variable scalar curvature.   The torus action method implies that the two metrics on $\om$ are Neumann, Dirichlet, and Steklov isospectral and that the metrics on $\sig$ are Laplace isospectral, although only the latter fact is actually stated in \cite{GoSz2002}.)
\end{ex}

In spite of the many examples of isospectral manifolds, the gap in our knowledge between what one can glean from, say, spectral asymptotics of the Steklov spectrum as in Section~\ref{sec:inv probs pos} and what the counterexamples tell us remains huge.  A sampling of the myriad open questions:

\begin{ques}\label{quesgeq3} For manifolds of dimension $\geq 3$, can one tell from the Steklov spectrum:
\begin{itemize}
\item Whether the manifold has constant sectional curvature? 
\item Whether the induced metric on the boundary is Einstein?
\item Whether the induced metric on the boundary has constant sectional curvature?
\item Whether the mean curvature of the boundary is constant?
\item Whether the principal curvatures of the boundary are constant?  

 \end{itemize}

\end{ques}

While more can be gleaned currently from spectral asymptotics of the Laplacian than from the Steklov spectrum, the questions above are not answered for the Laplacian either, although it is known for the Laplacian that any Riemannian metric $g_0$ of constant curvature $\kappa$ on a closed Riemannian manifold $M$  is at least spectrally isolated in the sense that no sufficiently nearby Riemannian metric has the same Laplace spectrum.     This was proven by S. Tanno for $\kappa>0$, R. Kuwabara for $\kappa=0$, and V. Sharafutdinov for $\kappa<0$.

\subsection{Quotients of Euclidean balls}\label{ballquots}

Recall that a spherical space form is a quotient $\Gamma\bs\sph^d$ where $\Gamma$ is a finite subgroup of the orthogonal group of $\operatorname{O}(d+1,\R)$ acting freely on $\sph^d$.    One can also consider finite subgroups $\Gamma<\operatorname{O}(d+1,\R)$ whose action on $\sph^d$ is not free; we will refer to the quotient $\Gamma\bs\sph^d$ in this case as an ``orbifold spherical space form''.  The literature on the Laplace spectrum and also on the Hodge spectra on $p$-forms contains rich collections of examples of isospectral spherical space forms and also of isospectral orbifold spherical space forms that exhibit interesting properties.  The following elementary proposition shows that each such example yields a corresponding example of Steklov isospectral quotients of Euclidean balls.  

We remind the reader of the notation $\stek_p^{\rs}(\om)$, respectively, $\stek_p^{K}(\om)$ for the Steklov spectra on $p$-forms introduced by Raulot and Savo (see Definition~\ref{dtnp}), respectively by Karpukhin (see Definition~\ref{dtnp}). Recall that when $p=0$, $\stek_p^{\rs}(\om)$ coincides with the usual notion of the Steklov spectrum $\stek(\om)$ on functions.   

\begin{prop}\label{prop.ballquot}  Let $\Gamma_1$ and $\Gamma_2$ be finite subgroups of the orthogonal group $\operatorname{O}(d+1,\R)$.  Let $\B^{d+1}$ be the unit ball in $\R^{d+1}$ and let $\om_i=\Gamma_i\bs \B^{d+1}$, $i=1,2$.   Let $p\in \{0,\dots, d\}$.  Then the following are equivalent:
\begin{itemize}
\item[(i)]  $\stek_p^{\rs}(\om_1)=\stek_p^{\rs}(\om_2)$;
\item[(ii)]  The Hodge Laplacians of $\Gamma_1\bs \sph^{d}$ and $\Gamma_2\bs \sph^{d}$ on $p$-forms are isospectral.    
\end{itemize}
Moreover, when these equivalent conditions hold, we also have that \[\stek_p^{K}(\om_1)=\stek_p^{K}(\om_2).\]

\end{prop}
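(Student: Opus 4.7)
The plan is to reduce both spectra to a common combinatorial datum and then invert. From Example~\ref{pball}, the operator $\DtNp$ on $\B^{d+1}$ and the Hodge Laplacian $\lapsp$ on $\sph^{d}$ share a common system of finite-dimensional joint eigenspaces, namely the Ikeda--Taneguchi spaces $i^*H''_{k,p}$ (co-exact type) and $i^*H'_{k,p}$ (exact type) for $k\geq 1$, together with the one-dimensional span of the volume form when $p=d$. Since every element of $\Gamma_i\subset\operatorname{O}(d+1,\R)$ is built from the Euclidean orthogonal structure, it commutes with both operators and preserves each of these spaces, so both the Steklov $p$-spectrum of $\om_i$ and the Hodge $p$-spectrum of $\Gamma_i\bs\sph^{d}$ are obtained by restricting the respective operators to their $\Gamma_i$-invariant subspaces.

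Writing $a_k^{(i)}:=\dim(i^*H''_{k,p})^{\Gamma_i}$ and $b_k^{(i)}:=\dim(i^*H'_{k,p})^{\Gamma_i}$, the explicit eigenvalues of Example~\ref{pball} give
\[
\stek_p^{\rs}(\om_i)\;=\;\bigsqcup_{k\geq 1}(k+p)^{a_k^{(i)}}\;\sqcup\;\bigsqcup_{k\geq 1}\Bigl((k+p-1)\tfrac{d+2k+1}{d+2k-1}\Bigr)^{b_k^{(i)}},
\]
where the exponents denote multiplicities, and an analogous expression with the Ikeda--Taneguchi Hodge eigenvalues in place of the Steklov ones holds for the $p$-form spectrum of $\Gamma_i\bs\sph^{d}$. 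Both spectra are therefore two different labellings of the single multiplicity sequence $\{(a_k^{(i)},b_k^{(i)})\}_{k\geq 1}$, and the proposition reduces to checking that each labelling is faithful.

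For the Steklov side, the two eigenvalue functions $k\mapsto k+p$ and $k\mapsto(k+p-1)(d+2k+1)/(d+2k-1)=(k+p-1)+2(k+p-1)/(d+2k-1)$ are each strictly increasing and unbounded; outside the self-dual locus $d=2p-1$, a short calculation shows that the first is integer-valued while the second is integer-valued only when $(d+2k-1)\mid 2(k+p-1)$, which fails for all but finitely many $k$. One therefore reads off $a_k^{(i)}$ and $b_k^{(i)}$ for large $k$ by integrality and recovers the whole sequence by downward induction, with the parallel argument on the Hodge side using the Ikeda--Taneguchi formulas. This gives (i)$\Leftrightarrow$(ii) in the generic range. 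The self-dual case $d=2p-1$ requires a separate argument: there the exact Steklov eigenvalues collapse to $k+p$ as well, so a priori the Steklov spectrum only sees $a_k^{(i)}+b_k^{(i)}$; but Hodge--de Rham duality on $\sph^d$ gives an isomorphism of $\operatorname{O}(d+1,\R)$-representations $i^*H'_{k,p}\simeq i^*H''_{k,p-1}\simeq i^*H''_{k,p}$ (the second step using Hodge star together with $d-p=p-1$), which forces $a_k^{(i)}=b_k^{(i)}$ for every subgroup $\Gamma_i$ and restores the equivalence.

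The final statement about $\stek_p^{K}$ is then a direct consequence. By Example~\ref{pball} Karpukhin's operator vanishes on the exact part and coincides with $\DtNp$ on each $i^*H''_{k,p}$, so $\stek_p^{K}(\om_i)$ is determined by the sequence $\{a_k^{(i)}\}$ alone, together with a kernel of dimension $I_p$ from~\eqref{Ip}. Since $\om_i$ deformation retracts onto the image of the origin and $\partial\om_i=\Gamma_i\bs\sph^d$ is connected, $I_p$ depends only on $p$, not on $i$; hence once $a_k^{(1)}=a_k^{(2)}$ has been established via either (i) or (ii), we obtain $\stek_p^{K}(\om_1)=\stek_p^{K}(\om_2)$. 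The step I expect to require the most care is the self-dual degeneracy, where the argument hinges on checking that Hodge star is $\Gamma_i$-equivariant up to a controlled sign coming from orientation-reversing elements; this can be handled either by passing to the index-two cover by $\operatorname{SO}(d+1,\R)$ and tracking the action of $\Gamma_i/(\Gamma_i\cap\operatorname{SO}(d+1,\R))$, or more cleanly by comparing $\Gamma_i$-characters of $i^*H'_{k,p}$ and $i^*H''_{k,p}$ directly using the standard branching rules for spherical harmonics.
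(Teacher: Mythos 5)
Your strategy is the same as the paper's: both reduce the two spectra to the dimensions of $\Gamma_i$-invariants in the Ikeda--Taneguchi blocks $i^*H'_{k,p}$ and $i^*H''_{k,p}$, which are simultaneous eigenspaces of $\DtNp$ on $\B^{d+1}$ and of the Hodge Laplacian on $\sph^d$, together with the averaging argument that identifies the eigenspaces of the quotient operators with these invariants. You are more explicit than the paper about \emph{why} the multiplicity sequence $\{(a_k^{(i)},b_k^{(i)})\}$ can be read off from each spectrum; this is a genuine concern that the paper's one-sentence ``the equivalence follows'' passes over, and your integrality criterion on the exact Steklov eigenvalues disposes of it cleanly away from the self-dual locus.

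Your handling of the self-dual case $d=2p-1$ does not work, however. You assert a chain of $\operatorname{O}(d+1,\R)$-isomorphisms $i^*H'_{k,p}\simeq i^*H''_{k,p-1}\simeq i^*H''_{k,p}$, the second step coming from Hodge star. But on $\sph^d$ the Hodge star sends co-exact forms to \emph{exact} forms (from $*\delta=\pm\, d\,*$), so with $d-p+1=p$ one gets $*\colon i^*H''_{k,p-1}\to i^*H'_{k,p}$, not $i^*H''_{k,p}$; your chain therefore loops back to $i^*H'_{k,p}$ and establishes nothing. In fact none of $d$, $\delta$, $*$ connects $i^*H'_{k,p}$ to $i^*H''_{k,p}$ when $d=2p-1$, and the claimed isomorphism cannot exist because the two spaces generically have different dimensions: on $\sph^3$ with $p=2$ and $k=1$, $\dim i^*H'_{1,2}=\dim i^*H''_{1,1}=6$ (the adjoint representation of $\operatorname{SO}(4)$) while $\dim i^*H''_{1,2}=\dim i^*H'_{1,1}=4$, so $a_k^{(i)}=b_k^{(i)}$ already fails for $\Gamma_i=\{1\}$. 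There is also a symmetric degenerate locus that you did not flag, $d=2p$, where the Hodge eigenvalues collapse level by level (one checks $(k+p)(k+d-p-1)=(k+p-1)(k+d-p)$ precisely when $d=2p$), so the unstated ``parallel argument on the Hodge side'' breaks there in the same way. Both loci are also glossed over by the paper's own proof, which just asserts the two operators ``have the same eigenspaces'' -- a statement that is literally false on these loci. Your instinct that some extra verification is needed is therefore right, but the Hodge-star argument you supply is not that verification.
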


\begin{proof}
Fix $p$.
As discussed in Example~\ref{pball}, the Hodge-$p$ Laplacian on $\sph^d$ and the Dirichlet-to-Neumann operator $\DtN_p^{\rs}$ on $\B^{d+1}$ have the same eigenspaces in $\Acal^p(\sph^d)$.   We denote these eigenspaces by $E_k$, $k=1,2,\dots$.     The Hodge-$p$ eigenspace of $\Gamma_i\bs \  \sph^d$ corresponds by pullback to $E_k^{\Gamma_i}$, $k=1,2,\dots$, where $E_k^{\Gamma_i}$ denotes the $\Gamma_i$-invariant $p$-forms in $E_k$.   Let 
\[\widehat{E}_k =\{\widehat{\eta}\in \Acal^p(\B^{d+1}): \eta\in E_k\}\]
where $\widehat{\eta}$ is the tangential harmonic extension of $\eta$ as in Definition~\ref{dtnp}.  

Observe that 
\begin{equation}\label{hetaeta} \eta\in E_k^{\Gamma_i}\iff \widehat{\eta}\in \widehat{E}_k^{\Gamma_i}.\end{equation}
 Indeed the ``if'' statement is trivial.  The ``only if'' statement follows from the observation that if $\eta$ is $\Gamma_i$-invariant, then averaging $\widehat{\eta}$ by the isometric action of $\Gamma_i$ gives another tangential harmonic tangential extension of $\eta$, which must equal $\widehat{\eta}$ by uniqueness.   We can now conclude that $\DtN_p^{\rs}(\om_i)$ and the Hodge-$p$ operator of $\Gamma_i\bs \sph^d$ have the same eigenspaces $E_k^{\Gamma_i}$.  The equivalence of (i) and (ii) follows.   

The final statement follows in a similar way. Although the co-closed harmonic extension $\teta$ of $\eta$ used in Karpukhin's definition~\ref{dtnk} of $\DtN_p^K$ is not unique, the fact that the operator is well-defined independent of the choice of such extension permits an easy modification of the observation~\eqref{hetaeta}. We can't conclude the converse of the final statement since the $0$-eigenspace of $\DtN_p^K(\B^{d+1})$ is the sum of many of the $E_k$.

\end{proof}

The proposition above in the case $p=0$ was observed in \cite[Example 6.1]{ADGHRS2023}.

\begin{remark}\label{rem.orb ball quot} In Proposition~\ref{prop.ballquot}, $\om_i$, $i=1,2$, is necessarily an orbifold with a singularity at the point corresponding to the origin in $\B^{d+1}$.  If $\Gamma_i$ doesn't act freely on $\sph^d$, then $\om_i$ will also contain higher-dimensional singular strata.

\end{remark}

We give two applications of Proposition~\ref{prop.ballquot}.
The first application is motivated by the following question:

\begin{ques}\label{ques: inverse prob forms} Do the Steklov spectra on $p$-forms for different choices of $p$ contain different geometric information?  Equivalently, for $p\neq q$, do there exist compact Riemannian manifolds $\om_1$ and $\om_2$ with boundary such that $\stek_p^{\rs}(\om_1)=\stek_p^{\rs}(\om_2)$ but $\stek_q^{\rs}(\om_1)\neq \stek_q^{\rs}(\om_2)$?  (One can ask the same question for $\stek_p^{K}(\om_i)$) 
\end{ques}

Concerning the analogous question for the Hodge Laplacian on closed Riemannian manifolds, A. Ikeda \cite{Ik1989} showed that for every $p_0$, there exist pairs of Lens spaces (i.e., spherical space forms with cyclic fundamental groups) whose Hodge Laplacians on $p$-forms are isospectral for every $p\in \{0,\dots, p_0\}$ but not for $p=p_0 +1$.  The articles \cite{GoMc2006} (corrected in \cite{GoMc2019}) and \cite{La2019} contain many other examples of Lens spaces that are $p$-isospectral for some (not necessarily consecutive) but not all values of $p$. By Proposition~\ref{prop.ballquot}, each of these examples yields examples of flat orbifolds of the form $\Gamma_i\bs \B^{d+1}$ whose Steklov spectra on $p$-forms coincide for some but not all values of $p$.   To our knowledge, there are no examples currently known of pairs of Riemannian manifolds, as opposed to orbifolds, whose Steklov spectra on forms exhibit such behaviour.

Proposition~\ref{prop.ballquot} also has applications to Question~\ref{ques.sing type}, which asks the extent to which the Steklov spectrum of an orbifold $\orb$ recognises the types of singularities in $\orb$.
Indeed, Shams, Stanhope and Webb \cite{ShStWe2006} and also Rosetti, Schueth and Weilandt \cite[Example 2.9]{RoScWe2008} used the Sunada technique to construct orbifold spherical space forms $\Gamma_1\bs \sph^d$ and $\Gamma_2\bs \sph^d$ that are isospectral both for the Laplacian and for the Hodge Laplacian on $p$-forms for all $p$ but that have
singularities of different isotropy types.   By Proposition~\ref{prop.ballquot}, the orbifolds $\Gamma_1\bs \B^{d+1}$ and $\Gamma_2\bs \B^{d+1}$ are Steklov isospectral and also have the same Steklov spectra on $p$-forms for all $p$ (with respect to both the Raulot-Savo and the Karpukhin notions).  These orbifolds have different types of singularities both on their boundaries $\Gamma_1\bs \sph^d$ and $\Gamma_2\bs \sph^d$ and in their interiors.

\section{Geometry of eigenfunctions}\label{sec:eigenfunctions}

In this section we review some of the recent developments concerning the qualitative behaviour of Steklov eigenfunctions.

As can be seen from Example \ref{example: cylinder} in particular, there is a general heuristic for the behaviour of Steklov eigenfunctions:
\begin{itemize}
    \item Oscillation near the boundary, of frequency $\sigma$.
    \item Rapid decay into the interior of $\Omega$ as $\sigma\to\infty$. Equivalently, concentration of mass near the boundary $\partial\Omega$.
\end{itemize}

\subsection{Interior decay}

In 2001, Hislop and Lutzer \cite{HiLu2001} showed that when the boundary $\partial\Omega$ is smooth, Steklov eigenfunctions decay super-polynomially into the interior.
\begin{thm}\cite[Theorem 1.1]{HiLu2001} Suppose that $\Omega\subseteq\mathbb R^{d+1}$ has smooth boundary $\Sigma$. Let $K$ be a compact subset of the interior of $\Omega$. Then for any $j\in\mathbb N$,
\begin{equation}\label{eq:hlbound}
\|u_k\|_{H^1(K)} = O(k^{-j}),
\end{equation}
with the implied constants depending only on $K$ and $j$.
\end{thm}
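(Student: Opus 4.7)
My plan is to combine the Poisson representation of harmonic functions with the pseudodifferential nature of the Dirichlet-to-Neumann operator. Normalise the boundary traces so that $\|f_k\|_{L^2(\bdry)}=1$, and let $P(x,y)$ denote the Poisson kernel of $\om$. Since $\bdry$ is smooth, $P$ is smooth off the diagonal of $\overline{\om}\times\overline{\om}$, so $P_x := P(x,\cdot)$ lies in $C^\infty(\bdry)$ for every $x \in \operatorname{int}(\om)$. The harmonic extension formula then gives
$$u_k(x) = \int_{\bdry} P(x,y)\, f_k(y)\, dV_{\bdry}(y) = \langle P_x, f_k\rangle_{L^2(\bdry)}.$$

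The key step exploits the eigenfunction property: since $\DtN f_k = \sigma_k f_k$ and $\DtN$ is self-adjoint on $L^2(\bdry)$, for every $m \in \N$
$$\sigma_k^m\, u_k(x) \;=\; \langle P_x, \DtN^m f_k\rangle \;=\; \langle \DtN^m P_x, f_k\rangle.$$
Because $\DtN$ is an elliptic pseudodifferential operator of order one on the smooth closed manifold $\bdry$, it preserves $C^\infty(\bdry)$, so $\DtN^m P_x \in C^\infty(\bdry)$. Cauchy--Schwarz together with the Weyl law~\eqref{eq: prelim Weyl}, which implies $\sigma_k \ge c\, k^{1/d}$ for large $k$, yields the pointwise bound
$$|u_k(x)| \le \sigma_k^{-m}\, \|\DtN^m P_x\|_{L^2(\bdry)} \le C_m(x)\, k^{-m/d}.$$

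To upgrade this to a uniform estimate on a compact set, pick $K \subset\subset K' \subset\subset \operatorname{int}(\om)$. The map $x \mapsto \DtN^m P_x$ is continuous from $K'$ into $C^\infty(\bdry)$: this follows from the joint smoothness of $P$ away from the diagonal together with the continuity of $\DtN^m$ on smooth functions. Consequently $\sup_{x\in K'} C_m(x) < \infty$, and choosing $m \ge jd$ gives $\|u_k\|_{L^\infty(K')} = O(k^{-j})$. The conclusion then follows from interior elliptic regularity applied to the harmonic function $u_k$ on the nested pair $K \subset\subset K'$, which gives $\|u_k\|_{H^1(K)} \le C\, \|u_k\|_{L^\infty(K')} = O(k^{-j})$. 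The step requiring the most technical care is the uniformity of the pseudodifferential estimates in $x$, which is where the assumed smoothness of $\bdry$ (and the consequent smoothness of $P$ up to the boundary off the diagonal) is crucial; by contrast, converting the pointwise bound into the $H^1$-bound is entirely standard.
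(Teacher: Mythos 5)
Your proposal is correct and uses essentially the same ingredients as the Hislop--Lutzer argument described in the survey: a Green's function / Poisson kernel representation of the harmonic extension, the mapping properties of $\DtN^m$ as a pseudodifferential operator of order $m$, and the Weyl law. Your step $\langle P_x,\DtN^m f_k\rangle=\langle\DtN^m P_x,f_k\rangle$ followed by Cauchy--Schwarz is simply the dual formulation of the fact that $\DtN^m\colon H^m(\Sigma)\to L^2(\Sigma)$ is bounded (equivalently, $\|f_k\|_{H^{-m}(\Sigma)}=O(\sigma_k^{-m})$), which is the key lemma Hislop and Lutzer extract from the Calder\'on--Vaillancourt theorem.
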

This decay estimate may be extended uniformly up to the boundary, as shown recently by Helffer and Kachmar \cite[Theorem 1.9]{HeKa2022}.

The proof of Hislop and Lutzer uses a Green's function (layer potential) representation for the harmonic extensions of the boundary eigenfunctions. It also uses the fact, which follows from the Calder\'on-Vaillancourt theorem for pseudodifferential operators, that $\mathcal D^j$ is bounded from $H^j(\Sigma)$ to $L^2(\Sigma)$ for any $j$.  
\begin{remark}
The same result holds, with a virtually identical proof, if $H^1(K)$ is replaced by $H^m(K)$ or by $C^m(K)$ for any $m$. It also holds if $\mathbb R^{d+1}$ is replaced by any smooth $(d+1)$-dimensional ambient Riemannian manifold $M$. The proof is again nearly identical, using the smoothness of the Green's function on $M$ away from the diagonal.
\end{remark}
Hislop and Lutzer also conjectured in this paper that ``the decay is actually of order $e^{-\textrm{dist}(K,\Sigma)|m|}$ in the case of an analytic boundary." This was shown in by Polterovich, Sher, and Toth\cite{PoShTo2019} for domains $\Omega\subseteq\mathbb R^2$.

Galkowski and Toth have now proven the Hislop-Lutzer conjecture in all dimensions \cite{GaTo2019}. Their result is stated semiclassically and gives an extremely sharp description of the decay of Steklov eigenfunctions into the interior. Non-semiclassically, it translates as follows:
\begin{thm}\cite[Theorem 1]{GaTo2019} Let $\Omega$ be a real-analytic Riemannian manifold of dimension $d+1$ with real-analytic boundary. There exists a small tubular neighborhood of $\Sigma\subseteq\Omega$ in which the Steklov eigenfunctions $u_k$ satisfy the estimate
\[|u_k(x)|\leq Ck^{\frac d2-\frac 14}e^{-k(d(x,\Sigma)-cd^2(x,\Sigma))}.\]
Here $C$ depends only on the size of the tubular neighborhood, $c$ is an explicit geometric constant, and $d(x,\Sigma)$ is the distance to the boundary.

Similar bounds hold for derivatives of $u_k(x)$, with an additional factor of $k$ for each derivative.
\end{thm}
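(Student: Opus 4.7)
I would adopt a semiclassical framework with $h = 1/k$, and try to construct an analytic Poisson-type representation of the harmonic extension of the boundary eigenfunction and analyze it by complex stationary phase. First, I would work in Fermi (normal) coordinates $(t,y)$ near $\Sigma$, with $t=d(x,\Sigma)$ and $y\in\Sigma$. In these coordinates the Laplace--Beltrami operator has the form
\[
\Delta_g = -\partial_t^2 - H(t,y)\,\partial_t + \Delta_{\Sigma_t} + \text{l.o.t.},
\]
where $\Sigma_t=\{d(\cdot,\Sigma)=t\}$ and $H$ encodes the mean curvature of the level hypersurfaces. The Steklov condition $\partial_\nu u_k = k u_k$ on $\Sigma$ then reads $\partial_t u_k|_{t=0} = -k\,u_k|_{t=0}$ (with appropriate sign convention), and $u_k$ solves $\Delta_g u_k=0$ in the interior. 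The plan is to represent $u_k(t,y)$ as a semiclassical Poisson operator applied to its boundary trace $f_k=u_k|_\Sigma$, and then combine this with the fact that $f_k$ is an eigenfunction of the Dirichlet-to-Neumann map, whose principal symbol is $|\eta|_{g_\Sigma}$.

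The key step is to construct, in the real-analytic category, an analytic semiclassical parametrix for the harmonic extension of the form
\[
P(h)f(t,y) = \frac{1}{(2\pi h)^d}\int e^{i\varphi(t,y,\eta)/h}\,a(t,y,\eta;h)\,\widehat f(\eta/h)\,d\eta,
\]
where $\varphi$ is a \emph{complex} phase function satisfying an eikonal equation with boundary condition $\varphi(0,y,\eta)=\langle y,\eta\rangle$, and $a$ is a realization as an analytic symbol. Eikonal analysis forces $\partial_t\varphi|_{t=0}=i|\eta|_{g_\Sigma(y)}$, so that on the boundary the operator produces the factor $-k$ times the identity on the $|\eta|=k$ spectral window. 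Moving into the interior, one obtains
\[
\operatorname{Im}\varphi(t,y,\eta) = |\eta|_{g_\Sigma(y)}\bigl(t - c(y)t^2 + O(t^3)\bigr),
\]
with the quadratic coefficient $c(y)$ given explicitly in terms of the second fundamental form of $\Sigma$; taking the worst case over $y$ in a small tubular neighborhood yields the universal constant $c$ in the statement. This is the mechanism producing the exponent $-k(d(x,\Sigma)-c\,d^2(x,\Sigma))$.

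Next, I would insert the spectral decomposition of $f_k$ with respect to $\sqrt{-\Delta_\Sigma}$. Because the boundary DtN operator and $\sqrt{-\Delta_\Sigma}$ have the same principal symbol and both are analytic pseudodifferential in this setting, $f_k$ is microlocally concentrated near $\{|\eta|_{g_\Sigma}=k\}$ with analytically small off-diagonal contribution. Combining this frequency localization of $f_k$ with the Gaussian-type decay in $\operatorname{Im}\varphi$ and applying a steepest-descent/complex-stationary-phase argument to the oscillatory integral $P(h)f_k$ then gives pointwise bounds: the exponent is governed by the real part of $i\varphi$ evaluated at the critical point $|\eta|=k$, producing $e^{-k(t-ct^2)}$, and the amplitude factor arises from the Jacobian of the stationary phase together with a sup-norm bound on analytic-frequency-localized eigenfunctions of $\sqrt{-\Delta_\Sigma}$ on the $d$-dimensional boundary. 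A careful count of the powers of $h$ in the Jacobian, combined with the restriction-type $L^2\to L^\infty$ bound on the spectral cluster (of order $k^{(d-1)/2}$), weighted by an extra $k^{1/4}$ loss from glancing-region contributions near the boundary of the elliptic zone, yields the prefactor $k^{d/2-1/4}$. Derivative bounds follow by differentiating under the integral and noting that each derivative in $t$ or $y$ brings down at most one factor of $k$ from the phase.

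The main obstacle, and the real content of Galkowski--Toth's work, is constructing the analytic Poisson parametrix globally in a tubular neighborhood with exponentially small, rather than merely $O(k^{-\infty})$, remainders. This requires genuine analytic microlocal machinery: one must solve the complex eikonal equation in an analytic collar, realize the amplitude as a formal analytic symbol with good summation, and control remainders in FBI-type weighted spaces so that the exponential gain $e^{-k t}$ is not destroyed by remainder errors. Extending the representation uniformly up to the glancing set $|\eta|_{g_\Sigma}=k$ (which is where the $k^{-1/4}$ loss enters) and tracking the curvature-dependent quadratic correction to $\operatorname{Im}\varphi$ sharply — rather than merely as an $O(t^2)$ error — is what upgrades the linear exponent $e^{-kt}$ to the refined $e^{-k(t-ct^2)}$ claimed in the theorem.
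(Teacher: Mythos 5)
The survey does not prove this theorem; it cites \cite{GaTo2019} and only remarks that the proof rests on analytic microlocal analysis. Your proposal correctly reproduces the strategy of Galkowski--Toth: semiclassical scaling with $h = 1/\sigma$, Fermi coordinates near $\Sigma$, an analytic Poisson parametrix whose complex eikonal phase satisfies $\operatorname{Im}\varphi \sim |\eta|_{g_\Sigma}(t - c\,t^2 + O(t^3))$ (the source of the refined exponent), microlocal concentration of the boundary trace near the characteristic set of the Dirichlet-to-Neumann operator, a steepest-descent analysis of the resulting oscillatory integral producing the $e^{-\sigma(t - ct^2)}$ decay and the $\sigma^{d/2 - 1/4}$ prefactor, and FBI-type analytic control of the remainders so that the error is genuinely exponentially small rather than merely $O(\sigma^{-\infty})$ — which you rightly identify as the technically decisive point.
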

As Galkowski and Toth observe, this bound immediately implies the analogue of Hislop and Lutzer's result, by using the maximum principle: for any compact set $K$ contained in the interior of $\Omega$,
\begin{equation}\label{eq:gtbound}
|u_k(x)|\leq Ce^{-kc}\end{equation}
for some positive constant $c$.

It is not yet clear exactly which assumptions are necessary for which kind of decay. This motivates the following open questions.

\begin{ques}\label{ques:interiordecaysmooth} Does \eqref{eq:gtbound} hold if $\Sigma$ and/or $\Omega$ are assumed smooth rather than real analytic?
\end{ques}

The proof techniques of \cite{GaTo2019} are based on analytic microlocal analysis and thus use the analyticity hypotheses in extremely strong ways. So one would perhaps guess that the answer to Open Question \ref{ques:interiordecaysmooth} is ``no". However, the example of a cylinder (Example \ref{example: cylinder}) shows that even if $\Sigma$ is not assumed real analytic, \eqref{eq:gtbound} holds for $\Omega=\Sigma\times[0,1]$. Similar bounds also seem to hold for certain warped products \cite{DaHeNi2021}. This suggests that some form of partial analyticity, perhaps in the direction normal to the boundary, might suffice.

\begin{ques}\label{ques:decaynonsmooth} What kind of decay properties do the Steklov eigenfunctions possess if $\Sigma$ is not assumed to be smooth?
\end{ques}

In particular, it is natural to ask if we still have a bound of the form \eqref{eq:hlbound}, despite the fact that the pseudodifferential techniques involved in its proof do not go through when $\Sigma$ is not smooth. If not, perhaps one may replace $O(k^{-\infty})$ with $O(k^{-\alpha})$ for some $\alpha>0$ depending on the regularity of $\Sigma$. There are hints in \cite{LPPS2019} that \eqref{eq:hlbound} may \emph{not} hold in the case where $\Omega$ is a planar polygon. Curiously, this may be very sensitive to the arithmetic properties of the angles of the polygon, with decay rates depending on the angles, and super-polynomial decay if and only if all angles are certain rational multiples of $\pi$.

We should also mention the question of concentration in the case of multiple boundary components. Specifically, do Steklov eigenfunctions generically concentrate on a single boundary component, or is their mass more often split evenly between multiple boundary components? This question has been recently studied by Daud\'e, Helffer, and Nicoleau in the setting of warped product manifolds \cite{DaHeNi2021}. They observe that if the warped product is asymmetric (a generic assumption in this setting), Steklov eigenfunctions localise near one component each. It is thus a natural question to ask:

\begin{ques}\label{ques:generic}Do Steklov eigenfunctions generically concentrate on a single boundary component?
\end{ques}

This open question has been answered in dimension two by Martineau, who showed that this concentration occurs under the (generic) assumption that the ratios of the lengths of each boundary component are neither rational numbers nor Liouville numbers \cite[Theorem 3]{Ma2018}. It remains open in higher dimensions.

\subsection{Nodal sets}

There has also been recent progress on understanding the nodal sets of the Steklov eigenfunctions $u_k$ and the Dirichlet-to-Neumann eigenfunctions $\varphi_k$. 

\subsubsection{Size of nodal sets} As with eigenfunctions of the Laplacian, there is a version of Yau's conjecture for the size of the nodal set, stated as Open Problem 11 in \cite{GiPo2017}:
\begin{ques}\label{ques:yau} Do there exist constants $c$ and $C$ depending only on $\Omega$ for which
\begin{equation}\label{eq:yauint}
    c\sigma_k\leq\mathcal H_{d}(u_k)\leq C\sigma_k;
\end{equation}
\begin{equation}\label{eq:yaubound}
    c\sigma_k\leq\mathcal H_{d-1}(\varphi_k)\leq C\sigma_k?
\end{equation}
\end{ques}
\begin{remark}
By considering the example of a cylinder, the statement \eqref{eq:yaubound} would imply Yau's conjecture for the Laplace nodal sets. It is thus highly nontrivial. 
\end{remark}
Much of the progress on Question \ref{ques:yau} has been concentrated in the case where $\Omega$ is real analytic. In that setting, the study of the boundary nodal sets was largely initiated by Bellova and Lin \cite{BeLi2015}, who proved an upper bound of the form
\[\mathcal H_{d-1}(\varphi_k)\leq C\sigma_k^6\]
for the size of the nodal sets of the boundary eigenfunctions $\varphi_k$. This was then improved by Zelditch \cite{Ze2015}, who proved the optimal upper bound of $C\sigma_k$, again under the assumption that $\Omega$ is analytic. Results for the lower bounds are somewhat weaker; the state of the art is a result by Wang and Zhu \cite{WaZh2015} that
\[\mathcal H_{d-1}(\varphi_k)\geq c\sigma_k^{\frac{3-d}{2}}.\]

Concerning the study of the interior nodal sets, in the real analytic setting, the case $d=1$ of Question \ref{ques:yau} was answered in the affirmative in \cite{PoShTo2019} by using analytic microlocal analysis. Zhu then recently proved the upper bound in \eqref{eq:yauint} when $\Omega$ is a real analytic manifold of \emph{any} dimension \cite{Zh2020}. Zhu's proof uses a series of classical ideas including doubling properties and Carleman estimates. Together with Lin, Zhu has also used these ideas to obtain upper bounds on the size of the nodal sets for related eigenvalue problems \cite{LiZh2020}.

If $\Omega$ is only assumed smooth rather than real analytic, the results are naturally weaker. There is a lower bound: it was proved by Sogge, Wang, and Zhu \cite{SoWaZh2016} that
\[\mathcal H_{d}(u_k)\geq c\sigma_k^{1-\frac{d+1}{2}}.\]
The sharpest known upper bound in this sort of generality in the $d=1$ case is due to Zhu \cite{Zh2016}, who showed that
\[\mathcal H_{1}(u_k)\leq C\sigma_k^{3/2}.\]
In the case of higher dimensions, the best available result is a polynomial upper bound proved by Georgiev and Roy-Fortin \cite{GeRoFo2019}, namely that
\[\mathcal H_{d}(u_k)\leq C\sigma_k^{\alpha(d)}\]
where $\alpha(d)$ is a positive number depending on the dimension.

However, in the case where $\Omega$ is a domain in Euclidean space, there are notable recent improvements by Decio \cite{De2021, De2021_2}. In particular Decio proved the following:
\begin{thm}\cite{De2021_2} Suppose that $\Omega$ is a domain in Euclidean space with $C^2$ boundary. Then there exist positive constants $c$, $C$, and $e$ depending only on the domain such that
\[c\leq \mathcal H_{d-1}(u_k)\leq C\sigma_k\log(\sigma_k + e).\]
\end{thm}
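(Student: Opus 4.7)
The plan is to prove the two inequalities separately, since they rely on very different ingredients: the lower bound is essentially qualitative plus a compactness argument, while the upper bound requires recent quantitative techniques from the theory of harmonic functions with controlled frequency.

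For the lower bound $\mathcal{H}_{d-1}(u_k)\geq c$, I would first observe that for any $k\geq 1$ the boundary trace $\varphi_k$ is $L^2(\Sigma)$-orthogonal to the constants, hence $u_k$ must change sign in $\Omega$, so its nodal set is a nonempty hypersurface. Since $u_k$ is harmonic on $\Omega$, the Cheng-Hardt-Simon structure theorem shows that $\{u_k=0\}$ is locally an analytic hypersurface away from a set of codimension two, giving positive $\mathcal{H}_{d-1}$ measure for each fixed $k$. To extract a bound independent of $k$, I would normalise $\|\varphi_k\|_{L^2(\Sigma)}=1$, use the uniform bound on the harmonic extension operator $H^{1/2}(\Sigma)\to H^1(\Omega)$, and combine with a contradiction/compactness scheme (if the measure tended to zero along a subsequence, a limiting harmonic function would be forced to be either constant or identically zero, contradicting the boundary normalisation).

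For the upper bound $\mathcal{H}_{d-1}(u_k)\leq C\sigma_k\log(\sigma_k+e)$ my plan is to adapt the Logunov-Malinnikova framework for nodal sets of Laplace eigenfunctions. The steps would be: (i) introduce the Almgren frequency
\[N_{x_0}(r)=\frac{r\int_{B(x_0,r)\cap\Omega}|\nabla u_k|^2}{\int_{\partial B(x_0,r)\cap\Omega}u_k^2}\]
and use the Steklov boundary condition, together with a Rellich-type identity and standard cutoff techniques, to show that the doubling exponent of $u_k$ on balls of radius $\lesssim 1/\sigma_k$ is bounded by $C\sigma_k$; (ii) invoke the combinatorial cube-splitting argument of Logunov-Malinnikova on a Whitney decomposition adapted to the boundary, controlling the number of "significant" cubes by the frequency; (iii) use the interior decay estimates of Hislop-Lutzer (or Galkowski-Toth) to discard all contributions from cubes at distance $\gg \sigma_k^{-1}\log\sigma_k$ from $\Sigma$, where $u_k$ is already $O(k^{-\infty})$ and cannot support nodal set of any significant size; (iv) sum the local bounds on the nodal set area over the remaining boundary-adapted cubes.

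The main obstacle will be establishing the doubling/monotonicity estimate up to a $C^2$ boundary. For real-analytic boundary data one can reflect $u_k$ across $\Sigma$ to reduce to an interior frequency inequality, which is what ultimately yields Zelditch's sharp bound $C\sigma_k$ for the boundary nodal set in the analytic category; for merely $C^2$ boundary no such reflection is available and the frequency must be controlled intrinsically via Carleman estimates with weights tailored to the Steklov condition. It is precisely the loss incurred at this step — paying a logarithmic factor when summing local frequency/complexity bounds through a multiscale covering near a non-analytic boundary — that is responsible for the $\log(\sigma_k+e)$ correction in the final estimate, and matching this loss to the combinatorial counting in step (ii) will be the most delicate part of the argument.
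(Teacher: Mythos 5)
The theorem you are asked about is stated in the survey without proof; the authors merely cite \cite{De2021_2} and remark that ``Decio's methods use machinery developed by Logunov,'' so there is no in-paper argument to compare against. Your sketch for the upper bound is therefore in the right general territory (doubling/frequency estimates plus the Logunov combinatorial argument on a boundary-adapted decomposition), but both halves of your plan have concrete problems.

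\emph{Lower bound.} The compactness argument cannot work. With the normalisation $\|\varphi_k\|_{L^2(\Sigma)}=1$ one has $\int_\Omega|\nabla u_k|^2 = \sigma_k\|\varphi_k\|^2_{L^2(\Sigma)}=\sigma_k\to\infty$, so $\{u_k\}$ is \emph{not} bounded in $H^1(\Omega)$: there is no Rellich-type limit, and the claimed ``uniform bound on the harmonic extension operator $H^{1/2}(\Sigma)\to H^1(\Omega)$'' does not help because $\|\varphi_k\|_{H^{1/2}}\asymp\sqrt{\sigma_k}$ is itself unbounded. Passing to a weak $L^2(\Sigma)$ limit of the boundary traces gives only $\varphi_k\rightharpoonup\varphi_\infty$, which is consistent with $\varphi_\infty\equiv 0$ (weak limits of orthonormal sequences). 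Even granting a limit, Hausdorff measure of nodal sets is not lower semicontinuous under any of these modes of convergence, so nothing about $\mathcal{H}_{d-1}(u_k)\to 0$ could be contradicted this way. A correct lower bound has to be quantitative and local: one must show that some fixed ball meets the nodal set in a set of definite measure, e.g.\ via a density estimate plus a doubling bound, not a soft compactness argument.

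\emph{Upper bound.} The architecture --- frequency/doubling control near the boundary, then Logunov's cube-splitting to count significant cubes, then summing local nodal volume bounds --- is indeed the right framework, and you correctly identify the $\log$ loss as a boundary phenomenon. However step (iii) as stated is both quantitatively and conceptually off. Quantitatively: the Galkowski–Toth estimate gives decay $\lesssim e^{-\sigma_k\,d(x,\Sigma)}$, so at distance $\sigma_k^{-1}\log\sigma_k$ from $\Sigma$ the eigenfunction is only $O(\sigma_k^{-1})$, not $O(\sigma_k^{-\infty})$ --- the super-polynomial Hislop–Lutzer decay is a statement about fixed compact subsets of the interior, not about shrinking collars. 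Conceptually: small pointwise size of a harmonic function does not by itself control the measure of its zero set (multiply a fixed harmonic function by a tiny constant: the nodal set is unchanged). The correct mechanism for discarding cubes far from $\Sigma$ is that the \emph{doubling index} of $u_k$ drops once one moves a distance $\gg\sigma_k^{-1}$ from the boundary, and small doubling index implies small nodal volume via the Donnelly–Fefferman/Hardt–Simon type estimates. That is the substitute you need for your step (iii); the interior decay bounds are not the right tool here.
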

Observe that this is an improvement over the lower bound of \cite{SoWaZh2016} and the upper bound of \cite{Zh2016}, albeit in a slightly more restrictive setting. Decio's methods use machinery developed by Logunov to study Yau's conjecture for Laplace eigenfunctions.

\subsubsection{Density of nodal sets} It is well-known that nodal sets of Laplace eigenfunctions are dense on the scale $\lambda^{-1/2}$. In \cite{GiPo2017}[Open Problem 10], Girouard and Polterovich asked the analogous question for both the Steklov and Dirichlet-to-Neumann eigenfunctions, namely whether they are dense on the scale $\sigma^{-1}$. They point out that this cannot always hold without some additional regularity assumptions, as for example Dirichlet-to-Neumann eigenfunctions on a rectangle, of arbitrarily high eigenvalue, may be constant and nonzero along the full length of a side.

There has been significant recent progress on this question. For Steklov eigenfunctions, it was answered in the negative by Bruno and Galkowski \cite{BrGa2020}. In simplified and weakened form, their result reads:
\begin{thm}\cite{BrGa2020} There exists a compact domain $\Omega\subseteq\mathbb R^2$ with analytic boundary, and a fixed value $r_1>0$, for which each Steklov eigenfunction of $\Omega$ has a nodal domain which contains a ball of radius $r_1$.
\end{thm}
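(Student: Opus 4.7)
The plan is to exhibit a specific analytic domain $\Omega$ and verify that the exponential interior decay of Steklov eigenfunctions (Galkowski--Toth), combined with the oscillation/zero-set structure of harmonic functions, forces some nodal domain of every $u_k$ to contain a ball of fixed size. Fix any analytic $\Omega\subset\mathbb{R}^2$ together with an interior closed ball $\overline{B(x_0,R_0)}\subset\Omega$ at distance at least $d_0>0$ from $\partial\Omega$, and normalise the Steklov eigenfunctions so that $\|u_k\|_{L^2(\partial\Omega)}=1$. Then Galkowski--Toth together with the maximum principle gives
\begin{equation*}
  \sup_{B(x_0,R_0)}|u_k|\leq C\, k^{d/2-1/4} e^{-c k d_0}
\end{equation*}
for explicit $c,C>0$ depending only on $\Omega$ and $d_0$, and standard gradient estimates for harmonic functions upgrade this to the same bound on $|\nabla u_k|$ throughout $B(x_0,R_0/2)$. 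My candidate radius will be any fixed $r_1\in(0,R_0/4)$.

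The core step is to show that for each such $r_1$ there exists a threshold $k_0=k_0(r_1,\Omega)$ with the following property: for every $k\geq k_0$ the eigenfunction $u_k$ admits a zero-free ball $B(y,r_1)\subset B(x_0,R_0/2)$, which then automatically lies inside a single nodal domain. I would pursue this via an Almgren frequency argument: if $u_k$ vanished at some point of every ball $B(y,r_1)\subset B(x_0,R_0/2)$, then by Malinnikova-type estimates relating the density of zeros of a harmonic function to its frequency, the frequency $N(x_0,R_0/2)$ would be bounded below by $c'R_0/r_1$. On the other hand, the frequency is controlled above by the logarithmic growth $\log(\|u_k\|_{L^\infty(B(x_0,R_0))}/\|u_k\|_{L^\infty(B(x_0,R_0/2))})$, and an iterated three-circles inequality propagating outward to a boundary collar, combined with the polynomial amplitude bound $\|u_k\|_{L^\infty(\partial\Omega)}\lesssim k^{d/2-1/4}$ available on analytic boundary, pins the frequency at $\lesssim k$. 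Balancing the two estimates gives $R_0/r_1\lesssim k$, which for fixed $r_1$ holds only up to some threshold $k_0$, contradicting the density assumption for $k\geq k_0$ and furnishing the desired zero-free ball.

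The hard part will be carrying out this frequency balancing with sharp constants. Both the lower bound (dense zeros $\Rightarrow$ high frequency) and the upper bound (interior-to-boundary growth $\Rightarrow$ frequency $\lesssim k$) use the analyticity of $\partial\Omega$ in an essential way: Malinnikova's inequality requires harmonic-measure estimates uniform in the scale $r_1$, and the propagation of smallness from $B(x_0,R_0)$ out to $\partial\Omega$ via three-circles iteration has exponents that accumulate only subexponentially because $\partial\Omega$ is analytic --- merely smooth boundary would lose super-polynomially and degrade the conclusion to a $k$-dependent radius $r_1(k)\to 0$. Once $k_0$ is established, the finitely many small-$k$ eigenfunctions are handled individually --- each is a fixed nonzero harmonic function on a fixed compact analytic domain, hence has analytic nodal set with uniformly positive inradius of some nodal component --- and $r_1$ is shrunk one final time to simultaneously accommodate them and the asymptotic regime.
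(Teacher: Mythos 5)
Your argument is phrased for a \emph{fixed but arbitrary} analytic domain $\Omega$, but the theorem is an \emph{existence} statement: it asserts that \emph{some} carefully constructed analytic domain has the property, and indeed that property fails for the round disk. The Steklov eigenfunctions of $\D$ are $r^k\cos(k\theta)$ and $r^k\sin(k\theta)$, with nodal set a union of $2k$ radial segments; the nodal domains are sectors of opening angle $\pi/k$, whose inradius tends to zero like $1/k$, so no fixed $r_1>0$ works for all $k$. Since your argument invokes only analyticity of $\partial\Omega$ and the Galkowski--Toth interior decay estimate --- both of which the disk possesses --- no correct version of it can exist. The error is already visible in the balancing step, whose logic is reversed: granting the frequency lower bound $N(x_0,R_0/2)\gtrsim R_0/r_1$ from dense vanishing at scale $r_1$, and the upper bound $N(x_0,R_0/2)\lesssim k$ from interior-to-boundary growth, one obtains $R_0/r_1\lesssim k$. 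This inequality is \emph{satisfied}, not contradicted, once $k\geq c\,R_0/r_1$; the contradiction --- and hence the zero-free ball --- is obtained only for $k$ \emph{below} that threshold, not above it as you claim. Correctly parsed, the argument yields a zero-free ball of $k$-dependent radius $\sim R_0/k$, which is exactly the rate the disk exhibits and gives no uniform $r_1$. Your remark that merely smooth boundary would degrade the conclusion to $r_1(k)\to 0$ is therefore not a defect of the smooth case: it is the unavoidable output of this frequency comparison regardless of boundary regularity.

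The proof of Bruno and Galkowski is of an entirely different, constructive character. They build a particular analytic domain for which each high-frequency Steklov eigenfunction, restricted to the boundary, retains a quantitatively non-negligible low-frequency Fourier component. Since a pure mode $e^{ij\theta}$ has harmonic extension decaying like $r^{|j|}$, the low mode decays only algebraically into the interior while the dominant high-frequency part is exponentially small there; at a fixed distance from $\partial\Omega$ the low mode wins and fixes the sign of the eigenfunction on a macroscopic region. This requires explicit spectral control of the Dirichlet-to-Neumann map on the chosen family of domains (controlling how eigenfunctions distribute across Fourier modes, avoiding near-degeneracies), which is precisely the kind of domain-specific information that a universal decay-and-frequency argument cannot access.
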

Note that the results of Bruno and Galkowski are in fact significantly stronger. They show that $\Omega$ may be chosen arbitrarily close to any fixed domain $\Omega_0$ with analytic boundary. They also give upper bounds for the `oscillation' of Steklov eigenfunctions on larger subsets of $\Omega$.

Despite both of these negative results, Decio has recently proved a positive density result, involving balls in $\Omega$ which are nonetheless centered at a point in $\partial\Omega$:
\begin{thm}\cite{De2021} Suppose that $\Omega$ is a Lipschitz domain in Euclidean space. Then there is a constant $C=C(\Omega)$ for which any ball of radius $C/\sigma_k$ centered at a point of $\partial\Omega$ intersects the nodal set of $u_{\sigma_k}$ nontrivially.
\end{thm}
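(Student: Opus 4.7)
The approach I would take reduces the problem to a density statement on the boundary itself: since the nodal set of $u_k$ in $\overline{\Omega}$ contains the zero set of the trace $\varphi_k = u_k|_{\partial\Omega}$, it suffices to show that for some $C = C(\Omega)$, every Euclidean ball $B(x_0, C/\sigma_k)$ centered at $x_0 \in \partial\Omega$ contains a sign change of $\varphi_k$. Equivalently, $\varphi_k$ cannot have constant sign on $\Gamma_r := B(x_0, r) \cap \partial\Omega$ once $r \geq C/\sigma_k$. I would argue by contradiction: assume $\varphi_k > 0$ throughout $\Gamma_r$ for some $r = C/\sigma_k$ with $C$ a large constant to be chosen.

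The first ingredient is a boundary doubling estimate for $u_k$ at $x_0$. For harmonic functions on Lipschitz domains satisfying a Robin-type boundary condition, monotonicity formulas of Almgren / Garofalo--Lin type, adapted to Lipschitz boundaries by Adolfsson--Escauriaza and developed in the Steklov setting by Bellova--Lin, Zelditch, and Zhu, yield a bound of the form
\begin{equation*}
\int_{B(x_0, 2\rho)\cap\Omega} u_k^2 \, dV \leq e^{C_1(1 + \sigma_k \rho)} \int_{B(x_0, \rho)\cap\Omega} u_k^2 \, dV
\end{equation*}
for all $\rho \leq \rho_0(\Omega)$. Iterating and combining with standard interior elliptic estimates (and the Carleson-type access available on Lipschitz domains) produces an effective pointwise lower bound for $u_k$ on a corkscrew region $R \subset B(x_0, r/2) \cap \Omega$, of the form
\begin{equation*}
u_k(y) \geq e^{-C_2 \sigma_k r}\, \Bigl(\fint_{B(x_0,r)\cap\Omega} u_k^2 \, dV\Bigr)^{1/2}, \quad y\in R.
\end{equation*}

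The second ingredient is the boundary condition itself, exploited via Green's identity. Choose a cutoff $\eta \in C_c^\infty(B(x_0, 2r))$ equal to $1$ on $B(x_0, r)$ with $|\nabla \eta| \leq C/r$. Testing against $\eta^2 u_k$ and using $\partial_\nu u_k = \sigma_k \varphi_k$ gives
\begin{equation*}
\sigma_k \int_{\partial\Omega} \eta^2 \varphi_k^2 \, dS = \int_\Omega \eta^2 |\nabla u_k|^2 \, dV + 2\int_\Omega \eta \,u_k \,\nabla\eta\cdot\nabla u_k\, dV,
\end{equation*}
which, after Cauchy--Schwarz, relates the boundary $L^2$ mass of $\varphi_k$ near $x_0$ to the interior $L^2$ mass of $u_k$ on $B(x_0, 2r)\cap\Omega$. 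Combining this trace inequality with the doubling estimate (to control $\|u_k\|_{L^2(B(x_0,2r)\cap\Omega)}$ by $\|u_k\|_{L^2(B(x_0,r)\cap\Omega)}$) and the nontangential lower bound collapses everything to an inequality of the shape $\sigma_k r \leq C_3(\Omega)$, which is violated by $r = C/\sigma_k$ when $C$ is chosen large.

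The main obstacle is establishing the boundary doubling formula uniformly in $k$ on merely Lipschitz (as opposed to $C^{1,\alpha}$) domains, with explicit and linear dependence of the doubling exponent on $\sigma_k r$. The Adolfsson--Escauriaza frequency handles harmonic functions vanishing on a Lipschitz boundary, but the Steklov condition $\partial_\nu u_k = \sigma_k u_k$ contributes a boundary term in the monotonicity computation proportional to $\sigma_k$, which must be absorbed carefully; moreover, the outward normal is only defined $\mathcal{H}^d$-almost everywhere, so every integration by parts must be justified through weak traces and the distributional divergence theorem of De Giorgi--Federer type. Once this uniform quantitative doubling at scale $r \sim 1/\sigma_k$ is in hand, the soft contradiction above concludes the proof.
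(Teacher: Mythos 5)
Your first ingredient---a doubling estimate $\int_{B(x_0,2\rho)\cap\Omega}u_k^2\,dV\leq e^{C_1(1+\sigma_k\rho)}\int_{B(x_0,\rho)\cap\Omega}u_k^2\,dV$ that is uniform in $k$ on a merely Lipschitz domain---is not available and does not follow from the references you cite. The Adolfsson--Escauriaza frequency function is tailored to harmonic functions \emph{vanishing} on a Lipschitz boundary; the Steklov condition $\partial_\nu u_k=\sigma_k u_k$ contributes an unbounded boundary term to the monotonicity computation whose absorption, in the works of Bellova--Lin, Zelditch, and Zhu that you invoke, requires boundary regularity at least $C^{1,\alpha}$ and usually real analyticity. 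Establishing such a doubling inequality on a Lipschitz domain, with the linear dependence of the exponent on $\sigma_k\rho$ that your argument needs, would be a substantial theorem in its own right, arguably harder than the density statement you are trying to prove. You flag this as ``the main obstacle,'' but it is really the entire difficulty: once granted, the rest of your argument is routine; without it, nothing closes.

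The theorem in fact has an elementary proof that bypasses all of this machinery, via a Barta--Picone comparison with a mixed Steklov--Dirichlet eigenvalue. Suppose $u_k>0$ on $B(x_0,r)\cap\overline{\Omega}$, and set $D:=B(x_0,r)\cap\Omega$, $\Gamma_r:=B(x_0,r)\cap\partial\Omega$. Let $\psi>0$ be the first eigenfunction of the mixed problem on $D$ with Steklov condition on $\Gamma_r$ and Dirichlet condition on $\partial B(x_0,r)\cap\Omega$, with eigenvalue $\sigma_0^D(D)>0$. Testing $\Delta u_k=0$ against $\psi^2/u_k$, the Dirichlet part of the boundary drops out and Picone's identity gives $\sigma_k\int_{\Gamma_r}\psi^2=\int_D\nabla u_k\cdot\nabla(\psi^2/u_k)\leq\int_D|\nabla\psi|^2=\sigma_0^D(D)\int_{\Gamma_r}\psi^2$, so that $\sigma_k\leq\sigma_0^D(D)$. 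A single scaled Lipschitz trial function, vanishing on the spherical cap and equal to one near $x_0$, then shows $\sigma_0^D(D)\leq C(\Omega)/r$ with $C(\Omega)$ controlled by the Lipschitz character of $\partial\Omega$ (lower Ahlfors regularity of the boundary measure). Hence $\sigma_k\,r\leq C(\Omega)$, which is exactly the claimed bound; no doubling, no Carleman estimate, and no regularity beyond Lipschitz is used. Your Green's-identity step is essentially the trial-function bound on $\sigma_0^D(D)$, but by itself it yields no contradiction; it is the comparison against the positive function $u_k$ via Picone that is the missing idea.
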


\subsection{Nodal count}

Another natural question about nodal sets is to count nodal domains. This is inspired by the famous Courant nodal domain theorem, which states that the $k$th eigenfunction of the Laplacian on a compact manifold has at most $k$ nodal domains. It is an old result of Kuttler and Sigillito \cite{KuSi1969} that the same is true for Steklov eigenfunctions.

In \cite{HaSh2021}, Hassannezhad and Sher have recently investigated the analogue for eigenfunctions of the Steklov problem with a potential $q$.
\begin{thm}\label{thm:mainthmhash} \cite[Theorem 1.1]{HaSh2021} Let $\Omega$ be a Lipschitz domain in a smooth manifold $M$ and let $q\in L^{\infty}(\Omega)$ be a potential. Let $N_k$ be the number of nodal domains of a $k$th eigenfunction $u_k$ of the Steklov problem for $\Delta+q$ on $\Omega$. Then
\[N_k\leq k + d,\]
where $d$ is the number of non-positive Dirichlet eigenvalues of $\Delta + q$.
\end{thm}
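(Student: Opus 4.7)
I would follow the classical Courant nodal domain strategy, carefully adapted to account for the fact that a negative potential $q$ can allow $u_k$ to possess \emph{interior} nodal domains whose closures are disjoint from $\partial\Omega$. Let $\Omega_1,\dots,\Omega_{N_k}$ denote the nodal domains of $u_k$ and set $v_j:=u_k\chi_{\Omega_j}\in H^1(\Omega)$. I partition the $\Omega_j$ into \emph{boundary} nodal domains (those with $\mathcal{H}^{d-1}(\partial\Omega_j\cap\partial\Omega)>0$) and \emph{interior} nodal domains, and let $b$ and $i$ denote their respective numbers so that $N_k=b+i$. Testing the identity $(\Delta+q)u_k=0$ against $u_k$ on each $\Omega_j$, and using the Steklov condition on $\partial\Omega_j\cap\partial\Omega$ together with the vanishing of $u_k$ on the nodal portion $\partial\Omega_j\setminus\partial\Omega$, yields the energy identity
\[
\int_{\Omega_j}\bigl(|\nabla u_k|^2+qu_k^2\bigr)\,dV=\sigma_k\int_{\partial\Omega_j\cap\partial\Omega}u_k^2\,dV.
\]
Because the $v_j$ have pairwise disjoint supports in $\Omega$ and on $\partial\Omega$, summing with arbitrary coefficients shows that every $v=\sum_j c_jv_j\in\mathrm{span}(v_1,\dots,v_{N_k})$ satisfies $a(v,v)=\sigma_k\,\|v\|_{L^2(\partial\Omega)}^2$, where $a(u,w):=\int_\Omega(\nabla u\cdot\nabla w+quw)\,dV$.

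\emph{Bounding $i$ by $d$.} On an interior nodal domain $\Omega_j$, $v_j\in H^1_0(\Omega_j)$ is sign-definite and satisfies $(\Delta+q)v_j=0$, so it is the Dirichlet ground state of $\Delta+q$ on $\Omega_j$ with eigenvalue $0$. Extending the $i$ such $v_j$'s by zero gives $i$ disjointly supported trial functions in $H^1_0(\Omega)$ on each of which $\int_\Omega(|\nabla v_j|^2+qv_j^2)=0$. By the min-max principle applied to the Dirichlet problem for $\Delta+q$ on $\Omega$, this forces $\lambda_{i-1}^D(\Delta+q,\Omega)\leq 0$, and hence $i\leq d$.

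\emph{Bounding $b$ by $k$.} I claim $b\leq k$. If instead $b\geq k+1$, let $W:=\mathrm{span}\{v_j:\Omega_j\text{ boundary}\}$, a $b$-dimensional subspace of $H^1(\Omega)$. Imposing $L^2(\partial\Omega)$-orthogonality to the boundary traces of $u_1,\dots,u_{k-1}$ (giving $k-1$ conditions) together with the vanishing of a single chosen coefficient $c_{j_0}$ leaves a nonempty subspace of dimension at least $b-k\geq 1$. Pick $0\neq v$ in it. The energy identity gives $R(v)=\sigma_k$, while the min-max characterisation of $\sigma_k$ forces $R(v)\geq\sigma_k$, with equality only if $v$ lies in the $\sigma_k$-eigenspace of the Steklov problem. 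Thus $v$ is a genuine Steklov eigenfunction; by interior elliptic regularity it solves $(\Delta+q)v=0$ classically in $\Omega$. But $v\equiv 0$ on the nonempty open set $\Omega_{j_0}$, contradicting the weak unique continuation property for Schr\"odinger operators with bounded potential (Aronszajn--Cordes, extended by Jerison--Kenig). Hence $b\leq k$, and combining with the previous step yields $N_k=b+i\leq k+d$.

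\textbf{Main obstacle.} The delicate step is upgrading the \emph{scalar} energy identity $a(v,v)=\sigma_k\|v\|_{L^2(\partial\Omega)}^2$ to the full weak eigenvalue identity $a(v,\phi)=\sigma_k\langle v,\phi\rangle_{L^2(\partial\Omega)}$ for all $\phi\in H^1(\Omega)$, which is what is needed before elliptic regularity and unique continuation can be invoked on $v$. In the standard Courant argument (positive potential) this upgrade relies on positivity of the bilinear form; with an indefinite $q$ one must instead expand $v|_{\partial\Omega}$ in the complete orthonormal basis of Steklov eigenfunction traces, exploit the orthogonality to $u_1,\dots,u_{k-1}$ to discard low-frequency components, and use the equality case in the resulting Rayleigh inequality. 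A secondary technical issue is justifying the integration by parts on each nodal domain $\Omega_j$ at only Lipschitz regularity of $\partial\Omega$, which is handled via $H^{1/2}$ trace theory combined with interior smoothness of $u_k$ away from its nodal set.
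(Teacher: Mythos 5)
Your approach is genuinely different from the paper's, which establishes a Steklov--Robin duality (viewing $u_k$ as a Robin eigenfunction of $\Delta+q$ with interior eigenvalue $0$ and Robin parameter $\sigma_k$, and locating the index of $0$ in the Robin spectrum in terms of $k$ and $d$) and then applies a Courant-type theorem for Robin eigenfunctions. Your interior count $i\le d$ is correct. But the bound $b\le k$ is both unjustified by the argument you give and, in fact, false. The gap is not where you place it (the scalar-to-weak upgrade); it occurs one step earlier. You assume that for $v\in H^1(\Omega)$ whose trace is orthogonal in $L^2(\partial\Omega)$ to the traces of $u_1,\dots,u_{k-1}$ one has $R(v)\ge\sigma_k$, so that equality would make $v$ an eigenfunction. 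This $H^1(\Omega)$ min-max characterisation of $\sigma_k$ \emph{fails} whenever $d>0$: the quantity $\|v\|^2_{L^2(\partial\Omega)}$ vanishes on the infinite-dimensional subspace $H^1_0(\Omega)$, on which the form $a(v,v)=\int_\Omega\bigl(|\nabla v|^2+qv^2\bigr)$ has $d$ negative directions. Taking $v=w+\varepsilon\psi$ with $w\in H^1_0(\Omega)$ a Dirichlet ground state of $\Delta+q$ having negative eigenvalue and $\psi\in H^1(\Omega)$ any function with nonzero trace orthogonal to the first $k-1$ eigentraces gives an admissible function with $R(v)\to-\infty$ as $\varepsilon\to0$. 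So ``$R(v)=\sigma_k$'' carries no extremality information, no Euler--Lagrange equation follows, and $v$ need not be an eigenfunction. The correct $H^1(\Omega)$ min-max for $\sigma_k$ is shifted by exactly $d$; making that shift rigorous is the content of the Steklov--Robin correspondence.

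Moreover $b\le k$ is false as a statement. Take $\Omega$ the unit disk and $q\equiv -C$ with $C$ approaching, from the appropriate side, a Dirichlet eigenvalue of $-\Delta$ on the $\ell=5$ angular mode; the corresponding Dirichlet-to-Neumann eigenvalue of $\Delta+q$ diverges to $-\infty$, so the \emph{first} Steklov eigenfunction is $u_1=f_5(r)\cos5\theta$, which has $b=10$ boundary nodal domains while $k=1$. The theorem survives because $d$ is large; but the slack $d-i$ must absorb the excess $b-k$, so the two quantities cannot be bounded independently. A correct Courant argument must process all $N_k$ functions $v_j$ simultaneously, using the Robin quadratic form $a(v,v)-\sigma_k\|v\|^2_{L^2(\partial\Omega)}$ relative to $\|v\|^2_{L^2(\Omega)}$, on which each $v_j$ has vanishing Rayleigh quotient and whose $H^1(\Omega)$ min-max \emph{is} legitimate, then count negative Robin eigenvalues at parameter $\sigma_k$. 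That is precisely the paper's route; I do not see how to avoid it within the decomposition you propose.
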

This theorem is sharp, as the authors show via explicit examples.

The idea of the proof is to use Steklov-Robin duality. This idea is originally due to Friedlander \cite{Fr1991}, see also \cite{Ma1991} and sequels. Steklov eigenvalues and eigenfunctions, with the spectral parameter in the boundary condition, may be viewed alternatively as Robin eigenvalues and eigenfunctions, with the spectral parameter in the interior. After suitably generalising this set of ideas, the proof of Theorem \ref{thm:mainthmhash} is a direct consequence of a Courant-type theorem for Robin eigenfunctions.

There are many open questions. First, if $N_k$ is a Laplace nodal count, it is a result of Pleijel \cite{Pl1956} that
\[\limsup\frac{N_k}{k}<\gamma<1,\]
where $\gamma$ is an explicit constant.
\begin{ques}\label{ques:pleijel1}
Is there a Pleijel-type theorem for the nodal counts of Steklov eigenfunctions?
\end{ques}

Interestingly, very little seems to be known about the nodal counts of the \emph{Dirichlet-to-Neumann} eigenfunctions $\phi_k$. The same explicit examples show that a strict bound of $k$ is impossible, at least in the same generality considered in \cite{HaSh2021}. However one may ask the following:
\begin{ques}\label{ques:pleijel2} Is there a theorem like Theorem \ref{thm:mainthmhash} for Dirichlet-to-Neumann eigenfunctions? If so, can it be improved to a Pleijel-type result?
\end{ques}

\subsection{Other properties of eigenfunctions}

In the setting of the Laplacian, Uhlenbeck showed in 1976 \cite{uhlenbeck1976} that for \emph{generic} metrics on a fixed manifold $\Omega$,
\begin{itemize}
\item All eigenvalues are simple;
\item All eigenfunctions have zero as a regular value;
\item All eigenfunctions are Morse functions.
\end{itemize}
This work has recently been extended to the Steklov setting by Wang \cite{wang2022gen}. In particular, Wang shows analogues of each of the three results above, proving that the Steklov eigenvalues are generically simple. The latter two results are demonstrated for the Dirichlet-to-Neumann eigenfunctions, that is, the restrictions to the boundary of the Steklov eigenfunctions.

\begin{appendices}

\section{Variational eigenvalues of Radon measures}
\label{Section:Radon}

Let $(\Omega,g)$ be a compact Riemannian manifold of dimension $n=d+1$ with boundary $\Sigma$. Let $\mu$ be a nonzero Radon measure on $\Omega$ and consider the induced Rayleigh--Radon quotient
\[R_{\mu}(u):=\frac{\int_\Omega|\nabla u|^2\,dV_{\om}}{\int_{\Omega}u^2\,d\mu},\]
which is initially defined for $u\in C^\infty(\Omega)$ with $\int_{\Omega}u^2\,d\mu\neq 0$. 
The variational eigenvalues $\lambda_k(\Omega,g,\mu)$ are then defined through the min-max variational formula~\eqref{def:VarEigenRadon}.  (We will often write $\lambda_k(\mu)$ for $\lambda_k(\Omega,g,\mu)$, suppressing the name of the Riemannian manifold if it is fixed.)
The goal of this appendix is to give enough background to state Theorem~\ref{thm:ContinuityRadon}, which provides continuity for these eigenvalues. See section \ref{subsection:introvareigenradon} for examples connecting variational eigenvalues of Radon measures to various classical eigenvalue problems.

It follows from $R_\mu(1)=0$ that $\lambda_0(\Omega,g,\mu)=0$. For $\Omega$ connected, it is natural to expect that $\lambda_1(\Omega,g,\mu)$ should be positive. However, for an arbitrary Radon measure, the analogy between the variational eigenvalues $\lambda_k(\Omega,g,\mu)$ and the eigenvalues of usual elliptic operators could become very weak.
\begin{ex}
 Let $p_1,\cdots,p_\ell\in\Omega$ be distinct and consider the sum of delta-measures
 $\mu=\sum_i\delta_{p_i}$.
Because the capacity of a point is 0, we see directly that
$\lambda_0(\mu)=\lambda_1(\mu)=\cdots=\lambda_{\ell-1}(\mu)=0$. 
Moreover, since $L^2(\Omega,\mu)$ is only $\ell$-dimensional, the infimum in the definition of  $\lambda_\ell(\mu)$ is over the empty set, so $\lambda_\ell(\mu)=+\infty$.
\end{ex}
To develop the theory beyond a mere observation and catalogue of examples, a sound functional setting is required.
Define $H^1(\Omega,\mu)$ to be the completion of   $C^\infty(\Omega)$ with respect to the norm given by
\begin{equation}
  \label{eq:completion}
  \|u\|^2_{H^1(\Omega,\mu)} = \int_\Omega u^2\,d\mu + \int_\Omega |\nabla u|_g^2\,dv_g =
  \|u\|^2_{L^2(\Omega,\mu)} + \|{\nabla u}\|_{L^2(\Omega,g)}^2.
\end{equation}
Through this completion, the natural map $C^\infty(\Omega)\to L^2(\Omega,\mu)$ induces a bounded map \[\tau^{\mu}:H^1(\Omega,\mu)\rightarrow L^2(\Omega,\mu),\] which we call the \emph{trace map} induced by $\mu$. 
For all measures $\mu$ that we will use, the space $H^1(\Omega,\mu)$ coincides (although the norm may differ) with the usual Sobolev space $H^1(\Omega,dV_g)$ and the map $\tau^\mu$ is explicitly identified. 
Nevertheless, for an arbitrary Radon measure, the spaces $H^1(\Omega,\mu)$ could be very different from the usual ones.
\begin{ex}
 Let $\mu=\delta_{p}$ be a delta-measure supported at $p\in\Omega$. Then $H^1(\Omega,\mu)$ is naturally identified with $\R\times H^1(\Omega,dV_g))/\R$ and $\tau^\mu(t,f)=t$.
\end{ex}
By limiting our attention to a more manageable class of Radon measures, we will recover many of the expected properties observed in classical eigenvalue problems. The following definition was proposed by Girouard, Kapukhin and Lagac\'e in~\cite[Definition 3.3]{GiKaLa2021}.
\begin{defn}\label{def:admissible}
    A Radon measure $\mu$ on $\Omega$ is \emph{admissible} if the following conditions are satisfied:
    \begin{enumerate}
        \item[A1)] For each $p\in\Omega$, $\mu(\{p\})=0$;
        \item[A2)] The map $\tau^{\mu}:H^1(\Omega,\mu)\to L^2(\Omega,\mu)$ is compact;
        \item[A3)] There exists $K > 0$ such that for all $u \in C^\infty(\Omega)$ with vanishing $\mu$-average,
  \begin{equation}\label{Ineq:Poincare}
    \int_\Omega u^2\, d\mu \le K \int_\Omega |\nabla u|^2\, dV.
  \end{equation}
    \end{enumerate}
\end{defn}
One can show that the best constant in $(A3)$ is $\lambda_1(\Omega,g,\mu)^{-1}$.
Moreover, the Poincar\'e-type inequality $(A3)$ implies a Poincar\'e--Wirtinger type inequality for all $u\in C^\infty(\Omega)$:
\[\int_\Omega u^2\, d\mu \le K'( \int_\Omega |\nabla u|^2\, dV+\int_\Omega u^2\,dV).\]
In particular, the natural map $j:C^\infty(\Omega)\to L^2(\Omega,\mu)$ extends to a bounded linear map $T_\mu:H^1(\Omega,dV_g))\to L^2(\Omega,\mu)$.  (We emphasize that $T_\mu$ and the map $\tau^\mu$ defined above have different domains as normed spaces.)
\begin{lemma}\cite[Theorem 3.4]{GiKaLa2021}\label{Lemma:AdminissibleAlternative}
Let $\mu$ be a nonzero Radon measure on $\Omega$ such that $\mu(\{p\})=0$ for each $p\in\Omega$. Then $\mu$ is admissible if and only if the natural map $j:C^\infty(\Omega)\to L^2(\Omega,\mu)$ extends to a compact linear map
$T_\mu:H^1(\Omega,dV_g)\to L^2(\Omega,\mu)$. 
\end{lemma}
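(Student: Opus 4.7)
The plan is to prove both implications in turn. The forward direction will use all three admissibility conditions to construct $T_\mu$ as a composition of a continuous map $\Phi\colon H^1(\Omega,dV_g)\to H^1(\Omega,\mu)$ with the compact trace $\tau^\mu$; the reverse direction will exploit the compactness of $T_\mu$, combined with the classical Poincar\'e inequality on $(\Omega,g)$, to recover A2 and A3.

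For the direction ($\Rightarrow$), I would first upgrade A3 to the Poincar\'e--Wirtinger-type estimate
\begin{equation*}
\|u\|_{L^2(\mu)}^2 \leq K'\bigl(\|u\|_{L^2(dV_g)}^2 + \|\nabla u\|_{L^2(dV_g)}^2\bigr), \qquad u\in C^\infty(\Omega),
\end{equation*}
as stated in the remark following A3. This bound immediately shows that $j$ is continuous with respect to the $H^1(\Omega,dV_g)$-norm, hence extends uniquely to a bounded linear map $T_\mu$. The same estimate yields $\|u\|_{H^1(\mu)} \leq C\|u\|_{H^1(dV_g)}$, so the identity on $C^\infty(\Omega)$ extends to a bounded linear map $\Phi\colon H^1(\Omega,dV_g)\to H^1(\Omega,\mu)$. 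By density, $T_\mu = \tau^\mu\circ\Phi$, and compactness of $\tau^\mu$ given by A2 transfers to compactness of $T_\mu$.

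For the direction ($\Leftarrow$), A1 is given. To obtain A3 I would argue by contradiction: a would-be sequence $u_n\in C^\infty(\Omega)$ with $\int u_n\,d\mu=0$, $\|u_n\|_{L^2(\mu)}=1$ and $\|\nabla u_n\|_{L^2(dV_g)}\to 0$, after centring by its $dV_g$-mean $a_n$, produces $v_n:=u_n-a_n$ which tends to zero in $H^1(\Omega,dV_g)$ by the standard Poincar\'e inequality on $(\Omega,g)$. Continuity of $T_\mu$ then forces $v_n\to 0$ in $L^2(\mu)$; applying the zero-$\mu$-mean constraint on $u_n$ gives $a_n\to 0$, so $\|u_n\|_{L^2(\mu)}\to 0$, contradicting normalisation. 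To obtain A2, I would take a bounded sequence $\{f_n\}\subset H^1(\Omega,\mu)$, approximate each $f_n$ in the $H^1(\mu)$-norm by some $u_n\in C^\infty(\Omega)$, and decompose $u_n = w_n + b_n + a_n$ where $a_n$ is the $\mu$-mean of $u_n$ and $b_n$ is the $dV_g$-mean of $u_n-a_n$. Boundedness of $\{u_n\}$ in $H^1(\mu)$ and A3 (just proven) control $\|u_n-a_n\|_{L^2(\mu)}$, while the Poincar\'e inequality on $(\Omega,g)$ controls $\|w_n\|_{L^2(dV_g)}$ in terms of $\|\nabla u_n\|_{L^2(dV_g)}$, showing that $\{w_n\}$ is bounded in $H^1(\Omega,dV_g)$. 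Compactness of $T_\mu$ extracts a subsequence along which $w_n$ converges in $L^2(\mu)$; the identity $\int(u_n-a_n)\,d\mu=0$ rewrites as $b_n\mu(\Omega)=-\int w_n\,d\mu$, so $b_n$ converges along this subsequence, and boundedness of $a_n$ (via Cauchy--Schwarz in $L^2(\mu)$) yields a further extraction on which $a_n$ converges. Hence $u_n$, and therefore $\tau^\mu(f_n)$, converges in $L^2(\mu)$, establishing A2.

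The main obstacle is the Poincar\'e--Wirtinger upgrade of A3 in the forward direction. Since $\mu$ may be singular with respect to $dV_g$, there is no direct comparison between the two $L^2$-norms, and bounding the $\mu$-average $\bar u_\mu$ by $\|u\|_{H^1(\Omega,dV_g)}$ appears to require A1 (to rule out atomic obstructions) and A2 (to enable a compactness-based contradiction argument) in combination with the mean-zero bound supplied by A3. Everything afterward is a classical Rellich-type bootstrap using the Poincar\'e inequality on $(\Omega,g)$.
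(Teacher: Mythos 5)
Your argument for the implication ($\Leftarrow$) is correct. For A3, the contradiction argument (subtract the $dV_g$-mean $a_n$, apply the classical Poincar\'e inequality on $(\Omega,g)$ to get $v_n := u_n - a_n \to 0$ in $H^1(\Omega,dV_g)$, push through the continuous $T_\mu$ to get $v_n\to 0$ in $L^2(\mu)$, then use the $\mu$-mean-zero constraint on $u_n$ to force $a_n\to 0$) is sound. For A2, the three-fold decomposition $u_n = w_n + b_n + a_n$, the control of $\|w_n\|_{H^1(\Omega,dV_g)}$ by the Poincar\'e inequality, the compactness of $T_\mu$ to make $w_n$ converge in $L^2(\mu)$, and the linear identity $b_n\mu(\Omega) = -\int w_n\,d\mu$ together with boundedness of $a_n$ close the argument.

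The implication ($\Rightarrow$), however, rests on a step that you correctly flag as ``the main obstacle'' but do not establish: the Poincar\'e--Wirtinger upgrade of A3, equivalently the boundedness $\|u\|_{L^2(\mu)}\leq C\|u\|_{H^1(\Omega,dV_g)}$. The remark you cite attributes this to A3, but A3 alone cannot deliver it: in dimension $d+1\geq 2$ the Dirac mass $\delta_p$ satisfies A3 trivially (the left side vanishes on $\mu$-mean-zero functions) and also A2 (the target $L^2(\delta_p)$ is one-dimensional, so $\tau^{\delta_p}$ is automatically compact), yet Poincar\'e--Wirtinger fails for $\delta_p$ because one can produce $u_n\in C^\infty(\Omega)$ with $u_n(p)=1$ and $\|u_n\|_{H^1(\Omega,dV_g)}\to 0$ (a logarithmic cutoff when $d+1=2$, a power-law bump when $d+1\geq 3$). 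So A1 is essential, as you suspect, but the compactness-based contradiction you gesture at does not close as outlined. Indeed, if $u_n\in C^\infty(\Omega)$ has $\|u_n\|_{L^2(\mu)}=1$ and $\|u_n\|_{H^1(\Omega,dV_g)}\to 0$, and one writes $u_n = w_n + a_n$ with $a_n$ the $\mu$-mean of $u_n$ and $w_n$ of $\mu$-mean zero, then A3 already gives $w_n\to 0$ in $L^2(\mu)$, so A2 applied to $\{w_n\}$ yields no new information; meanwhile $a_n\to a:=\mu(\Omega)^{-1/2}$ after a sign choice. One is then left with the smooth sequence $w_n + a$, which converges to $0$ in $H^1(\Omega,dV_g)$ (by Rellich and $\nabla w_n\to 0$) while converging to the nonzero constant $a$ in $H^1(\Omega,\mu)$. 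Because $H^1(\Omega,\mu)$ is an abstract completion of $C^\infty(\Omega)$, nothing in A1--A3 as you have deployed them identifies or relates these two limits; ruling out exactly this dichotomy is the boundedness of $T_\mu$, which is what you are trying to prove, so the argument is circular. Showing that admissibility forces $\mu$ to act as a bounded functional on $H^1(\Omega,dV_g)$ is the genuine content of the forward implication, and it requires an input beyond the reduction you sketch.
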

To complete this picture, it is useful to know that the Sobolev--Radon spaces associated to admissible Radon measures are isomorphic to each other.
\begin{lemma}\cite[Theorem 3.5]{GiKaLa2021}
If $\mu,\xi$ are two admissible measures, then the identity map on $C^\infty(\Omega)$ extends to a bounded invertible linear map $T_{\mu,\xi}:H^1(\Omega,\mu)\to H^1(\Omega,\xi)$.
\end{lemma}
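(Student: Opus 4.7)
\medskip

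\noindent\textbf{Proof proposal.} The plan is to build the map $T_{\mu,\xi}$ by extending the identity on $C^\infty(\Omega)$, which requires showing that there is a constant $C>0$ such that
\[\|u\|_{H^1(\Omega,\xi)} \le C\,\|u\|_{H^1(\Omega,\mu)}\qquad\text{for every }u\in C^\infty(\Omega).\]
Since the Dirichlet term $\int_\Omega|\nabla u|^2\,dV_g$ appears in both norms, this reduces to establishing the inequality
\begin{equation}\label{eq:reduced}
\int_\Omega u^2\,d\xi \;\le\; C_1\int_\Omega u^2\,d\mu + C_2\int_\Omega|\nabla u|^2\,dV_g\qquad\text{for every }u\in C^\infty(\Omega).
\end{equation}

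\noindent First, I would invoke Lemma~\ref{Lemma:AdminissibleAlternative} applied to $\xi$: the admissibility of $\xi$ means that the natural inclusion $j:C^\infty(\Omega)\to L^2(\Omega,\xi)$ extends to a bounded (indeed compact) map $T_\xi:H^1(\Omega,dV_g)\to L^2(\Omega,\xi)$, so there is a constant $K_\xi$ with
\[\int_\Omega u^2\,d\xi \le K_\xi\,\Bigl(\int_\Omega u^2\,dV_g + \int_\Omega|\nabla u|^2\,dV_g\Bigr)\qquad\text{for all }u\in C^\infty(\Omega).\]
It therefore suffices to compare the $L^2(dV_g)$-mass of $u$ to the $\mu$-mass via a generalised Poincar\'e--Wirtinger inequality. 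Specifically, I would prove that
\begin{equation}\label{eq:genPW}
\int_\Omega u^2\,dV_g \;\le\; C'\Bigl(\int_\Omega u^2\,d\mu + \int_\Omega|\nabla u|^2\,dV_g\Bigr)\qquad\text{for all }u\in C^\infty(\Omega),
\end{equation}
by a standard compactness-contradiction argument: if \eqref{eq:genPW} failed, one would obtain a sequence $u_n\in C^\infty(\Omega)$ normalised by $\int u_n^2\,dV_g=1$ with $\int |\nabla u_n|^2\,dV_g\to 0$ and $\int u_n^2\,d\mu\to 0$. By Rellich--Kondrachov (i.e.\ compactness of $H^1(\Omega,dV_g)\hookrightarrow L^2(\Omega,dV_g)$), a subsequence converges in $L^2(dV_g)$ to a constant $c$ with $\int c^2\,dV_g=1$, hence $c\neq 0$; but the compact map $T_\mu:H^1(\Omega,dV_g)\to L^2(\Omega,\mu)$ (again from Lemma~\ref{Lemma:AdminissibleAlternative}) sends $u_n\to c$ in $L^2(\Omega,\mu)$, forcing $c^2\mu(\Omega)=0$, a contradiction since $\mu$ is nonzero. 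Combining \eqref{eq:genPW} with the bound on $T_\xi$ yields \eqref{eq:reduced}.

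\noindent With \eqref{eq:reduced} in hand, the identity on $C^\infty(\Omega)$ extends by continuity and density to a bounded linear map $T_{\mu,\xi}:H^1(\Omega,\mu)\to H^1(\Omega,\xi)$. Swapping the roles of $\mu$ and $\xi$ in the argument gives a bounded linear map $T_{\xi,\mu}:H^1(\Omega,\xi)\to H^1(\Omega,\mu)$; since $T_{\xi,\mu}\circ T_{\mu,\xi}$ and $T_{\mu,\xi}\circ T_{\xi,\mu}$ both agree with the identity on the dense subspace $C^\infty(\Omega)$, they coincide with the identities on the respective Hilbert spaces, so $T_{\mu,\xi}$ is invertible.

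\noindent The main obstacle is the contradiction step establishing \eqref{eq:genPW}: one has to exploit the full strength of admissibility twice---Rellich--Kondrachov for $dV_g$ to extract a limit, and compactness of the trace $T_\mu$ (rather than mere boundedness) to pass to the limit in $\int u_n^2\,d\mu$ along the $L^2(dV_g)$-convergent subsequence. Without the compactness condition (A2), the limiting constant could fail to have zero $\mu$-integral, and the argument would break down.
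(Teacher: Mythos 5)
Your overall strategy is sound and correct: the reduction to an inequality of the form $\int_\Omega u^2\,d\xi \le C_1\int_\Omega u^2\,d\mu + C_2\int_\Omega|\nabla u|^2\,dV_g$ for $u\in C^\infty(\Omega)$, the two-step split into a Poincar\'e--Wirtinger bound for $\xi$ and a generalised Poincar\'e--Wirtinger bound $\int_\Omega u^2\,dV_g\le C'(\int_\Omega u^2\,d\mu + \int_\Omega|\nabla u|^2\,dV_g)$ for $\mu$, the compactness-contradiction proof of the latter, and the density argument for invertibility all work.

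However, your closing remark misattributes what the argument actually uses. Once $u_n\rightharpoonup c$ weakly in $H^1(\Omega,dV_g)$, the conclusion $T_\mu c=0$ needs only \emph{boundedness} of $T_\mu$, not compactness: boundedness gives $T_\mu u_n \rightharpoonup T_\mu c$ weakly in $L^2(\Omega,\mu)$, while $\|T_\mu u_n\|_{L^2(\mu)}^2=\int_\Omega u_n^2\,d\mu\to 0$ gives $T_\mu u_n\to 0$ strongly; uniqueness of weak limits then forces $T_\mu c =0$. And boundedness of $T_\mu$ already follows from the Poincar\'e--Wirtinger inequality that the paper derives from condition (A3) alone, just before stating the Lemma. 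So this Lemma in fact uses only (A1) and (A3); the compactness hypothesis (A2) plays no role here, and you should not suggest that it is essential to this step. (Its role is elsewhere, e.g.\ in the discreteness of the spectrum in Proposition~\ref{thm:SpectralTheoremMeasure}.)

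For what it is worth, the generalised Poincar\'e bound also admits a direct, constant-explicit proof avoiding the contradiction argument and Rellich compactness entirely. Set $\bar u_V := |\Omega|^{-1}\int_\Omega u\,dV_g$ and $v := u-\bar u_V$, so $\int_\Omega u^2\,dV_g = \int_\Omega v^2\,dV_g + |\Omega|\,\bar u_V^2$, and the first term is controlled by the classical Poincar\'e inequality for $dV_g$. Writing $\bar u_V = \bar u_\mu - \bar v_\mu$ in terms of the $\mu$-averages of $u$ and $v$, Cauchy--Schwarz gives $\bar u_\mu^2\le\mu(\Omega)^{-1}\int_\Omega u^2\,d\mu$ and $\bar v_\mu^2\le\mu(\Omega)^{-1}\int_\Omega v^2\,d\mu$, and the Poincar\'e--Wirtinger inequality for $\mu$ applied to $v$ (together with the classical Poincar\'e for $v$) bounds the latter by $\int_\Omega|\nabla u|^2\,dV_g$, yielding an explicit constant $C'$ in terms of $K$, $|\Omega|$, $\mu(\Omega)$, and the $dV_g$-Poincar\'e constant.
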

In particular, since $dV_g$ is itself an admissible measure, $H^1(\Omega,\mu)\cong H^1(\Omega,dV_g)$ for any admissible $\mu$.
\begin{remark}
The literature on Sobolev spaces is replete with compactness criteria, which are useful in deciding which measures are admissible. See the book~\cite[Chapter 11]{Ma2011} by Maz’ya. In particular, if $\mu$ is admissible then any set $E\subset\Omega$ of vanishing capacity must satisfy $\mu(E)=0$. A very interesting geometric criterion for admissibility in terms of a quantity called the \emph{lower $\infty$-dimension of $\mu$} could also be deduced from the work of Hu, Lau, and Ngai~\cite{HuLaNg2006}. See also~\cite[Section 2]{Ko2014}.
\end{remark}
The following Proposition shows that variational eigenvalues of admissible measures behave similarly to the eigenvalues of elliptic operators with compact resolvent.
\begin{prop}\label{thm:SpectralTheoremMeasure}
Let $\mu$ be an admissible measure on the compact connected Riemannian manifold $(\Omega,g)$.
Then the variational eigenvalues $\lambda_k(\mu)=\lambda_k(\Omega,g,\mu)$ form an unbounded sequence
\[
0=\lambda_0(\mu)<\lambda_1(\mu)\leqslant\lambda_2(\mu)\leqslant\ldots\nearrow\infty.
\]
A real number $\lambda$ is an eigenvalue if and only if there exists $0\neq u\in H^1(\Omega,\mu)$ such that
\begin{equation}
\label{eigenfunctions_measures:def}
\int_\Omega  \nabla u\cdot \nabla \phi \, dV = \lambda\int_\Omega \tau^{\mu}(u) \tau^{\mu}(\phi)\, d\mu,\qquad\qquad\text{for all } \phi\in H^1(\Omega,\mu).
\end{equation}
In that case we call $u$ an eigenfunction corresponding to $\lambda$.
Moreover, there exists a sequence $(u_j)\subset H^1(\Omega,\mu)$ of eigenfunctions corresponding to $\lambda_j(\mu)$ such that the functions $\tau^\mu(f_j)$ form an orthonormal basis of $L^2(\Omega,\mu)$.
\end{prop}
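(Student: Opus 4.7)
The plan is to recast the variational problem as an eigenvalue problem for a compact, self-adjoint, positive operator on $L^2(\Omega,\mu)$, and then recover the min-max values by a standard Courant--Fischer argument. The two key ingredients will be the Lax--Milgram lemma for the Dirichlet form, whose coercivity is supplied by the Poincar\'e-type inequality (A3), and the spectral theorem for compact self-adjoint operators, whose compactness hypothesis is supplied by condition (A2).

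First I would work on the closed codimension-one subspace
\[V = \left\{u \in H^1(\Omega,\mu)\,:\,\int_\Omega \tau^\mu(u)\,d\mu = 0\right\}\]
of $\mu$-mean-zero functions. The Dirichlet form $a(u,v) = \int_\Omega \langle \nabla u, \nabla v\rangle_g\,dV_g$ is continuous on $V$ and, by (A3) combined with the definition~\eqref{eq:completion} of the $H^1(\Omega,\mu)$-norm, satisfies $a(u,u) \geq \frac{1}{K+1}\|u\|_{H^1(\Omega,\mu)}^2$, so it is coercive on $V$. Given $f \in L^2(\Omega,\mu)$ with vanishing $\mu$-average, the linear functional $\phi \mapsto \int_\Omega f\,\tau^\mu(\phi)\,d\mu$ is bounded on $V$ by continuity of $\tau^\mu$. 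Lax--Milgram then yields a unique $u_f \in V$ with $a(u_f,\phi) = \int_\Omega f\,\tau^\mu(\phi)\,d\mu$ for every $\phi \in V$; extending by zero on the line of constants produces a bounded linear operator $T: L^2(\Omega,\mu) \to L^2(\Omega,\mu)$, $Tf := \tau^\mu(u_f)$.

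Next I would verify that $T$ is self-adjoint, positive, and compact. Symmetry and positivity of $a$ give $\langle Tf,g\rangle_{L^2(\mu)} = a(u_f,u_g)$, so $T$ is self-adjoint and non-negative. For compactness, the coercivity bound gives $\|u_f\|_{H^1(\Omega,\mu)} \lesssim \|f\|_{L^2(\mu)}$, and then the compactness of $\tau^\mu:H^1(\Omega,\mu)\to L^2(\Omega,\mu)$ (condition (A2)) carries a bounded family of $u_f$'s to a precompact family of $Tf$'s. The spectral theorem for compact self-adjoint operators then supplies a complete orthonormal system $\{\tau^\mu(u_j)\}_{j\geq 0}$ of eigenfunctions in $L^2(\Omega,\mu)$ with real eigenvalues $t_j$ accumulating only at $0$. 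Using the admissibility isomorphism $H^1(\Omega,\mu)\cong H^1(\Omega,dV_g)$ together with the connectedness of $\Omega$, one checks that $\ker T = \mathbb{R}\cdot 1$: if $Tf=0$ then testing against $u_f$ forces $a(u_f,u_f)=0$, so $u_f$ is constant and in fact zero (being in $V$), and then the testing identity forces $f$ to be $L^2(\mu)$-orthogonal to $\tau^\mu(V)$; a density argument, namely that $\tau^\mu(V)$ is dense in the $\mu$-mean-zero subspace of $L^2(\Omega,\mu)$, then forces $f\equiv 0$.

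Finally, unwinding the definition shows that $Tf = tf$ with $t>0$ is equivalent to $u_f$ satisfying~\eqref{eigenfunctions_measures:def} with $\lambda = 1/t$, the test function $\phi\equiv 1$ being handled separately (both sides of~\eqref{eigenfunctions_measures:def} vanish on constants). Ordering the positive eigenvalues $t_1 \geq t_2 \geq \ldots \to 0$ and setting $\lambda_j = 1/t_j$ for $j\geq 1$, $\lambda_0 = 0$, yields an unbounded non-decreasing sequence, and (A3) gives $\lambda_1 \geq 1/K>0$, strictly separating $\lambda_0$ from $\lambda_1$. Agreement with~\eqref{def:VarEigenRadon} follows by the usual Courant--Fischer bookkeeping: the trial subspace $F_{k+1}=\operatorname{span}(u_0,\dots,u_k)$, together with density of $C^\infty(\Omega)$ in $H^1(\Omega,\mu)$ used to pass between $C^\infty$-trial spaces and $H^1$-trial spaces, yields the upper bound $\lambda_k(\mu)\leq \lambda_k$, while in an arbitrary admissible $F_{k+1}$ a nonzero element $f \perp_{L^2(\mu)} \tau^\mu(u_0),\dots,\tau^\mu(u_{k-1})$ furnishes the matching lower bound. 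The hard part, and the step that most substantively uses admissibility, is the compactness of $T$ together with the density claim underlying $\ker T = \mathbb{R}\cdot 1$; once those are in place, the rest is a careful but routine application of spectral theory.
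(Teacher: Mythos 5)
Your proof is correct, but it follows a genuinely different route from the one the paper indicates. The paper offers no proof of its own; it points to Kokarev \cite{Ko2014}, whose argument is a ``classical recursive procedure'': one minimises the Rayleigh--Radon quotient over $\mu$-mean-zero functions, extracts a minimiser using the compactness condition (A2) to obtain $\lambda_1$ and its eigenfunction, passes to the $L^2(\Omega,\mu)$-orthogonal complement, and iterates. You instead build a single compact, self-adjoint, nonnegative resolvent $T$ on $L^2(\Omega,\mu)$ via Lax--Milgram and apply the spectral theorem for compact operators once, reading off the orthonormal basis and the weak formulation~\eqref{eigenfunctions_measures:def} immediately and recovering the min-max values by Courant--Fischer. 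This is exactly the ``Krein--Feller operator'' viewpoint the paper remarks on immediately after the statement as an instructive alternative that it does not develop (and points to \cite{HuLaNg2006} and \cite{KeNi2022} for). The trade-off is that the recursive argument is more elementary and keeps the variational definition in view at every step but must re-verify existence of a minimiser at each stage, whereas your route front-loads all the analysis into the Lax--Milgram/compactness package, the kernel identification, and the approximation argument passing between smooth and $H^1(\Omega,\mu)$ trial subspaces without degenerating their $L^2(\Omega,\mu)$-images --- a step you correctly flag as the one requiring care.

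One small gap: condition (A1) is never invoked, and your argument needs it. The conclusion that the sequence $\lambda_k(\mu)$ is infinite and tends to $+\infty$ requires $L^2(\Omega,\mu)$ to be infinite-dimensional; without this, the spectral theorem produces only a finite list and your step ``$t_1\geq t_2\geq\cdots\to 0$'' has no content. Condition (A1), that $\mu$ has no atoms, is precisely what delivers this: a nonzero non-atomic Radon measure on a compact manifold has infinite support, and disjoint open subsets of positive measure yield an infinite orthogonal family in $L^2(\Omega,\mu)$. A one-sentence appeal to (A1) at the point where you order the eigenvalues of $T$ closes the gap.
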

In~\cite{Ko2014} the proof of this result is described in terms of a classical recursive procedure for constructing the eigenvalues. However, it is interesting to observe that the eigenvalues $\lambda_k(\Omega,g,\mu)$ are the eigenvalues of a measure geometric Laplace operator $\Delta_\mu$ of Krein--Feller type. The whole theory could also be presented from that perspective. See~\cite{HuLaNg2006} and also the recent preprint~\cite{KeNi2022} of Kesseb\"ohmer and Niemann where these operators are studied for the purpose of studying spectral asymptotics on fractals. In particular, it is interesting to compare Proposition~\ref{thm:SpectralTheoremMeasure} with~\cite[Theorem 1.2]{HuLaNg2006}.

\subsection{Continuity properties of variational eigenvalues}

A sequence $(\mu_n)$ of Radon measures on $\Omega$ converges to $\mu$ in the weak-$\star$ topology if $\int_{\Omega}f\,d\mu_n\xrightarrow{n\to\infty}\int_{\Omega}f\,d\mu$ for each $f\in C^0(\Omega)$. In that case we write
$\mu_n\xrightharpoonup{\star}\mu$.
In~\cite{Ko2014}, Kokarev proved upper semicontinuity of variational eigenvalues:
\[\limsup_{n\to\infty}\lambda_k(\mu_n)\leq\lambda_k(\mu).\]
This holds for all Radon measures, whether they are admissible or not. 

\begin{ex}Let us consider a simple application of the upper semicontinuity.
Let $M$ be a closed Riemannian manifold of dimension $d$. Let $\Sigma\subset M$ be a submanifold of dimension $0<n\leq d-2$, with measure $dV_\Sigma$. The inclusion $\iota:\Sigma\to M$ allows the definition of the push-forward probability measure $\mu=|\Sigma|^{-1}\iota_{\star}dV_\Sigma$ on $M$. This measure is not admissible, since $\Sigma$ has vanishing capacity and one can show that $\lambda_k(M,g,\mu)=0$ for each $k$. 
Consider tubular neighborhoods $T_\eps:=\{x\in M\,:\,d(x,\Sigma)<\eps\}$, and let $\Omega_\eps=M\setminus T_\eps$ with corresponding inclusions still written $\iota:\partial\Omega_\eps\to M$.
The boundary probability measures $\mu_\eps=|\partial T_\eps|^{-1}\iota_{\star}dV_{\partial T_\eps}$ are admissible. In fact we have seen in example~\ref{example:transmission} that their variational eigenvalues are related to the transmission eigenvalues of $\Omega_\eps$ in $M$:
\[\lambda_k(M,g,\mu_\eps)=\tau_k(M,\Omega_\eps)|\partial\Omega_\eps|\geq\sigma_k(\Omega_\eps)|\partial\Omega_\eps|.\]
It is clear from their definition that $\mu_\eps\xrightharpoonup{\star}\mu$, and it follows from the semi-continuity property that
\[\lim_{\eps\to 0}\sigma_k(\Omega_\eps)|\partial\Omega_\eps|=\lim_{\eps\to 0}\lambda_k(M,g,\mu_\eps)=0.\]
This raises the question to understand the asymptotic behaviour of $\sigma_k(\Omega_\eps)$ as $\eps\to 0$. For connected submanifolds $\Sigma$, this was studied by Brisson in~\cite{Br2022}, where she proved for instance that for $d\geq 3$ and $0<n\leq d-2$, and for each index $k\in\N$,
\begin{gather*}
    \eps\sigma_k(\Omega_\eps)\xrightarrow{\eps\to0}d-n-2.
\end{gather*}
See Example \ref{JadeBrisson}.
\end{ex}

In~\cite[Proposition 4.8]{GiKaLa2021}, Girouard, Karpukhin and Lagac\'e gave sufficient conditions for the continuity of $\lambda_k(\mu)$ under weak-$\star$ convergence. 
\begin{thm}\label{thm:ContinuityRadon}
Let $(\Omega,g)$ be a compact Riemannian manifold (with or without boundary), and let $\Omega_n\subset\Omega$ be a sequence of domains such that $|\Omega\setminus\Omega_n|\to 0$. Let $\mu_n$ be a sequence of admissible Radon measures supported on $\overline{\Omega}_n$ and let $\mu$ be an admissible measure on $\Omega$. Suppose that the following conditions are satisfied:
\begin{itemize}
    \item[M1)] $\mu_n\xrightharpoonup{\star}\mu$;
    \item[M2)] There is a bounded family of extension maps $J_n:H^1(\Omega_n,\mu_n)\to H^1(\Omega,\mu)$.
\end{itemize}
Suppose moreover that the the measures $\mu_n,\mu$ induce elements of $W^{1,1}(\Omega,dV_g)^\star$ such that $\mu_n\to\mu$ in $W^{1,1}(\Omega,dV_g)^\star$. Then for each index $k$,
\[\lim_{n\to\infty}\lambda_k(\Omega_n,\mu_n)=\lambda_k(\Omega,\mu).\]
\end{thm}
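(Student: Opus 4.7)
The strategy is to prove $\lambda_k(\Omega_n,\mu_n)\to\lambda_k(\Omega,\mu)$ by establishing both upper and lower semicontinuity separately. The upper bound essentially extends Kokarev's argument to the variable-domain setting, while the lower bound requires the full strength of the hypotheses M1, M2, and the $W^{1,1}$-dual convergence. Throughout, I use that admissibility of $\mu$ gives a compact trace $\tau^\mu:H^1(\Omega,\mu)\to L^2(\Omega,\mu)$ and that admissibility of $\mu_n$ supplies a spectral theorem (Proposition~\ref{thm:SpectralTheoremMeasure}) on each $\Omega_n$.

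For upper semicontinuity, I would take an $L^2(\mu)$-orthonormal family $u_0,\dots,u_k\in H^1(\Omega,\mu)$ of eigenfunctions for $\lambda_0(\mu)\leq\dots\leq\lambda_k(\mu)$, approximate them by smooth functions, and use their restrictions to $\Omega_n$ as trial functions in~\eqref{def:VarEigenRadon}. Since $|\Omega\setminus\Omega_n|\to 0$, the numerators $\int_{\Omega_n}|\nabla u_j|^2\,dV_g$ converge to $\int_\Omega|\nabla u_j|^2\,dV_g$. For the denominators, each product $u_iu_j$ lies in $W^{1,1}(\Omega,dV_g)$ (since $\nabla(u_iu_j)=u_j\nabla u_i+u_i\nabla u_j$ is in $L^1$ by Cauchy--Schwarz), so the $W^{1,1}(\Omega,dV_g)^\star$-convergence of $\mu_n\to\mu$ gives $\int u_iu_j\,d\mu_n\to\int u_iu_j\,d\mu=\delta_{ij}$. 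Hence the Rayleigh--Radon quotients on the $(k+1)$-dimensional trial space converge to those on the eigenspace, yielding $\limsup_n\lambda_k(\Omega_n,\mu_n)\leq\lambda_k(\Omega,\mu)$.

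For lower semicontinuity, I would choose eigenfunctions $v_0^n,\dots,v_k^n\in H^1(\Omega_n,\mu_n)$ associated to $\lambda_0(\Omega_n,\mu_n),\dots,\lambda_k(\Omega_n,\mu_n)$, orthonormal in $L^2(\Omega_n,\mu_n)$, and set $\tilde v_j^n:=J_nv_j^n\in H^1(\Omega,\mu)$. The uniform boundedness of $J_n$ combined with the upper bound just established controls $\|\tilde v_j^n\|_{H^1(\Omega,\mu)}$ uniformly, so after passing to a subsequence $\tilde v_j^n\rightharpoonup\tilde v_j$ weakly in $H^1(\Omega,\mu)$, and $\tau^\mu\tilde v_j^n\to\tau^\mu\tilde v_j$ strongly in $L^2(\Omega,\mu)$ by compactness. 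To pass to the limit in the normalisation $\int v_i^nv_j^n\,d\mu_n=\delta_{ij}$, I split
\begin{equation*}
\int \tilde v_i^n\tilde v_j^n\,d\mu_n-\int\tilde v_i\tilde v_j\,d\mu=\int\tilde v_i^n\tilde v_j^n\,d(\mu_n-\mu)+\int(\tilde v_i^n\tilde v_j^n-\tilde v_i\tilde v_j)\,d\mu.
\end{equation*}
The first term is bounded by $\|\mu_n-\mu\|_{W^{1,1}(\Omega)^\star}\|\tilde v_i^n\tilde v_j^n\|_{W^{1,1}(\Omega)}$, and a product estimate together with the uniform $H^1$-bound shows $\|\tilde v_i^n\tilde v_j^n\|_{W^{1,1}(\Omega)}$ stays bounded, so this term tends to zero by the $W^{1,1}$-dual convergence. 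The second term tends to zero by the strong $L^2(\Omega,\mu)$-convergence of traces. Since $\mu_n$ is supported on $\overline\Omega_n$ and $J_n$ extends $v_j^n$, this shows $\int\tilde v_i\tilde v_j\,d\mu=\delta_{ij}$; in particular $\tilde v_0,\dots,\tilde v_k$ span a $(k+1)$-dimensional subspace of $H^1(\Omega,\mu)$. Using this space in the min-max for $\lambda_k(\Omega,\mu)$ and applying weak lower semicontinuity of the Dirichlet energy gives $\lambda_k(\Omega,\mu)\leq\liminf_n\lambda_k(\Omega_n,\mu_n)$.

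The main obstacle is the joint passage to the limit in the denominators: weak-$\star$ convergence of $\mu_n\to\mu$ is insufficient to integrate products of $H^1$-functions, which is exactly why the stronger $W^{1,1}(\Omega,dV_g)^\star$ convergence is imposed. Orchestrating this with the compact trace coming from admissibility of $\mu$, and with the uniform $H^1$-control provided by the bounded extensions $J_n$, is the delicate point; once these ingredients are in place, linear independence of the limits $\tilde v_0,\dots,\tilde v_k$ follows automatically from the asymptotic orthonormality, and the remainder of the argument is the standard min-max machinery.
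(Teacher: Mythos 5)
This theorem appears in the paper only as a citation of \cite[Proposition~4.8]{GiKaLa2021}, without an in-paper proof, so there is no author argument to compare against line by line. Your two-sided strategy (restriction of limit eigenfunctions for $\limsup\le$, extension plus weak limits for $\liminf\ge$), the use of the identification $H^1(\Omega,\mu)\cong H^1(\Omega,dV_g)$ to place products $u_iu_j$ in $W^{1,1}(\Omega,dV_g)$, and the interplay of the compact trace with the $W^{1,1}$-dual convergence in passing orthonormality to the limit are all sound. However, the final step of your lower bound has a gap.

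You invoke weak lower semicontinuity of the Dirichlet energy for $\tilde v^n:=\sum_j a_j\tilde v_j^n\rightharpoonup\tilde v:=\sum_j a_j\tilde v_j$ in $H^1(\Omega,\mu)$, which yields $\int_\Omega|\nabla\tilde v|^2\,dV\le\liminf_n\int_\Omega|\nabla\tilde v^n|^2\,dV$. But this is the wrong quantity to bound. Since $\tilde v^n=J_nv^n$ is an \emph{extension}, one has
\[
\int_\Omega|\nabla\tilde v^n|^2\,dV=\int_{\Omega_n}|\nabla v^n|^2\,dV+\int_{\Omega\setminus\Omega_n}|\nabla\tilde v^n|^2\,dV,
\]
and while the first summand equals $\sum_j a_j^2\lambda_j(\Omega_n,\mu_n)\le|a|^2\lambda_k(\Omega_n,\mu_n)$, the second summand --- the energy the extension dumps on $\Omega\setminus\Omega_n$ --- is only uniformly bounded via M2 and need not vanish as $n\to\infty$. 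As written, your argument therefore gives $\lambda_k(\Omega,\mu)\le\liminf_n\lambda_k(\Omega_n,\mu_n)+C$ for a nonzero constant $C$, which is too weak. The repair is to bring the hypothesis $|\Omega\setminus\Omega_n|\to 0$ into play in the lower bound too (you used it only in the upper bound). For any fixed $\phi\in L^2(\Omega)$, absolute continuity gives $\|\mathbf 1_{\Omega\setminus\Omega_n}\phi\|_{L^2}\to 0$, and the uniform $L^2$-bound on $\nabla\tilde v^n$ then forces $\mathbf 1_{\Omega\setminus\Omega_n}\nabla\tilde v^n\rightharpoonup 0$ weakly in $L^2(\Omega)$, hence $\mathbf 1_{\Omega_n}\nabla\tilde v^n\rightharpoonup\nabla\tilde v$. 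Applying weak lower semicontinuity of the $L^2$-norm to \emph{these truncated gradients} gives $\int_\Omega|\nabla\tilde v|^2\,dV\le\liminf_n\int_{\Omega_n}|\nabla v^n|^2\,dV\le|a|^2\liminf_n\lambda_k(\Omega_n,\mu_n)$, which is what the min--max needs. With that correction your argument closes.
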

Here $W^{1,1}(\Omega,dV_g)$ is the usual Sobolev space of $L^1$-functions with weak gradient in $L^1$.
\begin{remark}
See also the paper~\cite{FrMi2020} by Freiberg and Minorics for related results in dimension 1, presented in terms of Krein--Feller operators.
\end{remark}
\begin{remark}
Theorem \ref{thm:ContinuityRadon} provides a flexible tool to discuss eigenvalue convergence. It is the main technical tool behind the proof of Theorem~\ref{thm: GiKaLa 8pi} and Theorem~\ref{thm: GKL sharp}, via homogenisation by perforation. It also provides a convenient setting to discuss boundary homogenisation, as in the work of Bucur and Nahon~\cite{BuNa2021}. See Remark~\ref{rem:BucurNahon}. 
\end{remark}

\section{Open problems}\label{app:open ques}

We list here the open problems and questions that have been proposed throughout the paper.

\begin{itemize}

\item[\ref{ques:conformal}]
Describe the class of all smooth compact surfaces $\Omega\subset\R^3$ with boundary $\partial\Omega=\partial\D$ that admit a conformal parametrisation $\Phi:\D\to\Omega$ such that $|\Phi'|\equiv 1$ on $\partial\D$.
\item[\ref{ques:karpstern}.] If $M$ is a closed surface and $\Omega$ is a domain in that surface, does the strict inequality 
\[\sigma_k(\Omega,g)L(\partial\Omega)<\lambda_k^*(M,[g])\]
hold for $k\geq 3$? (It is known for $k=1$ and $k=2$.)
\item[\ref{conj:escobar}]
(Escobar's Conjecture, \cite{Es1999})
Let $\Omega$ be a smooth compact connected Riemannian manifold of dimension
$\ge 3$ with boundary $\Sigma=\partial \Omega$. Suppose that the Ricci curvature of $\Omega$ is non-negative
and that the second fundamental form $\rho$ of $\Sigma$ is bounded below by $c>0$.
Then $\sigma_1(\Omega) \ge c$, with equality if and only if $\Omega$ is the Euclidean ball of radius $\frac{1}{c}$. (Progress on this is discussed in subsection \ref{subsec.lowerboundgeom}).
\item[\ref{question lower}] 
Under the hypotheses (H1)-(H3) of Theorem \ref{comparison}, is it possible to find an explicit lower bound for $\sigma_1(\Omega)$ in terms of geometric invariants of $\Omega$ and of it boundary $\Sigma$ even when $\Sigma$ is not assumed to be connected?
 \item[\ref{question cheeger}] 
Can one define a different Cheeger-type constant for which $\sigma_1$ satisfies a Buser-type inequality?
\item[\ref{ques:logbase2}]
Is the coefficient $\frac{C'}{\log^2(k+2)}$ of $i_{k+1}(\Omega)$ in the following estimate (estimate (\ref{better})) sharp?
\begin{equation*}
\sigma_{2k+1}(\Omega)\ge \frac{C'}{\log^2(k+2)} i_{k+1}(\Omega).
\end{equation*}

\item[\ref{ques:higherordercheeger}]
Can one find a higher order Cheeger inequality without any dependence on $k$?
\item[\ref{question:maxdom2}]
If $(M,g)$ is a complete Riemannian manifold of dimension $\ge 3$ of infinite volume, with Ricci curvature bounded from below, can one construct domains $\Omega \subset M$ with arbitrarily large first nonzero normalised eigenvalue (for the different normalisations mentioned in subsection \ref{Upperexamples})?
\item[\ref{ques:intrinsicextrinsic}]
Let $\Omega$ be a compact Riemannian manifold of dimension $d+1\ge 3$ with (connected) boundary $\Sigma$. Is it possible to construct a family $(g_{\epsilon})_{\epsilon>0}$ of Riemannian metrics on $\Omega$ that stays constant on $\Sigma$ and satisfies $\sigma_1(\Omega,g_{\epsilon}) \vert \Sigma\vert\vert(\Omega, g_{\epsilon})\vert^{\frac{1-d}{d+1}} \to \infty$ as $\epsilon \to 0$?
\item[\ref{ques:weinstockhnsn}]Let $\Omega$ be a bounded convex domain in $\mathbb H^{d+1}$ or $\mathbb S^{d+1}$. Let $\Omega^*$ be a ball in the same space with $|\partial\Omega^*|=|\partial\Omega|$. Is it true that 
\begin{equation*}
		\sigma_1(\Omega) \le \sigma_1(\Omega^*)
	\end{equation*}
 with equality iff $\Omega$ is a ball?
 \item[\ref{ques:diamterm}]  The term $\frac{sn_{K}(\diam(\Omega))}{sn_{\kappa}(\diam(\Omega))}$ in \eqref{LiWaWu} may become very large when $\diam(\Omega)$ becomes large. Is it possible to establish a better estimate or to construct an example of a domain $\Omega$ with large diameter and $\sigma_1(\Omega)$ large?
 \item[\ref{ques:cartanhadamard}]
Can we get estimates like (\ref{LiWaWu}) for the other eigenvalues of domains in Cartan-Hadamard manifolds? The methods used in \cite{CoElGi2011} or \cite{Ha2011} do not seem to apply.
\item[\ref{ques:diameterhnsn}]
	Is it possible to get inequalities similar to those in Theorem \ref{diam1} for domains in the hyperbolic space or the sphere?
 \item[\ref{ques:specconvtozero}]
If the diameter of a domain $\Omega$ is fixed and its volume tends to $0$, can one say that all the eigenvalues of the domain tend to $0$?
\item[\ref{ques:manyquestionsdoublyconnected}] Can the results on doubly connected domains at the end of Subsection \ref{UpperDomains} be improved? (See the text for the full question).
\item[\ref{ques:rev1}] Can Xiong's results on upper and lower bounds for Steklov eigenvalues on manifolds of revolution, Theorems \ref{thm:xiong1} and \ref{thm:xiong2}, be improved by imposing a geometric constraint such as $\vert Ricci \vert \le a^2$? 
\item[\ref{ques:rev2}] Can Xiong's results (see previous question) be generalised to manifolds of revolution with two boundary components?
\item[\ref{ques:2bc1}] Find a sharp lower bound for $\sigma_{k}(\Omega)$, where $\Omega$ is a hypersurface of revolution with boundary components $\left(\mathbb \Sp^{d} \times \{0\}\right) \cup \left(\mathbb \Sp^{d} \times \{\delta\}\right)$ and $\delta<2$.
\item[\ref{ques:2bc2}] Find a sharp upper bound for $\sigma_{k}(\Omega)$, where $\Omega$ is a hypersurface of revolution with boundary components $\left(\mathbb \Sp^{d} \times \{0\}\right) \cup \left(\mathbb \Sp^{d} \times \{\delta\}\right)$.
\item[\ref{ques:2bc3}]
In the case of a hypersurface of revolution with either one or two boundary components, can we obtain results as in Theorem \ref{thm:xiong2}, that is, sharp bounds for the difference $\sigma_{(k+1)}-\sigma_{(k)}$ and the ratio $\frac{\sigma_{(k+1)}}{\sigma_{(k)}}$?
\item[\ref{open:arbitrarilylarge}]
Given a $d$-dimensional compact submanifold $\Sigma$ in $\R^m$, is it possible to construct a family of $d+1$- dimensional submanifolds $\Omega$ of $\R^m$ with boundary $\Sigma$ for which $\sigma_1(\Omega)$ becomes arbitrarily large?

\item[\ref{ques:eucl1}]
Is it possible to get a similar inequality 
 to \eqref{ineqmean} for submanifolds of hyperbolic space?

\item[\ref{ques:eucl2}] 
Is it possible to generalise Inequality (\ref{ineqmean}) to other eigenvalues? Note that a similar question for the spectrum of the Laplacian was solved only recently (and partially) by Kokarev  in~\cite[Theorem 1.6]{Ko2020}.

\item[\ref{ques:any higher maximisers}] Can one find examples of compact surfaces with boundary and integers $k>1$ for which a $\sigma_k$-maximising metric exists.
\item[\ref{ques:gapk}.] For given $k$, which compact surfaces $\om$ satisfy $\gap_k(\om)>0$? (See Definition \ref{def.gap} for a definition of $\gap_k(\om)$).

\item [\ref{ques:sig >2pi}.] Is $\sigma_1^*(\om,[g])>2\pi$ for every conformal class $[g]$ when the surface $\om $ is not diffeomorphic to a disk?

\item[\ref{ques:ksoq3}] \cite[Open Question 3]{KaSt2021}. In the setting of Theorem~\ref{thm.KS asymptotics}, if the limiting surface in $\sph^{N-1}$ realising $\lambda_1^*(M)$ is embedded, does it necessarily follow that the minimal surfaces in $\B^{N}$ realising $\sigma_1^*(M_b)$ are embedded for all sufficiently large $b$?

\item[\ref{fraser li conj}] (Fraser and Li's Conjecture, \cite[Conjecture 3.3]{FrLi2014}) If $\om$ is a properly embedded free boundary minimal hypersurface in $\B^n$, then $\sigma_1(\om)=1$, i.e., $\om$ has spectral index one.
\item[\ref{conj:mcKu}]\cite[Conjecture 5]{KuMc2022} The first Steklov eigenspace of any properly embedded free boundary minimal surface $S$ in $\B^n$ coincides with the span of the coordinate functions of the embedding.
\item[\ref{ques: conf ext metric vs pair}] 
If $g$ is a $\bsig_k$-(conformally) extremal metric, is $(g,1)$ necessarily a $\bsig_k$-(conformally) extremal pair?  
\item[\ref{ques: conf spec simple}] Given a compact manifold $\om$ with boundary and a conformal class $[g]$ of Riemannian metrics on $\om$, define
\[\bsig_k(\om, [g])=\sup_{g'\in [g]}\, \bsig(\om,g).\]
Is \[\bsig_k(\om,[g])<\bsig_{k+1}(\om, [g]) \mbox{\,\,for all}\,\,k?\]
\item[\ref{ques:discretcomp}]For all $1\le k \le \vert B\vert-1$,
is it possible to obtain an inequality of the form
\[
A_1 \le \frac{\sigma_k(\Omega)}{\sigma_k(\Gamma)}\le A_2,
\]
where $\Gamma$ is a discretisation of $\Omega$ defined as in subsection \ref{Discretisation and spectrum}?
\item[\ref{ques:cheegerdiscrete}] In the discrete setting, is it possible to have a higher order Cheeger inequality without any dependence on $k$?
\item[\ref{ques:discretequasiisom}] Let $\Gamma_1,\Gamma_2$ be two infinite roughly quasi-isometric graphs. If there exists a constant $C_1(\Gamma_1)$ such that for each finite subgraph $\Omega$ of $\Gamma_1$, $\sigma_1(\bar \Omega)\le \frac{C_1(\Gamma_1)}{\vert B\vert^{\alpha}}$, with $B$ the boundary of $\Omega$ and $\alpha>0$, is an analogous property also true for the finite subgraphs of $\Gamma_2$?

\item[\ref{ques:yangyusharp}] Is Yang-Yu's inequality \eqref{yyiso} for eigenvalues of the Raulot-Savo Dirichlet-to-Neumann operator sharp when $k=1$ and $p<\frac{d+1}{2}$?   Is it sharp when $k>1?$
    
\item[\ref{ques: karpukhin pseudo}] Is Karpukhin's Dirichlet-to-Neumann operator $\kDtN_p$ a pseudo-differential operator?

\item[\ref{ques: Karp Weyl error}] Can the error bound in the Weyl Law~\eqref{eq:karp p weyl} for the asymptotics  of $\ksig_{k,p}$ be improved to $O(\sigma^{d-1})$? (This would follow from an affirmative answer to Question~\ref{ques: karpukhin pseudo}.  See \cite[Remark 5.9]{GKLP2021} for further comments).
\item[\ref{prob:karp}] Given a closed oriented Riemannian manifold $(\sig,h)$ of dimension $d=2p+1$, let $[\sig,h]_m$ denote the collection of all orientable Riemannian manifolds $(\om,g)$ with $\partial \om =\sig$, $g_{|\sig}=h$ and $\beta_{d-p}(\om)=m$.  For fixed $m$ and $k$, investigate \[\sup\,\{\ksig_{k,p}(\om,g): (\om,g)\in [\sig,h]_m\}.\] 
\item[\ref{conjs:karpuhkin}]Some conjectures from Karpukhin \cite{Ka2019}:
\begin{itemize}
\item The inequality in Theorem~\ref{khpsp} is sharp for all $k$.
\item When $k\leq \frac{1}{2}\binom{2p+2}{p+1}$, equality holds only for the Euclidean unit ball.
\end{itemize}
\item[\ref{ques:weylrough}]
 Can it be shown that the Steklov Weyl asymptotics \eqref{eq:stekweyl} hold whenever $\partial\Omega$ is Lipschitz? It has been done when $d+1=2$ \cite{KLP2022}.

\item[\ref{ques:multconnops}] 
Are families of multiply connected, compact, Steklov isospectral planar domains necessarily compact in the $C^{\infty}$ topology?

\item[\ref{ques:polygonsbigangles}]
When obtaining Steklov spectral asymptotics for curvilinear polygons, can the condition that the angles of $\Omega$ are in $(0,\pi)$ be relaxed to $(0,2\pi)$?

\item[\ref{ques:recoverangles}]
Under the hypotheses of Theorem \ref{thm:mainklpps}, can the angles themselves, and not just $\cos(\pi^2/2\alpha)$, be recovered from the Steklov spectrum of a (possibly curvilinear) polygon?

\item[\ref{ques: orb boundary}]
Does the Steklov spectrum detect the presence of singularities on the boundary of a compact Riemannian orbifold? (Some results are known for orbisurfaces, see subsection \ref{subsec:orb pos}).

\item[\ref{ques: orb interior}]
Does the Steklov spectrum detect the presence of singularities in the interior of a compact Riemannian orbifold? (Some results are known for orbisurfaces, see subsection \ref{subsec:orb pos}).

\item[\ref{ques:mixed vs pure}] Can a mixed Steklov-Neumann problem on a compact Riemannian manifold $\om_1$  and a pure Steklov problem on a compact Riemannian manifold $\om_2$ have the same spectrum?

\item[\ref{ques.sing type}] Among Riemannian orbifolds with singularities, to what extent does the Steklov spectrum recognise the types of singularities?

\item[\ref{prob: orb heat}]  Develop Steklov heat asymptotics for compact Riemannian orbifolds with boundary.

\item[\ref{ques.NotLapIsosp}]
Do there exist Steklov isospectral Riemannian manifolds whose boundaries are not Laplace isospectral? Equivalently, does the Steklov spectrum of a Riemannian manifold determine the Laplace spectrum of its boundary?

\item[\ref{ques.NotNeumIsosp}]
Do there exist Steklov isospectral Riemannian manifolds that are not Neumann isospectral (other than $\sigma$-isometric surfaces)?

\item[\ref{ques:isospplanedomains}] 
Do there exist non-isometric Steklov isospectral plane domains?

\item[\ref{quesgeq3}] For manifolds of dimension $\geq 3$, can one tell from the Steklov spectrum:
\begin{itemize}
\item Whether the manifold has constant sectional curvature? 
\item Whether the induced metric on the boundary is Einstein?
\item Whether the induced metric on the boundary has constant sectional curvature?
\item Whether the mean curvature of the boundary is constant?
\item Whether the principal curvatures of the boundary are constant?  

 \end{itemize}

\item[\ref{ques: inverse prob forms}] Do there exist compact Riemannian manifolds $\om_1$ and $\om_2$ with boundary such that we have $p$ and $q$ with $\stek_p^{\rs}(\om_1)=\stek_p^{\rs}(\om_2)$ but $\stek_q^{\rs}(\om_1)\neq \stek_q^{\rs}(\om_2)$?  One can ask the same question for $\stek_p^{K}(\om_i)$. (Note that answers to some similar problems for orbifolds seem to be ``yes", see subsection \ref{ballquots}).

\item[\ref{ques:interiordecaysmooth}] Does exponential decay of Steklov eigenfunctions into the interior, for example \eqref{eq:gtbound}, hold if $\Sigma$ and/or $\Omega$ are merely assumed smooth rather than real analytic?

\item[\ref{ques:decaynonsmooth}] More generally, what kind of decay properties do the Steklov eigenfunctions possess if $\Sigma$ is not assumed to be smooth?

\item[\ref{ques:generic}] Do Steklov eigenfunctions generically concentrate on a single boundary component? (The case of dimension 2 has been answered in the affirmative by Martineau \cite{Ma2018}).

\item[\ref{ques:yau}]\cite[Open Problem 11]{GiPo2017} Does the Steklov analogue of the nodal volume conjecture of Yau hold? That is, do there exist constants $c$ and $C$ depending only on $\Omega$ for which
\begin{equation*}
    c\sigma_k\leq\mathcal H_{d}(u_k)\leq C\sigma_k\textrm{ and/or }
    c\sigma_k\leq\mathcal H_{d-1}(\varphi_k)\leq C\sigma_k?
\end{equation*}

\item[\ref{ques:pleijel1}]
Is there a Pleijel-type theorem for the nodal counts of Steklov eigenfunctions?

\item[\ref{ques:pleijel2}] Is there a theorem like Theorem \ref{thm:mainthmhash} for Dirichlet-to-Neumann eigenfunctions? If so, can it be improved to a Pleijel-type result?

\end{itemize}

\end{appendices}

\bibliographystyle{plain}
\bibliography{surveybib}

\end{document}